\newtheorem{theorem}{Theorem}[chapter]
\newtheorem{proposition}[theorem]{Proposition}
\newtheorem{lemma}[theorem]{Lemma}
\theoremstyle{definition}
\newtheorem{definition}[theorem]{Definition}
\newtheorem{example}[theorem]{Example}
\newtheorem{notation}{Notation}[section]
\newtheorem{corollary}[theorem]{Corollary}
\theoremstyle{remark}
\newtheorem{remark}[theorem]{Remark}
\numberwithin{section}{chapter}
\numberwithin{equation}{chapter}
\newcommand{\ndash}{\nobreakdash-\hspace{0pt}}
\DeclareMathOperator{\Mexp}{Mexp}
\begin{document}

\frontmatter
\title{Affine Kac-Moody symmetric spaces}
\author{Walter Freyn}

\maketitle

\tableofcontents
\mainmatter

\chapter{Introduction}

In this work we develop a theory of affine Kac-Moody symmetric spaces. Those spaces are the closest infinite dimensional analogue to finite dimensional Riemannian symmetric spaces. Finite dimensional symmetric spaces and affine Kac-Moody symmetric spaces share many features of their geometry; additionally their classification follows the same principles and exhibits a similar structure and, most importantly, their relations to other classes of objects like polar actions, isoparametric submanifolds and (twin) buildings are similar~\cite{Freyn10a}.

In this work we introduce affine Kac-Moody symmetric spaces and describe their algebraic, geometric and functional analytic structure; we prove them to be quotient spaces of tame Fr\'echet Kac-Moody Lie groups in a similar way as finite dimensional Riemannian symmetric spaces (besides $\mathbb{R}^n$) are quotient spaces of semisimple Lie groups. 

From a functional analytic point of view, the central observation of our approach is the choice of the functional analytic setting we work in: we construct Kac-Moody groups as torus extension of groups of holomorphic loops form $\mathbb{C}^*$ into complex reductive Lie groups $G_{\mathbb{C}}$. This setting allows for a nice complexification of the resulting Kac-Moody groups; this complexification is a necessary step for the construction of symmetric spaces of the non-compact type. As a result we get Kac-Moody symmetric spaces of the ``compact type'' and the ``non-compact type'' together with a duality relation between simply connected Kac-Moody symmetric spaces of both types. In each case we get two sub-classes, one corresponding to Kac-Moody groups, the other corresponding to involutions of compact Kac-Moody groups. We show that the isotropy representations of Kac-Moody symmetric spaces coincide with the classical examples of $P(G,H)$\ndash actions introduced in \cite{HPTT}. All those results have counterparts in the theory of finite dimensional Riemannian symmetric spaces~\cite{Helgason01, Berndt03}.

\section{Kac-Moody geometry - The origin of the problem}

Together with proper Fredholm isoparametric submanifolds in Hilbert spaces, twin cities and proper Fredholm polar actions Kac-Moody symmetric spaces are one of the three important differential geometry objects of Kac-Moody geometry (for an overview~\cite{Freyn10a} and references therein). 

Kac-Moody geometry is the investigation of the geometry inherent to affine Kac-Moody algebras and affine Kac-Moody groups. Around 1967  Kac-Moody algebras were introduced in the works of V.\ G.\ Kac~\cite{Kac68}, R.\ V.\ Moody~\cite{Moody69}, I.\ L.\ Kantor~\cite{Kantor68} and D.-N.\ Verma (unpublished). From a formal, algebraic point of view, Kac-Moody algebras are understood as Lie algebra realizations of generalized Cartan matrices~\cite{Kac90, Moody95}. In this way Kac-Moody algebras appear as a natural generalization of simple Lie algebras. Similar to the bijection between indecomposable Cartan matrices and simple complex Lie algebras we get a new, wider bijection between generalized indecomposable Cartan matrices and simple complex Kac-Moody algebras; in this wider context Cartan matrices correspond to simple Lie groups, affine Cartan matrices to affine Kac-Moody algebras. In spite of this similarity there is an ample difference: The theory of simple Lie algebras starts with abstract Lie algebras; then structure results are proven; in the end we arrive at the concept of a Cartan subalgebra and a Cartan matrix. Finally a bijection between simple complex Lie algebras and indecomposable Cartan matrices is established. In contrast the construction of affine Kac-Moody algebras starts with an affine Cartan matrix; then one constructs a Cartan subalgebra $\mathfrak{h}$ as a realization of this Cartan matrix and proceeds then to the construction of the whole Kac-Moody algebra using the root spaces associated to $\mathfrak{h}$. Hence the usual construction and definition of Kac-Moody algebras is representation theoretic in nature and breaks manifestly the symmetry inherent to Kac-Moody algebras, thus darkening the geometry behind the definition. Because of this very abstract definition an explicit construction of Kac-Moody algebras was important: In this direction V.\ G.\ Kac, R.\ V.\ Moody and I.\ L.\ Kantor pointed out the subclass of affine Kac-Moody algebras allowing for an explicit description in terms of $2$\ndash dimensional extensions of algebras of polynomial loops into simple Lie algebras. This point of view links the theory of affine Kac-Moody algebras to the theory of simple Lie algebras in yet another very elementary geometric way. Furthermore this description suggests functional analytic completions of the loop algebras with respect to various norms and opens thus the way to the use of functional analytic methods~\cite{PressleySegal86}.

The next step towards a geometric theory is the construction of Kac-Moody groups. This was done by Jacques Tits around 1980; he developed the idea of twin buildings and twin BN-pairs and showed that Kac-Moody groups can be constructed as groups with a twin $BN$\ndash pair. Several works of various authors investigate sundry completions. The most important observation from a geometric viewpoint is, that a big class of complex Kac-Moody groups (namely the $2$-spherical ones) containing all affine Kac-Moody groups can be understood as amalgamations of $SL(2,\mathbb{C})$-groups. Hence similarly to a simple Lie group a complex Kac-Moody group has many $SL(2,\mathbb{C})$-subgroups. The well-known two- or three dimensional geometry of $SL(2,\mathbb{C})$ and its real forms $SU(2)$ and $SL(2,\mathbb{R})$ will thus play an important role in understanding the geometry of (infinite dimensional) Kac-Moody groups. For the subclass of affine Kac-Moody algebras we can construct the associated Kac-Moody groups as torus extensions of groups of polynomial loops. Unfortunately --- as for all Kac-Moody groups --- there is no exponential function with nice properties for algebras of polynomial loops and their loop groups; this problem can be partly solved using suitable completions~\cite{PressleySegal86}; nevertheless there is no completions allowing for the definition of Kac-Moody symmetric spaces such that the resulting exponential map gets a local diffeomorphism. This leads to various delicate functional analytic problems - some of them we will encounter later in these notes.

In the investigation of the geometry associated to Kac-Moody groups one is guided by a finite dimensional blueprint, namely the geometry associated to semisimple Lie groups. Let us thus review shortly the finite dimensional blueprint. The main classes of objects of this well-developped theory are the following:

\begin{enumerate}
\item {\bf Riemannian Symmetric spaces} are Riemannian manifolds with a huge symmetry group: For every point $p$ there is a symmetry $\sigma_p$, which is $-Id$ on the tangential space at $p$. A consequence is that Riemannian symmetric spaces are homogeneous.
\item {\bf Polar representations} are representations of a Lie group $G$ on a vector space $V$ such that there is a subspace $\Sigma\subset V$ called a section that meets each orbit orthogonally.
\item {\bf Spherical buildings over \boldmath$\mathbb{R}$ or \boldmath$\mathbb{C}$} are very special simplicial complexes having a huge set of subcomplexes, called apartments. They are a geometric translation of the Bruhat decomposition of Lie groups.
\item {\bf Isoparametric submanifolds} are submanifolds $V\subset\mathbb{R}^n$ such that their normal bundle is flat and the principal curvatures are constant.
\end{enumerate}

In the first two examples homogeneity is intrinsic, in the last two examples homogeneity is a priori an additional assumption. However, homogeneity can be proven under the assumption of sufficiently high rank \cite{AbramenkoBrown08, Berndt03}. Those four classes of objects are closely related: Start with a symmetric space $M$. We define the isotropy group $K_p$ to be the group of all isometries of $M$, fixing $p$. The isotropy representation of $M$ is the natural representation $K_p:T_pM\longrightarrow T_pM$ induced by the action of $K_p$ on $M$. Because of homogeneity the isotropy representations at different points are isomorphic. It is straight forward to check that the isotropy representation of a symmetric space is a polar representation. Conversely J.\ Dadok proved every polar representation on $\mathbb{R}^n$ to be orbit equivalent to the isotropy representation of a symmetric space~\cite{Dadok85}.  Principal orbits of polar representations are isoparametric submanifolds. Conversely a result of G.\ Thorbergsson shows any full irreducible isoparametric submanifold of $\mathbb{R}^n$ of rank at least three to be equivalent to a principal orbit of some isotropy representation~\cite{Thorbergsson91, Berndt03} and references therein. The boundary of a symmetric space of non-compact type can be identified with a building~\cite{Eberlein96}. Furthermore the building can be embedded into the unit sphere of the representation space of the isotropy representation and hence be seen geometrically in the tangent space of the corresponding symmetric spaces.

The crucial observation for the development of Kac-Moody geometry was the discovery of a close link between affine Kac-Moody algebras and infinite dimensional differential geometry around 1980~\cite{HPTT}. Using the description of affine Kac-Moody algebras as loop algebras, E.\ Heintze, R.\ Palais, C.-L.\  Terng, and G.\ Thorbergsson defined $H^0$\ndash Sobolev completions of Kac-Moody algebras and $H^1$\ndash Sobolev completions of Kac-Moody groups and constructed various classes of polar actions in complete analogy to the various classes of the (known) classification of finite dimensional polar actions.  C.-L.\ Terng proved furthermore that polar representations on Hilbert spaces can be described using similar completions of Kac-Moody algebras; furthermore proper Fredholm isoparametric submanifolds are principal orbits of polar representations~\cite{terng89,terng95}. Conversely all (full irreducible) proper Fredholm submanifolds of codimension not $1$ are principal orbits of polar representations by a result of E.\ Heintze and X.\ Liu~\cite{HeintzeLiu}. These surprising connections parallel the finite dimensional theory where the theorem of Dadok shows that polar representations correspond to symmetric spaces.   All those developments hint to the idea that there is a rich infinite dimensional geometry lurking behind affine Kac-Moody algebras and groups, that parallels in its broad lines the finite dimensional theory of geometric objects whose structures are described by semisimple Lie groups~\cite{Heintze06}.

Inspired by the finite dimensional blueprint besides generalizing the concepts of isoparametric submanifolds and polar actions to Hilbert spaces, C.-L.\ Terng conjectured in her article~\cite{terng95} the existence of infinite dimensional symmetric spaces as the fundamental geometric object necessary for the generalization of the finite dimensional blueprint to affine Kac-Moody algebras, remarking (remark~3.4) that severe technical
problems make the rigorous definition of those spaces difficult. 

It turns out that the crucial
point is to find an analytic framework that allows all algebraic constructions
which are needed for the description of the geometric theory. A recent review of the theory of isoparametric submanifolds
and polar actions on Hilbert spaces, which contains additional
references, is given in~\cite{Heintze06}.

Important progress towards the construction of Kac-Moody symmetric
spaces was achieved by B.\ Popescu only in 2005/2006 in his
thesis~\cite{Popescu05} where he considers weak Hilbert symmetric spaces,
modeled as loop spaces of $H^1$-Sobolev loops, equipped with a $H^0$ scalar
product. As the differential of $H^1$ loops is only in $H^0$ this approach does not allow a convincing definition of the torus
bundle extension corresponding to the $c$ and $d$ parts of the Kac-Moody algebra. To
remedy this he investigates also the framework of smooth loops, which allows the construction of symmetric Fr\'echet
spaces of the ``compact type''. However, there is no convincing definition of a complexification for those groups. Hence, the definition of the dual non-compact symmetric spaces fails completely.
As about half of all symmetric spaces are of non-compact type this is a serious detriment.

This problem was overcome only in the author's thesis~\cite{freyn09} using holomorphic loops defined on
$\mathbb{C}^*=\mathbb{C}\setminus\{0\}$.
In this ansatz we have to tackle the obstacle that the exponential
function in general defines no longer a diffeomorphism between neighborhoods of
the $0$-element in $\mathfrak{g}$ and the unit element in $G$. Therefore, one
requires methods from infinite dimensional Lie theory to define Lie group-
(resp.\ manifold-) structures on those groups and their quotients. A comprehensive review
of this topic including further references is given in the article~\cite{Neeb06},
an investigation of Lie groups of holomorphic maps is contained in the
article~\cite{Neeb07}. See also~\cite{Kriegl97}.

Hence, the theory of Kac-Moody symmetric spaces presented in this work is situated at the
melting point of 3 different areas of current research:

\begin{itemize}
  \item[-] The differential geometry of loop groups, polar actions and isoparametric
    submanifolds yields the geometric intuition.
  \item[-] The theory of infinite dimensional analysis and Lie groups gives the functional analytic framework.
  \item[-] The theory of Kac-Moody algebras and Kac-Moody groups rules the structure theory of Kac-Moody symmetric spaces and imposes precise requirements on the kind of algebraic operations that have to be defined. This imposes severe restrictions on the functional analytic framework we work in.
\end{itemize}

\noindent Using the geometric description of
extensions of loop groups we describe Kac-Moody groups as tame Fr\'echet Lie groups and construct Kac-Moody symmetric spaces of the compact and of the non-compact type as tame Fr\'echet manifolds.

By work done during the last 20 years a generalization of the finite dimensional blueprint to affine Kac-Moody algebras is now well understood. The fundamental (global) object are Kac-Moody symmetric spaces. Their isotropy representations induce polar representations on Hilbert spaces. They describe the local structure in each tangent space. Principal orbits of polar representations are proper Fredholm isoparametric submanifolds in Hilbert space. Twin cities, the appropriate completion of twin buildings, can be embedded equivariantly into the isotropy representation \cite{Freyn10d}. Chambers in twin cities correspond to points in isoparametric submanifolds; the twin city can be seen in the tangent space of a Kac-Moody symmetric space~\cite{Freyn10d}. A version of Thorbergsson's result is available in this setting, proved by E.\ Heintze and X.\ Liu~\cite{HeintzeLiu}; the only big missing link is a proof of an appropriate version of Dadok's theorem. In this direction there exist partial results by C.\ Gorodski, E.\ Heintze and K.\ Weinl.

Besides in Kac-Moody geometry itself Kac-Moody symmetric spaces appear in various places in mathematics and theoretical physics. Let us just mention supergravity theories or M-theory. The literature about this field is vast. For a start see for example~\cite{DamourKleinschmidtNicolai07}, \cite{NicolaiSamtleben05}.

Let us remark nevertheless that there are various other approaches to the construction of infinite dimensional symmetric spaces. See for example \cite{delaHarpe72}, \cite{Neretin96}, or \cite{Klotz11}. Nevertheless, all those approaches do not generalize the important structure properties of finite dimensional symmetric spaces.

\section{Affine Kac-Moody symmetric spaces}

Affine Kac-Moody symmetric spaces are tame Fr\'echet Lorentz symmetric spaces whose isometry group contains a transitive acting affine Kac-Moody group. To state our main results about the geometry of Kac-Moody symmetric spaces let us fix some notation: we denote by 
$G_{\mathbb{C}}$ a complex semisimple Lie group and by $G$ a compact real
form of $G_{\mathbb{C}}$. Furthermore let $\sigma$ be a diagram automorphism  of order $n$ of $\mathfrak{g}_{\mathbb{C}}$. (Hence $n\in \{1,2,3\}$, the case $n=1$ (the identity) is allowed, and $\omega:=e^{\frac{2\pi i}{n}}$. Now, we define the holomorphic loop spaces 
\begin{displaymath}
MG_{\mathbb C}^{\sigma}:= \{f: \mathbb C^* \rightarrow
G_{\mathbb C}| f \textrm{ is holomorphic and } \sigma\circ f (z)=f(\omega z)\}
\end{displaymath}
and
\begin{displaymath}
MG_{\mathbb
  R}^{\sigma}:=\{f:\mathbb C^* \rightarrow G_{\mathbb C} | f(S^1)
\subset G, \textrm{f is holomorphic and } \sigma \circ f (z)=f(\omega z)\}\,.
\end{displaymath}

The complex Kac-Moody groups $\widehat{MG}_{\mathbb{C}}^{\sigma}$ are now constructed as certain $(\mathbb{C}^*)^2$-bundles over $MG_{\mathbb{C}}^{\sigma}$. To simplify the notation we omit the superscript $\sigma$ whenever possible.

\noindent Let $\rho_*$ denote a suitable involution of the second kind (see~\cite{Heintze09}).

Then we can summerize the main results as follows:

\begin{theorem}[affine Kac-Moody symmetric spaces of the ``compact'' type]
Both the  Kac-Moody groups $\widehat{MG}^{\sigma}_{\mathbb R}$
equipped with their $Ad$-invariant metric (called Kac-Moody symmetric space of type $II$), and the quotient spaces
$X= \widehat{MG}^{\sigma}_{\mathbb R}/\textrm{Fix}(\rho_*)$ equipped with its
$Ad(\textrm{Fix}(\rho_*))$-invariant metric (called Kac-Moody symmetric space of type $I$) are tame Fr\'echet
symmetric spaces of the ``compact'' type with respect to their natural $Ad$-invariant
metric. Their curvature tensors satisfy 
\begin{displaymath}
\langle R(X,Y)X, Y\rangle \geq 0\,.
\end{displaymath}
\end{theorem}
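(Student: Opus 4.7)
The plan is to proceed in four steps, paralleling the classical treatment of compact symmetric spaces but adapted to the tame Fr\'echet category.

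First, I would record that $\widehat{MG}^{\sigma}_{\mathbb R}$ carries a tame Fr\'echet Lie group structure. The underlying holomorphic loop group $MG_{\mathbb R}^{\sigma}$ is tame Fr\'echet via the holomorphic setup on $\mathbb{C}^*$ mentioned in the introduction, and the Kac-Moody group arises as a $(\mathbb{C}^*)^2$-bundle whose transition data (a smooth cocycle for the central extension and the rotation action for the derivation) is smooth tame. Simultaneously I fix the corresponding Lie algebra $\widehat{M\mathfrak g}_{\mathbb R}^{\sigma}=M\mathfrak g_{\mathbb R}^{\sigma}\oplus\mathbb R c\oplus\mathbb R d$ and its canonical $Ad$-invariant form, which is positive definite on the loop part and turns $\mathbb R c\oplus\mathbb R d$ into a hyperbolic plane with $c,d$ null and $\langle c,d\rangle=1$. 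This is where the Lorentz signature enters.

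For type $II$, I would equip the Lie group with the corresponding bi-invariant metric and check the symmetric space axioms by exhibiting the geodesic symmetry $s_g(x)=gx^{-1}g$ at each point $g$; bi-invariance makes it an isometry, and its differential at $g$ equals $-\mathrm{Id}$. For type $I$, I would verify that $\rho_*$ is a smooth involution, that $\mathrm{Fix}(\rho_*)$ is a tame Fr\'echet Lie subgroup, and then use a slice argument to give $X$ a tame Fr\'echet manifold structure. Writing $\widehat{M\mathfrak g}_{\mathbb R}^{\sigma}=\mathfrak k\oplus\mathfrak p$ for the $\pm 1$-eigenspace decomposition of $d\rho_*$, the restriction of the invariant form to $\mathfrak p$ descends to an $\mathrm{Ad}(\mathrm{Fix}(\rho_*))$-invariant metric on $X$; the symmetry at the base point is induced by $\rho_*$ itself, and translating by the transitive action yields symmetries everywhere.

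For the curvature inequality I would appeal to the standard bi-invariant formulas. In the type $II$ case this yields
\begin{displaymath}
\langle R(X,Y)X,Y\rangle=\tfrac{1}{4}\langle [X,Y],[X,Y]\rangle,
\end{displaymath}
and for type $I$ (with $X,Y\in\mathfrak p$) the familiar $\langle R(X,Y)X,Y\rangle=\langle [X,Y],[X,Y]\rangle$. The key observation is that any bracket in $\widehat{M\mathfrak g}_{\mathbb R}^{\sigma}$ has vanishing $d$-component: if $A=X+ac+bd$ and $B=Y+a'c+b'd$, then $[A,B]\in M\mathfrak g_{\mathbb R}^{\sigma}\oplus\mathbb R c$. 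Since $\langle\cdot,\cdot\rangle$ is positive definite on the loop part and annihilates $\mathbb R c$, the right-hand side above is $\geq 0$ despite the ambient Lorentz signature.

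The main obstacle is not this algebraic curvature computation but the analytic step of providing $X$ with a tame Fr\'echet manifold structure and proving smoothness of the geodesic symmetries. In finite dimensions one uses an $\exp$-slice, but here the exponential map is not a local diffeomorphism, so the classical slice theorem must be replaced by a direct construction of charts transverse to the $\mathrm{Fix}(\rho_*)$-orbits, built from the decomposition $\widehat{M\mathfrak g}_{\mathbb R}^{\sigma}=\mathfrak k\oplus\mathfrak p$ and a Nash-Moser-type inverse function theorem in the tame Fr\'echet category.
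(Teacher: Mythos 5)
Your proposal is correct in outline, and on the algebraic side it reproduces the paper's argument exactly: the curvature estimate rests, as in the paper, on the bi-invariant formula $R(f)\{g,h,k\}=\tfrac14[[g,h],k]$ together with the observation that every bracket lands in the derived algebra $M\mathfrak g^{\sigma}_{\mathbb R}\oplus\mathbb R c$, where the $Ad$-invariant form is positive semidefinite (positive definite on the loop part, degenerate exactly along $\mathbb R c$), so that $\langle R(X,Y)X,Y\rangle=\langle[X,Y],[X,Y]\rangle\geq 0$ despite the Lorentz signature. (Your convention $\langle c,d\rangle=+1$ differs from the paper's $\langle c,d\rangle=-1$, but both give a hyperbolic plane on $\mathbb R c\oplus\mathbb R d$, so nothing changes.) Where you diverge is in the two structural steps. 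For type $II$ you use the point symmetries $s_g(x)=gx^{-1}g$ directly, whereas the paper realizes the group as the quotient $\widehat{M(G\times G)}/MG$ under the swap involution and verifies the Loos axioms; these are equivalent, and yours is the more economical route. For type $I$ you obtain the curvature bound from the Cartan-type formula on $\mathfrak p$, while the paper gets it by observing that the projection is a Riemannian submersion and the estimate on the group is preserved; again both work. The one place where your sketch is genuinely thinner than the paper is the analytic step you flag as ``the main obstacle'': the paper does not build Nash--Moser slices transverse to the $\mathrm{Fix}(\rho_*)$-orbits. Instead it embeds $MG$ via the logarithmic derivative $f\mapsto(f^{-1}df,\,f(1))$ into $\Omega^1(\mathbb C^*,\mathfrak g)\times G$, identifies the image as the preimage of a regular value of the monodromy map (so that the tame implicit function theorem gives a tame submanifold of finite type), treats the quotients by comparing gauge orbits in $\Omega^1$, and handles the $(\mathbb C^*)^2$-extension by Popescu's lemma that a fiber bundle with Banach fiber over a tame Fr\'echet base is tame. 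If you want to complete your write-up along your own lines you would still need to supply the smooth tame family of inverses required by the Nash--Moser theorem for your transverse charts; the logarithmic-derivative device is precisely what makes that verification tractable in the paper.
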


\begin{theorem}[affine Kac-Moody symmetric spaces of the ``non-compact'' type]

Both quotient spaces $X=\widehat{MG}^{\sigma}_{\mathbb{C}}/\widehat{MG}^{\sigma}_{\mathbb R}$ (called Kac-Moody symmetric space of type $IV$) and  
$X=H/\textrm{Fix}(\rho_*)$, where $H$ is a non-compact real form of $\widehat{MG}^{\sigma}_{\mathbb{C}}$ equipped with their $Ad$-invariant metrics (called Kac-Moody symmetric space of type $III$),
are tame Fr\'echet symmetric spaces of the ``non-compact'' type. Their curvature tensors satisfy

\begin{displaymath}
\langle R(X,Y)X, Y\rangle \leq 0\,.
\end{displaymath}

\noindent Furthermore Kac-Moody symmetric spaces of the non-compact type are
diffeomorphic to a vector space.
\end{theorem}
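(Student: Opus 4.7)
The plan is to mirror the construction of the ``compact'' type result and exploit the standard Cartan-type duality that interchanges $+$ and $-$ curvature by multiplying the ``$\mathfrak{p}$'' direction by $i$. Types $III$ and $IV$ will be handled in parallel, since for type $IV$ the Cartan involution is complex conjugation with respect to the compact real form $\widehat{MG}^{\sigma}_{\mathbb R}$, while for type $III$ it is $\rho_*$.

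First I would record the global Cartan decomposition at the Lie algebra level. For type $IV$, viewing $\widehat{M\mathfrak{g}}^{\sigma}_{\mathbb C}$ as a real tame Fr\'echet Lie algebra,
\begin{displaymath}
\widehat{M\mathfrak{g}}^{\sigma}_{\mathbb C}=\widehat{M\mathfrak{g}}^{\sigma}_{\mathbb R}\oplus i\,\widehat{M\mathfrak{g}}^{\sigma}_{\mathbb R},
\end{displaymath}
the two summands being the $(\pm 1)$-eigenspaces of the conjugation $\theta$ whose fixed point set integrates to $\widehat{MG}^{\sigma}_{\mathbb R}$. For type $III$ I would similarly split $\mathrm{Lie}(H)=\mathfrak{h}_+\oplus\mathfrak{h}_-$ using $\rho_*$, with $\mathfrak{h}_+=\mathrm{Lie}(\mathrm{Fix}(\rho_*))$. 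In both cases the ``$\mathfrak{p}$'' summand is the candidate tangent space to $X$ at the base point, and the $Ad$-invariant Lorentz form, which is negative definite on the ``$\mathfrak{k}$'' part in the compact theorem, becomes positive definite on $\mathfrak{p}$ after the $i$-twist; this is what makes the metric Riemannian on $X$.

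Next I would verify the three symmetric-space axioms. The tame Fr\'echet manifold structure on the quotient is the delicate step: I would use the holomorphic-loop setup fixed in the introduction, which gives tame implicit-function-type theorems, together with the fact that $\widehat{MG}^{\sigma}_{\mathbb R}$ (resp.\ $\mathrm{Fix}(\rho_*)$) acts freely and properly by left multiplication with closed orbits, producing a tame principal bundle. The geodesic symmetry at the base point is induced by the relevant involution $\theta$ (resp.\ $\rho_*$); it extends by homogeneity to every point and has differential $-\mathrm{Id}$ on $\mathfrak{p}$. For the curvature I would invoke the general reductive formula
\begin{displaymath}
R(X,Y)Z=-[[X,Y],Z],\qquad X,Y,Z\in\mathfrak{p},
\end{displaymath}
so that $\langle R(X,Y)X,Y\rangle=-\langle[[X,Y],X],Y\rangle=-\langle[X,Y],[X,Y]\rangle$ in the normalization dual to the compact case. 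Because $[\mathfrak{p},\mathfrak{p}]\subset\mathfrak{k}$ and the Lorentz form has the opposite sign on $\mathfrak{k}$ after the $i$-twist, this expression is $\leq 0$, giving exactly the claimed inequality and exhibiting it as the formal dual of the compact statement.

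Finally, for the diffeomorphism with a vector space I would prove a Kac-Moody polar decomposition: the map
\begin{displaymath}
\mathfrak{p}\times\mathrm{Fix}(\rho_*)\longrightarrow H,\qquad (X,k)\longmapsto \exp(X)\cdot k,
\end{displaymath}
and its type $IV$ analogue $\mathfrak{p}\times\widehat{MG}^{\sigma}_{\mathbb R}\longrightarrow \widehat{MG}^{\sigma}_{\mathbb C}$, should be tame diffeomorphisms; quotienting out on the right then gives $X\cong\mathfrak{p}$. I expect this to be the hardest point, because the exponential map of a Kac-Moody group is \emph{not} a local diffeomorphism near $0$ in general, a pathology stressed repeatedly in the introduction. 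The route around it is to use holomorphicity on $\mathbb C^*$: elements of $\mathfrak{p}$ are ``imaginary'' with respect to the compact real form, and a Mostow-type argument, applied fiberwise over $\mathbb C^*$ using the finite-dimensional polar decomposition of $G_{\mathbb C}$ and then promoted to the full loop space via uniqueness of the finite-dimensional factors, should yield a global smooth inverse in the tame category. Separately, one has to track the central $(\mathbb C^*)^2$ torus extension, whose non-compact real directions account for the Lorentz signature of the metric and contribute linear factors that are trivially diffeomorphic to $\mathbb R^2$, leaving $X$ globally diffeomorphic to the tame Fr\'echet vector space $\mathfrak{p}$.
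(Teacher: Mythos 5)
Your overall architecture (Cartan decomposition, duality flips the sign of the curvature, symmetric-space axioms via the involution) matches the paper's, and the curvature estimate by dualizing the compact-type inequality is exactly what the paper does. But there are two genuine problems.

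The serious gap is in your final step, the diffeomorphism with a vector space via a ``Kac-Moody polar decomposition'' $\mathfrak{p}\times K\to G$, $(X,k)\mapsto\exp(X)k$, carried out ``fiberwise over $\mathbb{C}^*$''. This would fail. First, the compact real form $MG^{\sigma}_{\mathbb{R}}$ is defined by the condition $f(S^1)\subset G$ \emph{only on the unit circle}, not pointwise on $\mathbb{C}^*$, so there is no fiberwise decomposition to perform at points $z\notin S^1$. Second, the finite-dimensional polar decomposition $g\mapsto(\sqrt{gg^*},k)$ is real-analytic but not holomorphic; applying it on $S^1$ produces factors that are a priori only real-analytic there, hence extend holomorphically to some annulus but not to all of $\mathbb{C}^*$, so the factors need not lie in $MG_{\mathbb{C}}$ at all. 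Third, the paper has already stressed (and proved, via the $MSL(2,\mathbb{C})$ example) that $\Mexp$ is not even a local diffeomorphism, so $\exp(\mathfrak{p})$ cannot be taken as a global slice without a substitute argument. The paper avoids all of this by \emph{linearizing the quotient before dividing}: the logarithmic-derivative embedding $f\mapsto(f^{-1}df,\,f(1))$ into $\Omega^1(\mathbb{C}^*,\mathfrak{g}_{\mathbb{C}})\times G_{\mathbb{C}}$ identifies $MG_{\mathbb{C}}/MG_{\mathbb{R}}$ with a gauge quotient that reduces to the linear space $\Omega^1_{\mathbb{R}}(\mathbb{C}^*,\mathfrak{g}\,|\,a_{-1}=0)$ times the finite-dimensional $G_{\mathbb{C}}/G_{\mathbb{R}}$ (proposition~\ref{mgc/mgrtamefrechet}); the $(\mathbb{R}^+)^2$ coming from the $c$- and $d$-extensions is then a trivial bundle over this contractible base (theorems~\ref{hatdiffeovectorspace} and~\ref{typeIIIdiffeovectorspace}). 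The same chart construction, not a properness argument for the group action, is what supplies the tame Fr\'echet manifold structure on the quotients.

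A second, smaller but genuine error: you assert the $i$-twist makes the metric on $X$ Riemannian. It does not. Since $\rho_*$ is of the second kind, $c$ and $d$ lie in $\mathcal{P}\cong T_{\pi(e)}X$, and $\langle c,d\rangle=-1$ with $\langle c,c\rangle=\langle d,d\rangle=0$, so the induced metric on the non-compact symmetric space is still Lorentzian; only the loop part of $\mathcal{P}$ becomes positive definite. This is not cosmetic: the Lorentz signature is what the theorem statement asserts and what drives the later Kulkarni-type and non-Hermitian results.
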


\noindent Define the notion of duality as for finite dimensional Riemann symmetric spaces; hence two symmetric spaces are dual iff their curvature tensors differ only by sign. For example the $n$\ndash sphere $S^n$ (curvature $K\equiv 1$ and hyperbolic $n$\ndash space ($K\equiv -1$) are dual. Then we obtain in complete analogy to the theory of finite dimensional Riemannian symmetric spaces:

\begin{theorem}[Duality]
Affine Kac-Moody symmetric spaces of the compact type are dual to the Kac-Moody symmetric spaces of the non-compact type and vice versa.
\end{theorem}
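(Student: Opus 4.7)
The plan is to mimic the classical finite-dimensional argument on the level of tame Fréchet Lie algebras, using the complexification $\widehat{MG}^{\sigma}_{\mathbb{C}}$ to exhibit the duality pairing. Since the two preceding theorems guarantee that each of the four types is an honest tame Fréchet symmetric space with the usual Lie-theoretic formula for the curvature tensor at the base point, the whole question reduces to a purely algebraic comparison of Cartan decompositions inside $\widehat{\mathfrak{g}}^{\sigma}_{\mathbb{C}}$.

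First I would set up, for each pair (type I, type III) and (type II, type IV), a Cartan-type decomposition $\widehat{\mathfrak{g}} = \widehat{\mathfrak{k}} \oplus \widehat{\mathfrak{p}}$ of the Lie algebra of the transitive isometry group, with $[\widehat{\mathfrak{k}},\widehat{\mathfrak{k}}]\subset \widehat{\mathfrak{k}}$, $[\widehat{\mathfrak{k}},\widehat{\mathfrak{p}}]\subset \widehat{\mathfrak{p}}$ and $[\widehat{\mathfrak{p}},\widehat{\mathfrak{p}}]\subset \widehat{\mathfrak{k}}$. For types I/III the decomposition is induced by the involution $\rho_*$ on $\widehat{\mathfrak{g}}^{\sigma}_{\mathbb{R}}$ and on its non-compact real form $\mathrm{Lie}(H)$ respectively; for types II/IV it is the Cartan decomposition $\widehat{\mathfrak{g}}^{\sigma}_{\mathbb{C}} = \widehat{\mathfrak{g}}^{\sigma}_{\mathbb{R}} \oplus i\,\widehat{\mathfrak{g}}^{\sigma}_{\mathbb{R}}$ of the complex Kac-Moody algebra viewed as a real algebra. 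In each case the key observation is that the $\widehat{\mathfrak{p}}$-part of the non-compact member is obtained from the $\widehat{\mathfrak{p}}$-part of the compact member by multiplication by $i$ inside $\widehat{\mathfrak{g}}^{\sigma}_{\mathbb{C}}$. This step is essentially a translation of the classical compact/non-compact real form duality of $\widehat{\mathfrak{g}}^{\sigma}_{\mathbb{C}}$, and uses precisely the complexification structure that was built into the holomorphic-loop definition of $MG^{\sigma}_{\mathbb{C}}$.

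Next I would compute the curvature at the base point. By the Koszul-type formula for symmetric-space curvatures, established in the two theorems above, one has
\begin{displaymath}
R(X,Y)Z = -[[X,Y],Z] \quad \text{for } X,Y,Z\in \widehat{\mathfrak{p}}.
\end{displaymath}
Given the dual decomposition $\widehat{\mathfrak{g}}^* = \widehat{\mathfrak{k}} \oplus i\widehat{\mathfrak{p}}$, a bracket of two elements $iX, iY \in i\widehat{\mathfrak{p}}$ picks up two factors of $i$, yielding $[iX,iY] = -[X,Y]\in \widehat{\mathfrak{k}}$; a further bracket with $iZ$ then produces $-i\,[[X,Y],Z]$. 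Identifying the tangent space of the dual at the base point with $\widehat{\mathfrak{p}}$ via multiplication by $i$, this gives $R^*(X,Y)Z = +[[X,Y],Z] = -R(X,Y)Z$. Translating the identity along the group action (using homogeneity of each space) shows that the curvature tensors of the paired spaces differ exactly by a global sign, which is the definition of duality. The sign of $\langle R(X,Y)X,Y\rangle$ recorded in the two earlier theorems is consistent with this, providing a useful sanity check.

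The main obstacle is not the algebra but the analytic identification of the dual quotients as bona fide tame Fréchet symmetric spaces; this is exactly what is delicate in the Kac-Moody setting because the exponential map is not a local diffeomorphism. However this identification is precisely what the two preceding theorems already provide, so for the duality statement itself the only point that requires genuine care is to verify that the complex linear extension of $\rho_*$ picks out the correct non-compact real form $H$ so that $\mathrm{Lie}(H) = \widehat{\mathfrak{k}} \oplus i\widehat{\mathfrak{p}}$ with $\widehat{\mathfrak{k}} = \mathrm{Fix}(\rho_*)|_{\widehat{\mathfrak{g}}^{\sigma}_{\mathbb{R}}}$, and that $\mathrm{Fix}(\rho_*)$ sits inside $H$ as a maximal $\rho_*^{\mathrm{extended}}$-compatible subgroup. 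Once this matching of involutions is spelled out (as in Heintze's treatment cited in the excerpt), the two dualities I $\leftrightarrow$ III and II $\leftrightarrow$ IV follow by the algebraic computation above.
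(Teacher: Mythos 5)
Your proposal is correct and follows essentially the same route as the paper: the paper also defines the dual on the Lie-algebra level by $\mathcal{G}=\mathcal{K}\oplus\mathcal{P}\mapsto\mathcal{G}^{*}=\mathcal{K}\oplus i\mathcal{P}$ (with the matching of involutions delegated to the Heintze--Gro\ss\ classification of real forms), and then derives the sign flip of the curvature tensor from the double-bracket formula $R(f)\{g,h,k\}=\frac{1}{4}[[g,h],k]$, exactly your $i^{2}=-1$ computation. The only cosmetic difference is that the paper phrases the curvature step through the bi-invariant group formula and the two curvature-estimate lemmata rather than the $\mathfrak{p}$-restricted formula $R(X,Y)Z=-[[X,Y],Z]$, which changes nothing of substance.
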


Kac-Moody symmetric spaces have several conjugacy classes of flats. For our purposes the most important class is the one of flats of finite type. A flat is called ``of finite type'' iff it is finite dimensional. A flat is called of exponential type iff it lies in the image of the exponential map and it is called maximal iff it is not contained in another flat. Similar to a result of B.\ Popescu (see~\cite{Popescu05}) in another setting we find:

\begin{theorem}
All maximal flats of finite exponential type in a Kac-Moody symmetric space are conjugate.
\end{theorem}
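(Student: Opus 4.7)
The plan is to convert the geometric statement into an algebraic conjugacy statement for maximal finite-dimensional abelian subspaces of the $(-1)$-eigenspace of the Cartan-like involution, and then to invoke a conjugacy theorem of Peterson--Kac type for horizontal Cartan subalgebras of affine Kac--Moody algebras, refined so that the conjugating element can be chosen in $K := \textrm{Fix}(\rho_*)$.

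First I would fix a basepoint $o \in X$ and, using the transitivity of $\widehat{MG}^{\sigma}_{\mathbb{R}}$ on $X$ established in the first theorem of this introduction, reduce to showing that maximal flats of finite exponential type through $o$ are $K$-conjugate. A flat through $o$ that lies in the image of $\exp_o$ is the $\exp_o$-image of an abelian subspace $\mathfrak{a}$ of the tangent space $T_oX$; under the identification $T_oX \cong \mathfrak{p}$, where $\mathfrak{p}$ denotes the $(-1)$-eigenspace of the differential of $\rho_*$, finite-dimensionality of the flat translates to $\dim_{\mathbb{R}} \mathfrak{a} < \infty$, and maximality of the flat translates into maximality of $\mathfrak{a}$ among abelian subspaces of $\mathfrak{p}$ whose elements all lie in the image of $\exp$.

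Next I would show that any such $\mathfrak{a}$ is, up to conjugation under $K$, the $\mathfrak{p}$-component of a horizontal Cartan subalgebra of the affine Kac--Moody Lie algebra $\widehat{\mathfrak{mg}}^{\sigma}_{\mathbb{R}}$ (i.e.\ one conjugate to $\mathfrak{h}_0 \oplus \mathbb{R}c \oplus \mathbb{R}d$, with $\mathfrak{h}_0$ a Cartan of the underlying finite-dimensional $\mathfrak{g}$). The key point is that an element of $\mathfrak{p}$ whose image under $\exp$ is defined and finite-dimensional cannot be supported purely on imaginary (null) root directions: exponentiability forces a form of real-root regularity, which in turn allows one to conjugate $\mathfrak{a}$ into the horizontal part using the standard Kac--Moody structure theory and the $SL(2,\mathbb{C})$-amalgamation mentioned earlier in the introduction.

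Finally, with the reduction to horizontal Cartan subalgebras in hand, I would invoke a $\rho_*$-equivariant version of the Peterson--Kac conjugacy theorem: any two $\rho_*$-stable horizontal Cartan subalgebras of $\widehat{\mathfrak{mg}}^{\sigma}_{\mathbb{R}}$ whose $\mathfrak{p}$-parts are maximal abelian of exponential type are conjugate by an element of $K$, precisely because the finite-dimensional part of this step is controlled by the classical conjugacy of maximal abelian subspaces of $\mathfrak{p}$ in the Riemannian symmetric space of the Lie group associated with $\mathfrak{g}$. Applying $\exp_o$ then lifts the algebraic conjugacy back to the geometric conjugacy of flats. The main obstacle is the middle step: rigorously establishing that ``finite exponential type'' excludes the wild, null-root-supported maximal abelian subspaces that distinguish the Kac--Moody setting from the finite-dimensional blueprint, and ensuring that the resulting conjugation into a horizontal Cartan subalgebra can be performed equivariantly with respect to $\rho_*$ — this is precisely the delicate functional-analytic point that, in Popescu's Hilbert-space version, required the most care.
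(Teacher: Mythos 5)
Your overall frame --- reduce by homogeneity to flats through a basepoint, identify them with finite-dimensional maximal abelian subspaces $\mathfrak{a}\subset\mathcal{P}$, conjugate those, and lift back through $\exp$ --- matches the paper's. But the middle step, where all the content lives, is not the one the paper uses and as proposed it has two problems. First, the mechanism you invoke to exclude the ``wild'' maximal abelian subspaces is wrong: it is not exponentiability that rules out the subalgebras supported on imaginary-root (null) directions, it is finite-dimensionality. The paper's dichotomy is between maximal abelian subalgebras contained in $\widetilde{L}(\mathfrak{g},\sigma)$ (these are the infinite-dimensional ones) and those containing an element of the form $f+c+d$ with nonzero $d$-component (these are the finite-dimensional ones, of dimension $\mathrm{rank}(\mathfrak{g})+2$). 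The hypothesis ``exponential type'' plays no role at the Lie-algebra level; it is needed only at the very end, because $\exp$ is not a local diffeomorphism, so conjugacy of subalgebras transfers only to those flats that actually are exponential images.

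Second, and more seriously, the conjugacy of the finite-type subalgebras themselves is not obtained in the paper from a Peterson--Kac type theorem, and you cannot simply ``invoke'' one here: Peterson--Kac conjugacy lives in the minimal (algebraic) Kac--Moody group, whereas the whole difficulty of this setting is that one works in a holomorphic completion where that theorem is not available. The paper instead reduces the problem to the polarity of the gauge action of $MG$ on $M\mathfrak{g}$ (its theorem~\ref{polaractiononmg}): an element $v$ commutes with $d+u$ iff it solves the Lax equation $v'=[v,u]$, whose solutions are $v(t)=\mathrm{Ad}\,\phi(t)v_0$ for $\phi'=u\phi$, and the analytic work consists in showing that $\phi$ is again a holomorphic loop with trivial monodromy, so that the conjugating element lies in $\widehat{MG}$ (resp.\ in the isotropy group for types I/III, via the eigenspace decomposition $\mathcal{K}\oplus\mathcal{P}$, and via duality of isotropy representations for the non-compact types). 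That regularity-and-monodromy argument is exactly the content your appeal to a ``$\rho_*$-equivariant Peterson--Kac theorem'' would have to reproduce, and your proposal does not supply it. If you want to salvage your route, you would need to prove a conjugacy theorem for finite-type Cartan subalgebras of the \emph{completed} algebra $\widehat{M\mathfrak{g}}^{\sigma}$ directly, which in effect brings you back to the ODE/gauge-action argument.
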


\noindent 
We show that all Kac-Moody symmetric spaces are Lorentz symmetric spaces. This is a purely formal consequence of the structure of the $Ad$\ndash invariant scalar product. This is unique up to a global scaling factor.

In the finite dimensional case no complete classification
of pseudo Riemann symmetric spaces is known. However, there
are important partial results: Marcel Berger achieved in 1957 a
complete classification of pseudo Riemann symmetric spaces of
``semisimple'' type~\cite{Berger57}. Ines Kath and Martin Olbrich gave a
classification of pseudo Riemann symmetric spaces of index $1$ and $2$
and described structure results that indicate that a general
classification of pseudo Riemann symmetric spaces is out of 
reach~\cite{Kath04}, \cite{Kath06}. 

 As Kac-Moody groups are the natural infinite
dimensional analogue of semisimple Lie groups, at first sight it is tempting to
interpret Kac-Moody symmetric spaces as an infinite dimensional
analogue of the subclass of finite dimensional ``semisimple'' Lorentz
symmetric spaces.

In contrast to this point of view the structure theory of Kac-Moody symmetric spaces shows that they are a direct generalization of finite dimensional Riemannian symmetric spaces.  This concept is highlighted by the similar structure theory and geometric structure as by the similar classification including the duality relation. This point of view is further strengthened as the isotropy representations of Kac-Moody symmetric spaces induce polar actions on Hilbert spaces. Thus in the infinite dimensional setting Kac-Moody symmetric spaces take over the role played by Riemann symmetric spaces. In the end the Lorentz metric on those spaces turns out to be elementary consequence of the fact that affine Weyl groups are not finite.

Let us now turn to the structure of this work:
 
The material is ordered as follows:
\begin{enumerate}
\item[-] In chapter~\ref{chap:frechet} we collect the analytic fundamentals.  We review basic material about tame Fr\'echet spaces, prove that various spaces, we will encounter in later chapters are tame and prove various technical results, we need. The most important one among those results are an implicit function theorem and a chacterization of inverse images of ``regular points'' of certain tame maps as tame Fr\'echet manifolds.  
\item[-] In chapter~\ref{chap:alg} we investigate the algebraic structures which we need for Kac-Moody symmetric spaces. 
 Following the blueprint of the finite dimensional theory we describe
a classification of Kac-Moody symmetric spaces by the classification
of their indecomposable orthogonal symmetric affine Kac-Moody algebras
(OSAKA). After the definition of OSAKAs this is a direct
application of the classification of affine Kac-Moody algebras 
(see~\cite{Kac90}) and their involutions (see for 
example~\cite{MessaoudRousseau03} and the recent work of E.\ Heintze
achieving a complete classification from the geometric point of
view~\cite{Heintze09}.
\item[-] In chapter~\ref{chap:tame} we describe tame Fr\'echet completion. We study first the tame Fr\'echet- and $ILH$-structure on loop groups and loop algebras and investigate properties of the exponential map. Furthermore we study polar actions on certain Fr\'echet spaces. This will be needed at several places to understand the conjugacy properties of flats (part~\ref{chap:symm}). Besides that it is a prerequisite for the theory of cities~\cite{Freyn10d}.We describe tame Fr\'echet realizations of the twisted and non-twisted Kac-Moody algebras and Kac-Moody groups. 
\item[-] In chapter~\ref{chap:symm} we describe the construction of Kac-Moody symmetric spaces: we start with the definition of tame Fr\'echet symmetric spaces and construct the Kac-Moody symmetric spaces of Euclidean type, compact type and non-compact type. Furthermore we have a short look on Lie triple systems, duality and the isotropy representation.
\item[-] The last chapter is devoted to the description of some aspects of the geometry of Kac-Moody symmetric spaces. We prove, that finite dimensional flats are conjugate, review in detail the geometry of Kac-Moody symmetric spaces of type II and give some remarks about the geodesic. Furthermore we show that there are no Hermitian Kac-Moody symmetric spaces, thus pointing out a difference to the theory of finite dimensional symmetric spaces.
\end{enumerate}

\chapter{Analytic foundations}
\label{chap:frechet}

In this section we describe the analytic foundations of the theory of Kac-Moody symmetric spaces. As Kac-Moody symmetric spaces are tame Fr\'echet manifolds, we start by reviewing the theory of tame Fr\'echet structures following the presentation given by Richard Hamilton~\cite{Hamilton82}; furthermore we describe the strongly related concept of ILH- (resp.\ ILB-) structures developed by Hideki Omori~\cite{Omori97}. On the way we prove various technical results about Fr\'echet manifolds.

\section{Tame Fr\'echet manifolds}

\subsection{Fr\'echet spaces}

\noindent This introductory section collects some standard results about Fr\'echet spaces, Fr\'echet manifolds and Fr\'echet Lie groups. Further details or omitted proofs can be found in Hamilton's article~\cite{Hamilton82}.

\begin{definition}[Fr\'echet space]
\index{Fr\'echet space}
A Fr\'echet vector space is a locally convex topological vector space which is complete, Hausdorff and metrizable.
\end{definition}

\begin{lemma}[Metrizable topology]
\index{metrizable topology}
A topology on a vector space is metrizable iff it can be defined by a countable collection of seminorms.
\end{lemma}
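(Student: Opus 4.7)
The statement is really about locally convex Hausdorff topological vector spaces, which is the setting implicit in the discussion of Fr\'echet spaces; I would make that assumption explicit at the outset. The plan is to prove the two implications separately, using on one side an explicit formula for a metric in terms of a given family of seminorms, and on the other side the Minkowski functionals of a countable neighborhood base of zero.

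For the easier direction, suppose the topology is defined by a countable family of seminorms $\{p_n\}_{n\geq 1}$ (necessarily separating points since the topology is Hausdorff). I would define
\begin{displaymath}
d(x,y) \;=\; \sum_{n=1}^{\infty} \frac{1}{2^n}\,\frac{p_n(x-y)}{1+p_n(x-y)}
\end{displaymath}
and verify that this is a translation-invariant metric: positivity and symmetry are immediate, and the triangle inequality reduces to the elementary fact that $t\mapsto t/(1+t)$ is concave and monotone on $[0,\infty)$, so $p_n$-triangle inequalities pass through. To see that $d$ induces the original topology one shows the two kinds of basic neighborhoods are mutually subordinate: every $d$-ball of radius $\varepsilon$ contains a finite intersection of $p_n$-balls (take $N$ so that $\sum_{n>N}2^{-n}<\varepsilon/2$), and conversely each $\{x : p_n(x)<\varepsilon\}$ contains a small enough $d$-ball (only the $n$-th summand matters, up to a factor).

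For the converse, assume the topology is metrizable by some translation-invariant metric (which always exists if a compatible metric exists on a topological group). Then $0$ has a countable base of neighborhoods, and using local convexity I may replace these by a decreasing sequence $U_1\supset U_2\supset\cdots$ of absolutely convex, absorbing, balanced open neighborhoods of $0$ still forming a base at $0$. To each $U_n$ I associate its Minkowski functional
\begin{displaymath}
p_n(x) \;=\; \inf\{\,t>0 \,:\, x\in tU_n\,\},
\end{displaymath}
which is a continuous seminorm because $U_n$ is absolutely convex and absorbing, with $\{x : p_n(x)<1\}\subset U_n\subset\{x : p_n(x)\leq 1\}$. The family $\{p_n\}$ then defines a topology coarser than the original (each $p_n$ is continuous) and finer than it (each $U_n$, and hence each element of a neighborhood base of $0$, contains a $p_n$-ball), so the two topologies coincide. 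Separation of points by the $p_n$ follows from Hausdorffness.

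The only delicate step is the converse direction: producing a decreasing base of \emph{convex} balanced neighborhoods from an arbitrary countable base. This uses local convexity in an essential way (otherwise the conclusion is false, as for $L^p$ with $0<p<1$), and requires the standard argument that inside any neighborhood of $0$ one finds a smaller convex balanced one, followed by intersecting finitely many such to obtain a nested sequence. Everything else reduces to routine estimates with seminorms and with the function $t/(1+t)$.
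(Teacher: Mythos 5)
Your proof is correct and is precisely the standard argument that the paper itself omits (it defers this lemma to Hamilton's article without proof): the metric $\sum_n 2^{-n}p_n(x-y)/(1+p_n(x-y))$ in one direction and Minkowski functionals of a countable base of absolutely convex neighborhoods of $0$ in the other. You are also right to flag that the statement needs the standing local convexity hypothesis (implicit in the paper's definition of a Fr\'echet space) for the converse direction, since without it metrizability does not yield a defining family of seminorms.
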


\noindent Let us look at some easy examples:

\begin{example}[Fr\'echet spaces]~
\label{frechetexamples}
\begin{enumerate}
\item Every Banach space is a Fr\'echet space. The countable collection of norms contains just one element.
\item Let $\textrm{Hol}(\mathbb{C}, \mathbb{C})$ denote the space of holomorphic functions $f: \mathbb{C} \longrightarrow \mathbb{C}$. Let furthermore $K_n$ be a sequence of simply connected compact sets in $\mathbb C$, such that $K_n \subset K_{n+1}$ and $\bigcup K_n=\mathbb{C}$. Let $\|f\|_n:= \displaystyle\sup_{z\in K_n} |f(z)|$. Then $\textrm{Hol}(\mathbb {C}, \mathbb {C}; \|\hspace{3pt}\|_n)$ is a Fr\'echet space.
\item More generally, for every Riemann surface $S$ the sheaf of holomorphic functions carries a Fr\'echet structure which is defined similarly as in the special case $S=\mathbb{C}$ discussed in the second example.
\end{enumerate}
\end{example}
 
\begin{definition}
\index{Fr\'echet manifold}
 A Fr\'echet manifold is a (possibly infinite dimensional) manifold with charts in a Fr\'echet space such that the chart transition functions are smooth. 
\end{definition}

While it is possible to define Fr\'echet manifolds in this way, there are two strong impediments to the development of analysis and geometry of those spaces:
\begin{enumerate}
\item In general there is no inverse function theorem for smooth maps between Fr\'echet spaces. For counterexamples and examples showing features special to Fr\'echet spaces, see~\cite{Hamilton82}.
\item In general the dual space of a Fr\'echet space is in general not a Fr\'echet space. 
\end{enumerate}

Dealing with those problems  is difficult:  
take as example the space of holomorphic functions $\textrm{Hol}(\mathbb{C}, \mathbb{C})$.  $\mathbb{C}$ can be interpreted as a direct limit of the sets $K_n:=B_n(0)$ with respect to inclusion; the space of functions on a direct limit is an inverse limit; $\textrm{Hol}(\mathbb{C}, \mathbb{C})$ should thus be interpreted as the inverse limit of a sequence of function spaces $\textrm{Hol}(K_n, \mathbb{C})$, where 
\begin{displaymath}\textrm{Hol}(K_n, \mathbb{C}):=\bigcup_{K_n\subset U_{n}^m}\{\textrm{Hol}(U_{n}^m, \mathbb{C})\}\, .\end{displaymath}

 By a choice of appropriate norms on the spaces $\textrm{Hol}(K_n, \mathbb{C})$, one can give them structures as Bergmann-(or Hardy-) spaces. See for example~\cite{HKZ00} and \cite{Duren00}. Hence $\textrm{Hol}(\mathbb{C}, \mathbb{C})$ can be interpreted as inverse limit of Hilbert spaces. By category theory, the categorical dual space of an inverse limit is a direct limit and vice versa. Thus it is clear that we cannot expect the dual space of a Fr\'echet space to be Fr\'echet. It will be Fr\'echet iff the projective limit and the inductive limit coincide, that is, if they both stabilize. This is the case exactly for Banach spaces~\cite{Schaefer80}.

The solution to the first problem is based on a more refined control of the projective limits. Using this structure, the inverse function theorems on the Hilbert-(resp.\ Banach-) spaces in the sequence piece together to give an inverse function theorem on the limit space; this is the famous Nash-Moser inverse function theorem. In the next sections we will formalise those concepts. 

We have to note that there are other ways of dealing with those analytic problems. A recent example is the concept of bounded Fr\'echet geometry developed by Olaf M\"uller~\cite{Muller06}. 

The solution to the second problem consists in avoiding dual spaces. Let us remark that the theory could also be formulated by defining dual spaces as direct limit locally convex topological vector spaces.

\subsection{Tame Fr\'echet spaces}

The central problem for all further structure theory of Fr\'echet spaces is a better control of the set of seminorms. For Fr\'echet spaces $F$ and $G$ and a map $\varphi: F\longrightarrow G$ this will be done by imposing estimates similar in spirit to the concept of quasi isometries relating the sequences norms $\|\varphi(f)\|_n$ and $\|f\|_m$. We will then define tame Fr\'echet spaces as Fr\'echet spaces that are ``tame equivalent'' to some model space of holomorphic functions.
Our presentation follows the article~\cite{Hamilton82}. 

The prerequisite for estimating norms under maps between Fr\'echet spaces are estimates of the norms on the Fr\'echet space itself. This is done by a grading:

\begin{definition}[grading]
\index{grading}
Let $F$ be a Fr\'echet space. A grading on $F$ is a collection of seminorms $\{\|\hspace{3pt}\|_{n}, n\in \mathbb N_0\}$ that define the topology and satisfy
$$\|f \|_0\leq \|f\|_1 \leq \|f\|_2 \leq\| f \|_3 \leq \dots \,.$$
\end{definition}

\begin{lemma}[Constructions of graded Fr\'echet spaces]~
\begin{enumerate}
	\item A closed subspace of a graded Fr\'echet space is a graded Fr\'echet space.
	\item Direct sums of graded Fr\'echet spaces are graded Fr\'echet spaces.
\end{enumerate}
\end{lemma}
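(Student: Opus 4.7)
The plan is to verify each part by explicitly exhibiting a grading and checking the four conditions of a graded Fr\'echet space (seminorms, increasing, define topology, with completeness/Hausdorff inherited from general Fr\'echet theory).

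For part (1), let $F$ be graded by $\{\|\cdot\|_n\}_{n\in\mathbb{N}_0}$ and let $H\subset F$ be a closed linear subspace. First I would define the candidate grading on $H$ simply by restriction: set $\|h\|_n^H := \|h\|_n$ for $h \in H$. Each restriction is a seminorm, and the inequality chain $\|h\|_0^H \leq \|h\|_1^H \leq \cdots$ is inherited pointwise from $F$. Next I would observe that the seminorms $\|\cdot\|_n^H$ generate the subspace topology on $H$, because the sets $\{h\in H : \|h\|_n^H < \varepsilon\}$ are precisely the intersections with $H$ of the basic neighborhoods in $F$. Finally, since $H$ is closed in the complete metrizable locally convex space $F$, it is itself complete, Hausdorff, and metrizable, hence a Fr\'echet space. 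Combined with the grading just constructed, $H$ is a graded Fr\'echet space.

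For part (2), given graded Fr\'echet spaces $F$ and $G$ with gradings $\{\|\cdot\|_n^F\}$ and $\{\|\cdot\|_n^G\}$, I would equip $F\oplus G$ with the grading
\begin{displaymath}
\|(f,g)\|_n := \|f\|_n^F + \|g\|_n^G.
\end{displaymath}
The triangle inequality and absolute homogeneity pass through each summand, so $\|\cdot\|_n$ is a seminorm; monotonicity in $n$ follows directly from the corresponding monotonicities in $F$ and $G$. The topology these seminorms define on $F\oplus G$ is exactly the product topology, because $\|(f,g)\|_n < \varepsilon$ implies both $\|f\|_n^F<\varepsilon$ and $\|g\|_n^G<\varepsilon$, while conversely $\|f\|_n^F<\varepsilon/2$ and $\|g\|_n^G<\varepsilon/2$ together give $\|(f,g)\|_n<\varepsilon$. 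Since finite products of complete Hausdorff metrizable locally convex spaces are again complete Hausdorff and metrizable, $F\oplus G$ is a Fr\'echet space, and the above collection is a grading.

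Neither step presents a real obstacle; the content of the lemma is essentially bookkeeping. The only point where I would be slightly careful is to verify cleanly that in (1) the restricted seminorms generate the subspace topology (rather than some finer or coarser topology), and in (2) that the sum-of-seminorms choice is equivalent to, say, the maximum, so that any such natural choice gives the product topology. Once those observations are made, everything else reduces to inheriting completeness, the Hausdorff property, and metrizability from the ambient Fr\'echet spaces.
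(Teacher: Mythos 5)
Your proof is correct and is the standard argument; the paper itself gives no proof of this lemma (it is among the results for which the author refers to Hamilton's article), and the route you take --- restricting the seminorms to a closed subspace and summing them on a direct sum, then inheriting completeness, Hausdorffness and metrizability --- is exactly the expected one. The two points you flag as needing care (that the restricted seminorms give the subspace topology, and that the sum-of-seminorms grading induces the product topology) are indeed the only nontrivial checks, and you handle both correctly.
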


\noindent Every Fr\'echet space admits a grading. Let  $(F, \|\hspace{3pt}\|_{n, n\in \mathbb{N}})$ be a Fr\'echet space. Then 
\begin{displaymath}
\left(\widetilde{F}, \widetilde{\|\hspace{3pt}\|}_{n, n\in \mathbb{N}}\right)
\end{displaymath} 
such that $\widetilde{F}=F$ as a set and $\widetilde{\|\hspace{3pt} \|}_{n}:=\displaystyle\sum_{i=1}^n \|\hspace{3pt}\|_i$ is a graded Fr\'echet space. The Fr\'echet topologies of $F$ and $\widetilde{F}$ coincide. The existence of a grading is thus not a property of the Fr\'echet space as a metrizable topological space. It is an additional structure which is geometric in nature.

\begin{definition}[Tame equivalence of gradings]
\index{tame equivalence of gradings}
Let $F$ be a graded Fr\'echet space, $r,b \in \mathbb{N}$ and $C(n), n\in \mathbb{N}$ a sequence with values in $\mathbb{R}^+$. The two gradings  $\{\|\hspace{3pt}\|_n\}$ and $\{\widetilde{\|\hspace{3pt}\|}\}$ are called $\left(r,b,C(n)\right)$-equivalent iff 
\begin{equation*}
 \|f\|_n \leq C(n) \widetilde{\|f\|}_{n+r} \text{ and }  \widetilde{\|f\|}_n \leq C(n)\|f\|_{n+r} \text{ for all } n\geq b
\,.
\end{equation*}
They are called tame equivalent iff they are $(r,b,C(n))$-equivalent for some $(r,b,C(n))$.
\end{definition}

\noindent The following example is basic:

\begin{example}
Let $B$ be a Banach space with norm $\| \hspace{3pt} \|_B$. Denote by $\Sigma(B)$ the space of all exponentially decreasing sequences $\{f_k\}$, ${k\in \mathbb N_0}$ of elements of $B$.
On this space, we can define different gradings:

\begin{align}
\|f\|_{l_1^n} &:= \sum_{k=0}^{\infty}e^{nk} \|f_k\|_B\\
\|f\|_{l_{\infty}^n}&:= \sup_{k\in \mathbb N_0} e^{nk}\|f_k\|_B
\end{align}
\end{example}

\begin{lemma}
On the space $\Sigma(B)$ the two gradings $\|f\|_{l_1^n}$ and $\|f\|_{l_{\infty}^n}$ are tame equivalent.
\end{lemma}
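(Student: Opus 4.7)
The plan is to verify the two inequalities required by the definition of tame equivalence, with a shift $r=1$ and a \emph{constant} sequence $C(n)$. One direction is essentially free, the other needs a single geometric-series estimate, and there is no real obstacle.

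First I would dispose of the easy inequality $\|f\|_{l_\infty^n} \leq \|f\|_{l_1^n}$. This is immediate since every term of the series defining $\|f\|_{l_1^n}$ is nonnegative and dominates the corresponding term $e^{nk}\|f_k\|_B$ whose supremum defines $\|f\|_{l_\infty^n}$. In particular this works with shift $r=0$ and constant $1$, so it trivially satisfies the required bound with any choice of $C(n) \geq 1$.

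For the reverse direction, I would factor out a decaying exponential in order to convert a sup-bound into an $\ell^1$-bound. Writing
\begin{equation*}
\|f\|_{l_1^n} \;=\; \sum_{k=0}^{\infty} e^{nk}\|f_k\|_B \;=\; \sum_{k=0}^{\infty} e^{-k}\cdot e^{(n+1)k}\|f_k\|_B \;\leq\; \Bigl(\sup_{k} e^{(n+1)k}\|f_k\|_B\Bigr)\sum_{k=0}^{\infty} e^{-k},
\end{equation*}
and noting that the geometric series sums to $\tfrac{e}{e-1}$, one obtains
\begin{equation*}
\|f\|_{l_1^n} \;\leq\; \tfrac{e}{e-1}\,\|f\|_{l_\infty^{n+1}}.
\end{equation*}
This is the second half of the tame-equivalence bound, with shift $r=1$ and constant $C = \tfrac{e}{e-1}$.

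Combining the two inequalities and taking the larger shift and larger constant, I would conclude that with $r=1$, $b=0$, and the constant sequence $C(n) \equiv \tfrac{e}{e-1}$ one has $\|f\|_{l_\infty^n}\leq C(n)\|f\|_{l_1^{n+r}}$ and $\|f\|_{l_1^n}\leq C(n)\|f\|_{l_\infty^{n+r}}$ for all $n\geq b$, so the two gradings are $(r,b,C(n))$-equivalent, hence tame equivalent. There is no subtle step: the only thing worth flagging is that one must spend one unit of the shift parameter $r$ to make the geometric series converge, which is exactly the role of the index shift in the definition of tame equivalence.
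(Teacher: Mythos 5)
Your proof is correct and is exactly the standard argument: the paper itself gives no proof but defers to Hamilton's article, where the same two estimates (the trivial domination of the sup by the sum, and the geometric-series factorization $e^{nk}=e^{-k}e^{(n+1)k}$ costing one unit of index shift) establish $(1,0,\tfrac{e}{e-1})$-equivalence. Nothing to add.
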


\noindent For the proof~\cite{Hamilton82}.

\begin{example}
\index{space of exp.\ decreasing functions}
The space of exponentially decreasing sequences of elements in $B=\mathbb C^2$ equipped with the Euclidean norm and the space of exponentially decreasing sequences of elements in $B=\mathbb C^2$ together with the supremum-norm $\|(c_1, c_1')\|_B :=\sup(|c_1|,|c_1'|)$ are tame Fr\'echet spaces.
\end{example}

Let us give an intuitive characterization of tame spaces

\begin{lemma}
Let $B$ be a complex Banach space. The space $\Sigma(B)$ of exponentially decreasing sequences in $B$ is isomorphic to the space $\textrm{Hol}(\mathbb{C},B)$ of $B$\ndash valued holomorphic functions.
\end{lemma}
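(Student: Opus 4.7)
The plan is to exhibit the isomorphism via Taylor expansion. Specifically, I define
\[
\Phi : \Sigma(B) \longrightarrow \textrm{Hol}(\mathbb{C},B), \qquad \Phi\bigl(\{f_k\}_{k\in \mathbb N_0}\bigr)(z) := \sum_{k=0}^{\infty} f_k z^k,
\]
and conversely
\[
\Psi : \textrm{Hol}(\mathbb{C},B) \longrightarrow \Sigma(B), \qquad \Psi(F) := \bigl\{\tfrac{1}{k!}F^{(k)}(0)\bigr\}_{k\in\mathbb N_0}.
\]
For $\Phi$ to be well defined, note that exponential decay means $e^{nk}\|f_k\|_B$ is bounded for every $n$, hence the series $\sum f_k z^k$ converges absolutely and locally uniformly on all of $\mathbb C$, defining an entire $B$-valued function. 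For $\Psi$, the classical vector-valued Cauchy integral formula on any circle $|z|=R$ gives $\|f_k\|_B \leq R^{-k}\sup_{|z|=R}\|F(z)\|_B$, so the Taylor coefficients of an entire function decrease faster than any geometric rate, i.e.\ lie in $\Sigma(B)$. Uniqueness of power series expansions shows $\Phi$ and $\Psi$ are mutually inverse as linear maps.

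Next I would endow $\textrm{Hol}(\mathbb C,B)$ with the grading coming from the exhaustion $K_n := \{z\in \mathbb C : |z|\leq e^n\}$, namely $\|F\|_n := \sup_{|z|\leq e^n}\|F(z)\|_B$, and compare this with the sequence grading $\|f\|_{l_1^n}=\sum_k e^{nk}\|f_k\|_B$. The first estimate is immediate from the triangle inequality:
\[
\|\Phi(f)\|_n = \sup_{|z|\leq e^n}\Bigl\|\sum_k f_k z^k\Bigr\|_B \leq \sum_k e^{nk}\|f_k\|_B = \|f\|_{l_1^n}.
\]
For the reverse direction I use Cauchy's formula with $R = e^{n+1}$ to obtain $\|f_k\|_B \leq e^{-(n+1)k}\|F\|_{n+1}$, so
\[
\|\Psi(F)\|_{l_\infty^n} = \sup_k e^{nk}\|f_k\|_B \leq \sup_k e^{-k}\|F\|_{n+1} \leq \|F\|_{n+1}.
\]
Combining these two estimates with the already established tame equivalence of the $l_1^n$- and $l_\infty^n$-gradings on $\Sigma(B)$ (the preceding lemma) yields tame equivalence of the two Fr\'echet structures with $r=1$, a bounded constant sequence, and $b=0$.

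The routine parts are the convergence argument and the two norm inequalities; the only point that deserves a little care is the \emph{vector-valued} Cauchy formula and the interchange of summation with the $B$-norm, which are justified by the fact that holomorphy of $F:\mathbb C\to B$ in the Fr\'echet-derivative sense coincides (for Banach targets) with weak holomorphy and admits the usual contour-integral calculus; this is the main technical point, but it is standard once $B$ is Banach. With $\Phi$ and $\Psi$ bounded in each grading and inverse to each other, the isomorphism of graded (hence tame) Fr\'echet spaces follows, which is what the lemma asserts.
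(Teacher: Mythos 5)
Your proposal is correct and follows essentially the same route as the paper: identify the two spaces via Taylor expansion, bound the coefficients with the Cauchy inequality on the disc $B(0,e^n)$ in one direction, and verify locally uniform convergence of the power series in the other. The only difference is that you go slightly further by recording the explicit graded estimates (with loss $r=1$) that make the bijection a tame isomorphism, whereas the paper's proof contents itself with the two set-theoretic inclusions.
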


\begin{proof}
We have to prove both inclusions:
Let first $f\in \textrm{Hol}(\mathbb{C},B)$ with its Taylor series expansion:
\begin{displaymath}
f(z)=\sum_{k=0}^{\infty}f_k z^k.
\end{displaymath} 
The coefficients $f_k$ are elements of $B$ and we have to show that $(f_k)$ is an exponentially decreasing sequence. We want to use the $l_{\infty}$\ndash norm.
For the estimate of $f_k$ we need three ingredients:
\begin{enumerate}
 \item As $f(z)$ is entire the expansion $\|f(z)\|=\sum_{n=0}^{\infty}f_k z^k$ converges for all $z\in \mathbb{C}$ - hence $\sup_{z\in B(0,e^n)}|f(z)|<\infty$ of all $n$.
\item Differentiation of $f$ yields the identity: $f_k=\frac{1}{k!}f^{(k)}(0)$
\item The Cauchy inequality~\cite{Berenstein91}, 2.1.20: Let $f$ be holomorphic on $B(0,r)$. Then
\begin{displaymath}
 |f^(k)(0)|\leq k!\frac{\sup_{z\in B(0,r)}|f(z)|}{r^k}
\end{displaymath}
\end{enumerate}
Putting those ingredients together we get for $n\in \mathbb{N}$
\begin{align*}
 \sup_{k}|f_k|e^{nk}&=\sup\left|\frac{f^{(k)}(0)}{k!} \right|e^{kn}\leq\\
&\leq\sup_k\frac{e^{kn}}{k!}\left|k!\frac{\sup_{z\in B(0,e^n)}|f(z)|}{e^{nk}}\right|=\\
&=\sup_{z\in B(0,e^n)}|f(z)|\leq \infty
\end{align*}
Hence 
\begin{displaymath}
 \|f\|_{l_{\infty}^{n}}<\infty\, .
\end{displaymath}

As $f(z)$ is entire the expansion $\|f(z)\|=\sum_{n=0}^{\infty}f_k z^k$ converges for all $z\in \mathbb{C}$ yielding the result for all $n$.\\

Conversely for any exponentially decreasing sequence $(f_n)_{n\in \mathbb{N}}$, $f_n\in B$ we define the function
\begin{displaymath}
f(z):=\sum_{n=0}^{\infty}f_n z^n\, .
\end{displaymath} 
We have to prove that this defines an entire holomorphic function as we get for any $z\in B(0,e^n)$ the estimate
\begin{displaymath}
 |f(z)|=\left|\sum_k f_k z^k\right|\leq \sum_k |f_k||z^k|\leq \sum |f_k|e^{kn}<\infty
\end{displaymath}
\end{proof}

\begin{corollary}
Let $B$ be a real Banach space. Then $\Sigma(B)$ consists of all $B\otimes \mathbb{C}$\ndash valued holomorphic functions on $\mathbb{C}$ such that 
\begin{displaymath}
 \overline{f(z)}=f(\overline{z})\, .
\end{displaymath}
  
\end{corollary}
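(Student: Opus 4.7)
The plan is to bootstrap the previous lemma, applied to the complexification $B_{\mathbb{C}}:=B\otimes\mathbb{C}$, and then cut out the real subspace by means of the reality condition. Concretely, view $\Sigma(B)$ as the closed subspace of $\Sigma(B_{\mathbb{C}})$ consisting of those exponentially decreasing sequences $(f_k)_{k\in\mathbb{N}_0}$ whose entries lie in the real form $B\subset B_{\mathbb{C}}$. By the preceding lemma, the map $(f_k)\mapsto f(z)=\sum_{k=0}^{\infty}f_k z^k$ is a bijection between $\Sigma(B_{\mathbb{C}})$ and $\textrm{Hol}(\mathbb{C},B_{\mathbb{C}})$; it therefore suffices to translate the coefficient-wise reality condition $f_k\in B$ into the global functional equation $\overline{f(z)}=f(\overline{z})$.

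First I would verify the forward direction: if all $f_k\in B$, then the $\mathbb{C}$\ndash antilinear extension of complex conjugation on $B_{\mathbb{C}}$ fixes each $f_k$, so termwise
\begin{displaymath}
\overline{f(z)}=\sum_{k=0}^{\infty}\overline{f_k}\,\overline{z}^{k}=\sum_{k=0}^{\infty}f_k\,\overline{z}^{k}=f(\overline{z})\,,
\end{displaymath}
the interchange of conjugation and the sum being justified by the absolute convergence established in the previous lemma on every disk $B(0,e^n)$.

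Conversely, suppose $f\in\textrm{Hol}(\mathbb{C},B_{\mathbb{C}})$ satisfies $\overline{f(z)}=f(\overline{z})$. Both sides are holomorphic in $z$ (the right hand side being the composition of the antiholomorphic conjugation with $f$, composed again with conjugation, hence holomorphic). Expanding in Taylor series around $0$ yields $\overline{f_k}=f_k$ for every $k\in\mathbb{N}_0$, which is precisely the statement that $f_k$ lies in the fixed point set $B$ of the conjugation on $B_{\mathbb{C}}$. Uniqueness of Taylor coefficients then shows that the sequence $(f_k)$ associated with $f$ belongs to $\Sigma(B)$.

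Combining both directions yields the claimed identification. I do not foresee a serious obstacle here: the only point that requires care is making sure that the conjugation on $B_{\mathbb{C}}$ behaves well under the infinite sum defining $f(z)$, which follows from the uniform convergence on compacta already exploited in the previous lemma. The rest is the purely algebraic observation that the reality condition on a holomorphic function is equivalent to the reality of its Taylor coefficients at a real point.
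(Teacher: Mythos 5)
Your argument is correct and is exactly the intended one: the paper states this corollary without proof as an immediate consequence of the preceding lemma applied to $B\otimes\mathbb{C}$, with the reality condition $\overline{f(z)}=f(\overline{z})$ translating into $\overline{f_k}=f_k$, i.e.\ $f_k\in B$, for the Taylor coefficients. One small wording slip: neither side of $\overline{f(z)}=f(\overline{z})$ is holomorphic in $z$ (both are antiholomorphic); the clean way to extract the coefficient identity is to substitute $z\mapsto\overline{z}$ and compare the holomorphic function $\overline{f(\overline{z})}=\sum_k\overline{f_k}\,z^k$ with $f(z)=\sum_k f_k z^k$ via uniqueness of Taylor coefficients.
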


\begin{example}
 The easiest example is the choice $B=\mathbb{C}$. Then we find
\begin{displaymath}
 \Sigma(B):=\left\{(a_k)_{k\in \mathbb{N}_0}|\sum_k |a_k|e^{kn}<\infty \forall n\in \mathbb{N}\right\}
\end{displaymath}
For fixed $n$ the sequence $(a_k)$ corresponds to the coefficients of the Laurent series of a holomorphic function converging on the disc
$B(0,e^n)$. For $f(z):=\sum a_k z^k$ this is proved by the estimates
\begin{displaymath}
 |f(z)|=|\sum_k a_k z^k|\leq \sum_k |a_k||z^k|\underbrace{\leq}_{\textrm{for}\quad z\in B(0,e^n)} \sum |a_k|e^{kn}<\infty
\end{displaymath}
As this condition is satisfied by assumption for all $n\in \mathbb{N}$ we get that the function defined by the series $(a_k)_{k\in \mathbb{N}}$ is holomorphic on $\mathbb{C}$. Hence $\Sigma(B)=\textrm{Hol}(\mathbb{C},\mathbb{C})$.
\end{example}

Let $F$, $G$, $G_1$ and $G_2$  denote graded Fr\'echet spaces.

\begin{definition}[Tame linear map]
\index{tame map}
A linear map $\varphi: F\longrightarrow G$ is called $(r,b,C(n))$-tame if it satisfies the inequality
$$\|\varphi(f)\|_n \leq C(n)\|f\|_{n+r}\,.$$
$\varphi$ is called tame iff it is $(r,b,C(n))$-tame for some $(r,b, C(n))$.
\end{definition}

Let us give an example showing that tameness of maps depends on the grading. Let us look at the following variant of an example of~\cite{Hamilton82}:

\begin{example}
Let $\mathcal{P}$ be the space of entire holomorphic functions periodic with period
$2\pi i$ and bounded in each left half-plane. Define $L: \mathcal{P} \longrightarrow \mathcal{P}$ by $Lf(z) = f(2z)$.
\begin{enumerate}
\item Define first the grading on $\mathcal{P}$ by:
\begin{displaymath}
\|f \|_n = \sup{|f(z)|:\Re(z) = n}\,.
\end{displaymath}
 Then $\| Lf\|_n\leq \| f \|_{2n}$, hence $L$ is not tame.
 \item Define now the grading on $\mathcal{P}$ by
\begin{displaymath}
\|f \|_n = \sup{|f(z)|:\Re(z) = 2^n}\,.
\end{displaymath}
Then $\| Lf\|_n\leq \| f \|_{n+1}$, hence $L$ is $(1,0,1)$-tame.
 \end{enumerate}
Clearly our gradings are not tame equivalent. 
\end{example}

\begin{definition}[Tame isomorphism]
\index{tame isomorphism}
A map $\varphi:F\longrightarrow G$ is called a tame isomorphism iff it is a linear isomorphism and $\varphi$ and $\varphi^{-1}$ are tame maps.
\end{definition}

\begin{definition}[Tame direct summand]
\index{tame direct summand}
$F$ is a tame direct summand of $G$ iff there exist tame linear maps $\varphi: F\longrightarrow G$ and $\psi: G \longrightarrow F$ such that $\psi \circ \varphi: F \longrightarrow F$ is the identity.
\end{definition}

\begin{definition}[Tame Fr\'echet space]
\index{tame Fr\'echet space}
$F$ is a tame Fr\'echet space iff there is a Banach space $B$ such that $F$ is a tame direct summand of $\Sigma(B)$.
\end{definition}

\begin{lemma}[Constructions of tame Fr\'echet spaces]~
\label{constructionoftamespaces}
\begin{enumerate}
	\item A tame direct summand of a tame Fr\'echet space is tame.
	\item A cartesian product of two tame Fr\'echet spaces is tame.
\end{enumerate}
\end{lemma}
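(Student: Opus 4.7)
The plan is to reduce both parts of the lemma to two routine preliminary observations about tame maps. First I would verify that the composition of two tame linear maps is tame: if $\varphi:F\to G$ is $(r_1,b_1,C_1(n))$-tame and $\psi:G\to H$ is $(r_2,b_2,C_2(n))$-tame, then for $n$ sufficiently large
\[
\|\psi\varphi(f)\|_n\le C_2(n)\|\varphi(f)\|_{n+r_2}\le C_2(n)\,C_1(n+r_2)\,\|f\|_{n+r_1+r_2},
\]
so $\psi\circ\varphi$ is $(r_1+r_2,\max(b_1,b_2),C_2(n)C_1(n+r_2))$-tame. Second, for Banach spaces $B_1,B_2$ I would check that the obvious identification
\[
\Sigma(B_1)\times\Sigma(B_2)\;\cong\;\Sigma(B_1\oplus B_2)
\]
(with the sum norm $\|(b_1,b_2)\|:=\|b_1\|_{B_1}+\|b_2\|_{B_2}$ on $B_1\oplus B_2$) is a tame isomorphism: an exponentially decreasing sequence in $B_1\oplus B_2$ is nothing but a pair of exponentially decreasing sequences in $B_1$ and $B_2$, and on either side the natural $\ell^\infty$-gradings agree up to the factor $2$ coming from $\max(a,b)\le a+b\le 2\max(a,b)$.

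For part (1), suppose $F$ is a tame direct summand of the tame Fr\'echet space $G$, via tame linear maps $\iota_1\colon F\to G$ and $\pi_1\colon G\to F$ with $\pi_1\iota_1=\mathrm{id}_F$, and that $G$ itself is a tame direct summand of $\Sigma(B)$ via tame maps $\iota_2,\pi_2$ with $\pi_2\iota_2=\mathrm{id}_G$. Then I would form $\iota_2\circ\iota_1\colon F\to\Sigma(B)$ and $\pi_1\circ\pi_2\colon\Sigma(B)\to F$, which are tame by the first observation and satisfy
\[
(\pi_1\circ\pi_2)\circ(\iota_2\circ\iota_1)=\pi_1\circ(\pi_2\circ\iota_2)\circ\iota_1=\pi_1\circ\iota_1=\mathrm{id}_F,
\]
exhibiting $F$ as a tame direct summand of $\Sigma(B)$. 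Hence $F$ is tame.

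For part (2), let $F_1,F_2$ be tame Fr\'echet spaces, realized as tame direct summands of $\Sigma(B_i)$ via $\iota_i\colon F_i\to\Sigma(B_i)$ and $\pi_i\colon\Sigma(B_i)\to F_i$ with $\pi_i\iota_i=\mathrm{id}_{F_i}$. Equipping $F_1\times F_2$ with the natural product grading $\|(f_1,f_2)\|_n:=\|f_1\|_n+\|f_2\|_n$, the component-wise maps $(\iota_1,\iota_2)\colon F_1\times F_2\to\Sigma(B_1)\times\Sigma(B_2)$ and $(\pi_1,\pi_2)$ in the opposite direction are tame, with composition the identity. Post-composing with the tame isomorphism $\Sigma(B_1)\times\Sigma(B_2)\cong\Sigma(B_1\oplus B_2)$ from the second preliminary observation then realizes $F_1\times F_2$ as a tame direct summand of $\Sigma(B_1\oplus B_2)$, so it is tame.

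I do not expect a genuine obstacle in either part: both reduce to mechanical bookkeeping of tame estimates once the composition lemma and the identification $\Sigma(B_1\oplus B_2)\cong\Sigma(B_1)\times\Sigma(B_2)$ are in place. The only point that requires any care is the latter identification, where one must choose compatible gradings on the Banach direct sum and on the product of graded Fr\'echet spaces so that the comparison constants are indeed bounded independently of $n$; the elementary inequality between the $\ell^1$- and $\ell^\infty$-style seminorms handles this.
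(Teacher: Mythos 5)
Your proof is correct and is essentially the standard argument: the paper itself states this lemma without proof, deferring to Hamilton's article, and the argument there is exactly your reduction via composition of tame maps (for the direct summand) and the tame identification $\Sigma(B_1)\times\Sigma(B_2)\cong\Sigma(B_1\oplus B_2)$ (for the product). No gaps; the bookkeeping of the constants $(r,b,C(n))$ in your composition lemma and the $\ell^1$/$\ell^\infty$ comparison on the Banach direct sum are both handled correctly.
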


\noindent There are many different examples of tame Fr\'echet spaces. One can show that all examples described in \ref{frechetexamples} are tame Fr\'echet spaces. For proofs and additional examples see~\cite{Hamilton82}. In section \ref{Some_tame_Frechet_spaces} we will study the tame Fr\'echet spaces of holomorphic functions which we need for the construction of Kac-Moody symmetric spaces in detail.

\begin{definition}[Tame Fr\'echet Lie algebra]
\index{tame Lie algebra}
\label{tamefrechetLie algebra}
A Fr\'echet Lie algebra $\mathfrak{g}$ is tame iff it is a tame vector space and
$\textrm{ad}(X)$ is a tame linear map for every $X\in \mathfrak{g}$.
\end{definition}

\begin{example}
 Any finite dimensional Lie algebra is tame.
\end{example}

\begin{example}
The realizations of the Kac-Moody algebras $\widehat{L}(\mathfrak{g},\sigma)$ are tame Fr\'echet Lie algebras for $H^0$\ndash Sobolev  loops, smooth loops and holomorphic loops --- compare section \ref{holomorphicstructuresonkacmoodyalgebras}.
\end{example}

\noindent We now give some definitions for nonlinear tame Fr\'echet objects:

\begin{definition}
A nonlinear map $\Phi: U\subset F \longrightarrow G$ is called $(r, b, C(n))$-tame iff it satisfies the inequality 
\begin{displaymath}
\|\Phi(f)\|_{n}\leq C(n)(1+\| f\|_{n+r})\,\forall n>b\,.
\end{displaymath}
$\Phi$ is called tame iff it is $(r,b,C(n))$-tame for some $(r,b, C(n))$.
\end{definition}

\begin{example}
 Suppose $F$ and $G$ are Banach space (hence the collection of norms consists of one norm) and $\Phi:F\longrightarrow G$ is a $(r_1,b_1,C_1)$\ndash tame isomorphism with a $(r_2, b_2,C_2)$\ndash tame inverse $\Phi^{-1}$. If $b_1\geq 2$ and $b_2\geq 2$ the condition on the norms vanishes. If $b_1=r_1=b_2=r_2=0$ we get 
\begin{displaymath}
\|\Phi(f)\|\leq C_1(1+\| f\|)\,\quad\textrm{and}\quad \|\Phi^{-1}(g)\|\leq C_2(1+\| g\|)\, .
\end{displaymath}
After some manipulations we find:
\begin{displaymath}
 \frac{1}{C_2}\|f\|-1\leq \|\Phi(f)\|\leq C_1(1+\|f\|)
\end{displaymath}
which is similar to the definition of a quasi isometry~\cite{Burago01}
\end{example}

\begin{lemma}[Construction of tame maps]~
\label{constructionoftamemaps}
\begin{enumerate}
\item Let $\Phi: U\subset F \longrightarrow G_1\times G_2$ be a tame map. Define the projections $\pi_i:G_1\times G_2 \longrightarrow G_i, i=1,2$. The maps
\begin{displaymath}
\Phi_i:=\pi_i \circ \Phi: U\longrightarrow G_i
\end{displaymath}
are tame as well.
\item Let $\Phi_i: U \subset F\longrightarrow G_i, i\in \{1,2\}$ be $(r_i, b_i, C_i(n))$-tame maps. Then the map
\begin{displaymath}
\Phi:=(\Phi_1, \Phi_2): U \longrightarrow G_1 \times G_2
\end{displaymath}
is $(\max(r_1, r_2),\max(b_1, b_2), C_1(n)+C_2(n))$-tame.
\end{enumerate}
\end{lemma}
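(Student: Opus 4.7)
The plan is to exploit the natural grading on the cartesian product of graded Fréchet spaces. If $G_1$ and $G_2$ carry gradings $\{\|\cdot\|_n^{(1)}\}$ and $\{\|\cdot\|_n^{(2)}\}$, then I equip $G_1\times G_2$ with the grading $\|(g_1,g_2)\|_n:=\|g_1\|_n^{(1)}+\|g_2\|_n^{(2)}$ (one could equally take the maximum; the two are tame equivalent, so this choice is immaterial). With this convention both statements reduce to elementary estimates, and I would treat them in turn.

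For part~(1), I would first record the auxiliary observation that each projection $\pi_i:G_1\times G_2\to G_i$ is a $(0,0,1)$-tame linear map, since $\|\pi_i(g_1,g_2)\|_n^{(i)}\leq\|(g_1,g_2)\|_n$ by the very definition of the product grading. Using the assumption that $\Phi$ is $(r,b,C(n))$-tame, I would then simply estimate
\begin{displaymath}
\|\Phi_i(f)\|_n^{(i)}=\|\pi_i\Phi(f)\|_n^{(i)}\leq\|\Phi(f)\|_n\leq C(n)\bigl(1+\|f\|_{n+r}\bigr)\quad\text{for } n>b,
\end{displaymath}
so $\Phi_i$ is $(r,b,C(n))$-tame with the same constants. (One may equivalently invoke a general composition lemma that tame precomposition by a tame linear map preserves tameness, which is a trivial check.)

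For part~(2), I would use the $(r_i,b_i,C_i(n))$-tameness of each $\Phi_i$ together with the fact that $\|f\|_{n+r_i}\leq\|f\|_{n+\max(r_1,r_2)}$ (the grading is monotone by definition). For $n>\max(b_1,b_2)$ this yields
\begin{displaymath}
\|\Phi(f)\|_n=\|\Phi_1(f)\|_n^{(1)}+\|\Phi_2(f)\|_n^{(2)}\leq C_1(n)\bigl(1+\|f\|_{n+r_1}\bigr)+C_2(n)\bigl(1+\|f\|_{n+r_2}\bigr),
\end{displaymath}
which after bounding each $\|f\|_{n+r_i}$ by $\|f\|_{n+\max(r_1,r_2)}$ and collecting constants gives exactly the claimed $(\max(r_1,r_2),\max(b_1,b_2),C_1(n)+C_2(n))$-tame estimate.

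There is no substantial obstacle; the only mildly delicate point is to fix once and for all a convention for the grading on $G_1\times G_2$ and to note that this convention is independent of a bounded tame-equivalence constant (using the sum, the maximum, or an $\ell^p$-combination produces tame-equivalent gradings). Once this is settled, both statements are bookkeeping, and the lemma is essentially a direct verification of the definition of tameness.
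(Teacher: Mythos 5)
Your proposal is correct and follows essentially the same route as the paper: part (1) via the observation that the projections are $(0,0,1)$-tame (whether phrased as a composition of tame maps or as the direct estimate you write out), and part (2) via exactly the chain of inequalities the paper uses, summing the component norms and bounding each $\|f\|_{n+r_i}$ by $\|f\|_{n+\max(r_1,r_2)}$. Your explicit remark fixing the sum grading on $G_1\times G_2$ is a small clarification the paper leaves implicit, but it does not change the argument.
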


\begin{proof}~
\begin{enumerate}
\item Projections onto a direct factor are $\left(0,0,(1)_{n\in \mathbb{N}}\right)$-tame. The composition of tame maps is tame. Thus $\Phi_{i}$ is tame.
\item Let $f\in U\subset F$. \begin{align*}
\|\Phi(f)\|_n &= \|\Phi(f)\|^1_n+\|\Phi(f)\|^2_n \leq\\
              &\leq C_1(n)(1+\| f\|_{n+r_1})+ C_2(n)(1+\|f\|_{n+r_2}) \leq\\
							&\leq C_1(n)(1+\| f\|_{n+\max(r_1, r_2)})+ C_2(n)(1+\|f\|_{n+\max(r_1, r_2)})= \\
							&\leq (C_1(n)+C_2(n))(1+\| f\|_{n+\max(r_1, r_2)})
\end{align*}
for all $n\geq \max(b_1, b_2)$.
\end{enumerate}
\end{proof}

\subsection{Tame Fr\'echet manifolds}

\begin{definition}[Tame Fr\'echet manifold]
\index{tame Fr\'echet manifold}
A tame Fr\'echet  manifold is a Fr\'echet manifold with charts in a tame Fr\'echet space
such that the chart transition functions are smooth tame maps.
\end{definition}

\begin{example}
Every Banach manifold is a tame Fr\'echet manifold.
\end{example}

\begin{definition}
Let $M$ and $N$ be two tame Fr\'echet manifolds modelled on $F$ resp. $G$. A map $f: M\longrightarrow N$ is tame iff for every pair of charts $\psi_i:V_i \subset N \longrightarrow V_i'$ and $\varphi_j: U_i \subset M \longrightarrow U_i'$, the map $\psi_i \circ f \circ \varphi_j^{-1}$ is tame whenever it is defined. 
\end{definition}

For the construction of tame structures on Kac-Moody groups we need to introduce the following new notion:

\begin{definition}[Tame Fr\'echet submanifold of finite type]
\index{tame submanifold of finite type}
\label{Tamesubmanifoldoffinitetype}
Let $n\in \mathbb{N}$. A subset $M\subset F$ is a $n$-codimensional smooth submanifold of $F$ iff for every $m\in M$ there are open sets $U(m)\subset F$, $V(m)\subset G\times \mathbb{R}^n$ and a tame Fr\'echet chart $\varphi_m:U(m)\longrightarrow V(m)\subset G\times \mathbb{R}^n$ such that
$$\varphi_{M}(M\cap U(m))= G\cap V(m)\,.$$
\end{definition}

\begin{lemma}
A tame submanifold of finite type is a tame Fr\'echet manifold.
\end{lemma}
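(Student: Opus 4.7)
The plan is to construct an explicit atlas on $M$ from the ambient slice charts and verify the manifold axioms directly. First I would define, for each $m \in M$, the restricted chart $\psi_m := \varphi_m\big|_{M \cap U(m)} : M \cap U(m) \longrightarrow G \cap V(m)$, where I identify $G$ with the slice $G \times \{0\} \subset G \times \mathbb{R}^n$ so that $G \cap V(m) = (G \times \{0\}) \cap V(m)$ is an open subset of $G$. The compatibility condition $\varphi_m(M \cap U(m)) = G \cap V(m)$ built into Definition~\ref{Tamesubmanifoldoffinitetype} ensures $\psi_m$ is a bijection onto an open subset of $G$, and it is a homeomorphism since $\varphi_m$ is. Note that $G$ is itself a tame Fr\'echet space: the model space $G \times \mathbb{R}^n$ of the ambient charts is tame, and $G$ is a tame direct summand of it via the inclusion and projection, so $G$ is tame by Lemma~\ref{constructionoftamespaces}.

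Second, I would check that the transitions are smooth tame. Given overlapping charts $\psi_m,\psi_{m'}$, the transition factors as
\begin{displaymath}
G \cap \psi_m(M \cap U(m) \cap U(m')) \xrightarrow{\iota} V(m) \xrightarrow{\;\varphi_{m'} \circ \varphi_m^{-1}\;} V(m') \xrightarrow{\pi} G \cap V(m'),
\end{displaymath}
where $\iota(x) = (x,0)$ and $\pi$ is the projection onto the first factor. The middle arrow is smooth and tame by hypothesis (it is a transition function of the ambient tame Fr\'echet structure of $F$). The inclusion $\iota$ and projection $\pi$ are tame linear maps between tame Fr\'echet spaces (Lemma~\ref{constructionoftamespaces} guarantees that $G \times \mathbb{R}^n$ is tame and that these coordinate maps are tame). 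Moreover, the image of this composition really lies in $G \cap V(m')$ precisely because $\varphi_{m'}(M \cap U(m')) = G \cap V(m')$, so the middle map carries the slice $G \times \{0\}$ locally into itself. Since composition of smooth tame maps is smooth tame, $\psi_{m'} \circ \psi_m^{-1}$ is smooth tame on the relevant open subset of $G$.

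Combining these two steps, the collection $\{(M \cap U(m), \psi_m)\}_{m \in M}$ is an atlas for $M$ modeled on the tame Fr\'echet space $G$ with smooth tame transition functions, which is exactly the definition of a tame Fr\'echet manifold.

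The only conceptual point of any substance is the observation that a smooth tame map between tame Fr\'echet spaces, restricted to a tame direct summand on which it preserves the slice, stays smooth and tame. This is not an obstacle in itself: it follows formally from the tameness of the inclusion and projection for the product $G \times \mathbb{R}^n$, together with the composition property of tame maps. Once these bookkeeping facts are in place the proof reduces to recording the definitions, so I expect no hard analytic step — this lemma is essentially a compatibility check promoting the ambient slice structure to an intrinsic atlas on $M$.
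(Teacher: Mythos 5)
Your proof is correct and follows essentially the same route as the paper: restrict the ambient slice charts to $M$, observe that $G$ is tame as a direct summand of $G\times\mathbb{R}^n$, and use the tameness of the inclusion and projection together with closure of tame maps under composition to get smooth tame transitions. If anything, your version is slightly more explicit than the paper's in writing out the chart transition functions as a composition $\pi\circ(\varphi_{m'}\circ\varphi_m^{-1})\circ\iota$, whereas the paper concentrates on verifying that each restricted chart is a tame diffeomorphism; the content is the same.
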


\begin{proof}~
We have to find an atlas, hence we have to show that there are charts and that the chart transition functions are smooth tame maps. Charts can be defined via the maps $\varphi: U(m)\longrightarrow V(m)$. Hence we have to show that $\varphi_{M,m}:M\cap U(m) \longrightarrow G\cap V(m)$ is a tame diffeomorphism.

Let $m\in M$, $\varphi_m: U(m)\longrightarrow V(m)$ as in definition~\ref{Tamesubmanifoldoffinitetype}.
Define the projection  $\pi_G: G\times \mathbb{R}^n \longrightarrow G$. Lemma~\ref{constructionoftamemaps} tells us that the map $\varphi_{m,G}:=\pi_{G}\circ \varphi_m: U\longrightarrow V_G\subset G$ is tame. Restricting the domain to $M\cap U$ gives us a  tame function $\varphi_{M,G}: G\times \mathbb{R}^n \longrightarrow G$. $\varphi_m$ is supposed to be a tame chart. Thus it is invertible and the inverse function is tame. Let $\varphi^{-1}_m: V(m)\longrightarrow U(m)$ be the inverse function. Restricting the domain to $V(M)\cap G$ yields a tame function $\varphi^{-1}_{m,G}: V(M)\cap G \longrightarrow U(m)\cap M$. 

We have to show that it is a smooth isomorphism. As $\varphi(m)$ is a chart, the maps $\varphi(m)$ and $\varphi^{-1}(m)$ are injective and smooth. Hence the restrictions $\varphi_{M,m}$ and $\varphi^{-1}_{m,G}$ are injective and smooth. Furthermore $\varphi_{m,M}$ is surjective onto $V(m)\cap G$. Hence also $\varphi^{-1}_{m,G}$ is surjective.
\end{proof}

\begin{lemma}
\label{mapinfrechetsubmanifold}
Let $M\subset F$ be a tame Fr\'echet submanifold of finite type. Let $H$ be a tame Fr\'echet space. A map $\varphi_M:H \longrightarrow M$ is tame if it is tame as a map $\varphi_F:H\rightarrow F$, where $\varphi_F$ is defined via the embedding $M\subset F$. 
\end{lemma}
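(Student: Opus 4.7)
The plan is to exploit the local nature of tameness for maps between tame Fréchet manifolds: to show $\varphi_M$ is tame it suffices, chart by chart, to produce a tame chart representative. Fix $h_0 \in H$ and set $m := \varphi_M(h_0) \in M$. By Definition~\ref{Tamesubmanifoldoffinitetype} there is an open neighborhood $U(m) \subset F$ and a tame Fréchet chart
\begin{displaymath}
\varphi_m : U(m) \longrightarrow V(m) \subset G \times \mathbb{R}^n
\end{displaymath}
carrying $M \cap U(m)$ onto $G \cap V(m)$. By the preceding lemma the induced chart of $M$ at $m$ is $\varphi_{M,G,m} = \pi_G \circ \varphi_m\bigl|_{M \cap U(m)}$, where $\pi_G : G \times \mathbb{R}^n \to G$ denotes the projection onto the first factor.

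The set $W := \varphi_F^{-1}(U(m)) \subset H$ is an open neighborhood of $h_0$, and since $\varphi_M$ and $\varphi_F$ coincide as set-theoretic maps (the latter is simply the post-composition with the inclusion $M \hookrightarrow F$), on $W$ the chart representative of $\varphi_M$ satisfies the identity
\begin{displaymath}
\varphi_{M,G,m} \circ \varphi_M\bigl|_W \;=\; \pi_G \circ \varphi_m \circ \varphi_F\bigl|_W .
\end{displaymath}
Each factor on the right-hand side is tame: $\varphi_F$ is tame by hypothesis, $\varphi_m$ is tame as a chart of the tame Fréchet submanifold structure, and $\pi_G$ is tame by Lemma~\ref{constructionoftamemaps}. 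Since the composition of tame maps is tame, the chart expression $\varphi_{M,G,m} \circ \varphi_M$ is tame on $W$; as $h_0 \in H$ was arbitrary, $\varphi_M : H \to M$ is tame.

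I do not expect any real obstacle here: the content of the argument is precisely that the chart $\varphi_{M,G,m}$ of $M$ is the restriction to $M$ of a tame map $\pi_G \circ \varphi_m$ defined on a full open subset of the ambient tame Fréchet space $F$, so every tame estimate for $\varphi_F$ pulls back to one for $\varphi_M$ without any new work. The one point worth watching is purely bookkeeping, namely keeping the distinction between the maps taking values in $F$, in $G \times \mathbb{R}^n$, and in $G$, and verifying that the displayed factorization on $W$ is literally an equality of maps rather than an equality up to the identification $M \hookrightarrow F$.
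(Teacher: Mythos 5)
Your proof is correct and follows essentially the same route as the paper, which simply observes that post-composing $\varphi_F$ with any chart of $M$ yields a tame map. You have merely made explicit the one point the paper leaves implicit, namely that the chart $\varphi_{M,G,m}$ of the finite-type submanifold is the restriction of the tame map $\pi_G\circ\varphi_m$ defined on an open subset of the ambient space $F$, so the factorization $\varphi_{M,G,m}\circ\varphi_M=\pi_G\circ\varphi_m\circ\varphi_F$ reduces everything to closure of tame maps under composition.
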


This means, that we chose the tame structure on a tame Fr\'echet submanifold of finite type compatible with the tame structure on the vector the manifold is embedded in. We will need this result in the proof that affine Kac-Moody groups carry a tame structure. 

\begin{proof}
Let $\varphi$ be tame as a map $\varphi_F:H\rightarrow F$. Then the cocatenation $\varphi_i\circ \varphi$ is tame for any chart $\varphi$ of $M$. Thus $\varphi$ is tame as a map into $M$.   
\end{proof}

\noindent The reason for introducing the category of tame Fr\'echet spaces and tame maps is the Nash-Moser inverse function theorem. 
We cite the version of~\cite{Hamilton82}.

\begin{theorem}[Nash-Moser inverse function theorem]
\index{Theorem, Nash-Moser inverse function}
Let $F$ and $G$ be tame Fr\'echet spaces and $\Phi: U\subseteq F \longrightarrow G$ a smooth tame map. Suppose that the equation for the derivative $D\Phi(f)h=k$ has a unique solution $h=V\Phi(f)k$ for all $f\in U$ and all $k$ and that the family of inverses $V\Phi:U\times G \longrightarrow F$ is a smooth tame map. Then $\Phi$ is locally invertible, and each local inverse $\Phi^{-1}$ is a smooth tame map.
\end{theorem}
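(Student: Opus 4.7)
The plan is to follow the Nash--Moser scheme: reduce to a normalized situation at a basepoint, construct the inverse by a smoothing-modified Newton iteration, and extract the tame estimates for $\Phi^{-1}$ by tracking constants through the iteration. First I would reduce: by translating in $F$ and $G$ and pre-composing with the linear tame map $V\Phi(f_0)$ at a chosen basepoint $f_0\in U$, it suffices to prove the theorem under the normalization $\Phi(0)=0$ and $D\Phi(0)=\mathrm{Id}$. The task then becomes: for $g\in G$ with $\|g\|_n$ small in suitable norms, solve $\Phi(f)=g$ for $f$ near $0$.

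The core difficulty, and the step I expect to be hardest, is loss of derivatives. A straight Newton iteration $f_{k+1}=f_k-V\Phi(f_k)\bigl(\Phi(f_k)-g\bigr)$ would converge quadratically in a Banach scale, but here the tameness estimate $\|V\Phi(f)k\|_n\leq C(n)\|k\|_{n+r}$ costs $r$ derivatives per step, so after infinitely many steps no regularity remains. The standard remedy is to introduce smoothing operators. Since by definition $F$ and $G$ are tame direct summands of some $\Sigma(B)$, they inherit a one-parameter family $\{S_t\}_{t\geq 1}$ of smoothing operators satisfying
\begin{displaymath}
\|S_t f\|_n \leq C\,t^{n-m}\|f\|_m\quad(n\geq m),\qquad \|(I-S_t)f\|_n\leq C\,t^{n-m}\|f\|_m\quad(n\leq m),
\end{displaymath}
realised on $\Sigma(B)$ as truncation of the defining series. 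I would then run the modified iteration $f_{k+1}=f_k-S_{t_k}V\Phi(f_k)\bigl(\Phi(f_k)-g\bigr)$ starting from $f_0=0$, with a rapidly increasing schedule such as $t_k=t_0^{(3/2)^k}$.

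The convergence analysis is the standard technical computation: using the tame estimates for $\Phi$, $D\Phi$ and $V\Phi$ together with the smoothing inequalities above, and interpolating between seminorms, one bounds $\|f_{k+1}-f_k\|_n$ and $\|\Phi(f_{k+1})-g\|_n$ at a fixed working norm index $N$. The quadratic Newton gain $\|\Phi(f_k)-g\|\sim \varepsilon_k^2$ dominates the polynomial loss $t_k^{r}$ coming from the smoothing, provided $N$ is chosen large enough and $\|g\|_{N+r'}$ small enough; this yields geometric convergence of $(f_k)$ in $\|\cdot\|_N$ and gives both existence of $f=\Phi^{-1}(g)$ and the tame estimate $\|\Phi^{-1}(g)\|_n\leq C(n)\|g\|_{n+r'}$ for a fixed loss $r'$ and all sufficiently large $n$. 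Uniqueness in a $\|\cdot\|_N$-neighbourhood follows from a standard Gronwall-type argument using the tame bound on $D\Phi$.

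Finally, smoothness of $\Phi^{-1}$ is formal once existence and tameness are in hand. The identity $\Phi\circ\Phi^{-1}=\mathrm{Id}$ differentiated at $g$ gives $D\Phi^{-1}(g)=V\Phi(\Phi^{-1}(g))$, which by hypothesis is a smooth tame family in $g$; induction on the order of differentiation, combined with the fact that compositions and products of smooth tame maps are smooth tame, produces all higher derivatives together with the required tame estimates. The main obstacle throughout is the delicate balancing of the Newton gain against the smoothing loss in the convergence proof; every other step is either a formal reduction or a routine propagation of tame estimates through constructions already described in the excerpt.
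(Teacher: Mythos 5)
The paper does not prove this theorem itself; it states it as a citation of Hamilton's article~\cite{Hamilton82} and refers the reader there for the proof. Your sketch follows exactly the standard Hamilton/Nash--Moser strategy (normalization at a basepoint, smoothing operators inherited from the tame direct summand structure in $\Sigma(B)$, a smoothing-modified Newton iteration balancing quadratic gain against polynomial loss of derivatives, and the formal identity $D\Phi^{-1}(g)=V\Phi(\Phi^{-1}(g))$ for smoothness), and it correctly uses the hypothesis that $V\Phi(f)$ exists for all $f\in U$ rather than at a single point, so it is consistent with the cited source.
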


\noindent A description of this theorem a proof and some of its applications is the subject of the article~\cite{Hamilton82}.
In comparison to the classical Banach inverse function theorem the important additional assumption is that the invertibility of the differential is assumed not only in a single point $p$ but in a small neighborhood $U$ around $p$. This additional condition is necessary~\cite{Hamilton82} because
 in contrast to the Banach space situation it is not true in the Fr\'echet space case that the existence of an invertible differential in one point leads to invertibility in a neighborhood. Let us note the following result~\cite{Hamilton82}, theorem 3.1.1.\ characterizing the family of smooth tame inverses.

\begin{theorem}
 Let $L: (U \subseteq F) \times H \longrightarrow K$ be a smooth tame family of linear
maps. Suppose that the equation 
\begin{displaymath}
L(f)h = k
\end{displaymath} has a unique solution $h$ for all $f$ and
$k$ and that the family of inverses $V(f)k = h$ is continuous and tame as a map from $K$ to$H$
Then $V$ is also a smooth tame map 
\begin{displaymath}
V:(U\subseteq F)\times K\longrightarrow H.
\end{displaymath}
\end{theorem}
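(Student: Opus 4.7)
The plan is to produce explicit formulas for all derivatives of $V$ by differentiating the defining identity $L(f)V(f)k=k$, verify those formulas via a resolvent-type difference identity, and check tameness inductively using that compositions, sums and products of smooth tame maps are smooth tame (Lemma~\ref{constructionoftamemaps} together with the fact that the composition of tame linear maps is tame).

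First, since $L(f)$ is linear in $h$, uniqueness of solutions forces $V(f,\cdot)\colon K\to H$ to be linear, so the partial derivative in $k$ is simply $V$ itself. Formally differentiating $L(f)V(f)k=k$ in a direction $g\in F$ produces the candidate
\[
D_f V(f,k)\{g\}=-V(f)\bigl(D_f L(f)\{g\}\bigr)V(f)k.
\]
This expression is manifestly tame: it is built by composition from $V$ (tame by hypothesis) and $D_f L$ (tame by smoothness of $L$), so the proposed derivative is already a smooth tame map of $(U\subseteq F)\times F\times K$ into $H$.

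To see that this candidate really is the Fr\'echet derivative I apply $V(f+g)$ to the equality $L(f+g)V(f+g)k=L(f)V(f)k$, obtaining the resolvent-type identity
\[
V(f+g)k-V(f)k=V(f+g)\bigl[L(f)-L(f+g)\bigr]V(f)k.
\]
Smoothness of $L$ gives $L(f+g)-L(f)=D_fL(f)\{g\}+r(f,g)$ with $r$ of higher order in every seminorm, and continuity of $V$ gives $V(f+g)\to V(f)$. Combining these facts and using the tame estimates for $V$ to dominate the remainder term $V(f+g)\,r(f,g)\,V(f)k$ seminorm by seminorm yields that $V$ is $C^1$ with the derivative predicted above.

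Once $V$ is $C^1$ smooth tame, the formula for $D_fV$ is a polynomial expression in $V$ and $D_fL$. Differentiating this formula and iterating the same resolvent argument realises $D^{j+1}V$ as a polynomial expression in $V$ and derivatives of $L$ up to order $j+1$; by Lemma~\ref{constructionoftamemaps} and closure of the class of smooth tame maps under composition, this is again smooth tame. An induction on the order of differentiation then concludes the proof. The main obstacle is the passage from formal candidate to genuine derivative: in the Fr\'echet setting one cannot invoke uniform operator bounds, so the tameness of $V$ and of the remainder $r(f,g)$ must be exploited carefully to control the higher-order term in each seminorm simultaneously. Once $C^1$ differentiability is established, the inductive step for higher derivatives is essentially mechanical.
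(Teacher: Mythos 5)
Your argument is essentially Hamilton's original proof of this statement (the paper itself does not reprove it but cites \cite{Hamilton82}, Theorem 3.1.1): the resolvent-type identity $V(f+g)k-V(f)k=V(f+g)\bigl[L(f)-L(f+g)\bigr]V(f)k$, the resulting formula $D_fV(f,k)\{g\}=-V(f)\bigl(D_fL(f)\{g\}\bigr)V(f)k$, and induction on the order of differentiation are exactly the steps used there. The proposal is correct and takes the same route; the only presentational caveat is that in the tame Fr\'echet category one passes to the limit along $t\mapsto f+tg$ (directional derivatives in Hamilton's sense) rather than via a full Fr\'echet remainder estimate, which your argument accommodates without change.
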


We will discuss this in remark~\ref{the concept of inverse limits}.

\section{Tame Fr\'echet spaces for Kac-Moody symmetric spaces}
\label{Some_tame_Frechet_spaces}

In this section we prove some technical results about the spaces of holomorphic loops on $\mathbb{C}^*$ which we will need for the construction of the loop algebra realization of Kac-Moody algebras in section~\ref{Lie_algebras_of_holomorphic_maps}. The main point is to prove that they are tame Fr\'echet spaces.

\begin{notation}~
\label{A_n}
\begin{itemize}
\item[-] Let $A_n$ denote the annulus $A_n:= \{z\in \mathbb{C}^*|e^{-n}\leq |z| \leq e^n\}$ and define the boundaries of $A_n$ by 
\begin{displaymath}
\partial A_n^+:= \{z|\hspace{3pt}|z|=e^n\}\quad \textrm{and} \quad \partial A_n^-:= \{z|\hspace{3pt}|z|=e^{-n}\},
\end{displaymath}
\item[-] Let $A_n'$ denote the set $A_n':= \{z\in \mathbb C | -n \leq \Re(z)\leq n, 0\leq \Im(z)\leq 2\pi i\}$,
\item[-] Let $B_n$ denote the disc $B_n:=\{z\in \mathbb C| \hspace{3pt} |z|\leq  e^n\}$.
\end{itemize}
\end{notation}

\begin{lemma}
\label{tame_equivalence_of_gardings_on_holcc}
Let $F:= \textrm{Hol}(\mathbb C, \mathbb C)$. The two gradings
\begin{align*}
\|f\|_{L_1^n}:=& \frac{1}{2\pi} \int_{\partial B_n}|f(z)|dz\\
\intertext{and}
\|f\|_{L_{\infty}^n}:=& \sup_{z \in B_n }\|f(z)\|
\end{align*}
are tame equivalent.
\end{lemma}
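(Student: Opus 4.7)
The plan is to verify the two required tame inequalities, one in each direction, using elementary complex-analytic estimates.

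First I would handle the easy direction, namely bounding the $L_1$-grading by the $L_\infty$-grading. Parametrizing $\partial B_n$ by $z=e^ne^{i\theta}$ and interpreting the arc measure in the natural way, one immediately gets
\begin{displaymath}
\|f\|_{L_1^n} \;=\; \frac{1}{2\pi}\int_{\partial B_n}|f(z)|\,|dz| \;\leq\; \sup_{|z|=e^n}|f(z)| \cdot \frac{1}{2\pi}\int_{\partial B_n}|dz| \;=\; e^n\sup_{z\in B_n}|f(z)|\,,
\end{displaymath}
by the maximum principle. Hence $\|f\|_{L_1^n}\leq e^n\|f\|_{L_\infty^n}$, which is $(0,0,e^n)$-tame domination, and in particular tame.

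For the opposite direction I would apply Cauchy's integral formula on a slightly larger disc. For any $z$ with $|z|\leq e^n$ and any $w$ with $|w|=e^{n+1}$, one has $|w-z|\geq e^{n+1}-e^n=e^n(e-1)$, so Cauchy's formula gives
\begin{displaymath}
|f(z)| \;=\; \left|\frac{1}{2\pi i}\int_{\partial B_{n+1}}\frac{f(w)}{w-z}\,dw\right| \;\leq\; \frac{1}{e^n(e-1)}\cdot\frac{1}{2\pi}\int_{\partial B_{n+1}}|f(w)|\,|dw| \;=\; \frac{1}{e^n(e-1)}\|f\|_{L_1^{n+1}}\,.
\end{displaymath}
Taking the supremum over $z\in B_n$ yields $\|f\|_{L_\infty^n}\leq\frac{1}{e^n(e-1)}\|f\|_{L_1^{n+1}}\leq\|f\|_{L_1^{n+1}}$ for all $n\geq 0$, giving a $(1,0,1)$-tame estimate.

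The combination of these two inequalities is exactly the definition of tame equivalence with $r=1$, $b=0$, and $C(n)=e^n$. There is no real obstacle here: the only minor point is that in the Cauchy estimate one must use a strictly larger radius than $e^n$ (the buffer $r\geq 1$) in order that the kernel $1/(w-z)$ stay bounded; this is precisely what forces the shift $n\mapsto n+1$ in the grading index and explains why a grading-shift of $r=0$ would not suffice.
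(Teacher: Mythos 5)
Your proof is correct. The paper does not write out an argument for this lemma (it simply cites Hamilton), but your two estimates --- the trivial bound $\|f\|_{L_1^n}\leq e^n\|f\|_{L_\infty^n}$ via the maximum principle, and the Cauchy-integral bound $\|f\|_{L_\infty^n}\leq\frac{1}{e^n(e-1)}\|f\|_{L_1^{n+1}}$ with the radius buffer forcing the shift $r=1$ --- are exactly the method the paper itself uses for the analogous statement on $\textrm{Hol}(\mathbb{C}^*,\mathbb{C})$, so you have supplied the standard argument in essentially the intended form.
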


\begin{proof}
~\cite{Hamilton82}.
\end{proof}

\begin{lemma}
\label{holccistame}
$F:= \textrm{Hol}(\mathbb C, \mathbb C)$ is a tame Fr\'echet space.
\end{lemma}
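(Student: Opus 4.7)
The plan is to exhibit a tame isomorphism $\Phi : \Sigma(\mathbb{C}) \longrightarrow F$, which will immediately present $F$ as a tame direct summand of $\Sigma(\mathbb{C})$ (take $\varphi = \Phi^{-1}$ and $\psi = \Phi$, so that $\psi\circ\varphi = \mathrm{id}_F$), and hence establish tameness by the defining property of a tame Fr\'echet space. The underlying bijection has already been constructed in the preceding lemma: to a sequence $(a_k) \in \Sigma(\mathbb{C})$ one associates the entire function $\Phi((a_k))(z) := \sum_{k\geq 0} a_k z^k$, and conversely any $f \in F$ yields $\Phi^{-1}(f) = (f^{(k)}(0)/k!)_{k\geq 0}$. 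So all that is left is to verify that these two maps are tame with respect to the natural gradings.

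For the grading on $F$ I would use $\|f\|_{L_\infty^n} = \sup_{z\in B_n}|f(z)|$, which by the preceding lemma is tame equivalent to the $L_1^n$ grading. For the grading on $\Sigma(\mathbb{C})$ I would use $\|(a_k)\|_{l_\infty^n} = \sup_k e^{nk}|a_k|$, which is tame equivalent to $\|\cdot\|_{l_1^n}$ by the earlier lemma for $\Sigma(B)$. Tameness of $\Phi$ in the $l_1^n$ grading is the direct estimate
\begin{displaymath}
\|\Phi((a_k))\|_{L_\infty^n} \;=\; \sup_{|z|\leq e^n}\Bigl|\sum_k a_k z^k\Bigr| \;\leq\; \sum_k |a_k| e^{nk} \;=\; \|(a_k)\|_{l_1^n},
\end{displaymath}
so $\Phi$ is $(0,0,1)$-tame (after passing through the tame equivalence between the two gradings on $\Sigma(\mathbb{C})$).

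For the reverse direction, the Cauchy inequalities quoted in the previous lemma give, with radius $r=e^{n+1}$,
\begin{displaymath}
e^{nk}|a_k| \;=\; e^{nk}\,\Bigl|\tfrac{f^{(k)}(0)}{k!}\Bigr| \;\leq\; e^{nk}\cdot e^{-(n+1)k}\sup_{|z|=e^{n+1}}|f(z)| \;=\; e^{-k}\,\|f\|_{L_\infty^{n+1}}.
\end{displaymath}
Taking the supremum over $k$ yields $\|\Phi^{-1}(f)\|_{l_\infty^n} \leq \|f\|_{L_\infty^{n+1}}$, so $\Phi^{-1}$ is $(1,0,1)$-tame. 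Combining both estimates, $\Phi$ is a tame isomorphism, and $F$ is therefore a tame direct summand of $\Sigma(\mathbb{C})$.

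There is no genuine obstacle here; the only point that requires care is a consistent choice of grading on each side, so that the pair of estimates (one using an $l_1$-type majorization, the other using Cauchy's inequality with a shift in radius $n \mapsto n+1$) yields tame bounds without loss. Once this bookkeeping is settled, the conclusion is immediate from the definition of tame Fr\'echet space.
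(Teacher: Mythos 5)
Your argument is correct and is essentially the proof the paper relies on: the paper simply cites Hamilton for this lemma, and then carries out exactly your strategy (Taylor/Laurent coefficient map, an $l_1$-type majorization in one direction, Cauchy's inequality with a radius shift in the other) in full detail for the analogous statement about $\textrm{Hol}(\mathbb{C}^*,\mathbb{C})$ in Lemma~\ref{holc*cisfrechet}. Your estimates are accurate, and exhibiting a tame isomorphism onto $\Sigma(\mathbb{C})$ indeed suffices, since a tame isomorphism is in particular a tame direct summand.
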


\begin{proof}
~\cite{Hamilton82}.
\end{proof}

\begin{corollary}
The space $F: =\textrm{Hol}(\mathbb C,\mathbb C^n)$ is a tame Fr\'echet space.
\end{corollary}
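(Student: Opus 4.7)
The plan is to realize $\textrm{Hol}(\mathbb{C},\mathbb{C}^n)$ as an $n$-fold Cartesian product of the space $\textrm{Hol}(\mathbb{C},\mathbb{C})$ and then invoke the stability of tameness under finite products, established in Lemma \ref{constructionoftamespaces}.

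First I would note that, since $\mathbb{C}^n$ carries the product complex structure, a map $f:\mathbb{C}\longrightarrow\mathbb{C}^n$ is holomorphic if and only if each component $f_i := \pi_i\circ f:\mathbb{C}\longrightarrow\mathbb{C}$ is holomorphic, where $\pi_i:\mathbb{C}^n\longrightarrow\mathbb{C}$ denotes the $i$-th coordinate projection. This yields a natural linear bijection
\begin{displaymath}
\Phi:\textrm{Hol}(\mathbb{C},\mathbb{C}^n)\longrightarrow \textrm{Hol}(\mathbb{C},\mathbb{C})\times\cdots\times\textrm{Hol}(\mathbb{C},\mathbb{C}),\qquad f\longmapsto (f_1,\ldots,f_n).
\end{displaymath}
Equipping both sides with the gradings induced by the $L_\infty^n$-norms of Lemma \ref{tame_equivalence_of_gardings_on_holcc} on each factor (and $\sup$ or sum over components on the target), the map $\Phi$ and its inverse are tame linear maps: in each grading the norm of $f$ is comparable to the maximum of the norms of the $f_i$, so $\Phi$ is in fact a $(0,0,C)$-tame isomorphism.

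Next I would apply Lemma \ref{constructionoftamespaces}(2): by Lemma \ref{holccistame} the space $\textrm{Hol}(\mathbb{C},\mathbb{C})$ is a tame Fr\'echet space, and a straightforward induction on $n$, using the closure of the class of tame Fr\'echet spaces under Cartesian products, shows that $\textrm{Hol}(\mathbb{C},\mathbb{C})^n$ is tame. Since tameness is preserved under tame isomorphism (both spaces are then tame direct summands of the same $\Sigma(B)$), it follows that $\textrm{Hol}(\mathbb{C},\mathbb{C}^n)$ is a tame Fr\'echet space.

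There is no real obstacle here; the only point that requires mild care is making sure the gradings chosen on the two sides of $\Phi$ are the ones that make the comparison trivial, so that no non-trivial estimate between different norms on $\mathbb{C}^n$ is needed. All such norms on $\mathbb{C}^n$ are equivalent with a constant independent of $n$ (for fixed $n$), so the seminorm estimates giving tameness of $\Phi$ and $\Phi^{-1}$ reduce to finite-dimensional linear algebra.
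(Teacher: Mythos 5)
Your proof is correct and follows essentially the same route the paper intends: the corollary is deduced from Lemma~\ref{holccistame} together with the closure of tame Fr\'echet spaces under Cartesian products (Lemma~\ref{constructionoftamespaces}), exactly as the paper later does for $\textrm{Hol}(\mathbb{C}^*,\mathbb{C}^n)$. The componentwise decomposition and the observation that the gradings on the two sides of $\Phi$ are comparable is precisely the intended argument.
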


Our aim is now to prove that $\textrm{Hol}(\mathbb{C}^*, \mathbb{C})$ is a tame Fr\'echet space. Our strategy parallels the one used by Hamilton for the proof of lemma~\ref{holccistame} and \ref{tame_equivalence_of_gardings_on_holcc}. At some points we apply the Cauchy integral formula and the maximum principle for holomorphic functions.

\begin{lemma}
Let $F:=\textrm{Hol} (\mathbb C^*, \mathbb C)$. The two gradings
\begin{align*}
\|f\|_{L_{\infty}^n}&:= \sup_{z\in A_n} |f(z)|\\
\intertext{and}
\|f\|_{L_1^n}&:= \frac{1}{2\pi} \sup \left\{\int_{\partial A_n^+}|f(z)|dz,\int_{\partial A_n^-}|f(z)|dz   \right\}
\end{align*}
are tame equivalent.
\end{lemma}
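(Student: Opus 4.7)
The plan is to mimic Hamilton's proof for $\textrm{Hol}(\mathbb{C},\mathbb{C})$, replacing discs $B_n$ by annuli $A_n$ and the Cauchy integral formula by its Laurent-series analogue for an annulus. I would take $r=1$ throughout and show that the two inequalities required for tame equivalence hold for all sufficiently large $n$ with explicit constants $C(n)$.

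The first direction, $\|f\|_{L^n_1}\leq C(n)\|f\|_{L^n_\infty}$, is immediate: the boundary $\partial A_n^{\pm}$ has length $2\pi e^{\pm n}$, so
\begin{displaymath}
\frac{1}{2\pi}\int_{\partial A_n^{\pm}}|f(z)|\,dz\;\leq\; e^{n}\sup_{z\in A_n}|f(z)|,
\end{displaymath}
and taking the supremum over the two boundary pieces gives $\|f\|_{L^n_1}\leq e^n\|f\|_{L^n_\infty}$, so this direction is in fact $(0,0,e^n)$-tame.

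For the converse $\|f\|_{L^n_\infty}\leq C(n)\|f\|_{L^{n+1}_1}$, I would use the Cauchy integral representation on an annulus: since $f\in\textrm{Hol}(\mathbb{C}^*,\mathbb{C})$, for any $z_0$ with $e^{-(n+1)}<|z_0|<e^{n+1}$,
\begin{displaymath}
f(z_0)=\frac{1}{2\pi i}\int_{\partial A_{n+1}^+}\frac{f(w)}{w-z_0}\,dw-\frac{1}{2\pi i}\int_{\partial A_{n+1}^-}\frac{f(w)}{w-z_0}\,dw.
\end{displaymath}
For $z_0\in A_n$ the distances to the two contours satisfy $|w-z_0|\geq e^{n+1}-e^n=e^n(e-1)$ on $\partial A_{n+1}^+$ and $|w-z_0|\geq e^{-n}-e^{-(n+1)}=e^{-n}(e-1)/e$ on $\partial A_{n+1}^-$. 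Estimating the integrand and invoking the definition of $\|\cdot\|_{L^{n+1}_1}$ yields
\begin{displaymath}
|f(z_0)|\leq \left(\frac{1}{e^{n}(e-1)}+\frac{e^{n+1}}{e-1}\right)\|f\|_{L^{n+1}_1}\leq C(n)\|f\|_{L^{n+1}_1},
\end{displaymath}
with $C(n):=2e^{n+1}/(e-1)$. Taking the supremum over $z_0\in A_n$ gives $\|f\|_{L^n_\infty}\leq C(n)\|f\|_{L^{n+1}_1}$, proving that the gradings are $(1,0,C(n))$-equivalent.

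The only thing to check carefully is the annulus version of Cauchy's formula; but this is classical and follows from writing $f$ as the sum of its holomorphic and principal Laurent parts on $\{e^{-(n+1)}<|w|<e^{n+1}\}$ and applying the standard Cauchy formula to each. The main ``obstacle'' is really just bookkeeping of the two distances $|w-z_0|$, since they scale oppositely in $n$; the symmetric role of the inner and outer boundary in the definition of the $L^1_n$-grading (a supremum rather than a sum) costs at most a harmless factor of two and does not affect tame equivalence.
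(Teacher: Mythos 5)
Your proof is correct and follows essentially the same route as the paper's: the easy direction by bounding the integrand by its supremum over the contour, and the converse via the Cauchy integral representation over the boundary of the enlarged annulus together with an estimate on the distance from $A_n$ to $\partial A_{n+r}$. The only cosmetic difference is that the paper first passes to the $2\pi i$-periodic strip picture (identifying $A_n$ with $A_n'$), where the distance to the boundary is uniformly $r$ and the constant comes out as $2/r$ rather than your $n$-dependent $C(n)=2e^{n+1}/(e-1)$; either constant is admissible for tame equivalence.
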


\begin{proof}~
\begin{enumerate}
	\item We show: $\|f\|_{L_{1}^n}\leq \|f\|_{L_{\infty}^n}$.
\begin{alignat*}{1}
\|f\|_{L_{1}^n}&=\frac{1}{2\pi} \sup \left\{\int_{\partial A_n^+}|f(z)|dz,\int_{\partial A_n^-}|f(z)|dz   \right\}\leq\\
&\leq \frac{1}{2\pi} \sup \left\{\int_{\partial A_n^+}\sup_{\zeta\in A_n^+}|f(\zeta)|dz,\int_{\partial A_n^-}\sup_{\zeta \in A_n^-}|f(\zeta)|dz   \right\} \leq\\
&\leq \sup \left\{ \sup_{z \in \partial A_n^+} |f(z)|, \sup_{z\in  \partial A_n^-}|f(z)| \right\} \leq\\
&\leq \sup_{z\in A_n} |f(z)|=\|f\|_{L_{\infty}^n}
\end{alignat*}

	\item We show: $\|f\|_{L_{\infty}^n}\leq \frac{2}{r} \|f\|_{L_{1}^n}$.

	To this end, we identify the space $\textrm{Hol}(\mathbb C^*, \mathbb C)$ with the space $\textrm{Hol}_{2\pi i}(\mathbb C, \mathbb C)$ of $2\pi i$ periodic functions. Under this identification $A_n$ will be identified with $A_n'$.
\end{enumerate}
\begin{alignat*}{1}
\|f\|_{L_{\infty}^n}&= \sup_{z \in A_n'} |f(z)|=\\
&= \sup_{z \in A_n'}\left|\frac{1}{2\pi i} \left\{ \int_{n+r}^{n+r+2\pi i} \frac{f(\zeta)}{z-\zeta}d\zeta - \int_{-n-r}^{-n-r+2\pi i} \frac{f(\zeta)}{z-\zeta}d\zeta \right\}\right|\leq\\
&\leq \sup_{z \in A_n'}\frac{1}{2\pi} \left\{ \int_{n+r}^{n+r+2\pi i} \left|\frac{f(\zeta)}{r}\right|d\zeta + \int_{-n-r}^{-n-r+2\pi i} \left|\frac{f(\zeta)}{r}\right|d\zeta\right\}=\\
&=\sup_{z\in A_n'}\frac{1}{2\pi r}\left\{ \int_{n+r}^{n+r+2\pi i} \left|f(\zeta)\right|d\zeta + \int_{-n-r}^{-n-r+2\pi i} \left|f(\zeta)\right|d\zeta\right\}\leq\\
&\leq \frac{2}{r}\frac{1}{2\pi} \sup\left\{\int_{n+r}^{n+r+2\pi i}|f(\zeta)|, \int_{-n-r}^{-n-r+2\pi i}|f(\zeta)| \right\}=\\
&=\frac{2}{r}\|f\|_{L_{\infty}^{n+r}}
\end{alignat*}
\end{proof}

\begin{lemma}
\label{holc*cisfrechet}
$F:= \textrm{Hol}(\mathbb C^*, \mathbb C)$ is a tame Fr\'echet space.
\end{lemma}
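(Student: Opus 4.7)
The plan is to realize $F = \textrm{Hol}(\mathbb{C}^*,\mathbb{C})$ as a tame direct summand of the graded Fr\'echet space $G := \textrm{Hol}(\mathbb{C},\mathbb{C}) \times \textrm{Hol}(\mathbb{C},\mathbb{C})$, which is tame by lemma~\ref{holccistame} combined with the product clause of lemma~\ref{constructionoftamespaces}. Once tame linear maps $\Phi:F\to G$ and $\Psi:G\to F$ with $\Psi\circ\Phi = \mathrm{id}_F$ are produced, the tame direct summand clause of lemma~\ref{constructionoftamespaces} concludes the proof.

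The maps are built from the Laurent decomposition. Any $f\in F$ has a unique Laurent expansion $f(z) = \sum_{k\in\mathbb{Z}} a_k z^k$ converging on $\mathbb{C}^*$; setting $f_+(z):=\sum_{k\geq 0} a_k z^k$ and $\tilde f_-(w):= \sum_{k\geq 1} a_{-k} w^k$, both series define entire functions, so I can define
$$\Phi(f) := (f_+,\tilde f_-) \in G, \qquad \Psi(h_1,h_2)(z) := h_1(z) + h_2(1/z) - h_2(0).$$
A direct series computation shows $\Psi\circ\Phi = \mathrm{id}_F$ on $F$.

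For tameness, I work throughout with the $L_\infty^n$ grading (justified for $F$ by the previous lemma). The estimate on $\Psi$ is immediate: for $z\in A_n$ one has $|z|\leq e^n$ and $|1/z|\leq e^n$, so
$$\|\Psi(h_1,h_2)\|_{L_\infty^n} \;\leq\; \|h_1\|_{L_\infty^n} + \|h_2\|_{L_\infty^n} + \|h_2\|_{L_\infty^0} \;\leq\; 3\,\|(h_1,h_2)\|_{L_\infty^n},$$
so $\Psi$ is $(0,0,3)$\ndash tame. For $\Phi$, I would invoke the Cauchy integral representations that recover the two Laurent parts from the function $f$ on a separating circle,
$$f_+(z) = \frac{1}{2\pi i}\oint_{|\zeta|=e^{n+1}} \frac{f(\zeta)}{\zeta - z}\,d\zeta, \qquad f_-(z) = -\frac{1}{2\pi i}\oint_{|\zeta|=e^{-(n+1)}} \frac{f(\zeta)}{\zeta - z}\,d\zeta,$$
valid for $|z|\leq e^n$ and $|z|\geq e^{-n}$ respectively. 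Standard ML\ndash type estimates on these integrals yield $\|f_+\|_{L_\infty^n}, \|\tilde f_-\|_{L_\infty^n} \leq C(n)\,\|f\|_{L_\infty^{n+1}}$, so $\Phi$ is $(1,0,C(n))$\ndash tame.

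The main obstacle is conceptual rather than computational: it is recognising that, although $\mathbb{C}^*$ is not simply connected (so the proof of lemma~\ref{holccistame} does not apply directly), the Laurent splitting canonically embeds $F$ into two copies of the entire\ndash function space, with the correction term $-h_2(0)$ in $\Psi$ absorbing the ambiguity of the constant coefficient. The Cauchy estimates of the previous lemma already provide the necessary analytic content; no new technique is required.
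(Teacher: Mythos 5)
Your proof is correct and follows essentially the same route as the paper: the paper's own proof also splits $f$ into its nonnegative and negative Laurent parts (each entire after $z\mapsto 1/z$ on the second) and establishes the two tame estimates by the same Cauchy-integral bounds you invoke. The only cosmetic difference is that you package the target as $\textrm{Hol}(\mathbb{C},\mathbb{C})\times\textrm{Hol}(\mathbb{C},\mathbb{C})$ and cite lemma~\ref{holccistame}, while the paper maps directly into the sequence space $\Sigma(\mathbb{C}^2)$ via the coefficients $(c_k,c_{-k})$ --- the same space under the identification $\Sigma(B)\cong\textrm{Hol}(\mathbb{C},B)$ proved earlier.
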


\begin{proof}
Let $f:= \sum_{k\in \mathbb Z} c_k z^k$ and set $f_0^+:=\sum_{k\in \mathbb N_0} c_k z^k$  and $f^-:=\sum_{-k\in \mathbb N} c_k z^k$. Clearly $f_0^+(z)$ and $f^-(\frac{1}{z})$ are holomorphic functions on $\mathbb C$.
Let
$$
 \begin{matrix}
\varphi: &\textrm{Hol}(\mathbb C^*, \mathbb C) &\longrightarrow &\Sigma(\mathbb C^2)\\
       &(f) &\mapsto &\left(\{c_k\}_{k\geq 0}, \{c_k\}_{k < 0} \right) \,.
\end{matrix}
$$

We use the notation $\widetilde{c}_k:=(c_k, c_{-k})\subset \mathbb C^2$ and use the supremum-norm on $\mathbb C^2$. 

\begin{enumerate}
\item We show: $\|f\|_{L_{\infty}^n}\leq \|\{\widetilde {c}_k\}\|$.
\begin{alignat*}{1}
\|f\|_{L_{\infty}^n}&= \sup_{z \in A_n}|f(z)|\leq\\
&\leq \sup_{z \in A_n}\left\{|f_0^+(z)|+ |f^-(z)| \right\}\leq\\
&\leq 2  \sup_{z\in A_n}\left\{\sup \left\{|f_0^+(z)|, |f^-(z)|  \right\} \right\}=\\
&= 2 \sup\left\{ \sup_{z \in A_n}|f_0^+(z)|, \sup_{z \in A_n}|f^-(\frac{1}{z})|  \right\}=\\
&= 2  \sup\left\{\|f_0^+\|_{L_{\infty}^n}, \|f^-\|_{L_{\infty}^n} \right\}\leq\\
&\leq 2   \sup \left\{ \|\{c_k\}\|_{L_1^n}, \|\{c_{-k}\|_{L_1^n}\}\right\}\leq\\
&\leq 2  \|\{\widetilde{c}_k\}\|_{L_{\infty}^n} 
\end{alignat*}

\item We show: $\|\{c_k\}\|_{L_{\infty}^n}\leq \|f\|_{L_1^n}$.
\begin{alignat*}{1}
\|\{\widetilde{c}_k\}\|_{L_{\infty}^n} &= \sup_k e^{nk}|\widetilde{c}_k|=\\
&=\sup_k e^{nk}\left|\sup\left\{c_k, c_{-k} \right\} \right|=\\
&\leq \sup \left\{\sup_k e^{nk}|c_k|, \sup_k e^{nk}|c_{-k}| \right\}\leq\\
&\leq \sup \left\{\sup_k e^{nk}\frac{1}{2\pi}\left|\int_{n}^{n+2\pi i} e^{-kz} f(z) dz \right|, \sup_k e^{nk} \frac{1}{2\pi} \left| \int_{-n}^{-n+2\pi i} e^{kz} f(z)\right|| \right\} \leq\\
&
\begin{aligned}
\leq \sup &\left\{\sup_k \frac{1}{2}\int_{n}^{n+2\pi i} | e^{nk} e^{-kn}|| e^{-k i\Im(z)}| |f(z)|dz\right.,\\
& \hphantom{\{}\left. \sup_k \frac{1}{2\pi} \int_{-n}^{-n+2\pi i} |e^{nk} e^{-kn}||e^{ki\Im(z)} ||f(z)|dz \right\}\leq
\end{aligned} \\
&\leq \sup \left\{\sup_k \frac{1}{2}\int_{n}^{n+2\pi i} |f(z)|dz, \sup_k \frac{1}{2\pi} \int_{-n}^{-n+2\pi i} |f(z)|dz \right\} \leq\\
&\leq \sup \left\{\sup_k \frac{1}{2}\int_{n}^{n+2\pi i} \sup_{\zeta \in \partial {A'_n}^+}|f(\zeta )|dz, \sup_k \frac{1}{2\pi} \int_{-n}^{-n+2\pi i} \sup_{\zeta \in \partial{A'_n}^+}|f(\zeta)|dz 
\right\} =\\
&= \sup\left\{ \sup_{\zeta \in \partial {A'_n}^+} |f(z)|,  \sup_{\zeta \in \partial {A'_n}^-} |f(z)|  \right\}\leq\\
&\leq \sup_{\zeta \in A'_n |f(\zeta)|}= \|f\|_{L_{\infty}^n}
\end{alignat*}
\end{enumerate}
\end{proof}

The following result will be needed in the proof that an affine Kac-Moody algebra is a tame Fr\'echet Lie algebra:

\begin{lemma}
\label{differentialistame}
The differential 
\begin{displaymath}
\frac{d}{dz}:\textrm{Hol}(\mathbb{C}^*,\mathbb{C}) \longrightarrow \textrm{Hol}(\mathbb{C}^*,\mathbb{C}), \quad f \mapsto f'
\end{displaymath}
is a tame linear map.
\end{lemma}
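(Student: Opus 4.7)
The plan is to use Cauchy's integral formula for the derivative, adapted to the annular geometry of $\mathbb{C}^*$. Since $\textrm{Hol}(\mathbb{C}^*,\mathbb{C})$ is a Laurent-type function space, a function $f$ holomorphic on an annulus $\{r < |\zeta| < R\}$ admits the representation
\begin{displaymath}
f'(z) \;=\; \frac{1}{2\pi i}\oint_{|\zeta|=R}\frac{f(\zeta)}{(\zeta-z)^{2}}\,d\zeta \;-\; \frac{1}{2\pi i}\oint_{|\zeta|=r}\frac{f(\zeta)}{(\zeta-z)^{2}}\,d\zeta
\end{displaymath}
for every $z$ with $r<|z|<R$. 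This formula is the natural replacement for the usual Cauchy formula on a disc and is the core of the argument.

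To obtain a tame estimate, I take $z\in A_n$ and specialize to $R:=e^{n+1}$ and $r:=e^{-n-1}$, so that both integration circles lie in the enlarged annulus $A_{n+1}$. On the outer circle one has the lower bound $|\zeta-z|\geq e^{n+1}-e^{n}=e^{n}(e-1)$, and on the inner circle $|\zeta-z|\geq e^{-n}-e^{-n-1}=e^{-n-1}(e-1)$. Plugging these into the ML-inequality and using $\sup_{|\zeta|=e^{\pm(n+1)}}|f(\zeta)|\leq \|f\|_{L_{\infty}^{n+1}}$, a direct calculation gives
\begin{displaymath}
|f'(z)| \;\leq\; \frac{e^{\,1-n}+e^{\,n+1}}{(e-1)^{2}}\,\|f\|_{L_{\infty}^{n+1}}.
\end{displaymath}
Taking the supremum over $z\in A_{n}$ yields $\|f'\|_{L_{\infty}^{n}}\leq C(n)\,\|f\|_{L_{\infty}^{n+1}}$ with $C(n):=\tfrac{2e^{n+1}}{(e-1)^{2}}$ (for $n\geq 0$), which is exactly the $(r,b,C(n))$-tame estimate with $r=1$ and $b=0$. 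Linearity of $d/dz$ is obvious, so this proves the lemma.

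The only subtle point — and what I would expect to be the main technical obstacle — is the inner circle: a naive attempt to mimic Hamilton's proof of the disc case by shrinking a single contour fails because the geometry near $|z|=e^{-n}$ degenerates as $n\to\infty$. The remedy is precisely to pick the inner radius of the form $e^{-n-1}$ (not $e^{-n}-\varepsilon$), so that the denominator $|\zeta-z|$ is controlled by $e^{-n-1}(e-1)$ rather than by an $n$-dependent arbitrary constant. With this choice the two sides of the annulus contribute symmetrically and the estimate goes through. Note also that $C(n)$ is allowed to grow with $n$ in the definition of a tame linear map, so the exponential blow-up in $n$ is harmless.
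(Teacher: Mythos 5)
Your proof is correct, and it reaches the same $(1,0,C(n))$-tame estimate $\|f'\|_{L_\infty^n}\leq C(n)\|f\|_{L_\infty^{n+1}}$ that the paper obtains, but by a slightly different route. The paper does not use the two-contour Cauchy representation of $f'$ on the annulus at all: it cites a pointwise disc estimate (lemma~\ref{coroflang}, a consequence of Cauchy's integral formula on a disc), and for each $z\in A_n$ applies it on the disc $\overline{D}(z,R)$ with $R=e^{-(n+1)}(e-1)$, which is exactly the distance from $A_n$ to the inner boundary of $A_{n+1}$ and hence guarantees $\overline{D}(z,R)\subset A_{n+1}$. This yields $\|f'\|_n\leq \frac{e^{n+1}}{e-1}\|f\|_{n+1}$, marginally sharper than your $\frac{2e^{n+1}}{(e-1)^2}$, though the difference is immaterial for tameness. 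Your global two-contour argument is equally valid and arguably more transparent about where the exponential growth of $C(n)$ comes from (the inner circle contributes the dominant term $e^{n+1}/(e-1)^2$). One correction to your closing remark: the ``shrinking single disc'' strategy does not fail --- it is precisely what the paper does. The shrinking radius only forces $C(n)$ to grow like $e^{n+1}$, which, as you yourself note, is permitted in the definition of a tame linear map; there is no obstacle there, only an $n$-dependent constant.
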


\noindent For the proof we need the following result:

\begin{lemma}
\label{coroflang}
Let $f$ be analytic on a closed disc $\overline{D}(z_0, R), R>0$. Let $0 < R_1 < R$. Denote by $\|f\|_R$ the supremum norm of $f$ on the circle of radius $R$. Then for $z \in \overline{D}(z_0, R_1)$, we have:
$$|f^{(k)}(z)|\leq \frac{k! R}{(R-R_1)^{k+1}}\|f\|_R\,.$$
\end{lemma}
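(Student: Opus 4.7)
The plan is to derive the stated bound directly from the Cauchy integral formula for derivatives, together with a standard ML-estimate. The key observation is that once we have represented $f^{(k)}(z)$ as a contour integral over the circle of radius $R$ around $z_0$, bounding it reduces to estimating the integrand uniformly and multiplying by the length of the contour. The only point that requires care is the lower bound on $|\zeta - z|$ when $z$ lies in the smaller disc but $\zeta$ lies on the larger circle.

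First I would recall the Cauchy integral formula for derivatives of a function analytic on $\overline{D}(z_0,R)$: for any $z$ in the open disc $D(z_0,R)$,
\begin{displaymath}
f^{(k)}(z) = \frac{k!}{2\pi i} \oint_{|\zeta - z_0| = R} \frac{f(\zeta)}{(\zeta - z)^{k+1}}\,d\zeta.
\end{displaymath}
Since we assume $z\in\overline{D}(z_0,R_1)$ with $R_1<R$, such a $z$ lies strictly inside $D(z_0,R)$, so the formula applies.

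Next I would establish the geometric lower bound: for $\zeta$ with $|\zeta - z_0| = R$ and $z$ with $|z - z_0| \leq R_1$, the reverse triangle inequality gives
\begin{displaymath}
|\zeta - z| \geq |\zeta - z_0| - |z - z_0| \geq R - R_1 > 0.
\end{displaymath}
Consequently the integrand satisfies $|f(\zeta)/(\zeta - z)^{k+1}| \leq \|f\|_R/(R-R_1)^{k+1}$ uniformly for $\zeta$ on the circle of radius $R$.

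Finally, applying the standard ML-estimate to the contour integral, whose contour has length $2\pi R$, yields
\begin{displaymath}
|f^{(k)}(z)| \leq \frac{k!}{2\pi}\cdot \frac{\|f\|_R}{(R-R_1)^{k+1}} \cdot 2\pi R = \frac{k!\, R}{(R-R_1)^{k+1}}\,\|f\|_R,
\end{displaymath}
which is the claimed inequality. There is no real obstacle here; the only subtle step is the geometric separation $|\zeta - z|\geq R - R_1$, which is what forces the factor $(R-R_1)^{-(k+1)}$ and underlies the whole usefulness of the estimate for the subsequent tame-map argument in Lemma \ref{differentialistame}.
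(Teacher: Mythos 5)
Your proof is correct and is exactly the argument the paper has in mind: the paper simply states that the lemma ``is an application of Cauchy's integral formula'' and refers to Lang's book for details, and the Cauchy-formula-plus-ML-estimate computation you give, with the separation bound $|\zeta - z| \geq R - R_1$, is precisely that standard argument.
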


\begin{proof}
This lemma is an application of Cauchy's integral formula. For details~\cite{Lang99}, pp.~131.
\end{proof}

\begin{proof}[Proof of lemma~\ref{differentialistame}]~
\begin{enumerate}
\item $\frac{d}{dz}$ is linear.
\item To prove tameness we use lemma~\ref{coroflang}. Let $z\in A_n$ and choose $R= e^{-(n+1)}(e-1)$. Thus $\overline{D}(z, R)\subset A_{n+1}$. Hence $\|f\|_R\leq \|f\|_{n+1}$. 

In the notation of lemma~\ref{coroflang} we can use that $R_1=0$ and $k=1$  and calculate in this way: 
\begin{displaymath}
\|f'(z)\|_n\leq \frac{R}{R^{2}}\|f\|_{n+1}=\frac{e^{n+1}}{e-1}\|f\|_{n+1}\,.
\end{displaymath}

This description is independent of $z$. Thus 
\begin{displaymath}
\|f'\|_n\leq \frac{e^{n+1}}{e-1}\|f\|_{n+1}\,.
\end{displaymath}

Thus the differential is $(1,0,\frac{e^{n+1}}{e-1})$-tame.
\end{enumerate}
\end{proof}

A further class of spaces that will be important for the description of twisted Kac-Moody algebras (compare definition~\ref{definitiontwistedloopalgebra}) are
spaces of holomorphic functions that satisfy some functional equation. We describe first the general setting and specialize then to the two most important cases, namely symmetric and antisymmetric holomorphic functions.

\begin{lemma}[Subspaces of $\textrm{Hol}(\mathbb{C}^*, \mathbb{C})$]
\label{subspacesofmg}
Let $k,l\in \mathbb{N}$ and $\omega=e^{\frac{2\pi i}{k}}$. The spaces 
\begin{displaymath}
\textrm{Hol}^{k,l}(\mathbb{C}^*, \mathbb C):=\{ f\in \textrm{Hol}(\mathbb{C}^*, \mathbb C)| f(\omega z)=\omega^{l}f(z)\}
\end{displaymath}
 are tame Fr\'echet spaces.
\end{lemma}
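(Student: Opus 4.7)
The plan is to exhibit $\textrm{Hol}^{k,l}(\mathbb{C}^*, \mathbb{C})$ as a tame direct summand of $\textrm{Hol}(\mathbb{C}^*, \mathbb{C})$, which was shown to be tame Fréchet in lemma~\ref{holc*cisfrechet}. The condition $f(\omega z) = \omega^l f(z)$ is preserved under uniform convergence on compact subsets of $\mathbb{C}^*$, so $\textrm{Hol}^{k,l}(\mathbb{C}^*,\mathbb{C})$ is a closed subspace of $\textrm{Hol}(\mathbb{C}^*,\mathbb{C})$ and inherits the natural grading $\|f\|_{L_\infty^n} = \sup_{z \in A_n}|f(z)|$.

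The heart of the argument is the averaging projector
\begin{displaymath}
P \colon \textrm{Hol}(\mathbb{C}^*, \mathbb{C}) \longrightarrow \textrm{Hol}^{k,l}(\mathbb{C}^*, \mathbb{C}), \qquad P(f)(z) := \frac{1}{k}\sum_{j=0}^{k-1}\omega^{-jl} f(\omega^j z).
\end{displaymath}
A one-line index shift shows $P(f)(\omega z) = \omega^l P(f)(z)$, so the image lies in $\textrm{Hol}^{k,l}$; and if $f$ already satisfies $f(\omega z) = \omega^l f(z)$, each summand equals $f(z)$, so $P \circ \iota = \mathrm{id}$ where $\iota$ denotes the inclusion. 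Equivalently, on the Laurent coefficients $f = \sum_n c_n z^n$, the operator $P$ kills every $c_n$ with $n \not\equiv l \pmod{k}$ and leaves the others unchanged.

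Both $\iota$ and $P$ are tame. The inclusion $\iota$ is $(0,0,1)$-tame because the grading on $\textrm{Hol}^{k,l}$ is precisely the restricted one. For $P$, since $|\omega|=1$ we have $\omega A_n = A_n$, and therefore
\begin{displaymath}
\|P(f)\|_{L_\infty^n} \leq \frac{1}{k}\sum_{j=0}^{k-1}\sup_{z\in A_n} |f(\omega^j z)| = \sup_{z\in A_n}|f(z)| = \|f\|_{L_\infty^n},
\end{displaymath}
so $P$ is $(0,0,1)$-tame as well. Thus $\textrm{Hol}^{k,l}(\mathbb{C}^*,\mathbb{C})$ is a tame direct summand of $\textrm{Hol}(\mathbb{C}^*,\mathbb{C})$.

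To conclude, lemma~\ref{holc*cisfrechet} supplies tame linear maps $\varphi \colon \textrm{Hol}(\mathbb{C}^*,\mathbb{C}) \to \Sigma(\mathbb{C}^2)$ and $\psi \colon \Sigma(\mathbb{C}^2) \to \textrm{Hol}(\mathbb{C}^*,\mathbb{C})$ with $\psi \circ \varphi = \mathrm{id}$. Composing, $\varphi \circ \iota$ and $P \circ \psi$ are tame linear maps between $\textrm{Hol}^{k,l}(\mathbb{C}^*,\mathbb{C})$ and $\Sigma(\mathbb{C}^2)$ with $(P\circ\psi)\circ(\varphi\circ\iota) = P \circ \iota = \mathrm{id}$, exhibiting $\textrm{Hol}^{k,l}(\mathbb{C}^*,\mathbb{C})$ as a tame direct summand of $\Sigma(\mathbb{C}^2)$, which is the definition of a tame Fréchet space. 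The only step requiring any care is the verification that $P$ preserves the grading constants, which reduces to the rotational invariance of the annuli $A_n$; there is no real analytic obstacle.
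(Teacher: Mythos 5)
Your proof is correct, and it actually takes a more careful route than the paper does. The paper's own proof is a one-liner: it observes that $\textrm{Hol}^{k,l}(\mathbb{C}^*,\mathbb{C})$ is a \emph{closed subspace} of $\textrm{Hol}(\mathbb{C}^*,\mathbb{C})$ and then appeals to lemma~\ref{constructionoftamespaces}. But that lemma, as stated, only covers tame \emph{direct summands} (and cartesian products), not arbitrary closed subspaces --- and in Hamilton's framework a closed subspace of a tame space need not be tame unless it is tamely complemented. What you supply is exactly the missing ingredient: the averaging projector $P(f)(z)=\frac{1}{k}\sum_{j=0}^{k-1}\omega^{-jl}f(\omega^j z)$, together with the observation that the annuli $A_n$ are rotation-invariant so that $P$ is $(0,0,1)$-tame, exhibits $\textrm{Hol}^{k,l}$ as a tame direct summand of $\textrm{Hol}(\mathbb{C}^*,\mathbb{C})$, and composing with the maps from lemma~\ref{holc*cisfrechet} then legitimately places you in the situation of lemma~\ref{constructionoftamespaces}. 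Your index-shift verification that $P$ maps into $\textrm{Hol}^{k,l}$ and restricts to the identity there is right, as is the Laurent-coefficient description of $P$. In short: the paper's argument buys brevity at the cost of an unjustified step; yours buys rigor at the cost of a short explicit construction, and is the version one would want to keep.
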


\begin{proof}
As usual for $f\in \textrm{Hol}^{k,l}(\mathbb{C}^*, \mathbb C)$, we put $\|f\|_n:=\displaystyle\sup_{z\in A_n}|f(z)|$. As  $\textrm{Hol}^{k,l}(\mathbb{C}^*, \mathbb C)$ is a closed subspace of $\textrm{Hol}(\mathbb{C}^*, \mathbb{C})$, it is a tame Fr\'echet space as a consequence of lemma~\ref{constructionoftamespaces}.
\end{proof}

Twisted affine Kac-Moody algebras arise as fixed point algebras of diagram automorphisms of non-twisted affine Kac-Moody algebras. The list of possible diagram automorphism shows that nontrivial diagram automorphisms have order $k=2$ or $k=3$~\cite{Carter05}. Thus the values of $k$ which are important for us are $k=2$ and $k=3$.

\noindent For $k=2$, lemma~\ref{subspacesofmg} has the corollaries:

\begin{corollary}[symmetric and antisymmetric loops]~
\label{holc*csaisfrechet}
\begin{itemize}
\item[-] The space $\textrm{Hol}^s(\mathbb{C}^*, \mathbb C):=\{ f\in \textrm{Hol}(\mathbb{C}^*, \mathbb C), f(z)=f(-z)\}$ is a tame Fr\'echet space. 
\item[-] The space $\textrm{Hol}^a(\mathbb{C}^*, \mathbb C):=\{ f\in \textrm{Hol}(\mathbb{C}^*, \mathbb C), f(z)=-f(-z)\}$ is a tame Fr\'echet space.
\end{itemize}
\end{corollary}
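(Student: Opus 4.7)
The plan is to derive both statements as immediate specializations of lemma~\ref{subspacesofmg}. That lemma establishes that for any $k,l\in\mathbb{N}$ with $\omega=e^{2\pi i/k}$, the space
\begin{displaymath}
\textrm{Hol}^{k,l}(\mathbb{C}^*,\mathbb{C})=\{f\in\textrm{Hol}(\mathbb{C}^*,\mathbb{C})\mid f(\omega z)=\omega^l f(z)\}
\end{displaymath}
carries a tame Fr\'echet structure, obtained by restricting the seminorms $\|f\|_n=\sup_{z\in A_n}|f(z)|$ from $\textrm{Hol}(\mathbb{C}^*,\mathbb{C})$, and using that this is a closed subspace together with the closed-subspace stability of tame Fr\'echet spaces from lemma~\ref{constructionoftamespaces}.

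For the symmetric case, I would choose $k=2$ and $l=0$, so that $\omega=-1$ and the functional equation $f(\omega z)=\omega^0 f(z)$ becomes $f(-z)=f(z)$; hence $\textrm{Hol}^s(\mathbb{C}^*,\mathbb{C})=\textrm{Hol}^{2,0}(\mathbb{C}^*,\mathbb{C})$ is a tame Fr\'echet space by the lemma. For the antisymmetric case, I would choose $k=2$ and $l=1$, so that $\omega^l=-1$ and the functional equation becomes $f(-z)=-f(z)$; hence $\textrm{Hol}^a(\mathbb{C}^*,\mathbb{C})=\textrm{Hol}^{2,1}(\mathbb{C}^*,\mathbb{C})$ is a tame Fr\'echet space.

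There is no real obstacle here, since the work has already been done in the two preceding lemmas (lemma~\ref{holc*cisfrechet} supplying the ambient tame Fr\'echet structure on $\textrm{Hol}(\mathbb{C}^*,\mathbb{C})$, and lemma~\ref{subspacesofmg} handling the passage to equivariant subspaces). The only verification worth spelling out is the closedness of $\textrm{Hol}^s$ and $\textrm{Hol}^a$ inside $\textrm{Hol}(\mathbb{C}^*,\mathbb{C})$, which is immediate because both conditions $f(z)\mp f(-z)=0$ are preserved under locally uniform convergence of holomorphic functions on $\mathbb{C}^*$, i.e.\ under convergence in each seminorm $\|\cdot\|_n$. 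Thus the corollary follows at once from the two parameter choices above.
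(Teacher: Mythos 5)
Your proposal is correct and matches the paper's own argument: the corollary is stated there precisely as the $k=2$ specialization of lemma~\ref{subspacesofmg} (with $\omega^l=\pm1$ giving the symmetric and antisymmetric cases), resting on the same closed-subspace stability from lemma~\ref{constructionoftamespaces}. Your extra remark on why the subspaces are closed under convergence in the seminorms $\|\cdot\|_n$ is a reasonable detail the paper leaves implicit.
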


\noindent Lemmata~\ref{constructionoftamespaces} and~\ref{holc*cisfrechet} and corollary~\ref{holc*csaisfrechet} include the following result:

\begin{corollary}
\label{holc*cnisfrechet}
$F:= \textrm{Hol} (\mathbb C^*, \mathbb C^n)$, $F^s:= \textrm{Hol}^s (\mathbb C^*, \mathbb C^n)$ and $F^a:= \textrm{Hol}^a (\mathbb C^*, \mathbb C^n)$ are tame Fr\'echet spaces.
\end{corollary}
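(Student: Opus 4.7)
The plan is to reduce all three statements to the scalar cases already established (lemma~\ref{holc*cisfrechet} for the unrestricted case and corollary~\ref{holc*csaisfrechet} for the symmetric and antisymmetric cases) by exploiting the natural coordinate decomposition of $\mathbb{C}^n$-valued holomorphic maps.

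First I would fix the standard basis $e_1,\dots,e_n$ of $\mathbb{C}^n$ and write every $f\in \textrm{Hol}(\mathbb{C}^*,\mathbb{C}^n)$ uniquely as $f(z)=\sum_{i=1}^n f_i(z)e_i$ with $f_i\in \textrm{Hol}(\mathbb{C}^*,\mathbb{C})$. This produces a linear bijection
\begin{displaymath}
\Phi:\textrm{Hol}(\mathbb{C}^*,\mathbb{C}^n)\longrightarrow \textrm{Hol}(\mathbb{C}^*,\mathbb{C})^{\,n},\quad f\mapsto (f_1,\dots,f_n)\,.
\end{displaymath}
If we equip the left hand side with the seminorms $\|f\|_n:=\sup_{z\in A_n}|f(z)|$ (using, say, the supremum norm on $\mathbb{C}^n$) and the right hand side with the product grading from lemma~\ref{constructionoftamemaps}, then each coordinate projection satisfies $\|f_i\|_n\le\|f\|_n$, and conversely $\|f\|_n\le\sum_{i=1}^n\|f_i\|_n$. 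Both estimates are $(0,0,C)$-type bounds, so $\Phi$ is a tame isomorphism. Combining this with the second clause of lemma~\ref{constructionoftamespaces}, which asserts that Cartesian products of tame Fr\'echet spaces are tame, and with lemma~\ref{holc*cisfrechet}, gives that $F=\textrm{Hol}(\mathbb{C}^*,\mathbb{C}^n)$ is a tame Fr\'echet space.

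For $F^s$ and $F^a$ I would observe that the conditions $f(z)=\pm f(-z)$ are imposed coordinatewise, so that $\Phi$ restricts to linear bijections
\begin{displaymath}
\Phi^s:F^s\longrightarrow \textrm{Hol}^s(\mathbb{C}^*,\mathbb{C})^{\,n},\qquad \Phi^a:F^a\longrightarrow \textrm{Hol}^a(\mathbb{C}^*,\mathbb{C})^{\,n}\,.
\end{displaymath}
The same norm comparison as above shows that these are tame isomorphisms. By corollary~\ref{holc*csaisfrechet} each factor on the right is a tame Fr\'echet space, and a second application of the Cartesian product clause of lemma~\ref{constructionoftamespaces} completes the argument.

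The only point that requires care is checking that the grading on $\textrm{Hol}(\mathbb{C}^*,\mathbb{C}^n)$ one wishes to use, defined by supremum over the annuli $A_n$, really is tame equivalent to the product grading obtained from $\textrm{Hol}(\mathbb{C}^*,\mathbb{C})^n$ via $\Phi$; this is however immediate from the equivalence of any two norms on the finite dimensional space $\mathbb{C}^n$ together with the trivial coordinatewise estimates written above, so no genuine analytic difficulty arises. The rest is a bookkeeping exercise with the construction lemmata already in place.
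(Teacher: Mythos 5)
Your proof is correct and follows essentially the same route as the paper, which presents the corollary precisely as a consequence of the Cartesian-product clause of lemma~\ref{constructionoftamespaces} together with lemma~\ref{holc*cisfrechet} and corollary~\ref{holc*csaisfrechet}; you have merely made explicit the coordinatewise identification and the (trivially tame) comparison of gradings that the paper leaves implicit. No gaps.
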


Let $V^n$ be a $n$-dimensional complex vector space. We want a tame structure on $\textrm{Hol}(\mathbb{C}^*, V^n)$. Using the corollary~\ref{holc*cnisfrechet} we get a tame Fr\'echet structure on the space $F:= \textrm{Hol} (\mathbb C^*, \mathbb C^n)$. This yields a tame structure on the spaces $\textrm{Hol}(\mathbb{C}^*, V^n)$, $\textrm{Hol}^s(\mathbb{C}^*, V^n)$ and $\textrm{Hol}^a(\mathbb{C}^*, V^n)$ only after the choice of an identification of $V^n$ with $\mathbb{C}^n$, hence a choice of a basis. As this construction uses this identification of $V^n$ with $\mathbb{C}^n$ we have to prove that the resulting tame structure is independent of it.

\noindent This is the content of the following lemma:

\begin{lemma}
Let $V^n$ be a $n$-dimensional complex vector space equipped with two norms $|\hspace{3pt},\hspace{3pt}|$ and $|\hspace{3pt},\hspace{3pt}|'$. Study the spaces $\textrm{Hol}^{k,l}(\mathbb{C}^*, V^n)$. Define gradings $\|f\|_n:=\displaystyle\sup_{z\in A_n}|f(z)|$ and $\|f\|_n':=\displaystyle\sup_{z\in A_n}|f(z)|'$. Those gradings are tame equivalent. 
\end{lemma}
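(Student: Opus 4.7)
The plan is to reduce the claim to the elementary fact that any two norms on a finite-dimensional vector space are equivalent. Since $V^n$ is finite-dimensional over $\mathbb{C}$, there exist positive constants $C_1, C_2 > 0$ such that
\begin{displaymath}
C_1 |v| \leq |v|' \leq C_2 |v| \quad \text{for every } v \in V^n.
\end{displaymath}
This is the only substantive input; it is purely linear-algebraic and makes no reference to the Fr\'echet structure.

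Next I would transfer this pointwise estimate to a pointwise estimate on the values $f(z)$ of a holomorphic function $f \in \textrm{Hol}^{k,l}(\mathbb{C}^*, V^n)$, and then take the supremum over the annulus $A_n$ from Notation~\ref{A_n}. Concretely, for any fixed $z \in A_n$ the above inequalities applied to $v = f(z)$ yield $C_1 |f(z)| \leq |f(z)|' \leq C_2 |f(z)|$, and taking suprema gives
\begin{displaymath}
C_1 \|f\|_n \leq \|f\|'_n \leq C_2 \|f\|_n \quad \text{for every } n \in \mathbb{N}.
\end{displaymath}

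Finally I would interpret this as tame equivalence in the sense of the definition: the two gradings are $(r, b, C(n))$-equivalent with $r = 0$, $b = 0$, and constant sequence $C(n) \equiv \max(C_2, C_1^{-1})$, which in particular gives $(0, 0, C(n))$-equivalence and hence tame equivalence. There is no real obstacle here; the proof is a direct application of norm equivalence on finite-dimensional spaces, and the only thing to observe is that the ``seminorm shift'' $r$ may be taken to be zero, so the corresponding tame Fr\'echet structure on $\textrm{Hol}^{k,l}(\mathbb{C}^*, V^n)$ is intrinsic and independent of the choice of identification $V^n \cong \mathbb{C}^n$ used to transport the tame structure of Corollary~\ref{holc*cnisfrechet}.
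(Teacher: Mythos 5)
Your proposal is correct and follows essentially the same route as the paper: both reduce the claim to the equivalence of any two norms on the finite-dimensional space $V^n$, apply the resulting constants pointwise to $f(z)$, and pass to the supremum over $A_n$ to obtain a $(0,0,C(n))$-equivalence of the gradings. Your additional remark that the shift $r$ may be taken to be zero, so the tame structure is intrinsic, is a correct and slightly more explicit observation than the paper makes in the proof itself (it is recorded in the subsequent corollary).
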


\begin{proof}
Any two norms on a finite dimensional vector space are equivalent (see any book about elementary analysis, i.e.~\cite{Koenigsberger00}). Thus there exist constants $c_1$ and $c_2$ such that $|x|\leq c_1|x|'$ and $|x|'\leq c_2|x|$. Then $\|f\|_n:=\displaystyle\sup_{z\in A_n}|f(z)|\leq \displaystyle\sup_{z\in A_n}c_1|f(z)|'=c_1 \|f\|_n'$ and $\|f\|_n':=\displaystyle\sup_{z\in A_n}|f(z)|'\leq \displaystyle\sup_{z\in A_n}c_2|f(z)|' =c_2 \|f\|_n$. Thus they are tame equivalent. 
\end{proof}

\begin{corollary}
Any identification of  $\varphi:\mathbb{C}^n\longrightarrow V^n$ gives a norm $1$\ndash norm $\|v\|=\sum v_i$ on $V^n$. The tame structures induced on $\textrm{Hol}(\mathbb{C}^*, \mathbb{C})$ by two different choices are equivalent. Hence the tame structure on $V^n$ is independent of the identification of $V^n$ with $\mathbb{C}^n$.
\end{corollary}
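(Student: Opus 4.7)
The plan is to derive the corollary as an almost immediate consequence of the preceding lemma on equivalence of norms. Concretely, each identification $\varphi:\mathbb{C}^n\longrightarrow V^n$ transports the standard $\ell^1$\ndash norm $\|(z_1,\dots,z_n)\|_1:=\sum_i |z_i|$ on $\mathbb{C}^n$ to a norm on $V^n$ via $|v|_\varphi:=\|\varphi^{-1}(v)\|_1$, and this norm in turn defines a grading on $\textrm{Hol}(\mathbb{C}^*,V^n)$ by $\|f\|_n^\varphi:=\sup_{z\in A_n}|f(z)|_\varphi$, where $A_n$ is the annulus from notation~\ref{A_n}.

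The core step is to show that if $\varphi_1,\varphi_2:\mathbb{C}^n\longrightarrow V^n$ are two such identifications, then the gradings $\|\cdot\|_n^{\varphi_1}$ and $\|\cdot\|_n^{\varphi_2}$ on $\textrm{Hol}(\mathbb{C}^*,V^n)$ are tame equivalent. Since both $|\cdot|_{\varphi_1}$ and $|\cdot|_{\varphi_2}$ are norms on the finite dimensional vector space $V^n$, the elementary fact that all norms on a finite dimensional vector space are equivalent (invoked in the proof of the preceding lemma) gives constants $c_1,c_2>0$ with $|v|_{\varphi_1}\leq c_1|v|_{\varphi_2}$ and $|v|_{\varphi_2}\leq c_2|v|_{\varphi_1}$ for all $v\in V^n$. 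Taking the supremum pointwise over $z\in A_n$ yields $\|f\|_n^{\varphi_1}\leq c_1\|f\|_n^{\varphi_2}$ and the reverse inequality with $c_2$, which is the $(0,0,\max(c_1,c_2))$\ndash equivalence of the two gradings in the sense of the definition of tame equivalence.

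By the preceding lemma this is exactly the statement that the tame structures on $\textrm{Hol}(\mathbb{C}^*,V^n)$ (and its symmetric or antisymmetric variants, by the same reasoning applied on the closed subspaces from lemma~\ref{subspacesofmg}) coincide, proving the corollary. I do not anticipate a genuine obstacle here: the only delicate point is the passage from pointwise equivalence of norms on $V^n$ to equivalence of the sup\ndash over\ndash $A_n$ gradings, which is completely routine because the constants $c_1,c_2$ do not depend on $n$, so the tame equivalence holds with $r=b=0$ and constant function $C(n)\equiv\max(c_1,c_2)$.
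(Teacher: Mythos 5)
Your argument is correct and is essentially the paper's own: the corollary is deduced directly from the preceding lemma by noting that two identifications yield two norms on the finite dimensional space $V^n$, which are equivalent with $n$\ndash independent constants, so the induced sup\ndash gradings over $A_n$ are $(0,0,C)$\ndash tame equivalent. Your explicit remark that the constants do not depend on $n$ is the only point the paper leaves implicit.
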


As a consequence we have proved the following theorem:

\begin{theorem}
 The space $\textrm{Hol}(\mathbb{C}^*, V^n)$ with the family of norms 
\begin{displaymath}
\|f\|_n=\sup_{z\in A_n}|f(z)|_2
\end{displaymath}
where $|f_z|_2$ denotes the usual Euclidean norm on $\mathbb{C}^n$ is tame. 
\end{theorem}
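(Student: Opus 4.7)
The plan is to reduce the statement to the already-established tameness of $\textrm{Hol}(\mathbb{C}^*,\mathbb{C}^n)$ by choosing a linear identification of $V^n$ with $\mathbb{C}^n$, and then to invoke the preceding lemma to see that the tame structure is intrinsic to $V^n$.

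First, I would fix an arbitrary linear isomorphism $\varphi:V^n\longrightarrow\mathbb{C}^n$ (e.g.\ by picking a basis of $V^n$). Pulling back the Euclidean norm on $\mathbb{C}^n$ yields a norm $|v|_\varphi:=|\varphi(v)|_2$ on $V^n$, and pointwise post-composition by $\varphi$ gives a vector space isomorphism
\begin{displaymath}
\Phi:\textrm{Hol}(\mathbb{C}^*,V^n)\longrightarrow\textrm{Hol}(\mathbb{C}^*,\mathbb{C}^n),\qquad f\mapsto \varphi\circ f\, .
\end{displaymath}
By construction $\Phi$ is an isometry for the graded norms $\|f\|_n^{\varphi}:=\sup_{z\in A_n}|f(z)|_\varphi$ on the source and $\|g\|_n=\sup_{z\in A_n}|g(z)|_2$ on the target, so $\Phi$ and $\Phi^{-1}$ are $(0,0,1)$\ndash tame. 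Hence $\Phi$ is a tame isomorphism, and by Corollary~\ref{holc*cnisfrechet} the space $\textrm{Hol}(\mathbb{C}^*,V^n)$ equipped with the grading $\{\|\cdot\|_n^\varphi\}$ is a tame Fr\'echet space.

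It remains to observe that this tame structure does not depend on the choice of $\varphi$, and that it coincides with the one defined by the Euclidean norm $|\cdot|_2$ stated in the theorem. This is exactly the content of the lemma immediately preceding the theorem: any two norms on the finite\ndash dimensional vector space $V^n$ are equivalent, so the induced gradings on $\textrm{Hol}(\mathbb{C}^*,V^n)$ differ only by multiplicative constants $c_1,c_2$, and are therefore tame equivalent (in fact $(0,0,\max(c_1,c_2))$\ndash equivalent). Consequently, the graded Fr\'echet space $(\textrm{Hol}(\mathbb{C}^*,V^n),\|\cdot\|_n)$ with the Euclidean grading is tame equivalent to $(\textrm{Hol}(\mathbb{C}^*,V^n),\|\cdot\|_n^{\varphi})$, which is tame.

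No step here looks like a real obstacle: all the heavy lifting was already done in Lemma~\ref{holc*cisfrechet} (tameness of $\textrm{Hol}(\mathbb{C}^*,\mathbb{C})$ via Laurent expansions and the isomorphism with $\Sigma(\mathbb{C}^2)$), and the passage from $\mathbb{C}$ to $\mathbb{C}^n$ and then to $V^n$ is essentially bookkeeping based on the fact that tame direct summands and finite products of tame Fr\'echet spaces are tame (Lemma~\ref{constructionoftamespaces}) together with equivalence of norms in finite dimension. The only mild subtlety worth flagging explicitly is that tameness of a space is not just a topological property but depends on the chosen grading; so the role of the preceding lemma is precisely to guarantee that the ``basis\ndash free'' grading $\|\cdot\|_n$ really does sit in the same tame equivalence class as the ``basis\ndash dependent'' one used to transfer the structure from $\textrm{Hol}(\mathbb{C}^*,\mathbb{C}^n)$.
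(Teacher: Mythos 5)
Your proposal is correct and follows essentially the same route as the paper: the text derives this theorem directly from Corollary~\ref{holc*cnisfrechet} (tameness of $\textrm{Hol}(\mathbb{C}^*,\mathbb{C}^n)$) together with the immediately preceding lemma showing that any two norms on the finite-dimensional space $V^n$ induce tame equivalent gradings, so the tame structure is independent of the chosen identification $V^n\cong\mathbb{C}^n$. Your explicit flagging of the fact that tameness depends on the grading, not just the topology, is exactly the point the paper's lemma is there to address.
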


\section{An implicit function theorem for tame maps}

The aim of this section is to prove that the inverse image of a ``regular'' point of a tame map $\varphi$ of a tame Fr\'echet space $F$ into $\mathbb{R}^n,n\in \mathbb{N}$ is a tame Fr\'echet submanifold of finite type. Following the finite dimensional blueprint we prove this result as a consequence of an implicit function theorem.

Morally inverse function theorems and implicit function theorems come in pairs.
In the literature there appear different implicit function theorems for classes of Fr\'echet spaces admitting smoothing operators (i.e.\ tame spaces). See for example~\cite{Krantz02}, \cite{Sergeraert73} and \cite{Poppenberg99}. An implicit function theorem for maps from topological vector spaces to Fr\'echet spaces using metric estimates is described in \cite{Glockner07}.

Richard Hamilton~\cite{Hamilton82} p.~212 proves the following theorem:

\begin{theorem}[Hamilton's implicit function theorem]
\index{Hamilton's implicit function theorem}
Let $F$, $G$ and $H$ be tame spaces and let $\Phi$ be a smooth tame map defined on an open subset $U$ in $F\times G$ to $H$.
$$\Phi:U\subset F\times G \longrightarrow H$$ 
Suppose that whenever $\Phi(f,g)=0$, the partial derivative $D_f\Phi(f,g)$ is surjective, and there is a smooth tame map $V(f,g)h$ linear in $h$,
$$V:(U\subset F\times G)\times H \longrightarrow F\,,$$
and a smooth tame map $Q(f,g\{h,k\})$, bilinear in $h$ and $k$, such that for all $(f,g)$ in $U$ and all $h\in H$ we have: 
$$D_f\Phi(f,g)V(f,g)h=h+Q(f,g)\{\Phi(f,g),h\}\,,$$
so that $V$ is an approximate right inverse for $D_f\Phi$ with quadratic error $Q$. Then if $\Phi(f_0,g_0)=0$ for some $(f_0, g_0\in U)$ we can find neighborhoods of $f_0$ and $g_0$ such that for all $g$ in the neighborhood of $g_0$ there exists an $f$ in the neighborhood of $f_0$ with $\Phi(f,g)=0$. Moreover the solution $f=\Psi(g)$ is defined by a smooth tame map $\Psi$.
\end{theorem}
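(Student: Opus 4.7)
The plan is to construct the implicit function $\Psi$ by applying a Nash--Moser type inverse function theorem to the auxiliary map $\Xi \colon U \to H \times G$ defined by $\Xi(f,g) := (\Phi(f,g), g)$, worked near the base point $(f_0, g_0)$ where $\Xi(f_0, g_0) = (0, g_0)$. Once $\Xi$ is shown to admit a smooth tame local inverse there, the implicit function is read off as $\Psi(g) := \pi_F(\Xi^{-1}(0, g))$, so that $\Phi(\Psi(g), g) = 0$ holds tautologically.

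First I would verify that $\Xi$ is smooth tame, which is immediate from smoothness and tameness of $\Phi$ together with the closure properties of tame maps (lemma~\ref{constructionoftamemaps}). Its derivative is
\begin{displaymath}
D\Xi(f,g)(h,k) = \bigl(D_f\Phi(f,g)h + D_g\Phi(f,g)k,\; k\bigr),
\end{displaymath}
and the obvious candidate for an approximate right inverse is
\begin{displaymath}
W(f,g)(a,b) := \bigl(V(f,g)\bigl(a - D_g\Phi(f,g)b\bigr),\; b\bigr) \qquad \text{for } (a,b) \in H \times G.
\end{displaymath}
Substituting the identity $D_f\Phi(f,g) V(f,g) h = h + Q(f,g)\{\Phi(f,g), h\}$ yields
\begin{displaymath}
D\Xi(f,g)\,W(f,g)(a,b) = (a,b) + \bigl(Q(f,g)\bigl\{\Phi(f,g),\; a - D_g\Phi(f,g)b\bigr\},\; 0\bigr),
\end{displaymath}
so $W$ is an approximate right inverse whose error is smooth tame, bilinear in the $H \times G$-arguments, and quadratic in the obstruction $\Phi(f,g)$. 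In particular the error term vanishes at $(f_0, g_0)$ because $\Phi(f_0, g_0) = 0$.

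The main obstacle is now to upgrade this to a genuine local inverse. The Nash--Moser theorem as stated in the excerpt demands a two-sided inverse of the derivative throughout a whole neighborhood, which is not available here. The required substitute is Hamilton's strengthened Nash--Moser inverse function theorem allowing approximate inverses with quadratic error: a modified Newton iteration using $W$ in place of a genuine inverse converges, because the bilinear form of the error $Q$ in the obstruction $\Phi$ ensures that each iterate's residual is controlled by the square of the previous residual, preserving quadratic convergence after interpolation with tame smoothing operators and uniformly across every seminorm of the grading. This analytic step is the heart of the proof; once it is granted, it produces a smooth tame local inverse $\Xi^{-1}$ on a neighborhood of $(0, g_0)$.

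The conclusion is then formal. Choose a neighborhood $N$ of $g_0$ in $G$ such that $\{0\} \times N$ lies in the domain of $\Xi^{-1}$, and define $\Psi(g) := \pi_F(\Xi^{-1}(0, g))$. By construction $\Xi(\Psi(g), g) = (0, g)$, hence $\Phi(\Psi(g), g) = 0$, and smoothness and tameness of $\Psi$ follow from those of $\Xi^{-1}$ together with tameness of the inclusion $g \mapsto (0, g)$ and of the projection $\pi_F$.
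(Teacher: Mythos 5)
The first thing to say is that the paper does not actually prove this theorem: it is quoted verbatim from Hamilton's article, and the text immediately after the statement reads ``For details and a proof we refer to \cite{Hamilton82}.'' So there is no in-paper argument to compare against; the relevant benchmark is Hamilton's own proof.

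Measured against that, your algebraic setup is fine --- the auxiliary map $\Xi(f,g)=(\Phi(f,g),g)$, the formula for $D\Xi$, and the verification that $W(f,g)(a,b)=\bigl(V(f,g)(a-D_g\Phi(f,g)b),\,b\bigr)$ is an approximate right inverse with error $\bigl(Q(f,g)\{\Phi(f,g),\,a-D_g\Phi(f,g)b\},\,0\bigr)$ are all correct. But the step you label ``the heart of the proof'' is where the proof actually is, and you have not supplied it. The ``strengthened Nash--Moser theorem allowing approximate right inverses with quadratic error'' that you invoke is not a separate, more primitive tool from which the implicit function theorem can be cheaply derived: in Hamilton's development (Part III, Theorem 3.3.1 of \cite{Hamilton82}) that surjectivity-with-quadratic-error statement \emph{is} the implicit function theorem, proved directly by a Newton-type iteration interleaved with smoothing operators, with the tame estimates carried uniformly through every seminorm of the grading. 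Reducing the theorem to that result and then granting it is therefore essentially circular; none of the analytic content (choice of smoothing operators, the interpolation inequalities, the quadratic-convergence estimates, the verification that the limit is smooth tame in $g$) appears in your argument. A secondary, repairable inaccuracy: an approximate \emph{right} inverse with quadratic error yields local surjectivity and a tame solution map, not a genuine two-sided local inverse $\Xi^{-1}$ --- there is no injectivity claim available --- so you should phrase the conclusion in terms of a tame local section $S$ with $\Xi\circ S=\mathrm{id}$ and set $\Psi(g):=\pi_F\, S(0,g)$, which is all the statement requires.
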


For details and a proof we refer to \cite{Hamilton82}.
 
For the application we have in mind a much easier theorem is sufficient: 

\begin{theorem}[Implicit function theorem for tame maps]
\index{implicit function theorem}
\label{ImplicitefunctiontheoremfortameFrechetmaps}
Let $F$ be a tame space, $V\simeq W\simeq \mathbb{R}^n$ a finite dimensional vector space and $\varphi:U\subset F\times V \longrightarrow W$ a smooth tame map, such that the partial derivative $\frac{\partial}{\partial (y)}\varphi(z)$ is invertible for all $z=(x,y)\in U$. Suppose $\varphi(z_0)=0$.
Then there exist open sets $U'$ and $U''$ and a smooth tame map $\psi:U'\longrightarrow U''$ such that
$$ \varphi(x,y)=0 \Leftrightarrow y=\psi(x)\,. $$
\end{theorem}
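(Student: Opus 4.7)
The strategy is the classical reduction of an implicit function theorem to an inverse function theorem, here in the Nash--Moser form stated earlier. I would introduce the auxiliary map
\begin{displaymath}
\Phi : U\subset F\times V \longrightarrow F\times W,\qquad \Phi(x,y):=(x,\varphi(x,y)),
\end{displaymath}
and aim to invert it locally around $z_0=(x_0,y_0)$. Since $F\times V$ and $F\times W$ are tame Fr\'echet spaces (Lemma~\ref{constructionoftamespaces}) and since the projection onto the first factor and $\varphi$ are both smooth and tame, Lemma~\ref{constructionoftamemaps} guarantees that $\Phi$ is smooth and tame. Its differential at $(x,y)$ reads
\begin{displaymath}
D\Phi(x,y)(h,k)=\bigl(h,\;D_x\varphi(x,y)h+D_y\varphi(x,y)k\bigr),
\end{displaymath}
which has the block lower-triangular form $\begin{pmatrix}\mathrm{Id}&0\\ D_x\varphi&D_y\varphi\end{pmatrix}$. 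By hypothesis $D_y\varphi(x,y)$ is invertible throughout $U$, so $D\Phi(x,y)$ is a linear bijection with inverse
\begin{displaymath}
V\Phi(x,y)(h',k')=\Bigl(h',\;D_y\varphi(x,y)^{-1}\bigl(k'-D_x\varphi(x,y)h'\bigr)\Bigr).
\end{displaymath}

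The heart of the proof is to verify that this inverse family is itself smooth and tame; once this is established, the Nash--Moser inverse function theorem applies directly. Smoothness is immediate from the smoothness of $\varphi$ and from Cramer's rule, since $D_y\varphi(x,y)$ may be viewed as a smooth map from $U$ into the finite-dimensional space $\mathrm{End}(V)\cong\mathrm{Mat}_{n\times n}(\mathbb{R})$ whose determinant does not vanish on $U$, so that $(x,y)\mapsto D_y\varphi(x,y)^{-1}$ is smooth as a finite-dimensional matrix-valued map. For tameness I would observe that $D_x\varphi$ is tame as the partial derivative of a smooth tame map, and that the assignment $(x,y)\mapsto D_y\varphi(x,y)^{-1}\in \mathrm{Mat}_{n\times n}(\mathbb{R})$ takes values in a finite-dimensional space and is continuous; hence it is locally bounded on a neighborhood of $z_0$, and the pointwise action of a bounded family of linear endomorphisms of the finite-dimensional factor $V$ respects the tame estimates (with the same loss of derivatives as $D_x\varphi$ and no additional loss from the finite-dimensional block). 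This uses that for finite-dimensional linear maps tameness reduces to boundedness of the coefficient functions, an observation analogous to Lemma~\ref{constructionoftamemaps}.

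The anticipated technical obstacle is precisely this tameness verification for the inverse family: one must ensure that the Cramer-rule expression for $D_y\varphi(x,y)^{-1}$, combined with the tame derivative estimates for $D_x\varphi$, yields a map $V\Phi:(U\subset F\times V)\times(F\times W)\longrightarrow F\times V$ that is tame in the joint variables on a neighborhood of $z_0$. Once this is secured, the Nash--Moser theorem provides open neighborhoods $\widetilde U$ of $z_0=(x_0,y_0)$ and $\widetilde W$ of $\Phi(z_0)=(x_0,0)$ on which $\Phi$ is a smooth tame diffeomorphism. Since $\Phi$ preserves the first factor, its inverse has the form $\Phi^{-1}(x,w)=(x,\Psi(x,w))$ with $\Psi$ smooth and tame.

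Finally I would set $U':=\{x\in F\mid (x,0)\in\widetilde W\}$, $U'':=\{y\in V\mid (x,y)\in\widetilde U\text{ for some }x\in U'\}$, and $\psi(x):=\Psi(x,0)$. Tameness of $\psi$ follows from tameness of $\Psi$ composed with the tame map $x\mapsto(x,0)$. The relation $\Phi\circ\Phi^{-1}=\mathrm{Id}$ gives $\varphi(x,\psi(x))=0$ for every $x\in U'$, while injectivity of $\Phi$ on $\widetilde U$ yields the converse: if $(x,y)\in\widetilde U$ and $\varphi(x,y)=0$ then $\Phi(x,y)=(x,0)=\Phi(x,\psi(x))$, whence $y=\psi(x)$. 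This establishes the stated equivalence and concludes the proof.
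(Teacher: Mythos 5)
Your proposal is correct and follows essentially the same route as the paper: both reduce to the Nash--Moser inverse function theorem via the auxiliary map $\Phi(x,y)=(x,\varphi(x,y))$, use the block-triangular form of $D\Phi$ together with the invertibility of $D_y\varphi$ to write down the explicit family of inverses, and then read off $\psi$ from the first-factor-preserving form of $\Phi^{-1}$. Your treatment of the tameness of $(x,y)\mapsto D_y\varphi(x,y)^{-1}$ via Cramer's rule and finite-dimensionality is a slightly more explicit version of the paper's appeal to finite dimensionality, but it is the same argument.
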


For the proof we use the Nash-Moser inverse function theorem. Our proof follows the one described by Konrad K\"onigsberger~\cite{Koenigsberger00} for the finite dimensional implicit function theorem. The main difference is that we have to prepare the use of the Nach-Moser inverse function theorem and not the Banach inverse function theorem. 

\begin{proof}~
\begin{enumerate}
\item {\bf Prepare use of the Nash-Moser inverse function theorem} 
Study the map $\Phi:F\times V \longrightarrow F\times W$ defined by $\Phi(x,y):=(x, \varphi(x,y))$.
As Cartesian products of tame spaces are tame (lemma~\ref{constructionoftamespaces}), $F\times W$ is a tame space; furthermore $\Phi$ is a tame map (lemma~\ref{constructionoftamemaps}). 

The differential of $\Phi$ is given by
$$D\Phi(x,y)(h',h'')=\left(h', \frac{\partial}{\partial (x)}\varphi(x,y)h'+\frac{\partial}{\partial (y)}\varphi(x,y)h''\right)\,.$$

As the operator $\frac{\partial}{\partial (y)}\varphi(x,y)$ is invertible the equation $D\Phi(x,y)(h',h'')=(k',k'')$ has 
for every $k=(k',k'')$ a unique solution  $(h',h'')=V\Phi(x,y)(k',k'')$. As $\Phi$ is smooth also $D\Phi(x,y)(h',h'')$ is. By assumption $\frac{\partial}{\partial (y)}\varphi(z)$ is invertible for all $z=(x,y)\in U$. By finite dimensionality the inverse $\frac{\partial}{\partial (y)}\varphi(z)^{-1}$ is smooth. 
Hence $V\Phi(x,y)(k',k'')=(k', -\frac{\partial}{\partial (y)}\varphi(z)^{-1} \frac{\partial}{\partial (x)}\varphi(x,y) k' ,\frac{\partial}{\partial (y)}\varphi(z)^{-1}k'')$ is a smooth family of inverses.
Hence, we are now in the situation to apply the Nash-Moser inverse function theorem.

\item {\bf Apply the Nash-Moser inverse function theorem }
Application of the Nash-Moser inverse function theorem gives us open neighborhoods $U_0\subset U$ and $\widetilde{U}_0\subset F\times W$ such that $z_0\in U_0$ and $\Phi(z_0)=(x_0,0) \subset \widetilde{U}_0$ together with an inverse $\Phi^{-1}: \widetilde{U}_0 \longrightarrow U_0$ that is a smooth tame map.

Without loss of generality suppose $\widetilde{U}_0=\widetilde{U}_0'\times \widetilde{U}_0''$ such that $\widetilde{U}_0''=\widetilde{U}_0\cap W$ and $\widetilde{U}_0'=\widetilde{U}_0\cap F$. Thus we can put $\Phi^{-1}(w_1, w_2)=(w_1, \psi(w_1, w_2))$ for some $\psi: F\times W \rightarrow V$. $\varphi$ is tame as $V$ is a finite dimensional vector space.
Thus $$\Phi(x,y)=0 \Leftrightarrow y=\psi(x)\,.$$
This completes the proof.\qedhere
\end{enumerate}
\end{proof}

Thus the main difference to the finite dimensional implicit function theorem is that it is not enough to suppose invertibility in just one point and get the extension to an open neighborhood for free. As it is the case for the Nash-Moser inverse function theorem, we have always to take care of invertibility in an open neighborhood. 

This phenomenon appears again in the definition of a regular value:

\begin{definition}[tame regular value]
\index{tame regular value}
Let $\varphi: F\rightarrow \mathbb{R}^n$ be a tame map. $g\in \mathbb{R}^n$ is a regular value iff there is an open set $U\subset \mathbb{R}^n$ containing $g$, such that the differential has full rank for all $g\in U$.
\end{definition}

\noindent A direct consequence is the theorem:

\begin{theorem}
\label{submanifoldsoffrechetspace}
Let $F$ be a tame space, $W \simeq \mathbb{R}^n$ and $\varphi: F \rightarrow W$ a tame map. Let $g\in W$ be a regular value for $\varphi$.
Then $\varphi^{-1}(g)$ is a tame Fr\'echet submanifold of finite type. 
\end{theorem}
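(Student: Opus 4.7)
The plan is to work locally around each point $p\in\varphi^{-1}(g)$ and produce, via the tame implicit function theorem (Theorem~\ref{ImplicitefunctiontheoremfortameFrechetmaps}), a tame chart of $F$ near $p$ that straightens out $\varphi^{-1}(g)$ into an affine subspace of codimension $n$, matching Definition~\ref{Tamesubmanifoldoffinitetype}.

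Fix $p\in\varphi^{-1}(g)$. Since $g$ is a regular value, $D\varphi(q):F\longrightarrow W$ is surjective for every $q$ in some open neighborhood of $p$. Pick vectors $v_1,\ldots,v_n\in F$ whose images under $D\varphi(p)$ form a basis of $W$, and set $V:=\mathrm{span}(v_1,\ldots,v_n)\cong\mathbb{R}^n$. Because $V$ is finite-dimensional, any continuous linear projection $\pi_V:F\to V$ exists and is automatically tame (its target is finite-dimensional), so $G:=\ker\pi_V$ is a closed tame direct summand of $F$ and the induced linear isomorphism $F\cong G\times V$ is a tame isomorphism by Lemma~\ref{constructionoftamespaces} and Lemma~\ref{constructionoftamemaps}. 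In these coordinates, write $\varphi(x,y)$ with $x\in G$, $y\in V$; by construction $\tfrac{\partial}{\partial y}\varphi(p):V\to W$ is invertible, and by continuity of $D\varphi$ together with the neighborhood hypothesis on regular values, this remains invertible for all $(x,y)$ in some open neighborhood $U\subset G\times V$ of $p$.

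Apply Theorem~\ref{ImplicitefunctiontheoremfortameFrechetmaps} to the smooth tame map $\widetilde\varphi(x,y):=\varphi(x,y)-g$ on $U$. One obtains open sets $U'\subset G$, $U''\subset V$ with $p\in U'\times U''\subset U$, and a smooth tame map $\psi:U'\longrightarrow U''$ such that for $(x,y)\in U'\times U''$,
\begin{displaymath}
\varphi(x,y)=g \iff y=\psi(x).
\end{displaymath}
Now define the chart
\begin{displaymath}
\Phi:U'\times U''\longrightarrow G\times V,\qquad \Phi(x,y):=(x,\,y-\psi(x)),
\end{displaymath}
with inverse $\Phi^{-1}(x,z)=(x,z+\psi(x))$. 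Both maps are smooth and tame by Lemma~\ref{constructionoftamemaps} (the components are projections and sums of the tame maps $\psi$ and the identity), so $\Phi$ is a tame Fr\'echet chart of $F$ around $p$. Under $\Phi$,
\begin{displaymath}
\Phi\bigl(\varphi^{-1}(g)\cap(U'\times U'')\bigr)=\{(x,0)\in U'\times V\}=(U'\times U'')\cap G,
\end{displaymath}
which is exactly the condition of Definition~\ref{Tamesubmanifoldoffinitetype}. Since $p$ was arbitrary, $\varphi^{-1}(g)$ is a tame Fr\'echet submanifold of $F$ of codimension $n$.

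The only subtle point is the construction of the tame splitting $F\cong G\times V$ and the passage from $F$ to the product form required by Theorem~\ref{ImplicitefunctiontheoremfortameFrechetmaps}; this is harmless because $V$ is finite-dimensional, so continuous projections onto $V$ are tame and produce a closed tame complement. Everything else is a direct transcription of the classical finite-dimensional argument, with the Banach inverse function theorem replaced by the Nash--Moser-based implicit function theorem that makes available the tameness of $\psi$.
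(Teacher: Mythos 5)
Your proof is correct and follows essentially the same route as the paper: split off a finite-dimensional subspace spanned by vectors on which $D\varphi$ is nondegenerate, apply Theorem~\ref{ImplicitefunctiontheoremfortameFrechetmaps} to obtain the tame implicit function $\psi$, and read off the chart. Your version is in fact slightly more explicit than the paper's at two points — the tameness of the projection onto the finite-dimensional factor and the final straightening chart $\Phi(x,y)=(x,y-\psi(x))$ — but these are refinements of the same argument, not a different one.
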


\begin{proof}
The proof follows  the finite dimensional blueprint:
\begin{enumerate} 
\item  {\bf Prepare use of theorem~\ref{ImplicitefunctiontheoremfortameFrechetmaps}} 
Let $\widetilde{U}''\subset W$ be an open set containing $g$ such that the differential of $\varphi$  has full rank on $\widetilde{U}''$. Choose now an arbitrary element $h\in \varphi^{-1}(g)$. 
 To apply the implicit function theorem~\ref{ImplicitefunctiontheoremfortameFrechetmaps} we need an open set $U\subset F$ containing $h$ and a decomposition $U = U'\times U''$ such that the partial differential $\frac{\partial}{\partial (y)}\varphi(z)$ is invertible for all $z=(x,y)\in  U$. To this end, choose a vector $v_1\in T_hV$ such that $\frac{\partial \varphi}{\partial v_1}(0)\not=0$. 
By continuity of the derivative there exists an open set $U_1\subset U$, such that $\frac{\partial \varphi}{u_1}(x)\not=0$ for all $x\in U_1$. 
Now choose $u_2$ such that $\frac{\partial \varphi}{\partial u_2}(0)\not=0$ and such that $\frac{\partial \varphi}{\partial u_1}(0)$ and $\frac{\partial \varphi}{\partial u_2}(0)$ are linearly independent. Those conditions are satisfied on an open set $U_2$. Successively we choose $n$ vectors $\{u_1,\dots, u_n\}$ that are linearly independent with non-vanishing differentials on an open set $U_0 \subset U\subset F$.
Define now $U_0''=\textrm{span}\{u_1, \dots, u_n\}\cap U_0$. Then $U_0=U_0'\times U_0''$, where $U_0'$ is a complementary subspace to $U_0''$, is the desired decomposition. Now we are in a position to apply theorem~\ref{ImplicitefunctiontheoremfortameFrechetmaps}. 

\item {\bf Construct charts} 
Theorem~\ref{ImplicitefunctiontheoremfortameFrechetmaps} gives us an implicit function $\psi: U_0' \longrightarrow U_0''$, such that $\Phi(x,y)=g$ iff $y=\psi(x)$. This defines a chart in $U_0''$. This chart is a tame map. Thus $\varphi^{-1}(g)$ is a tame submanifold.\qedhere
\end{enumerate}
\end{proof}

\begin{example}
Let $F$ be a tame Fr\'echet space. The $n$\ndash unit spheres $S_F^{n}:=\{x\in F| \|x\|_n=1 \}$ are tame Fr\'echet submanifolds. 
\end{example}

\section{Inverse limit constructions}

Following Omori~\cite{Omori97}, we define 
\begin{definition}[$ILE$-chain]
\index{$ILE$-chain}
A family of locally convex topological vector spaces $\{E, E^k; k\in \mathbb{N}\}$ is called an $ILE$-chain iff:
\begin{enumerate}
	\item  $E^k$ is continuously embedded in $E^{k+1}$; its image is dense.
	\item $E=\bigcap E^k$; the topology on $E$ is the inverse limit topology (i.e.\ the weakest topology such that the embedding $E\hookrightarrow E^k$ is continuous for every $k$). 
\end{enumerate}
If the spaces $E^k$ are Hilbert spaces, it is called an $ILH$-chain, if they are Banach spaces an $ILB$-chain.
\index{$ILB$-chain}
\index{$ILH$-chain}

\end{definition}

From now on, we suppose every $ILE$-chain to be at least $ILB$.

\begin{lemma}
Every Fr\'echet $F$ space admits an $ILB$-chain.
\end{lemma}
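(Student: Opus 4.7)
The Fréchet space $F$ is metrizable, so by the lemma at the start of the excerpt its topology is defined by a countable family of seminorms, and by the standard device of replacing $\|\cdot\|_k$ with the partial sum $\sum_{j\le k}\|\cdot\|_j$ we may assume it comes equipped with a grading $\|\cdot\|_0\le\|\cdot\|_1\le\|\cdot\|_2\le\cdots$. For each $k$ set $N_k:=\{f\in F:\|f\|_k=0\}$; this is a closed subspace, and the monotonicity of the seminorms gives $N_0\supseteq N_1\supseteq N_2\supseteq\cdots$. The seminorm $\|\cdot\|_k$ descends to a genuine norm on $F/N_k$, and we define $E^k$ to be its Banach-space completion.

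Next I would build the connecting maps of the chain. Since $N_{k+1}\subseteq N_k$ and $\|\cdot\|_k\le\|\cdot\|_{k+1}$, the identity on $F$ induces a norm-nonincreasing linear map $F/N_{k+1}\to F/N_k$, which extends uniquely to a continuous linear map $\iota_k\colon E^{k+1}\to E^k$. Its image contains $F/N_k$, which is dense in $E^k$ by construction, so $\iota_k$ has dense image; this furnishes the required chain of Banach spaces with dense continuous transition maps (reading the indexing so that higher $k$ corresponds to a stronger seminorm and hence a smaller completion, as in the usual Sobolev picture).

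The remaining task is to identify $F$, with its original Fréchet topology, with the inverse limit $\varprojlim E^k$ (equivalently with $\bigcap E^k$ in the natural embedding). The natural map $\Phi\colon F\to\varprojlim E^k$ sending $f\mapsto([f]_k)_k$ is continuous, since each projection to $E^k$ is just the seminorm $\|\cdot\|_k$ which defines the topology on $F$. It is injective because its kernel is $\bigcap_k N_k=\{0\}$ by Hausdorffness. For surjectivity, given a compatible sequence $(x_k)$ with $x_k\in E^k$ and $\iota_k(x_{k+1})=x_k$, I would pick for each $k$ a representative $f_k\in F$ with $\|f_k-f_{k-1}\|_{k-1}<2^{-k}$ (possible because $F/N_{k-1}$ is dense in $E^{k-1}$ and $f_{k-1}$ already approximates $x_{k-1}$ in $\|\cdot\|_{k-1}$). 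Then $(f_k)$ is Cauchy in every seminorm $\|\cdot\|_j$, and by completeness of $F$ converges to some $f\in F$ which maps to $(x_k)$. The comparison of topologies is then immediate: the inverse-limit topology on $\varprojlim E^k$ is the coarsest making each projection continuous, which is precisely the topology on $F$ generated by the $\|\cdot\|_k$.

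\textbf{Main obstacle.} The non-trivial step is the surjectivity of $\Phi$: compatible sequences in the abstract completions need not a priori be realized by bona fide elements of $F$, and the diagonal/telescoping choice of representatives together with completeness of $F$ is exactly what rescues this. Once this is in hand, everything else is formal from the construction.
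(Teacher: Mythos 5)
Your proof is correct and follows the same route as the paper's: fix a grading, complete $F$ with respect to each seminorm, and identify $F$ with the inverse limit of the resulting Banach spaces. You supply two details that the paper's three-line proof omits but which are genuinely needed — quotienting by the null spaces $N_k$ before completing (the seminorms of a grading need not be norms), and the telescoping choice of representatives plus completeness of $F$ establishing that the natural map onto $\varprojlim E^k$ is surjective.
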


\begin{proof}
As any Fr\'echet space admits a grading, we can suppose $(F, \|\hspace{3pt}\|_n, n\in \mathbb{N})$ to be a graded Fr\'echet space. Define $B_n$ to be the completion of $F$ with respect to $\|\hspace{3pt}\|_n$. The grading assures that the embedding $B_{n+1} \hookrightarrow  B_n$ is bounded and thus continuous (a linear map between Banach spaces is continuous iff it is bounded~\cite{Conway90}. As $F$ is dense in $B_n$, the image of $B_{n+1}$ in $B_n$ is dense too.  
Thus $\displaystyle F =\lim_{\longleftarrow} B_n$. This inverse limit will be called the \emph{canonical $ILB$-system associated to $F$}. 
\end{proof}

The following definition is a slight variant of a concept due to Hideki Omori:  

\begin{definition}[$ILB$-regular map]
\index{$ILB$-regular map}
Let $\varphi:E\longrightarrow F$ be a linear map between two $ILB$-systems.
$\varphi$ is called $ILB$-$(r,b)$-regular iff for every $n$, $\varphi$ extends to a continuous map $\varphi_n:E^n\longrightarrow F^{n+r}$ for all $n>b$. It is called $ILB$-regular if it is $ILB$-$(0,0)$-regular. It is called weak $ILB$-regular iff there are $(r,b)$ such that it is $ILB$-$(r,b)$-regular \end{definition}

\begin{remark}~
\begin{enumerate}
\item Suppose $E\not=F$ and $\varphi:E\longrightarrow F$ is a weak $ILB$-regular map. After re-indexing the canonical $ILB$-system associated to $E$ and $F$ one may suppose $\varphi$ to be $ILB$-regular. Thus without loss of generality we can suppose every weak $ILB$-regular map $\varphi:E\longrightarrow F$ to be $ILB$-regular.
\item Caution! This is in contrast to the case $E=F$. In this case a map is $ILB$-regular iff it is $ILB$-$(0,b)$-regular.
\end{enumerate}
\end{remark}

\begin{definition}
Let $\{F, B^k; k\in \mathbb{N}\}$ and $\{\widetilde{F}, \widetilde{B}^k; k\in \mathbb{N}\}$ be two $ILB$-systems. They are called $ILB$-equivalent iff there are $ILB$-regular maps:
\begin{align*}
\varphi:\{F, B^k; k\in \mathbb{N}\}&\longrightarrow \{\widetilde{F}, \widetilde{B}^k; k\in \mathbb{N}\} \\
\nu: \{\widetilde{F}, \widetilde{B}^k; k\in \mathbb{N}\} &\longrightarrow \{F, B^k; k\in \mathbb{N}\}\,.
\end{align*}
such that $\varphi\circ \nu= Id$ and $\nu \circ \varphi = Id$.
\end{definition}

\noindent Now we investigate the relationship with tame structures: so from now on we suppose $E$ and $F$ denote tame Fr\'echet spaces.

\begin{lemma}
\label{tame=ilbregular}
A tame map $\varphi:E\longrightarrow F$ induces an $ILB$-$(r,b)$-regular map between the canonical $ILB$-system associated to $E$ and $F$.
\end{lemma}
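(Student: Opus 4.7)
The plan is to reduce the lemma to the standard fact from functional analysis that a bounded linear operator defined on a dense subspace of a Banach space extends uniquely to a continuous linear operator on the whole space; the tameness estimate supplies exactly the required boundedness.

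First I would write the canonical $ILB$-systems down explicitly as in the preceding construction: let $B^k$ be the completion of $E$ with respect to $\|\cdot\|_k$ and $\widetilde{B}^k$ the completion of $F$ with respect to $\|\cdot\|_k$. By construction $E$ is a dense linear subspace of every $B^k$, and $F$ a dense linear subspace of every $\widetilde{B}^k$; moreover, the gradings on $E$ and $F$ imply that the natural maps between the successive $B^k$'s (and the successive $\widetilde{B}^k$'s) are continuous embeddings, which is the structure of the canonical chains.

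Next I would invoke the tameness hypothesis. Since $\varphi$ is $(r,b,C(n))$-tame, for every $n \geq b$ and every $f \in E$ we have
\begin{equation*}
\|\varphi(f)\|_n \leq C(n)\,\|f\|_{n+r}.
\end{equation*}
Regarded as a map from $E$, equipped with the norm $\|\cdot\|_{n+r}$ inherited from $B^{n+r}$, into $\widetilde{B}^n$, this inequality says exactly that $\varphi$ is a bounded linear operator on a dense subspace of the Banach space $B^{n+r}$ with values in the complete normed space $\widetilde{B}^n$, and with operator norm at most $C(n)$. The standard extension principle for bounded linear operators between Banach spaces then produces a unique continuous linear extension
\begin{equation*}
\varphi_{n+r} \colon B^{n+r} \longrightarrow \widetilde{B}^n
\end{equation*}
for every $n \geq b$, which is the required $(r,b)$-shifted continuity.

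Finally I would check coherence of these extensions across different $n$: for $m \geq n \geq b$, the two extensions $\varphi_{n+r}$ and $\varphi_{m+r}$ agree on the common dense subspace $E$ (after composing with the natural embeddings between the $\widetilde{B}^k$'s), so by density they agree wherever both are defined. This yields a coherent family of continuous linear maps $\varphi_{n+r}\colon B^{n+r}\to \widetilde{B}^n$ for $n \geq b$, which is precisely the data of an $ILB$-$(r,b)$-regular map in the sense of the paper. The argument is essentially soft functional analysis; the only real content is recognizing that the defining tameness inequality fits the extension principle on the nose, so there is no substantial obstacle—the main thing to watch is the bookkeeping of the shift $r$ between source and target indices.
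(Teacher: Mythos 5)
Your proof is correct and follows essentially the same route as the paper: both extend the densely defined operator $\varphi$ to the Banach completions using the tame estimate as the boundedness hypothesis, the paper carrying out the extension by hand via Cauchy sequences where you invoke the standard extension principle for bounded operators. If anything, your index bookkeeping (extending $\varphi$ from $B^{n+r}$ into $\widetilde{B}^n$, i.e.\ losing $r$ in regularity) is the faithful reading of the tame inequality $\|\varphi(f)\|_n\leq C(n)\|f\|_{n+r}$, whereas the paper's written proof states the estimate with the indices reversed, so your version is the more careful one.
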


\begin{proof}
Suppose $\varphi$ is $(r,b,C(n))$-tame. We have to show that $\varphi$ induces a family of maps $\varphi_n:E^n\longrightarrow F^{n+r}$ for all $n\geq b$. Let $e\in E^n$. Take a sequence $e_k\in E$ such that $\displaystyle\lim_{k\rightarrow \infty}e_k=e$. Define $\varphi_n(e):= \displaystyle\lim_{k\rightarrow \infty}\varphi(e_k)$. Clearly $\varphi_n(e)$ is well defined. As $\|\varphi(e_k)\|_{n+r}\leq C(n)\|e_k\|$ we get $\|\varphi_n(e)\|_{n+r}\leq C(n)\|e\|+\epsilon$ for all $\epsilon >0$. Thus $\varphi_n(e)\subset F^{n+r}$ and $\varphi_n$ is a bounded linear map and thus continuous. 
\end{proof}

\noindent Tame equivalence of gradings translates directly into a property of the associated $ILB$-systems.

\begin{definition}[Tame equivalence of $ILB$-systems]
Let $F$ be a Fr\'echet spaces with two gradings $\|\hspace{3pt}\|_n$ and $\widetilde{\|\hspace{3pt}\|}$. Let $(F,B_n; n\in \mathbb N)$ and $(F,\widetilde{B}_n; n\in \mathbb N)$ be the canonical $ILB$-systems associated to $(F,\|\hspace{3pt}\|_n)$ and $(F, \widetilde{\|\hspace{3pt}\|})$. 

$(F,B_n; n\in \mathbb N)$ and $(F,\widetilde{B}_n; n\in \mathbb N)$ are called $(r,b, C(n))$-equivalent iff there are $ILB$-$(r,b)$ regular maps $\varphi: (F,B_n) \longrightarrow (F, \widetilde{B_n})$ and $\nu:(F,\widetilde{B_n}) \longrightarrow (F, B_n)$ such that 
\begin{equation*}
  |\nu(b)|_n \leq C(n) \widetilde{|b|}_{n+r} \text{ and } \widetilde{|\varphi(b)|}_n \leq C(n)|b|_{n+r} \text{ for all } n\geq b\,.
\end{equation*} 
Two inverse limit systems are called tame equivalent if they are $(r,b, C(n))$-equivalent for some set $(r,b, C(n))$.
\end{definition}

\begin{lemma}
Let $F$ be a Fr\'echet space. Tame equivalence of gradings on $F$ and tame equivalence of the canonical $ILB$-systems associated to $F$ is equivalent.
\end{lemma}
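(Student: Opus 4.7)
The plan is to prove the two implications separately, using lemma~\ref{tame=ilbregular} as the main bridge between tame linear maps and $ILB$-regular maps, and exploiting the fact that in the canonical $ILB$-system associated to $F$ the space $F$ itself sits densely in every $B_n$.

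First I would treat the forward direction. Suppose the two gradings $\|\hspace{3pt}\|_n$ and $\widetilde{\|\hspace{3pt}\|}_n$ on $F$ are $(r,b,C(n))$-equivalent. By definition of tame linear map, the identity $\mathrm{id}_F:(F,\|\hspace{3pt}\|_n)\longrightarrow (F,\widetilde{\|\hspace{3pt}\|}_n)$ and $\mathrm{id}_F:(F,\widetilde{\|\hspace{3pt}\|}_n)\longrightarrow (F,\|\hspace{3pt}\|_n)$ are both $(r,b,C(n))$-tame linear maps. Applying lemma~\ref{tame=ilbregular} to each, the identity extends by continuity to $ILB$-$(r,b)$-regular maps $\varphi:(F,B_n)\longrightarrow (F,\widetilde{B}_n)$ and $\nu:(F,\widetilde{B}_n)\longrightarrow (F,B_n)$ satisfying, for every $b\in B_{n+r}$ with $n\geq b$, the estimates $\widetilde{|\varphi(b)|}_n \leq C(n)|b|_{n+r}$ and $|\nu(b)|_n \leq C(n)\widetilde{|b|}_{n+r}$ — these inequalities follow from the tameness estimates on elements of $F$ and extend to the completions by density together with continuity of the norms. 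This is exactly the definition of $(r,b,C(n))$-equivalence of the two canonical $ILB$-systems.

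For the reverse direction, suppose the canonical $ILB$-systems $(F,B_n)$ and $(F,\widetilde{B}_n)$ are $(r,b,C(n))$-equivalent via $ILB$-$(r,b)$-regular maps $\varphi$ and $\nu$. The key observation is that both $\varphi$ and $\nu$ must act as the identity on $F=\bigcap B_n=\bigcap \widetilde{B}_n$; indeed, the canonical embeddings $F\hookrightarrow B_n$ and $F\hookrightarrow \widetilde{B}_n$ identify $F$ as the same dense subset of each completion, and the compatibility conditions built into the canonical $ILB$-construction (together with $\varphi\circ \nu=\mathrm{id}$ and $\nu\circ\varphi=\mathrm{id}$ inherited from the $ILB$-equivalence) force $\varphi|_F=\nu|_F=\mathrm{id}_F$. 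Plugging $f\in F$ into the defining inequalities thus produces
\begin{displaymath}
\widetilde{\|f\|}_n=\widetilde{|\varphi(f)|}_n \leq C(n)|f|_{n+r}=C(n)\|f\|_{n+r},\qquad \|f\|_n=|\nu(f)|_n\leq C(n)\widetilde{|f|}_{n+r}=C(n)\widetilde{\|f\|}_{n+r},
\end{displaymath}
for all $n\geq b$, which is precisely $(r,b,C(n))$-equivalence of the two gradings.

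The one subtle step I expect to require care is the identification of $\varphi$ and $\nu$ with the identity on $F$ in the second part — one has to argue that, given the canonical construction of the $ILB$-systems from the gradings, the only $ILB$-regular maps between them compatible with the common dense subspace $F$ and mutually inverse are the extensions of $\mathrm{id}_F$. Once this is pinned down cleanly (perhaps by building it into the definition, or by noting that $\varphi(f)$ lies in every $\widetilde{B}_n$ hence in $F$, and then using that $\nu\circ\varphi$ restricts to the identity on $F$ to characterise $\varphi|_F$), both directions become essentially bookkeeping on the norm estimates and continuous extension by density.
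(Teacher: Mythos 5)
Your proof is correct and follows essentially the same route as the paper's: one direction is lemma~\ref{tame=ilbregular} applied to the identity map, the other is restriction of the $ILB$-norm estimates to the dense subspace $F$. The only difference is that you make explicit the point the paper dismisses as ``trivial'' --- namely that the $ILB$-equivalence maps $\varphi$ and $\nu$ must restrict to $\mathrm{id}_F$ before the estimates can be pulled back to the gradings --- which is a worthwhile clarification, since the paper's definition of tame equivalence of $ILB$-systems does not state this condition explicitly.
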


\begin{proof}
We have to prove two directions:
\begin{enumerate} 
	\item Suppose first $(F,B^n; n\in \mathbb N)$ and $(F,\widetilde{B}^n; n\in \mathbb N)$ are $(r,b, C(n))$-equivalent. The $n$-norms $\|\hspace{3pt}\|_n$ (resp. $\widetilde{\|\hspace{3pt}\|}_n$ on $F$ are the restriction of the norm $|\hspace{3pt}|_n$ on $B_n$ (resp. $\widetilde{|\hspace{3pt}|}_n$ on $\widetilde{B}^n$) to $F$. Thus the equivalence of the two gradings is trivial.
	\item Suppose now the spaces $(F, \|\hspace{3pt}\|_n)$ and $(F, \widetilde{\|\hspace{3pt}\|}_n)$ are  $(r,b, C(n))$-tame. Using lemma \ref{tame=ilbregular} we get maps $\varphi: (F,B_n) \longrightarrow (F, \widetilde{B_n})$ and $\nu:(F,\widetilde{B}^n) \longrightarrow (F, B^n)$ that are both $(r,b, C(n))$-equivalent. This completes the proof.\qedhere
\end{enumerate}
\end{proof}

\begin{corollary}
Let $F$ be a Fr\'echet space. Tame equivalent gradings on $F$ have  $ILB$-equivalent $ILB$-systems iff the gradings are $(0,b, C(n))$-tame equivalent.
\end{corollary}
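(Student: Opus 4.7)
The plan is to track the parameter $r$ through the proof of the preceding lemma. That proof shows, by restricting the Banach norms $|\cdot|_n$ and $\widetilde{|\cdot|}_n$ to the dense subspace $F$ in one direction, and by invoking lemma~\ref{tame=ilbregular} in the other, that an $(r,b,C(n))$-tame equivalence of gradings corresponds in a one-to-one fashion to an $(r,b,C(n))$-equivalence of the associated canonical $ILB$-systems. The corollary is the specialization of this correspondence to $r=0$, provided one observes that by the Remark following the definition of $ILB$-regular maps, for two $ILB$-systems sharing the same Fréchet limit $F$ an $ILB$-regular map is precisely an $ILB$-$(0,b)$-regular map for some $b$; hence $ILB$-equivalence coincides with $(0,b,C(n))$-equivalence for some $b$ and $C(n)$.

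For the $(\Leftarrow)$ direction I would assume the two gradings are $(0,b,C(n))$-tame equivalent, so that the identity map on $F$ is $(0,b,C(n))$-tame in both directions as a map between the two graded versions of $F$. By lemma~\ref{tame=ilbregular} the identity extends to $ILB$-$(0,b)$-regular maps between the canonical $ILB$-systems $(F,B_n)$ and $(F,\widetilde{B}_n)$. These extensions are continuous mutual inverses on the dense subspace $F$, hence on the whole Banach completions by density; by the Remark they are $ILB$-regular, which exhibits the two $ILB$-systems as $ILB$-equivalent.

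For the $(\Rightarrow)$ direction I would assume an $ILB$-equivalence given by mutually inverse $ILB$-regular maps $\varphi,\nu$. Being $ILB$-regular, they extend for all $n\geq b$ to continuous linear maps $\varphi_n\colon B_n \to \widetilde{B}_n$ and $\nu_n\colon \widetilde{B}_n \to B_n$, whose operator norms furnish bounds $\widetilde{|\varphi(b)|}_n \leq C(n)|b|_n$ and $|\nu(b)|_n \leq C(n)\widetilde{|b|}_n$. These are precisely the defining inequalities of a $(0,b,C(n))$-equivalence of the $ILB$-systems in the sense of the preceding definition, and applying the preceding lemma (the equivalence of tame equivalence of $ILB$-systems with tame equivalence of gradings) yields the $(0,b,C(n))$-tame equivalence of the gradings themselves.

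The main subtlety I expect is the role of the Remark: without it one might naively expect $ILB$-equivalence to force $b=0$ rather than merely some $b \in \mathbb{N}$, and the corollary would then have to be stated with $b=0$. Once this point is settled, the argument is essentially a transcription of the proof of the preceding lemma, with the verification that the parameter $r$ remains zero on both sides of the correspondence.
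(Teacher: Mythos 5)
Your proof is correct and takes essentially the approach the paper intends: the corollary is stated there without its own proof, being the immediate specialization of the preceding lemma's parameter-tracking argument to $r=0$, combined with the remark that, for two canonical $ILB$-systems over the same Fr\'echet space, $ILB$-regularity amounts to $ILB$-$(0,b)$-regularity for some $b$. Your explicit handling of that remark, and your observation that the mutual inverses extend from the dense subspace $F$ to the Banach completions by continuity, are exactly the points the paper's statement relies on.
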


\begin{remark}
\label{the concept of inverse limits}
The concept of inverse limits sheds also new light on the Nash-Moser inverse function theorem.
It gives an intuition where the additional condition -- the existence of an open set $U$ such that the differential is invertible on $U$ -- has its origins: 

Let $F^n$ and $G^n$ be the inverse limit systems associated to $F$ and $G$. Suppose $\phi$ is $(r, 0, C(n))$-tame. Then $\phi$ induces a family of maps $\phi_{n}: F^{n} \longrightarrow G^{n+r}$ for all $n\geq b$. Suppose now, we know that for $f\in F$ the derivative is invertible. Thus by the Banach inverse function theorem we get a family of open sets $U_n\subset F^n$ such that $\phi_n|_{U_n}$ is invertible. The whole family -- and thus $\phi$ -- is invertible on the intersection $\bigcap U_n$ -- which as an intersection of infinitely many open sets may not be open. Thus the Nash-Moser inverse function theorem can be rephrased: $\phi$ is invertible if there exists an open set $U \subset \bigcap U_n$ such that $\phi_n$ is invertible on $U$.
\end{remark}

\noindent We define some nonlinear $ILB$-concepts. Our reference is again the monograph~\cite{Omori97}:

\begin{definition}[nonlinear $ILB$-mappings]
A function $f:U\cap E \longrightarrow F$ is called a $C^{\infty}-ILB$-mapping iff for each $n\in \mathbb{N}$, $f$ extends to a $C^{\infty}$ function $f^n:U \cap E^n \longrightarrow F^n$.
\end{definition}

\noindent There is an inverse function theorem and an implicit function theorem for $C^{\infty,r}$-$ILB$ normal mappings, a slight variant of $ILB$-regular maps. But as its assumptions are  too restricted for our applications we omit the details (for a statement of the theorem and a proof see the monograph~\cite{Omori97}).

\begin{definition}[$ILB$-manifold]
\index{$ILB$-manifold}
Let $\{F; F^n\}$ be an $ILB$-system. A Fr\'echet manifold $M$ modelled on $F$ is an $ILB$-manifold modeled on $\{F; F^n\}$ iff there are an open covering $\{U_{i}\}$ and a family of mappings $\varphi_{i}$ satisfying the following conditions:
\begin{enumerate} 
	\item For each $i$ there is an open subset $V_{i}\subset F^n$ and $\varphi_{i}$ is a homeomorphism of $V_{i}\cap F$ onto $U_{i}$.
	\item For any $U_{i}$, $U_{j}$ with $U_{i}\cap U_{j}\neq \emptyset$ there are open subsets  $V_{i ,j}$ and $V_{j,i}$ of $F^n$ such that the chart transition functions $\varphi_{i,j}$ are $C^{\infty}$-$ILB$-functions.
\end{enumerate}
\end{definition}

\begin{proposition}
\label{f0manifoldisilb}
A manifold $M$ modelled on the tame space $F$ is an $ILB$-manifold iff it has an atlas $\mathcal{A}$ such that its chart transition functions are $(0, b, C(n))$-tame.
\end{proposition}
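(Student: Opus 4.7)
The plan is to verify the two implications separately by translating between the $C^{\infty}$-$ILB$ extension property of chart transitions and the $r=0$ tame estimate. Both directions rest on the nonlinear analog of the earlier lemma that a tame linear map induces an $ILB$-regular map: the restriction $r=0$ is precisely what forces the Banach-completion extensions to live at the same index $n$ as the domain, which is what the $ILB$-manifold definition demands (both $V_{i,j}$ and $V_{j,i}$ sit inside the same $F^{n}$).

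For the direction ($\Rightarrow$), suppose $M$ carries an $ILB$-atlas. By definition each chart transition $\varphi_{ij}$ is a $C^{\infty}$-$ILB$ function, so it extends to a smooth map $\varphi_{ij}^{n}\colon V_{ji}\cap F^{n}\to F^{n}$ for every $n\ge b$. A smooth map between Banach spaces is locally bounded and locally Lipschitz, so on any bounded piece of $V_{ji}\cap F^{n}$ one obtains a constant $C(n)$ with $\|\varphi_{ij}^{n}(f)\|_{n}\le C(n)(1+\|f\|_{n})$. Restricting to $f\in F\cap V_{ji}$ yields the desired $(0,b,C(n))$-tame estimate; this suffices because tameness of chart transitions is a local condition around each point.

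For the converse ($\Leftarrow$), assume the chart transitions $\varphi_{ij}$ are smooth and $(0,b,C(n))$-tame. The estimate $\|\varphi_{ij}(f)\|_{n}\le C(n)(1+\|f\|_{n})$, combined with the analogous estimate for a difference $\varphi_{ij}(f_{1})-\varphi_{ij}(f_{2})$ (via the mean value theorem together with the inherited $(0,b',C'(n))$-tameness of $D\varphi_{ij}$), shows that $\varphi_{ij}$ is Lipschitz in $\|\cdot\|_{n}$ on bounded subsets of $V_{ji}\cap F$. Since $F$ is dense in the completion $F^{n}$, $\varphi_{ij}$ extends continuously to $\varphi_{ij}^{n}\colon V_{ji}\cap F^{n}\to F^{n}$. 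Applying the same extension procedure to each higher derivative $D^{k}\varphi_{ij}$, whose $r=0$ tameness is inherited from that of $\varphi_{ij}$ by the standard calculus of smooth tame maps in~\cite{Hamilton82}, produces continuous extensions of all the $D^{k}\varphi_{ij}$; these coincide with the Fr\'echet derivatives of $\varphi_{ij}^{n}$ on the dense subset $F$ and hence everywhere, so $\varphi_{ij}^{n}$ is $C^{\infty}$. Therefore $\varphi_{ij}$ is a $C^{\infty}$-$ILB$ function, and $M$ is an $ILB$-manifold.

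The main delicate point is the smoothness upgrade in the converse direction: one must know that every higher derivative of a $(0,b,C(n))$-tame smooth map between tame Fr\'echet spaces is again $r=0$ tame, only with possibly worse base $b_{k}$ and constants $C_{k}(n)$. This is the ingredient that lets the level-$n$ extensions of $\varphi_{ij}$ and of all its derivatives fit together into a genuine $C^{\infty}$ map $\varphi_{ij}^{n}\colon V_{ji}\cap F^{n}\to F^{n}$; everything else in the proposition then reduces to comparing the two definitions.
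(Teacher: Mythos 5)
Your proof is correct and follows essentially the same strategy as the paper's: the heart of both arguments is that the restriction $r=0$ is exactly what makes the transition functions (and their derivatives) extend to the Banach completions \emph{at the same index} $n$, and that the extended derivatives agree with the Fr\'echet derivatives of the extended map because they coincide on the dense subspace $F\hookrightarrow F^n$. The differences are minor but worth recording. In the forward direction the paper argues by contraposition, invoking Lemma~\ref{tame=ilbregular} to say that an $(r\neq 0,b,C(n))$-tame transition would only induce maps $F^n\to F^{n+r}$ and hence could not be a $C^\infty$-$ILB$ function; you instead extract the $(0,b,C(n))$ estimate directly from local boundedness of the smooth Banach-level extensions, which is a more positive and arguably cleaner argument (with the caveat that the constant $C(n)$ you obtain is only local, which is fine since tameness is a local condition). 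In the converse direction the paper gets the level-$n$ extension by applying Lemma~\ref{tame=ilbregular}, whereas you reprove the extension by hand via a Lipschitz-plus-density argument; the content is the same. Finally, you explicitly isolate the one genuinely delicate point that the paper leaves implicit: the hypothesis only controls the zeroth-order estimate of $\varphi_{ij}$, and one must additionally know that each derivative $D^k\varphi_{ij}$ is again $r=0$ tame (possibly with worse $b_k$, $C_k(n)$) for the level-$n$ extensions to assemble into a $C^\infty$ map $F^n\to F^n$. The paper simply writes ``we get smooth tame derivatives of any order'' and applies the linear lemma, which silently assumes degree $0$ for the derivatives as well; your formulation at least names this assumption, though neither argument actually verifies it from the stated hypotheses.
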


\begin{proof}~
\begin{enumerate}
	\item We first show that existence of an atlas with $(0, b, C(n))$-tame chart transition function is necessary.
	Suppose there is no such atlas. Thus there is always a pair of charts: $\varphi_{i}:U_i \longrightarrow V_i$ and $\varphi_{j}:U_j \longrightarrow V_j$ such that the chart transition functions $\varphi_{i,j}$ are $(r\not=0, b, C(n))$-tame maps.

Lemma~\ref{tame=ilbregular} tells us that  it induces an $ILB$-$(r\not=0,b)$-regular map of the canonical $ILB$-systems associated to $V_i\subset F$ and $V_j \subset F$. Thus this chart transition functions are not $C^{\infty}$-$ILB$-functions as required by the definition.
	\item Now we construct the $ILB$-manifold associated to $M$. Let $\{F; F^n\}$ be the canonical $ILB$-system associated to $F$. As the chart transition functions for $M$ are supposed to be $(0, b, C(n))$-tame, they induce for each $n\in \mathbb{N}$ a continuous map $\varphi_{i,j}^n: V^n_j \longrightarrow V^n_i$. Hence, the chart transition functions are $ILB$-maps. Next we have to check that the maps  $\varphi_{i,j}^n$ are smooth functions. Using that $\varphi_{i,j}$ is supposed to be a smooth function, we get smooth tame derivatives of any order. Another application of Lemma~\ref{tame=ilbregular} tells us that they induce maps $\varphi_{i,j}^{n,(k)}$ of the canonical associated $ILB$-systems. Those maps coincide with the derivatives of the $ILB$-maps as they coincide on a dense subspace after the embedding of $F\hookrightarrow F^{n}$. 
\end{enumerate}\qedhere
\end{proof}

\begin{remark}
An $ILB$-manifold $M$ whose chart transition functions are $(0, b, C(n))$-tame has an associated series of Banach manifolds $M^n, n\in \mathbb{N}$. $M^n$ is defined to be the space of charts $U_{i}^n, i\in I$ identifying points in different charts via the chart transition functions. 
This is a manifold. The set of spaces $U_{i}^n, i\in I$ defines charts, whose chart transition functions are smooth maps. It is furthermore Hausdorff as $M$ is. 

\noindent Thus one can imagine those manifolds to be surrounded by a cloud of Hilbert manifolds. 
\end{remark}

\chapter{Algebraic foundations}
~\label{chap:alg} 
\section{Affine Kac-Moody algebras}

\subsection{Algebraic approach to Kac-Moody algebras}

The theory of Kac-Moody algebras was developed in the 60's independently by V.\ G.\ Kac~\cite{Kac68}, R.\ V.\ Moody~\cite{Moody69}, I.\ L.\ Kantor~\cite{Kantor68} and D.-N.\ Verma (unpublished) as a generalization of semisimple Lie algebras.
The classical references are the books~\cite{Kac90} and \cite{Moody95} ; the book~\cite{Kumar02} contains a short summary for the parts of the theory which are necessary for algebraic Kac-Moody groups; the book~\cite{Carter05} is a very detailed reference for semisimple and affine Kac-Moody algebras.

The main idea of Kac-Moody theory is borrowed from Cartan's classification of complex semisimple Lie algebras which is based on the encoding  of the Lie algebra structure into a matrix, later called Cartan matrix. Reversing this procedure one starts with a $n\times n$-matrix $A$ called a generalized Cartan matrix and constructs a Lie algebra $\mathfrak{g}(A)$ realizing this matrix, i.e.\ a Lie algebra such that the matrix obtained by applying Cartan's procedure is $A$.

We start with the definition of a Cartan matrix:

\begin{definition}[Cartan matrix]
\index{Cartan matrix}
A Cartan matrix $A^{n\times n}$ is a square matrix with integer
coefficients such that
\begin{enumerate}
  \item $a_{ii}=2$ and $a_{i\not=j}\leq 0 \label{geometriccondition}$,
  \item $a_{ij}=0 \Leftrightarrow a_{ji}=0\label{liealgebracondition} $,
  \item There is a vector $v>0$ (component wise) such that $Av>0$ (component wise).
\end{enumerate}
\end{definition}

\begin{definition}
\index{indecomposable Cartan matrix}
A Cartan matrix $A^{n\times n}$ is called decomposable iff $\{1, 2, \dots, n\}$ has a decomposition into two non-empty sets $N_1$ and $N_2$ such that $a_{ij}=0$ for $i\in N_1$ and $j\in N_2$. It is called indecomposable iff it is not decomposable.
\end{definition}

\begin{example}[$2\times 2$-Cartan matrices]

There are -- up to equivalence -- four different $2$-dimensional Cartan matrices:
$$\left(\begin{array}{rr} 2&0\\0&2  \end{array}\right),
\left(\begin{array}{rr} 2&-1\\-1&2  \end{array}\right),
\left(\begin{array}{rr} 2&-1\\-2&2  \end{array}\right),
\left(\begin{array}{rr} 2&-1\\-3&2  \end{array}\right).$$

\noindent The first Cartan matrix is decomposable, the other three Cartan matrices are indecomposable. Those Cartan matrices correspond to the finite dimensional simple Lie algebras
$A_1\times A_1, A_2, B_2, G_2$ respectively. 
Over the field $\mathbb{C}$ we have the equivalences: 
\begin{align*}
\mathfrak{g}(A_1\times A_1)&\cong\mathfrak{sl}(2,\mathbb{C})\times \mathfrak{sl}(2, \mathbb{C}), \\
\mathfrak{g}(A_2)&\cong \mathfrak{sl}(3,\mathbb{C}),\\
\mathfrak{g}(B_2)&\cong\mathfrak{so}(5,\mathbb{C}),\\
\mathfrak{g}(G_2)&\cong\mathfrak{g}_2(\mathbb{C}).
\end{align*}
\end{example}

\noindent Their dimensions are $6$, $8$, $10$, and $14$ respectively. 

\noindent A complete list of indecomposable Cartan matrices consists of the Cartan matrices
\index{classification of Cartan matrices} 
\begin{displaymath}
A_n, B_{n, n\geq 2}, C_{n, n\geq 3}, D_{n, n \geq 4}, E_6, E_7, E_8, F_4, G_2\,. 
\end{displaymath}

\noindent They correspond to Dynkin diagrams of the same name~\cite{Carter05}. 

\begin{definition}[affine Cartan matrix]
\index{affine Cartan matrix}
An affine Cartan matrix $A^{n\times n}$ is a square matrix with integer
coefficients, such that
\begin{enumerate}
  \item $a_{ii}=2$ and $a_{i\not=j}\leq 0 \label{geometricconditionaffine}$.
  \item $a_{ij}=0 \Leftrightarrow a_{ji}=0\label{liealgebraconditionaffine}$.
  \item There is a vector $v>0$ (component wise) such that $Av=0$.
\end{enumerate}
\end{definition}

\begin{example}[$2\times 2$-affine Cartan matrices]
There are -- up to equivalence -- two different $2$-dimensional affine Cartan matrices:
$$\left(\begin{array}{rr} 2&-2\\-2&2  \end{array}\right),
\left(\begin{array}{rr} 2&-1\\-4&2  \end{array}\right)\ .$$

\noindent They correspond to the non-twisted Kac-Moody algebra
$\widetilde{A}_1$ and the twisted algebra $\widetilde{A}_1'$ respectively. Both of those algebras are infinite dimensional.

\end{example}

The reason for the distinction between twisted and non-twisted affine Kac-Moody algebras will become apparent in section~\ref{TheloopalgebraapproachtoKacMoodyalgebras}.

\begin{enumerate}
\item The indecomposable non-twisted affine Cartan matrices are
\begin{displaymath}\widetilde{A}_n, \widetilde{B}_n, \widetilde{C}_n, \widetilde{D}_n, \widetilde{E}_6, \widetilde{E}_7, \widetilde{E}_8, \widetilde{F}_4, \widetilde{G}_2\,.\end{displaymath}
They correspond to Dynkin diagrams of the same name. In fact every non-twisted affine Cartan matrix $\widetilde{X}_l$ can be transformed into the corresponding Cartan matrix $X_l$ by the removal of the first column and the first line. This close relation is reflected in the explicit realizations; see section~\ref{TheloopalgebraapproachtoKacMoodyalgebras}. 
\item The indecomposable twisted affine Cartan matrices are
\begin{displaymath} 
\widetilde{A}_1', \widetilde{C}_l', \widetilde{B}_l^t, \widetilde{C}_l^t, \widetilde{F}_4^t, \widetilde{G}_2^t\,.
\end{displaymath}

They correspond to Dynkin diagrams of the same name. The Kac-Moody algebras associated to those diagrams can be constructed as fixed point algebras of the so-called twisted diagram automorphisms $\sigma$ of a non-twisted Kac-Moody algebra $X$. This construction suggests an alternative notation describing a twisted Kac-Moody algebra by the order of $\sigma$ and the type of $X$.
This yields the following equivalences~\cite{Carter05}, p.~451:
\begin{displaymath}
  \begin{array}{ccl}
  \widetilde{A}_1'   & \hspace{30pt} & ^2\widetilde{A}_{2}\\
  \widetilde{C}_l'   & \hspace{30pt} & ^2\widetilde{A}_{2l}, l\geq 2\\
  \widetilde{B}_l^t  & \hspace{30pt} & ^2\widetilde{A}_{2l-1}, l\geq 3\\
  \widetilde{C}_l^t  & \hspace{30pt} & ^2\widetilde{D}_{l+1}, l\geq 2\\
  \widetilde{F}_4^t  & \hspace{30pt} & ^2\widetilde{E}_6\\
  \widetilde{G}_2^t  & \hspace{30pt} & ^3\widetilde{D}_4\\ 
 \end{array}
\end{displaymath}
\end{enumerate}
In the left column we have characterized the Kac-Moody algebras via their root systems, in the right column via their realizations as fixed point algebras of a diagram automorphism.  Take for example the last line: $\widetilde{G}_2^t$ characterizes the Kac-Moody algebra as one with Weyl group $\widetilde{G}_2$; the entry in the right column tells us, that this algebra can be realized as the fixed point set of a twisted diagram automorphism of order $3$ of the nontwisted affine Kac-Moody algebra of type $D_4$. 

\index{indefinite Cartan matrix}
There are various more general types of generalized Cartan matrices:
\begin{itemize}
	\item[-] hyperbolic Cartan matrices
	\item[-] indefinite Cartan matrices
	\item[-] Borchert's Cartan matrices.
\end{itemize}

As an example let us list all $2$-dimensional hyperbolic Cartan matrices. Those matrices can be parametrized by two integers $a,b$ such that $a,b \in \mathbb{N}, a \geq b$ and $4-ab<0$:
\begin{displaymath}
A_{a,b}:=\left(\begin{array}{cc} 2&-a\\-b&2  \end{array}\right)\, .
\end{displaymath}
\noindent To all those classes of Cartan matrices one can associate Lie algebras, called their algebraic Lie algebra realizations:

\begin{definition}[realization]
\index{realization}
Let $A^{n \times n}$ be a generalized Cartan matrix. The realization of $A$, denoted $\mathfrak{g}(A)$, is the Lie algebra
$$\mathfrak{g}(A^{n\times n}) = \langle e_i, f_i, h_i, i=1,\dots, n| \textrm{R}_1, \dots, \textrm{R}_6\rangle\,,$$ where
\begin{displaymath}
  \begin{array}{cl}
    \textrm{R}_1:& [h_i,h_j]=0\,,\\
    \textrm{R}_2:& [e_i,f_j]=h_i \delta_{ij}\,,\\
    \textrm{R}_3:& [h_i, e_j]=a_{ji}e_j\,,\\
    \textrm{R}_4:& [h_i, f_j]=-a_{ji}f_j\,,\\
    \textrm{R}_5:& (\textrm{ad} e_i)^{1-a_{ji}}(e_j)=0 \hspace{3pt}\quad \textrm{for}\ i\not= j\,,\\
    \textrm{R}_6:& (\textrm{ad} f_i)^{1-a_{ji}}(f_j)=0\hspace{3pt}\quad\textrm{for}\ i\not= j\,.\\
  \end{array}
\end{displaymath}
\end{definition}

\noindent The realization defines a bijection between generalized Cartan matrices and (possibly infinite dimensional) complex Lie algebras. Via this bijection Cartan matrices corresponds to complex simple Lie algebras, affine Cartan matrices correspond to complex affine Kac-Moody and hyperbolic Cartan matrices correspond to hyperbolic Kac-Moody algebras.

For a general description see~\cite{Kac90}, \cite{Moody95}, and references therein. For hyperbolic Kac-Moody algebras see \cite{CCCMNNP10}, \cite{Feingold80}, \cite{FeingoldFrenkel83}, and references therein. Let us note that our theory does not allow for a direct generalization to more general classes of Kac-Moody algebras as it is crucially based on the use of loop group realizations. As of this writing there are no explicit similar realizations known for more general Kac-Moody algebras. Infinite dimensional symmetric spaces might exist for symmetrizable Kac-Moody algebras; in supergravity and string theory coset spaces are studied that from a purely formal point of view resemble symmetric spaces of type $III$ associated to the Kac-Moody groups $E_{10}$ and $E_{11}$. But for those coset spaces there is no description as a symmetric space or even only as a manifold known~\cite{DamourHenneaux01}, \cite{DamourHenneauxNicolai02}, \cite{West01}. 
\index{$E_{10}$}
\index{$E_{11}$}

If a (generalized) Cartan matrix $A^{n+m\times n+m}$ is decomposable into the direct sum of two Cartan matrices $A^{n\times n}$ and $A^{m\times m}$ then a similar decomposition holds for the realizations:
\begin{displaymath}
\mathfrak{g}(A^{n+m\times n+m})=\mathfrak{g}(A^{n\times n})\oplus\mathfrak{g}(A^{m\times m})\,.
\end{displaymath}

This is a crucial observation as this decomposition has its counterparts in all classes of objects associated to those Lie algebras.  
In the special case of finite semisimple Lie algebras this is especially appealing: The following decompositions correspond to each other:

\index{decomposition of realization}
\begin{itemize}
\item[-] Cartan matrix $ \longrightarrow $ sum of indecomposable Cartan matrices,
\item[-] complex semisimple Lie algebra $ \longrightarrow $ direct product of complex simple Lie algebras (ideals),
\item[-] complex semisimple Lie group $ \longrightarrow $ direct product of complex simple Lie groups,
\item[-] simply connected, complete Riemannian manifold  $ \longrightarrow $ simply connected, complete Riemannian manifold with irreducible holonomy,
\item[-] Riemannian symmetric space $ \longrightarrow $ direct sum of irreducible Riemannian symmetric spaces.
\end{itemize}

\noindent In the infinite dimensional situation of Kac-Moody symmetric spaces the situation is slightly more complicated. The central problem is that the direct product construction is very ill-adapted to the geometric situation of Kac-Moody symmetric spaces, as it does not preserve the Lorentz-structure of the spaces. Thus it will be necessary to review this construction in order to define a different concept of composition such that the products are still Lorentzian. In order to distinguish between those two concepts we will refer in any case of ambiguity to Kac-Moody algebras in the usual sense as ``algebraic affine Kac-Moody algebras'' in contrast to ``geometric affine Kac-Moody algebras''.

\subsection{The loop algebra approach to Kac-Moody algebras}
\label{TheloopalgebraapproachtoKacMoodyalgebras}

The loop algebra realization of (algebraic) affine Kac-Moody algebras is developed in the books \cite{Kac90} and \cite{Carter05} from an algebraic point of view. We follow the more geometric approach to loop algebra realizations of Kac-Moody algebras described in the article~\cite{Heintze09}. Nevertheless the class of ``geometric affine Kac-Moody algebras'' we construct is more general. 

\noindent Let $\mathfrak{g}$ be a finite dimensional reductive Lie algebra over $\mathbb{F}=\mathbb{R}$ or $\mathbb{C}$. Hence $\mathfrak{g}$ is a direct product of a semisimple Lie algebra $\mathfrak{g}_s$ with an Abelian Lie algebra $\mathfrak{g}_a$. Let furthermore $\sigma_s \in \textrm{Aut}(\mathfrak{g}_s)$ denote an automorphism of finite order of $\mathfrak{g}_s$ such that the restriction of $\sigma$ to any simple factor $\mathfrak{g}_i$ of $\mathfrak{g}$ is an automorphism of $\mathfrak{g}_i$ and $\sigma|_{\mathfrak{g}_a}=\textrm{Id}$. If $\mathfrak{g}_s$ is a Lie algebra over $\mathbb{R}$ we suppose it to be of compact type. \index{$L(\mathfrak{g}, \sigma)$}
\begin{displaymath}
L(\mathfrak g, \sigma):=\{f:\mathbb{R}\longrightarrow \mathfrak{g}\hspace{3pt}|f(t+2\pi)=\sigma f(t), f \textrm{ satisfies some regularity conditions}\}\label{abstractkacmoodyalgebra}\,.
\end{displaymath}
We use the notation $L(\mathfrak g, \sigma)$ to describe in a unified way constructions that can be realized with explicit constructions of loop algebras satisfying various regularity conditions. The loops may be smooth, real analytic, (after complexification) holomorphic on $\mathbb{C}^*$, holomorphic on an annulus $A_n\subset \mathbb{C}$ or algebraic. If we discuss loop algebras of a fixed regularity we use other precise notations: $M\mathfrak{g}$, $L_{alg}\mathfrak{g}$, $A_n\mathfrak{g}$, $L_{\textrm{alg}}\mathfrak{g}$, $\dots$.

\begin{definition}[Geometric affine Kac-Moody algebra]
\index{geometric affine Kac-Moody algebra}
\label{geometricaffinekacmoodyalgebra}
The geometric affine Kac-Moody algebra associated to a pair $(\mathfrak{g}, \sigma)$ is the algebra:
$$\widehat{L}(\mathfrak g, \sigma):=L(\mathfrak g, \sigma) \oplus \mathbb{F}c \oplus \mathbb{F}d\,,$$
equipped with the lie bracket defined by:
\begin{alignat*}{1}
  [d,f]&:=f'\, ,\\
  [c,c]=[d,d]&:=0\, ,\\
	[c,d]=[c,f]&:=0\, ,\\
  [f,g]&:=[f,g]_{0} + \omega(f,g)c\,.
\end{alignat*}

Here $f\in L(\mathfrak g, \sigma)$ and $\omega$ is a certain antisymmetric $2$-form on $M\mathfrak g$, satisfying the cocycle condition.
\end{definition}

\index{regularity condition for geometric affine Kac-Moody algebra}
Explicit realizations using $k$-times differentiable, smooth, analytic, holomorphic or algebraic loops are common~\cite{HPTT}.
\begin{enumerate}
\item For holomorphic or algebraic loops we define \begin{displaymath}\omega(f,g):=\textrm{Res}(\langle f, g' \rangle)\,.\end{displaymath}
\item For integrable loops we use \begin{displaymath}\omega(f,g)=\frac{1}{2\pi}\int_{0}^{2\pi}\langle f, g'\rangle dt\,.\end{displaymath}
\end{enumerate}
By the formula for residua of holomorphic functions~\cite{Berenstein91} those two definitions of $\omega$ coincide for polynomial functions and holomorphic functions. All functional analytic completions which we study, contain the space of polynomials as a dense subspace. Hence we can use both formulations equivalently.

Let us reformulate this for nontwisted affine Kac-Moody algebras in terms of functions on $\mathbb{C}^*$: Embedding the interval $[0;2\pi]$ into the unit circle of $\mathbb{C}^*$, the parameter $t$ gets replaced by the complex parameter $z:=e^{it}$, which extends to $\mathbb{C}^*$.

\begin{definition}
The complex geometric affine Kac-Moody algebra associated to a pair $(\mathfrak{g}, \sigma)$ is the algebra:
$$\widehat{M\mathfrak g}:=M\mathfrak g \oplus \mathbb{C}c \oplus \mathbb{C}d\,,$$
equipped with the lie bracket defined by:
\begin{alignat*}{1}
  [d,f(z)]&:=izf'(z)\,,\\
  [c,c]=[d,d]&:=0\,,\\
  [c,d]=[c,f]&:=0\,,\\
  [f,g](z)&:=[f(z),g(z)]_{0} + \omega(f(z),g(z))c\,.
\end{alignat*}
\end{definition}

As $\frac{d}{dt}e^{itn}=ine^{itn}=inz^n=iz\frac{d}{dz}z^n$ both definitions coincide.

\begin{definition}[semisimple geometric affine Kac-Moody algebra]
\index{semisimple geometric Kac-Moody algebra}
A geometric affine Kac-Moody algebra $\widehat{L}(\mathfrak{g}, \sigma)$ is called semisimple if $\mathfrak{g}$ is semisimple.
A geometric affine Kac-Moody algebra $\widehat{L}(\mathfrak{g}, \sigma)$ is called simple if $\mathfrak{g}$ is simple.
\end{definition}

\begin{definition}[derived algebra]
\index{derived algebra}
The derived geometric affine Kac-Moody algebra associated to a pair $(\mathfrak{g}, \sigma)$ is the algebra:
\begin{displaymath}\widetilde{L}(\mathfrak g, \sigma):=L(\mathfrak g, \sigma) \oplus \mathbb{F}c\,,\end{displaymath}
with the lie bracket inherited from the affine geometric Kac-Moody algebra.
\end{definition}

The name ``derived algebra'' is due to the fact that for semisimple geometric affine Kac-Moody algebras we have the equality
\begin{displaymath}\widetilde{L}(\mathfrak g, \sigma)\hspace{3pt}=\hspace{3pt}\left[\widehat{L}(\mathfrak g, \sigma), \widehat{L}(\mathfrak g, \sigma)\right]\,.\end{displaymath}

If $\mathfrak{g}$ is a simple Lie algebra then the associated algebraic and geometric Kac-Moody algebras coincide up to completion:

\begin{lemma}
Suppose $\mathfrak {g}$ is simple.
The realization of $\widehat{L}(\mathfrak g, \sigma)$ with algebraic loops $\widehat{L_{alg}\mathfrak{g}}^{\sigma}$ is a simple, algebraic affine Kac-Moody algebra.
\end{lemma}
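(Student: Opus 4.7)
The plan is to reduce to the classical loop-realization theorem for affine Kac-Moody algebras, as presented in Kac~\cite{Kac90} (Chapters~7--8) and Carter~\cite{Carter05}. The assumption that $\mathfrak{g}$ is simple forces $\sigma$ to be either the identity (giving the non-twisted case) or a diagram automorphism of order $2$ or $3$ (giving one of the twisted types), so these two cases cover everything.

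First I would identify, inside $\widehat{L_{alg}\mathfrak{g}}^{\sigma}$, an explicit set of Chevalley generators $\{e_i,f_i,h_i\mid i=0,1,\dots,\ell\}$ attached to the affine Dynkin diagram of the relevant type. In the non-twisted case one takes the standard Chevalley generators of $\mathfrak{g}\subset L_{alg}(\mathfrak{g},\mathrm{Id})$ for indices $i=1,\dots,\ell$, and sets
\begin{displaymath}
e_0 := z\otimes f_\theta,\qquad f_0 := z^{-1}\otimes e_\theta,\qquad h_0 := c - h_\theta,
\end{displaymath}
where $\theta$ is the highest root of $\mathfrak{g}$. In the twisted case one uses the $\sigma$-eigenspace decomposition of $\mathfrak{g}$ and picks the generators from the grading described in Kac~\cite{Kac90}, Chapter~8.

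Second, I would verify directly from the bracket in Definition~\ref{geometricaffinekacmoodyalgebra} that these generators satisfy the six Serre relations $R_1,\dots,R_6$ for the generalized Cartan matrix of the corresponding affine type. The relations $R_1$--$R_4$ follow from the $\mathfrak{g}$-bracket together with the behaviour of the cocycle $\omega(f,g)=\mathrm{Res}\langle f,g'\rangle$ on monomials $z^m\otimes x$, and the behaviour of $\mathrm{ad}(d)$ on such monomials. The Serre relations $R_5,R_6$ are inherited from those in $\mathfrak{g}$ for $i,j\geq 1$, and for the index $0$ they follow from the theory of the highest root (respectively the twisted analogue).

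Third, by the Gabber--Kac theorem these relations give a presentation of the algebraic affine Kac-Moody algebra $\mathfrak{g}(A)$. Combined with the surjection from $\mathfrak{g}(A)$ onto $\widehat{L_{alg}\mathfrak{g}}^{\sigma}$ obtained by sending the $e_i,f_i,h_i$ to the elements above, and the linear independence check based on the grading of $\widehat{L_{alg}\mathfrak{g}}^{\sigma}$ by the eigenvalues of $\mathrm{ad}(d)$, one gets an isomorphism onto an algebraic affine Kac-Moody algebra. Simplicity then follows from the standard fact (Kac, Chapter~9) that in the affine case the algebra $\mathfrak{g}(A)$ associated to an indecomposable affine Cartan matrix is simple modulo the center, and is simple as an algebraic Kac-Moody algebra in the precise technical sense already adopted.

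The main obstacle is bookkeeping rather than a conceptual difficulty: one must match the normalization of the cocycle $\omega$ (the factor $\frac{1}{2\pi}$, the choice of invariant form, the $iz$ versus $\partial_t$ convention in $[d,f]$) with the normalization used in the classical presentation, and, in the twisted case, one must be careful to pick generators that respect the $\sigma$-equivariance, since the ``first'' and ``last'' simple roots receive a different treatment from the interior ones. Once these conventions are pinned down, the verification of the Serre relations is routine.
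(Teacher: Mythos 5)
Your proposal is correct and follows essentially the same route as the paper: the paper disposes of this lemma by observing that it is a restatement of the classical loop-algebra realization theorem and citing Carter (and Kac), and your plan is precisely an outline of the standard proof of that cited theorem (Chevalley generators indexed by the affine diagram, verification of the Serre relations, Gabber--Kac, and the eigenspace decomposition in the twisted case). The only point worth noting is that the generators $e_i,f_i,h_i$ alone produce the derived algebra $\widetilde{L}_{alg}(\mathfrak{g},\sigma)$, so the scaling element $d$ must be adjoined separately to land in $\widehat{L_{alg}\mathfrak{g}}^{\sigma}$ --- a bookkeeping step consistent with your remark about the $\mathrm{ad}(d)$-grading.
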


This lemma is a restatement of the loop algebra realization of affine Kac-Moody algebras described in~\cite{Carter05}.
If one chooses $\widehat{L}(\mathfrak{g}, \sigma)$ to be a realization of a more general regularity class of loops than algebraic loops then $\widehat{L}(\mathfrak g, \sigma)$ can be described as a completion of $\widehat{L_{alg}\mathfrak{g}}^{\sigma}$ with respect to some seminorms or some set of semi-norms. 

Following \cite{PressleySegal86}, p.168 we define two Lie algebra representations $R$ and $R'$ of a Lie algebra $\mathfrak{g}$ to be essentially equivalent to $R'$ if there is an injective map $\varphi: R\longrightarrow R'$ with dense image, equivariant with respect to $\mathfrak{g}$.
\index{essential equivalence}

\begin{lemma}
As representation of $\widehat{L_{alg}\mathfrak{g}}^{\sigma}$ we have:
$\widehat{L_{alg}\mathfrak{g}}^{\sigma}$ is essentially equivalent to $\widehat{L}(\mathfrak{g}, \sigma)$ for all functional analytic regularity conditions.
\end{lemma}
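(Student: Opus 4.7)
The plan is to construct explicitly an intertwining map
\[
\varphi:\widehat{L_{alg}\mathfrak g}^{\sigma}\longrightarrow \widehat{L}(\mathfrak g,\sigma)
\]
as the natural inclusion of $\sigma$\ndash twisted Laurent polynomial loops into the larger space of loops of the given regularity, extended by the identity on the central and derivation summands $\mathbb Fc\oplus \mathbb Fd$. The first step is to check that $\varphi$ is well\ndash defined and injective: any Laurent polynomial $f:\mathbb C^{*}\to\mathfrak g$ satisfying $\sigma\circ f(z)=f(\omega z)$ is entire on $\mathbb C^{*}$ and restricts to a smooth function on $S^{1}$, so it automatically lies in each of the functional analytic completions we consider (smooth, real-analytic, holomorphic on $\mathbb C^{*}$, $H^{s}$\ndash Sobolev), and the inclusion is obviously injective in each case.

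Next I verify that $\varphi$ is a Lie algebra homomorphism, and thus automatically equivariant for the adjoint actions of $\widehat{L_{alg}\mathfrak g}^{\sigma}$ on itself and on $\widehat{L}(\mathfrak g,\sigma)$. The pointwise bracket $[f,g]_{0}$ is unchanged under inclusion of function spaces. The derivation $[d,f]=izf'(z)$ sends Laurent polynomials to Laurent polynomials, and agrees with the derivation on the larger space. Finally, the cocycle term $\omega(f,g)c$ agrees on the algebraic subspace because, as noted directly after Definition~\ref{geometricaffinekacmoodyalgebra}, for polynomial loops the residue formula and the integral formula coincide. Hence $\varphi([x,y]_{alg})=[\varphi(x),\varphi(y)]$, which is precisely the equivariance condition required in the definition of essential equivalence.

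The remaining step is density of the image, which is the main point. Since $c$ and $d$ lie in the image, it suffices to show that $\sigma$\ndash twisted Laurent polynomials are dense in $L(\mathfrak g,\sigma)$ for each regularity class. For smooth and $H^{s}$\ndash Sobolev loops this is Fourier approximation, and for holomorphic loops on $\mathbb C^{*}$ it follows from Lemma~\ref{holc*cisfrechet} together with its proof: partial sums of the Laurent expansion converge uniformly on every annulus $A_{n}$, which is exactly the convergence defining the Fr\'echet topology. In all three cases the untwisted density statement is standard and was already invoked in the remarks preceding the lemma.

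The only genuine subtlety concerns the twisted case $\sigma\neq\mathrm{Id}$, where one must produce approximants respecting the equivariance $\sigma\circ f(z)=f(\omega z)$. This is handled by averaging any untwisted Laurent approximant $p_{N}$ of $f$ over the cyclic group $\langle\sigma\rangle$ of order $n\in\{1,2,3\}$, i.e.\ replacing $p_{N}(z)$ by $\tfrac{1}{n}\sum_{k=0}^{n-1}\sigma^{-k}p_{N}(\omega^{k}z)$. This projection onto the $\sigma$\ndash fixed subspace preserves both the polynomial nature of $p_{N}$ and the approximation property in each of the completion norms, because the $\sigma$\ndash averaging operator is a norm\ndash nonincreasing projection in every regularity class considered. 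Combining the three ingredients (well\ndash definedness, homomorphism property, dense image) yields the essential equivalence.
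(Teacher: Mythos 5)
Your proof is correct and follows essentially the same route as the paper's, which is a three-line argument: the inclusion is injective, equivariant, and has dense image because $\widehat{L}(\mathfrak{g},\sigma)$ is \emph{defined} as a completion of $\widehat{L_{alg}\mathfrak{g}}^{\sigma}$. The only difference is that you verify density concretely in each function space (Laurent partial sums on the annuli $A_n$, plus the $\sigma$-averaging projection for the twisted case), whereas the paper takes it as automatic from the definition of the completion; your extra care is harmless and the averaging argument is sound since the annuli are invariant under $z\mapsto\omega z$.
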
 

\begin{proof}
As $\widehat{L}(\mathfrak{g}, \sigma)$ is constructed as a completion of $\widehat{L_{alg}\mathfrak{g}}^{\sigma}$, the embedding $\varphi:\widehat{L_{alg}\mathfrak{g}}^{\sigma}\longrightarrow \widehat{L}(\mathfrak{g}, \sigma)$ has a dense image. Furthermore it is injective and $\widehat{L_{alg}\mathfrak{g}}^{\sigma}$-equivariant.
\end{proof}

\begin{remark}
We remark that one can define a formal completion for algebraic Kac-Moody algebras. From a purely algebraic point of view, this completion has the disadvantage that it does not preserve the symmetries of the Kac-Moody algebra. For example the symmetry between the generators $e_i$ and $f_i$ is destroyed. From the point of view of polynomial realizations this means that the symmetry between $t$ and $t^{-1}$ is broken. It is thus not preserved under involutions mapping $t$ onto $t^{-1}$. From the point of view of holomorphic functions, the problem is that the two poles at $0$ and at $\infty$ are not treated at the same footing.  This causes serious problems with the definition of Kac-Moody symmetric spaces as involutions of second type are ruled out (for further details and proofs see for example~\cite{Kumar02}).
\end{remark}

We will now describe the splitting behaviour of geometric Kac-Moody algebras.
Let us distinguish two concepts:

\begin{definition}[indecomposable Lie algebra]
\index{indecomposable Lie algebra}
A Lie algebra $\mathfrak{g}$ is called indecomposable if it does not admit a decomposition $\mathfrak{g}=\mathfrak{g}_1\oplus \mathfrak{g}_2$ into two ideals $\mathfrak{g}_1$ and $\mathfrak{g}_2$.
\end{definition}

\begin{definition}[irreducible Kac-Moody algebra]
\index{irreducible Kac-Moody algebra}
A Kac-Moody algebra $\mathfrak{g}$ is called irreducible if there is no Kac-Moody algebra $\mathfrak{h}$ such that  $\mathfrak{h}\subset \mathfrak{g}$  and $\mathfrak{h}$ is an ideal in $\mathfrak{g}$.
\end{definition}

\begin{example}
Suppose $\mathfrak{g}=\mathfrak{g}_1\oplus \mathfrak{g}_2$ with $\mathfrak{g}_{i, i=\{1,2\}}$ semisimple. Then $\widehat{L}(\mathfrak{g},\sigma)$ is reducible, as $\widehat{L}(\mathfrak{g}_1,\sigma_1)$ and $\widehat{L}(\mathfrak{g}_2,\sigma_2)$ are Kac-Moody subalgebras. Nevertheless it is indecomposable as the (unique) central direction spanned by $c$ cannot be in all components but only in one component of a decomposition.
\end{example}

We start with the investigation of the loop algebra part.
\begin{lemma}
Let $\mathfrak{g}=\mathfrak{g}_a\oplus \mathfrak{g}_s$ be a reductive Lie algebra and $\sigma=(\sigma_a, \sigma_s)$ an involution. Then $L(\mathfrak{g}_a, \sigma_a)$ and $L(\mathfrak{g}_s, \sigma_s)$ are ideals in $L(\mathfrak{g}_s, \sigma_s)$.
\end{lemma}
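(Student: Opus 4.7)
The plan is to reduce everything to a pointwise computation on loops, using that a reductive decomposition $\mathfrak{g}=\mathfrak{g}_a\oplus\mathfrak{g}_s$ is in particular a decomposition into two commuting ideals of $\mathfrak{g}$. First I would record the basic algebraic input: since $\mathfrak{g}_a$ is abelian and commutes with $\mathfrak{g}_s$ in the direct product, one has $[\mathfrak{g}_a,\mathfrak{g}_s]=0$, and both summands are ideals of $\mathfrak{g}$. The involution $\sigma=(\sigma_a,\sigma_s)$ respects this split by hypothesis, so $\sigma$ commutes with the two canonical projections $\pi_a,\pi_s:\mathfrak{g}\to\mathfrak{g}_{a},\mathfrak{g}_{s}$.

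Next I would use these projections pointwise on loops: for any $f\in L(\mathfrak{g},\sigma)$ define $f_a(t):=\pi_a(f(t))$ and $f_s(t):=\pi_s(f(t))$. The twisting condition $f(t+2\pi)=\sigma f(t)$ together with $\pi_a\sigma=\sigma_a\pi_a$ and $\pi_s\sigma=\sigma_s\pi_s$ gives $f_a(t+2\pi)=\sigma_a f_a(t)$ and analogously for $f_s$. Moreover $\pi_a,\pi_s$ are continuous $\mathbb{F}$\ndash linear maps on the finite dimensional space $\mathfrak{g}$, so they preserve whatever regularity class (smooth, real analytic, holomorphic on $\mathbb{C}^*$, $H^0$\ndash Sobolev, algebraic, \ldots) we have fixed in the definition of $L(\mathfrak{g},\sigma)$. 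Hence $f_a\in L(\mathfrak{g}_a,\sigma_a)$ and $f_s\in L(\mathfrak{g}_s,\sigma_s)$, and we get a direct sum decomposition
\begin{displaymath}
L(\mathfrak{g},\sigma)\;=\;L(\mathfrak{g}_a,\sigma_a)\,\oplus\,L(\mathfrak{g}_s,\sigma_s)
\end{displaymath}
of $\mathbb{F}$\ndash vector spaces. Both summands are visibly Lie subalgebras because the bracket in $L(\mathfrak{g},\sigma)$ is taken pointwise.

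To finish, I would check the ideal property by a one\ndash line pointwise computation. Pick $f\in L(\mathfrak{g}_a,\sigma_a)$ and $h\in L(\mathfrak{g},\sigma)$, decompose $h=h_a+h_s$ as above, and evaluate at $t\in\mathbb{R}$:
\begin{displaymath}
[f,h](t)\;=\;[f(t),h_a(t)]+[f(t),h_s(t)]\;=\;[f(t),h_a(t)]\,,
\end{displaymath}
since $f(t)\in\mathfrak{g}_a$, $h_s(t)\in\mathfrak{g}_s$, and $[\mathfrak{g}_a,\mathfrak{g}_s]=0$. The right\ndash hand side takes values in $\mathfrak{g}_a$, so $[f,h]\in L(\mathfrak{g}_a,\sigma_a)$. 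The same argument with the roles of $a$ and $s$ exchanged shows that $L(\mathfrak{g}_s,\sigma_s)$ is stable under bracketing with arbitrary elements of $L(\mathfrak{g},\sigma)$.

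Honestly, there is no real obstacle here: the statement is essentially the observation that the pointwise bracket respects whatever vanishing and ideal relations already hold in $\mathfrak{g}$. The only point worth stating carefully is the compatibility of the projections $\pi_a,\pi_s$ with both the twisting condition and the chosen regularity class; once that is noted, everything reduces to the fact that $\mathfrak{g}_a$ and $\mathfrak{g}_s$ are already commuting ideals of $\mathfrak{g}$.
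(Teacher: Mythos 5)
Your proof is correct and follows essentially the same route as the paper, which simply says to decompose each loop $f$ into its component functions $f_a$ and $f_s$; you have merely spelled out the pointwise bracket computation and the compatibility of the projections with the twisting condition and the regularity class, which the paper leaves implicit. (Note that the statement as printed contains a typo --- the two summands are of course ideals in $L(\mathfrak{g},\sigma)$, not in $L(\mathfrak{g}_s,\sigma_s)$ --- and you have correctly proved the intended claim.)
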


\begin{proof}
Decompose $f\in (\mathfrak{g}, \sigma)$ into the component functions $f_a\in (\mathfrak{g}_a, \sigma_a)$ and $f_s\in (\mathfrak{g}_s, \sigma_s)$.
\end{proof}

\begin{lemma}
Let $\mathfrak{g}$ be semisimple and suppose $(\mathfrak{g}, \sigma):=\bigoplus_i (\mathfrak{g_i}, \sigma_i)$.
Then \begin{displaymath}L(\mathfrak{g}, \sigma)=\bigoplus_i L(\mathfrak{g}_i, \sigma_i)\,.\end{displaymath}
Each algebra $L(\mathfrak{g}_i, \sigma_i)$ is an ideal in $L(\mathfrak{g}, \sigma)$.
\end{lemma}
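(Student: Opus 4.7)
The plan is to reduce the statement to pointwise linear algebra via the projections $\pi_i : \mathfrak{g} \to \mathfrak{g}_i$ coming from the decomposition $\mathfrak{g} = \bigoplus_i \mathfrak{g}_i$, and then check that all three structures involved in the definition of $L(\mathfrak{g},\sigma)$ — the regularity class, the twist, and the Lie bracket — respect this pointwise splitting.

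First I would fix $f \in L(\mathfrak{g},\sigma)$ and define $f_i := \pi_i \circ f$, so that $f = \sum_i f_i$ with $f_i$ taking values in $\mathfrak{g}_i$. Since $\pi_i$ is a $\mathbb{F}$-linear map between finite dimensional spaces, the composition with $\pi_i$ preserves every regularity class considered in the text (smooth, real analytic, holomorphic on $\mathbb{C}^*$ or on an annulus, algebraic); hence $f_i$ lies in the same loop space as $f$. Next, using the hypothesis $\sigma = \bigoplus_i \sigma_i$ with $\sigma_i \in \operatorname{Aut}(\mathfrak{g}_i)$, the projections satisfy $\pi_i \circ \sigma = \sigma_i \circ \pi_i$. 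Applying $\pi_i$ to the twisting condition $\sigma\circ f(t) = f(t+2\pi)$ yields $\sigma_i \circ f_i(t) = f_i(t+2\pi)$, so $f_i \in L(\mathfrak{g}_i,\sigma_i)$. Uniqueness of the decomposition of $\mathfrak{g}$ into its summands gives uniqueness of the $f_i$, which establishes the direct sum decomposition $L(\mathfrak{g},\sigma) = \bigoplus_i L(\mathfrak{g}_i,\sigma_i)$ as vector spaces.

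For the Lie algebra structure and the ideal claim, I would use that $\mathfrak{g}_i$ and $\mathfrak{g}_j$ are ideals of $\mathfrak{g}$ with $[\mathfrak{g}_i,\mathfrak{g}_j]_{\mathfrak{g}} = 0$ for $i \neq j$. Given $f \in L(\mathfrak{g}_i,\sigma_i)$ and $g \in L(\mathfrak{g},\sigma)$ with decomposition $g = \sum_j g_j$, the pointwise bracket gives
\begin{displaymath}
[f,g]_{0}(t) = \sum_j [f(t), g_j(t)]_{\mathfrak{g}} = [f(t), g_i(t)]_{\mathfrak{g}_i},
\end{displaymath}
which again lies in $L(\mathfrak{g}_i,\sigma_i)$. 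This shows $L(\mathfrak{g}_i,\sigma_i)$ is an ideal, and simultaneously that the bracket on $L(\mathfrak{g},\sigma)$ is the direct sum of the brackets on the $L(\mathfrak{g}_i,\sigma_i)$, completing the Lie algebra decomposition.

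I do not anticipate a genuine obstacle here — the only mildly delicate point is verifying that the projection $\pi_i$ preserves the chosen regularity class, but since each $\pi_i$ is a bounded linear endomorphism of the finite dimensional space $\mathfrak{g}$, it commutes with all the analytic operations (differentiation, Laurent expansion, restriction to $S^1$) used in defining $L(\mathfrak{g},\sigma)$, so this check is uniform across the different realizations listed after Definition~\ref{geometricaffinekacmoodyalgebra}.
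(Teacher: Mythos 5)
Your proposal is correct and follows the same route as the paper: decompose each loop pointwise into its components $f_i=\pi_i\circ f$ and observe that the pointwise bracket respects the splitting, which immediately gives the ideal property. The paper's own proof is only a two-line sketch of exactly this argument; your version merely makes explicit the (routine) checks that the regularity class and the twist condition are preserved under the projections.
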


\begin{proof}
Study the decomposition of any loop $f\in L(\mathfrak{g}, \sigma)$ into its component loops $f_i\in L(\mathfrak{g}_i, \sigma_i)$. This yields the direct product decomposition. As the bracket is defined pointwise each $L(\mathfrak{g}_i, \sigma_i)$ is an ideal in $L(\mathfrak{g}, \sigma)$.\qedhere
\end{proof}

 Nevertheless on the level of Kac-Moody algebras the behaviour is different:  to get a direct sum decomposition of a Kac-Moody algebra into indecomposable ones we would need an extension $\mathbb{C}c_i\oplus \mathbb{C}d_i$ for every simple factor $\mathfrak{g}_i$ in the decomposition of the underlying Lie algebra $\mathfrak{g}$:

From the analogy with the affine algebraic Kac-Moody algebra associated to $(\mathfrak{g}, \sigma)$ we would expect $\widehat{L}(\mathfrak{g}, \sigma)$ to be the algebra
$$\bigoplus_{i=1}^n \widehat{L}(\mathfrak{g}_i, \sigma_i)\,.$$

This algebra has a $n$-fold extension. It is exactly the Kac-Moody algebra $\mathfrak{g}(A)$ we would get by completing the realization of a generalized Cartan matrix $A$ which is the  sum of affine Cartan matrices $A_i$ such that $A_i$ is the Cartan matrix of $\widehat{L}(\mathfrak{g}_i, \sigma_i)$. As we will see later the $Ad$-invariant scalar product associated to this extension has index $n$. Nevertheless there is a second possibility to define the extension, which yields an $Ad$-invariant Lorentz scalar product (i.e.\ index 1) -- we defined  $\widehat{L}(\mathfrak{g}, \sigma)$ to be an algebra with only one such extension.  
Thus we trivially get

\begin{lemma}
Suppose $\mathfrak{g}$ is a semisimple, non-simple Lie algebra. Then
$$\widehat{L}(\mathfrak{g}, \sigma)\not= \bigoplus_i \widehat{L}(\mathfrak{g}_i, \sigma_i)\,.$$
\end{lemma}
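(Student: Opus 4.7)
The strategy is to distinguish the two algebras by the dimension of their centers. Let $\mathfrak{g}=\bigoplus_{i=1}^{n}\mathfrak{g}_i$ with $n\geq 2$ and each $\mathfrak{g}_i$ simple, and write $\sigma=(\sigma_1,\dots,\sigma_n)$. I will show that $Z(\widehat{L}(\mathfrak{g},\sigma))=\mathbb{F}c$ is one-dimensional, whereas $Z\bigl(\bigoplus_{i}\widehat{L}(\mathfrak{g}_i,\sigma_i)\bigr)=\bigoplus_{i}\mathbb{F}c_i$ is $n$-dimensional; since the center of a Lie algebra is an isomorphism invariant, the two algebras cannot coincide.

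First I would compute the center of $\widehat{L}(\mathfrak{g},\sigma)$. Let $X=\alpha d+f+\beta c$ be a general element with $\alpha,\beta\in\mathbb{F}$ and $f\in L(\mathfrak{g},\sigma)$. The condition $[X,Y]=0$ for every $Y\in\widehat{L}(\mathfrak{g},\sigma)$ must in particular hold for $Y=g\in L(\mathfrak{g},\sigma)$, which by the defining bracket relations forces
\begin{equation*}
\alpha g'+[f,g]_{0}=0\quad\text{and}\quad\omega(f,g)=0\qquad\text{for all }g\in L(\mathfrak{g},\sigma).
\end{equation*}
Testing the first equation on a constant loop $g\equiv g_{0}\in\mathfrak{g}$ yields $[f(t),g_{0}]=0$ for all $t$ and all $g_{0}$. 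Because $\mathfrak{g}$ is semisimple it has trivial center, so $f\equiv 0$; then $\alpha g'=0$ for all non-constant $g$, forcing $\alpha=0$. Hence $X=\beta c$ and $Z(\widehat{L}(\mathfrak{g},\sigma))=\mathbb{F}c$.

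Next I would observe that the center of a direct sum of Lie algebras is the direct sum of the centers, so the same argument applied to each simple factor gives
\begin{equation*}
Z\Bigl(\bigoplus_{i=1}^{n}\widehat{L}(\mathfrak{g}_i,\sigma_i)\Bigr)=\bigoplus_{i=1}^{n}Z(\widehat{L}(\mathfrak{g}_i,\sigma_i))=\bigoplus_{i=1}^{n}\mathbb{F}c_i,
\end{equation*}
a vector space of dimension $n$. Comparing the dimensions $1$ and $n\geq 2$ yields the claim.

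The only delicate point in the argument is the computation for $\widehat{L}(\mathfrak{g},\sigma)$: one must verify that no non-trivial combination involving the derivation $d$ or a non-zero loop $f$ can be central. This is precisely where the semisimplicity of $\mathfrak{g}$ enters, via triviality of its center together with the existence of non-constant test loops in every regularity class of $L(\mathfrak{g},\sigma)$ considered in the text.
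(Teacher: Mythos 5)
Your proof takes exactly the same approach as the paper, which distinguishes the two algebras by the dimension of their centers ($1$ versus the number $n$ of simple factors); you merely fill in the computation of $Z(\widehat{L}(\mathfrak{g},\sigma))=\mathbb{F}c$ that the paper leaves implicit. One small refinement: in the twisted case a constant loop must lie in the fixed-point subalgebra $\mathfrak{g}^{\sigma}$, so to force $f\equiv 0$ you should test against all equivariant loops of the form $g_{0}z^{k}$ rather than constants alone; with that adjustment the argument is complete.
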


\begin{proof}
The center of $\widehat{L}(\mathfrak{g}, \sigma)$ is $1$-dimensional. In contrast the dimension of the center of $\bigoplus_i \widehat{L}(\mathfrak{g}_i, \sigma_i)$ is equivalent to the number of simple factors of $\mathfrak{g}$.
\end{proof}

\noindent Thus geometric Kac-Moody algebras do not split into a direct sum of simple algebras. 

\begin{lemma}[Splitting of geometric Kac-Moody algebras]
\index{splitting of geometric affine Kac-Moody algebra}
Let as above $\mathfrak{g}:=\mathfrak{g}_a\oplus\bigoplus_i \mathfrak{g}_i$ be the decomposition of a reductive Lie algebra into its abelian factor $\mathfrak{g}_a$ and its simple factors $\mathfrak{g}_i$. Let $\widehat{L}(\mathfrak{g}, \sigma)$ be the associated Kac-Moody algebra. Then 
\begin{enumerate}
\item $\widetilde{L}(\mathfrak{g}_a, \sigma_a)$ and $\widetilde{L}(\mathfrak{g}_i, \sigma_i)$ are ideals in $\widehat{L}(\mathfrak{g}, \sigma)$.
\item Let $(\mathfrak{g}_i, \sigma_i)$ be a simple factor of $\mathfrak{g}$. $\widetilde{L}_{alg}(\mathfrak{g}_i, \sigma_i)\oplus \mathbb{F}d$ is an indecomposable Kac-Moody subalgebra.
\item There is a Lie algebra homomorphism
\begin{displaymath}\varphi:\left(\widetilde{L}(\mathfrak{g}_a, \sigma_a)\oplus\bigoplus_{i=1}^n \widetilde{L}(\mathfrak{g}_i, \sigma_i)\right)\longrightarrow \widetilde{L}(\mathfrak{g}, \sigma)\,,\end{displaymath}
defined by $\varphi(f_1, r_{c_1}), \dots, (f_n, r_{c_n})=(f_1, \dots, f_n,  r_c=\sum r_{c_i})$.

\item $\widehat{L}(\mathfrak{g}, \sigma):=\left(\bigoplus \widetilde{L}(\mathfrak{g}_i, \sigma_i)\right)/\textrm{Ker}(\varphi)\oplus \mathbb{F}d$.
This defines an exact sequence
\begin{displaymath}1\longrightarrow \mathbb{F}^{n-1}\longrightarrow \left(\bigoplus_{i=1}^n \widetilde{L}(\mathfrak{g}_i, \sigma_i)\right) \longrightarrow \widetilde{L}(\mathfrak{g}, \sigma)\longrightarrow 1\,.\end{displaymath}
\end{enumerate}
\end{lemma}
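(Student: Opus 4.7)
The entire statement reduces to direct bookkeeping with the bracket of Definition~\ref{geometricaffinekacmoodyalgebra}, exploiting that in a reductive $\mathfrak{g}=\mathfrak{g}_a\oplus\bigoplus_i\mathfrak{g}_i$ the simple (and abelian) summands are pairwise orthogonal with respect to any $\mathrm{ad}$-invariant form, and that the pointwise bracket $[\mathfrak{g}_i,\mathfrak{g}_j]_0=0$ for $i\neq j$. I would treat the four items in order, since each prepares the next.

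For (1), I would pick $X=f+rc+sd\in\widehat L(\mathfrak g,\sigma)$ and $Y=g+tc\in\widetilde L(\mathfrak g_i,\sigma_i)$ (embedded by placing $g$ in the $i$-th slot and sending $c$ to $c$), expand
\[
 [X,Y]=[f,g]_0+s\,g'+\omega(f,g)\,c,
\]
and observe that $[f,g]_0$ is again a loop in $\mathfrak g_i$ because only the $i$-th component of $f$ brackets non-trivially against $g$; the term $s g'$ stays in $L(\mathfrak g_i,\sigma_i)$, and $\omega(f,g)c$ lies in $\mathbb F c\subset\widetilde L(\mathfrak g_i,\sigma_i)$. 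The same computation with $Y\in\widetilde L(\mathfrak g_a,\sigma_a)$ goes through because $\mathfrak g_a$ is abelian and central-orthogonal to the $\mathfrak g_i$. Claim (2) then follows by noting that $\widetilde L_{\mathrm{alg}}(\mathfrak g_i,\sigma_i)\oplus\mathbb F d$ is, by construction, the standard algebraic affine Kac-Moody algebra attached to the simple pair $(\mathfrak g_i,\sigma_i)$, and invoking the known indecomposability of these algebras from \cite{Kac90,Carter05}.

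For (3), I would verify the homomorphism property on the generating bracket identities. The source carries the direct-sum bracket, i.e.\ the brackets are taken within each $\widetilde L(\mathfrak g_i,\sigma_i)$ and vanish across different factors. On the target, pointwise brackets decompose as $[(f_a,f_1,\dots,f_n),(g_a,g_1,\dots,g_n)]_0=(0,[f_1,g_1]_0,\dots,[f_n,g_n]_0)$ because $[\mathfrak g_i,\mathfrak g_j]_0=0$ for $i\neq j$ and $\mathfrak g_a$ is central in $\mathfrak g$. The decisive point is that the cocycle also splits:
\[
 \omega\bigl((f_a,f_1,\dots,f_n),(g_a,g_1,\dots,g_n)\bigr)=\sum_i\omega_i(f_i,g_i),
\]
which follows from $\langle\mathfrak g_i,\mathfrak g_j\rangle=0$ for $i\neq j$ in the $\mathrm{ad}$-invariant form used to define $\omega$ (the abelian factor contributes zero because $\omega$ restricted to $L(\mathfrak g_a,\sigma_a)$ can be absorbed harmlessly or is seen to cancel under the residue). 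Adding central components $r_{c_a}+\sum r_{c_i}$ on both sides then matches the definition of $\varphi$, so $\varphi([X,Y])=[\varphi(X),\varphi(Y)]$.

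For (4), surjectivity of $\varphi$ onto $\widetilde L(\mathfrak g,\sigma)$ is immediate: any loop in $L(\mathfrak g,\sigma)$ splits component-wise and any central element $r_c$ can be written as $\sum r_{c_i}$. The kernel consists of tuples with vanishing loop components and $\sum r_{c_i}=0$, which is an $(n-1)$-dimensional central subspace (specialising, as the statement does, to the purely semisimple case with no $\mathfrak g_a$); this yields the exact sequence. Adjoining the derivation $\mathbb F d$ to both sides of the identification $\widetilde L(\mathfrak g,\sigma)\cong\bigl(\bigoplus_i\widetilde L(\mathfrak g_i,\sigma_i)\bigr)/\mathrm{Ker}(\varphi)$ produces the claimed description of $\widehat L(\mathfrak g,\sigma)$. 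The main obstacle I anticipate is the cocycle splitting in paragraph three: one has to check that whichever concrete realisation of $\omega$ is chosen (residue formula for holomorphic loops, integral formula for integrable loops), the orthogonality $\langle\mathfrak g_i,\mathfrak g_j\rangle=0$ is preserved loop-wise; everything else is essentially a matter of careful indexing.
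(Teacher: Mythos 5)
Your proposal is correct and follows essentially the same route as the paper: item (1) by direct verification that the bracket (including the $g'$ term coming from $d$ and the central $\omega(f,g)c$ term) stays in $\widetilde{L}(\mathfrak{g}_i,\sigma_i)$, item (3) by checking that the loop part of $\varphi$ is an isomorphism and that the cocycle splits as $\omega=\sum_i\omega_i$ over the pairwise orthogonal simple summands, and item (4) by identifying the kernel as the $(n-1)$-dimensional central subspace $\{\sum r_{c_i}=0\}$. You are in fact somewhat more explicit than the paper about why the cocycle splits (orthogonality of the ideals under the invariant form in either realisation of $\omega$), which is the one point the paper's computation leaves implicit.
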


\begin{proof}~
\begin{enumerate}
\item To check 1., it is sufficient to verify the closedness of the bracket operation: let $f_i\in \widetilde{L}(\mathfrak{g}_i, \sigma_i)$, and $g+\mu d\in \widehat{L}(\mathfrak{g}, \sigma)$.  Then $[f_i, g+\mu d]= [f_i, g]-\mu f_i'$. $f_i'$ is in $\widetilde{L}(\mathfrak{g}_i, \sigma_i)$, the same is true for $[f_i, g]$, as it is true pointwise for elements in $\mathfrak{g}$.

\item 2.\ follows directly from the definition.

\item $\varphi|_{L(\mathfrak{g}, \sigma)}$ is an isomorphism. So we are left with checking the behaviour of the extensions.
\begin{alignat*}{1}
\hspace{3pt}&\hspace{3pt}\varphi[((f_1, r_{c_1}), \dots, (f_n, r_{c_n})); ((\bar{f}_1, \bar{r}_{c_1}), \dots, (\bar{f}_n, \bar{r}_{c_n}))] = \\
= &\hspace{3pt}\varphi[f_1, \dots, f_n; \bar{f}_1, \dots, \bar{f}_n]_0+\varphi\left(\sum_{i=1}^n\omega_i\left(f_i; \bar{f}_i'\right)c_i\right)=\\
= &\hspace{3pt}[\varphi(f_1, \dots, f_n); \varphi(\bar{f}_1, \dots, (\bar{f}_n))]_0+\sum_{i=1}^n \omega\left(\varphi(f_1, \dots, f_n), \varphi(\bar{f}_1, \dots , \bar{f}_n)\right)c=\\
= &\hspace{3pt}[\varphi((f_1, r_{c_1}), \dots, (f_n, r_{c_n}));\varphi((\bar{f}_1, \bar{r}_{c_1}), \dots, (\bar{f}_n, \bar{r}_{c_n}))]
\end{alignat*}
\item is a consequence of 3.\ .\qedhere
\end{enumerate}

\end{proof}

\begin{remark}
One can construct products of geometric affine Kac-Moody algebras. If a generalized geometric affine Kac-Moody algebra is the product of $n$ geometric affine Kac-Moody algebras, it has index $n$. This constructions corresponds to a direct product of Kac-Moody symmetric spaces. Thus from now on we restrict our attention to geometric affine Kac-Moody algebras.
\end{remark}

Now we want to prove a similar splitting theorem for automorphisms of complex geometric affine Kac-Moody algebras.

The following lemma allows to restrict the study to the loop algebras:

\begin{samepage}
\begin{lemma}~ 
\begin{enumerate}
\item Any automorphism $\widehat{\varphi}:\widehat{L}(\mathfrak{g}_{\mathbb {R}},\sigma) \longrightarrow \widehat{L}(\mathfrak{g}_{\mathbb {R}}, \sigma)$ induces an automorphism of the derived algebras $\widetilde{\varphi}:\widetilde{L}(\mathfrak{g}_{\mathbb {R}}, \sigma)\longrightarrow \widetilde{L}(\mathfrak{g}_{\mathbb {R}}, \sigma)$.
\item Any automorphism $\widehat{\varphi}:\widehat{L}(\mathfrak{g}_{\mathbb {R}},\sigma) \longrightarrow \widehat{L}(\mathfrak{g}_{\mathbb {R}}, \sigma)$ induces an automorphism of the loop algebras $\varphi:L(\mathfrak{g}_{\mathbb {R}}, \sigma)\longrightarrow L(\mathfrak{g}_{\mathbb {R}}, \sigma)$.
\end{enumerate}
\end{lemma}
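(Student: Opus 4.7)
The plan is to exploit canonical algebraic characterisations of $\widetilde{L}(\mathfrak{g}_{\mathbb{R}},\sigma)$ and $L(\mathfrak{g}_{\mathbb{R}},\sigma)$ inside $\widehat{L}(\mathfrak{g}_{\mathbb{R}},\sigma)$, which are automatically preserved by any Lie algebra automorphism. Throughout I assume $\mathfrak{g}_{\mathbb{R}}$ is semisimple (as it is the compact real form of a semisimple complex Lie algebra in the setting of this work).

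For part 1, the key identity to establish is
\[
\widetilde{L}(\mathfrak{g}_{\mathbb{R}},\sigma) \;=\; [\widehat{L}(\mathfrak{g}_{\mathbb{R}},\sigma),\widehat{L}(\mathfrak{g}_{\mathbb{R}},\sigma)],
\]
which is asserted in the text immediately after the definition of the derived algebra. The inclusion $\supseteq$ is immediate from inspection of the bracket formulas: every bracket in $\widehat{L}$ lies in $L\oplus\mathbb{F}c$ since $d$ never appears in the output. For $\subseteq$, one uses that $\mathfrak{g}_{\mathbb{R}}$ is perfect, so every value $f(t)$ is a sum of pointwise commutators, which realises an arbitrary loop as a bracket in $L$, and $c$ arises as $\omega(f,g)c$ for loops $f,g$ with non-vanishing $\omega$-pairing. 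Since any Lie algebra automorphism preserves the derived subalgebra, $\widehat{\varphi}(\widetilde{L})=\widetilde{L}$, and $\widetilde{\varphi}:=\widehat{\varphi}|_{\widetilde{L}}$ is the desired automorphism.

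For part 2, I would identify $L(\mathfrak{g}_{\mathbb{R}},\sigma)$ with the quotient $\widetilde{L}(\mathfrak{g}_{\mathbb{R}},\sigma)/Z(\widetilde{L})$. To compute the centre, suppose $f+\lambda c$ commutes with every $g\in L$. The equation $[f+\lambda c,g]=[f,g]_{0}+\omega(f,g)c=0$ forces both components to vanish, hence $[f(t),g(t)]=0$ for every $t$ and every $g$. Since $\mathfrak{g}_{\mathbb{R}}$ is semisimple, its centre is trivial, so $f=0$ and $Z(\widetilde{L})=\mathbb{F}c$. The induced bracket on $\widetilde{L}/\mathbb{F}c$ is precisely $[\cdot,\cdot]_{0}$, giving a canonical Lie algebra isomorphism $\widetilde{L}/\mathbb{F}c\cong L(\mathfrak{g}_{\mathbb{R}},\sigma)$. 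Because $\widetilde{\varphi}$ preserves the centre of $\widetilde{L}$, it descends to an automorphism $\varphi$ of the quotient, which is the desired automorphism of $L$.

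The proof is essentially a matter of recognising $\widetilde{L}$ and $L$ as intrinsic invariants of $\widehat{L}$; the only computation with real content is the identification of $Z(\widetilde{L})$, and that rests only on semisimplicity of $\mathfrak{g}_{\mathbb{R}}$. The main subtlety I would check carefully is that the identification $\widetilde{L}/\mathbb{F}c\cong L$ is as Lie algebras under the loop bracket $[\cdot,\cdot]_{0}$ (so that $\varphi$ is genuinely an automorphism of $L$ with its standard structure), which follows directly from the defining relations of the bracket on $\widehat{L}$.
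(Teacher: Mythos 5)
Your argument is correct in substance, but it is worth noting that the paper does not actually prove this lemma at all --- it simply cites Heintze--Gro\ss~\cite{Heintze09} --- so your self-contained proof via intrinsic invariants is a genuine addition rather than a reproduction. Characterising $\widetilde{L}$ as the derived subalgebra $[\widehat{L},\widehat{L}]$ and $L$ as the quotient $\widetilde{L}/Z(\widetilde{L})$ is exactly the standard mechanism behind the cited result, and both invariance arguments (automorphisms preserve derived subalgebras and centres) are airtight. Two caveats. First, your proof only covers the semisimple case, which you flag honestly; but the lemma sits in a section where $\mathfrak{g}=\mathfrak{g}_s\oplus\mathfrak{g}_a$ is allowed to be reductive with a nontrivial abelian factor, and there both of your characterisations fail: since $[d,f]=izf'$ kills the constant Fourier mode and the averaged Killing form vanishes on $\mathfrak{g}_a$, constant abelian loops lie in $Z(\widehat{L})$ but not in $[\widehat{L},\widehat{L}]$, so $\widetilde{L}\neq[\widehat{L},\widehat{L}]$ and $Z(\widetilde{L})\supsetneq\mathbb{F}c$. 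A proof in that generality needs a different invariant description (or the normal-form theorem of Heintze--Gro\ss\ quoted later in the chapter, which directly exhibits $\widehat{\varphi}f=\varphi(f)+\mu(f)c$ and $\widehat{\varphi}c=\epsilon_\varphi c$). Second, in the inclusion $L\subseteq[\widehat{L},\widehat{L}]$ your appeal to pointwise perfectness of $\mathfrak{g}_{\mathbb{R}}$ is slightly glib in the twisted case: one must check that the commutator decomposition of an arbitrary loop can be realised by loops satisfying the twisting condition $f(\omega z)=\sigma f(z)$, i.e.\ that the twisted loop algebra itself is perfect. This is true and standard for $\mathfrak{g}$ semisimple, but deserves a sentence (for instance by decomposing into $\sigma$-eigenspaces and using $[\mathfrak{g}^{(0)},\mathfrak{g}^{(j)}]=\mathfrak{g}^{(j)}$), after which one subtracts off the $c$-component using your observation that $c$ itself is a bracket of two loops with nonvanishing $\omega$-pairing.
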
 
\end{samepage}

\begin{proof}
see~\cite{Heintze09}.
\end{proof}

\noindent For loop algebras of real or complex type we find:

\begin{lemma}
Let $\mathfrak{g}=\mathfrak{g}_s \oplus \mathfrak{g}_a$ and $(\mathfrak{g_s}, \sigma):= \bigoplus_i (\mathfrak{g_i}, \sigma_i)$, where $(\mathfrak{g_i}, \sigma_i)$ is a simple Lie algebra of the compact type.
Let $L(\mathfrak g, \sigma)$ be the associated loop algebra and $\varphi$ an automorphism of $L(\mathfrak g, \sigma)$. 
Then
\begin{enumerate}
\item $\varphi\left(L(\mathfrak{g}_a, \sigma)\right)=L(\mathfrak{g}_a, \sigma)$ and $\varphi\left(L(\mathfrak{g}_s, \sigma)\right)=L(\mathfrak{g}_s, \sigma)$.
\item $L(\mathfrak{g}_s, \sigma)$ decomposes under the action of $\varphi$ into $\varphi$-invariant ideals of two types:
   \begin{enumerate}
			\item Loop algebras of simple Lie algebras $L(\mathfrak{g}_i,\sigma_i)$ together with an automorphism $\varphi_i$ (called ``type $I$-factors''),
			\item Loop algebras of products of simple Lie algebras $\mathfrak{g_i}=\oplus_{i=1}^m\mathfrak{g_i}'$ together with an automorphism $\varphi_i$ of order $n$, cyclically interchanging the $m$-factors (called ``type $II$-factors''). In this case $\frac{n}{m}=k\in \mathbb{Z}$, and $\sigma$ induces an automorphism of order $k$ on each simple factor.
   \end{enumerate}
\end{enumerate}
\end{lemma}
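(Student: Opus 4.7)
The plan is to prove the two parts separately, treating Part~1 as a consequence of intrinsic, automorphism-invariant characterizations of the two summands, and Part~2 as a consequence of the known ideal decomposition of $L(\mathfrak{g}_s,\sigma)$ together with the fact that any automorphism permutes its simple components.

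For Part~1, I would first argue that $L(\mathfrak{g}_a,\sigma)$ coincides with the center $Z(L(\mathfrak{g},\sigma))$. The inclusion $L(\mathfrak{g}_a,\sigma)\subset Z(L(\mathfrak{g},\sigma))$ is immediate because $\mathfrak{g}_a$ is central in $\mathfrak{g}$ and the bracket in $L(\mathfrak{g},\sigma)$ is pointwise. For the reverse inclusion, evaluate a central loop $f$ at any $z_0\in S^1$ not fixed by the rotation $z\mapsto \omega z$; at such a point the values $\{g(z_0):g\in L(\mathfrak{g},\sigma)\}$ exhaust $\mathfrak{g}$, so $f(z_0)\in Z(\mathfrak{g})=\mathfrak{g}_a$, and continuity (or analyticity) extends this to all $z$. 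In the same spirit, $L(\mathfrak{g}_s,\sigma)$ equals the derived subalgebra $[L(\mathfrak{g},\sigma),L(\mathfrak{g},\sigma)]$: the inclusion $\supset$ uses perfectness of $\mathfrak{g}_s$ together with density of the algebraic loops (where expressing a loop as a finite sum of commutators is routine), while $\subset$ holds pointwise since $[\mathfrak{g},\mathfrak{g}]=\mathfrak{g}_s$. Both characterizations are manifestly preserved by the automorphism $\varphi$.

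For Part~2, I would start from the pointwise decomposition $L(\mathfrak{g}_s,\sigma)=\bigoplus_i L(\mathfrak{g}_i,\sigma_i)$, in which each summand is an ideal because $[\mathfrak{g}_i,\mathfrak{g}_j]=0$ for $i\neq j$. The core step is to show that $\varphi$ permutes the summands $L(\mathfrak{g}_i,\sigma_i)$. For this I would invoke the classification of automorphisms of twisted loop algebras of semisimple Lie algebras obtained in~\cite{Heintze09}, which realizes any such $\varphi$ as a combination of a reparametrization of the circle with a family of pointwise automorphisms of $\mathfrak{g}_s$; such a pointwise action necessarily permutes the simple ideals of $\mathfrak{g}_s$ and hence, globally, the $L(\mathfrak{g}_i,\sigma_i)$. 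Alternatively, one can identify the $L(\mathfrak{g}_i,\sigma_i)$ intrinsically as the minimal non-zero ideals of $L(\mathfrak{g}_s,\sigma)$ whose generic pointwise values are simple ideals of $\mathfrak{g}_s$; any Lie algebra automorphism preserves this property. Once $\varphi$ acts by permutation on the index set, the orbit decomposition of this finite permutation (of order dividing $\mathrm{ord}(\varphi)=n$) yields the two families of factors: orbits of length one correspond to type~$I$-factors $(L(\mathfrak{g}_i,\sigma_i),\varphi_i)$ with $\varphi_i$ the restriction of $\varphi$, while an orbit of length $m>1$ produces a $\varphi$-invariant ideal $\bigoplus_{j=1}^{m} L(\mathfrak{g}'_i,\sigma_{i,j})$ on which $\varphi$ shifts cyclically; the $m$-fold iterate $\varphi^m$ then stabilizes each simple summand and induces there an automorphism of order $k:=n/m$, forcing $m\mid n$ and determining the compatible twist on each factor, as required.

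The main obstacle is the permutation step: in finite dimensions it follows from minimality of the simple ideals, but loop algebras admit many ideals of a purely function-theoretic origin (e.g.\ $h(z)\cdot L(\mathfrak{g}_i,\sigma_i)$ for suitable scalar functions $h$), so the $L(\mathfrak{g}_i,\sigma_i)$ are not minimal ideals in the naive sense. One must therefore either appeal to Heintze's structure theorem for automorphisms of loop algebras or produce an intrinsic characterization of the $L(\mathfrak{g}_i,\sigma_i)$ that only refers to Lie-algebraic data invariant under $\varphi$. Everything else — the orbit partition, the identification of type~$I$ and type~$II$ factors, and the divisibility $m\mid n$ — is then routine bookkeeping on the resulting finite permutation representation.
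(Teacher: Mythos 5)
Your proof is correct and follows the same skeleton as the paper's (pointwise splitting into loop algebras of the simple factors, a permutation argument, then orbit bookkeeping on a finite cyclic permutation), but you justify the two critical steps quite differently, and in both cases more carefully than the paper does. For Part~1 the paper simply writes down the decomposition $L(\mathfrak{g},\sigma)=L(\mathfrak{g}_a,\mathrm{Id})\oplus L(\mathfrak{g}_s,\sigma_s)$ without explaining why $\varphi$ preserves the two summands; your identification of $L(\mathfrak{g}_a,\sigma)$ with the center and of $L(\mathfrak{g}_s,\sigma)$ with the derived subalgebra (via evaluation at a point, where the values of the twisted loops exhaust $\mathfrak{g}$) supplies exactly the missing invariance argument. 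The only caution is that, depending on the regularity class, you may want to take the \emph{closed} derived subalgebra, or note that perfectness already holds algebraically by bracketing against constant loops in the $\sigma$-fixed subalgebra $\mathfrak{g}_0$ and using $[\mathfrak{g}_0,\mathfrak{g}_j]=\mathfrak{g}_j$ for the eigenspaces of $\sigma$. For Part~2 the paper postulates a decomposition of $\mathfrak{g}_s$ into minimal $\varphi$-invariant pieces, implicitly treating $\varphi$ as if it acted on $\mathfrak{g}_s$ itself; you correctly isolate the real difficulty, namely that the summands $L(\mathfrak{g}_i,\sigma_i)$ are not minimal ideals of the loop algebra (ideals such as the loops vanishing at a point, or $h\cdot L(\mathfrak{g}_i,\sigma_i)$, intervene), so the finite-dimensional minimality argument does not transfer verbatim. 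Your primary fix --- invoking the Heintze--Gro\ss\ standard form $\varphi(f)(t)=\varphi_t f(\lambda(t))$, from which the permutation of the simple summands is immediate --- is the right one and is in fact the tool the paper itself quotes directly after this lemma. Your alternative ``intrinsic'' characterization of the summands via ``generic pointwise values'' is the one shaky point: pointwise values are not purely Lie-algebraic data, so as stated it does not obviously give a $\varphi$-invariant characterization; I would either drop it or replace it by something genuinely intrinsic (e.g.\ characterizing the $L(\mathfrak{g}_i,\sigma_i)$ as the isotypic pieces of the centroid of $L(\mathfrak{g}_s,\sigma)$), and otherwise rely on the standard form. The final orbit analysis, including the divisibility $m\mid n$ and the order-$k$ automorphism induced by $\varphi^m$ on each simple summand, is correct and matches the paper's conclusion.
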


\begin{proof}
Let $\mathfrak{g}=\mathfrak{g}_a \oplus \mathfrak{g}_s$. Each function $f:\mathbb{R}\longrightarrow \mathfrak{g}$ has a unique decomposition $f:=(f_a, f_s)$, such that $f_s: \mathbb{R}\rightarrow \mathfrak{g}_s$ and $f_a: \mathbb{R}\rightarrow \mathfrak{g}_a$. As $f(t+2\pi)=\sigma f(t)$ is equivalent to $f_a(t+2\pi)=\sigma f_a(t)=f_a(t)$ and $f_s(t+2\pi)=\sigma_s f_s(t)$ and $\sigma|_{\mathfrak{g}_a}=Id$ this induces the decomposition: $L(\mathfrak{g}, \sigma)=L (\mathfrak{g}_a, Id) \oplus L(\mathfrak{g}_s, \sigma_s)$. 

\noindent Let now $\mathfrak{g}_s=\bigoplus_{i=1}^m\mathfrak{g}_i$ be a decomposition of $\mathfrak{g}_s$ such that  
\begin{enumerate}	
 \item $\mathfrak{g}_i$ is invariant under $\varphi = \varphi|_{\mathfrak{g}_i}$.
 \item There is no decomposition $\mathfrak{g}_i=\mathfrak{g}'_i \oplus \mathfrak{g}''_i$ such that  $\varphi|_{\mathfrak{g}'_i}$ and $\varphi|_{\mathfrak{g}''_i}$ are automorphisms and $\mathfrak{g}'_i$ and $\mathfrak{g}''_i$ are invariant under the bracket operation.
\end{enumerate}

Again $f_s$ splits into $m$ component functions $f_s=(f_1, \dots, f_m)$ and the compatibility condition $f_s(t+2\pi)=\sigma f_s(t)$ is equivalent to the $m$ compatibility conditions $f_i(t+2\pi)=\sigma f_i(t), i=1, \dots, m$.

\noindent There are now two cases:
\begin{enumerate}
\item Suppose first, $\mathfrak{g}_i$ is simple. Then $\varphi_i$ is an involution of $L(\mathfrak{g}_i, \sigma_i)$. The pair 
\begin{displaymath}\left(L(\mathfrak{g}_i, \sigma_i), \varphi_i\right)
\end{displaymath} is of type $I$. 
 The finite order automorphisms of simple affine geometric Kac-Moody algebras are completely classified~\cite{Heintze09}.
\item Suppose now $\mathfrak{g}_i$ is not simple and $\varphi|_{L(\mathfrak{g}_i,\sigma_i)}$ is of order $n$. There is a decomposition $\mathfrak{g}_i:=\bigoplus_j \mathfrak{g}_i^j$ such that $\mathfrak{g}_i^j$ is a simple Lie algebra. As there is no subalgebra $L(\mathfrak{h},\sigma_i|_{\mathfrak{h}})\subset L(\mathfrak{g}_i,\sigma_i)$ which is both invariant under $\varphi_{\mathfrak{g}_i}$ and an ideal in $L(\mathfrak{g}, \sigma)$, we find that all $(\mathfrak{g}_i^j, \sigma_i^j)$ are of the same type and the algebras $L(\mathfrak{g}_i^j, \sigma_i^j)$ are permuted by $\varphi_{\mathfrak{g}_i}$. Thus the number of those factors, denoted $m$, is a divisor of $n=km$.
We get $$L(\mathfrak{g}_i, \sigma_i):=\bigoplus_{j=1}^m L(\mathfrak{g}_j,\sigma_j)\,.$$
$\varphi$ induces an automorphism $\bar{\varphi}$ of order $k$ on each simple factor. $\bar{\varphi}$ is again a finite order automorphism of a simple geometric affine Kac-Moody algebra. \qedhere
\end{enumerate}
\end{proof}

\index{normal form of automorphism of loop algebra}
Using the classification result of E.\ Heintze and C.\ Gro\ss~\cite{Heintze09} we know that every automorphism of the loop algebra $L(\mathfrak{g},\sigma)$ is of standard form:
\begin{displaymath}\varphi\left( f(t)\right)=\varphi_t f(\lambda(t))\end{displaymath}
Here $\varphi(t)$ denotes a curve of automorphisms of $\mathfrak{g}$ and $\lambda:\mathbb{R}\longrightarrow \mathbb{R}$ is a smooth function. Not all such automorphisms are extendible to the affine Kac-Moody algebra associated to $L(\mathfrak{g}, \sigma)$. We quote theorem 3.4.\ of~\cite{Heintze09}:

\begin{theorem}[Heintze-Gro\ss, 09]
\index{theorem of Heintze Gro\ss}
Let $\widehat{\varphi}:\widehat{L}(\mathfrak{g}, \sigma)\longrightarrow \widehat{L}(\widetilde{\mathfrak{g}}, \widetilde{\sigma})$  be a linear or conjugate linear map. Then $\widehat{\varphi}$ is an isomorphism of Lie algebras iff there exists $\gamma\in \mathbb{F}$ and a linear (resp.\ conjugate linear) isomorphism $\varphi:L(\mathfrak{g}, \sigma))\longrightarrow L(\widetilde{\mathfrak{g}}, \widetilde{\sigma})$ with $\lambda'=\epsilon_{\varphi}$ constant such that
\begin{alignat*}{2}
\widehat{\varphi}c&= \epsilon_{\varphi}c\\
\widehat{\varphi}d&=\epsilon_{\varphi}d-\epsilon_{\varphi} f_{\varphi}+\gamma c\\
\widehat{\varphi}f&=\varphi(u)+\mu(f) c\,.
\end{alignat*}
\end{theorem}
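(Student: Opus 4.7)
The statement is an iff, and I would handle the two directions separately. For the forward direction, given $\varphi$ of the prescribed form $\varphi(f)(t) = \varphi_t f(\lambda(t))$ with $\lambda' = \epsilon_{\varphi}$ constant, I would define $\widehat{\varphi}$ by the three displayed formulas and verify directly that the three nontrivial bracket relations of Definition~\ref{geometricaffinekacmoodyalgebra} are preserved: the centrality of $c$, the derivation rule $[d, f] = f'$, and the cocycle identity $[f, g] = [f, g]_0 + \omega(f, g) c$. Each amounts to a routine calculation using the chain rule applied to $\varphi_t f(\lambda(t))$, the assumption that each $\varphi_t$ is an algebra automorphism of $\mathfrak{g}$, and a cocycle manipulation that expresses $\omega(\varphi f, \varphi g) - \epsilon_{\varphi}\omega(f, g)$ as the coboundary $\mu([f, g]_0)$, which the linear functional $\mu$ is tailored to absorb.

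For the converse, I would exploit the three canonical ingredients of the extension $\widehat{L} = L \oplus \mathbb{F}c \oplus \mathbb{F}d$ in turn. Semisimplicity of $\mathfrak{g}$ makes the centre of $\widehat{L}(\mathfrak{g}, \sigma)$ exactly $\mathbb{F}c$, so $\widehat{\varphi}(c) = \epsilon_{\varphi} c$ for a nonzero scalar, which we name $\epsilon_{\varphi}$. The derived subalgebra $[\widehat{L}, \widehat{L}] = L \oplus \mathbb{F}c$ is therefore also preserved; passing to the quotient by $\mathbb{F}c$ yields an induced isomorphism of loop algebras $\varphi$, which by the preceding classification theorem of Heintze-Gro\ss\ has the shape $\varphi(f)(t) = \varphi_t f(\lambda(t))$ for some curve of automorphisms $\varphi_t$ and some smooth $\lambda$. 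Lifting $\varphi$ back to $\widehat{L}$ is unique only modulo $\mathbb{F}c$, giving $\widehat{\varphi}(f) = \varphi(f) + \mu(f) c$, and $d$ is determined only modulo the derived algebra, yielding $\widehat{\varphi}(d) = \alpha d + f_{\ast} + \gamma c$ for uniquely determined $\alpha \in \mathbb{F}$, $f_{\ast} \in L(\widetilde{\mathfrak{g}}, \widetilde{\sigma})$, $\gamma \in \mathbb{F}$.

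The rigidity that pins down $\alpha$, $\lambda$ and $f_{\ast}$ comes from $\widehat{\varphi}[d, f] = [\widehat{\varphi}(d), \widehat{\varphi}(f)]$. Comparing loop-algebra parts gives
\begin{displaymath}
\varphi(f') = \alpha\,\varphi(f)' + [f_{\ast}, \varphi(f)]_0,
\end{displaymath}
and substituting $\varphi(f)(t) = \varphi_t f(\lambda(t))$ with the chain rule forces the coefficient of $f'(\lambda(t))$ to make $\alpha \lambda'(t)$ constant in $t$. Cross-referencing with the $c$-component of the cocycle relation for $\widehat{\varphi}[f, g]$ identifies this constant with $\epsilon_{\varphi}$, yielding both the constancy of $\lambda'$ and, after bookkeeping, the asserted scalars. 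The coefficient of $f(\lambda(t))$ then reads $\textrm{ad}(f_{\ast}(t)) \circ \varphi_t = -\alpha\,\varphi_t'$; since $\varphi_t^{-1}\varphi_t'$ is a derivation of $\mathfrak{g}$ and semisimplicity makes every derivation inner, we may write $\varphi_t^{-1}\varphi_t' = \textrm{ad}(Y_t)$ for a unique curve $Y_t$, and then $f_{\ast}$ is forced into the form $-\epsilon_{\varphi} f_{\varphi}$ with $f_{\varphi}(t) := \varphi_t Y_t$. The $c$-component of the bracket identities determines $\mu$ up to coboundary freedom, absorbed by $\gamma$, which is itself genuinely unconstrained. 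The main obstacle is precisely the rigidity $\lambda' \equiv \textrm{const}$: an arbitrary automorphism of $L(\mathfrak{g}, \sigma)$ allows $\lambda$ merely to be smooth, and it is only the demand of extendability to $\widehat{L}$ that restricts the admissible reparametrizations to the affine ones, explaining why not every loop-algebra automorphism lifts.
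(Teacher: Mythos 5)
The paper does not prove this statement at all: it is quoted verbatim as Theorem~3.4 of the cited work of Heintze and Gro\ss\ (\cite{Heintze09}), so there is no in-paper proof to compare against. Judged on its own, your reconstruction is essentially the correct and standard argument: the center $\mathbb{F}c$ and the derived algebra $L\oplus\mathbb{F}c$ are canonical, so $\widehat{\varphi}$ must preserve them, the induced map on $\widetilde{L}/\mathbb{F}c\cong L$ gives the loop-algebra isomorphism $\varphi$ in the Heintze--Gro\ss\ normal form $\varphi_t f(\lambda(t))$, and the compatibility with $[d,\cdot]$ and with the cocycle $\omega$ pins down $\lambda'$, $f_{\varphi}$ and $\mu$ exactly as you describe.

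Two small points deserve to be made explicit. First, when you deduce $\alpha\lambda'(t)=1$ from the coefficient of $f'(\lambda(t))$ and then identify both $\alpha$ and the scalar $\epsilon_{\varphi}$ acting on $c$ with $\lambda'$, you are implicitly using that $\epsilon_{\varphi}=\pm1$ (so that $1/\epsilon_{\varphi}=\epsilon_{\varphi}$); this normalization is not automatic from the bracket relations alone but comes from the requirement that $\lambda$ be compatible with the quasi-periodicity $f(t+2\pi)=\sigma f(t)$, which forces $\lambda(t+2\pi)=\lambda(t)\pm2\pi$ and hence, once $\lambda'$ is constant, $\lambda'=\pm1$. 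This is consistent with the paper's subsequent remark that $\epsilon_{\varphi}\in\{1,-1\}$ distinguishes first and second type. Second, your appeal to ``semisimplicity'' (center equal to $\mathbb{F}c$, all derivations of $\mathfrak{g}$ inner) restricts the argument to the semisimple case; the paper's surrounding discussion allows reductive $\mathfrak{g}$, where the abelian factor would enlarge the center and the argument for $\widehat{\varphi}(c)\in\mathbb{F}c$ needs the extra observation that $c$ spans the center of the \emph{derived} algebra intersected with the annihilator of the loop part. Neither point invalidates the outline, but both are where the actual rigidity lives.
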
 

Following \cite{Heintze09} we call $\widehat{\varphi}$ of ``first type'' if $\epsilon_{\varphi}=1$ and of ``second type'' if $\epsilon_{\varphi}=-1$.

\begin{definition}
Let $\mathfrak{g}$ be reductive Lie algebra, $\sigma$ a diagram automorphism and $L(\mathfrak{g}, \sigma)$ the associated (twisted) loop algebra.
\begin{enumerate} 
\item An involution $\varphi$ of $L(\mathfrak {g}, \sigma)$ is called locally admissible iff its restriction to any irreducible factor is extendible to the associated simple affine Kac-Moody algebra. 
\item It is called extendible iff it can be extended to $\widehat{L}(\mathfrak {g}, \sigma)$.   
\end{enumerate}
\end{definition}

\begin{remark}
It is important to note that on every simple factor of a loop algebra we can choose any automorphism we want, especially it is possible to use any locally admissible automorphism --- i.e.\ the identity, automorphisms of first type and automorphism of second type --- simultaneously.
\end{remark}

\index{involution of 1.~type}
\index{involution of 2.~type}
This is no longer the case with geometric affine Kac-Moody algebras: study now extensions to $\widehat{L}(\mathfrak{g}, \sigma)$. Here we have an important restriction: study any factor $\widehat{L}(\mathfrak{g}_i, \sigma_i)$. The involution $\varphi_i$ of $L(\mathfrak{g}, \sigma)$ defines a unique involution $\widehat{\varphi}_i$ of $\widehat{L}(\mathfrak{g}_i, \sigma_i)$~\cite{Heintze09}. Thus we find:

\begin{lemma} 
\index{extendible involution}
\index{locally admissible involution}
A locally admissible involution of $\varphi:L(\mathfrak{g}, \sigma)\longrightarrow L(\mathfrak{g}, \sigma)$ is admissible, that is extendible to $\widehat{L}(\mathfrak{g}, \sigma)$, iff every restriction $\varphi_i:L(\mathfrak{g}_i, \sigma_i)\longrightarrow L(\mathfrak{g}_i, \sigma_i)$ has the same extension to $\widehat{L}(\mathfrak{g}_i, \sigma_i)$.
\end{lemma}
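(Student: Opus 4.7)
The plan is to leverage the Heintze-Gro\ss{} classification recalled just before the statement: each locally admissible involution $\varphi_i$ has a unique extension $\widehat{\varphi}_i$ to $\widehat{L}(\mathfrak{g}_i, \sigma_i)$ determined by a triple $(\epsilon_i, f_{\varphi_i}, \gamma_i)$, where $\epsilon_i = \lambda_i' \in \{\pm 1\}$ specifies its type (first or second) and $\gamma_i$ is pinned down by the involutivity condition via $2\gamma_i = \mu_i(f_{\varphi_i})$. I read the phrase ``has the same extension'' in the statement as the requirement that the scalars $\epsilon_i$ (hence the first-type or second-type character of each $\widehat{\varphi}_i$) agree for all $i$.

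For the forward direction, suppose $\widehat{\varphi}$ extends $\varphi$ to $\widehat{L}(\mathfrak{g}, \sigma)$. By the splitting lemma above, for every $i$ the subspace $\mathfrak{n}_i := \bigoplus_{j\ne i} L(\mathfrak{g}_j, \sigma_j)$ is an ideal of $\widehat{L}(\mathfrak{g}, \sigma)$, and $\widehat{\varphi}$ preserves it because $\varphi$ respects the decomposition into simple factors. Consequently $\widehat{\varphi}$ descends to an involution of the quotient $\widehat{L}(\mathfrak{g}, \sigma)/\mathfrak{n}_i \cong \widehat{L}(\mathfrak{g}_i, \sigma_i)$, which by uniqueness must coincide with $\widehat{\varphi}_i$. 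Since $\widehat{\varphi}(c) = \epsilon c$ for a single global $\epsilon \in \{\pm 1\}$, projection to each factor yields $\epsilon_i = \epsilon$ for every $i$.

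For the reverse direction, assume all $\widehat{\varphi}_i$ share a common $\epsilon$. Define $\widehat{\varphi}$ on $\widehat{L}(\mathfrak{g}, \sigma)$ by $\widehat{\varphi}|_{L(\mathfrak{g}, \sigma)} = \varphi$, $\widehat{\varphi}(c) = \epsilon c$, and $\widehat{\varphi}(d) = \epsilon d - \epsilon \sum_i f_{\varphi_i} + \bigl(\sum_i \gamma_i\bigr) c$. To verify that this is a Lie algebra automorphism one invokes the orthogonality of distinct factors under both the pointwise bracket $[\cdot,\cdot]_0$ and the cocycle $\omega$; the bracket identities involving $d$ and the central extension then reduce factor-by-factor to identities already verified for each $\widehat{\varphi}_i$.

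The main obstacle is the bookkeeping around $d$: because the global algebra has only one derivation and one central direction, one must check that the candidate data $f_\varphi = \sum_i f_{\varphi_i}$ and $\gamma = \sum_i \gamma_i$ satisfy the global involutivity relation $2\gamma = \mu(f_\varphi)$. This uses the direct-sum splitting of the cocycle $\omega$, which renders $\mu$ additive over factors in the form $\mu(f_\varphi) = \sum_i \mu_i(f_{\varphi_i})$, combined with the local involutivity $2\gamma_i = \mu_i(f_{\varphi_i})$ holding within each $\widehat{L}(\mathfrak{g}_i, \sigma_i)$. Once this compatibility is established, the remainder is a routine check against the Heintze-Gro\ss{} normal form.
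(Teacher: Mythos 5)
Your proposal is correct and fills in, along essentially the same lines, the argument the paper leaves implicit: the paper offers no written proof, presenting the lemma as an immediate consequence of the Heintze--Gro\ss{} normal form (a single global $\epsilon_{\varphi}$ attached to the one shared $c$) together with the uniqueness of each local extension $\widehat{\varphi}_i$, which is exactly the skeleton you flesh out in both directions. One small imprecision: $\bigoplus_{j\neq i} L(\mathfrak{g}_j,\sigma_j)$ is not an ideal of $\widehat{L}(\mathfrak{g},\sigma)$, since the cocycle term $\omega(f_j,g_j)c$ lands outside it; by the paper's own splitting lemma you should work with $\bigoplus_{j\neq i}\widetilde{L}(\mathfrak{g}_j,\sigma_j)$ (i.e.\ include $\mathbb{F}c$) or simply restrict $\widehat{\varphi}$ to the subalgebra $\widetilde{L}(\mathfrak{g}_i,\sigma_i)\oplus\mathbb{F}d$ and project $\widehat{\varphi}(d)$ onto it --- either repair leaves your conclusion $\epsilon_i=\epsilon$ untouched.
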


Thus we have exactly two possibilities:
\begin{enumerate}
\item Every involution $\varphi_i$ is of the first type or the identity. In this case $\varphi$ is called of first type.
\item Every involution $\varphi_i$ is of the second type. In this case $\varphi$ is called of second type.
\end{enumerate}

\section{Orthogonal symmetric affine Kac-Moody algebras}

\subsection{The finite dimensional blueprint}

The classification of finite dimensional Riemann symmetric spaces proceeds by associating a pair consisting of a real simple Lie algebra and an involution to every irreducible symmetric space~\cite{Helgason01}. Those pairs are called ``orthogonal symmetric Lie algebras''~\cite{Helgason01}, chapter V.

We review the approach used for finite dimensional symmetric spaces~\cite{Helgason01}. 
An orthogonal symmetric Lie algebra is defined to be a pair $(\mathfrak{l},\rho)$, consisting of a real lie algebra $\mathfrak{l}$ and an involutive automorphism $\rho$ of $\mathfrak{l}$ such that the fixed point algebra of $\rho$ is a subalgebra of compact type of $\mathfrak{l}$. We distinguish three types of orthogonal symmetric Lie algebras.

\index{orthogonal symmetric Lie algebra (OSLA)}
Let us describe them more closely: let $\mathfrak{l}=\mathfrak{u}\oplus \mathfrak{e}$ be the decomposition of $\mathfrak{l}$ into the $+1$ and $-1$-eigenspace of $\rho$.
\begin{enumerate}
\item {\bf Euclidean type:}\index{OSLA of Euclidean type}
An effective orthogonal symmetric Lie algebra is of Euclidean type iff $\mathfrak{e}$ is Abelian.
\item {\bf Compact type:} \index{OSLA of compact type}
An orthogonal symmetric Lie algebra is of compact type iff $\mathfrak{l}$ is a compact, semisimple Lie algebra.
There are two classes of irreducible orthogonal symmetric Lie algebras of compact type: the first class consists of compact real forms of simple Lie algebras together with an involution $\rho$, the second class consists of an algebra $\mathfrak{l}=\mathfrak{h}\oplus\mathfrak{h}$ such that $\mathfrak{h}$ is a compact real form of a simple Lie algebra together with an involution $\rho$, interchanging the two factors.
\item {\bf Non-compact type:}\index{OSLA of non-compact type}
An orthogonal symmetric Lie algebra is of non-compact type iff $\mathfrak{e}_0$ is non-compact, semisimple, and $\mathfrak{l}=\mathfrak{u}\oplus \mathfrak{e}$ is a Cartan decomposition (for the definition of a Cartan decomposition see~\cite{Helgason01}, p.~183). There are two classes of irreducible Lie algebras of non-compact type:
the first class consists of complex simple Lie algebras with the involution being conjugation with respect to a compact real form, the second one consists of non compact real forms with the involution defined by the Cartan decomposition.
\end{enumerate}

Every orthogonal symmetric Lie algebra can be decomposed into a direct product of three ideals $\mathfrak{l}=\mathfrak{l}_0\oplus \mathfrak{l}_+ \oplus\mathfrak{l}_-$ such that $\mathfrak{l}_0$ is an orthogonal symmetric subalgebra of Euclidean type, $\mathfrak{l}_+$ is an orthogonal symmetric Lie algebra of compact type and $\mathfrak{l}_-$ is an orthogonal symmetric Lie algebra of non-compact type.

Orthogonal symmetric algebras of compact type and of non-compact type are in duality: let $\mathfrak{l}_+=\mathfrak{u}\oplus \mathfrak{e}$ be an orthogonal symmetric Lie algebra of compact type, then $\mathfrak{l}_-=\mathfrak{u}\oplus i\mathfrak{e}$ is an orthogonal symmetric Lie algebra of non-compact type and vice versa.

Let $\mathfrak{g}_{\mathbb{C}}$ be a simple complex Lie algebra, $\mathfrak{g}_{\mathbb{R}}$ a real form of compact type of $\mathfrak{g}_{\mathbb{C}}$ and denote by $\overline{\phantom{z}}$ the conjugation with respect to $\mathfrak{g}_{\mathbb{R}}$. Let $\rho$ denote an involution of $\mathfrak{g}_{\mathbb{R}}$. We define two involutions:
\begin{equation*}
\begin{array}{cc}
\rho_*: \mathfrak{g}_{\mathbb{C}} \longrightarrow \mathfrak{g}_{\mathbb{C}}, & z \mapsto \overline{z}\\
\rho_0: \mathfrak{g}_{\mathbb{C}} \longrightarrow \mathfrak{g}_{\mathbb{C}}, & z \mapsto \overline{\rho(z)}
\end{array}
\end{equation*}

Using this notation we can describe the compact and non-compact real forms of $\mathfrak{g}_{\mathbb{C}}$  as fixed point algebras of the involutions: namely we get  $\mathfrak{g}_{\mathbb{R}}=\textrm{Fix}(\rho_*)$ and for the non-compact dual $\mathfrak{g}_D=\textrm{Fix}(\rho_0)$. Let $\mathfrak{g}_{\mathbb{R}}=\mathfrak{k}\oplus \mathfrak{p}$ be the decomposition into the $+1$ and $-1$ eigenspaces of $\rho$, then $\mathfrak{g}_D=\mathfrak{k}\oplus i\mathfrak{p}$.
\begin{enumerate} 
\item The pair $(\textrm{Fix}(\rho_*)= \mathfrak{g}_{\mathbb{R}}, \rho_0)$ is an orthogonal symmetric Lie algebra of the compact type, 
\item The pair $(\textrm{Fix}(\rho_0)=\mathfrak{g}_D, \rho_*)$ is an orthogonal symmetric Lie algebra of the non-compact type.
\end{enumerate}

\subsection{Orthogonal symmetric affine Kac-Moody algebras}

In this section we will study the infinite dimensional version of this theory which we need for Kac-Moody symmetric spaces: we introduce orthogonal symmetric affine Kac-Moody algebras. The results in this section hold for twisted and non-twisted affine Kac-Moody algebras. We use the notation of twisted affine Kac-Moody algebras together with the convention that $\sigma$ may denote the identity.

\index{orthogonal symmetric affine Kac-Moody algebra (OSAKA)}
We start with some definitions:

\begin{definition}
A real form of a complex geometric affine Kac-Moody algebra $\widehat{L}(\mathfrak{g}_{\mathbb{C}}, \sigma)$ is the fixed point set of a conjugate linear involution. 
\end{definition}
\index{real form of geometric affine Kac-Moody algebra}

\begin{example}
Let $A$ be an affine Cartan matrix, $\mathfrak{g}_{\mathbb{C}}(A)$ be its Lie algebra realization over $\mathbb{C}$. The realization $\mathfrak{g}_{\mathbb{R}}(A)$ over $\mathbb{R}$ is a real form of $\mathfrak{g}_{\mathbb{C}}(A)$. The conjugate linear involution is just ordinary complex conjugation. Similarly any completion $\widehat{L}(\mathfrak{g}_{\mathbb{R}}, \sigma)$ is a real form of the corresponding completion $\widehat{L}(\mathfrak{g}_{\mathbb{C}}, \sigma)$. 
\end{example}

We have described in section~\ref{TheloopalgebraapproachtoKacMoodyalgebras} that involutions of a geometric affine Kac-Moody algebra restrict to involutions of irreducible factors of the loop algebra. Hence, the invariant subalgebras are direct products of invariant subalgebras in those factors together with the appropriate $2$-dimensional extension.

\begin{definition}[compact real affine Kac-Moody algebra]
\index{compact real form of affine Kac-Moody algebra}
A compact real form of a complex affine Kac-Moody algebra $\widehat{L}(\mathfrak{g}_{\mathbb{C}}, \sigma)$ is defined to be a subalgebra of $\widehat{L}(\mathfrak{g}_\mathbb{C},\sigma)$ that is conjugate to the algebra
$\widehat{L}(\mathfrak{g}_{\mathbb {R}}, \sigma)$
where $\mathfrak{g}_{\mathbb R}$ is a compact real form of
$\mathfrak{g}_{\mathbb C}$. 
\end{definition}

\begin{remark}
A semisimple Lie algebra is called of ``compact type'' iff it integrates to a compact semisimple Lie group.
The infinite dimensional generalization of compact Lie groups are loop groups of compact Lie groups and the Kac-Moody groups, constructed as extensions of those loop groups (see section~\ref{Kac-Moody groups}). Thus the denomination is justified by the fact that ``compact'' affine Kac-Moody algebras integrate to ``compact'' Kac-Moody groups. Of course ``compact'' Kac-Moody groups are neither compact nor even locally compact.
\end{remark}

To define a loop group of the compact type we define an infinite dimensional version of the Cartan-Killing form:

\begin{definition}[Cartan-Killing form]
\index{Cartan-Killing form}
The Cartan-Killing form of a loop algebra $L(\mathfrak{g}_{\mathbb{C}},\sigma)$ is defined by
\begin{displaymath}B_{(\mathfrak{g}_{\mathbb{C}},\sigma)}\left(f, g\right)=\int_{0}^{2\pi} B\left(f(t), g(t) \right)dt\,.\end{displaymath}
\end{definition}

\begin{definition}[compact loop algebra]
\index{compact loop algebra}
A loop algebra of compact type is a real form of $L(\mathfrak{g}_{\mathbb{C}},\sigma)$ such that its Cartan-Killing form is negative definite. 
\end{definition}

\begin{lemma}
Let $\mathfrak{g}_{\mathbb{R}}$ be a compact semisimple Lie algebra.
Then the loop algebra $L(\mathfrak{g}_{\mathbb{R}}, \sigma)$ is of compact type.
\end{lemma}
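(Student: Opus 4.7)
The plan is to unpack the two defining conditions in the definition of a loop algebra of compact type and verify each one. First, I would need to show that $L(\mathfrak{g}_{\mathbb{R}}, \sigma)$ is a real form of $L(\mathfrak{g}_{\mathbb{C}}, \sigma)$, i.e.\ the fixed point set of a conjugate linear involution. Let $c: \mathfrak{g}_{\mathbb{C}} \to \mathfrak{g}_{\mathbb{C}}$ denote pointwise complex conjugation with respect to $\mathfrak{g}_{\mathbb{R}}$. Define
$$\tau: L(\mathfrak{g}_{\mathbb{C}}, \sigma) \longrightarrow L(\mathfrak{g}_{\mathbb{C}}, \sigma), \qquad (\tau f)(t) := c(f(t)).$$
Pointwise, $\tau$ is conjugate linear, involutive, and compatible with the bracket (since the bracket on $L(\mathfrak{g}_{\mathbb{C}}, \sigma)$ is defined pointwise). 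The fixed point set is exactly those loops taking values in $\mathfrak{g}_{\mathbb{R}}$, i.e.\ $L(\mathfrak{g}_{\mathbb{R}}, \sigma)$.

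The one subtlety is checking that $\tau$ preserves the twist condition $f(t+2\pi) = \sigma f(t)$. Since $\sigma$ is a diagram automorphism, one may choose a compact real form $\mathfrak{g}_{\mathbb{R}}$ such that $\sigma$ preserves $\mathfrak{g}_{\mathbb{R}}$, which is equivalent to $c\sigma = \sigma c$. Granted this compatibility, one computes
$$(\tau f)(t+2\pi) = c(f(t+2\pi)) = c(\sigma f(t)) = \sigma(c(f(t))) = \sigma (\tau f)(t),$$
so $\tau f \in L(\mathfrak{g}_{\mathbb{C}}, \sigma)$ and the fixed point set is indeed $L(\mathfrak{g}_{\mathbb{R}}, \sigma)$. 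This compatibility step is the main (and essentially the only) obstacle; everything else follows from pointwise considerations.

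Second, I would verify negative definiteness of the Cartan-Killing form. For $f \in L(\mathfrak{g}_{\mathbb{R}}, \sigma)$,
$$B_{(\mathfrak{g}_{\mathbb{C}}, \sigma)}(f,f) = \int_0^{2\pi} B(f(t), f(t))\, dt.$$
Because $\mathfrak{g}_{\mathbb{R}}$ is a compact semisimple real Lie algebra, the Killing form $B$ on $\mathfrak{g}_{\mathbb{R}}$ is negative definite, so the integrand is pointwise $\leq 0$ with equality iff $f(t) = 0$. If $f \not\equiv 0$, then for each regularity class under consideration (smooth, real analytic, holomorphic on $\mathbb{C}^*$, algebraic, $\dots$) the loop $f$ is continuous and has an open set on which $f(t) \neq 0$; the integral is therefore strictly negative. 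Combining the two steps gives the result.
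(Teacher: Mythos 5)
Your proof is correct and follows essentially the same route as the paper: the paper's own proof is a one-line observation that negative definiteness of the Killing form on $\mathfrak{g}_{\mathbb{R}}$ forces negative definiteness of the integrated form $B_{(\mathfrak{g}_{\mathbb{C}},\sigma)}$. You additionally verify the real-form condition --- that $L(\mathfrak{g}_{\mathbb{R}},\sigma)$ is the fixed point set of a conjugate linear involution commuting with the twist by $\sigma$ --- and make the strict negativity for nonzero continuous loops explicit; these are worthwhile completions of steps the paper leaves implicit rather than a different argument.
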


\begin{proof}
The Cartan-Killing form on $\mathfrak{g}_{\mathbb{R}}$ is negative definite. Hence, $B_{(\mathfrak{g}_{\mathbb{C}},\sigma)}\left(f, g\right)$ is negative definite.
\end{proof}

\noindent E.\ Heintze and C.\ Gro\ss\ prove in~\cite{Heintze09}, theorem 7.4.\ for indecomposable simple affine Kac-Moody algebras:

\begin{theorem}
\index{uniqueness of compact real form}
$\widehat{L}(\mathfrak{g}_{\mathbb{C}},\sigma)$ has a compact real form which is unique up to conjugation. 
\end{theorem}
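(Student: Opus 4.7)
The plan is to treat existence and uniqueness separately: existence by explicit construction of the standard form $\widehat{L}(\mathfrak{g}_{\mathbb{R}}, \sigma)$, and uniqueness by combining the Heintze-Gro\ss{} normal form for (anti)automorphisms with the classical finite dimensional fact that compact real forms of $\mathfrak{g}_{\mathbb{C}}$ are unique up to conjugation.

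For existence I would fix a compact real form $\mathfrak{g}_{\mathbb{R}}\subset \mathfrak{g}_{\mathbb{C}}$ and choose the diagram automorphism $\sigma$ so that it preserves $\mathfrak{g}_{\mathbb{R}}$; this is possible because diagram automorphisms act by permutation of a Chevalley basis and one can always build a compact real form compatible with such a basis. Then $\widehat{L}(\mathfrak{g}_{\mathbb{R}}, \sigma) = L(\mathfrak{g}_{\mathbb{R}},\sigma)\oplus \mathbb{R}c \oplus \mathbb{R}d$ is a candidate, and to check it is of compact type in the sense of the definition it suffices to verify that
\begin{displaymath}
B_{(\mathfrak{g}_{\mathbb{R}},\sigma)}(f,g) = \int_0^{2\pi} B(f(t),g(t))\, dt
\end{displaymath}
is negative definite. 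This follows pointwise from negative definiteness of $B$ on the finite dimensional compact form $\mathfrak{g}_{\mathbb{R}}$.

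For uniqueness, let $\widehat{\rho}_1,\widehat{\rho}_2$ be two conjugate linear involutions defining compact real forms. By the Heintze-Gro\ss{} theorem cited in the excerpt, each $\widehat{\rho}_i$ is determined, up to a choice of constants $\gamma_i\in\mathbb{F}$ and a $1$-cochain $\mu_i$, by a conjugate linear automorphism $\rho_i$ of the loop algebra $L(\mathfrak{g}_{\mathbb{C}},\sigma)$, and by the classification of such automorphisms $\rho_i$ is of the standard form $\rho_i(f)(t) = \tau^{(i)}_t(f(\lambda_i(t)))$. The compact type hypothesis forces the pointwise fixed algebra of $\tau^{(i)}_t$ to be a compact real form of $\mathfrak{g}_{\mathbb{C}}$ for each $t$. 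Invoking the finite dimensional uniqueness theorem, each $\tau^{(i)}_t$ is conjugate to the fixed standard conjugation by an inner automorphism $\mathrm{Ad}(g_i(t))$ of $\mathfrak{g}_{\mathbb{C}}$, and the loop $t\mapsto g_i(t)$ defines an element of the complex loop group that implements the conjugation at the level of $L(\mathfrak{g}_{\mathbb{C}},\sigma)$; composing with the corresponding element of the Kac-Moody group realizes the conjugation of $\widehat{\rho}_i$ to a standard involution.

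The main obstacle will be regularity and compatibility. First, the pointwise finite dimensional conjugating elements $g_i(t)$ are only unique up to the centralizer of the compact form, so one must select a branch that is holomorphic on $\mathbb{C}^*$ (or at least has the regularity demanded by the loop algebra under consideration); this can be handled by working with the adjoint orbit of compact forms in $\mathfrak{g}_{\mathbb{C}}$, which is a smooth manifold on which smooth/holomorphic sections can be built. Second, the reparametrizations $\lambda_i$ must be normalized; here one uses that compactness of the fixed algebra rigidly constrains $\lambda_i$ (in particular, involutivity forces $\lambda_i$ to be an involution of the circle, and the $\sigma$-twisting condition pins down its possible form). Finally, the remaining freedom in the extension data $\gamma_i$ and $\mu_i$ can be eliminated by conjugating with elements in the center and by translations along $d$, exactly as in Theorem 7.4 of \cite{Heintze09}, which we may quote directly.
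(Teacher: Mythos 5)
Your proposal is a from-scratch reconstruction of a result that the paper does not actually prove: the text simply quotes Theorem~7.4 of Heintze--Gro\ss~\cite{Heintze09} for the indecomposable simple case, and the only argument it supplies itself is the one-sentence reduction of the semisimple (non-simple) case to the simple one --- choose the unique compact form in each irreducible factor of the loop algebra and check that the single shared $\mathbb{F}c\oplus\mathbb{F}d$-extension is compatible with the per-factor conjugations. Your strategy (existence via $\widehat{L}(\mathfrak{g}_{\mathbb{R}},\sigma)$ and pointwise negative definiteness of the averaged Killing form; uniqueness via the normal form $\rho(f)(t)=\tau_t(f(\lambda(t)))$, pointwise reduction to the finite dimensional conjugacy of compact real forms, and a regular selection of the conjugating loop) is indeed the strategy of the cited Heintze--Gro\ss{} proof, so in that sense you are reproving their theorem rather than paralleling the paper. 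What your version buys is an actual argument where the paper has a citation; what it omits is precisely the one step the paper does contribute, namely the passage from irreducible factors to a reducible $\mathfrak{g}$ with a single central extension.

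Two places in your uniqueness sketch are thinner than they should be. First, the selection problem: the pointwise conjugating elements $g_i(t)$ must be assembled into an element of the relevant loop \emph{group} --- holomorphic on $\mathbb{C}^*$ (or on the annulus), satisfying the $\sigma$-twisting condition $g_i(\omega z)=\sigma(g_i(z))$, and with trivial monodromy; saying that the adjoint orbit of compact forms is a smooth manifold does not by itself produce such a global equivariant holomorphic section, and this is the genuinely hard analytic core of the Heintze--Gro\ss{} argument. Second, you assert that compactness forces the fixed algebra of $\tau_t^{(i)}$ to be a compact real form pointwise; this presupposes $\lambda_i=\mathrm{id}$ on the circle, which you only justify afterwards and somewhat circularly (``compactness rigidly constrains $\lambda_i$''). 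The clean order is: first show that if $\lambda$ is nontrivial the averaged Killing form on the fixed algebra is indefinite or degenerate (exactly the computation the paper carries out for involutions with $\lambda(t)=-t$), hence $\lambda=\mathrm{id}$; only then is the pointwise argument available. With those two steps filled in, and the semisimple reduction added, your proof would be complete and self-contained where the paper's is not.
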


Their proof extends directly to semisimple geometric affine Kac-Moody algebras such that $\mathfrak{g}_{\mathbb{C}}$ is semisimple (but need not be simple): we choose the (up to conjugation) unique subalgebra in every factor and have to check that the extensions fit together. This follows from the explicit construction. 

To find non-compact real forms, we need the following result of E.\ Heintze and C.\ Gro\ss\ (Corollary 7.7.\ of~\cite{Heintze09}):

\begin{theorem}
\label{theoremofHeintzegross}
Let $\mathcal{G}$ be a irreducible complex geometric affine Kac-Moody algebra, $\mathcal{U}$ a real form of compact type. The conjugacy classes of real forms of non-compact type of $\mathcal{G}$ are in bijection with the conjugacy classes of involutions on $\mathcal{U}$. The correspondence is given by $\mathcal{U}=\mathcal{K}\oplus \mathcal{P}\mapsto \mathcal{K}\oplus i\mathcal{P}$ where $\mathcal{K}$ and $\mathcal{P}$ are the $\pm 1$-eigenspaces of the involution.  
\end{theorem}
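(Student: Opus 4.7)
The plan is to prove the bijection by constructing each direction explicitly and then matching conjugacy classes, using the uniqueness of the compact real form quoted just before the theorem.

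\textbf{Forward map.} Given an involution $\theta$ of $\mathcal{U}$, decompose $\mathcal{U} = \mathcal{K}\oplus\mathcal{P}$ into $\pm 1$\ndash eigenspaces and set $\mathcal{G}_0 := \mathcal{K}\oplus i\mathcal{P}$. The standard relations $[\mathcal{K},\mathcal{K}]\subset\mathcal{K}$, $[\mathcal{K},\mathcal{P}]\subset\mathcal{P}$, $[\mathcal{P},\mathcal{P}]\subset\mathcal{K}$ give $[i\mathcal{P},i\mathcal{P}] = -[\mathcal{P},\mathcal{P}]\subset\mathcal{K}$, so $\mathcal{G}_0$ is a real Lie subalgebra. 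Since $\mathcal{G} = \mathcal{U}\oplus i\mathcal{U} = (\mathcal{K}\oplus i\mathcal{P})\oplus(i\mathcal{K}\oplus\mathcal{P})$, $\mathcal{G}_0$ is a real form; concretely, extending $\theta$ complex-linearly to $\theta_{\mathbb{C}}$ on $\mathcal{G}$ and writing $\tau_u$ for the conjugation fixing $\mathcal{U}$, the composition $\tau_0 := \theta_{\mathbb{C}}\circ\tau_u = \tau_u\circ\theta_{\mathbb{C}}$ is a conjugate-linear involution of $\mathcal{G}$ with $\mathrm{Fix}(\tau_0) = \mathcal{G}_0$. That $\mathcal{G}_0$ is of non-compact type follows from the Cartan--Killing form $B$ being negative definite on $\mathcal{U}$: since $\theta$ is a $B$-isometry, $\mathcal{K}$ and $\mathcal{P}$ are $B$-orthogonal, $B|_{\mathcal{K}}$ is negative definite, and $B(ix,iy) = -B(x,y)$ makes $B|_{i\mathcal{P}}$ positive definite, so $B|_{\mathcal{G}_0}$ is indefinite whenever $\theta\neq\mathrm{id}$.

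\textbf{Inverse map.} Given a non-compact real form $\mathcal{G}_0$ with conjugation $\tau_0$, the goal is to produce a compact real form whose conjugation commutes with $\tau_0$. Using uniqueness-up-to-conjugation of the compact real form, one seeks an inner automorphism moving $\mathcal{U}$ into a position where its conjugation $\tau_u$ commutes with $\tau_0$. Once this is achieved, $\theta_{\mathbb{C}} := \tau_u\tau_0 = \tau_0\tau_u$ is a complex-linear involution of $\mathcal{G}$ preserving $\mathcal{U}$, and its restriction $\theta$ to $\mathcal{U}$ is the sought involution; a direct verification then shows $\mathcal{G}_0 = \mathcal{K}\oplus i\mathcal{P}$ for the corresponding $\pm 1$\ndash decomposition of $\mathcal{U}$ under $\theta$.

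\textbf{Conjugacy classes.} If $\theta_1,\theta_2$ are involutions of $\mathcal{U}$ whose associated real forms $\mathcal{G}_0^1,\mathcal{G}_0^2$ are conjugate in $\mathcal{G}$ by some $\varphi\in\mathrm{Aut}(\mathcal{G})$, then $\varphi(\mathcal{U})$ is again a compact real form of $\mathcal{G}$; by uniqueness we can modify $\varphi$ by an inner automorphism into one that preserves $\mathcal{U}$ while still carrying $\mathcal{G}_0^1$ to $\mathcal{G}_0^2$. The restriction of the modified $\varphi$ to $\mathcal{U}$ then conjugates $\theta_1$ to $\theta_2$. The reverse implication is immediate from the construction of the forward map.

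\textbf{Main obstacle.} The substantive step is the simultaneous-conjugation argument: in finite dimensions one polar-decomposes $\tau_u\tau_0$ and uses a compactness/averaging argument on the compact group fixing $\mathcal{U}$ to move $\tau_u$ into commuting position with $\tau_0$. In the Kac-Moody setting the analogous group is only a tame Fr\'echet Lie group and is not locally compact, so this step must be replaced by the structural results of \cite{Heintze09}: the classification of involutions of $L(\mathfrak{g},\sigma)$ in the normal form $\varphi_t f(\lambda(t))$, the theorem on extendibility to $\widehat{L}(\mathfrak{g},\sigma)$ recalled earlier, and the uniqueness of the compact real form. Granted these inputs, the commuting-conjugation step can be carried out inside the relevant Kac-Moody group and the bijection follows as in the finite dimensional blueprint.
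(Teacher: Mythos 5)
First, note that the paper does not actually prove this statement: it is quoted as Corollary 7.7 of Heintze--Gro\ss{}~\cite{Heintze09}, so there is no in-paper proof to compare yours against. Judged on its own terms, your forward map and your matching of conjugacy classes follow the correct finite-dimensional blueprint and are essentially sound, modulo one imprecision: on the full geometric Kac-Moody algebra $\widehat{L}(\mathfrak{g},\sigma)$ the invariant bilinear form is Lorentzian even on the compact real form (because of the $c,d$-extension with $\langle c,d\rangle=-1$), so ``non-compact type'' cannot be read off from indefiniteness of the form on $\mathcal{G}_0=\mathcal{K}\oplus i\mathcal{P}$; the definiteness argument has to be applied to the averaged Cartan--Killing form on the loop-algebra part, which is how the paper actually defines compact versus non-compact.

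The genuine gap is in your inverse map, and you have in fact located it yourself. Producing, for a given non-compact real form with conjugation $\tau_0$, a compact real form whose conjugation $\tau_u$ commutes with $\tau_0$ --- equivalently, the existence and conjugacy-uniqueness of a Cartan involution of $\mathcal{G}_0$ --- is not a routine transplant of the finite-dimensional argument: that argument polar-decomposes $\tau_u\tau_0$ with respect to a positive-definite $\mathrm{Ad}$-invariant Hermitian form and exploits compactness of the relevant isotropy group, and neither ingredient exists here, since the invariant form is Lorentzian and the automorphism groups are tame Fr\'echet rather than locally compact. This commuting-position step is precisely the mathematical content of the Heintze--Gro\ss{} theorem; their proof goes through the normal form $\varphi(f)(t)=\varphi_t f(\lambda(t))$ for automorphisms of $L(\mathfrak{g},\sigma)$, the extendibility criterion to $\widehat{L}(\mathfrak{g},\sigma)$, and a separate analysis of the finite-dimensional curve of automorphisms $\varphi_t$. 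By writing ``granted these inputs, the commuting-conjugation step can be carried out,'' you have reduced the theorem to the very reference that states it rather than proved it. That is a legitimate move --- it is exactly what the paper does --- but it should then be presented as a citation of~\cite{Heintze09}, not as a proof.
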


Thus to find non-compact real forms, we have to study automorphisms of order $2$ of a geometric affine Kac-Moody algebra of the compact type. From now on we restrict to involutions $\widehat{\varphi}$ of second type, that is those such that $\epsilon_{\varphi}=-1$.

Now we want to extend theorem \ref{theoremofHeintzegross} to non-irreducible geometric affine Kac-Moody algebras.
\begin{enumerate}
\item Suppose first that the involution $\widehat{\varphi}$ on $\widehat{L}(\mathfrak{g}, \sigma)$ is chosen in a way that every irreducible factor is of type $I$. In this case $\widehat{\varphi}$ restricts to an involution $\widehat{\varphi}_i$ on every irreducible factor $\widehat{L}(\mathfrak{g}_i, \sigma_i)$. Let $L(\mathfrak{g}_i, \sigma_i)=\mathcal{K}_i \oplus \mathcal{P}_i$ be the decomposition of $L(\mathfrak{g}_i, \sigma_i)$ into the eigenspaces of $\varphi_i$. The dualization is performed according to the standard pattern using $\mathcal{K}=\oplus_i \mathcal{K}_i$ and $\mathcal{P}=\oplus_i \mathcal{P}_i\oplus \mathbb {R} c\oplus \mathbb{R}d$.

\item If the decomposition of $\widehat{\varphi}$ contains simple factors of type $II$, we perform the same decomposition procedure: 
let $\widehat{L}(\mathfrak{g}_j \oplus \mathfrak{g}_j', \sigma_j \oplus \sigma_j')$ be irreducible with respect to $\widehat{\varphi}$, then we have the decomposition 
$L(\mathfrak{g}_j\oplus \mathfrak{g}_j', \sigma_j \oplus \sigma_j')=\mathcal{K}_j\oplus \mathcal{P}_j$. Dualization follows the standard pattern.
\end{enumerate}

\noindent We have to investigate iff all real forms can be described in this way:

\begin{enumerate}
\item If $\mathfrak{g}$ is simple it is a result of E.\ Heintze and C.\ Gro\ss\ that every real form of non-compact type can be constructed in this way~\cite{Heintze09}.
\item If $\mathfrak{g}$ is not simple we use  that the restriction to the loop algebra $L(\mathfrak{g}, \sigma)$ of any real form consists of the direct product of real forms in the irreducible components of $L(\mathfrak{g}_{\mathbb{C}}, \sigma)$. Hence, according to the result of E.\ Heintze and C.\ Gro\ss, those are of the described type, and thus the non-compact real form we started with.
\end{enumerate}

\noindent Putting this together we have proved the following two theorems:

\begin{theorem}
Let $\widehat{L}(\mathfrak{g}, \sigma)$ be a geometric affine Kac-Moody algebra of the compact type. Let $\widehat{\varphi}$ be an involution of order $2$ of the second kind of $\widehat{L}(\mathfrak{g}, \sigma)$. Let furthermore  $\widehat{L}(\mathfrak{g}, \sigma)=\mathcal{K} \oplus \mathcal{P}$ be the decomposition into its $\pm 1$-eigenspaces.
Then $\mathcal{G}_D:=\mathcal{K}\oplus i \mathcal{P}$ is the dual real form of the non-compact type. 
\end{theorem}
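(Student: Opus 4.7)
The plan is to reduce the statement to the irreducible simple case, where it has already been settled by the cited result of E.\ Heintze and C.\ Gro\ss\ (Theorem~\ref{theoremofHeintzegross}), and then to check that the compact/non-compact duality on each factor assembles correctly across the unique $2$-dimensional extension $\mathbb{R}c\oplus\mathbb{R}d$ of $\widehat{L}(\mathfrak{g},\sigma)$. First I would decompose the loop algebra $L(\mathfrak{g},\sigma)=L(\mathfrak{g}_a,\mathrm{Id})\oplus\bigoplus_i L(\mathfrak{g}_i,\sigma_i)$ into its abelian part and the $\widehat{\varphi}$-invariant ideals of type~I and type~II supplied by the splitting lemma for automorphisms of loop algebras proved above. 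Since $\widehat{\varphi}$ is of the second kind, its restriction $\varphi_i$ to each irreducible factor is again of the second kind, and the restriction to the abelian part is trivial on the eigenspace decomposition level.

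Next I would apply Theorem~\ref{theoremofHeintzegross} on each irreducible factor: writing $L(\mathfrak{g}_i,\sigma_i)=\mathcal{K}_i\oplus\mathcal{P}_i$ for the $\pm1$-eigenspaces of $\varphi_i$, the subalgebra $\mathcal{K}_i\oplus i\mathcal{P}_i$ is a real form of $L(\mathfrak{g}_{i,\mathbb C},\sigma_i)$ of non-compact type. For type~II factors $L(\mathfrak{g}_j\oplus\mathfrak{g}_j',\sigma_j\oplus\sigma_j')$ where $\varphi_i$ cyclically permutes simple summands, exactly the same dualization $\mathcal{K}_j\oplus i\mathcal{P}_j$ produces a non-compact real form, as was verified in the paragraph immediately preceding the theorem. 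Putting $\mathcal{K}=\bigoplus_i\mathcal{K}_i$ and $\mathcal{P}=\bigoplus_i\mathcal{P}_i$ and taking the direct sum with $\mathbb{R}c\oplus\mathbb{R}d$ gives the claimed decomposition.

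The key remaining point is the behaviour of the extension: one must check that $c\in\mathcal{P}$ and $d\in\mathcal{P}$, so that $ic$ and $id$ land in the $+1$-eigenspace of the dualizing conjugate linear involution. This is where the assumption that $\widehat{\varphi}$ is of the \emph{second} kind is essential, since by the Heintze-Gro\ss\ normal form
\begin{align*}
\widehat{\varphi}c &= \epsilon_{\varphi}c = -c,\\
\widehat{\varphi}d &= \epsilon_{\varphi}d - \epsilon_{\varphi}f_{\varphi} + \gamma c = -d + f_{\varphi} + \gamma c,
\end{align*}
so after the standard adjustment of the splitting of $d$ (absorbing $f_{\varphi}$ and $\gamma c$ by the freedom in choosing a Cartan complement), both $c$ and $d$ lie in the $-1$-eigenspace $\mathcal{P}$. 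Hence the full extension is dualized to $\mathbb{R}ic\oplus\mathbb{R}id$, fitting the ansatz of Theorem~\ref{theoremofHeintzegross} exactly. Finally I would verify that the Cartan-Killing form of $\widehat{L}(\mathfrak{g},\sigma)$, negative definite on the compact form $\mathcal{K}\oplus\mathcal{P}$, becomes indefinite of the correct non-compact signature on $\mathcal{K}\oplus i\mathcal{P}$: on $\mathcal{K}$ it is unchanged and negative definite, while on $i\mathcal{P}$ the factor $i^2=-1$ flips the sign, yielding positive definiteness there. The main subtle point in the whole argument is the assembly across non-simple $\mathfrak{g}$, since geometric affine Kac-Moody algebras do not split as direct sums of their simple factors; but this is precisely the content of the splitting discussion preceding the theorem and requires only that \emph{every} restriction $\varphi_i$ be of the same (second) type, which is built into the hypothesis.
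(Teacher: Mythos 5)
Your proposal is correct and follows essentially the same route as the paper: reduce to the $\widehat{\varphi}$-invariant irreducible factors (type~I and type~II) via the splitting lemma, apply the Heintze--Gro\ss\ theorem on each factor, and observe that since $\widehat{\varphi}$ is of the second kind the extension $\mathbb{R}c\oplus\mathbb{R}d$ lands in $\mathcal{P}$ so that the dualization assembles across the single two-dimensional extension. The extra checks you add (the normal-form computation for $\widehat{\varphi}d$ and the sign flip of the Cartan--Killing form on $i\mathcal{P}$) are consistent with material the paper establishes separately and do not change the argument.
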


\begin{theorem}
\label{eithercompactornoncompact}
Every real form of a complex geometric affine Kac-Moody algebra is either of compact type or of non-compact type. A mixed type is not possible.
\end{theorem}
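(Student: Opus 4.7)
The plan is to reduce to the irreducible case (Theorem \ref{theoremofHeintzegross}), where the alternative already holds, and then use the global ``type'' $\epsilon_\tau \in \{+1,-1\}$ of the defining involution to show that the same label (compact or non-compact) must be assigned to every irreducible summand simultaneously.

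First, let $\mathcal{M}$ be a real form of $\widehat{L}(\mathfrak{g}_{\mathbb{C}},\sigma)$ defined by a conjugate linear involution $\tau$. By the material established earlier in this section, $\tau$ preserves the loop algebra $L(\mathfrak{g}_{\mathbb{C}},\sigma)$ and (after permuting and re-grouping factors if necessary) decomposes it into $\tau$-invariant irreducible summands $L(\mathfrak{g}_i,\sigma_i)$ of type~$I$ or type~$II$; the fixed-point set $\mathcal{M}$ restricts factorwise to $L(\mathfrak{g}_i,\sigma_i)$, together with a one-dimensional real line in the extension $\mathbb{C}c\oplus\mathbb{C}d$. On each irreducible factor $\widehat{L}(\mathfrak{g}_i,\sigma_i)$, Theorem \ref{theoremofHeintzegross} supplies the desired dichotomy: the restriction is either (a compact real form conjugate to) $\widehat{L}((\mathfrak{g}_i)_{\mathbb{R}},\sigma_i)$ or a non-compact real form $\mathcal{K}_i\oplus i\mathcal{P}_i$ arising from a second-type involution of the compact factor.

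The key step, and the one where mixing is ruled out, invokes the Heintze--Gro\ss{} classification of isomorphisms (Theorem~3.4 of~\cite{Heintze09}, recorded in the excerpt): every isomorphism $\widehat{\tau}$ of $\widehat{L}(\mathfrak{g},\sigma)$ carries a single global constant $\epsilon_{\tau}\in\{+1,-1\}$, visible from $\widehat{\tau}\,c = \epsilon_\tau c$. This constant is shared by every restriction $\tau_i$ to an irreducible factor --- that is exactly the dichotomy lemma for (locally) admissible involutions proved just before the statement of the theorem, and the proof goes through verbatim for conjugate linear $\tau$ because the Heintze--Gro\ss{} normal form is stated for linear and conjugate linear isomorphisms alike. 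Since compact real forms correspond to first-type (pointwise) conjugation and non-compact real forms, by explicit restriction in the text following Theorem~\ref{theoremofHeintzegross}, correspond to second-type involutions, the two classes are separated by the value of $\epsilon_\tau$. Thus $\epsilon_\tau = +1$ forces $\mathcal{M}$ to be (conjugate to) the compact real form on every factor, while $\epsilon_\tau = -1$ forces $\mathcal{M}$ to be non-compact on every factor; no intermediate possibility can occur.

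The main obstacle is Step 3, namely verifying that the ``global $\epsilon$'' rigidity transfers from linear admissible involutions to conjugate linear ones in the non-irreducible setting; however, the argument is essentially bookkeeping on the extension term $\omega(f,g)\,c$ under $\tau$, using that the cocycle $\omega$ splits across irreducible factors while $c$ does not.
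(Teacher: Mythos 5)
Your proof is correct and takes essentially the same route as the paper: the paper also derives this theorem by restricting the defining (conjugate linear) involution to the irreducible factors of $L(\mathfrak{g},\sigma)$, invoking Theorem~\ref{theoremofHeintzegross} on each factor, and then using the extendibility lemma --- the fact that the Heintze--Gro\ss{} constant $\epsilon$ is a single global invariant, so every restriction must be of the same type --- to exclude mixing. Your explicit remark that this rigidity transfers verbatim to conjugate linear involutions is exactly the point the paper leaves implicit in its ``putting this together'' summary.
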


Note, that the combination of the identity on some factors with involutions of second type on other factors is excluded.

\begin{proposition}
Let $\mathfrak{g}$ be semisimple and $\widehat{L}(\mathfrak{g}, \sigma)_D$ be a real form of the non-compact type. Let $\widehat{L}(\mathfrak{g}, \sigma)_D=\mathcal{K}\oplus \mathcal{P}$ be a Cartan decomposition. The Cartan-Killing form is negative definite on $\mathcal{K}$ and positive definite on $\mathcal{P}$.
\end{proposition}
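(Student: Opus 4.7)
The strategy is to combine the duality of theorem~\ref{theoremofHeintzegross} with the fact that the Cartan-Killing form on the compact real form is negative definite. By theorem~\ref{theoremofHeintzegross} there is an involution $\varphi$ of the second kind on the compact real form $\mathcal{U} := \widehat{L}(\mathfrak{g}_{\mathbb{R}},\sigma)$ with $\pm 1$-eigenspaces $\mathcal{K}$ and $\mathcal{P}'$, such that $\widehat{L}(\mathfrak{g},\sigma)_D = \mathcal{K}\oplus i\mathcal{P}'$. The Cartan involution $\theta$ of $\widehat{L}(\mathfrak{g},\sigma)_D$ is then $+1$ on $\mathcal{K}$ and $-1$ on $i\mathcal{P}'$, so the Cartan decomposition in the statement identifies $\mathcal{P} = i\mathcal{P}'$.

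First I would verify negative definiteness of $B := B_{(\mathfrak{g}_{\mathbb{C}},\sigma)}$ on the loop algebra part of $\mathcal{U}$. This is essentially the content of the earlier lemma: since $\mathfrak{g}_{\mathbb{R}}$ is compact semisimple, the Killing form of $\mathfrak{g}_{\mathbb{R}}$ is pointwise negative definite, and any nonzero loop $f\in L(\mathfrak{g}_{\mathbb{R}},\sigma)$ is continuous, hence
\begin{displaymath}
\int_0^{2\pi} B\bigl(f(t),f(t)\bigr)\,dt < 0.
\end{displaymath}
Restricting to the subspaces $\mathcal{K}$ and $\mathcal{P}'$ of $\mathcal{U}$ therefore yields negative definiteness on both.

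The next step is to transport this sign to $\mathcal{P}$. Since $B$ extends $\mathbb{C}$-bilinearly to $L(\mathfrak{g}_{\mathbb{C}},\sigma)$, for any nonzero $X\in \mathcal{P}'$ we have
\begin{displaymath}
B(iX, iX) = i^{2} B(X,X) = -B(X,X) > 0,
\end{displaymath}
which shows that $B$ is positive definite on the loop algebra part of $\mathcal{P} = i\mathcal{P}'$. Orthogonality of $\mathcal{K}$ and $\mathcal{P}$ is immediate from $\theta$-invariance of $B$: if $X\in \mathcal{K}$ and $Y\in \mathcal{P}$, then $B(X,Y) = B(\theta X,\theta Y) = -B(X,Y)$, so $B(X,Y)=0$, and the Cartan decomposition is automatically $B$-orthogonal.

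I do not anticipate a serious obstacle; once the duality is set up, the argument reduces to the classical negative definiteness of the Killing form on a compact semisimple Lie algebra, integration against nontrivial continuous loops, and $\mathbb{C}$-bilinearity of $B$. One mild subtlety worth flagging is that $B_{(\mathfrak{g}_{\mathbb{C}},\sigma)}$ was defined only on the loop algebra piece of the Kac-Moody algebra, so the proposition is to be read as a statement about the loop algebra components of $\mathcal{K}$ and $\mathcal{P}$; the central and derivation directions are governed by the separate Lorentzian pairing that extends $B$ to the full Kac-Moody algebra and are not at issue here.
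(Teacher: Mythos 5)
Your proof is correct, and it rests on the same underlying mechanism as the paper's --- negative definiteness of the averaged Killing form on the compact real form, with the factor $i^2=-1$ reversing the sign on $\mathcal{P}=i\mathcal{P}'$ --- but the execution is genuinely different. The paper argues concretely: it invokes the normal form $\varphi(f)=\varphi_0(f(-t))$ of an involution of the second kind, expands a fixed loop in Fourier modes $\sum_n k_n\cos(nt)+\sum_n p_n\sin(nt)$ with $k_n,p_n$ in the $\pm 1$-eigenspaces of $\varphi_0$, evaluates the integral mode by mode using orthonormality of $\{\cos(nt),\sin(nt)\}$, and then reduces the twisted case to the untwisted one by embedding $L(\mathfrak{g},\sigma)$ into some $L(\mathfrak{h},\mathrm{id})$. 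You replace all of this by the duality of theorem~\ref{theoremofHeintzegross} together with $\mathbb{C}$-bilinearity of $B_{(\mathfrak{g}_{\mathbb{C}},\sigma)}$, which buys two things: no normal form and no mode-by-mode bookkeeping are needed (the paper's displayed formula is in fact delicate --- the minus sign in front of the $\sin^2$ term only makes sense once the $i$ from $i\mathcal{P}'$ has been absorbed into the coefficients), and the twisted and untwisted cases are handled uniformly, since pointwise negative definiteness of the Killing form on $\mathfrak{g}_{\mathbb{R}}$ integrates to negative definiteness on $L(\mathfrak{g}_{\mathbb{R}},\sigma)$ for any $\sigma$. Your closing observations --- $B$-orthogonality of $\mathcal{K}$ and $\mathcal{P}$ via $\theta$-invariance, and the caveat that the proposition concerns only the loop-algebra components because $B_{(\mathfrak{g}_{\mathbb{C}},\sigma)}$ is not defined on the $c$- and $d$-directions --- are consistent with how the paper implicitly reads the statement.
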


\begin{proof}
Suppose first $\sigma$ is the identity. Let $\varphi$ be an automorphism. Then without loss of generality $\varphi(f)=\varphi_0(f(-t))$~\cite{Heintze09}. Let $\mathfrak{g}=\mathfrak{k}\oplus \mathfrak{p}$ be the decomposition of $\mathfrak{g}$ into the $\pm 1$-eigenspaces of $\varphi_0$. Then
$f\in \textrm{Fix}(\varphi)$ iff its Taylor expansion satisfies
$$\sum_n a_n e^{int}= \sum \varphi_0(a_{-n})e^{int}\,.$$
Let $a_n=k_n\oplus p_n$ be the decomposition of $a_n$ into the $\pm 1$ eigenspaces with respect to $\varphi_0$. 
Hence $$f(t)=\sum_n k_n \cos(nt)+\sum_n p_n \sin(nt)\,.$$ 
Then using bilinearity and the fact that $\{\cos(nt), \sin(nt)\}$ are orthonormal we can calculate $B_{\mathfrak{g}}$:
$$B_\mathfrak{g}=\int_0^{2\pi}\sum_n \cos^{2}(nt)B(k_n, k_n)-\int_0^{2\pi}\sum_n \sin^{2}(nt)B(p_n, p_n)\,.$$
Hence $B_{\mathfrak{g}}$ is negative definite on $\textrm{Fix}(\varphi)$. Analogously one calculates that it is positive definite on the $-1$-eigenspace of $\varphi$. 
If $\sigma\not=Id$ then one gets the same result by embedding $L(\mathfrak{g}, \sigma)$ into an algebra $L(\mathfrak{h}, \textrm{id})$ which is always possible~\cite{Kac90}. 

\end{proof}

\begin{proposition}
Let $\mathfrak{g}$ be abelian. The Cartan-Killing form of $L(\mathfrak{g})$ is trivial.
\end{proposition}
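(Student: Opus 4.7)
The plan is immediate from the definition. The Cartan--Killing form on $L(\mathfrak{g})$ was defined earlier as
\begin{displaymath}
B_{(\mathfrak{g},\sigma)}(f,g) = \int_0^{2\pi} B\bigl(f(t), g(t)\bigr)\,dt\,,
\end{displaymath}
where $B$ on the right-hand side is the Cartan--Killing form on the finite dimensional Lie algebra $\mathfrak{g}$, that is $B(X,Y) = \operatorname{tr}(\operatorname{ad}(X)\operatorname{ad}(Y))$. So the plan is to reduce the statement to the corresponding (well-known) fact on the finite dimensional abelian Lie algebra $\mathfrak{g}$, and then to integrate.

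First I would observe that if $\mathfrak{g}$ is abelian then $[X,Y]=0$ for all $X,Y\in\mathfrak{g}$, so $\operatorname{ad}(X)=0$ as an operator on $\mathfrak{g}$ for every $X$. Hence $B(X,Y) = \operatorname{tr}(\operatorname{ad}(X)\operatorname{ad}(Y)) = 0$ for all $X,Y\in\mathfrak{g}$; the Cartan--Killing form of $\mathfrak{g}$ vanishes identically. (Note that by the standing convention $\sigma|_{\mathfrak{g}_a}=\operatorname{Id}$, so $L(\mathfrak{g})$ really is just the space of $2\pi$--periodic loops in $\mathfrak{g}$ of the prescribed regularity; but this plays no role in the argument.)

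Second I would substitute this into the defining integral: for arbitrary $f,g\in L(\mathfrak{g})$ the integrand $B(f(t),g(t))$ vanishes pointwise in $t$, so
\begin{displaymath}
B_{(\mathfrak{g},\sigma)}(f,g) = \int_0^{2\pi} 0\,dt = 0\,,
\end{displaymath}
which proves the claim.

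There is essentially no obstacle here; the content of the statement is that the definition of the Cartan--Killing form on $L(\mathfrak{g})$ is ``pointwise'' in the sense that it is built from the finite dimensional Cartan--Killing form, and therefore inherits degeneracy on the abelian factor. The only point worth emphasising is that this justifies treating the abelian summand $L(\mathfrak{g}_a,\mathrm{Id})$ separately (as an additional ``Euclidean'' piece) throughout the subsequent classification of orthogonal symmetric affine Kac--Moody algebras, since it contributes no Killing form and hence no non-degenerate Ad--invariant structure on its own.
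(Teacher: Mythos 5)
Your proof is correct and follows exactly the paper's own argument: the Cartan--Killing form of an abelian Lie algebra vanishes because $\operatorname{ad}(X)=0$, so the defining integral vanishes pointwise. The paper states this in two lines; your version merely spells out the same reasoning in more detail.
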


\begin{proof}
If $\mathfrak{g}$ is abelian its Cartan-Killing form $B_{\mathfrak{g}}$ vanishes. Hence the integral over $B_{\mathfrak{g}}$ vanishes.
\end{proof}

\noindent Now we can define orthogonal symmetric affine Kac-Moody algebras (OSAKA):

\begin{definition}[Orthogonal symmetric Kac-Moody algebra]
An orthogonal symmetric affine Kac-Moody algebra (OSAKA) is a pair $\left(\widehat{L}(\mathfrak{g}, \sigma), \widehat{L}(\rho)\right)$ such that
\begin{enumerate}
	 \item $\widehat{L}(\mathfrak{g}, \sigma)$ is a real form of an affine geometric Kac-Moody algebra,
	 \item $\widehat{L}(\rho)$ is an involutive automorphism of $\widehat{L}(\mathfrak{g}, \sigma)$,
	 \item If $\mathfrak{g}=\mathfrak{g}_s\oplus \mathfrak{g}_a$ then the restriction $\textrm{Fix}(\widehat{L}(\rho))|_{\mathfrak{g}_s}$ is a compact Kac-Moody algebra or a compact loop group and $\textrm{Fix}(\widehat{L}(\rho))|_{\mathfrak{g}_a}=0$.
\end{enumerate}
\end{definition}

\begin{definition}
An OSAKA is called effective if $\textrm{Fix}(\widehat{L}(\rho))\cap\mathfrak{z}=0$ where $\mathfrak{z}$ denotes the center of $\widehat{L}(\mathfrak{g}, \sigma)$.
\end{definition}

\begin{lemma}
Let $\left(\widehat{L}(\mathfrak{g}, \sigma), \widehat{L}(\rho)\right)$ be an effective OSAKA. Then $\widehat{L}(\rho)$ is of second type and $\textrm{Fix}(\widehat{L}(\rho))$ is a compact loop group. 
\end{lemma}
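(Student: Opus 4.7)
The plan is to use the explicit normal form for involutions from the Heintze-Gro\ss{} classification to read off both conclusions essentially by direct computation on $X=f+rc+sd$.

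For the first claim, the Heintze-Gro\ss{} formula gives $\widehat{L}(\rho)c=\epsilon_\rho c$ with $\epsilon_\rho\in\{+1,-1\}$ according to whether the involution is of first or second type. Now $c$ is a non-zero element of the center $\mathfrak{z}$ of $\widehat{L}(\mathfrak{g},\sigma)$ (immediate from $[c,\cdot]=0$ in the defining brackets). If $\widehat{L}(\rho)$ were of first type then $\epsilon_\rho=+1$, so $c\in\textrm{Fix}(\widehat{L}(\rho))\cap\mathfrak{z}$, contradicting effectiveness. Hence $\widehat{L}(\rho)$ must be of second type.

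To identify $\textrm{Fix}(\widehat{L}(\rho))$, I decompose a general element as $X=f+rc+sd$ and apply the Heintze-Gro\ss{} formulas with $\epsilon_\rho=-1$; a short calculation gives
\begin{displaymath}
\widehat{L}(\rho)(f+rc+sd)=\bigl(\rho(f)+sf_\rho\bigr)+\bigl(\mu(f)-r+s\gamma\bigr)c-sd\,.
\end{displaymath}
Setting this equal to $X$ and comparing components forces in turn $s=0$, $\rho(f)=f$, and $r=\tfrac{1}{2}\mu(f)$. Therefore
\begin{displaymath}
\textrm{Fix}(\widehat{L}(\rho))=\bigl\{f+\tfrac{1}{2}\mu(f)c\ :\ f\in L(\mathfrak{g},\sigma),\ \rho(f)=f\bigr\}\,.
\end{displaymath}
In particular no fixed element has a $d$-component, and the $c$-component is completely determined by the loop part. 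Since $c$ is central, the linear projection onto $L(\mathfrak{g},\sigma)$ killing the $c$-direction is a Lie-algebra homomorphism, and the formula above shows it is an isomorphism onto $\textrm{Fix}\bigl(\rho|_{L(\mathfrak{g},\sigma)}\bigr)$.

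Finally I would invoke condition (3) of the OSAKA definition: $\textrm{Fix}(\widehat{L}(\rho))|_{\mathfrak{g}_a}=0$, and the restriction to $\mathfrak{g}_s$ is either a compact loop algebra or a compact Kac-Moody algebra. The Kac-Moody alternative is ruled out by the preceding paragraph, since a Kac-Moody extension would require $c$ to be fixed, which it is not in the second-type case. Hence $\textrm{Fix}(\widehat{L}(\rho))$ is, via the canonical projection, isomorphic to a $\rho$-invariant real subalgebra of $L(\mathfrak{g}_s,\sigma)$ with negative-definite Cartan-Killing form, i.e.\ to a compact loop algebra (``compact loop group''). The only non-routine point is the bookkeeping for the $\tfrac{1}{2}\mu(f)c$ term: it could a priori contribute a genuine $c$-component to the fixed-point algebra, but since it is slaved to $f$ by a linear functional it is absorbed by the isomorphism and cannot produce additional central or derivation directions beyond the compact loop structure inherited from the $\mathfrak{g}_s$-part.
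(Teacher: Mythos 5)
Your argument for the first claim is exactly the paper's: $c$ spans the centre, effectiveness forbids $c\in\textrm{Fix}(\widehat{L}(\rho))$, and since $\widehat{L}(\rho)c=\epsilon_\rho c$ with $\epsilon_\rho=\pm 1$ this forces $\epsilon_\rho=-1$, i.e.\ second type. For the second claim you diverge from the paper in a useful way. The paper argues abstractly: it invokes the classification to say that $\widehat{L}(\rho)c=-c$ makes $\widehat{L}(\rho)$ quasiconjugate to an involution with $\widehat{L}(\rho)d=-d$, and concludes $\textrm{Fix}(\widehat{L}(\rho))\subset L(\mathfrak{g},\sigma)$ (the notion of quasiconjugacy is not defined in the text, so this step leans on \cite{Heintze09}). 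You instead substitute $\epsilon_\rho=-1$ into the Heintze--Gro\ss{} normal form and solve the fixed-point equations directly, obtaining $s=0$, $\rho(f)=f$, $r=\tfrac12\mu(f)$; your computation is correct. What your route buys is precision: it makes visible that $\textrm{Fix}(\widehat{L}(\rho))$ need not literally sit inside $L(\mathfrak{g},\sigma)$ but is the graph $\{f+\tfrac12\mu(f)c\}$ over $\textrm{Fix}(\rho|_{L(\mathfrak{g},\sigma)})$, and that the central projection is a Lie algebra isomorphism onto that subalgebra (the $c$-term is slaved to $f$ and $c$ is central, so no bracket information is lost). The paper's blunter statement $\textrm{Fix}(\widehat{L}(\rho))\subset L(\mathfrak{g},\sigma)$ is only true after the quasiconjugation normalizing $\mu$ away. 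Your final step --- ruling out the compact Kac-Moody alternative in condition (3) of the OSAKA definition because a Kac-Moody subalgebra would have to contain the non-fixed element $c$ --- is the same closing move as the paper's, just made explicit. The proof is complete and, if anything, more self-contained than the original.
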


\begin{proof}
We have $c\in \mathfrak{z}$. Hence $c\not\in \widehat{L}(\rho)$. But $\widehat{L}(\rho)(c)\in \{\pm c\}$. Hence $\widehat{L}(\rho)$ is of second type.
By the classification of involutions we know that $\widehat{L}(\rho)(c)=-c$ implies that $\widehat{L}(\rho)$ is quasiconjugate to an involution satisfying $\widehat{L}(\rho)(d)=-d$. Hence $\textrm{Fix}(\widehat{L}(\rho))\subset L(\mathfrak{g}, \sigma)$.
\end{proof}

Following again the presentation of Helgason, we define 3 types of OSAKAs:

\index{OSAKA of Euclidean type}
\index{OSAKA of compact type}
\index{OSAKA of non-compact type}
\begin{definition}[Types of OSAKAs]
\label{types of OSAKAs}
Let $(\widehat{L}(\mathfrak{g}, \sigma), \widehat{L}(\rho))$ be an OSAKA. Let $\widehat{L}(\mathfrak{g}, \sigma)=\mathcal{K}\oplus \mathcal{P}$ be the decomposition of $\widehat{L}(\mathfrak{g}, \sigma)$ into the eigenspaces of $\widehat{L}(\mathfrak{g}, \sigma)$ of eigenvalue $+1$ resp.\ $-1$.
\begin{enumerate}
	\item If $\widehat{L}(\mathfrak{g}, \sigma)$ is a compact real affine Kac-Moody algebra, it is said to be of the compact type.
	\item If $\widehat{L}(\mathfrak{g}, \sigma)$ is a non-compact real affine Kac-Moody algebra, $\widehat{L}(\mathfrak{g}, \sigma)=\mathcal{U}\oplus \mathcal{P}$ is a Cartan decomposition of $\widehat{L}(\mathfrak{g}, \sigma)$.
	\item If $L(\mathfrak{g},\sigma)$ is abelian it is said to be of Euclidean type.
\end{enumerate}
\end{definition}

\begin{definition}[semisimple OSAKA]
\index{semisimple OSAKA}
An OSAKA $(\widehat{L}(\mathfrak{g}, \sigma), \widehat{L}(\rho))$ is called semisimple if $\widehat{L}(\mathfrak{g}, \sigma)$ is a semisimple geometric affine Kac-Moody algebra.
\end{definition}

OSAKAs of the compact type and of the noncompact type are semisimple, OSAKAs of the Euclidean type are not semisimple.

\index{irreducible OSAKA}
\begin{definition}[irreducible OSAKA]
An OSAKA $(\widehat{L}(\mathfrak{g}, \sigma), \widehat{L}(\rho))$ is called irreducible iff it has no non-trivial Kac-Moody subalgebra invariant under $\widehat{L}(\rho)$.
\end{definition}

\noindent Thus we can describe the different classes of irreducible OSAKAs of compact type.

\index{OSAKA of type I}
\index{OSAKA of type II}
\begin{enumerate}
\item The first class consists of compact real forms $\widehat{L}(\mathfrak{g},{\sigma})$, where $\mathfrak{g}$ is a simple Lie algebra together with an involution of the second kind. A complete classification is available: As each Kac-Moody algebra has a (up to conjugation unique real form), a classification consists in running through all pairs consisting of affine Kac-Moody algebras and conjugate linear involutions of the second kind. Lists of involutions may be found in~\cite{Heintze08}. Those OSAKAs are irreducible OSAKAs of type $I$. We will see in chapter~\ref{chap:symm} that they correspond to Kac-Moody symmetric spaces of type $1$. A paper of Tripathy and Pati gives a list of Satake diagrams~\cite{Tripathy06}. 
\item Let $\mathfrak{g}_{\mathbb{R}}$ be a simple real Lie algebra of the compact type. The second class consists of pairs of an affine Kac-Moody algebra
$\widehat{L}(\mathfrak{g}_{\mathbb{R}}\times \mathfrak{g}_{\mathbb{R}})$ together with an involution $\widehat{\varphi}$ of the second kind, that switches the two factors. 
\begin{align*}
\varphi:\quad \widehat{L}(\mathfrak{g}_{\mathbb{R}}\times \mathfrak{g}_{\mathbb{R}}, \sigma\oplus\sigma)&\longrightarrow \widehat{L}(\mathfrak{g}_{\mathbb{R}}\times \mathfrak{g}_{\mathbb{R}},\sigma\oplus\sigma)\\
(f(t), g(t), r_c, r_d)&\mapsto (g(-t), f(-t), -r_c, -r_d)
\end{align*}
Hence the fixed point algebra consists of elements $(f(t), f(-t), 0,0)$, which is isomorphic to $L(\mathfrak{g}, \sigma)$.
Those algebras correspond to Kac-Moody symmetric spaces that are compact Kac-Moody groups equipped with their $Ad$-invariant metrics (type $II$). As each Kac-Moody algebra has a (unique up to conjugation) compact real form, a complete classification consists in running through all affine Kac-Moody algebras.
\end{enumerate}

Besides the OSAKAs of the compact type there are the OSAKAs of the non-compact type.
\index{OSAKA of type III}
\index{OSAKA of type VI}

\begin{itemize}
\item[(3)] Let $\mathfrak{g}_{\mathbb{C}}$ be a complex semisimple Lie algebra, and $\widehat{L}(\mathfrak{g}_{\mathbb{C}},\sigma)$ the associated complex affine Kac-Moody algebra.
This class consists of real forms of the non-compact type that are described as fixed point sets of involutions of type 2 together with a special involution, called Cartan involution. This is the unique involution on $\mathcal{G}$, such that the decomposition into its $\pm 1$-eigenspaces $\mathcal{K}$ and $\mathcal{P}$ yields: $\mathcal{K}\oplus i \mathcal{P}$ is a real form of compact type of $\widehat{L}(\mathfrak{g}_{\mathbb{C}},\sigma)$. For existence and uniqueness of the Cartan involution see~\cite{Heintze09}. Those orthogonal symmetric Lie algebras correspond to Kac-Moody symmetric spaces of type $III$.

\item[(4)] Let $\mathfrak{g}_{\mathbb{C}}$ be a complex semisimple Lie algebra. The fourth class consists of negative-conjugate real forms of $\widehat{L}(\mathfrak{g}_{\mathbb{C}}\oplus\mathfrak{g}_{\mathbb{C}}, \sigma \oplus \sigma)$. The involution is given by the complex conjugation $\widehat{L}(\rho_0)$ with respect to a compact real form of $L(\mathfrak{g}_{\mathbb{C}},\sigma)\cong L(\mathfrak{g}, \sigma)\oplus i L(\mathfrak{g}, \sigma)$. Hence they have the form: $L(\mathfrak{g}, \sigma)\oplus iL(\mathfrak{g},\sigma)\oplus\mathbb{R}c\oplus \mathbb{R}d$.
Let us remark that we can also take the complex Kac-Moody algebra $\widehat{L}(\mathfrak{g}_{\mathbb{C}}, \sigma)$ and define the involution $\widehat{\rho}_0$ as conjugation with respect to the real form of compact type $\widehat{L}(\mathfrak{g}_{\mathbb{R}}, \sigma)$. Nevertheless the pair $(\widehat{L}(\mathfrak{g}_{\mathbb{C}}, \sigma), \widehat{\rho}_0)$ is not an OSAKA in the sense of our definition as the fixed point algebra is not a loop algebra of the compact type and as the subjacent Kac-Moody algebra is not a real Kac-Moody algebra. Both of those algebras describe the same symmetric space as their $\mathcal{P}$-components are the same.
Those algebras correspond to Kac-Moody symmetric spaces of type $IV$.
\end{itemize}

Let us investigate the duality a little more closely. We follow the presentation for finite dimensional Lie algebras given in~\cite{Helgason01}.
Let $(\mathcal{G},\rho)$ be an OSAKA and let $\mathcal{G}=\mathcal{K}\oplus\mathcal{P}$ be the decomposition into the $\pm 1$-eigenspaces of $\rho$. Then $\mathcal{G}^*=\mathcal{K}\oplus\mathcal{P}$ is a real Lie form of $\mathcal{G}_{\mathbb{C}}$. For any element $g\in \mathcal{G}$ let $g=k+p$ be the decomposition into the $\mathcal{K}$-and $\mathcal{P}$-component. Define $\rho^*:k+ip\mapsto k-ip$.

\index{dual OSAKA}
\begin{proposition}
Let $(\mathcal{G},\rho)$ be an OSAKA. 
\begin{enumerate}
\item Then the pair $(\mathcal{G}^*, \rho^*)$ is an OSAKA, called the dual OSAKA.
\item If $(\mathcal{G},\rho)$ is of compact type then $(\mathcal{G}^*, \rho^*)$ is of non-compact type and vice versa.
\end{enumerate}
\end{proposition}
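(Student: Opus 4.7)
The proof splits naturally into four verifications: (i) that $\mathcal{G}^*:=\mathcal{K}\oplus i\mathcal{P}\subset \mathcal{G}_{\mathbb{C}}$ is a real Lie subalgebra and in fact a real form of $\mathcal{G}_{\mathbb{C}}$; (ii) that $\rho^*$ is a well-defined involutive automorphism of $\mathcal{G}^*$; (iii) that the fixed point algebra of $\rho^*$ satisfies the compactness condition required of an OSAKA involution; (iv) that the type flips under duality. The whole argument is the direct analogue of the finite dimensional computation in \cite{Helgason01}, and uses only the eigenspace calculus of the involution $\rho$ together with Theorem~\ref{theoremofHeintzegross} (the Heintze--Gro\ss{} bijection) and the sign analysis of the Cartan--Killing form $B_{(\mathfrak{g}_{\mathbb{C}},\sigma)}$ already carried out in the excerpt.

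For (i) and (ii), since $\rho$ is an involutive Lie algebra automorphism, the bracket relations $[\mathcal{K},\mathcal{K}]\subset\mathcal{K}$, $[\mathcal{K},\mathcal{P}]\subset\mathcal{P}$ and $[\mathcal{P},\mathcal{P}]\subset\mathcal{K}$ hold. Extending $[\,,\,]$ bilinearly to $\mathcal{G}_{\mathbb{C}}$ one checks
\begin{displaymath}
[k_1+ip_1,k_2+ip_2]=\bigl([k_1,k_2]-[p_1,p_2]\bigr)+i\bigl([k_1,p_2]+[p_1,k_2]\bigr)\in\mathcal{K}\oplus i\mathcal{P},
\end{displaymath}
so $\mathcal{G}^*$ is a real Lie subalgebra; the identity $\mathcal{G}^*\oplus i\mathcal{G}^*=\mathcal{K}_{\mathbb{C}}\oplus\mathcal{P}_{\mathbb{C}}=\mathcal{G}_{\mathbb{C}}$ shows it is a real form. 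The map $\rho^*$ is $\mathbb{R}$-linear and squares to the identity, and the computation above, together with its analogue for $k_1-ip_1$, shows $\rho^*[X,Y]=[\rho^*X,\rho^*Y]$, so $\rho^*$ is an involutive automorphism with $\mathrm{Fix}(\rho^*)=\mathcal{K}$. Since $\mathcal{K}=\mathrm{Fix}(\rho)$ is by hypothesis a compact loop algebra (with the correct behaviour on the abelian factor), the OSAKA conditions for $(\mathcal{G}^*,\rho^*)$ are satisfied.

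For (iv) one separates the two directions with the sign analysis of $B$. Recall that since $\rho$ preserves $B$ and interchanges the $\pm 1$-eigenspaces, $\mathcal{K}$ and $\mathcal{P}$ are $B$-orthogonal; moreover $B$ extends $\mathbb{C}$-bilinearly to $\mathcal{G}_{\mathbb{C}}$, so for $k\in\mathcal{K}$, $p\in\mathcal{P}$,
\begin{displaymath}
B(k+ip,k+ip)=B(k,k)-B(p,p).
\end{displaymath}
If $(\mathcal{G},\rho)$ is of compact type then $B$ is negative definite on both $\mathcal{K}$ and $\mathcal{P}$, so $B$ is negative on $\mathcal{K}\subset\mathcal{G}^*$ and positive on $i\mathcal{P}\subset\mathcal{G}^*$; hence $\mathcal{G}^*$ is not a compact real form, and Theorem~\ref{theoremofHeintzegross} combined with Theorem~\ref{eithercompactornoncompact} forces it to be of non-compact type (with $\mathcal{G}^*=\mathcal{K}\oplus i\mathcal{P}$ already in the form of a Cartan decomposition). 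Conversely, if $(\mathcal{G},\rho)$ is of non-compact type with Cartan decomposition $\mathcal{G}=\mathcal{K}\oplus\mathcal{P}$, then $B|_{\mathcal{K}}<0$ and $B|_{\mathcal{P}}>0$, whence $B(k+ip,k+ip)=B(k,k)-B(p,p)<0$, so $\mathcal{G}^*$ is of compact type.

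The only mildly delicate point is the semisimple/non-semisimple split: one must first apply the decomposition of the underlying reductive algebra into its abelian and semisimple parts and observe that $\rho|_{\mathfrak{g}_a}=\mathrm{id}$ gives $\mathcal{P}\cap L(\mathfrak{g}_a,\sigma)=0$, so that the dualization $\mathcal{P}\mapsto i\mathcal{P}$ only modifies the semisimple factor. Once this reduction is made, the preceding sign computation on each simple block (type $I$) and on each pair interchanged by $\rho$ (type $II$) applies verbatim, and one assembles the dual OSAKA as the direct sum of the dual blocks with the single central extension $\mathbb{R}c\oplus\mathbb{R}d$ kept on the $\mathcal{K}$- respectively $i\mathcal{P}$-side as prescribed by the behaviour of $\rho$ on $c$ and $d$. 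This concludes the plan.
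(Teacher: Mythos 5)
Your proof is correct, and for part (1) it follows essentially the same route as the paper: the paper's own argument is just the observation that $\mathrm{Fix}(\rho^*)=\mathrm{Fix}(\rho)=\mathcal{K}$ is a loop algebra of compact type, so the OSAKA axioms transfer. You add the explicit verification that $\mathcal{G}^*=\mathcal{K}\oplus i\mathcal{P}$ is closed under the bracket and is a real form, and that $\rho^*$ is an automorphism — details the paper takes for granted. The genuine difference is in part (2): the paper disposes of the type flip by citing results of Heintze--Gro\ss, whereas you give a self-contained sign analysis of the averaged Cartan--Killing form, using $B(k+ip,k+ip)=B(k,k)-B(p,p)$ together with the paper's Proposition that $B$ is negative on $\mathcal{K}$ and positive on $\mathcal{P}$ for a Cartan decomposition, and Theorem~\ref{eithercompactornoncompact} to exclude mixed type. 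This buys a proof readable without the external reference, at the cost of implicitly invoking the equivalence between ``negative definite Killing form'' and ``conjugate to the standard compact real form'' (which the paper does supply via the uniqueness of the compact real form). Two small blemishes: the phrase ``$\rho$ preserves $B$ and interchanges the $\pm 1$-eigenspaces'' is misstated — $\rho$ \emph{preserves} each eigenspace and acts on them by $\pm 1$, which is what actually kills the cross terms $B(\mathcal{K},\mathcal{P})$; and your final remark about keeping $\mathbb{R}c\oplus\mathbb{R}d$ ``on the $\mathcal{K}$- respectively $i\mathcal{P}$-side'' should be sharpened: for an effective OSAKA $\rho$ is of second type, so $c$ and $d$ lie in $\mathcal{P}$ and are replaced by $ic$, $id$ in the dual, consistent with the paper's explicit description of the type $III$/$IV$ OSAKAs. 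Neither affects the validity of the argument.
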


\begin{proof}
Suppose $\mathcal{G}=\mathcal{K}\oplus \mathcal{P}$.  By assumption $\mathcal{G}$ is an OSAKA. Hence $\mathcal{K}=Fix(\rho)$ is a loop algebra of the compact type. $\rho^*$ is an involution on $\mathcal{G}^*$. $Fix(\rho^*)=Fix(\rho)=\mathcal{K}$ is hence also a loop algebra of the compact type. Thus $(\mathcal{G}^*, \rho^*)$ is an OSAKA proving (1).\\ 
If $(\mathcal{G},\rho)$ is of compact type then $(\mathcal{G}^*,\rho^*)$ is of non-compact type by results of~\cite{Heintze08}.
\end{proof}

\begin{corollary}
The OSAKAs of the following types are dual:
\begin{center}\begin{tabular}{ccc}
type I&$\Leftrightarrow$& type III\\
type II&$\Leftrightarrow$&type IV
\end{tabular} 
\end{center}
\end{corollary}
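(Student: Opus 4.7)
The preceding proposition has already done the heavy lifting: duality interchanges the compact and non-compact families. So what remains is purely a matching problem within the already established dichotomy, namely to show that among the non-compact duals of Type I OSAKAs one lands exactly in Type III (and symmetrically for II and IV). I would proceed by starting with a representative of each compact type, computing the dual $(\mathcal{G}^*,\rho^*)$ via the construction $\mathcal{K}\oplus i\mathcal{P}$ of the proposition, and matching the resulting data against the enumeration of non-compact OSAKAs given in definition \ref{types of OSAKAs} (items (3) and (4)).

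For Type I $\Leftrightarrow$ Type III, let $(\widehat{L}(\mathfrak{g}_{\mathbb R},\sigma),\rho)$ be Type I, so $\mathfrak{g}_{\mathbb R}$ is a compact real form of a simple $\mathfrak{g}_{\mathbb C}$ and $\rho$ is an involution of the second kind. Writing $\widehat{L}(\mathfrak{g}_{\mathbb R},\sigma)=\mathcal{K}\oplus\mathcal{P}$, the dual $\mathcal{G}^*=\mathcal{K}\oplus i\mathcal{P}$ sits inside $\widehat{L}(\mathfrak{g}_{\mathbb C},\sigma)$ as a non-compact real form, because its complexification is $\widehat{L}(\mathfrak{g}_{\mathbb C},\sigma)$ and by the proposition it is of non-compact type. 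By construction $\rho^*$ fixes $\mathcal{K}$ (which is compact) and acts by $-1$ on $i\mathcal{P}$, and $\mathcal{K}\oplus i(i\mathcal{P})=\mathcal{K}\oplus\mathcal{P}$ is the compact real form we started with. This is exactly the characterization of the Cartan involution (uniqueness provided by the result of Heintze--Gro\ss\ cited after theorem \ref{theoremofHeintzegross}), so $(\mathcal{G}^*,\rho^*)$ is Type III. The converse direction is immediate because dualization is an involutive operation: dualizing $\mathcal{K}\oplus i\mathcal{P}$ gives back $\mathcal{K}\oplus\mathcal{P}$ with the original $\rho$, and by theorem \ref{eithercompactornoncompact} no mixing can occur.

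For Type II $\Leftrightarrow$ Type IV the strategy is the same but with an extra identification. Let $\mathcal{G}=\widehat{L}(\mathfrak{g}_{\mathbb R}\oplus\mathfrak{g}_{\mathbb R},\sigma\oplus\sigma)$ with the flip $\varphi(f(t),g(t),r_c,r_d)=(g(-t),f(-t),-r_c,-r_d)$. A direct calculation gives $\mathcal{K}=\{(f(t),f(-t),0,0)\}$ and $\mathcal{P}=\{(h(t),-h(-t),r_c,r_d)\}$. Under the standard real-linear identification $\mathfrak{g}_{\mathbb R}\oplus\mathfrak{g}_{\mathbb R}\hookrightarrow\mathfrak{g}_{\mathbb C}$, $(x,y)\mapsto\tfrac12(x+y)+\tfrac{i}{2}(x-y)$ (so diagonals go to $\mathfrak{g}_{\mathbb R}$ and anti-diagonals go to $i\mathfrak{g}_{\mathbb R}$), one verifies that $\mathcal{K}\oplus i\mathcal{P}$ is real-linearly isomorphic to $L(\mathfrak{g}_{\mathbb R},\sigma)\oplus iL(\mathfrak{g}_{\mathbb R},\sigma)\oplus\mathbb{R}c\oplus\mathbb{R}d$, i.e.\ to the realification of $\widehat{L}(\mathfrak{g}_{\mathbb C},\sigma)$. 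Under this identification $\rho^*$ becomes the conjugation with respect to the compact real form $\widehat{L}(\mathfrak{g}_{\mathbb R},\sigma)$, which is the involution described in item (4) above. Hence $(\mathcal{G}^*,\rho^*)$ is Type IV, and dualizing back recovers the original Type II.

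The main obstacle is the explicit bookkeeping in the Type II $\Leftrightarrow$ Type IV step: one has to keep track of how the $t\mapsto -t$ twist inside the flip $\varphi$ interacts with the identification of $\mathfrak{g}_{\mathbb R}\oplus\mathfrak{g}_{\mathbb R}$ with the realification of $\mathfrak{g}_{\mathbb C}$ at every loop value $t$, and to check that the $c$- and $d$-extensions behave correctly under this identification (so that $\rho^*c=-c$, $\rho^*d=-d$ on the non-compact side matches the conjugation description of Type IV). Once this bookkeeping is in place, the remainder is formal.
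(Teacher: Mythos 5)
Your proposal is correct and follows exactly the route the paper leaves implicit: the corollary is stated there without proof, as an immediate consequence of the preceding proposition (dualization swaps compact and non-compact type) together with the enumeration of the four types, and your argument simply carries out that matching --- identifying $\rho^*$ with the Cartan involution for I~$\Leftrightarrow$~III and with conjugation relative to the compact real form for II~$\Leftrightarrow$~IV. The one genuine subtlety you rightly flag, the behaviour of the $c$- and $d$-extensions under the identification in the II~$\Leftrightarrow$~IV case, is acknowledged by the paper itself only much later (in the lemma on the OSAKA associated to type~IV symmetric spaces, where the pair is called an OSAKA ``up to complexification of the $c$- and $d$-extension''), so your bookkeeping remark is consistent with the paper's own treatment.
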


The derived algebras of the last class of OSAKAs --- the ones of Euclidean type  --- are  Heisenberg algebras~\cite{PressleySegal86}. The maximal subgroups of compact type are trivial.  Hence the involution inverts all elements.

\chapter{Tame structures on affine Kac-Moody algebras and Kac-Moody groups}
\label{chap:tame}

In this chapter we introduce the functional analytic setting, we need for the construction of affine Kac-Moody symmetric spaces: We define Kac-Moody groups and Kac-Moody algebras of holomorphic loops and prove that they are tame Fr\'echet Lie groups resp. Lie algebras; we define ILB-structures on them. As Kac-Moody groups (resp.\ algebras) are $2$-dimensional extensions of loop groups we focus our attention to the loop group (resp.\ algebra). With the only exception that the action of group elements in the two dimensional extension has to be well-defined,  the extensions do not lead to new functional analytic problems.

\section{Lie algebras of holomorphic maps}
\label{Lie_algebras_of_holomorphic_maps}

Let $\mathfrak{g}$ be a complex reductive Lie algebra that is a direct product of simple Lie algebras with an abelian Lie algebra. 
\index{reductive Lie algebra} Simple Lie algebras are completely classified by their root systems; we give a complete list:  \index{classification of simple Lie algebra}
\begin{displaymath}
A_n, B_{n, n\geq 2}, C_{n, n\geq 3}, D_{n, n \geq 4}, E_6, E_7, E_8, F_4, G_2\,. 
\end{displaymath}

Abelian Lie algebras are classified by their dimension. We define a 
 complex reductive Lie algebra $\mathfrak{g}_{\mathbb{C}}$ to be a real reductive Lie algebra $\mathfrak{g}$ such that $\mathfrak{g}$ is a direct product of the (up to conjugation) unique compact real forms of the simple factors of $\mathfrak{g}_{\mathbb{C}}$ together with a compact real form of the abelian factor. A compact real abelian Lie algebra of dimension $n$ is just $\mathbb{R}^n$ together with the trivial bracket; but we define the exponential function such that the exponential image is a torus. Hence a compact real Lie algebra can be identified with the purely imaginary part of a complex abelian Lie algebra.

\begin{definition}[complex holomorphic non-twisted Loop algebra]
\index{holomorphic loop algebra}
\label{complex holomorphic non-twisted Loop algebra}
Let $\mathfrak{g}_{\mathbb C}$ be a finite-dimensional reductive complex Lie algebra. 

\begin{enumerate}
\item  The loop algebra $A_n\mathfrak{g}_{\mathbb{C}}$ is the vector space
$$A_n\mathfrak{g}_{\mathbb{C}}:=\bigcup_{A_n\subset U \textrm{open}}\{f:U \longrightarrow
\mathfrak{g}_\mathbb {C}| \textrm{ f is holomorphic}\}\,,$$
equipped with the natural Lie bracket:
$$[f,g]_{L_n\mathfrak{g}}(z):=[f,g]_0(z):=[f(z),g(z)]_{\mathfrak{g}}\,.$$

\item  The loop algebra $M\mathfrak{g}_{\mathbb{C}}$ is the vector space
$$M\mathfrak{g}_{\mathbb{C}}:=\{f:\mathbb C^* \longrightarrow
\mathfrak{g}_\mathbb {C}| \textrm{ f is holomorphic}\}$$
equipped with the natural Lie bracket:
$$[f,g]_{M\mathfrak{g}}(z):=[f,g]_0(z):=[f(z),g(z)]_{\mathfrak{g}}\,.$$
\end{enumerate}
\end{definition}

\begin{lemma}~
\label{mgfrechetagbanach}
\begin{enumerate}
\item $M\mathfrak{g}_{\mathbb{C}}$ is a tame  space.
\item $A_n\mathfrak{g}_{\mathbb{C}}$ is a Banach space. 
\end{enumerate}
\end{lemma}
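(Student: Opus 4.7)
Both statements reduce, after a choice of $\mathbb{C}$-basis of the finite-dimensional algebra $\mathfrak{g}_{\mathbb{C}}$, to the scalar-valued results of the preceding section. The plan is to identify $\mathfrak{g}_{\mathbb{C}}$ with $\mathbb{C}^N$, $N:=\dim_{\mathbb{C}}\mathfrak{g}_{\mathbb{C}}$, inherit the Fr\'echet or Banach structure from the scalar spaces already handled, and then check that the result is canonical, i.e.\ independent of the chosen basis. This last point rests only on the equivalence of any two norms on a finite-dimensional vector space, exactly as in the lemma and corollary at the end of Section~\ref{Some_tame_Frechet_spaces}; it is routine and not the substance of the proof.

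For (1), the basis identification gives a linear homeomorphism $M\mathfrak{g}_{\mathbb{C}}\cong \textrm{Hol}(\mathbb{C}^*,\mathbb{C})^N$. By Lemma~\ref{holc*cisfrechet}, $\textrm{Hol}(\mathbb{C}^*,\mathbb{C})$ is a tame Fr\'echet space, and by Lemma~\ref{constructionoftamespaces} so is any finite Cartesian power of it; equivalently, one may quote Corollary~\ref{holc*cnisfrechet} with $V^n:=\mathfrak{g}_{\mathbb{C}}$. Two gradings coming from different choices of basis are pairwise tamely equivalent by the norm-equivalence argument just indicated, so the tame structure on $M\mathfrak{g}_{\mathbb{C}}$ is intrinsic.

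For (2), equip $A_n\mathfrak{g}_{\mathbb{C}}$ with $\|f\|_{A_n}:=\sup_{z\in A_n}|f(z)|_{\mathfrak{g}_{\mathbb{C}}}$ for any fixed norm on $\mathfrak{g}_{\mathbb{C}}$; compactness of $A_n$ makes this finite, and it is a norm because holomorphic germs at $A_n$ are determined by their restriction to $A_n$. Completeness is the usual Weierstrass argument: a Cauchy sequence $(f_k)$ converges uniformly on $A_n$ to a continuous limit $f$, and its restrictions to the open annulus $A_n^{\circ}$ converge locally uniformly, so $f|_{A_n^{\circ}}$ is holomorphic.

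The only genuine obstacle is interpretive rather than analytic: the stated definition of $A_n\mathfrak{g}_{\mathbb{C}}$ as a union of spaces of holomorphic functions over open neighborhoods of $A_n$ is not literally closed under the sup norm, since a Cauchy sequence of germs need not converge to a germ on any fixed neighborhood of $A_n$. One reads the definition up to completion, so that the actual Banach space in question is
\begin{displaymath}
\{f\in C(A_n,\mathfrak{g}_{\mathbb{C}})\mid f|_{A_n^{\circ}}\textrm{ is holomorphic}\}\,,
\end{displaymath}
or, equivalently, one replaces the varying neighborhoods by a single slightly larger fixed annulus. With either reading (2) is immediate, and the independence from the chosen basis follows once more from the equivalence of norms on finite-dimensional spaces.
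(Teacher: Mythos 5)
Your proof takes essentially the same route as the paper: part (1) is exactly the paper's argument (it simply cites Corollary~\ref{holc*cnisfrechet}, i.e.\ the identification with $\textrm{Hol}(\mathbb{C}^*,\mathbb{C}^N)$), and for part (2) the paper just invokes ``Montel's theorem'' for the statement that uniform limits of holomorphic functions are holomorphic, which is the Weierstrass convergence argument you spell out. The one place you go beyond the paper is your observation that the space $A_n\mathfrak{g}_{\mathbb{C}}$, as literally defined via a union over open neighborhoods of $A_n$, is not complete under the sup norm on $A_n$ (a Cauchy sequence of germs on shrinking neighborhoods need not converge to a germ on any fixed neighborhood); this is a genuine gap in the paper's terse proof, and your proposed reading --- continuous on $A_n$, holomorphic on the interior, or equivalently working on a fixed slightly larger annulus --- is the correct repair.
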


\begin{proof}
The first assertion is a consequence of corollary~\ref{holc*cnisfrechet}. The second assertion is a consequence of Montel's theorem stating that absolute convergent sequences of holomorphic functions converge to a holomorphic function~\cite{Berenstein91}. 
\end{proof}

\noindent The inclusions $S^1 =A_0 \subset \dots A_n \subset A_{n+1} \subset \dots \subset \mathbb{C}^*$ induce the reversed inclusions on the associated loop algebras:

\begin{displaymath}
M\mathfrak{g}_{_{\mathbb{C}}} \subset \dots \subset A_{n+1}\mathfrak{g}_{\mathbb{C}}\subset A_n\mathfrak{g}_{\mathbb{C}} \subset \dots \subset A_0\mathfrak{g}_{\mathbb{C}}= L_{\textrm{hol}}\mathfrak{g}_{\mathbb{C}}\,.
\end{displaymath}

$L_{\textrm{hol}}\mathfrak{g}_{\mathbb{C}}$ denotes functions holomorphic in a small open set around $S^1\subset \mathbb{C}^*$.

To describe the twisted loop algebras we recall the graph automorphisms of the finite dimensional simple Lie algebras:
the following list contains the simple algebras $A$ with a nontrivial diagram automorphism $\sigma$ and the type of the fixed point algebra (compare \cite{Carter05}). 
\[
\begin{array}{lrccccc}
A&:&A_{2k}&A_{2k+1}&D_{k+1}&D_4&E_6\\
\textrm{Order of }\sigma&:&2&2&2&3&2\\
A^1&:&B_k&C_k&B_k&G_2&F_4
\end{array}
\]

\begin{definition}[(twisted) loop algebra, $\textrm{ord}(\sigma ) =2$]
\index{twisted holomorphic loop algebra}
\label{definitiontwistedloopalgebra}
Let $\mathfrak{g}_{\mathbb C}$ be a finite dimensional semisimple complex Lie
algebra of type $A_{k}$, $D_{k, k \geq 5}$ or $E_6$,  $\sigma$ the diagram automorphism. Let $\mathfrak{g}_{\mathbb C}:=\mathfrak{g}_{\mathbb C}^1 \oplus \mathfrak{g}_{\mathbb C}^{-1}$ be the decomposition into the $\pm$-eigenspaces of $\sigma$. Let $X\in \{A_n, \mathbb{C}^*\}$. If $X=A_n$ holomorphic functions on $X$ are understood to be holomorphic on an open set containing $X$.

\noindent Then the loop algebra $(X\mathfrak{g})^{\sigma}$ is the vector space
$$X\mathfrak{g}^{\sigma}:=\{ f\in X\mathfrak{g}| f(-z)=\sigma(f(z))\}\,,$$
equipped with the natural Lie bracket:
$$[f,g]_{X\mathfrak{g}^{\sigma}}(z):=[f,g]_0(z):=[f(z),g(z)]_{\mathfrak{g}}\,.$$

\end{definition}

\begin{remark}[(twisted) loop algebra, $\textrm{ord}(\sigma ) =3$]
For the algebra of type $D_4$ there exists an automorphism $\sigma$ of order $3$. In this case we get exactly the same results as for the other types. The main difference is that we have three eigenspaces, corresponding to $\{\omega, \omega^2, \omega^3=1\}$ for $\omega= e^{\frac{2\pi i }{3}}$. For a function $f$ in the loop algebra $M\mathfrak{g}$, this results in a twisting condition $f(\omega z)=\sigma f(z)$ (for details compare again~\cite{Carter05}).
\end{remark}

\begin{lemma}[Banach- and Fr\'echet structures on twisted loop algebras]~
\begin{enumerate}
	\item $A_n\mathfrak{g}^{\sigma}$ equipped with the norm $\|\hspace{3pt} \|_n$ is a Banach Lie algebra,
	\item $M\mathfrak{g}^{\sigma}$ equipped with the norms $\|\hspace{3pt} \|_n$ is a tame Fr\'echet Lie algebra. 
\end{enumerate}
\end{lemma}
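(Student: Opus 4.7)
The plan is to reduce both assertions to facts already established about the spaces of scalar holomorphic functions, together with the observation that the Lie bracket on a finite dimensional $\mathfrak{g}$ is a continuous bilinear map, so the bracket on the loop algebras is controlled pointwise.

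First I would verify the underlying vector space structure. The twisting condition $f(-z)=\sigma(f(z))$ (resp.\ $f(\omega z)=\sigma(f(z))$ in the order $3$ case) defines a closed linear subspace of $A_n\mathfrak{g}_{\mathbb C}$, resp.\ $M\mathfrak{g}_{\mathbb C}$: it is the kernel of the continuous linear map $f\mapsto f(\omega \,\cdot)-\sigma\circ f$, continuity being immediate since evaluation at any $z$ and the finite dimensional operator $\sigma$ are continuous. Therefore, by lemma~\ref{mgfrechetagbanach}(2), $A_n\mathfrak{g}^{\sigma}$ is a closed subspace of a Banach space and is itself Banach; and by lemma~\ref{mgfrechetagbanach}(1) together with lemma~\ref{constructionoftamespaces}, $M\mathfrak{g}^{\sigma}$ is a closed subspace of a tame Fr\'echet space and is itself a tame Fr\'echet space.

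Next I would handle the bracket. Because $\mathfrak{g}_{\mathbb C}$ is finite dimensional, the bilinear map $[\cdot,\cdot]:\mathfrak{g}_{\mathbb C}\times \mathfrak{g}_{\mathbb C}\to \mathfrak{g}_{\mathbb C}$ is bounded: there exists $C>0$ with $|[X,Y]|\leq C|X|\,|Y|$ for all $X,Y\in\mathfrak{g}_{\mathbb C}$. Taking suprema over $z\in A_n$ yields
\begin{displaymath}
\|[f,g]\|_n \;=\; \sup_{z\in A_n}\,|[f(z),g(z)]|\;\leq\; C\,\|f\|_n\,\|g\|_n\,.
\end{displaymath}
In addition, the twisting condition is preserved by the bracket: since $\sigma$ is a Lie algebra automorphism, $[f,g](\omega z)=[\sigma f(z),\sigma g(z)]=\sigma([f,g](z))$. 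This shows that the pointwise bracket restricts to a well-defined bilinear operation on the twisted subalgebra, and that it is continuous on $A_n\mathfrak{g}^{\sigma}$, proving part (1).

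For part (2), it remains to check that $\mathrm{ad}(X):M\mathfrak{g}^{\sigma}\to M\mathfrak{g}^{\sigma}$ is a tame linear map for every fixed $X$ (cf.\ definition~\ref{tamefrechetLie algebra}). This is immediate from the estimate above: for a fixed $X\in M\mathfrak{g}^{\sigma}$ and every $n$,
\begin{displaymath}
\|\mathrm{ad}(X)Y\|_n \;=\; \|[X,Y]\|_n \;\leq\; C\,\|X\|_n\,\|Y\|_n\,,
\end{displaymath}
so $\mathrm{ad}(X)$ is $(0,0,C\|X\|_n)$-tame, hence tame. Combined with the Fr\'echet structure established above, this gives that $M\mathfrak{g}^{\sigma}$ is a tame Fr\'echet Lie algebra. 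The only mild subtlety, which is not really an obstacle, is to be careful that the $\sigma$-equivariance is genuinely preserved by the bracket; this is what forces $\sigma$ to be a Lie algebra automorphism (which it is, as it comes from a diagram automorphism), and so nothing further is required.
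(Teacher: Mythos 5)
Your proof is correct and takes essentially the same route as the paper, which simply observes that the twisted algebras are closed subspaces of $A_n\mathfrak{g}_{\mathbb C}$ resp.\ $M\mathfrak{g}_{\mathbb C}$ and invokes the closedness-under-subspaces results. You additionally spell out the pointwise bracket estimate $\|[f,g]\|_n\leq C\|f\|_n\|g\|_n$ and the $\sigma$-equivariance of the bracket, which the paper's one-line proof leaves implicit (the tameness of $\mathrm{ad}$ is only recorded elsewhere in the text), so your write-up is, if anything, more complete.
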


\begin{proof}
Closed subspaces of Banach spaces are Banach spaces and closed subspaces of tame Fr\'echet spaces are tame Fr\'echet spaces (lemma~\ref{constructionoftamespaces}). 
\end{proof}

\noindent To unify notation suppose the identity to be an involution and define $X\in \{A_n, \mathbb{C}\}$. If $X=A_n$ then holomorphic functions on $X$ are understood to be holomorphic on an open set containing $X$.

\begin{proposition}
\label{nganginverselimit}
The system $\{M\mathfrak{g}_{\mathbb{C}}^{\sigma}; A_n\mathfrak{g}_{\mathbb{C}}^{\sigma}\}$ is an $ILB$-system.
\end{proposition}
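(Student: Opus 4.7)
The plan is to verify the two defining properties of an $ILB$-chain for the family $\{A_n\mathfrak{g}_{\mathbb{C}}^{\sigma}\}_{n\in\mathbb{N}}$ with limit space $M\mathfrak{g}_{\mathbb{C}}^{\sigma}$. That each $A_n\mathfrak{g}_{\mathbb{C}}^{\sigma}$ is a Banach space and $M\mathfrak{g}_{\mathbb{C}}^{\sigma}$ is Fr\'echet has already been established in lemma~\ref{mgfrechetagbanach} and its twisted analogue.

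\emph{Continuous inclusions.} First I would check the sequence of bounded inclusions. Since $A_n\subset A_{n+1}$, any function holomorphic on an open neighborhood of $A_{n+1}$ is automatically holomorphic on a neighborhood of $A_n$; the twisting condition $\sigma\circ f(z)=f(\omega z)$ is of course preserved by restriction. This gives a linear inclusion $A_{n+1}\mathfrak{g}_{\mathbb{C}}^{\sigma}\hookrightarrow A_n\mathfrak{g}_{\mathbb{C}}^{\sigma}$, and the estimate
\begin{displaymath}
\|f\|_n=\sup_{z\in A_n}|f(z)|\leq\sup_{z\in A_{n+1}}|f(z)|=\|f\|_{n+1}
\end{displaymath}
shows that it is bounded, hence continuous.

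\emph{Density of the image.} Given $f\in A_n\mathfrak{g}_{\mathbb{C}}^{\sigma}$, expand $f$ in its Laurent series $f(z)=\sum_{k\in\mathbb{Z}}a_k z^k$, which converges in some open annulus $\widetilde{A}$ with $A_n\subset\widetilde{A}\subset\mathbb{C}^{*}$. The twisting condition is equivalent to the coefficient-wise relation $\sigma(a_k)=\omega^{k}a_k$, i.e.\ each $a_k$ lies in the $\omega^{k}$-eigenspace of $\sigma$; this condition is preserved under truncation. Hence the partial sums $f_N(z):=\sum_{|k|\leq N}a_k z^k$ are Laurent polynomials lying in $A_m\mathfrak{g}_{\mathbb{C}}^{\sigma}$ for every $m\in\mathbb{N}$, in particular in $A_{n+1}\mathfrak{g}_{\mathbb{C}}^{\sigma}$. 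Uniform convergence of the Laurent series on compact subsets of $\widetilde{A}$ yields $\|f-f_N\|_n\to 0$, so $A_{n+1}\mathfrak{g}_{\mathbb{C}}^{\sigma}$ is dense in $A_n\mathfrak{g}_{\mathbb{C}}^{\sigma}$.

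\emph{Identification of the limit.} Since $\mathbb{C}^{*}=\bigcup_n A_n$, a function lies in $M\mathfrak{g}_{\mathbb{C}}^{\sigma}$ iff it lies in $A_n\mathfrak{g}_{\mathbb{C}}^{\sigma}$ for every $n$, giving the set-theoretic equality $M\mathfrak{g}_{\mathbb{C}}^{\sigma}=\bigcap_n A_n\mathfrak{g}_{\mathbb{C}}^{\sigma}$. The Fr\'echet topology on $M\mathfrak{g}_{\mathbb{C}}^{\sigma}$ is by construction defined by the very seminorms $\|\cdot\|_n$ obtained by pulling back along the embeddings $M\mathfrak{g}_{\mathbb{C}}^{\sigma}\hookrightarrow A_n\mathfrak{g}_{\mathbb{C}}^{\sigma}$, which is exactly the inverse limit topology. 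The only step requiring a moment of care is density in the twisted case, but the observation that Laurent-polynomial truncation automatically preserves $\sigma$-equivariance reduces it to the classical approximation theorem on annuli; I do not expect any genuine obstacle.
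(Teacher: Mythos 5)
Your proof is correct and follows the same route as the paper's: continuity of the inclusions from the monotonicity $\|\cdot\|_n\leq\|\cdot\|_{n+1}$, density via Laurent polynomials, and identification of the Fr\'echet topology with the inverse limit topology. The paper merely asserts that polynomials are dense in $A_n\mathfrak{g}^{\sigma}$; your explicit truncation argument, together with the check that the coefficient-wise relation $\sigma(a_k)=\omega^k a_k$ is preserved under truncation, supplies a detail the paper leaves implicit.
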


\begin{proof}~

\begin{enumerate}
\item We have to check that $A_{n+1}\mathfrak{g}^{\sigma}\hookrightarrow A_n\mathfrak{g}^{\sigma}$ is a continuous, dense embedding. Continuity follows as $\|\hspace{3pt}\|_{n}\leq \|\hspace{3pt}\|_{n+1}$. Thus the embedding is a bounded linear map and thus continuous; the image is dense as polynomials on $\mathbb{C^*}$ are dense in $A_{n}\mathfrak{g}$  for all $n\in \mathbb{N}$.
\item The topology on  $M\mathfrak{g}^{\sigma}$ is the inverse limit topology as it is the topology generated by the set of all norms $\|\hspace{3pt}\|_n$.  
\end{enumerate}
\end{proof}

\begin{theorem}
\index{topology on $M\mathfrak{g}$}
 The following topologies on $M\mathfrak{g}_{\mathbb{C}}$ are equivalent:
\begin{enumerate}
 \item the compact-open topology,
 \item the topology of compact convergence,
 \item the Fr\'echet topology,
 \item the $ILB$\ndash topology.
\end{enumerate}
\end{theorem}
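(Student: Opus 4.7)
The strategy is to establish the chain of equivalences $(1)\Leftrightarrow(2)\Leftrightarrow(3)\Leftrightarrow(4)$, where each link uses a different standard tool. Three of the four equivalences are essentially structural observations; the non-trivial ingredient common to all of them is that the sequence of annuli $\{A_n\}_{n\in\mathbb{N}}$ forms an exhaustion of $\mathbb{C}^*$ by compact sets with the property that every compact subset of $\mathbb{C}^*$ is contained in some $A_n$. I plan to prove this auxiliary statement first: if $K\subset\mathbb{C}^*$ is compact, then $0<\inf_{z\in K}|z|$ and $\sup_{z\in K}|z|<\infty$, so choosing $n$ large enough that $e^{-n}$ is below the infimum and $e^n$ above the supremum yields $K\subset A_n$.

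For $(1)\Leftrightarrow(2)$ I would invoke the classical fact that for continuous maps into a metric space, the compact-open topology agrees with the topology of uniform convergence on compacta. Since the target $\mathfrak{g}_{\mathbb{C}}$ is a finite-dimensional complex vector space, hence metrizable, this applies verbatim to our holomorphic functions on $\mathbb{C}^*$.

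For $(2)\Leftrightarrow(3)$ the argument runs via the auxiliary statement above. On the one hand, each $A_n$ is itself compact in $\mathbb{C}^*$, so uniform convergence on compact sets immediately implies convergence with respect to every seminorm $\|\cdot\|_n$, i.e.\ convergence in the Fr\'echet topology. On the other hand, given an arbitrary compact $K\subset\mathbb{C}^*$, pick $n$ with $K\subset A_n$; then $\sup_{z\in K}|f(z)|\leq\|f\|_n$, so Fr\'echet convergence forces uniform convergence on $K$. Hence the two topologies share the same convergent nets, and since both are defined by a countable family of seminorms or a cofinal family of compacta they have the same basis of neighbourhoods of $0$.

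Finally $(3)\Leftrightarrow(4)$ is essentially a restatement of Proposition~\ref{nganginverselimit}. The $ILB$-topology on $M\mathfrak{g}_{\mathbb{C}}$ is by definition the coarsest topology making every embedding $M\mathfrak{g}_{\mathbb{C}}\hookrightarrow A_n\mathfrak{g}_{\mathbb{C}}$ continuous, and the seminorm $\|\cdot\|_n$ on $M\mathfrak{g}_{\mathbb{C}}$ is precisely the pullback of the Banach norm on $A_n\mathfrak{g}_{\mathbb{C}}$ along this embedding. Consequently the Fr\'echet topology generated by the family $\{\|\cdot\|_n\}$ coincides with the initial topology induced by the maps into the spaces $A_n\mathfrak{g}_{\mathbb{C}}$, which is exactly the inverse limit topology. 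The only point that requires a moment of care is to note that the embeddings $M\mathfrak{g}_{\mathbb{C}}\hookrightarrow A_n\mathfrak{g}_{\mathbb{C}}$ indeed have dense image (polynomials on $\mathbb{C}^*$ are dense in each $A_n\mathfrak{g}_{\mathbb{C}}$), so we are genuinely in the $ILB$-framework rather than merely dealing with a formal projective system; this is already verified in Proposition~\ref{nganginverselimit}. I do not expect a substantive obstacle in this proof: the entire argument reduces to the exhaustion property of the $A_n$ together with the definition of an inverse limit topology.
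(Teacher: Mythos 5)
Your proposal is correct and follows essentially the same route as the paper: the same chain $(1)\Leftrightarrow(2)$, $(2)\Leftrightarrow(3)$, $(3)\Leftrightarrow(4)$, with the exhaustion of $\mathbb{C}^*$ by the annuli $A_n$ as the common key ingredient. The only cosmetic differences are that you compare the generating seminorm families directly where the paper runs explicit sequence estimates against the Fr\'echet metric $d(f,g)=\sum_n 2^{-n}\|f-g\|_n/(1+\|f-g\|_n)$, and you dispense with the paper's (inessential) appeal to Montel's theorem in step $(1)\Leftrightarrow(2)$.
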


Recall that on a Fr\'echet space $(F, \|\ \|_n)$
\begin{displaymath}
 d(f,g)=\sum_n\frac{1}{2^n}\textrm{\small$\frac{\|f-g\|_n}{1+\|f-g\|_n}$\normalsize}
\end{displaymath}
defines a metric. The Fr\'echet topology is the topology generated by the metric $d(f,g)$~\cite{Hamilton82}.
Let $(F; F^n)$ be an ILB\ndash system. The $ILB$\ndash topology on $F$ is the weakest topology  such that the embedding $F\hookrightarrow F^k$ is continuous for every $k$~\cite{Omori97}.

\begin{proof} We prove $(1)\Leftrightarrow (2)$, $(3)\Leftrightarrow (2)$ and $(3)\Leftrightarrow (4)$.
 \begin{enumerate}
  \item [$(1)\Leftrightarrow$]$ (2)$ The equivalence between the compact-open topology and the topology of compact convergence is well-known for spaces of continuous functions $C(X,Y)$ for $X$, $Y$ metric spaces. The extension to the setting of holomorphic functions is a consequence of Montel's theorem.
\item [$(3)\Leftrightarrow$]$ (2)$ Let $(f_k) \subset M\mathfrak{g}$ be a sequence converging to $f_0$ in the topology of compact convergence. Then for every $n_0\in \mathbb{N}$ and $\epsilon> 0$ there is a $k_0$ such that for all $k\geq k_0$ the estimate $\|f_k-f_0\|_n\leq\epsilon$ is satisfied for all $n\leq n_0$. Hence
\begin{align*}
 d(f_k,f_0)& =\sum_{n=0}^{\infty}\frac{1}{2^n} \textrm{\small$\frac{\|f_k-f_0\|_n}{1+\|f_k-f_0\|_n}$\normalsize}=\\
           & =\sum_{n=0}^{n_0}\frac{1}{2^n}\textrm{\small$\frac{\|f_k-f_0\|_n}{1+\|f_k-f_0\|_n}$\normalsize}+
           \sum_{n=n_0+1}^{\infty}\frac{1}{2^n}\textrm{\small$\frac{\|f_k-f_0\|_n}{1+\|f_k-f_0\|_n}$\normalsize}\leq\\
&\leq \sum_{n=0}^{n_0}\frac{1}{2^n}\frac{\epsilon}{1+\epsilon}+ \sum_{n=n_0+1}^{\infty}\frac{1}{2^n}\leq\\
&\leq 2\epsilon+\left(\frac{1}{2}\right)^{n_0}\, .
\end{align*}
Hence $(f_k) \subset M\mathfrak{g}$ converges in the Fr\'echet topology.
Conversely let $(f_k) \subset M\mathfrak{g}$ be a sequence converging to $f_0$ in the Fr\'echet topology. Then we have
\begin{displaymath}
\lim_{k\rightarrow \infty} d(f_k,f_0)=\lim_{k\rightarrow\infty} \sum_n\frac{1}{2^n}\textrm{\small$\frac{\|f_k-f_0\|_n}{1+\|f_f-f_0\|_n}$\normalsize}=0\, .
\end{displaymath}
As $d(f_k,f_0)\geq \sup_n \frac{1}{2^n}\frac{\|f_k-f_0\|_n}{1+\|f_f-f_0\|_n}$ we conclude that 
\begin{displaymath}
 \lim_{k\rightarrow \infty}\sup_n \frac{1}{2^n}\frac{\|f_k-f_0\|_n}{1+\|f_f-f_0\|_n}=0\, .
\end{displaymath}
As for any compact set $K\subset \mathbb{C}^*$ there is some $n$ such that $K\subset A_n$, this yields compact convergence.
\item [$(3)\Leftrightarrow$]$ (4)$ Suppose $M\mathfrak{g}$ is equipped with the tame Fr\'echet topology. We first check that the embedding 
\begin{displaymath}
 \varphi_k: M\mathfrak{g}\hookrightarrow A_n\mathfrak{g}\, .
\end{displaymath}
is continuous. To this end let $f_n\subset M\mathfrak{g}$ be a sequence converging to $f_0$. This means that $$\lim_{n\rightarrow \infty}\|f_n-f_0\|_k=0$$ for all $k$. Hence $\lim_{n\rightarrow \infty} \varphi(f_n)=\varphi(f_0)$. Thus $\varphi$ is continuous. As a consequence the Fr\'echet topology is stronger than the ILB\ndash topology. Let conversely $M\mathfrak{g}$ be equipped with the ILB\ndash topology. A sequence $(f_n)\in M\mathfrak{g}$ converges if $\varphi_k(f_n)\in A_k\mathfrak{g}$ converges for all $k$. This is equivalent to 
\begin{displaymath}
 \lim_{n\rightarrow \infty}\|f_n-f_0\|_k=0
\end{displaymath}
 and thus to convergence in the Fr\'echet topology. Hence the Fr\'echet topology is weaker than the ILB\ndash topology. This completes the proof.
 \end{enumerate}

\end{proof}

The adjoint action $ad(g):M\mathfrak{g}^{\sigma}\longrightarrow M\mathfrak{g}^{\sigma}$ is $(0,0, 2\|g\|_n)$-tame for each  $g\in M\mathfrak{g}^{\sigma}$. Thus it induces an adjoint action on each algebra $A_n\mathfrak{g}^{\sigma}$ of the associated $ILB$-system. Contrast this with the situation for the affine Kac-Moody algebra  described in section~\ref{holomorphicstructuresonkacmoodyalgebras}.

\noindent Having described the holomorphic complex loop algebras which we will need, we turn now to some associated objects, namely the compact real forms and spaces of differential forms.

\noindent We start with real forms of compact type:

\begin{definition}[compact real form of a holomorphic non-twisted loop algebra]
\index{compact real non-twisted loop algebra}
Let $\mathfrak{g}_{\mathbb C}$ be a finite-dimensional semisimple complex Lie algebra and $\mathfrak{g}$ its compact real form.
 The loop algebra $X\mathfrak{g}_{\mathbb{R}}^{\sigma}$ is the vector space
\begin{displaymath}X\mathfrak{g}_{\mathbb{R}}^{\sigma}:=\{f\in X\mathfrak{g}_{\mathbb{C}}^{\sigma}| f(S^1)\subset \mathfrak{g} \}\,,\end{displaymath}
equipped with the natural Lie bracket:
\begin{displaymath}[f,g]_{X\mathfrak{g}}(z):=[f,g]_0(z):=[f(z),g(z)]_{\mathfrak{g}}\,.\end{displaymath}

\end{definition}

\noindent As a holomorphic function on $X$ can be expanded into its Laurent
series, one can represent every element of a loop algebra by a series 
\begin{displaymath} f(z):= \sum_{n} g_n z^n\end{displaymath}
with $g_n \in \mathfrak{g}$.

\begin{lemma}
The condition $f(S^1)\subset \mathfrak{g}_{\mathbb{R}}$  is equivalent to the condition $g_n=-\bar{g}_{-n}^{t}$.
\end{lemma}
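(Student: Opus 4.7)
The plan is to translate the pointwise condition $f(S^1)\subset\mathfrak{g}_{\mathbb{R}}$ into a condition on the Laurent coefficients by using (i) the fact that the compact real form $\mathfrak{g}_{\mathbb{R}}\subset\mathfrak{g}_{\mathbb{C}}$ is cut out by the conjugate linear involution $X\mapsto -\bar X^{t}$, and (ii) the fact that on the unit circle $\bar z=z^{-1}$. These two ingredients make the matching of coefficients automatic.

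First I would fix a faithful representation $\mathfrak{g}_{\mathbb{C}}\hookrightarrow \mathfrak{gl}(N,\mathbb{C})$ realizing $\mathfrak{g}_{\mathbb{R}}$ as the anti\ndash Hermitian elements, so that $X\in\mathfrak{g}_{\mathbb{R}}$ iff $X=-\bar X^{t}$ (the passage to a matrix picture is harmless because the statement of the lemma already uses the notation $\bar g_{-n}^{t}$). Then the condition $f(S^{1})\subset\mathfrak{g}_{\mathbb{R}}$ is equivalent to
\begin{displaymath}
f(z)=-\overline{f(z)}^{t}\qquad\text{for every }z\in S^{1}.
\end{displaymath}

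Next I would substitute the Laurent expansion $f(z)=\sum_{n\in\mathbb{Z}}g_{n}z^{n}$, which converges on $\mathbb{C}^{*}$ and in particular on $S^{1}$. Using $\bar z=z^{-1}$ on $S^{1}$ I compute
\begin{displaymath}
\overline{f(z)}^{t}=\sum_{n}\bar g_{n}^{t}\,\bar z^{n}=\sum_{n}\bar g_{n}^{t}\,z^{-n}=\sum_{n}\bar g_{-n}^{t}\,z^{n},
\end{displaymath}
after reindexing $n\mapsto -n$. Therefore the anti\ndash Hermiticity condition on $S^{1}$ reads
\begin{displaymath}
\sum_{n}g_{n}z^{n}=-\sum_{n}\bar g_{-n}^{t}z^{n}\qquad\text{for all }z\in S^{1}.
\end{displaymath}

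Finally I would invoke uniqueness of Laurent coefficients: two holomorphic functions on $\mathbb{C}^{*}$ which agree on $S^{1}$ must have identical Laurent series, so the displayed identity forces $g_{n}=-\bar g_{-n}^{t}$ for every $n\in\mathbb{Z}$. Conversely, this relation on the coefficients reverses the computation and gives $f(z)=-\overline{f(z)}^{t}$ on $S^{1}$, hence $f(S^{1})\subset\mathfrak{g}_{\mathbb{R}}$. No serious obstacle is expected; the only point that deserves a line of justification is the passage $\bar z=z^{-1}$ on $S^{1}$ together with the bookkeeping of the index $n\mapsto -n$.
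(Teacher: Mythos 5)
Your proof is correct, and it reaches the coefficient relation by a cleaner route than the paper does. The paper parametrizes the circle as $z=e^{it}$, rewrites the Laurent series as a real trigonometric series in $\cos(nt)$ and $\sin(nt)$, splits each coefficient as $g_n=g_n^r+ig_n^i$, and imposes anti\ndash Hermiticity separately on the cosine and sine coefficients; this yields the two relations $g_n^r=-(g_{-n}^r)^t$ and $g_n^i=(g_{-n}^i)^t$, which are then recombined into $g_n=-\bar g_{-n}^t$. You instead keep everything complex: you use $\bar z=z^{-1}$ on $S^1$ to recognize $\overline{f(z)}^{\,t}$ as the boundary value of the holomorphic function $\sum_n \bar g_{-n}^t z^n$, and then compare Laurent (equivalently Fourier) coefficients once, invoking the identity theorem to upgrade agreement on $S^1$ to equality of coefficients. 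Your version is shorter and less error\ndash prone in the index bookkeeping; the paper's real/imaginary decomposition has the side benefit of exhibiting the loop explicitly as a sum of $\cos$ and $\sin$ modes with coefficients in the $\pm1$ eigenspaces of the relevant involution, a normal form that the paper reuses later (for instance in the computation of the Cartan--Killing form on the eigenspaces of an involution). Your remark that one must first fix a matrix realization in which $\mathfrak{g}_{\mathbb{R}}$ is the set of anti\ndash Hermitian elements is well taken; the paper makes the same tacit assumption by writing $-\bar g_{-n}^{\,t}$ in the statement.
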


\begin{proof}
Let $z=e^{it}\in S^1\subset \mathbb{C}^*$ and let $g_n=g_n^{r}+ig_n^{i}$ be the decomposition of $g_n$ into its real and imaginary parts. Then we find
\begin{align*}
f(z)&= \sum_{n\in \mathbb{Z}} g_n z^n=\sum_{n\in \mathbb{Z}} g_n e^{itn}=\\
&=a_0+\sum_{n\in \mathbb{N}}g_n e^{itn}+g_{-n}e^{-itn}=\\
&=a_0+\sum_{n\in \mathbb{N}}(g_n^{r}+ig_n^{i}) (\cos(tn)+i\sin(tn))+(g_{-n}^{r}+ig_{-n}^{i})(\cos(-tn)+i\sin(-tn))=\\
&=a_0+\sum_{n\in \mathbb{N}}(g_n^{r}\cos(tn)-g_n^{i}\sin(tn)) +i(g_{n}^{i}\cos(tn)+g_n^r\sin(tn))+\\
&\qquad \qquad(g_{-n}^{r}\cos(-tn)- g_{-n}^{i}\sin(-tn) +i(g_{-n}^{i}\cos(-tn)+(g_{-n}^{r}\sin(-tn))=\\
&=a_0+\sum_{n\in \mathbb{N}}(g_n^r+g_{-n}^r+i(g_{n}^{i}+g_{-n}^{i}))\cos(tn)+(-g_n^{i}+g_{-n}^{i}+i(g_n^r-g_{-n}^r))\sin(tn)
\end{align*}
Now $x\in \mathfrak{g}_{\mathbb{C}}$ is in $\mathfrak{g}$ if $x=-\overline{x}^t$. This gives for the coefficients $g_{n}^r$:
\begin{align*}
g_n^r+g_{-n}^r&=-(g_n^r)^t-(g_{-n}^r)^t\, ,\\
g_n^r-g_{-n}^r&=(g_n^r)^t-(g_{-n}^r)^t\, .
\end{align*}
Adding both we get $g_n^{r}=-(g_{-n}^r)^t$.
In a similar way we get for the imaginary parts of the coefficients $g_n^{i}=(g_{-n}^i)^t$ and thus the result.
\end{proof}

\begin{lemma}
$M\mathfrak{g}_{\mathbb{R}}$ is a tame Fr\'echet space.
\end{lemma}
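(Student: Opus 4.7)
The plan is to realize $M\mathfrak{g}_{\mathbb{R}}$ as a tame direct summand of the already-tame Fr\'echet space $M\mathfrak{g}_{\mathbb{C}}$ (lemma~\ref{mgfrechetagbanach}) and then invoke part (1) of lemma~\ref{constructionoftamespaces}. For this I need a continuous (in fact tame) linear inclusion $i:M\mathfrak{g}_{\mathbb{R}}\hookrightarrow M\mathfrak{g}_{\mathbb{C}}$ together with a tame linear projection $P:M\mathfrak{g}_{\mathbb{C}}\to M\mathfrak{g}_{\mathbb{R}}$ such that $P\circ i=\mathrm{Id}$.

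The inclusion $i$ is literally set-theoretic inclusion, and since both spaces are equipped with the same family of sup-norms $\|f\|_n=\sup_{z\in A_n}|f(z)|$, it is isometric, hence $(0,0,1)$-tame. For the projection I would use the antiholomorphic involution $z\mapsto 1/\bar z$ of $\mathbb{C}^*$, which preserves each annulus $A_n$, combined with the compact-form involution $\theta(x)=-\bar x^{\,t}$ on $\mathfrak{g}_{\mathbb{C}}$ whose fixed point set is $\mathfrak{g}_{\mathbb{R}}$. Concretely, set
\begin{displaymath}
P(f)(z):=\tfrac{1}{2}\bigl(f(z)-\overline{f(1/\bar z)}^{\,t}\bigr).
\end{displaymath}

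Three verifications are then in order. First, $P(f)$ is holomorphic on $\mathbb{C}^*$: the map $z\mapsto 1/\bar z$ is antiholomorphic, so its composition with $f$ followed by the entry-wise conjugation $\overline{\,\cdot\,}^{\,t}$ is again holomorphic. Second, $P(f)$ takes values in $\mathfrak{g}_{\mathbb{R}}$ on $S^1$: if $f(z)=\sum_n g_n z^n$ is the Laurent expansion, then $\overline{f(1/\bar z)}^{\,t}=\sum_n \overline{g_{-n}}^{\,t} z^n$, so the Laurent coefficients of $P(f)$ are $h_n=\tfrac{1}{2}(g_n-\overline{g_{-n}}^{\,t})$, which manifestly satisfy $h_n=-\overline{h_{-n}}^{\,t}$; by the lemma preceding this one, this is precisely the condition characterizing $P(f)\in M\mathfrak{g}_{\mathbb{R}}$. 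Moreover, if $f\in M\mathfrak{g}_{\mathbb{R}}$ then $g_n=-\overline{g_{-n}}^{\,t}$, which yields $h_n=g_n$, so $P\circ i=\mathrm{Id}$.

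Third, I must show $P$ is tame. Because the compact-form involution $\theta$ is by definition an isometry of $\mathfrak{g}_{\mathbb{C}}$ (it preserves the Hermitian inner product defining our norm on $\mathfrak{g}_{\mathbb{C}}$) and because $z\in A_n\Leftrightarrow 1/\bar z\in A_n$, one has $\sup_{z\in A_n}|\overline{f(1/\bar z)}^{\,t}|=\sup_{z\in A_n}|f(z)|=\|f\|_n$. Hence
\begin{displaymath}
\|P(f)\|_n \leq \tfrac{1}{2}\bigl(\|f\|_n+\|f\|_n\bigr) = \|f\|_n \quad\text{for every } n\in\mathbb{N},
\end{displaymath}
so $P$ is $(0,0,1)$-tame. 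With both $i$ and $P$ tame and $P\circ i=\mathrm{Id}$, the space $M\mathfrak{g}_{\mathbb{R}}$ is a tame direct summand of the tame Fr\'echet space $M\mathfrak{g}_{\mathbb{C}}$ and therefore itself tame Fr\'echet by lemma~\ref{constructionoftamespaces}. No step here is a real obstacle; the only bookkeeping subtlety is matching the Laurent-coefficient reality condition $g_n=-\overline{g_{-n}}^{\,t}$ with the geometric involution $z\mapsto 1/\bar z$, which is why I packaged $P$ in closed form rather than coefficient-by-coefficient.
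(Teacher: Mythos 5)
Your proof is correct, and it is actually more complete than the one in the paper. The paper disposes of this lemma in one line: $M\mathfrak{g}_{\mathbb{R}}$ is a closed subspace of $M\mathfrak{g}_{\mathbb{C}}$, hence tame by the construction lemma. But that lemma, as stated, only asserts that a \emph{tame direct summand} of a tame Fr\'echet space is tame; closedness alone does not produce the required tame projection, and in Hamilton's framework a closed subspace of a tame space need not be tame in general. What you have done is supply precisely the missing datum: the explicit projection $P(f)(z)=\tfrac{1}{2}\bigl(f(z)-\overline{f(1/\bar z)}^{\,t}\bigr)$, built from the antiholomorphic involution $z\mapsto 1/\bar z$ of $\mathbb{C}^*$ (which preserves each $A_n$) and the compact-form conjugation of $\mathfrak{g}_{\mathbb{C}}$. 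Your Laurent-coefficient computation $h_n=\tfrac{1}{2}(g_n-\overline{g_{-n}}^{\,t})$ correctly matches the reality condition $h_n=-\overline{h_{-n}}^{\,t}$ established in the preceding lemma, the identity $P\circ i=\mathrm{Id}$ holds, and the $(0,0,1)$-tame estimate $\|P(f)\|_n\leq\|f\|_n$ is immediate from the invariance of $A_n$ under $z\mapsto 1/\bar z$. So where the paper relies on the reader accepting the closed-subspace shortcut (which it uses repeatedly for real forms and twisted subalgebras), your argument exhibits $M\mathfrak{g}_{\mathbb{R}}$ as a genuine tame direct summand, which is what the cited lemma actually requires. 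The only cosmetic caveat is that your formula presupposes a matrix realization of $\mathfrak{g}_{\mathbb{C}}$ in which the compact real form is $\{x:x=-\overline{x}^{\,t}\}$; this is consistent with the paper's own conventions, but stated invariantly one would write $P(f)(z)=\tfrac{1}{2}\bigl(f(z)+\theta(f(1/\bar z))\bigr)$ with $\theta$ the antilinear conjugation fixing the compact real form.
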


\begin{proof}
$M\mathfrak{g}_{\mathbb{R}}\subset M\mathfrak{g}_{\mathbb{C}}$ is a closed subspace and thus tame according to lemma~\ref{constructionoftamespaces}.
\end{proof}

\begin{proposition}
The system $\{M\mathfrak{g}_{\mathbb{R}}^{\sigma}, A_n\mathfrak{g}_{\mathbb{R}}^{\sigma}\}$ is an $ILB$-system.

\end{proposition}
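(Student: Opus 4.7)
The plan is to mirror the proof of Proposition~\ref{nganginverselimit} for the complex case. The three items to verify are: $A_n\mathfrak{g}_{\mathbb{R}}^{\sigma}$ is a Banach space, the inclusion $A_{n+1}\mathfrak{g}_{\mathbb{R}}^{\sigma}\hookrightarrow A_n\mathfrak{g}_{\mathbb{R}}^{\sigma}$ is continuous with dense image, and $M\mathfrak{g}_{\mathbb{R}}^{\sigma}$ carries the inverse limit topology with respect to this system.

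First I would note that $A_n\mathfrak{g}_{\mathbb{R}}^{\sigma}$ is the joint fixed-point set inside $A_n\mathfrak{g}_{\mathbb{C}}$ of the conjugate-linear involution $f(z)\mapsto -\overline{f(1/\bar{z})}^{\,t}$ encoding the condition $f(S^1)\subset\mathfrak{g}_{\mathbb{R}}$ and the linear involution $f(z)\mapsto\sigma^{-1}f(\omega z)$. Both operators are continuous on the Banach space $A_n\mathfrak{g}_{\mathbb{C}}$, so $A_n\mathfrak{g}_{\mathbb{R}}^{\sigma}$ is a closed subspace, hence a Banach space in the induced norm $\|\,\cdot\,\|_n$. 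Continuity of the inclusion $A_{n+1}\mathfrak{g}_{\mathbb{R}}^{\sigma}\hookrightarrow A_n\mathfrak{g}_{\mathbb{R}}^{\sigma}$ is immediate from $A_n\subset A_{n+1}$, which yields $\|f\|_n\leq\|f\|_{n+1}$, so the embedding is a bounded linear map.

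The only nontrivial step is density. Given $f\in A_n\mathfrak{g}_{\mathbb{R}}^{\sigma}$, I would use that $f$ extends holomorphically to some open neighbourhood of $A_n$ and hence admits a Laurent expansion $f(z)=\sum_{k\in\mathbb{Z}}g_k z^k$ which converges uniformly on $A_n$. Consider the symmetric truncations $S_N(z):=\sum_{k=-N}^{N}g_k z^k$, which are Laurent polynomials and therefore lie in $M\mathfrak{g}_{\mathbb{C}}^{\sigma}\subset A_{n+1}\mathfrak{g}_{\mathbb{C}}^{\sigma}$. The reality condition $f(S^1)\subset\mathfrak{g}_{\mathbb{R}}$ is equivalent (by the lemma just proved in the excerpt) to $g_k=-\overline{g_{-k}}^{\,t}$, and the twisting condition $\sigma\circ f(z)=f(\omega z)$ translates into $\sigma(g_k)=\omega^k g_k$; both are pointwise conditions on the individual coefficients and are therefore preserved by symmetric truncation, so $S_N\in M\mathfrak{g}_{\mathbb{R}}^{\sigma}$. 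Since $\|S_N-f\|_n\to 0$ and $M\mathfrak{g}_{\mathbb{R}}^{\sigma}\subset A_{n+1}\mathfrak{g}_{\mathbb{R}}^{\sigma}$, this proves both that $A_{n+1}\mathfrak{g}_{\mathbb{R}}^{\sigma}$ has dense image in $A_n\mathfrak{g}_{\mathbb{R}}^{\sigma}$ and (by the same argument at each level) that $M\mathfrak{g}_{\mathbb{R}}^{\sigma}=\bigcap_n A_n\mathfrak{g}_{\mathbb{R}}^{\sigma}$ is dense in each $A_n\mathfrak{g}_{\mathbb{R}}^{\sigma}$.

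Finally, the tame Fr\'echet topology on $M\mathfrak{g}_{\mathbb{R}}^{\sigma}$ is, by definition, the one generated by the family of seminorms $\{\|\,\cdot\,\|_n\}_{n\in\mathbb{N}}$, which is exactly the weakest topology making all embeddings $M\mathfrak{g}_{\mathbb{R}}^{\sigma}\hookrightarrow A_n\mathfrak{g}_{\mathbb{R}}^{\sigma}$ continuous, i.e.\ the inverse limit topology. The main (and essentially only) obstacle in the whole argument is verifying that the reality constraint does not destroy the standard Laurent-polynomial density argument; the symmetric truncation above handles this in a single line. All other points are routine restrictions of the complex case already treated in Proposition~\ref{nganginverselimit}.
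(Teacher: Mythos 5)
Your proof is correct and follows essentially the same route as the paper, which simply defers to the complex case (Proposition~\ref{nganginverselimit}): continuity of the embeddings from $\|\cdot\|_n\leq\|\cdot\|_{n+1}$, density via Laurent polynomials, and identification of the Fr\'echet topology with the inverse limit topology. Your symmetric-truncation argument supplies the one detail the paper leaves implicit, namely that the coefficient conditions $g_k=-\overline{g_{-k}}^{\,t}$ and $\sigma(g_k)=\omega^k g_k$ are preserved by truncation, so the density step survives the reality and twisting constraints.
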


\begin{proof}
The proof is as in the complex case.
\end{proof}

\begin{definition}~
\begin{enumerate}
\item $\Omega^1(X,\mathfrak{g}_{\mathbb{C}})$ is the space of $\mathfrak{g}_{\mathbb{C}}$-valued $1$-forms on $X$, that is elements $\omega\in \Omega^1(X,\mathfrak{g}_{\mathbb{C}})$ are of the form $\omega(z)=f(z)dz$ with $f(z)\in X\mathfrak{g}_{\mathbb{C}}$. We define a family of norms  by $\|\omega\|_n:=|f(z)|_n$.
\item $\Omega^1(X,\mathfrak{g}_{\mathbb{C}})_\mathbb{R}$ is the space of $\mathfrak{g}_{\mathbb{C}}$-valued $1$-forms on $X$ such that $f(S^1) \subset \mathfrak{g}_{\mathbb{R}}$.
\end{enumerate}
\end{definition}

\noindent As $M\mathfrak{g}_{\mathbb{C}}$ and $M\mathfrak{g}_{\mathbb{R}}$ are  tame Fr\'echet spaces, also $\Omega^1(X,\mathfrak{g}_{\mathbb{C}})$ and  $\Omega^1(X,\mathfrak{g}_{\mathbb{C}})_\mathbb{R}$ are tame.

\begin{proposition}
Identifying $\Omega^1(M,\mathfrak{g}_{\mathbb{C}})\cong M\mathfrak{g}_{\mathbb{C}}$ and $\Omega^1(A_n,\mathfrak{g}_{\mathbb{C}})\cong A_n\mathfrak{g}_{\mathbb{C}}$ we have:
The system $\{\Omega^1(M,\mathfrak{g}_{\mathbb{C}}),\Omega^1(A_n,\mathfrak{g}_{\mathbb{C}})\}$ is an $ILB$-system.
\end{proposition}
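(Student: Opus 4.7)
The identification $\omega = f(z)\,dz \longleftrightarrow f(z)$ is manifestly a bijection $\Omega^1(X,\mathfrak{g}_{\mathbb{C}}) \to X\mathfrak{g}_{\mathbb{C}}$ for $X \in \{\mathbb{C}^*, A_n\}$, and by the very definition $\|\omega\|_n := |f(z)|_n$ it is a linear isometry with respect to each of the norms. The plan is therefore to transport the $ILB$-structure established in Proposition~\ref{nganginverselimit} along this isometry; no new analytic content is required.

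Concretely, I would first observe that the identification is compatible with the inclusions: a $1$-form $f(z)\,dz$ on $A_n$ extends to a $1$-form on $A_{n+1}$ exactly when $f$ extends as a holomorphic function, so the diagram
\begin{displaymath}
\begin{array}{ccc}
\Omega^1(A_{n+1},\mathfrak{g}_{\mathbb{C}}) & \hookrightarrow & \Omega^1(A_n,\mathfrak{g}_{\mathbb{C}}) \\
\cong\;\downarrow & & \downarrow\;\cong \\
A_{n+1}\mathfrak{g}_{\mathbb{C}} & \hookrightarrow & A_n\mathfrak{g}_{\mathbb{C}}
\end{array}
\end{displaymath}
commutes. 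Since the bottom arrow is a continuous embedding with dense image (Proposition~\ref{nganginverselimit}) and the vertical maps are isometric isomorphisms of Banach spaces, the top arrow is also a continuous embedding with dense image. This verifies the first defining condition of an $ILB$-chain.

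For the second condition, I would note the analogous commutative diagram for the embeddings $\Omega^1(M,\mathfrak{g}_{\mathbb{C}}) \hookrightarrow \Omega^1(A_n,\mathfrak{g}_{\mathbb{C}})$. Because the Fr\'echet topology on $\Omega^1(M,\mathfrak{g}_{\mathbb{C}})$ is by definition the one generated by the seminorms $\|\omega\|_n = |f|_n$, it coincides under the identification with the Fr\'echet topology on $M\mathfrak{g}_{\mathbb{C}}$, which by Proposition~\ref{nganginverselimit} is the inverse limit topology. Hence $\Omega^1(M,\mathfrak{g}_{\mathbb{C}}) = \bigcap_n \Omega^1(A_n,\mathfrak{g}_{\mathbb{C}})$ carries the inverse limit topology, and $\{\Omega^1(M,\mathfrak{g}_{\mathbb{C}});\Omega^1(A_n,\mathfrak{g}_{\mathbb{C}})\}$ is an $ILB$-system.

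There is no genuine obstacle here: the entire proposition is a transport-of-structure statement, and the only thing worth emphasising in the write-up is that the norms on the two sides agree by the very definition $\|\omega\|_n := |f|_n$, so continuity, density, and the inverse-limit property all pass across the identification for free.
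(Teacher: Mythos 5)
Your proposal is correct and follows exactly the paper's approach: the paper's own proof is the one-line observation that the statement is equivalent to $\{M\mathfrak{g}_{\mathbb{C}}, A_n\mathfrak{g}_{\mathbb{C}}\}$ being an $ILB$-system, which is precisely the transport-of-structure argument you spell out. Your version merely makes explicit the commutativity of the inclusion diagrams and the agreement of norms, which the paper leaves implicit.
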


\begin{proof}
This is equivalent to $\{M\mathfrak{g}_{\mathbb{C}}, A_n\mathfrak{g}_{\mathbb{C}}\}$ being an $ILB$-system.
\end{proof}

\begin{remark}
Real forms of the algebras $X\mathfrak{g}^{\sigma}_{\mathbb{C}}$ correspond to conjugate-linear involutions of $X\mathfrak{g}^{\sigma}_{\mathbb{C}}$: assign to a real form the conjugation with respect to it. In the other direction, fixed point algebras of conjugate-linear involutions are real forms.
Hence, real forms are closed subalgebras. Thus by an application of lemma~\ref{constructionoftamespaces} real forms of $M\mathfrak{g}_{\mathbb{C}}^{\sigma}$ are tame, real forms of $A_n{\mathfrak{g}}_{\mathbb{C}}^{\sigma}$ are Banach. 
The proof of proposition~\ref{nganginverselimit} generalizes to this setting. Thus one has $ILB$-systems.
\end{remark}

\section{Lie groups of holomorphic maps}
\label{loopgroups}

Up to now we studied analytic structures on loop algebras but not on the
associated loop groups. In short the main result is that all loop algebras interesting to us are tame Lie algebras allowing for the definition of an $ILB$-structure. In this section we prove similar results for loop groups. Let $G$ be a compact semisimple Lie group and $G_{\mathbb{C}}$ its complexification.

\subsection{Foundations}

Let us recall the definition of a smooth tame Lie group from~\cite{Hamilton82}:

\begin{definition}[smooth tame Lie group]
A smooth tame Lie group is a smooth tame Fr\'echet manifold $G$ equipped with a
group structure such that the multiplication map 
\begin{displaymath}
\varphi: G\times G\longrightarrow G,\quad (g,h)\mapsto gh
\end{displaymath}
and the inverse map
\begin{displaymath}
\varphi: G\longrightarrow G,\quad g\mapsto g^{-1}
\end{displaymath}
are smooth tame maps.
\end{definition}

In this section we will show that the following groups are smooth tame Lie groups:

\begin{definition}[complex loop groups]~ 
\label{A_nGC}
\index{complex loop group}
\begin{enumerate}
\item The loop group $A_nG$ is the group
\begin{displaymath}A_nG_{\mathbb C}:=\{f:A_n\longrightarrow G_{\mathbb C}|
\textrm{ f is holomorphic}\}\,.\end{displaymath}
The multiplication is defined to be $fg(z):= f(z)g(z)$ for $f,g \in A_nG$. 

\item The loop group $MG$ is the group 
$$MG_{\mathbb C}:=\{f:\mathbb C^*\longrightarrow G_{\mathbb C}|
\textrm{ f is holomorphic}\}\,.$$
The multiplication is defined to be $(fg)(z):= f(z)g(z)$ for $f,g \in MG$.
\end{enumerate}
\end{definition}

\begin{definition}[real form of the compact type]~
\index{holomorphic loop group of compact type}
\label{A_nGR}
\begin{enumerate}
\item The real form of the compact type $A_nG_{\mathbb{R}}$ is defined to be 
$$A_nG_{\mathbb R}:=\{f\in A_nG_{\mathbb C}| f(S^1) \subset G_{\mathbb R}\}\,.$$
\item The real form of the compact type $MG_{\mathbb{R}}$ is defined to be 
$$MG_{\mathbb R}:=\{f\in MG_{\mathbb C}| f(S^1) \subset G_{\mathbb R}\}\,.$$
\end{enumerate}
\end{definition}

\index{exponential function for loop group}
There are exponential functions 
\begin{align*}A_n\exp: A_n\mathfrak{g}&\longrightarrow A_nG\quad \textrm{and}\\
\Mexp: M\mathfrak{g}&\longrightarrow MG,
\end{align*} 
defined pointwise using the group exponential function $\exp: \mathfrak{g}\rightarrow G$:
$$
\begin{aligned}
[(A_n\exp)(f)](z)&:=\exp(f(z))\,,\\
[(\Mexp)(f)](z)&:=\exp(f(z))\,.
\end{aligned}$$

Let us remark, that for any $z\in \mathbb{C}^*$ resp. $z\in A_n$, the function  $[(\Mexp)(t\cdot f)](z)$ (resp. $[(A_n\exp)(t\cdot f)]$ is a $1$-parameter subgroup.

The next important object needed to describe the connection between the loop algebras and the loop groups is the definition of the Adjoint action $\textrm{Ad}$:

As usual it is defined pointwise using the Adjoint action of the Lie group $G_{\mathbb{K}}$, $\mathbb{K}\in \{\mathbb{R},\mathbb{C}\}$:

$$
\begin{array}{ll}
(\textrm{Ad}(A_nG)_{\mathbb{K}} \times A_n\mathfrak{g}_{\mathbb{K}})\longrightarrow A_n\mathfrak{g}_{\mathbb{K}}, \hspace{10pt}&(f,h) \mapsto fhf^{-1}\,,\\
(\textrm{Ad}(MG)_{\mathbb{K}}\times M\mathfrak{g}_{\mathbb{K}})\longrightarrow M\mathfrak{g}_{\mathbb{K}}, &(f,h) \mapsto fhf^{-1}\,,\\
\end{array}
$$
where $$fhf^{-1}(z):= f(z)h(z)f^{-1}(z)\simeq \textrm{Ad}(f(z)) (h(z))\, .$$

For the Adjoint action of groups of the compact type to be well-defined we have to check, that the condition $f(S^1)\subset \mathfrak{g}_{\mathbb{R}}$ is preserved. This is a consequence of the adjoint action for finite dimensional compact Lie groups: for all $z\in S^1$ we have $f(z)\in G_{\mathbb{R}}$ and $h(z)\in \mathfrak{g}_{\mathbb{R}}$. Thus  the condition $\textrm{Ad}(f)h(z)\in \mathfrak{g}_{\mathbb{R}}$ is preserved pointwise.

\begin{lemma}
The exponential function and the Adjoint action satisfy the identity: 
\begin{displaymath}\textrm{Ad}\circ X\exp= e^{ad}\quad \textrm{for} \quad X\in \{A_n, \mathbb{C}^*\}\, .\end{displaymath}
\end{lemma}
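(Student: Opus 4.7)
The plan is to reduce the identity to the corresponding finite dimensional fact $\operatorname{Ad}\circ\exp = e^{\operatorname{ad}}$ in $G_{\mathbb{K}}$ by exploiting the fact that every ingredient on both sides ($\operatorname{Ad}$, $\operatorname{ad}$, $X\exp$) is defined pointwise from its finite dimensional counterpart.

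First I would fix $f,h\in X\mathfrak{g}_{\mathbb{K}}$ and evaluate the left hand side at an arbitrary point $z\in X$. Directly from the definitions,
\begin{align*}
\bigl(\operatorname{Ad}(X\exp(f))h\bigr)(z)
&= X\exp(f)(z)\cdot h(z)\cdot X\exp(f)(z)^{-1}\\
&= \exp(f(z))\cdot h(z)\cdot \exp(-f(z))\\
&= \operatorname{Ad}(\exp(f(z)))\bigl(h(z)\bigr)
= e^{\operatorname{ad}(f(z))}\bigl(h(z)\bigr),
\end{align*}
where the last equality is the classical finite dimensional identity in $G_{\mathbb{K}}$.

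Next I would expand the right hand side. Since the bracket on $X\mathfrak{g}_{\mathbb{K}}$ is pointwise, $\operatorname{ad}(f)(h)(z)=[f(z),h(z)]=\operatorname{ad}(f(z))(h(z))$, and by induction $\operatorname{ad}(f)^k(h)(z)=\operatorname{ad}(f(z))^k(h(z))$. Hence, granting convergence of the series in $X\mathfrak{g}_{\mathbb{K}}$,
\begin{displaymath}
\bigl(e^{\operatorname{ad}(f)}h\bigr)(z)=\sum_{k=0}^{\infty}\frac{1}{k!}\operatorname{ad}(f)^k(h)(z)=\sum_{k=0}^{\infty}\frac{1}{k!}\operatorname{ad}(f(z))^k(h(z))=e^{\operatorname{ad}(f(z))}(h(z)),
\end{displaymath}
which agrees with the left hand side.

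The only technical step is to justify that $e^{\operatorname{ad}(f)}$ converges as an operator on $X\mathfrak{g}_{\mathbb{K}}$ and that evaluation at $z$ commutes with the summation. In the Banach case $X=A_n$, a fixed matrix norm estimate gives $\|\operatorname{ad}(f)(h)\|_n\le C_n\|f\|_n\|h\|_n$ on each annulus, so $\|\operatorname{ad}(f)^k(h)\|_n\le C_n^k\|f\|_n^k\|h\|_n$ and the series converges absolutely in the Banach algebra of continuous endomorphisms; evaluation at $z\in A_n$ is continuous, hence commutes with the sum. In the Fr\'echet case $X=\mathbb{C}^*$, I would apply this argument separately to each seminorm $\|\cdot\|_n$ via the $ILB$\ndash system $\{M\mathfrak{g}_{\mathbb{K}};\,A_n\mathfrak{g}_{\mathbb{K}}\}$ of Proposition~\ref{nganginverselimit}: the bound $\|\operatorname{ad}(f)^k(h)\|_n\le C_n^k\|f\|_n^k\|h\|_n$ holds simultaneously for all $n$, giving convergence in every seminorm and therefore in the Fr\'echet topology, after which pointwise evaluation at $z\in\mathbb{C}^*$ (continuous for some $\|\cdot\|_n$) commutes with the sum as before. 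The hardest, though still routine, point is exactly this verification of convergence in all seminorms uniformly in $k$; once it is in hand the identity follows pointwise and hence globally.
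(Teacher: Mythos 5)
Your proof is correct and follows the same route as the paper, which simply observes that the identity holds pointwise by the finite dimensional fact $\operatorname{Ad}\circ\exp=e^{\operatorname{ad}}$. The only difference is that you additionally spell out the convergence of the operator series $e^{\operatorname{ad}(f)}$ in each seminorm and the continuity of evaluation, a detail the paper leaves implicit; this is a welcome but not essentially different elaboration.
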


\begin{proof}
Applying the identity for finite dimensional Lie algebras (resp. Lie groups) we get that it is valid pointwise.\end{proof}

\noindent We will now investigate the functional analytic nature of the groups $A_nG$ and $MG$: to fix some notation let $X\sigma$ denote by abuse of notation an involution of $X\mathfrak{g}$ resp.\ of $XG$. Let $XG_D$ denote a real form of non-compact type of $XG$, and denote by $\textrm{Fix}(X\sigma)$ the fixed point group of an involution $X\sigma$.

E.\ Heintze and C.\ Gro\ss~\cite{Heintze09} show that real forms of the non-compact type of a complex simple Kac-Moody algebra are in bijection with involutions of the compact real form (which is unique up to conjugation).  Let $X\mathfrak{g}_{\mathbb{R}}$ be a compact real form with involution $X\sigma$. We denote by $X\mathfrak{g}_{D, \sigma}$ the real form of non-compact type associated to $X\sigma$.

The groups $A_nG$ are easy to understand: as $A_n$ is compact we can follow the classical strategy to define manifold and Lie group structures.  We start by defining a chart on an open set containing the identity with values in the Lie algebra via the exponential map; then we use left translation to construct an atlas of the whole group. This strategy yields the following basic results:

\begin{theorem}~
\begin{enumerate}
\item $A_nG_{\mathbb{R}}$ and $A_nG_{\mathbb{C}}$ are  Banach-Lie groups.
\item Real forms $A_nG_D$ of non-compact type of $A_nG_{\mathbb{C}}$ are Banach-Lie groups.
\item Quotients $A_nG_{\mathbb{R}}/\textrm{Fix}(A_n\sigma)$ and  $A_nG_{D, \sigma}/\textrm{Fix}(A_n\sigma)$ are Banach manifolds. 
\end{enumerate}
\end{theorem}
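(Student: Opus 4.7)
The plan is to exploit the compactness of the annulus $A_n$ so that the finite dimensional Lie theory on $G_{\mathbb{C}}$ can be transferred pointwise to the loop spaces without any of the functional analytic pathologies that appear for $M G_{\mathbb C}$. Since $A_n \mathfrak g_{\mathbb C}$ has already been shown to be a Banach space (Lemma~\ref{mgfrechetagbanach}), everything will reduce to standard Banach–Lie group theory once the exponential chart is constructed.

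First I would construct the Banach--Lie group structure on $A_nG_{\mathbb C}$. The multiplication and inversion maps are defined pointwise from the corresponding maps on $G_{\mathbb C}$; since holomorphic composition with a smooth map on a compact domain is smooth in the sup norm, these operations are smooth as maps between the Banach spaces $A_n G_{\mathbb C} \times A_n G_{\mathbb C} \to A_n G_{\mathbb C}$ and $A_n G_{\mathbb C} \to A_n G_{\mathbb C}$. The pointwise exponential $A_n\exp : A_n \mathfrak g_{\mathbb C} \to A_n G_{\mathbb C}$ has differential at $0$ equal to the identity, computed pointwise from $d\exp_0 = \operatorname{id}$ on $\mathfrak g_{\mathbb C}$. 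Using the Banach inverse function theorem together with a uniform lower bound (by compactness of $A_n$) on the radius of an open $U \subset \mathfrak g_{\mathbb C}$ on which $\exp$ is a diffeomorphism, one obtains an open neighborhood $V \subset A_n \mathfrak g_{\mathbb C}$ of $0$ on which $A_n\exp$ is a Banach diffeomorphism onto its image. Translating by left multiplications of group elements produces a smooth Banach atlas, so $A_n G_{\mathbb C}$ is a Banach--Lie group with Lie algebra $A_n \mathfrak g_{\mathbb C}$.

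Second I would handle the real forms. The compact real form $A_n G_{\mathbb R}$ is the fixed point set of the conjugate linear involution $\tau(f)(z) := \overline{f(1/\bar z)}$ (or equivalently of the involution corresponding to the choice of $G_{\mathbb R}$ as in Definition~\ref{A_nGR}); it is a topologically closed subgroup of $A_n G_{\mathbb C}$, and its Lie algebra $A_n \mathfrak g_{\mathbb R}$ is a closed real Banach subspace of $A_n \mathfrak g_{\mathbb C}$ with a tame (in fact bounded linear) complement given by the $-1$ eigenspace of the involution. Since $A_n\exp$ intertwines the two involutions, it restricts to a local diffeomorphism $A_n\mathfrak g_{\mathbb R} \supset V_{\mathbb R} \to A_n G_{\mathbb R}$, producing a Banach--Lie group structure. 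The same argument works for any real form of non-compact type $A_n G_D$, which by the theorem of Heintze--Gro\ss\ (theorem~\ref{theoremofHeintzegross}) is the fixed point set of a conjugate linear involution of the second kind on $A_n G_{\mathbb C}$; one obtains closed subspaces, a pointwise exponential that preserves the fixed point condition, and hence a Banach--Lie group structure on $A_n G_D$.

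Third, the quotients $A_n G_{\mathbb R}/\mathrm{Fix}(A_n \sigma)$ and $A_n G_{D,\sigma}/\mathrm{Fix}(A_n \sigma)$ are treated by the standard Banach quotient construction: $\mathrm{Fix}(A_n\sigma)$ is a closed Banach--Lie subgroup (by the argument of the preceding paragraph applied to the involution defining the fixed point set), and its Lie algebra $\mathcal K = \mathrm{Fix}(d(A_n\sigma))$ admits a topological complement $\mathcal P$ in $A_n\mathfrak g_{\mathbb R}$ (respectively $A_n\mathfrak g_{D,\sigma}$), namely the $-1$ eigenspace of $d(A_n\sigma)$. The map $\mathcal P \to A_n G_{\mathbb R}/\mathrm{Fix}(A_n\sigma)$, $X \mapsto A_n\exp(X)\cdot \mathrm{Fix}(A_n\sigma)$, provides a local chart around the base point; translating by left multiplications and invoking the standard slice argument for Banach--Lie groups produces a smooth atlas making the quotient a Banach manifold.

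The only real obstacle is the construction of the exponential chart in the first step: one has to verify that the neighborhood on which $\exp$ is a diffeomorphism on $\mathfrak g_{\mathbb C}$ induces, via compactness of $A_n$, a uniform neighborhood in the Banach space $A_n \mathfrak g_{\mathbb C}$ on which $A_n\exp$ is a diffeomorphism. Once this pointwise-to-uniform upgrade is carried out, everything else is formal consequence of finite dimensional Lie theory and the Banach implicit/quotient manifold theorems.
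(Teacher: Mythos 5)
Your proposal is correct and follows essentially the same route the paper indicates: the paper's (unwritten, merely sketched) proof is precisely the classical strategy of using compactness of $A_n$ to make the pointwise exponential a local Banach diffeomorphism near the identity and then left-translating to obtain an atlas, with real forms and quotients handled as closed subgroups and standard Banach quotients. You have simply supplied the details the paper leaves implicit, including the pointwise-to-uniform upgrade of the exponential chart, which is indeed the only substantive step.
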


 For Banach-Lie groups and Banach manifolds, there is a considerable body of work; for a classical introduction see for example~\cite{Palais68}. 

\noindent For the groups $MG$ the theory is considerably more difficult. The crucial observation is the fact that the exponential map has in general not to be a local diffeomorphism.

\noindent We give an example of this strange phenomenon:

\begin{example}[$MSL(2,\mathbb C)$]
\label{sl2chasnodiffeomorphicexponentialmap}
We study the Lie group $SL(2,\mathbb{C})$. As is well known, $$\exp: \mathfrak{sl}(2,\mathbb {C})\longrightarrow SL(2,\mathbb{C})$$ is not surjective. For example, elements $g\in SL(2, \mathbb{C})$ conjugate to the element \tiny$\left(
\begin{array}{cc}
  -1 & 1\\
  0 & -1
\end{array}
\right)$ \normalsize
are not in the image of $\exp \left(\mathfrak{sl}(2,\mathbb C)\right)$.

We want to show that there exists a sequence $f_n \in MSL(2,\mathbb C)$ which
converges to the identity map in the compact-open-(tame Fr\'echet) topology but is not contained in the image of $\Mexp$. To this end, we have to construct $f_n$ in a way that it contains points that are not in the image of $\exp
\left(\mathfrak{sl}(2,\mathbb C)\right)$.
We take
\begin{align*}
f_n(z)&=
\left(
	\begin{array}{cc}e^{\pi z/n} & -i z/n\\0&e^{- \pi z/n}\end{array}
\right)\,.\\
\intertext{Then}
f_n(in)&=
\left(
	\begin{array}{cc}-1 & 1\\0&-1\end{array}
\right)\,.
\end{align*}
So $f_n$ is not contained in $\Im(\Mexp(\mathfrak{sl}(2, \mathbb C)))$. On the other hand, for all $z_0 \in \mathbb C^*$ fixed
$$\lim_{n\rightarrow \infty}f_n(z_0)=
\left(	\begin{array}{cc}1 & 0\\0&1\end{array}
\right)=\textrm{Id}\,.$$

So in the compact-open topology for every neighborhood $U_k$ of the identity there exist $n_k\in \mathbb N$ such that $\forall n\geq n_k: f_n \in U_k$.

This proves that $f_n$ is not a local diffeomorphism.
\end{example}

\noindent Especially this observation contains the corollary that the groups $MG$ are no locally exponential Lie groups in the sense of Karl-Hermann Neeb~\cite{Neeb06}, that is Lie groups such that $\exp$ is a local diffeomorphism. 

Hence, we have to find another way to define manifold structures on $MG$.
We will start by describing some results about the relationship between $MG$ and $M\mathfrak g$. Then we will show that loop groups satisfy the weaker axioms for pairs of exponential type introduced by Hideki Omori. Finally, we will investigate the structure of $(MG;A_nG)$ as $ILB$-groups.

\begin{definition}
The tangential space $T_pMG$ is defined as the space of path-equivalence classes of continuous paths.
\end{definition}

\noindent The relationship between $M\mathfrak{g}$ and $MG$ is governed by
the following three results:

\index{tangential space}
\begin{theorem}[Tangential space]
\label{tangential space}
Let $\mathfrak{g}$ be the Lie algebra of $G$. Then
$$M\mathfrak{g}= T_e(MG)\,.$$
\end{theorem}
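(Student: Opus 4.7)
The plan is to establish both containments by using the pointwise structure of loop groups and the Banach-Lie structure on the approximating groups $A_nG$, then invoke the characterization of $M\mathfrak{g}$ via the $ILB$-system $\{M\mathfrak{g}; A_n\mathfrak{g}\}$.

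First I would show the inclusion $T_e(MG) \subseteq M\mathfrak{g}$. Given a differentiable path $\gamma : (-\epsilon, \epsilon) \to MG$ with $\gamma(0) = e$, restriction to any annulus $A_n$ yields a path $\gamma_n : (-\epsilon, \epsilon) \to A_nG_{\mathbb{C}}$. Since each $A_nG_{\mathbb{C}}$ is a Banach-Lie group with Lie algebra $A_n\mathfrak{g}_{\mathbb{C}}$ (a standard fact for Banach-Lie groups, which the paper records implicitly via its discussion of the exponential map), we obtain $X_n := \gamma_n'(0) \in A_n\mathfrak{g}_{\mathbb{C}}$. The restriction maps $A_{n+1}G_{\mathbb{C}} \to A_nG_{\mathbb{C}}$ are group homomorphisms extending the embeddings of Lie algebras, so the family $\{X_n\}$ is compatible with the projective system, and by proposition~\ref{nganginverselimit} it defines a unique element $X \in M\mathfrak{g}_{\mathbb{C}} = \varprojlim A_n\mathfrak{g}_{\mathbb{C}}$. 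Concretely, $X(z) = \frac{d}{dt}\big|_{t=0}\gamma(t)(z)$ for every $z \in \mathbb{C}^*$, and holomorphy of $X$ follows because $X$ is holomorphic on each $A_n$.

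For the reverse inclusion $M\mathfrak{g} \subseteq T_e(MG)$, I would exhibit, for each $X \in M\mathfrak{g}$, an explicit differentiable path realizing $X$ as a tangent vector. The natural candidate is
\begin{displaymath}
\gamma(t)(z) := \exp\bigl(tX(z)\bigr),
\end{displaymath}
where $\exp$ is the exponential of $G_{\mathbb{C}}$. Since $\exp: \mathfrak{g}_{\mathbb{C}} \to G_{\mathbb{C}}$ is holomorphic and $X$ is holomorphic in $z$, the map $z \mapsto \gamma(t)(z)$ lies in $MG_{\mathbb{C}}$ for each fixed $t$. Differentiability of $t \mapsto \gamma(t)$ as a map into $MG_{\mathbb{C}}$ is verified on each Banach piece: the restriction to $A_n$ is just $A_n\exp \circ (tX)$, which is smooth because $A_n\exp$ is smooth on the Banach space $A_n\mathfrak{g}_{\mathbb{C}}$ and pointwise differentiation at $t=0$ yields $X|_{A_n}$. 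Passing to the inverse limit, the path $\gamma$ is differentiable in the Fr\'echet topology with $\gamma'(0) = X$.

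The final step is to check that the two constructions are inverse on the level of path-equivalence classes: two paths $\gamma_1, \gamma_2$ through $e$ define the same tangent vector iff $\gamma_1'(0) = \gamma_2'(0)$ as elements of $M\mathfrak{g}$, which is immediate from the pointwise derivative description. The main obstacle I anticipate is not the algebra but the functional-analytic subtlety: one must be careful to work with the Fr\'echet notion of differentiability (equivalently, the $ILB$-compatible notion) rather than rely on the exponential map being a local diffeomorphism — example~\ref{sl2chasnodiffeomorphicexponentialmap} shows that it is not. The workaround is exactly the one above, namely to reduce differentiation to the Banach level via the $ILB$-system and reconstruct the tangent vector as an inverse limit.
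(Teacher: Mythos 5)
Your proof is correct and follows essentially the same route as the paper's: the inclusion $T_e(MG)\subseteq M\mathfrak{g}$ by pointwise differentiation of the path at each $z\in\mathbb{C}^*$, and the reverse inclusion by exhibiting $\Mexp(tX)$ as an explicit path with tangent vector $X$. The only divergence is technical rather than conceptual: where the paper verifies holomorphy of the derivative by a termwise power-series expansion $\gamma(t)=\sum_n\gamma_n(t)z^n$, you route the verification through the Banach--Lie groups $A_nG$ and the $ILB$-system of proposition~\ref{nganginverselimit}, which is arguably the more careful justification of the same step.
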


\noindent Moreover, it is isomorphic to the algebra of left-invariant vector fields on $MG$. 

\noindent On the other hand, we find:

\begin{theorem}[Loop groups whose exponential map is no local diffeomorphism]
\label{expnodiffeomorphism}
Let $G_{\mathbb{C}}$ be a complex semisimple Lie group. 
$$\Mexp:M\mathfrak{g}\longrightarrow MG$$
is not a local diffeomorphism.
\end{theorem}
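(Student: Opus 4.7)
The plan is to show that the image of $\Mexp$ fails to contain any neighborhood of the identity in $MG$, which rules out $\Mexp$ being a local diffeomorphism at $0$. The essential obstruction is already present in the subgroup $SL(2,\mathbb{C})$, so the first step is to reduce the general semisimple case to $SL(2,\mathbb{C})$.

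First I would invoke the fact that any complex semisimple Lie group $G_{\mathbb{C}}$ contains a closed subgroup isomorphic to $SL(2,\mathbb{C})$ (or a finite quotient thereof): pick any root $\alpha$ of $\mathfrak{g}_{\mathbb{C}}$ and integrate the $\mathfrak{sl}(2,\mathbb{C})$-triple $(e_\alpha,f_\alpha,h_\alpha)$. This yields a Lie group embedding $\iota: SL(2,\mathbb{C}) \hookrightarrow G_{\mathbb{C}}$, and by pointwise composition an induced map $M\iota: MSL(2,\mathbb{C}) \to MG_{\mathbb{C}}$ which is continuous in the Fréchet topology and intertwines the exponential maps; in particular, if $f \in MSL(2,\mathbb{C})$ is not in the image of the $SL(2,\mathbb{C})$-valued $\Mexp$, then $\iota \circ f$ is not in the image of the $MG_{\mathbb{C}}$-valued $\Mexp$ either (because $\exp(\iota_*h(z)) = \iota(\exp h(z))$ would force $f(z)$ back into $\exp\mathfrak{sl}(2,\mathbb{C})$ at every $z$).

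Next I would appeal directly to Example~\ref{sl2chasnodiffeomorphicexponentialmap}: the explicit sequence
\begin{displaymath}
f_n(z) = \begin{pmatrix} e^{\pi z/n} & -iz/n \\ 0 & e^{-\pi z/n} \end{pmatrix}
\end{displaymath}
converges pointwise on compact sets to the identity, hence by the definition of the tame Fréchet topology (which coincides with the topology of compact convergence) converges to $e$ in $MSL(2,\mathbb{C})$. Yet $f_n(in) = \bigl(\begin{smallmatrix}-1 & 1\\ 0 & -1\end{smallmatrix}\bigr)$ lies outside $\exp(\mathfrak{sl}(2,\mathbb{C}))$ since its Jordan form is not in the image of the matrix exponential on $\mathfrak{sl}(2,\mathbb{C})$. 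Pushing through $M\iota$ produces a sequence $\iota\circ f_n \in MG_{\mathbb{C}}$ with the same two properties: convergence to the identity and non-membership in $\Im(\Mexp)$.

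Finally, the conclusion: if $\Mexp$ were a local diffeomorphism at $0$, then there would exist an open neighborhood $U \subset M\mathfrak{g}$ of $0$ and an open neighborhood $V \subset MG$ of $e$ with $\Mexp(U) = V$. But the sequence $\iota \circ f_n$ eventually enters every such $V$ while never meeting $\Im(\Mexp)$, contradiction. The main obstacle I anticipate is the subgroup reduction step: one must verify that $M\iota$ is genuinely continuous and that the image of the embedded $\Mexp_{SL(2,\mathbb{C})}$ sits inside the image of $\Mexp_G$ in a way compatible with pointwise exponentiation, but this is an immediate consequence of the naturality of $\exp$ under Lie group homomorphisms applied pointwise in $z$.
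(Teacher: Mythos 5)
Your overall architecture --- reduce to an embedded $SL(2,\mathbb{C})$-subgroup and then invoke the sequence from Example~\ref{sl2chasnodiffeomorphicexponentialmap} --- is exactly the route the paper takes. The problem is the justification you give for the reduction step, which is a genuine gap. You claim that if $f_n(z_0)\notin\exp(\mathfrak{sl}(2,\mathbb{C}))$ for some $z_0$, then $\iota\circ f_n\notin\Im(\Mexp_G)$, arguing from the naturality relation $\exp_G\circ\iota_*=\iota\circ\exp_{SL(2,\mathbb{C})}$. But naturality only tells you what $\exp_G$ does to elements of the subalgebra $\iota_*\mathfrak{sl}(2,\mathbb{C})$; if $\iota\circ f_n=\Mexp(h)$ for some $h\in M\mathfrak{g}$, nothing forces $h$ to take values in that subalgebra, so you cannot ``push $f_n(z)$ back into $\exp(\mathfrak{sl}(2,\mathbb{C}))$''. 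The pointwise statement you rely on is in fact false. Take $G=SL(3,\mathbb{C})$ with the block-diagonal embedding of $SL(2,\mathbb{C})$: the element
\begin{displaymath}
A=\begin{pmatrix}-1&1&0\\0&-1&0\\0&0&1\end{pmatrix}=\iota\!\left(f_n(in)\right)
\end{displaymath}
satisfies $A=\exp(X)$ with
\begin{displaymath}
X=\begin{pmatrix}i\pi&-1&0\\0&i\pi&0\\0&0&-2\pi i\end{pmatrix}\in\mathfrak{sl}(3,\mathbb{C})\,,
\end{displaymath}
even though $f_n(in)\notin\exp(\mathfrak{sl}(2,\mathbb{C}))$: the extra eigenvalue provides room to correct the trace, which is the only obstruction present in $SL(2,\mathbb{C})$. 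Worse, for $G=PSL(2,\mathbb{C})$ the group exponential is surjective, so no pointwise ``missing element'' obstruction can survive at all, yet the theorem still asserts the conclusion for that group.

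To be fair, the paper's own proof is equally terse here: it restricts to $M\mathfrak{h}\subset M\mathfrak{g}$ and $MH\subset MG$ and quotes the example without explaining why an $\Mexp_G$-preimage of a loop valued in $MH$ would have to lie in $M\mathfrak{h}$. So you have reproduced the paper's argument including its weak spot; the difference is that you made the implicit step explicit and the explicit version is incorrect as stated. Closing the gap requires a genuinely different ingredient --- for instance controlling \emph{all} $\exp_G$-preimages of $\iota(f_n(z))$ rather than only those in the subalgebra, or replacing the ``image misses a neighborhood'' criterion by an argument about the derivative of $\Mexp$ or about the nonexistence of a holomorphic logarithm along the whole loop.
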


\noindent In contrast we have for nilpotent Lie groups:

\begin{proposition}[Loop groups whose exponential map is a local diffeomorphism]
\label{diffeomorphicexponentialmap}
Let $G_{\mathbb{C}}$ be a complex Lie group such that its universal cover is biholomorphically equivalent to $\mathbb{C}^n$. Then its exponential map $\Mexp$ is a local diffeomorphism.
\end{proposition}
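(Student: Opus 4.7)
\bigskip

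The plan is to reduce the statement to the Nash--Moser inverse function theorem applied pointwise. The key payoff of the hypothesis is that for such a $G_{\mathbb{C}}$ the finite-dimensional exponential $\exp\colon \mathfrak{g}\to G_{\mathbb{C}}$ is a local biholomorphism at \emph{every} point of $\mathfrak{g}$, not merely near $0$. Indeed, the lift $\widetilde{\exp}\colon \mathfrak{g}\to \widetilde{G}_{\mathbb{C}}\cong \mathbb{C}^n$ to the universal cover is a biholomorphism of complex manifolds (this is where the nilpotent-like hypothesis is essential: the Baker--Campbell--Hausdorff series reduces to a polynomial, so the lifted exponential is a polynomial diffeomorphism), and $\exp$ is the composition of $\widetilde{\exp}$ with the covering projection, which is itself a local diffeomorphism.

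First, I would record this consequence and show that $\Mexp$ has an invertible differential at every $f\in M\mathfrak{g}$. Since $\Mexp$ is defined pointwise, its Gateaux derivative at $f$ in direction $h$ is
\begin{displaymath}
D\Mexp(f)(h)(z) \;=\; D\exp(f(z))\bigl(h(z)\bigr),
\end{displaymath}
which at every $z$ is the invertible linear map $D\exp(f(z))\colon \mathfrak{g}\to T_{\exp f(z)}G_{\mathbb{C}}$. Using a trivialisation by left translation this identifies with a map $M\mathfrak{g}\to M\mathfrak{g}$ of the form $h\mapsto A_f\cdot h$, where $A_f(z):=D\exp(f(z))$ is a pointwise-invertible holomorphic family of matrices. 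The inverse equation $D\Mexp(f)(h)=k$ therefore has the unique pointwise solution
\begin{displaymath}
h(z) \;=\; A_f(z)^{-1}\, k(z) \;=:\; V\Mexp(f)(k)(z).
\end{displaymath}

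Second, I would verify that $V\Mexp\colon U\times M\mathfrak{g}\to M\mathfrak{g}$ is a smooth tame family of linear maps. Smoothness in $f$ is immediate from smoothness in finite dimensions. For tameness one needs uniform bounds: on the annulus $A_n$ one has $\|A_f^{-1}\|_n\le C_n(\|f\|_n)$ for some continuous function $C_n$, because $f(A_n)$ lies in a compact subset of $\mathfrak{g}$ on which $D\exp$ is uniformly invertible (using that $\exp$ is a \emph{global} local diffeomorphism by the first step, so $D\exp$ is nowhere singular). This gives the estimate $\|V\Mexp(f)(k)\|_n\le C(n)(1+\|f\|_n)\|k\|_n$, which is the $(0,b,C(n))$-tame bound required.

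Third, I would apply the Nash--Moser inverse function theorem (stated in the excerpt) together with the fact that $\Mexp$ is itself a smooth tame map (smooth because $\exp$ is smooth, tame by the same uniform bounds on compact annuli together with Lemma on tame composition) to conclude that $\Mexp$ is locally invertible with smooth tame inverse at every $f\in M\mathfrak{g}$. The main obstacle is the tameness estimate for $V\Mexp$: one must argue that the pointwise inverse $A_f(z)^{-1}$ is controlled uniformly over $A_n$ by $\|f\|_n$ and not by $\|f\|_{n+r}$ for some $r>0$, which is exactly the feature that the hypothesis on the universal cover supplies (no singularities of $D\exp$ anywhere in $\mathfrak{g}$). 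Contrast with Example \ref{sl2chasnodiffeomorphicexponentialmap}, where $D\exp$ degenerates along curves escaping to infinity and this estimate necessarily fails.
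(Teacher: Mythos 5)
Your proposal reaches the conclusion by a genuinely different mechanism from the paper's, and the difference matters. The paper never inverts a differential: it uses the hypothesis to assert that $\exp\colon\mathfrak g\to\widetilde G$ is a \emph{global} biholomorphism, so that post-composition with $\exp$ and with the globally defined $\exp^{-1}\colon\widetilde G\to\mathfrak g$ gives an explicit two-sided inverse pair between $M\mathfrak g$ and $M\widetilde G$; it then descends to $MG$ by lifting loops to the universal cover (unique up to deck transformations, hence locally unique). The local inverse of $\Mexp$ is thus exhibited directly as ``lift, then apply $\exp^{-1}$ pointwise,'' with no inverse function theorem at all. You use the hypothesis only through its weaker consequence that $D\exp$ is nowhere singular, and then try to recover invertibility of $\Mexp$ from the Nash--Moser theorem. (Both you and the paper silently replace the stated hypothesis by the stronger statement that $\widetilde{\exp}$ is a biholomorphism; that is what is actually used, and it is fine for the intended nilpotent/abelian applications.)

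The gap in your route is the tameness verification, which is not a formality here but the entire difficulty. Your claimed bound $\|V\Mexp(f)(k)\|_n\le C(n)(1+\|f\|_n)\|k\|_n$ is justified only by compactness of $f(A_n)$, which gives $\|A_f(z)^{-1}\|\le\Theta(\|f\|_n)$ with $\Theta(R):=\sup_{|X|\le R}\|D\exp(X)^{-1}\|$ --- and nothing forces $\Theta$ to grow linearly. Already for a non-abelian nilpotent $G$ the operator $D\exp(X)^{-1}$ is a polynomial of degree $\ge 2$ in $\textrm{ad}\,X$, so $\Theta(R)\sim R^k$ with $k\ge 2$. Hamilton's tame estimate must hold on a Fr\'echet-open set $U$, and such a set bounds only finitely many seminorms $\|f\|_l$, so the higher norms remain unbounded on $U$ and a bound of the form $\|f\|_n^k$ cannot be absorbed into $C(n)(1+\|f\|_{n+r})$ for any fixed $r$ (log-convexity of the sup-norms on the annuli $A_n$ only trades a power of $\|f\|_n$ for a norm whose index grows with $n$). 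The same problem afflicts your third step: $\|\Mexp(f)\|_n$ is controlled only by $\sup_{|X|\le\|f\|_n}|\exp X|$, which grows exponentially in $\|f\|_n$, so $\Mexp$ is not obviously a smooth tame map on any Fr\'echet-open set in the sense required by the quoted Nash--Moser theorem. This is precisely the failure mode the paper is organized around --- it builds the manifold structure on $MG$ from logarithmic derivatives rather than from $\Mexp$ --- and it is why its proof of this proposition is arranged to avoid the inverse function theorem altogether. Your identification of the everywhere-invertibility of $D\exp$ as the payoff of the hypothesis, and the contrast with Example~\ref{sl2chasnodiffeomorphicexponentialmap}, are correct; but to close the argument you should follow the paper and use the explicit global inverse $\exp^{-1}$ on the universal cover together with a lifting argument, rather than Nash--Moser.
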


\begin{corollary}
\label{examplesofliegroupswithdiffexp}
Let $G$ be a complex Lie group. If $G$ is nilpotent (i.e.\ abelian) then $\Mexp$ is a diffeomorphism.
\end{corollary}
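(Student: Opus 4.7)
The corollary will follow from Proposition~\ref{diffeomorphicexponentialmap} by promoting the local statement to a global one using the abelianness. I would first verify the hypothesis: if $G$ is a complex abelian (or more generally nilpotent) Lie group, then $\textrm{ad}(X)$ is nilpotent for every $X\in\mathfrak g$, so the Lie group differential $d\exp_X=L_{\exp(X)*}\circ\tfrac{1-e^{-\textrm{ad}(X)}}{\textrm{ad}(X)}$ is invertible everywhere. The lifted exponential $\tilde\exp\colon\mathfrak g\to\widetilde G$ is therefore a local biholomorphism at every point; its image is an open subgroup of the connected group $\widetilde G$ and hence equals $\widetilde G$, and since the kernel is discrete in $\mathfrak g\cong\mathbb{C}^n$ and $\widetilde G$ is simply connected, this kernel is trivial. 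So $\tilde\exp$ is a global biholomorphism, $\widetilde G\cong\mathbb{C}^n$, and Proposition~\ref{diffeomorphicexponentialmap} applies.

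Next I would construct an explicit global inverse $\Mlog\colon MG\to M\mathfrak g$ in the case that $G=\widetilde G$ is simply connected: the pointwise inverse $\log=\tilde\exp^{-1}\colon G\to\mathfrak g$ is a biholomorphism, and post-composition with $\log$ sends $f\in MG$ to $\Mlog(f):=\log\circ f\in M\mathfrak g$. Under the identification $G\cong\mathbb{C}^n$ provided by $\log$, both $M\mathfrak g$ and $MG$ become the tame Fr\'echet space $\textrm{Hol}(\mathbb{C}^*,\mathbb{C}^n)$ of Corollary~\ref{holc*cnisfrechet}, and $\Mexp$ becomes the identity map; smoothness and tameness of $\Mlog$ follow from the Cauchy estimates used in Lemma~\ref{differentialistame} applied to the fixed holomorphic function $\log$. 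The two pointwise identities $\Mexp\circ\Mlog=\textrm{id}_{MG}$ and $\Mlog\circ\Mexp=\textrm{id}_{M\mathfrak g}$ then complete the proof.

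The main obstacle, which I would treat last, is the case of non-simply-connected $G$ (for instance $G=\mathbb{C}^*$), where pointwise $\log$ is multi-valued and the argument above only recovers the local diffeomorphism already supplied by the Proposition. A genuine global diffeomorphism in that case can only hold on the connected component of the constant identity loop, consisting of the null-homotopic loops: each such loop $f\colon\mathbb{C}^*\to G$ lifts uniquely to a holomorphic $\tilde f\colon\mathbb{C}^*\to\widetilde G$, depending smoothly and tamely on $f$ by the standard monodromy-lifting argument combined with the tameness of pointwise evaluation in the seminorms $\|\cdot\|_n$, and composition with $\tilde\exp^{-1}$ then yields $\Mlog$ on that component. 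This lifting/monodromy step is the only technically delicate point; once it is in place the pointwise verifications and the Cauchy-estimate tameness arguments are routine, and the corollary is established.
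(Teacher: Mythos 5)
Your first paragraph is essentially the paper's own proof: the paper verifies the hypothesis of Proposition~\ref{diffeomorphicexponentialmap} by citing \cite{Knapp96}, Corollary 1.103 and \cite{Varadarajan84}, Section 3.6 for the fact that a nilpotent (in particular abelian) complex Lie group has universal cover biholomorphic to $\mathbb{C}^n$, and then applies that proposition; your derivation of this fact from the nilpotency of $\textrm{ad}(X)$ and the formula for the differential of $\exp$ is a self-contained substitute for the citation. Note that the proposition, and every later use of the corollary (e.g.\ in section~\ref{KMSSofEuclideantype}), only yields and only needs that $\Mexp$ is a \emph{local} diffeomorphism, so the paper's argument stops there.

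Your remaining two paragraphs aim at a genuinely global inverse, and the final step contains a gap: a null-homotopic loop $f\colon\mathbb{C}^*\to G$ does \emph{not} lift uniquely to $\widetilde G$ --- the lifts differ by the deck group $\pi_1(G)$, so the proposed logarithm is only defined up to adding a constant loop in $\ker(\exp)$. Concretely, for $G=\mathbb{C}^*$ one has $\Mexp(f)=e^{f}=e^{f+2\pi i}$, so even restricted to the identity component of $M G$ the map $\Mexp$ is a covering with fibre $\pi_1(G)$ rather than an injection. A global diffeomorphism therefore holds only when $G$ is simply connected, where your second paragraph is correct and $\Mexp$ becomes the identity on $\textrm{Hol}(\mathbb{C}^*,\mathbb{C}^n)$; for general nilpotent $G$ the corollary must be read, as the paper implicitly does, as asserting a local diffeomorphism.
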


\noindent Corollary~\ref{examplesofliegroupswithdiffexp} is a direct consequence of proposition~\ref{diffeomorphicexponentialmap}, as abelian and nilpotent Lie groups have $\mathbb{C}^n$ as their universal cover~\cite{Knapp96}, corollary 1.103 and \cite{Varadarajan84}, section 3.6.

This behaviour seems contradictory as the usual intuition tells us there should be a geodesic in every direction. 
From a classical analysis point of view, one can describe the analytic basics of this situation as a consequence of a change of limits, arising because of the non-compactness of $\mathbb C^*$:

\begin{itemize}
\item[-] On the one hand, theorem~\ref{tangential space} describes locally uniform convergence on each compact subset $K\subset \mathbb C^*$.
\item[-] Theorem~\ref{expnodiffeomorphism} shows in contrast that globally on $\mathbb{C}^*$ convergence needs no longer be uniform.
\end{itemize}

\noindent We now give the proofs that we have omitted:

\begin{proof}[Proof of theorem~\ref{tangential space}]~
\begin{enumerate}
\item We prove: $M\mathfrak{g}\subset T_e(MG)$, that is: let $\gamma(t): (-\epsilon, \epsilon) \rightarrow MG$ be a curve such that $\gamma(0)= e\in MG$. Then $\dot{\gamma}(0)$ is in $M\mathfrak g$.

Let $z\in \mathbb{C}^*$ arbitrary but fixed. Then $\gamma_z(t)$, the evaluation of $\gamma(t)$ at $z\in \mathbb{C^*}$, is a curve in $G$ satisfying $\gamma_z(0)=e$. Thus $\frac{d}{dt}\gamma_z(t)$ is an element in $\mathfrak{g}$. So we get a function $w: \mathbb{C}^*\longrightarrow \mathfrak{g}$; we have to prove that $w$ is holomorphic.

So let $\gamma(t):=\sum_n \gamma_n(t) z^n$. Then $\frac{d}{dt}\gamma(t)=\sum_n \frac{d}{dt}\gamma_n(t) z^n$, which is clearly holomorphic.

This shows: $T_e(MG)\subset M\mathfrak{g}$.

\item The other direction is straight forward: let $\gamma\in M\mathfrak{g}$. Then $\Mexp(t\gamma)$ is a curve in $MG$ with tangential vector $\gamma$. So $ M\mathfrak{g}\subset T_e(MG)$.

\end{enumerate}
This completes the proof. 
\end{proof}

\begin{proof}[Proof of theorem~\ref{expnodiffeomorphism}] 
The proof of theorem~\ref{expnodiffeomorphism} relies on the fact that $\Mexp$ is no local diffeomorphism for $SL(2,\mathbb{C})$. 
Let $\mathfrak{h}\simeq \mathfrak{sl}(2,\mathbb{C})\subset \mathfrak{g}$ be a subalgebra of $\mathfrak{g}$. Then there is an $H=SL(2,\mathbb{C})$ subgroup in $G$, such that $\mathfrak{h}$ is the Lie algebra of  $H$. See part VII.5 of the book~\cite{Knapp96}. 
Study the subalgebra $M\mathfrak{h}\subset M\mathfrak g$ and the subgroup $MH\subset MG$. $M\mathfrak{h}$ can be identified with $T_e(MH)$. Moreover $\Mexp: M\mathfrak{h} \subset MH$. But as example~\ref{sl2chasnodiffeomorphicexponentialmap} shows, $\Mexp$ is no local diffeomorphism. This completes the proof. 
\end{proof}

The image of the exponential map consists of those loops whose image is completely
contained in the image of the Lie group exponential map. We will give further
comments about this situation in section~\ref{geodesics}, where we study the
behaviour of geodesics.

\begin{proof}[Proof of theorem~\ref{diffeomorphicexponentialmap}]
Let $\widetilde G$ be the universal cover of $G$; the exponential map $\exp: \mathfrak{g}\longrightarrow \widetilde{G}$ is a biholomorphic map. Thus cocatenation with $\exp$ (resp.\ $\exp^{-1}$) induces a biholomorphic map between $M\mathfrak{g}$ and $M\widetilde{G}$. To get that $\Mexp: M\mathfrak{g}\longrightarrow MG$ is a local diffeomorphism, one uses the fact that each loop in $\widetilde{MG}$ projects onto a loop in $MG$ and, conversely, each loop in $MG$ can be lifted to a loop in $M\widetilde{G}$, which is unique up to Deck transformation and thus locally unique.  This proves that $\Mexp$ is a local diffeomorphism. 
\end{proof}

\subsection{Manifold structures on groups of holomorphic maps}
\label{Manifoldstructuresongroupsofholomorphicmaps}

In this section, we prove that the groups $MG_{\mathbb{R}}$, $MG_{\mathbb{C}}$, and various quotients are tame Fr\'echet manifolds.

\begin{theorem}
\label{mgcistame}
$MG_{\mathbb C}$ is a tame Fr\'echet manifold.
\end{theorem}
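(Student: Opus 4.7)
The plan is to exhibit an atlas on $MG_{\mathbb{C}}$ of charts modeled on the tame Fr\'echet space $M\mathfrak{g}_{\mathbb{C}}$, whose tameness was established in Lemma~\ref{mgfrechetagbanach}. First I would use a faithful finite-dimensional representation $\rho:G_{\mathbb{C}}\hookrightarrow GL(V)$, which exists because $G_{\mathbb{C}}$ is complex reductive; this embeds $MG_{\mathbb{C}}$ inside the tame Fr\'echet space $M\mathrm{End}(V)$ of Corollary~\ref{holc*cnisfrechet}. Since pointwise matrix multiplication and inversion are smooth tame operations on the ambient space, left translation $L_{f_{0}}\colon f\mapsto f_{0}\cdot f$ is a tame diffeomorphism of $MG_{\mathbb{C}}$ onto itself for every $f_{0}\in MG_{\mathbb{C}}$. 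It therefore suffices to construct a single chart around the identity $e$.

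The naive candidate $\mathrm{Mexp}$ fails to be a local diffeomorphism by Theorem~\ref{expnodiffeomorphism}, so the chart near $e$ has to be built differently. My plan is to exploit the $ILB$-structure $\{MG_{\mathbb{C}};A_{n}G_{\mathbb{C}}\}$ (proved to be an $ILB$-system by the same argument as Proposition~\ref{nganginverselimit}): on each Banach Lie group $A_{n}G_{\mathbb{C}}$ the pointwise exponential $\exp_{n}\colon A_{n}\mathfrak{g}_{\mathbb{C}}\to A_{n}G_{\mathbb{C}}$ \emph{is} a local diffeomorphism in the Banach sense, because compactness of the annulus $A_{n}$ forces uniform control of the differential and allows the usual finite-dimensional log to be applied pointwise to a loop sufficiently close to $e$ in the norm $\|\cdot\|_{n}$. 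These charts are compatible with the restriction maps $A_{n+1}\mathfrak{g}_{\mathbb{C}}\hookrightarrow A_{n}\mathfrak{g}_{\mathbb{C}}$, so their inverse limit produces a chart near $e\in MG_{\mathbb{C}}$ modeled on $M\mathfrak{g}_{\mathbb{C}}$. Tameness of the chart and of its transitions with translates $\Phi_{f_{0}}=L_{f_{0}}\circ\Phi_{e}$ then reduces, by Lemma~\ref{tame=ilbregular}, to tame estimates for pointwise matrix multiplication and for the exponential on each Banach level, both of which are standard.

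The main obstacle in this program is reconciling the $ILB$-chart, whose natural domain $\bigcap_{n}U_{n}=\{X\in M\mathfrak{g}_{\mathbb{C}}:\|X\|_{n}<\varepsilon_{n}\ \forall n\}$ is typically not Fr\'echet-open in $M\mathfrak{g}_{\mathbb{C}}$, with the requirement (from the definition of a tame Fr\'echet manifold) that charts be defined on honestly Fr\'echet-open sets. This is precisely the manifestation of the failure in Theorem~\ref{expnodiffeomorphism}: Fr\'echet convergence $f_{k}\to e$ allows $f_{k}(z)$ to wander far from $e\in G_{\mathbb{C}}$ at points $z$ escaping any fixed annulus. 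I would overcome this by not trying to cover a Fr\'echet neighborhood of $e$ by a single chart, but rather by combining countably many charts of the above form, centered at well-chosen ``test loops'' $f_{0}$ that account for this escape behavior; the key input for the tame gluing is the infinite-dimensional Lie theory for holomorphic loop groups developed in~\cite{Neeb07} (together with~\cite{Neeb06} and~\cite{Kriegl97} cited in the introduction), which furnishes the required local models. Once the atlas is in place, verifying that chart transitions are tame Fr\'echet smooth is routine: they are given by expressions of the form $X\mapsto\log_{n}(f_{1}^{-1}f_{0}\exp_{n}(X))$, built from pointwise operations whose tameness is inherited from the ambient $M\mathrm{End}(V)$ and the tame estimates of Lemma~\ref{differentialistame} and Lemma~\ref{tame=ilbregular}.
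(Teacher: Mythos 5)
Your reduction to a single chart at the identity via left translation is reasonable, and the embedding into $M\mathrm{End}(V)$ is harmless, but the chart you then try to build at $e$ does not exist, and the obstacle you flag in your last paragraph is fatal rather than technical. On each Banach level the pointwise exponential is indeed a local diffeomorphism on a ball $\{\|X\|_{n}<\varepsilon\}$, and the radius $\varepsilon$ is necessarily uniform in $n$, since it is dictated by the injectivity radius of $\exp$ on the finite-dimensional group $G_{\mathbb{C}}$. But then the inverse-limit domain $\bigcap_{n}\{\|X\|_{n}<\varepsilon\}=\{X\in M\mathfrak{g}_{\mathbb{C}}:\sup_{z\in\mathbb{C}^{*}}|X(z)|<\varepsilon\}$ consists, by the removable-singularity/Liouville argument on $\mathbb{C}^{*}$, of the \emph{constant} loops only: a bounded holomorphic map on $\mathbb{C}^{*}$ is constant. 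So the glued $ILB$-chart degenerates completely; this is exactly the content of Theorem~\ref{expnodiffeomorphism} and Example~\ref{sl2chasnodiffeomorphicexponentialmap}. Translating the defective chart to countably many base points $f_{0}$ cannot repair it: each translate $f_{0}\cdot\Mexp(U)$ fails to be open in $MG_{\mathbb{C}}$ for the same reason (its elements take values only in $f_{0}(z)\exp(\mathfrak{g}_{\mathbb{C}})$, while arbitrarily Fr\'echet-close loops escape this set at large $|z|$), so none of these maps is a homeomorphism onto an open set and hence none is a chart in the sense required by the definition of a tame Fr\'echet manifold. Your appeal to~\cite{Neeb07} at this point is not a gluing lemma but a different construction altogether.

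The construction that works, and the one the paper uses, abandons exponential charts entirely in favour of the logarithmic derivative: embed $MG_{\mathbb{C}}$ into $\Omega^{1}(\mathbb{C}^{*},\mathfrak{g}_{\mathbb{C}})\times G_{\mathbb{C}}$ by $f\mapsto(f^{-1}df,\,f(1))$, check injectivity, and characterize the image of the first factor as the set of $1$-forms with trivial monodromy, i.e.\ the preimage of $e\in G_{\mathbb{C}}$ under the monodromy map into the finite-dimensional group. Local solvability of $f^{-1}df=\alpha$ (Lemma~\ref{integrability of 1 dim }) means the only global obstruction is this finite-dimensional condition, and Theorem~\ref{submanifoldsoffrechetspace} then exhibits the preimage as a tame Fr\'echet submanifold of finite type; $MG_{\mathbb{C}}$ is the product of this submanifold with $G_{\mathbb{C}}$. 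In other words, the linearization has to happen on the space of $1$-forms, not on the group via $\Mexp$; if you want to retain an $ILB$-flavoured argument, it is the monodromy map on $\Omega^{1}(\mathbb{C}^{*},\mathfrak{g}_{\mathbb{C}})$ to which the tame implicit function theorem should be applied.
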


The idea of the proof is to use logarithmic derivatives. The idea to use logarithmic derivatives to get charts is quite common: for regular Lie groups it is developed in the book~\cite{Kriegl97}, chapters 38 and 40. 
Furthermore it is used by Karl-Hermann Neeb~\cite{Neeb06} to prove the following theorem~\cite{Neeb06}, theorem III.1.9.:

\begin{theorem}[Neeb]
\label{Neebstheorem}
Let $\mathbb{F}\in \{\mathbb{R}, \mathbb{C}\}$, $G$ be a $\mathbb{F}$-Lie group and $M$ a finite dimensional, connected $\sigma$-compact
$\mathbb{F}$-manifold. We endow the group $C_{\mathbb{F}}^{\infty}(M,G)$ with the compact open $C^{\infty}$-topology, turning it into a topological group. This topology is compatible with a Lie group structure if
$\dim_{\mathbb{F}}M=1$ and $\pi_1(M)$ is finitely generated.
\end{theorem}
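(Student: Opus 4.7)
The plan is to construct the Lie group structure via logarithmic derivatives, as hinted by the paper, since direct use of a chart $\xi \mapsto \exp\circ\xi$ fails for non-compact $M$: the condition $\xi(M)\subset V$ (with $V$ a small neighborhood of $0$ in $\mathfrak{g}$ where $\exp$ is a diffeomorphism) is not open in the compact-open $C^{\infty}$ topology. The logarithmic derivative will circumvent this by trading maps into $G$ for $\mathfrak{g}$-valued $1$-forms, which form a genuine Fréchet space in the compact-open $C^{\infty}$ topology on $M$.

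First I would treat the case of simply connected $M$. Let $\theta\in\Omega^{1}(G,\mathfrak{g})$ denote the left Maurer--Cartan form and define
\begin{equation*}
\delta : C^{\infty}_{\mathbb{F}}(M,G)\longrightarrow \Omega^{1}(M,\mathfrak{g}),\qquad \delta(f):=f^{*}\theta.
\end{equation*}
Its image consists of forms satisfying the Maurer--Cartan equation $d\omega+\tfrac{1}{2}[\omega\wedge\omega]=0$, and the fundamental theorem on integration of Lie algebra valued forms on a simply connected manifold yields, upon choice of a basepoint $x_{0}\in M$ and a value $g_{0}\in G$, a unique preimage $f$ with $f(x_{0})=g_{0}$. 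Now exploit $\dim_{\mathbb{F}}M=1$: there are no $2$-forms on $M$, so the Maurer--Cartan equation is vacuous and $\delta$ factors through a bijection
\begin{equation*}
C^{\infty}_{\mathbb{F}}(M,G)\ \longleftrightarrow\ \Omega^{1}(M,\mathfrak{g})\times G,\qquad f\longleftrightarrow (\delta f, f(x_{0})).
\end{equation*}
The target is a product of a Fréchet space with a finite dimensional Lie group; pushing the group operations through the bijection (using $\delta(fg)=\operatorname{Ad}(g^{-1})\delta(f)+\delta(g)$) yields smoothness of multiplication and inversion. Compatibility with the compact-open $C^{\infty}$ topology follows because on a $1$-manifold integration of a $1$-form over a compact subinterval depends only on the restriction of the form to that subinterval, so $\delta$ and its inverse are continuous in this topology.

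Next I would pass from the universal cover $\widetilde{M}$ to general $M$. Since $\widetilde{M}$ is a simply connected $1$-manifold, the first step equips $C^{\infty}_{\mathbb{F}}(\widetilde{M},G)$ with a Lie group structure. Pullback along the covering gives an embedding of $C^{\infty}_{\mathbb{F}}(M,G)$ as the subgroup of $\pi_{1}(M)$-invariant maps $\tilde{f}:\widetilde{M}\to G$. Under $\delta$, $\pi_{1}(M)$-invariance of $\tilde{f}$ splits into two conditions: $\delta\tilde{f}\in\Omega^{1}(\widetilde{M},\mathfrak{g})^{\pi_{1}(M)}\cong\Omega^{1}(M,\mathfrak{g})$, and the monodromy homomorphism $\mathrm{mon}_{\tilde{f}}:\pi_{1}(M)\to G$ (obtained by comparing $\tilde{f}(x)$ and $\tilde{f}(\gamma\!\cdot\! x)$) is trivial. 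Picking generators $\gamma_{1},\dots,\gamma_{k}$ of $\pi_{1}(M)$, triviality of monodromy reduces to a finite system of equations $\mathrm{mon}_{\tilde{f}}(\gamma_{i})=e$; this is where finite generation is essential. The implicit function theorem in the Fréchet category (e.g.\ Theorem~\ref{ImplicitefunctiontheoremfortameFrechetmaps} or its Banach counterpart applied stagewise in an ILB framework) then cuts out $C^{\infty}_{\mathbb{F}}(M,G)$ as a closed Lie subgroup, provided the differential of the monodromy map is surjective onto $G^{k}$ at the identity; this surjectivity is easily arranged by choosing $\mathfrak{g}$-valued bump forms supported near small loops representing each generator.

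The hardest step is the last one: ensuring that the subgroup structure on $C^{\infty}_{\mathbb{F}}(M,G)$ obtained this way really agrees with the compact-open $C^{\infty}$ topology, and that the monodromy equations define a \emph{smooth} submanifold (not just a closed subgroup). The delicate point is the non-locality of integration of $1$-forms along paths: perturbations of $\omega$ that are large far from a compact set $K$ can shift monodromies through $K$ in ways that must still be controlled in the compact-open topology. Finite generation of $\pi_{1}(M)$ plus $\sigma$-compactness of $M$ reduces the issue to finitely many compact loops, so only compact pieces of $\widetilde{M}$ enter the constraint, which is exactly the range in which the compact-open $C^{\infty}$ topology detects continuity; this interplay between finite generation, $\sigma$-compactness and $\dim_{\mathbb{F}}M=1$ is the crux of the theorem, and where one would expect any nontrivial hypothesis to intervene.
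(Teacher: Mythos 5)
Your route is the one the paper itself points to (and the one Neeb actually follows in theorem III.1.9 of \cite{Neeb06}, which the text only cites rather than proves): the logarithmic derivative identifies $C^{\infty}_{\mathbb{F}}(M,G)$, for simply connected one-dimensional $M$, with $\Omega^{1}(M,\mathfrak{g})\times G$, the Maurer--Cartan obstruction being vacuous in dimension one, and the general case is cut out of the covering-space picture by the vanishing of finitely many monodromies, handled by an inverse/implicit function theorem (Gl\"ockner's \cite{Glockner06}, in Neeb's version). Up to that point your argument is sound and matches the intended proof, including the correct identification of where $\sigma$-compactness, finite generation of $\pi_{1}(M)$ and $\dim_{\mathbb{F}}M=1$ each intervene.

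The one step that breaks is your surjectivity argument for the differential of the monodromy map. You propose ``$\mathfrak{g}$-valued bump forms supported near small loops representing each generator,'' but for $\mathbb{F}=\mathbb{C}$ the relevant objects are holomorphic $1$-forms, which never have compact support: a nonzero holomorphic form cannot vanish on an open subset of a connected Riemann surface. Surjectivity of the period map onto $\mathfrak{g}^{k}$ must instead come from the existence of holomorphic $1$-forms with prescribed periods on the open Riemann surface $M$ (a Behnke--Stein/Runge-type statement), and the compact case has to be treated separately, since there the period map need not be submersive and $C^{\infty}_{\mathbb{C}}(M,G)$ can degenerate to the constants. In the paper's own application $M=\mathbb{C}^{*}$ this is exactly what happens concretely: the monodromy of $\alpha=f(z)\,dz$ is controlled by the single Laurent coefficient $a_{-1}(\alpha)$, and surjectivity is witnessed by the forms $\lambda z^{-1}dz$ with $\lambda\in\mathfrak{g}$. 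With that repair, and the routine verification that integration of $1$-forms along compact arcs is continuous and smooth for the compact-open $C^{\infty}$ topology, your proof goes through.
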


The main ingredients for the proof of Neebs theorem are the use of logarithmic derivatives to define charts in $\Omega^1(M, \mathfrak{g})$, the space of $\mathfrak{g}$-valued $1$-forms on $M$ and Gl\"ockners inverse function theorem~\cite{Glockner06}, to take care of the monodromy if $\pi_1(M)$ is nontrivial.

If $\mathbb{F}=\mathbb{C}$ we have the equivalence $C_{\mathbb{F}}^{\infty}(M,G)\simeq Hol(M,G)$. Our situation is the special case $M=\mathbb{C}^*$. Hence, $\pi_1(M)=\mathbb{Z}$ is a finitely generated group.  The compact open $C^{\infty}$-topology coincides for holomorphic maps with the topology of compact convergence. 
Thus Neeb's theorem tells us that  $MG_{\mathbb F}, \mathbb{F} \in \{\mathbb{R}, \mathbb{C}\}$ are locally convex
topological Lie groups.

Nevertheless, we do not get tame structures. Hence we have to prove the theorem completely new. Our presentation follows the proof of Karl-Hermann Neeb for the locally convex case.

We need some definitions: $\alpha \in \Omega^1(M,\mathfrak{g})$ is called integrable iff there exists a function $f\in Hol(M, G)$ such that $\delta(f):=f^{-1}df=\alpha$. The uniqueness of solutions to linear differential equations shows that $\delta(f_1)=\delta(f_2)$ iff $f_1=gf_2$ for some $g\in G$.

The first step of the proof is the following lemma, whose statement and proof can be found in~\cite{Kriegl97} and \cite{Neeb06}:
morally it is a straight forward application of the monodromy principle for holomorphic Pfaffian systems, as described in the article~\cite{Novikov02}.

\begin{lemma}
Let $M$ be a $1$-dimensional complex manifold, $\alpha \in \Omega^{1}(M, \mathfrak{g})$.
\label{integrability of 1 dim }
\begin{enumerate}
	\item $\alpha$ is locally integrable
	\item If $M$ is connected, $M_0\in M$,  then there exists a homomorphism 
	$$ \textrm{per}_{\alpha}: \pi_1(M,m_o)\longrightarrow G$$
        that vanishes iff $\alpha$ is integrable.
\end{enumerate} 
\end{lemma}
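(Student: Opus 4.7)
The plan is to reduce the problem to standard monodromy arguments for Pfaffian systems, exploiting that on a one-complex-dimensional manifold the Maurer--Cartan curvature $d\alpha + \tfrac{1}{2}[\alpha,\alpha]$ lives in $\Omega^2(M,\mathfrak g)=0$ and therefore vanishes automatically. First, for part (1), I would pick a chart $U\subset M$ biholomorphic to a disc in $\mathbb C$ with coordinate $z$, write $\alpha|_U = A(z)\,dz$ with $A\colon U\to\mathfrak g$ holomorphic, and look for $f\colon U\to G$ holomorphic with $f^{-1}df=\alpha$, i.e.\ satisfying the ODE $f'(z) = f(z)A(z)$ with initial condition $f(z_0)=e$. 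Existence, uniqueness, and holomorphic dependence for this linear ODE on the Lie group $G$ follow from the holomorphic version of Picard--Lindel\"of (one may e.g.\ embed $G$ into some $GL(n,\mathbb C)$ and apply the classical result, or invoke the Frobenius/Cartan--Maurer theorem, which applies because the integrability condition is automatic in complex dimension one).

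Next, for the period map in part (2), I would define $\mathrm{per}_\alpha$ as a standard monodromy: for a loop $\gamma\colon[0,1]\to M$ based at $m_0$ the pullback $\gamma^*\alpha = A(t)\,dt$ lives on $[0,1]$, and the initial value problem $\tilde\gamma^{-1}\tilde\gamma' = A$, $\tilde\gamma(0)=e$, has a unique solution $\tilde\gamma\colon[0,1]\to G$; set $\mathrm{per}_\alpha(\gamma):=\tilde\gamma(1)$. The key step is well-definedness on homotopy classes. Given a smooth homotopy $H\colon[0,1]^2\to M$ with fixed endpoints, I would solve the two-parameter system $\tilde H^{-1}\partial_t\tilde H = H^*\alpha(\partial_t)$ along each $t$-line and observe that the flatness condition $d(H^*\alpha)+\tfrac12[H^*\alpha,H^*\alpha]=0$ (trivial on a square because $H^*\alpha$ comes from a 1-dimensional source) forces the $s$-derivative $\partial_s\tilde H(1,s)$ to vanish; hence $\tilde H(1,s)$ is constant in $s$. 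Multiplicativity $\mathrm{per}_\alpha(\gamma_1*\gamma_2)=\mathrm{per}_\alpha(\gamma_1)\mathrm{per}_\alpha(\gamma_2)$ follows from the observation that left-translating the solution along $\gamma_2$ by $\mathrm{per}_\alpha(\gamma_1)$ still solves the same equation (because $f^{-1}df$ is left-invariant), so concatenation multiplies endpoints.

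For the equivalence ``$\mathrm{per}_\alpha\equiv e$ iff $\alpha$ integrable'': if $\alpha=f^{-1}df$ globally, then for any loop $\gamma$ the function $f\circ\gamma$ solves the defining ODE up to a constant left translation, and hence $(f\circ\gamma)(0)^{-1}(f\circ\gamma)(1) = e$, so $\mathrm{per}_\alpha$ is trivial. Conversely, assuming trivial periods, I would pass to the universal cover $\pi\colon\widetilde M\to M$, solve the ODE globally on $\widetilde M$ (which is possible by part (1) and simple-connectedness, gluing the local solutions from (1) along a tree of charts) to get $\tilde f\colon\widetilde M\to G$ with $\tilde f^{-1}d\tilde f = \pi^*\alpha$, and then use that triviality of the periods makes $\tilde f$ invariant under deck transformations, hence descends to a holomorphic primitive $f\colon M\to G$.

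The main obstacle is really the well-definedness of $\mathrm{per}_\alpha$ on $\pi_1$: the homotopy invariance argument has to be carried out cleanly in the non-commutative Lie group setting, where the right bookkeeping of left- versus right-invariance of $f^{-1}df$ and the use of the Maurer--Cartan equation are essential. Once this is in place, the homomorphism property and the characterization of integrability follow by standard lifting arguments; the construction of local primitives from (1) is then fed into the final descent to $M$.
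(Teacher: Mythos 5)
Your argument is correct and follows exactly the route the paper itself takes, namely the monodromy principle for holomorphic Pfaffian systems (the paper does not spell out a proof but defers to \cite{Kriegl97}, \cite{Neeb06} and \cite{Novikov02} for precisely this argument): local solvability of $f^{-1}df=\alpha$ as a holomorphic linear ODE with automatically vanishing Maurer--Cartan curvature, the period homomorphism via lifting loops, homotopy invariance from flatness, and descent from the universal cover when the periods vanish. The only cosmetic point is that the vanishing of $d\alpha+\tfrac12[\alpha,\alpha]$ should be attributed to $\alpha$ being a \emph{holomorphic} $(1,0)$-form on a Riemann surface (so that the curvature is a holomorphic $2$-form, hence zero) rather than to the underlying real dimension of $M$, which is $2$; this does not affect the validity of the proof.
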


\begin{proof} Proof of theorem \ref{mgcistame}
\noindent  We define the embedding:
\[
\begin{array}{ccr}
\varphi: MG_{\mathbb{C}}&\hookrightarrow& \Omega^{1}(\mathbb{C}^*, \mathfrak{g}_{\mathbb{C}}) \times G_{\mathbb{C}}\, ,\\
f &\mapsto& (\delta(f)=f^{-1}df, f(1))\, .
\end{array}
\]
This embedding is injective as $(\delta(f_1), f_1(1))=(\delta(f_2), f_2(1))$ iff $\delta(f_1)=\delta(f_2)$ and $f_1(1)=f_2(1)$. The first condition leads to the relation $f_1=g f_2$ for some $g\in G$. Then the second condition gets $f_1(1)=gf_1(1)$. Hence $g=\textrm{Id}$. 

Compare this embedding with the description of polar actions on Fr\'echet spaces in section~\ref{PolaractionsontameFrechetspaces}.
Let $\pi_1$ and $\pi_2$ denote the projections:
\[
\begin{array}{ccc}
\pi_1: \Omega^{1}(\mathbb{C}^*, \mathfrak{g}_{\mathbb{C}}) \times G_{\mathbb{C}} &\mapsto& \Omega^{1}(\mathbb{C}^*, \mathfrak{g}_{\mathbb{C}})\, ,\\
\pi_2: \Omega^{1}(\mathbb{C}^*, \mathfrak{g}_{\mathbb{C}}) \times G_{\mathbb{C}} &\mapsto& G_{\mathbb{C}}\, .
\end{array}
\]

We construct charts for $MG_{\mathbb{C}}\subset \Omega^{1}(\mathbb{C}^*, \mathfrak{g}_{\mathbb{C}}) \times G_{\mathbb{C}}$ as direct product of charts for $\pi_1 \circ \varphi(MG_{\mathbb{C}})$ and $\pi_2 \circ \varphi(MG_{\mathbb{C}})$.
\begin{itemize}
\item[-] $\pi_2 \circ \varphi$ is surjective; so describing charts for the second factor is easy: we can choose charts for $G$. Via the exponential mapping and left translation, we get charts $\psi_{2,g}: U(g)\longrightarrow V(0)$ defined on an open set $U(g)$ around $g\in G$ taking values in $V(0)\subset \mathfrak{g}_{\mathbb{C}}$. To describe the family of norms, we use the Euclidean metric $$\|\hspace{3pt}\|_n:=\|\hspace{3pt}\|_{Eucl}\, .$$ 

\item[-] The first factor is more difficult to deal with as $\pi_1 \circ \varphi$ is not surjective. While every $\mathfrak{g}_{\mathbb{C}}$-valued $1$-form $\alpha \in \Omega(\mathbb{C}^*, \mathfrak{g}_{\mathbb{C}})$ is locally integrable by lemma \ref{integrability of 1 dim }, the monodromy may prevent global integrability. 
A form $\alpha \in \Omega^1(\mathbb{C}^*, \mathfrak{g}_{\mathbb{C}})$ is in the image of $\pi_1 \circ \varphi$ iff its monodromy vanishes, that is iff
$$e^{\int_{S^1}\alpha}=e\in G_{\mathbb{C}}\,.$$

This is equivalent to the condition $\int_{S^1}\alpha = a_{-1}(\alpha) \subset \frac{1}{2\pi i} \exp^{-1}(e)$ where $a_{-1}(\alpha) $ denotes the $(-1)$-Laurent coefficient of the Laurent series of $\alpha= f(z)dz$. Hence we get the characterization of  $\Im(\pi_1\circ \varphi)$ as the inverse image of $e\in G_{\mathbb{C}}$ via the monodromy map.

Thus we have to show that this inverse image is a tame Fr\'echet manifold. To this end, we use composition with a chart $\psi: U\longrightarrow V$ for $e\in U\subset G$ with values in $G_{\mathbb{C}}$. This gives us a tame map $\Omega(\mathbb{C}^*, \mathfrak{g}_{\mathbb{C}})\longrightarrow \mathfrak{g}_{\mathbb{C}}$. This map satisfies the assumptions of theorem~\ref{submanifoldsoffrechetspace}. Thus its inverse image is a tame Fr\'echet submanifold. This proves that $\pi_1 \circ \varphi$ is a tame Fr\'echet submanifold.
\end{itemize}
\noindent Thus $MG_{\mathbb{C}}$ as a product of a tame Fr\'echet manifold with a Lie group is a tame Fr\'echet manifold.
This completes the proof of theorem~\ref{mgcistame}.
\end{proof}

\begin{theorem}
$MG_{\mathbb{C}}$ is a tame Fr\'echet Lie group.
\end{theorem}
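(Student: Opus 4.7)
To promote $MG_{\mathbb{C}}$ from a tame Fr\'echet manifold to a tame Fr\'echet Lie group, the task is to verify that multiplication $\mu: MG_{\mathbb{C}}\times MG_{\mathbb{C}} \to MG_{\mathbb{C}}$ and inversion $\iota: MG_{\mathbb{C}}\to MG_{\mathbb{C}}$ are smooth tame. The plan is to carry out the verification inside the chart $\varphi(f) = (\delta(f), f(1))$ from the previous theorem, using the Maurer--Cartan cocycle identities
$$\delta(fg) = \mathrm{Ad}(g^{-1})\delta(f) + \delta(g), \qquad \delta(f^{-1}) = -\mathrm{Ad}(f)\delta(f),$$
together with the pointwise identities $(fg)(1) = f(1)g(1)$ and $(f^{-1})(1) = f(1)^{-1}$.

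These identities exhibit $\mu$ and $\iota$ as compositions of a handful of manifestly tame building blocks: (i) the smooth group operations on the finite-dimensional Lie group $G_{\mathbb{C}}$, which are automatically smooth tame; (ii) the evaluation $\mathrm{ev}_1: f \mapsto f(1)$, which is $(0,0,1)$-tame because $|f(1)| \leq \|f\|_n$ for every $n\geq 0$; (iii) the addition of $\mathfrak{g}_{\mathbb{C}}$-valued one-forms, which is linear and $(0,0,1)$-tame; and (iv) the pointwise adjoint action $\mathrm{Ad}: MG_{\mathbb{C}}\times \Omega^1(\mathbb{C}^*, \mathfrak{g}_{\mathbb{C}}) \to \Omega^1(\mathbb{C}^*, \mathfrak{g}_{\mathbb{C}})$, $(g, \alpha) \mapsto g\alpha g^{-1}$, computed in a fixed faithful representation $G_{\mathbb{C}}\subset \mathrm{GL}(V)$. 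By Lemma~\ref{constructionoftamemaps} it then suffices to prove that (iv) is smooth tame.

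For (iv) I use the fact that, on each annulus $A_n$, submultiplicativity of the operator norm in $\mathrm{End}(V)$ yields
$$\|\mathrm{Ad}(g)\alpha\|_n \leq \|g\|_n\cdot \|g^{-1}\|_n \cdot \|\alpha\|_n.$$
Restricting to a chart neighborhood in which $g$ ranges in a bounded ball around a fixed loop $g_0$, both $\|g\|_n$ and $\|g^{-1}\|_n$ are dominated by a constant $C(n)$ independent of $g$, and the estimate reduces to $\|\mathrm{Ad}(g)\alpha\|_n \leq C(n)\|\alpha\|_n$, which is the required $(0,0,C(n))$-tame bound. Smoothness is automatic since pointwise matrix conjugation is polynomial in the entries of $g$ and $\alpha$ and rational, with non-vanishing denominator $\det g$, in the entries of $g$; every partial derivative is itself a pointwise multilinear expression of the same form and admits an estimate of the same shape, so $\mathrm{Ad}$ is $C^\infty$ tame.

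The principal obstacle I foresee is that expressing $g$ explicitly in the chart requires reconstructing the loop from its chart coordinates $(\beta, b) = \varphi(g)$, which amounts to inverting the logarithmic derivative via a path-ordered integration on the monodromy-free submanifold cut out in the previous theorem. I would handle this within the $ILB$-system $\{M\mathfrak{g}_{\mathbb{C}}; A_n\mathfrak{g}_{\mathbb{C}}\}$ of Proposition~\ref{nganginverselimit}: on each annulus the reconstruction is a linear ODE whose Picard iterates converge uniformly, and a Gronwall estimate shows that, on bounded chart neighborhoods, the loss of derivatives is controlled uniformly in $n$, so $\beta \mapsto g_\beta$ is smooth tame. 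Concatenating all pieces via Lemma~\ref{constructionoftamemaps} then yields that $\mu$ and $\iota$ are smooth tame when expressed in chart coordinates, completing the proof.
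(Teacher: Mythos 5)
Your proof follows essentially the same route as the paper's: both express multiplication and inversion in the chart $\varphi(f)=(\delta(f),f(1))$ via the cocycle identities for the logarithmic derivative and reduce tameness to that of the pointwise adjoint action together with the group operations on the finite-dimensional group $G_{\mathbb{C}}$. You additionally supply explicit tame estimates for $\mathrm{Ad}$ and address the reconstruction of the loop from its chart coordinates, details which the paper's own proof leaves implicit.
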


\begin{proof}
From theorem \ref{mgcistame} we know that $MG_{\mathbb{C}}$ is tame  Fr\'echet manifold. Hence we have to check that 
\begin{displaymath}
\psi_1: MG_{\mathbb{C}}\times MG_{\mathbb{C}}\longrightarrow MG_{\mathbb{C}},\quad (g,f)\mapsto gf,\end{displaymath}
and 
\begin{displaymath}
\psi_2:MG_{\mathbb{C}}\longrightarrow MG_{\mathbb{C}},\quad f\mapsto f^{-1}
\end{displaymath}
are tame Fr\'echet maps.
Using the embedding
\[
\begin{array}{ccr}
\varphi: MG_{\mathbb{C}}&\hookrightarrow& \Omega^{1}(\mathbb{C}^*, \mathfrak{g}_{\mathbb{C}}) \times G_{\mathbb{C}}\, ,\\
f &\mapsto& (f^{-1}df, f(1))\, .
\end{array}
\]
we get for $\psi_1$ the description
\begin{align*}
\psi_1: \Omega^{1}(\mathbb{C}^*, \mathfrak{g}_{\mathbb{C}})\times\Omega^{1}(\mathbb{C}^*, \mathfrak{g}_{\mathbb{C}}) \times G_{\mathbb{C}}\times G_{\mathbb{C}}&\longrightarrow \Omega^{1}(\mathbb{C}^*, \mathfrak{g}_{\mathbb{C}}) \times G_{\mathbb{C}}\, \\
(f^{-1}df, g^{-1}dg, f(1), g(1)) &\mapsto (gf^{-1}dfg^{-1}+g^{-1}dg, f(1)g(1))\, .
\end{align*}
which is smooth as a direct product of smooth tame maps. 

Similarly we get for $\psi_2$:
\begin{align*}
\psi_1: \Omega^{1}(\mathbb{C}^*, \mathfrak{g}_{\mathbb{C}})\times G_{\mathbb{C}}&\longrightarrow \Omega^{1}(\mathbb{C}^*, \mathfrak{g}_{\mathbb{C}}) \times G_{\mathbb{C}}\, \\
(f^{-1}df,  f(1))  &\mapsto (f(f^{-1}df) f^{-1}, f^{-1}(1))\, .
\end{align*}
which is again a smooth tame map.

\end{proof}

We now investigate different classes of quotients of loop groups:

\begin{proposition}
$MG_{\mathbb{R}}$ is a tame Fr\'echet Lie group.
\end{proposition}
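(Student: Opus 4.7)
The proof follows the same architecture as the one just given for $MG_{\mathbb{C}}$; the new ingredient is a reality involution whose fixed point set is $MG_{\mathbb{R}}$, and whose linearization makes the tangent space at the identity a tame direct summand.

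\textbf{Step 1 (Reality involution).} Let $\tau\colon G_{\mathbb{C}}\to G_{\mathbb{C}}$ be the antiholomorphic involution with $\mathrm{Fix}(\tau)=G_{\mathbb{R}}$, and define
\[
\tilde\tau\colon MG_{\mathbb{C}}\longrightarrow MG_{\mathbb{C}},\qquad \tilde\tau(f)(z):=\tau\bigl(f(1/\bar z)\bigr).
\]
Since $z\mapsto 1/\bar z$ and $\tau$ are both antiholomorphic, $\tilde\tau(f)$ is holomorphic in $z$; and on $S^{1}$ one has $1/\bar z=z$, so $\tilde\tau(f)(z)=\tau(f(z))$. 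Hence $\mathrm{Fix}(\tilde\tau)=MG_{\mathbb{R}}$. Since $z\mapsto 1/\bar z$ permutes each annulus $A_n$ and $\tau$ is isometric, $\|\tilde\tau(f)\|_n=\|f\|_n$; viewing $MG_{\mathbb{C}}$ as a real tame Fr\'echet Lie group, $\tilde\tau$ is therefore a smooth tame involution, and $MG_{\mathbb{R}}$ is a closed subgroup.

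\textbf{Step 2 (Tame direct sum at the Lie-algebra level).} The derivative $d\tilde\tau_e\colon M\mathfrak{g}_{\mathbb{C}}\to M\mathfrak{g}_{\mathbb{C}}$, $X(z)\mapsto \tau\bigl(X(1/\bar z)\bigr)$, is a tame linear involution with fixed space $M\mathfrak{g}_{\mathbb{R}}$. The averaging operators $\pi_\pm=\tfrac12(\mathrm{Id}\pm d\tilde\tau_e)$ are tame linear projections, so
\[
M\mathfrak{g}_{\mathbb{C}}=M\mathfrak{g}_{\mathbb{R}}\oplus iM\mathfrak{g}_{\mathbb{R}}
\]
is a tame direct sum, and $M\mathfrak{g}_{\mathbb{R}}$ is a tame Fr\'echet space (already observed in the preceding section).

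\textbf{Step 3 ($\tilde\tau$-equivariant chart near $e$).} The chart of $MG_{\mathbb{C}}$ built in Theorem~\ref{mgcistame} arises from the embedding $\varphi(f)=(f^{-1}df,f(1))$, composed with a chart on $G_{\mathbb{C}}$ and the description of the image as the zero set of the tame monodromy map. Choose the underlying chart on $G_{\mathbb{C}}$ to come from $\exp\colon \mathfrak{g}_{\mathbb{C}}\to G_{\mathbb{C}}$, which is automatically $\tau$-equivariant. Then $\varphi$ intertwines $\tilde\tau$ on $MG_{\mathbb{C}}$ with the tame involution $(\alpha,g)\mapsto (d\tilde\tau_e\alpha,\tau(g))$ on $\Omega^{1}(\mathbb{C}^{*},\mathfrak{g}_{\mathbb{C}})\times G_{\mathbb{C}}$, and the monodromy map is $\tau$-equivariant. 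Intersecting the $MG_{\mathbb{C}}$-chart with the fixed point set and applying the tame projection $\pi_+$ produces a homeomorphism of an open neighbourhood of $e\in MG_{\mathbb{R}}$ onto a tame Fr\'echet open neighbourhood of $0$ in $M\mathfrak{g}_{\mathbb{R}}$. Left translation by elements of $MG_{\mathbb{R}}$ (which preserves $MG_{\mathbb{R}}$) propagates this to an atlas, whose chart transitions are restrictions of the tame chart transitions on $MG_{\mathbb{C}}$, hence smooth and tame.

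\textbf{Step 4 (Group operations).} Multiplication $MG_{\mathbb{R}}\times MG_{\mathbb{R}}\to MG_{\mathbb{R}}$ and inversion $MG_{\mathbb{R}}\to MG_{\mathbb{R}}$ are restrictions of the corresponding smooth tame maps on $MG_{\mathbb{C}}$. Their images automatically lie in $MG_{\mathbb{R}}$ (it is a subgroup), and smoothness and tameness are preserved under restriction once the tame submanifold charts of Step~3 are in place (compare Lemma~\ref{mapinfrechetsubmanifold}).

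\textbf{Main obstacle.} The only nontrivial point is the $\tilde\tau$-equivariance in Step~3: one must verify that the logarithmic derivative, the $G_{\mathbb{C}}$-chart, and the monodromy map can all be chosen compatibly with the conjugations $\tilde\tau$ on $MG_{\mathbb{C}}$ and $\tau$ on $G_{\mathbb{C}}$. This reduces to using the $\tau$-equivariant exponential on $G_{\mathbb{C}}$ and to the tameness of the averaging projections $\pi_\pm$ established in Step~2. With equivariance in hand, every other statement is a real-form transcription of the corresponding statement for $MG_{\mathbb{C}}$.
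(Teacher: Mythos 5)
Your proof is correct, and it reaches the same destination by a more structured route than the paper. The paper's own argument is a one-liner: it observes that the embedding $\varphi(f)=(f^{-1}df,f(1))$ carries $MG_{\mathbb{R}}$ into $\Omega^1(\mathbb{C}^*,\mathfrak{g}_{\mathbb{C}})_{\mathbb{R}}\times G_{\mathbb{R}}$, declares these factors tame (the space of real forms having been handled earlier as a closed subspace of $\Omega^1(\mathbb{C}^*,\mathfrak{g}_{\mathbb{C}})$), and says ``similar arguments apply.'' You instead realize $MG_{\mathbb{R}}$ as the fixed point set of the antiholomorphic involution $\tilde\tau(f)(z)=\tau\bigl(f(1/\bar z)\bigr)$ and obtain the tame structure from the averaging projections $\pi_\pm=\tfrac12(\mathrm{Id}\pm d\tilde\tau_e)$. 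This buys you something the paper's version does not explicitly deliver: a closed subspace of a tame Fr\'echet space is not automatically tame (the paper's Lemma~\ref{constructionoftamespaces} only covers tame direct summands and products), whereas your $\pi_+$ exhibits $M\mathfrak{g}_{\mathbb{R}}$ as a genuine tame direct summand of $M\mathfrak{g}_{\mathbb{C}}$, which is exactly what the definition of tameness requires. The price is the equivariance bookkeeping in your Step~3 (compatibility of $\tilde\tau$ with the logarithmic derivative, the $\tau$-equivariant exponential chart on $G_{\mathbb{C}}$, and the monodromy map), which you correctly identify as the only nontrivial point and which does go through since $z\mapsto 1/\bar z$ restricts to the identity on $S^1$ and preserves each annulus $A_n$. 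In short: same skeleton (restrict the $MG_{\mathbb{C}}$ chart construction to the real form), but your fixed-point-plus-averaging mechanism makes rigorous a step the paper leaves implicit.
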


\begin{proof}
The proof is similar to the proof for $MG_{\mathbb{C}}$. We have only to take care of the reality condition $f(S^1)\subset G_{\mathbb{R}}$ for loops $f\in MG_{\mathbb{C}}$.
Thus the embedding $\varphi$ maps a loop $f$ into $\Omega \left(\mathbb{C}^*, \mathfrak{g}_{\mathbb{R}}\right)\times G_{\mathbb{R}}$, which are both tame Fr\'echet spaces. Now similar arguments apply.
\end{proof}

\begin{proposition}
\label{mgc/mgrtamefrechet}
The group $MG_{\mathbb{C}}/MG_{\mathbb R}$ is a tame Fr\'echet manifold.
\end{proposition}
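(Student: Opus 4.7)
The plan is to realize the quotient via a tame analogue of the Cartan (polar) decomposition $G_{\mathbb{C}} = G_{\mathbb{R}} \cdot \exp(i\mathfrak{g}_{\mathbb{R}})$ from finite-dimensional Lie theory, using the Nash--Moser inverse function theorem to turn it into a local tame slice for the right action of $MG_{\mathbb{R}}$ on $MG_{\mathbb{C}}$.

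First I would introduce the conjugate-linear involution $\sigma : MG_{\mathbb{C}} \to MG_{\mathbb{C}}$ defined by $\sigma(f)(z) := \rho_*(f(1/\bar z))$, where $\rho_*$ is conjugation of $\mathfrak{g}_{\mathbb{C}}$ with respect to $\mathfrak{g}_{\mathbb{R}}$; by analytic continuation its fixed-point set is exactly $MG_{\mathbb{R}}$. On the Lie algebra side the induced $\mathbb{R}$-linear involution on $M\mathfrak{g}_{\mathbb{C}}$ yields the tame direct-sum decomposition $M\mathfrak{g}_{\mathbb{C}} = M\mathfrak{g}_{\mathbb{R}} \oplus i M\mathfrak{g}_{\mathbb{R}}$; both summands are closed, hence tame by the closed subspace lemma, and the two projections are tame linear maps.

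Next I would consider the multiplication map
\begin{displaymath}
\Psi : i M\mathfrak{g}_{\mathbb{R}} \times MG_{\mathbb{R}} \longrightarrow MG_{\mathbb{C}}, \qquad (X,u) \longmapsto \Mexp(X)\cdot u,
\end{displaymath}
which is smooth and tame (composition of $\Mexp$, which is tame because $\exp$ is entire and $M\mathfrak{g}_{\mathbb{C}}$ is a tame space of holomorphic loops, with the multiplication of the tame Fr\'echet Lie group $MG_{\mathbb{C}}$). Its differential at $(0,e)$ is precisely the tame isomorphism $(X,Y) \mapsto X + Y$ coming from the decomposition above. The crux of the argument is to apply the Nash--Moser inverse function theorem to $\Psi$ near $(0,e)$. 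Although $\Mexp$ alone is not a local diffeomorphism (cf.\ Example~\ref{sl2chasnodiffeomorphicexponentialmap}), the combined map $\Psi$ is, because the fibre $MG_{\mathbb{R}}$-direction absorbs precisely the obstruction. Concretely one writes $D\Psi(X,u) = L_{\Mexp(X)u} \circ A(X)$, where $A(X)$ is the tame perturbation of the identity arising from $D\Mexp(X) = L_{\Mexp(X)} \circ \int_0^1 \mathrm{Ad}(\Mexp(-sX))\,ds$, and inverts it using the tame projections onto the two summands of $M\mathfrak{g}_{\mathbb{C}}$. This gives a smooth tame family of inverses on a whole neighbourhood of $(0,e)$, which is the precise assumption Nash--Moser requires; this is the main technical obstacle, since pointwise invertibility of the differential is not enough (cf.\ Remark~\ref{the concept of inverse limits}).

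Granted this, $\Psi$ is a local tame diffeomorphism, so the restriction of the canonical projection $MG_{\mathbb{C}} \to MG_{\mathbb{C}}/MG_{\mathbb{R}}$ to $\Mexp(iM\mathfrak{g}_{\mathbb{R}})$ is a local tame diffeomorphism onto a neighbourhood of $[e]$; composing with the tame chart $iM\mathfrak{g}_{\mathbb{R}} \to \Mexp(iM\mathfrak{g}_{\mathbb{R}})$ produces a chart for $MG_{\mathbb{C}}/MG_{\mathbb{R}}$ at $[e]$ with values in the tame Fr\'echet space $iM\mathfrak{g}_{\mathbb{R}}$. To obtain charts at arbitrary cosets $[f]$, translate by left multiplication by $f$, which is a tame Fr\'echet diffeomorphism of $MG_{\mathbb{C}}$ commuting with the right $MG_{\mathbb{R}}$-action and therefore descending to a tame diffeomorphism of the quotient; chart transitions are then smooth tame by composition of the tame group operations on $MG_{\mathbb{C}}$ with $\Psi$ and $\Psi^{-1}$. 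Hausdorffness follows since $MG_{\mathbb{R}}$ is a closed subgroup. This exhibits $MG_{\mathbb{C}}/MG_{\mathbb{R}}$ as a tame Fr\'echet manifold.
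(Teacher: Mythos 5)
Your route is genuinely different from the paper's (the paper embeds $MG_{\mathbb{C}}$ via the logarithmic derivative $f\mapsto(f^{-1}df,f(1))$, turns the right $MG_{\mathbb{R}}$-action into the gauge action on $\Omega^1(\mathbb{C}^*,\mathfrak{g}_{\mathbb{C}})$, and identifies the quotient with the \emph{linear} tame space $\Omega^1_{\mathbb{R}}(\mathbb{C}^*,\mathfrak{g}\,|\,a_{-1}=0)\times G_{\mathbb{C}}/G_{\mathbb{R}}$, so no further inverse function theorem is needed), and it has a genuine gap precisely at the step you yourself flag as the main obstacle. To invoke Nash--Moser for $\Psi(X,u)=\Mexp(X)u$ you must exhibit a smooth tame family of inverses of $D\Psi(X,u)$ on an entire Fr\'echet-open neighbourhood of $(0,e)$, i.e.\ you must show that
\begin{displaymath}
M\mathfrak{g}_{\mathbb{C}}=F(X)\bigl(iM\mathfrak{g}_{\mathbb{R}}\bigr)\oplus \textrm{Ad}\bigl(\Mexp(X)u\bigr)\bigl(M\mathfrak{g}_{\mathbb{R}}\bigr),\qquad F(X)=\textstyle\frac{e^{\textrm{ad}X}-1}{\textrm{ad}X},
\end{displaymath}
with tame bounds on the two projections, uniformly in $(X,u)$. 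A basic Fr\'echet neighbourhood of $0$ in $iM\mathfrak{g}_{\mathbb{R}}$ only controls $\sup_{z\in A_n}|X(z)|$ for one fixed $n$; for $|z|$ outside $A_n$ the values $X(z)$ are unbounded, so $F(X)(z)$ can degenerate (eigenvalues of $\textrm{ad}X(z)$ in $2\pi i\mathbb{Z}\setminus\{0\}$). This is exactly the mechanism of Example~\ref{sl2chasnodiffeomorphicexponentialmap}, and ``writing $D\Psi$ as a tame perturbation of the identity and inverting via the projections'' does not survive it: the perturbation is not small on any Fr\'echet-open set.

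The assertion that the $MG_{\mathbb{R}}$-direction ``absorbs precisely the obstruction'' is also not automatic. The reality constraints defining $MG_{\mathbb{R}}$ and $iM\mathfrak{g}_{\mathbb{R}}$ live only on $S^1$, while surjectivity of $D\Psi$ (and of $\Psi$ itself) must hold in the space of loops holomorphic on \emph{all} of $\mathbb{C}^*$. Restricting to $S^1$ and using the pointwise polar decomposition of $G_{\mathbb{C}}$ with respect to $G$ does give injectivity of $D\Psi$ by analytic continuation, and produces a candidate solution $(\xi,Y)$ that is real-analytic on $S^1$ and hence holomorphic on \emph{some} annulus around $S^1$ --- but you give no argument that it continues to all of $\mathbb{C}^*$, and the same problem already blocks the a priori surjectivity of $\Psi$ onto a neighbourhood of $e$. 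Until these two points are settled, the chart at $[e]$ is not established; this is why the paper works with the linearized (logarithmic-derivative) model instead.
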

\begin{proof}

Review the embedding 
$$\psi: MG_{\mathbb{C}}\longrightarrow \Omega^1(\mathbb{C^*}, \mathfrak{g}_{\mathbb{C}})\times G_{\mathbb{C}}\,.$$

A loop $f\cdot g$ is mapped onto $\psi(f\cdot g)=(g^{-1}f^{-1}df g+ g^{-1}dg, [f\cdot g](1))$. This is the well-known gauge-action of the group $MG_{\mathbb{R}}$, denoted by $\mathcal{G}^*(MG_{\mathbb{R}})$.
Thus there is a well defined embedding
$$\psi: MG_{\mathbb{C}}/MG_{\mathbb{R}}\longrightarrow \Omega^1(\mathbb{C^*}, \mathfrak{g}_{\mathbb{C}})/\mathcal{G}^*(MG_{\mathbb{R}})\times G_{\mathbb{C}}/G_{\mathbb{R}}\,.$$

No we study again the projections $\pi_1$ and $\pi_2$ on the first and second factor. $\pi_2$ is surjective; $G_{\mathbb{C}}/G_{\mathbb{R}}$ is a tame manifold; so this factor is no problem.

The projection on the first factor, $\pi_1$, needs a more careful analysis:
the right multiplication of $MG_{\mathbb{R}}$ on $MG_{\mathbb{R}}$ is surjective: using the decomposition $\mathfrak{g}_{\mathbb{C}}=\mathfrak{g}_{\mathbb{R}}+i \mathfrak{g}_{\mathbb{R}}$, we get for $\Omega^1(\mathbb{C}^*, \mathfrak{g}_{\mathbb{C}}):=\Omega^1_{\mathbb{R}}(\mathbb{C}^*, \mathfrak{g}_{\mathbb{C}})+i\Omega^1_{\mathbb{R}} (\mathbb{C}^*, \mathfrak{g}_{\mathbb{C}})$.  

The surjectivity of the right multiplication of $MG_{\mathbb{R}}$ on $MG_{\mathbb{R}}$ translates into the surjectivity of the $MG$-gauge action on the imaginary part $i\Omega^1_{\mathbb{R}} (\mathbb{C}^*, \mathfrak{g}_{\mathbb{C}})\cap \Im(\pi_1 \circ \psi)(MG)$. Thus we can suppose to have chosen a representative $f\in f\cdot MG$, such that the imaginary part $ \pi_1\circ \psi(f)$ is $0$.
So all we have to check is the real part. Here we find that $\exp^{-1}(e)=0$. Thus $a_{-1}=0$. So we can identify the image $\pi_1 \circ \psi (MG_{\mathbb{C}}/MG_{\mathbb{R}})\simeq \Omega^1_{\mathbb{R}}(\mathbb{C^*}, \mathfrak{g}|a_{-1}=0) $. This is a tame Fr\'echet space.

Hence proposition~\ref{mgc/mgrtamefrechet} is proven.
\end{proof}

Having proved that $MG_{\mathbb{C}}$ and $MG_{\mathbb{C}}/MG_{\mathbb{R}}$ are tame Fr\'echet
manifolds we have to check that the same is true for the quotients
$MG_{\mathbb R}/\textrm{Fix}{(\rho)}$ and $MG_D/\textrm{Fix}{(\rho)}$.

To this end, let $MG$ be a loop group and $M\rho$ the loop part of an involution of the second kind.

\begin{proposition}
Let $MG_D$ be a non-compact real form of $MG_{\mathbb{C}}$. $MG_D$ is a tame Fr\'echet manifold. 
\end{proposition}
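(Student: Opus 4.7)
The plan is to mimic the proofs of Theorem~\ref{mgcistame} and the analogous statement for $MG_{\mathbb R}$ via the logarithmic derivative embedding, and to realize $MG_D$ as the fixed point set of a continuous involution of $MG_{\mathbb C}$. By Theorem~\ref{theoremofHeintzegross}, a non-compact real form $MG_D$ of $MG_{\mathbb C}$ arises in bijection with an involution of the compact real form $MG_{\mathbb R}$: writing $M\mathfrak{g}_{\mathbb R}=\mathcal{K}\oplus\mathcal{P}$ for the $\pm 1$-eigenspace decomposition under the involution, the dual real form has Lie algebra $\mathcal{K}\oplus i\mathcal{P}\subset M\mathfrak{g}_{\mathbb C}$, and $MG_D$ is the corresponding subgroup, i.e.\ the fixed point set of a conjugate-linear involution $\rho_D$ of $MG_{\mathbb C}$.

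First I would use the embedding
\[
\varphi:MG_{\mathbb C}\hookrightarrow \Omega^1(\mathbb C^*,\mathfrak g_{\mathbb C})\times G_{\mathbb C},\qquad f\mapsto(f^{-1}df,f(1)),
\]
constructed in the proof of Theorem~\ref{mgcistame}. Since $\rho_D$ is a Lie group involution, an elementary computation with the chain rule shows that $\rho_D$ is intertwined by $\varphi$ with the involution $\tilde\rho_D$ on $\Omega^1(\mathbb C^*,\mathfrak g_{\mathbb C})\times G_{\mathbb C}$ acting pointwise via the induced involution on $\mathfrak g_{\mathbb C}$ on the first factor and via the restriction $G_{\mathbb C}\to G_{\mathbb C}$ on the second. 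Therefore
\[
\varphi(MG_D)=\varphi(MG_{\mathbb C})\cap \operatorname{Fix}(\tilde\rho_D).
\]

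Second I would check that each factor carries the desired tame structure. The fixed-point set of $\tilde\rho_D$ on $\Omega^1(\mathbb C^*,\mathfrak g_{\mathbb C})$ is $\Omega^1(\mathbb C^*,\mathfrak g_D)$, a closed $\mathbb R$-linear subspace; by Lemma~\ref{constructionoftamespaces} it inherits a tame Fr\'echet structure. The fixed-point set in $G_{\mathbb C}$ is the finite-dimensional real Lie group $G_D$. The monodromy condition $\int_{S^1}\alpha\in\frac{1}{2\pi i}\exp^{-1}(e)$ is preserved by $\tilde\rho_D$ because $\rho_D$ is a group homomorphism; so restricting to the fixed-point set the same argument as in Theorem~\ref{mgcistame}, combined with the implicit function Theorem~\ref{submanifoldsoffrechetspace}, exhibits $\pi_1(\varphi(MG_D))$ as a tame Fr\'echet submanifold of finite type inside $\Omega^1(\mathbb C^*,\mathfrak g_D)$.

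Finally I would assemble charts on $MG_D$ as the Cartesian product of charts on the tame Fr\'echet submanifold $\pi_1\circ\varphi(MG_D)\subset\Omega^1(\mathbb C^*,\mathfrak g_D)$ and charts on the finite-dimensional Lie group $G_D\subset G_{\mathbb C}$, and invoke left translation by elements of $MG_D$ to cover the whole group; tameness of the chart transition maps follows verbatim from the argument given for $MG_{\mathbb C}$. The only genuine obstacle is the first step: one must verify explicitly that the involution $\rho_D$ furnished by Theorem~\ref{theoremofHeintzegross} transports under the logarithmic derivative to a continuous $\mathbb R$-linear involution of $\Omega^1(\mathbb C^*,\mathfrak g_{\mathbb C})\times G_{\mathbb C}$ of the claimed form. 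Once this compatibility is checked, the tame Fr\'echet manifold structure on $MG_D$ is inherited from $MG_{\mathbb C}$ just as the one on $MG_{\mathbb R}$ was.
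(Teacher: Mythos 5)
Your proposal is correct and follows exactly the route the paper intends: the paper in fact states this proposition without any proof at all, relying (as it does explicitly for $MG_{\mathbb R}$ and for $MG_{\mathbb{C}}/MG_{\mathbb R}$) on the reader adapting the logarithmic-derivative embedding $f\mapsto(f^{-1}df,f(1))$ from Theorem~\ref{mgcistame}, which is precisely what you carry out, including the key compatibility check that the conjugate-linear involution cutting out $MG_D$ transports through $\delta$. One small imprecision: since $\rho_D$ is of the second kind it involves the substitution $z\mapsto 1/\bar z$ as well as a pointwise involution of $\mathfrak g_{\mathbb C}$, so the induced map $\tilde\rho_D$ on $\Omega^1(\mathbb C^*,\mathfrak g_{\mathbb C})$ is not purely pointwise and its fixed-point set is a reality-condition subspace (analogous to $\Omega^1(X,\mathfrak g_{\mathbb C})_{\mathbb R}$ in the paper) rather than literally $\Omega^1(\mathbb C^*,\mathfrak g_D)$ --- but it is still a closed subspace, indeed the image of the tame projection $\frac{1}{2}(1+\tilde\rho_D)$, so your argument goes through unchanged.
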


\begin{proposition}
The quotient spaces $MG_{\mathbb{R}}/\textrm{Fix}(M\rho)$ and $MG_{D, \rho}/\textrm{Fix}(M\rho)$ are tame manifolds.
\end{proposition}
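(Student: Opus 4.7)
The plan is to parallel the construction used for $MG_{\mathbb{C}}/MG_{\mathbb{R}}$ in Proposition~\ref{mgc/mgrtamefrechet}, transported to the setting of the involution $M\rho$: construct a tame slice through the identity coset via the logarithmic derivative embedding and then propagate charts by left translation. First I would verify that $M\rho$ is a smooth tame map: by the Heintze--Gro\ss\ normal form it is given by $(M\rho\, f)(z)=\rho_z f(\lambda(z))$ for a holomorphic family of automorphisms $\rho_z$ of $G$ and a reparametrization $\lambda$, which is manifestly tame in $f$. Differentiating at the identity produces a tame involutive endomorphism $d(M\rho)$ of $M\mathfrak{g}_{\mathbb{R}}$, and the projections $P_\pm=\tfrac12(\mathrm{Id}\pm d(M\rho))$ are tame linear idempotents, yielding a tame direct-sum decomposition $M\mathfrak{g}_{\mathbb{R}}=M\mathfrak{k}\oplus M\mathfrak{p}$ into closed tame Fr\'echet subspaces (Lemma~\ref{constructionoftamespaces}).

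Next I would show that $\textrm{Fix}(M\rho)$ is a tame Fr\'echet Lie subgroup. Consider the smooth tame map $\Psi:MG_{\mathbb{R}}\to MG_{\mathbb{R}}$ given by $\Psi(f)=M\rho(f)\cdot f^{-1}$, so that $\textrm{Fix}(M\rho)=\Psi^{-1}(e)$. At the identity, $d\Psi_e=d(M\rho)-\mathrm{Id}$ has kernel $M\mathfrak{k}$ and image the closed tame direct summand $M\mathfrak{p}$. Using the tame splitting from the previous paragraph, the argument behind Theorem~\ref{submanifoldsoffrechetspace}, routed through the implicit function theorem for tame maps (Theorem~\ref{ImplicitefunctiontheoremfortameFrechetmaps}), shows that $\Psi^{-1}(e)$ is a tame Fr\'echet submanifold of $MG_{\mathbb{R}}$ with tangent space $M\mathfrak{k}$ at $e$; it is automatically a subgroup and hence a tame Fr\'echet Lie subgroup.

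To chart the quotient near $[e]$ I would use the logarithmic derivative embedding
\[
\varphi:MG_{\mathbb{R}}\hookrightarrow \Omega^1(\mathbb{C}^*,\mathfrak{g}_{\mathbb{R}})\times G_{\mathbb{R}},\qquad f\mapsto(f^{-1}df,\,f(1)),
\]
that underlies Theorem~\ref{mgcistame}. The right action of $\textrm{Fix}(M\rho)$ on $MG_{\mathbb{R}}$ translates under $\varphi$ into a gauge action on $\Omega^1(\mathbb{C}^*,\mathfrak{g}_{\mathbb{R}})$, and the eigenspace splitting decomposes the target as $\Omega^1(\mathbb{C}^*,\mathfrak{k})\oplus\Omega^1(\mathbb{C}^*,\mathfrak{p})$, together with the finite-dimensional factor $G_{\mathbb{R}}=\textrm{Fix}(\rho)\oplus\exp(\mathfrak{p}_0)$ in the second coordinate. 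The infinitesimal gauge action by $M\mathfrak{k}$ exactly spans the $\Omega^1(\mathbb{C}^*,\mathfrak{k})$ direction at the identity, so a further application of Theorem~\ref{ImplicitefunctiontheoremfortameFrechetmaps} produces a tame local slice modelled on $\Omega^1(\mathbb{C}^*,\mathfrak{p})$ (intersected with the vanishing-monodromy locus) paired with $G_{\mathbb{R}}/\textrm{Fix}(\rho)$. Left translation by arbitrary $g\in MG_{\mathbb{R}}$ then transports this chart to a neighborhood of $[g]$, and since the multiplication on $MG_{\mathbb{R}}$ is a smooth tame map the atlas so obtained has smooth tame chart transitions.

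For $MG_{D,\rho}/\textrm{Fix}(M\rho)$ the argument is identical: $MG_D$ is a tame Fr\'echet Lie group by the preceding proposition, and the involution $M\rho$ restricts to a tame map on $MG_D$ because the Cartan involution defining $MG_D$ commutes with $M\rho$. The main obstacle throughout is the slice construction in the third paragraph: as emphasized in the discussion following Example~\ref{sl2chasnodiffeomorphicexponentialmap}, one cannot invoke the exponential map as a local diffeomorphism, so the transversality of $M\mathfrak{p}$ to the $\textrm{Fix}(M\rho)$-orbits must be established in the logarithmic-derivative picture, where Nash--Moser type tools are available; verifying the tame equivariance of the splitting and the tame injectivity of the slice map is the technical core of the proof.
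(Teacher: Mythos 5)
Your overall architecture — the logarithmic derivative embedding, the $\pm 1$-eigenspace splitting of $M\mathfrak{g}_{\mathbb{R}}$ under $d(M\rho)$, and the reduction to a chart modelled on the $\mathcal{P}$-directions — is exactly the spirit in which the paper intends this proposition to be proved: its proof is literally the one sentence that the argument is analogous to Proposition~\ref{mgc/mgrtamefrechet}. However, the specific tool you lean on to produce the slice does not apply as stated. Both Theorem~\ref{ImplicitefunctiontheoremfortameFrechetmaps} and Theorem~\ref{submanifoldsoffrechetspace} are formulated only for maps whose ``vertical'' factor is a \emph{finite-dimensional} space $V\simeq W\simeq\mathbb{R}^n$; you invoke them twice in situations where the transverse directions are infinite-dimensional — once to show that $\Psi^{-1}(e)=\textrm{Fix}(M\rho)$ is a tame submanifold (the image of $d\Psi_e$ is all of $M\mathfrak{p}$), and once to cut out a local slice transverse to the $\Omega^1(\mathbb{C}^*,\mathfrak{k})$-directions. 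To run a Nash--Moser implicit function argument with an infinite-dimensional target you would need Hamilton's full implicit function theorem, including the construction of a smooth tame approximate right inverse with quadratic error, and you do not supply one. As written, the technical core of your proof rests on theorems that do not cover the case at hand.

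The fix is to follow the model proof more closely. In the proof of Proposition~\ref{mgc/mgrtamefrechet} the paper does not construct a local slice at all: it shows that the right multiplication (equivalently, the gauge action) of the subgroup one is quotienting by acts \emph{surjectively} on the complementary eigenspace directions of $\Omega^1(\mathbb{C}^*,\mathfrak{g})$, so that every coset contains a canonical representative whose component in those directions vanishes; the quotient is then identified globally with a closed (hence tame, by Lemma~\ref{constructionoftamespaces}) subspace of $\Omega^1$ cut down by the monodromy condition, times the finite-dimensional factor $G_{\mathbb{R}}/\textrm{Fix}(\rho)$. For $MG_{\mathbb{R}}/\textrm{Fix}(M\rho)$ the same normalization works with $\mathfrak{g}_{\mathbb{C}}=\mathfrak{g}_{\mathbb{R}}\oplus i\mathfrak{g}_{\mathbb{R}}$ replaced by the splitting $M\mathfrak{g}_{\mathbb{R}}=\mathcal{K}\oplus\mathcal{P}$: surjectivity of the $\textrm{Fix}(M\rho)$-gauge action on the $\mathcal{K}$-component lets you kill that component of the logarithmic derivative, and the quotient is identified with a tame subspace of $\Omega^1(\mathbb{C}^*,\mathcal{P})$ paired with $G_{\mathbb{R}}/\textrm{Fix}(\rho)$. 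This sidesteps the infinite-codimension slice problem entirely, which is precisely why the paper can dispose of the proposition in one line.
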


\begin{proof}
The proof is  an argument analogous to the proof that $MG_{\mathbb{C}}/MG_{\mathbb{R}}$ is a tame Fr\'echet space.
\end{proof}

\noindent The next class are twisted loop groups:

\begin{proposition}[Twisted loop groups]
Let $G$ be a compact simple Lie group of type $A_n, D_n$ or  $E_6$ and $\sigma$ a diagram automorphism of order $m\in \{2,3\}$. Let $\omega=e^{\frac{2\pi i}{m}}$.

\begin{itemize}
\item[-] The group $A_nG^{\sigma}:=\{f\in A^nG | \sigma \circ f(z)=f(\omega z)\}$ is a Banach-Lie group. 
\item[-] The group $MG^{\sigma}:=\{f\in MG | \sigma \circ f(z)=f(\omega z)\}$ is a tame manifold. Charts can be taken to be in $\Omega^1(\mathbb{C}^*, G)^{\sigma}$. Furthermore $M\mathfrak{g}^{\sigma}\simeq T_e(MG^{\sigma})$. 
\end{itemize}
\end{proposition}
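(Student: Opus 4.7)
The plan is to mirror the proofs already established for $A_n G_{\mathbb{C}}$, $MG_{\mathbb{C}}$ and $MG_{\mathbb{R}}$, carrying the twist condition through every step. First, I would verify that the twist condition $\sigma \circ f(z) = f(\omega z)$ is preserved under pointwise multiplication, inversion, and the exponential map (all three reduce to the fact that $\sigma$ is a Lie algebra and group automorphism of the finite-dimensional target). Hence $A_n G^{\sigma}$ and $MG^{\sigma}$ are closed subgroups and $A_n \mathfrak{g}^{\sigma}$, $M\mathfrak{g}^{\sigma}$ are closed sub-Lie-algebras. For $A_n G^{\sigma}$ this is already enough: $A_n \mathfrak{g}^{\sigma}$ is a closed subspace of the Banach algebra $A_n \mathfrak{g}$ and hence itself a Banach Lie algebra; transporting a chart via $\exp$ around the identity and translating by left multiplication yields a Banach manifold atlas, so the usual argument of the untwisted case applies verbatim.

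For $MG^{\sigma}$ I would imitate the proof of Theorem~\ref{mgcistame} using the logarithmic derivative. Consider
\begin{displaymath}
\varphi : MG^{\sigma} \hookrightarrow \Omega^{1}(\mathbb{C}^*,\mathfrak{g})^{\sigma} \times G,\quad f \mapsto (\delta(f), f(1)),
\end{displaymath}
where $\Omega^{1}(\mathbb{C}^*,\mathfrak{g})^{\sigma}$ denotes the $\mathfrak{g}$-valued $1$-forms $\alpha = g(z)\,dz$ satisfying the twisted condition $\omega\,g(\omega z) = \sigma(g(z))$; a direct computation using the chain rule shows that $\delta(f) = f^{-1}\,df$ lies in this space whenever $f \in MG^{\sigma}$. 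The target is a tame Fr\'echet space by Lemma~\ref{subspacesofmg} and Corollary~\ref{holc*cnisfrechet}, since the twist condition is closed. Injectivity of $\varphi$ is the same as in the untwisted case (solutions of $\delta(f_1) = \delta(f_2)$ differ by a constant in $G$, fixed by the second coordinate).

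To identify the image, I would follow the decomposition of $\varphi$ into its two projections. The projection onto $G$ is surjective, so that factor contributes a finite-dimensional chart. For the first factor, local integrability of a $1$-form is automatic by Lemma~\ref{integrability of 1 dim }, and a global primitive exists iff the monodromy $\exp(\int_{S^{1}} \alpha) = \exp(2\pi i\, a_{-1}(\alpha))$ is trivial in $G$. The key observation is that the twist condition forces $a_{-1}(\alpha) \in \mathfrak{g}^{\sigma}$ (from $\omega^{n+1} a_n = \sigma(a_n)$ for the Laurent coefficients of $g$), so the monodromy map is a tame map with values in the finite-dimensional space $G$; by the implicit function theorem for tame maps (Theorem~\ref{submanifoldsoffrechetspace}) its zero set is a tame Fr\'echet submanifold of finite type, and Lemma~\ref{mapinfrechetsubmanifold} says the resulting tame structure is the restricted one. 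Multiplication and inversion are then smooth tame maps exactly as in the untwisted case, since in the $(\delta(f), f(1))$-coordinates they are given by the same polynomial expressions in $f$, $g$ and their logarithmic derivatives.

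Finally, the identification $M\mathfrak{g}^{\sigma} \simeq T_e(MG^{\sigma})$ follows the argument of Theorem~\ref{tangential space}: differentiating a curve $\gamma \subset MG^{\sigma}$ with $\gamma(0)=e$ at each $z \in \mathbb{C}^*$ gives a holomorphic $\mathfrak{g}$-valued map whose twist condition is obtained from differentiating $\sigma\circ\gamma(t)(z) = \gamma(t)(\omega z)$; conversely $t \mapsto \Mexp(tX)$ for $X \in M\mathfrak{g}^{\sigma}$ stays in $MG^{\sigma}$ because the exponential is pointwise. The main technical obstacle is verifying cleanly that the monodromy map, restricted to the twisted space of $1$-forms, still satisfies the regularity hypothesis of Theorem~\ref{submanifoldsoffrechetspace} with the identity as a regular value (equivalently, that the differential of the twisted monodromy remains surjective onto $\mathfrak{g}^{\sigma}$), which is the only place where the algebraic structure of $\sigma$ genuinely enters beyond bookkeeping.
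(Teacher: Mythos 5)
Your proposal follows essentially the same route as the paper: the paper's proof consists precisely of the two compatibility checks you perform first, namely that $\sigma_G\circ\Mexp=\Mexp\circ\sigma_{\mathfrak{g}}$ and that $\delta(\sigma\circ f)=\sigma(\delta f)$, after which it asserts that the untwisted arguments carry over to charts in the $\sigma$-invariant part of $\Omega^1(\mathbb{C}^*,\mathfrak{g}_{\mathbb{C}})$. The additional details you supply (the precise twisted condition $\omega\,g(\omega z)=\sigma(g(z))$ on $1$-forms, the observation that the residue $a_{-1}$ lands in $\mathfrak{g}^{\sigma}$, and the regular-value check for the twisted monodromy map) are correct elaborations of steps the paper leaves implicit, not a different method.
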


\begin{proof}
To generalize the proofs of the non-twisted setting to the twisted setting one has to check that the subspaces defined by diagram automorphism are preserved by the logarithmic derivative.

\begin{enumerate}
\item For the exponential map,  we use $\sigma_{\mathfrak{g}}$ resp.\ $\sigma_{G}$ to denote the realization of the diagram automorphism $\sigma$ on $\mathfrak{g}$ resp.\ $G$. Any involution of a semisimple Lie group satisfies the identity: $\sigma_{G} \circ \exp =\exp \circ {\sigma_{\mathfrak{g}}}$
$$[\sigma_{G}\circ \Mexp(f)](z)=\exp(\sigma_{\mathfrak{g}}(f(z)))=\exp(f(\omega z))=[\Mexp(f)](\omega z),$$

\item For the logarithmic derivative we calculate:
$$\delta(\sigma \circ f)=(\sigma f)^{-1} d(\sigma \circ f)= \sigma f^{-1} \sigma df = \sigma (\delta f)\,.$$
Thus we get charts in the $\sigma$-invariant subalgebra of $\Omega^1(\mathbb{C}^*, \mathfrak{g}_{\mathbb{C}})$.
\end{enumerate}
\end{proof}

\noindent The following definition is due to Omori~\cite{Omori97}:

\begin{definition}[Exponential pair]
\label{exponentialtype}
A pair $(G,\mathfrak{g})$ consisting of a Fr\'echet group $G$ and a Fr\'echet space $\mathfrak{g}$ is called a topological group of \emph{exponential type} if there is a continuous mapping:
$$\exp: \mathfrak{g}\longrightarrow G$$
such that:
\begin{enumerate}
	\item For every $X \in \mathfrak{g}$, $\exp(sX)$ is a one-parameter subgroup of $G$.
	\item For $X,Y \in \mathfrak{g}$, $X=Y$ iff $\exp(sX)=\exp(sY)$ for every $s\in \mathbb{R}$.
	\item For a sequence $\{X_n\}\in \mathfrak{g}$, $\displaystyle\lim_{n\rightarrow \infty} X_n$ converges to an element $X\in \mathfrak{g}$ iff $\displaystyle\lim_{n\rightarrow \infty} (\exp{sX_n})$ converges uniformly on each compact interval to the element $\exp(sX)$.
	\item There is a continuous mapping $\textrm{Ad}: G\times \mathfrak{g}\longrightarrow \mathfrak{g} $ with  $h\exp(sX)h^{-1}=\exp s \textrm{Ad}(h)X$ for every $h\in G$ and $X\in \mathfrak{g}$.
\end{enumerate}
\end{definition}

\begin{proposition}[Exponential type]
The pair $(MG_{\mathbb{K}}, M\mathfrak{g}_{\mathbb{K}})$ is of exponential type.
\end{proposition}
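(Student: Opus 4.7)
The plan is to verify the four axioms of Definition~\ref{exponentialtype} by reducing each to the corresponding pointwise property of the finite-dimensional pair $(G_{\mathbb{K}}, \mathfrak{g}_{\mathbb{K}})$, exploiting that $\Mexp$ is defined pointwise by $\Mexp(f)(z) = \exp(f(z))$ and that the topology on $M\mathfrak{g}_{\mathbb{K}}$ (and hence the subspace topology inherited by one-parameter subgroups in $MG_{\mathbb{K}}$) has already been identified with the topology of compact convergence on $\mathbb{C}^*$.

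Axioms (1) and (2) are essentially formal. For a fixed $X \in M\mathfrak{g}_{\mathbb{K}}$ and $s,t \in \mathbb{R}$, the pointwise calculation
\[
\Mexp((s+t)X)(z) = \exp((s+t)X(z)) = \exp(sX(z))\exp(tX(z)) = \bigl(\Mexp(sX)\cdot\Mexp(tX)\bigr)(z)
\]
yields the one-parameter subgroup property. For (2), if $\Mexp(sX) = \Mexp(sY)$ for all $s$, then at each $z \in \mathbb{C}^*$ the two one-parameter subgroups $s \mapsto \exp(sX(z))$ and $s \mapsto \exp(sY(z))$ of $G_{\mathbb{K}}$ coincide, and uniqueness of the infinitesimal generator in $\mathfrak{g}_{\mathbb{K}}$ forces $X(z) = Y(z)$, so $X = Y$. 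For axiom (4), I would set $[\textrm{Ad}(h)X](z) := \textrm{Ad}(h(z))X(z)$; the identity $h\Mexp(sX)h^{-1} = \Mexp(s\textrm{Ad}(h)X)$ then holds pointwise from the corresponding identity in $G_{\mathbb{K}}$, and continuity of this map in the compact-open topology follows from continuity of the finite-dimensional $\textrm{Ad}$ together with the fact that composition and pointwise multiplication are continuous in the topology of compact convergence.

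The substantial step is axiom (3). The forward direction is easy: if $X_n \to X$ uniformly on every compact $K \subset \mathbb{C}^*$, then for any compact interval $I \subset \mathbb{R}$ the family $sX_n$ converges to $sX$ uniformly on $I \times K$, and uniform continuity of $\exp$ on bounded subsets of $\mathfrak{g}_{\mathbb{K}}$ gives the required uniform convergence of $\Mexp(sX_n)$ to $\Mexp(sX)$ on $I$ in the topology of $MG_{\mathbb{K}}$. The main obstacle is the converse, where I have to recover convergence of $X_n$ from convergence of the one-parameter subgroups. I would handle this by restricting to a small compact interval $I$ around $0$ and using that $\exp: \mathfrak{g}_{\mathbb{K}} \to G_{\mathbb{K}}$ is a diffeomorphism onto an open neighborhood $V$ of the identity. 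For $s$ sufficiently small and $z$ in a fixed compact $K$, all values $\exp(sX_n(z))$ and $\exp(sX(z))$ eventually lie in $V$, so pointwise $X_n(z) = s^{-1}\exp^{-1}\Mexp(sX_n)(z)$ and continuity of $\exp^{-1}$ on $V$ transfers uniform convergence on $I \times K$ to uniform convergence of $X_n$ on $K$. An alternative route, which I would use if the first runs into trouble near loops passing through singularities of $\exp^{-1}$, is to invoke analyticity of $s \mapsto \exp(sX_n(z))$ and apply Cauchy estimates to recover $\partial_s|_{s=0}$ from uniform convergence on a compact $s$-interval, again uniformly in $z \in K$.
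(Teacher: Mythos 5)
Your overall strategy --- reducing each of Omori's four axioms to the corresponding pointwise statement for $(G_{\mathbb{K}},\mathfrak{g}_{\mathbb{K}})$ and using the identification of the Fr\'echet topology with compact convergence --- is exactly the route the paper takes, and for axioms (1), (2) and (4) your argument matches the paper's (which is in fact terser). The one place where you go beyond the paper is the converse half of axiom (3), which you correctly identify as the substantial step; the paper essentially only records the forward implication there.

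However, your argument for that converse has a gap. From ``$\exp(sX_n(z))$ and $\exp(sX(z))$ eventually lie in $V$'' you conclude $X_n(z)=s^{-1}\exp^{-1}\bigl(\Mexp(sX_n)(z)\bigr)$, where $\exp^{-1}$ denotes the local inverse $V\to U$. But $\exp^{-1}(\exp(sX_n(z)))$ is the unique preimage in $U$, and this equals $sX_n(z)$ only if you already know $sX_n(z)\in U$ --- which is not implied by $\exp(sX_n(z))\in V$, since $\exp$ is far from injective globally (this is precisely the phenomenon behind example~\ref{sl2chasnodiffeomorphicexponentialmap}). To close the gap you must first establish that $\sup_n\sup_{z\in K}|X_n(z)|<\infty$: if not, pick $z_n\in K$ with $r_n:=|X_n(z_n)|\to\infty$, take $U=B(0,r)$ with $r$ small enough that $\exp$ is injective on $\overline{U}$, and set $s_n:=r/r_n\to 0$; uniform convergence on a compact $s$-interval forces $\exp(s_nX_n(z_n))\to e$ while $s_nX_n(z_n)$ stays on the compact sphere $\partial B(0,r)$, a contradiction. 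Once the family is uniformly bounded on $K$ you may choose $s_0$ small enough, uniformly in $n$ and $z\in K$, that $s_0X_n(z)\in U$, and then your formula together with uniform continuity of $\exp^{-1}$ does finish the argument. Your fallback via Cauchy estimates does not repair this as stated, since you only control $s\mapsto\exp(sX_n(z))$ for real $s$ in a compact interval, and a Cauchy estimate on $\partial_s|_{s=0}$ requires bounds on a complex disc in the $s$-variable, which again presupposes a uniform bound on $|X_n(z)|$.
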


\begin{proof}
We proved that $MG$ is a tame Fr\'echet Lie group, thus a topological group.
To prove that $(MG_{\mathbb{K}}, M\mathfrak{g}_{\mathbb{K}})$ is of exponential type, we have to check the four conditions given in definition~\ref{exponentialtype}:

\begin{enumerate}
	\item The first condition can be checked by a pointwise analysis: for $f\in M\mathfrak{g}$ and for every $z\in \mathbb{C}^*$, the curve $\exp{sf(z)}$ is a $1$-parameter subgroup in $G$. This pieces together for all $z\in \mathbb{C^*}$, to yield the condition.
	\item The second condition follows analogously: let $X,Y\in M\mathfrak{g}$. $X=Y$ iff $X(z)=Y(z)$ for all $z \in \mathbb{C}^*$. The finite dimensional theory tells us that this is equivalent to the curves $\exp(sX(z))\subset G_{\mathbb{K}}$ and $\exp(sY(z))\subset G_{\mathbb{K}}$ to be equivalent for all $z\in \mathbb{C}^*$, but this is equivalent to $\exp(sX)=\exp(sY)$.
	\item Let $\{X_n(z)\}\in M\mathfrak{g}, z\in \mathbb{C}$ be a sequence of elements such that  $\displaystyle\lim_{n\rightarrow \infty} X_n(z)=X \in M\mathfrak{g}$. Let $T\subset \mathbb{R}$ be a compact interval, $s\in T$. 
	As we have on $MG$ the compact-open topology, 
	$$\lim_{n\rightarrow \infty}(\exp{sX_n})=\exp(sX)\Leftrightarrow \forall K \subset \mathbb{C^*}:\exp\lim_{n\rightarrow \infty}(\exp{sX_n(K)})=\exp(sX)(K)\,.$$
	This assertion is correct as for every $z\in K:\exp\displaystyle\lim_{n\rightarrow \infty}(\exp{sX_n(z)})=\exp(sX)(z)$.
	\item The last assertion follows again from pointwise consideration and the validity of the assertion for finite semisimple Lie groups.\qedhere
\end{enumerate}
\end{proof}

\begin{theorem}[$ILB$-manifold]
\label{ILB-manifold}
$MG_{\mathbb{K}}$ carries the structure of an $ILB$-manifold. As an $ILB$-manifold, it is modelled on the $ILB$-system $\{M\mathfrak{g}_{\mathbb{K}}, A_n\mathfrak{g}_{\mathbb{K}}\}$. 
\end{theorem}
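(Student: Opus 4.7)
The plan is to apply Proposition \ref{f0manifoldisilb}, which reduces the claim to exhibiting an atlas on $MG_{\mathbb{K}}$ whose chart transition functions are $(0,b,C(n))$-tame with respect to the canonical gradings on $M\mathfrak{g}_{\mathbb{K}}$. By Lemma \ref{mgfrechetagbanach} and Proposition \ref{nganginverselimit} the $n$\ndash th Banach completion of $M\mathfrak{g}_{\mathbb{K}}$ with respect to $\|f\|_n=\sup_{z\in A_n}|f(z)|$ is precisely $A_n\mathfrak{g}_{\mathbb{K}}$, so the $ILB$\ndash system produced by Proposition \ref{f0manifoldisilb} is automatically $\{M\mathfrak{g}_{\mathbb{K}}; A_n\mathfrak{g}_{\mathbb{K}}\}$, and the associated series of Banach manifolds will be compatible with the Banach-Lie group structures on $A_nG_{\mathbb{K}}$ already established.

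First I would build the atlas near a point $f_0 \in MG_{\mathbb{K}}$ by composing left translation with the chart at the identity constructed in the proof of Theorem \ref{mgcistame}. That chart is given by the logarithmic-derivative embedding $\varphi(f)=(\delta(f),f(1))$ into $\Omega^1(\mathbb{C}^*,\mathfrak{g}_{\mathbb{K}})\times G_{\mathbb{K}}$, whose image near the identity is locally cut out as the zero set of the tame monodromy map, so on a small enough neighborhood of $f_0$ we obtain a chart $\psi_{f_0}\colon U_{f_0}\to V_{f_0}$ taking values in an open subset of $M\mathfrak{g}_{\mathbb{K}}$ (combined with a finite-dimensional chart on $G_{\mathbb{K}}$ around $f_0(1)$).

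Second I would compute the transition $\psi_{f_1}\circ\psi_{f_0}^{-1}$. Writing $h_0:=f_0^{-1}f$ and $h_1:=f_1^{-1}f$, one has $h_1=(f_1^{-1}f_0)h_0$, and the identity $\delta(ab)=\mathrm{Ad}(b^{-1})\delta(a)+\delta(b)$ gives
\begin{displaymath}
\delta(h_1) \;=\; \mathrm{Ad}(h_0^{-1})\,\delta(f_1^{-1}f_0)\;+\;\delta(h_0),\qquad h_1(1)=(f_1^{-1}f_0)(1)\,h_0(1).
\end{displaymath}
Reading the right-hand side as a function of the input coordinate $\delta(h_0)$ requires recovering $h_0$ from $\delta(h_0)$ by a path-ordered integration from $1$; this integration is bounded on every annulus $A_n$ by the length of $A_n$, and the resulting $\mathrm{Ad}(h_0^{-1})$ acts on $\Omega^1(\mathbb{C}^*,\mathfrak{g}_{\mathbb{K}})$ by pointwise conjugation with a function whose $\|\cdot\|_n$-norm is finite.

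Third I would verify the tameness estimate. All ingredients preserve the $A_n$-filtration: pointwise multiplication satisfies $\|fg\|_n\le \|f\|_n\|g\|_n$, so $\mathrm{Ad}(g)$ is $(0,0,C(n))$-tame with $C(n)=\|g\|_n\|g^{-1}\|_n$; path-ordered integration is $(0,0,\mathrm{length}(A_n))$-tame; and the fixed element $\delta(f_1^{-1}f_0)$ has finite $\|\cdot\|_n$ for each $n$. Composing these, the chart transition extends for every $n\ge b$ to a smooth map between corresponding open subsets of $A_n\mathfrak{g}_{\mathbb{K}}$, as demanded by Proposition \ref{f0manifoldisilb}. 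The argument is identical for $\mathbb{K}=\mathbb{R}$ once one restricts to the real form. The main obstacle is the apparent derivative loss introduced by $\delta$: a careless reading of the embedding would declare charts to be merely $(1,0,\cdot)$-tame. The resolution, upgrading transitions to $(0,b,\cdot)$-tame, is that in $\psi_{f_1}\circ\psi_{f_0}^{-1}$ the differentiation performed to exit one chart is exactly inverted by the integration used to enter the other, so only the intrinsically $(0,0,C(n))$-tame group operations survive in the final formula, and Proposition \ref{f0manifoldisilb} then yields the $ILB$\ndash manifold structure.
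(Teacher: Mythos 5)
Your proposal follows the same route as the paper: the paper's entire proof is the one-line observation that the statement follows from Proposition~\ref{f0manifoldisilb} because ``the atlases for loop groups are $(0,b,C(n))$-tame.'' What you add is the actual verification of that parenthetical claim --- the explicit transition formula $\delta(h_1)=\mathrm{Ad}(h_0^{-1})\delta(f_1^{-1}f_0)+\delta(h_0)$ and the observation that the derivative loss from $\delta$ is cancelled by the path-ordered integration recovering $h_0$ --- which is exactly the content the paper leaves implicit, so your argument is a correct filling-in of the same proof rather than a different one.
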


\begin{proof} This is a consequence of theorem~\ref{f0manifoldisilb} as the atlases for loop groups are $(0,b, C(n))$-tame.
\end{proof}

Obviously a similar result holds for 
\begin{enumerate}
\item non-compact real forms $\{MG_{D,\rho}; A_nG_{D,\rho} \}$,
\item the quotient spaces $MG_{\mathbb{R}}/\textrm{Fix}(M\rho)$, $\{MG_{D,\rho}/\textrm{Fix}(M\rho)$
\item the twisted versions of all described objects.
\end{enumerate}

\noindent To prove this, one has to check in every step that the restrictions defined by the involutions are compatible.

\begin{remark} The $ILB$-structure gives an easy interpretation of the properties of the exponential function:
the exponential map
$A_n\exp^{(n)}:A_\mathfrak{g}^{(n)} \longrightarrow A_G^{(n)}$ is defined by composition with the group exponential function $\exp: \mathfrak{g} \rightarrow G$. By the inverse function theorem for Banach spaces it is a local diffeomorphism. Thus for every $n\in \mathbb N$ there exist $U^{(n)}$, $V^{(n)}$ such that the map
$A_n\exp^{(n)}$ is a local diffeomorphism $A_n\exp^{(n)}: U^{(n)}\longrightarrow V^{(n)}$.
Of course in the limit $n\rightarrow \infty$, we get only that $\Mexp$ is a diffeomorphism of $\bigcap
U^{(n)}$ onto $\bigcap V^{(n)}$, which need no longer be open.

The obvious solution to this problem is the introduction of an additional assumption:
there are open neighborhoods 
$U^{\infty}\subset M\mathfrak{g}$ and $V^{\infty}\subset MG$, such that $U^{\infty}\subset \left(\bigcap U^{(n)}\right)\cap M\mathfrak{g}$ and 
$V^{\infty} \subset \left(\bigcap V^{(n)}\right)\cap MG$ for all $n$. This assures that the intersection is open. 

This corresponds to the additional condition of invertibility on an open subset in the Nash-Moser inverse function theorem. 
\end{remark}

We give some remarks about $1$-parameter subgroups. 

\begin{remark}
Let $g(t):=X\exp(tu)$ for $u\in X\mathfrak{g}_{\mathbb{K}}$ be a $1$-parameter subgroup in $XG_{\mathbb{K}}$, $X\in \{A_n , \mathbb{C}^*\}$, $\mathbb{K}\in \{\mathbb{R}, \mathbb{C}\}$. Then the following statements hold:
\begin{enumerate}
\item  $X\exp(tu)_{z_0}$ is a $1$-parameter group in $G_{ \mathbb{C}}$ for all $z_0\in X$.
\item If $A_n\subset A_{n+k}$ then the embedding $A_{n+k}G \hookrightarrow A_nG$ maps $1$-parameter subgroups onto $1$-parameter subgroups.
\end{enumerate}
\end{remark}

\begin{proof} Direct calculation. \end{proof}

\begin{remark}
As we have seen, the fact that the exponential function does not define a local diffeomorphism is responsible for several difficulties; so it is reasonable to try to use a setting in which the exponential function defines a local diffeomorphism. 
So let us try to take loops $f: S^1\longrightarrow G$ satisfying some regularity condition. In this case it is easy to
see that the exponential map defines always a local diffeomorphism, as a neighborhood of
the identity element of such a loop group is given by loops whose images lie in
a small neighborhood $V$ of the identity of the subjacent Lie group; this
neighborhood can be chosen in a way such that the group exponential is a
diffeomorphism from an open neighborhood $U$ in the Lie algebra onto it. But now other problems appear:
\begin{enumerate}
\item Suppose the functions to be $H^1$-Sobolev loops. In this setting, one can construct weak Hilbert symmetric spaces of compact and non-compact type. Nevertheless, one cannot define the double extension corresponding to the $c$- and $d$-part of the Kac-Moody algebra. As this extension is responsible for the structure theory, this setting is not useful for us. 
\item To be able to construct the extension corresponding to the derivative $d$, one needs loops that are $C^{\infty}$. For $C^{\infty}$-loops, it is possible to construct compact type symmetric spaces corresponding to the finite dimensional types $I$ and $III$, but for Kac-Moody symmetric spaces of $C^{\infty}$-loops there is no dualization: as the complexification of $c$ is not defined, we cannot complexify. As a consequence there are no symmetric spaces of the non-compact type. 
\end{enumerate}
The details of both theories are developed in~\cite{Popescu05}. Hence we have:

\begin{proposition}
The setting of holomorphic loops is the biggest setting such that Kac-Moody symmetric space of the compact type and of the non-compact type of the same regularity condition can be defined.
\end{proposition}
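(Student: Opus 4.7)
The plan is to combine the explicit constructions already carried out in this chapter (which yield existence) with an obstruction argument that rules out strictly larger regularity classes. The existence half is immediate from Theorem~\ref{ILB-manifold} and the preceding propositions: starting from holomorphic loops on $\mathbb{C}^*$, one obtains both $\widehat{MG}^{\sigma}_{\mathbb{R}}$ (compact-type) and its complexification $\widehat{MG}^{\sigma}_{\mathbb{C}}$, whose quotient $\widehat{MG}^{\sigma}_{\mathbb{C}}/\widehat{MG}^{\sigma}_{\mathbb{R}}$ is the non-compact-type space (to be verified in Chapter~\ref{chap:symm}). The real work is to show that any regularity class $R$ whose restrictions to $S^1$ strictly enlarge those of $MG^{\sigma}$ fails to support at least one of the two constructions.

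Let $R$ denote such a functional-analytic class of loops, closed under pointwise multiplication. For the derivation $d$ of Definition~\ref{geometricaffinekacmoodyalgebra} to act on the associated Kac-Moody algebra $\widehat{L}(R\mathfrak{g},\sigma)$ as a tame linear map (Definition~\ref{tamefrechetLie algebra}), the class $R$ must be stable under $f \mapsto f'$, which iterated forces $R$-loops to be $C^{\infty}$ on $S^1$. This is already the setting of \cite{Popescu05}, which yields the compact type; but as noted in the introduction, the complexification required for the non-compact type is obstructed. I would make this obstruction precise as follows.

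By Theorem~\ref{theoremofHeintzegross} (Heintze–Groß), each non-compact real form of the complex Kac-Moody algebra arises from an involution of second kind on the compact real form, and by the classification of involutions such involutions must include representatives acting on the loop variable by $z \mapsto 1/\bar{z}$ (equivalently, by $t \mapsto -t$ on the circle followed by an inner automorphism). For the induced involution to be a well-defined automorphism of the \emph{complex} loop algebra $R\mathfrak{g}_{\mathbb{C}}$, the algebra must be closed under analytic continuation from $|z| = 1$ to $|z| = 1$ via the map $z \mapsto 1/\bar z$; equivalently, it must consist of boundary values of functions holomorphic on a neighbourhood of $S^1$. Furthermore, for the derivation $d$, which on $\widehat{M\mathfrak{g}}$ is $d \cdot f = iz f'(z)$, to commute correctly with this involution and to be $\mathbb{C}$-linear (needed for the extension of $c$ and $d$ to admit an $\mathrm{Ad}$-invariant Lorentz metric as in Theorem~\ref{eithercompactornoncompact}), the ``$z$'' must be defined off $S^1$. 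Hence every element of $R\mathfrak{g}_{\mathbb{C}}$ extends holomorphically to an open annulus about $S^1$.

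The final step is to argue that the common domain of extension must be all of $\mathbb{C}^*$. If it were only an annulus $A_n$, then the algebra $A_n\mathfrak{g}^{\sigma}$ would be merely a Banach algebra (Lemma~\ref{mgfrechetagbanach}) and, more importantly, the involutions of second kind acting by $z \mapsto \omega/z$ for $|\omega| > e^{2n}$ would map $A_n$ out of itself, so the full orbit of admissible involutions of second kind could not be accommodated; only the inverse-limit $M\mathfrak{g}^{\sigma} = \bigcap_n A_n\mathfrak{g}^{\sigma}$ is stable under all of them simultaneously. Thus $R\mathfrak{g}_{\mathbb{C}} \subseteq M\mathfrak{g}_{\mathbb{C}}$. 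The main obstacle in executing this plan is the last step: making precise the minimal family of second-kind involutions whose joint stabiliser pins down $\mathbb{C}^*$ as the unique maximal domain, rather than allowing some intermediate ``sector'' of holomorphy. This should be handled by invoking the classification of involutions of $\widehat{L}(\mathfrak{g}_{\mathbb{C}},\sigma)$ from \cite{Heintze09} and checking that the induced $z$-action on $\mathbb{C}^*$ generates a subgroup with dense orbit outside any fixed annulus.
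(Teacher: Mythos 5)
Your existence half and your first obstruction step (closure under $f\mapsto f'$, forced by the derivation $d$, rules out finite Sobolev regularity) match the paper's reasoning, which disposes of $H^1$-loops for exactly this reason. The gap is in the second obstruction step. You claim that the involutions of second kind force the loops to extend holomorphically off $S^1$ because they act by $z\mapsto 1/\bar z$ (resp.\ $z\mapsto \omega/z$). This premise is false: on $S^1$ the map $z\mapsto 1/\bar z$ is the identity and $z\mapsto 1/z$ is $t\mapsto -t$, so every second-kind involution in the admissible classification (which has $|\omega|=1$; otherwise the compact real form is not preserved) is already a well-defined automorphism of the $C^{\infty}$-loop algebra on $S^1$ and of every $A_n\mathfrak{g}^{\sigma}$, and imposes no holomorphy whatsoever. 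Consequently your final step --- pinning down $\mathbb{C}^*$ as the joint stabiliser of involutions with $|\omega|>e^{2n}$ --- invokes involutions that are not in the admissible family, and the argument does not close; your own closing caveat correctly identifies this as the weak point.

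The obstruction the paper actually uses lives in the complexified $d$-direction, not in the involutions. The type-IV space is $\widehat{MG}_{\mathbb{C}}/\widehat{MG}_{\mathbb{R}}$, and the complex Kac-Moody group contains the semidirect $\mathbb{C}^*$-factor acting by $f(z)\mapsto f(w_d z)$; equivalently $\mathrm{Ad}(\exp(ird))v(t)=v(te^{-r})$ rescales the argument. For loops defined only on $S^1$ (or on a fixed annulus $A_n$) this action is defined only for $|w_d|=1$ --- this is exactly the remark following the definition of the Kac-Moody group --- so the compact type exists for $C^{\infty}$-loops, but the group admits no complexification in the same regularity class and hence no non-compact dual; and since the full $\mathbb{C}^*$-orbit of rescalings preserves no proper annulus, the loops must be holomorphic on all of $\mathbb{C}^*$. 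If you replace your involution-based step by this $d$-rescaling argument, your obstruction half goes through and coincides with the paper's (admittedly informal) proof, which otherwise consists of recording the two failure modes ($H^1$: no $c,d$-extension; $C^{\infty}$: no complexification) worked out in Popescu's thesis.
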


Let us mention in this context the short summary in \cite{Berger03} about infinite dimensional differential geometry, where the conflict between easy Hilbert space structures and  good metrics is addressed.
\end{remark}

\section{Polar actions on tame Fr\'echet spaces}
\label{PolaractionsontameFrechetspaces}

The isotropy representation of a (finite dimensional) Riemann symmetric space is a polar representation of the isotropy group on the tangential space. As a section one can choose any maximal flat subspace. We will see in chapter~\ref{chap:symm} that Kac-Moody symmetric spaces behave in a similar way. Nevertheless, there is a striking difference: there are orbits with finite codimension and orbits with infinite codimension. We will see, that the orbits with finite codimension correspond to gauge actions of tame loop groups on tame spaces.

Closely related is the theory of polar actions on Hilbert spaces, which is described in the article~\cite{terng95}.

The fundamental theorem due to C.-L.\ Terng states:
\index{$P(G,H)$-action}
\begin{theorem}
Define $P(G,H):=\{g\in H^1([0,1], G)|(g(0), g(1))\in H\subset G\times G\}$ and $V=H^0([0,1],\mathfrak{g})$. Suppose the $H$-action on $G$ is polar with flat sections. Let $A$ be a torus section through $e$ and let $\mathfrak{a}$ denote its Lie algebra. Then the gauge action of $P(G,H)$ on $V$ is polar with section $\mathfrak{a}$.
\end{theorem}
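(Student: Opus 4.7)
The strategy is to reduce the statement to the finite dimensional polar action of $H$ on $G$ via the parallel transport (or holonomy) map. Concretely, for $u\in V=H^0([0,1],\mathfrak{g})$ let $E_u:[0,1]\to G$ be the unique $H^1$-solution of the initial value problem $E_u^{-1}E_u'=u$, $E_u(0)=e$, and define
\begin{displaymath}
\Phi:V\longrightarrow G,\qquad \Phi(u):=E_u(1).
\end{displaymath}
Standard ODE theory shows that $\Phi$ is a smooth submersion and that, once one equips $V$ with the $H^0$-inner product and $G$ with the bi-invariant metric induced by the $\mathrm{Ad}$-invariant form on $\mathfrak{g}$, $\Phi$ is in fact a Riemannian submersion. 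This is the key analytic fact and I would establish it first, because everything else will be transported through $\Phi$.

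Second, I would compute how $\Phi$ intertwines the two actions. A direct calculation with the gauge action $g\ast u = \mathrm{Ad}(g)u-g'g^{-1}$ shows that $E_{g\ast u}(t)=g(t)\,E_u(t)\,g(0)^{-1}$, so that
\begin{displaymath}
\Phi(g\ast u)=g(1)\,\Phi(u)\,g(0)^{-1}.
\end{displaymath}
Since $g\in P(G,H)$ means exactly that $(g(0),g(1))\in H\subset G\times G$, the gauge $P(G,H)$-orbits in $V$ are precisely the $\Phi$-preimages of the $H$-orbits in $G$ under its action $(h_0,h_1)\cdot x=h_1 x h_0^{-1}$. This reduces the polarity question on $V$ to the polarity of the $H$-action on $G$, which is assumed.

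Third, I would verify the three defining properties of a polar action with section $\mathfrak{a}\subset V$ (viewed as constant loops). Note that $\Phi$ sends the constant loop $a\in\mathfrak{a}$ to $\exp(a)\in A$, so that $\Phi(\mathfrak{a})=A$. The first property, that $\mathfrak{a}$ meets every $P(G,H)$-orbit, follows by picking $u\in V$, choosing $h\in H$ with $h\cdot\Phi(u)\in A$ (possible since $A$ is a section for the $H$-action), and producing via the submersion $\Phi$ a gauge transformation carrying $u$ into a constant element of $\mathfrak{a}$. The second property, flatness of $\mathfrak{a}$, is immediate because $\mathfrak{a}\subset V$ is a linear subspace with the induced $H^0$-metric, which is flat. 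The third property, orthogonality of the intersection with each orbit, is transferred from $G$ to $V$ using that $\Phi$ is a Riemannian submersion: horizontal vectors at $\mathfrak{a}$ for the $P(G,H)$-action project to horizontal vectors at $A$ for the $H$-action, and these are $\perp$ to the $H$-orbits by the assumed polarity.

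\textbf{Main obstacle.}
The principal technical point, and the one I would treat with most care, is showing that $\Phi$ is genuinely a Riemannian submersion between the $H^0$-structure on $V$ and the bi-invariant structure on $G$: one must trivialise $TG$ by left translation and identify the horizontal distribution for $\Phi$ with the space of constant loops (translated appropriately), then verify that $d\Phi$ restricts to an isometry on that distribution. The other more routine issues are checking regularity (that the gauge action is smooth in the $H^1$/$H^0$ setting) and that the kernel of $d\Phi_u$ is exactly the tangent space to the $P_0(G)$-orbit (loops based at $(e,e)$), which identifies the vertical distribution.
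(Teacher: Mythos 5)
Your outline is, in substance, the proof in the paper's cited source rather than a new route: the paper itself offers no argument for this theorem (its proof reads only ``see~\cite{terng95}''), and Terng's proof there is exactly what you describe --- the holonomy map $\Phi(u)=E_u(1)$ realized as a Riemannian submersion from $\bigl(V,\langle\cdot,\cdot\rangle_{H^0}\bigr)$ onto $G$ with its bi-invariant metric, equivariant for the gauge action and the $H$-action $(h_0,h_1)\cdot x=h_1xh_0^{-1}$, so that $P(G,H)$-orbits are $\Phi$-preimages of $H$-orbits and polarity with section $\mathfrak{a}$ (constant loops) is pulled back from polarity of $A$ in $G$. Two small points to repair in the write-up: your conventions are mismatched (with the normalization $E_u^{-1}E_u'=u$ the intertwining $E_{g\ast u}=g\,E_u\,g(0)^{-1}$ does \emph{not} hold for $g\ast u=\mathrm{Ad}(g)u-g'g^{-1}$; it does if one instead takes $E_u'E_u^{-1}=-u$, i.e.\ $u=-E_u'E_u^{-1}$, so fix one convention and recompute), and the step carrying $u$ into a constant loop of $\mathfrak{a}$ should be made explicit by first lifting the chosen $h\in H$ to any $g\in P(G,H)$ with the prescribed endpoints and then correcting by an element of the based gauge group $\Omega G=\{k\mid k(0)=k(1)=e\}\subset P(G,H)$, which acts transitively on each fiber of $\Phi$.
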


\begin{proof}see~\cite{terng95}. \end{proof}

Important special cases are the following: let $\Delta_{\sigma}\subset G\times G$ denote the $\sigma$-twisted diagonal subgroup of $G\times G$, that is: $(g,h)\in \Delta_{\sigma}$ iff $h=\sigma(g)$. We use the notation $\Delta=\Delta_{\textrm{Id}}$ for the non-twisted subgroup. 

\index{gauge action}
\begin{enumerate}
\item The gauge action of $H^1$-Sobolev loop groups $P(G, G\times G)\cong H^1([0,1],G)$ on their $H^0$-Sobolev loop algebras $H^0([0,1],\mathfrak{g})$ is transitive.
\item The gauge action of $P(G, \Delta_{\sigma})$ on $H^1([0,1],G)$  is polar with flat sections \cite{HPTT}.
\item The gauge action of $P(G, K\times K)$ on $H^1([0,1],G)$ where $K$ is the fixed point set of some involution of $G$ is polar with flat sections \cite{HPTT}.
\item The gauge action of $P(G, K_1\times K_2)$ on $H^1([0,1],G)$ where $K_i, i\in \{1,2\}$ are the fixed point groups of involutions of $G$ is polar with flat sections \cite{HPTT}.
\item The gauge action of Sobolev-$H^1$-loop groups $H^1(S^1,G)$ on their Sobolev $H^0$-loop algebras $H^0(S^1,\mathfrak{g})$-is polar \cite{PalaisTerng88}. 
\end{enumerate}

In this section we describe a similar theory for the loop groups $XG^{\sigma}$ on the tame  loop algebras $X\mathfrak{g}^{\sigma}$. As usual let $X\in \{A_n, \mathbb{C}^*\}$. Holomorphic functions on $A_n$ are supposed to be holomorphic in an open set containing $A_n$. From the embedding $XG^{\sigma}\hookrightarrow H^1([0,1],G)$ and $X\mathfrak{g}^{\sigma}\hookrightarrow H^0([0,1],\mathfrak{g})$ it is clear that the algebraic part of the theory works exactly the same in all regularity conditions. This means for example: sections for holomorphic actions  correspond to sections for the Hilbert actions and the associated affine Weyl groups are the same. Hence the crucial point is to check that the additional regularity restrictions fit together. This breaks down to two points:

\begin{enumerate}
\item One has to show locally that the additional regularity conditions are satisfied. 
\item One has to show globally that additional monodromy conditions are satisfied. 
\end{enumerate}

To define orthogonality on $X\mathfrak{g}$ we use the $H^0$-scalar product induced on $M\mathfrak{g}$ by the embedding into $H^0([0,1],\mathfrak{g})$. Hence we can define polar actions on Banach (resp.\ tame) spaces like that:

\begin{definition}
An action of a Lie group $G$ on a Fr\'echet space $F$ is called polar iff there is a subspace $S$, called a section, intersecting each orbit orthogonally with respect to some scalar product. 
\end{definition}

\begin{theorem}
\label{polaractiononmg}
The gauge action of $XG_{\mathbb{R}}^{\sigma}$ on $X\mathfrak{g}^{\sigma}$ is polar; an abelian subalgebra $\mathfrak{a} \subset \mathfrak{g}$ interpreted as constant loops is a section.
\end{theorem}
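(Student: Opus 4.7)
The strategy is to leverage the classical Hilbert-space result of Heintze--Palais--Terng--Thorbergsson~\cite{HPTT} via the dense embeddings $X\mathfrak{g}^{\sigma}\hookrightarrow H^0([0,1],\mathfrak{g})$ and $XG_{\mathbb{R}}^{\sigma}\hookrightarrow P(G,\Delta_{\sigma})$, and then to check that the additional holomorphicity survives the conjugation procedure. Concretely, the gauge action may be written as
\begin{displaymath}
g\cdot f \;=\; g f g^{-1} \;-\; g' g^{-1},
\end{displaymath}
and the claim splits into two parts: (a) every $XG_{\mathbb{R}}^{\sigma}$-orbit meets $\mathfrak{a}\subset X\mathfrak{g}^{\sigma}$, and (b) each such intersection is orthogonal with respect to the $H^0$-scalar product induced from $H^0([0,1],\mathfrak{g})$.

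For (a), given $f\in X\mathfrak{g}^{\sigma}$, the HPTT theorem produces an $H^1$-gauge element $g$ such that $g\cdot f = a\in\mathfrak{a}$. The equation $g\cdot f = a$ is equivalent to the linear holomorphic ODE
\begin{displaymath}
g' \;=\; (gfg^{-1} - a)\, g,
\end{displaymath}
which, since $f$ is holomorphic on $X$ and $\mathfrak{a}$ is chosen as a torus section, admits a local holomorphic solution on any simply connected open subset. The first task is then to promote this local solution to a global element of $XG_{\mathbb{R}}^{\sigma}$: for $X=A_n$ the annulus retracts onto $S^1$, so one has to match monodromy around the generator of $\pi_1(A_n)\cong\mathbb{Z}$; for $X=\mathbb{C}^*$ the same monodromy condition appears. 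One chooses $a\in\mathfrak{a}$ so that $\exp(2\pi i a)$ absorbs the monodromy of the auxiliary equation -- this is exactly the freedom used in the affine Weyl group's action -- and then the global holomorphic $g$ exists, with $\sigma$-equivariance guaranteed by averaging with the diagram automorphism. Reality of the loop $g$ on $S^1$ follows since the $H^1$-solution produced by HPTT already lies in $H^1(S^1,G_{\mathbb{R}})$, and by uniqueness of holomorphic extension the holomorphic continuation coincides with it on $S^1$.

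For (b), at a point $a\in\mathfrak{a}$ the tangent space to the $XG_{\mathbb{R}}^{\sigma}$-orbit is the image of the infinitesimal gauge map $\xi\mapsto [\xi,a] - \xi'$, while the tangent space to $\mathfrak{a}$ itself is $\mathfrak{a}$. Using the $H^0$-pairing $\langle u,v\rangle = \frac{1}{2\pi}\int_{S^1}\langle u(z),v(z)\rangle\,dz$, one checks
\begin{displaymath}
\bigl\langle [\xi,a]-\xi',\, h\bigr\rangle \;=\; 0 \qquad\forall\,h\in\mathfrak{a},
\end{displaymath}
which reduces via integration by parts and invariance of the Killing form to the identity $\langle [\xi,a],h\rangle + \langle \xi,[a,h]\rangle = 0$ together with $[a,h]=0$ and the fact that the derivative of a loop has vanishing $0$-th Fourier mode (constants being $L^2$-orthogonal to $\xi'$). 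This is essentially the classical computation and carries over verbatim from the Hilbert setting.

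The main obstacle is the holomorphic-monodromy step in (a): purely local ODE theory is immediate, but ensuring that the global holomorphic gauge element $g$ on $\mathbb{C}^*$ (or on the annulus $A_n$) is single-valued, $\sigma$-equivariant, and takes values in $G_{\mathbb{R}}$ on $S^1$ simultaneously requires a careful choice of the target $a$ within $\mathfrak{a}$ modulo the affine Weyl group action. Once this matching is performed, polarity reduces, by density of the holomorphic setting inside the Sobolev setting, to the already-known polarity on Hilbert space.
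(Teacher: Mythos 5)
Your proposal follows essentially the same route as the paper: both reduce polarity to the Hilbert-space result of Terng/HPTT via the embedding $X\mathfrak{g}^{\sigma}\hookrightarrow H^0([0,1],\mathfrak{g})$ (so orthogonality is inherited from the $H^0$-pairing), and both obtain the gauge element $g$ carrying $f$ into $\mathfrak{a}$ by solving the associated linear ODE, extending the known closed analytic loop holomorphically to $X$, and killing the apparent monodromy on the universal cover by observing that the continuation must agree with the already-periodic Sobolev/analytic solution on $S^1$ (the paper packages this through the explicit formula $g(t)=\exp(tX)g(0)h(t)^{-1}$ with $h'=uh$ and an intermediate $C^k$ step, but the mechanism is identical). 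The one loose phrase is ``$\sigma$-equivariance guaranteed by averaging with the diagram automorphism'' --- group-valued maps cannot be averaged; the twisted equivariance instead comes directly from applying Terng's theorem to $P(G,\Delta_{\sigma})$ and checking that the gauge formula preserves the twisting condition --- but this does not affect the validity of the overall argument.
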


The proof consists of two parts:
\begin{enumerate}
\item We have to show that each orbit intersects the section $\mathfrak{a}$.  
\item We have to show that the intersection is orthogonal.
\end{enumerate}

\noindent The second part follows trivially from the embedding and Terng's result.

\noindent Thus we are left with proving the first assertion. We do this in a step-by-step way: first we study the action of $C^{k}$-loop groups on $C^{k-1}$-loop algebras ($k\in \{\mathbb{N}, \infty\}$). Then we proceed to the holomorphic setting of theorem~\ref{polaractiononmg}.

\begin{lemma}
\label{cinfty}
The gauge action of $L^{k}G^{\sigma}$ on $L^{k-1}\mathfrak{g}^{\sigma}$ is polar for $k\in \{\mathbb{N, \infty}\}$.
\end{lemma}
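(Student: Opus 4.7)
My plan is to deduce Lemma \ref{cinfty} from Terng's theorem for $P(G,\Delta_\sigma)$ acting on $V = H^0([0,1],\mathfrak{g})$ by a bootstrap regularity argument. The key observation is that a twisted $C^k$\ndash loop $g\colon S^1 \to G$ satisfying $g(\omega z) = \sigma\circ g(z)$ corresponds, via the substitution $z = e^{2\pi i t}$, to a $C^k$\ndash path $\widetilde g\colon [0,1]\to G$ with boundary condition $\widetilde g(1) = \sigma(\widetilde g(0))$, that is, an element of $P(G,\Delta_\sigma)$ with additional smoothness; the analogous correspondence holds for $C^{k-1}$\ndash loops in $\mathfrak{g}$ sitting inside $H^0([0,1],\mathfrak{g})$. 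Hence we have inclusions $L^k G^\sigma \hookrightarrow P(G,\Delta_\sigma)$ and $L^{k-1}\mathfrak{g}^\sigma \hookrightarrow V$, and the gauge action of the former on the latter is the restriction of the gauge action in Terng's setting. Orthogonality of the section $\mathfrak{a}$ (constant loops in a maximal abelian subalgebra) is inherited from the $H^0$\ndash scalar product, since this scalar product is well-defined on the smooth subspace.

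The first real step is: given $u \in L^{k-1}\mathfrak{g}^\sigma$, apply Terng's theorem in $V$ to produce $g \in P(G,\Delta_\sigma)$ with
\begin{displaymath}
g\cdot u \;=\; gug^{-1} - g'g^{-1} \;=\; a \quad \text{for some } a \in \mathfrak{a}\, .
\end{displaymath}
A priori $g$ lies only in $H^1$, so $g$ is merely continuous by Sobolev embedding. The main obstacle, and the content of the proof, is to upgrade this $H^1$\ndash solution to a $C^k$ (resp.\ $C^\infty$) solution; this is the one place where the regularity classes have to be handled by hand, since all algebraic features transfer verbatim from Terng's result.

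The upgrade is a standard bootstrap for the first-order ODE rewritten as
\begin{displaymath}
g'(t) \;=\; g(t) u(t) - a\, g(t)\, .
\end{displaymath}
Since $u \in C^{k-1}$ and $g \in C^0$, the right-hand side lies in $C^0$, so $g \in C^1$. Feeding this back, the right-hand side now lies in $C^1$, hence $g \in C^2$, and so on up to $g \in C^k$; in the case $k = \infty$, we iterate indefinitely. Thus $g \in L^k G^\sigma$ as required, so every orbit of $L^k G^\sigma$ on $L^{k-1}\mathfrak{g}^\sigma$ meets $\mathfrak{a}$.

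Finally, I would verify that the twisting condition is preserved throughout: the $\sigma$\ndash equivariance of $u$ forces the boundary datum of $g$ to lie in $\Delta_\sigma$, and the ODE propagates this condition (uniqueness of solutions, combined with the fact that $t\mapsto \sigma(g(t-1))$ solves the same equation with shifted initial data). Orthogonality of $\mathfrak{a}$ to the orbit directions with respect to the $H^0$ pairing is then a pointwise finite-dimensional statement and is unaffected by the regularity class, completing the proof of polarity.
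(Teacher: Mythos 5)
Your proposal is correct and follows essentially the same route as the paper: reduce to Terng's polarity theorem for $P(G,\Delta_\sigma)$ acting on $H^0([0,1],\mathfrak{g})$, upgrade the regularity of the transforming element $g$ from $H^1$ to $C^k$ via the ODE it satisfies, and then check that the twisting/closing conditions on all derivatives are preserved. The only (immaterial) differences are in execution: the paper extracts the regularity from the explicit formula $g(t)=\exp(tX)\,g(0)\,h(t)^{-1}$ with $h'=uh$ rather than by bootstrapping $g'=gu-ag$ directly, and it verifies the closing condition $g^{(n)}(2\pi)=\sigma g^{(n)}(0)$ by an explicit inductive computation where you invoke uniqueness of ODE solutions.
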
 

\noindent This result is used without proof in~\cite{Popescu05} in order to show that all finite dimensional flats are conjugate. We do not know if a proof can be found in the literature. For completeness we give one: 

\begin{proof}[Proof of lemma~\ref{cinfty}]~
\begin{enumerate}
\item  {\bf Orthogonality} in $L^{k-1}\mathfrak{g}^{\sigma}$ is defined via the embedding into the space $H^0([0,1],\mathfrak{g})$ and the use of the $H^0$-scalar product. Hence orthogonality of the intersection between sections and orbits is covered by Terng's result.

\item {\bf Local regularity}~

\noindent Define the following spaces  

\begin{displaymath}
P(G,H)^k:=\{g\in C^k([0,1], G)|(g(0), g(1))\in H\subset G\times G\}\, .
\end{displaymath}

 Furthermore we use the equivalence~\cite{terng95}
\begin{align*}
P(G;e\times G)&\simeq H^0([0,1],\mathfrak{g})\, ,\\
h&\leftrightarrow -h'h^{-1}\, .
\end{align*}

Terng's polarity result~\cite{terng95} yields that the action of $P(G,\Delta_{\sigma})$ on $P(G; e\times G)$ defined by $(g(t), h(t))\mapsto g(t) h(t) g(0)^{-1}$ is polar with  a section of constant loops $\exp{t\mathfrak{a}}$ where $\mathfrak{a}$ is a maximal abelian subalgebra in $\mathfrak{g}$ (if $\sigma\not=0$ we restrict to $\mathfrak{a}_{\sigma}$ and omit the $\sigma$ in the notation~\cite{Kac90}). Thus for every $h(t)\in P(G;e\times G)$ there exist $g(t)\in P(G,\Delta_{\sigma})$ and $X\in \mathfrak{a}$, such that $g(t)h(t)g(0)^{-1}=\exp(tX)$.  

Rearranging this equation we deduce for any  loop $ g(t) \in P(G, \Delta_{\sigma})$ the explicit description $g(t):= \exp(tX)g(0)h(t)^{-1}$. Hence if $h(t)\in P^k(G;e\times G)$ then $g(t)\in  P^k(G,\Delta_{\sigma})$. Combining this with the orthogonality we obtain that the actions of $P^k(G,\Delta_{\sigma})$ on $P^k(G;e\times G)\simeq H^{k-1}([0,1],\mathfrak{g})$ and of $P^{\infty}(G,\Delta_{\sigma})$ on $P^{\infty}(G;e\times G)\simeq H^{\infty}([0,1],\mathfrak{g})$ are polar.

\item {\bf The periodicity relation:}~
We want to show that $L^{k}G^{\sigma}$ acts on $L^{k-1}\mathfrak{g}^{\sigma}$ with slice $\mathfrak{a}$ for $k\in \{\mathbb{N}, \infty\}$.

\begin{enumerate}
\item Let first $g\in L^{k}G^{\sigma}$ and $u\in L^{k-1}\mathfrak{g}^{\sigma}$. Then $g \cdot u= gug^{-1}-g' g^{-1}$ is in $L^{k-1}\mathfrak{g}^{\sigma}$. Thus $L^{k}G^{\sigma}$ acts on $L^{k-1}\mathfrak{g}^{\sigma}$.

\item We have to show that any $L^{k}G^{\sigma}$-orbit intersects the section $\mathfrak{a}$. This is equivalent to:
\noindent  For each $u\in L^{k-1}\mathfrak{g}^{\sigma}$, there is $X\in \mathfrak{g}$ and $g\in P^k(G,\Delta)$ such that
$\exp(tX)=g(t)h(t)g^{-1}(0)$ with $h'(t)=u(t)h(t)$ and the derivatives coincide; interpret in this last equation $u(t)$ as a quasi-periodic function on $\mathbb{R}$ (i.e. $u(t+2\pi)=\sigma u(t)$ and $h(t)$ as a function on $\mathbb{R}$). 

\noindent Using the first part, we find a function $g(t)\in P^k(G,\Delta_{\sigma})$. Hence, what remains is to check the closing condition of the derivatives: $g^{(n)}\cdot u(2\pi)= \sigma g^{(n)}\cdot u(0)$. We will prove that it is equivalent to the closing condition $g^{(n+1)}(2\pi)=\sigma g^{(n+1)}(0)$.

We start with the case $n=1$.
For this case, we have to show
$$\exp((t+2\pi)X) g_0 h(t+2\pi)^{-1}= \sigma (\exp(tX)g_0 h(t)^{-1})\,. $$ 

After rearranging, this is equivalent to the identity 
$$\sigma(g_0^{-1})\exp(2\pi X) g_0=\sigma(h(t)^{-1})h(t+2\pi)\,.$$
As the left side is a constant we find:
$$\left(\sigma(h(t)^{-1})h(t+2\pi)\right)'=0\,.$$ 
Hence: $-\sigma(h(t)^{-1}h'(t) h(t)^{-1})h(t+2\pi)+ \sigma(h(t)^{-1})h'(t+2\pi)=0$.
Rearranging this equality we get
$$\sigma(u(t))= - \sigma(h'(t))\sigma( h(t)^{-1})= -h'(t+2\pi)h(t+2\pi)^{-1}=u(t+2\pi)\,$$
which is the desired periodicity condition. 

For $n\not=0$ we use induction. If $g$ is $k$-times differentiable then $u$ is $k-1$-times differentiable. 
This proves the lemma.\qedhere
\end{enumerate}
\end{enumerate}
\end{proof}

\begin{proof}[Proof of theorem~\ref{polaractiononmg}]
To prove the theorem, we have to further strengthen the used regularity conditions to holomorphic functions. 
The description in the proof of lemma~\ref{cinfty} shows that the group of analytic loops $L_{an}(S^1, G)$ acts polarly with section $\mathfrak{a}$ on the algebra $L_{an}(S^1, \mathfrak{g})$ of analytic loops. 
\begin{enumerate}
\item {\bf The case of holomorphic loops on $\mathbb{C}^*$}~
For the specialization to holomorphic maps we use the description:
$$H_{\mathbb{C}^*}(\mathbb{C},\mathfrak{g}_{\mathbb{C}})_{\mathbb{R}}:=\{f:\mathbb{C}\longrightarrow \mathfrak{g}_{\mathbb{C}}|f(z+i\mathbb{Z})=f(z), f{i\mathbb{R}}\subset \mathfrak{g}_{\mathbb{R}}\}\,.$$
\noindent Identifying $it \leftrightarrow t$ in this (resp.\ the above) description we get an embedding: 
  $$H_{\mathbb{C}^*}(\mathbb{C},\mathfrak{g}_{\mathbb{C}})_{\mathbb{R}}\hookrightarrow H^{\infty}(S^1,\mathfrak{g})$$
This shows that there are no problems concerning the monodromy. So we have only to check the regularity aspect. 
For $g\in MG$ and $u\in M\mathfrak{g}$, $g\cdot(u)=gug^{-1}-g'g^{-1}\in M \mathfrak{u}$. 
On the other hand, using the description in the proof of lemma~\ref{cinfty}, we get for $u\in M\mathfrak{g}$ a transformation function $g(t):= \exp(tX)g(0)h(t)^{-1}$. A priori this function is in $L(S^1,G)$; but $\exp(tX)$ can be continued to a holomorphic function on $\mathbb{C}$, $g(0)$ is a constant and $h(t)^{-1}$ is a solution of the differential equation: $h'(t)=u(t)h(t)$; if $u(t)$ is defined on $\mathbb{C}^*$, this equation has a solution on the universal cover of $\mathbb{C}^*$, that is $\mathbb{C}$. So $g(t)$ is defined on $\mathbb{C}$, but has perhaps nontrivial monodromy; this is, of course, not possible, as the embedding tells us that $g(t)\subset L_{an}G$. 
\item {\bf The case of holomorphic loops on $A_n$}~
This case is exactly similar. $\mathbb{C}$ is replaced by $A'$ (compare subsection~\ref{A_n}).
\end{enumerate}
Hence theorem~\ref{polaractiononmg} is proved.
\end{proof}

Thus we have proven that $\sigma$-actions and diagonal actions are polar. Those two cases corresponds to the isotropy representation of Kac-Moody symmetric spaces of types $II$ and $IV$: the diagonal action is induced by the isotropy representation of Kac-Moody symmetric spaces in the non-twisted case, the $\sigma$-action is induced by the isotropy representation of $\sigma$\ndash twisted one.

The isotropy representations of Kac-Moody symmetric spaces of type $I$ and $III$ correspond to the Hermann examples. 
A holomorphic version of the Hermann examples~\cite{HPTT} can be defined in exactly the same way: 

Let $XG_{\mathbb{R}}^{\sigma}$ be a simply connected loop group, $\rho$ an involution such that $X\mathfrak{g}_{\mathbb{R}}^{\sigma}=\mathcal{K}\oplus \mathcal{P}$ is the decomposition into the $\pm 1$-eigenspaces of the involution induced by $\rho$ on $X\mathfrak{g}_{\mathbb{R}}^{\sigma}$. Let $XK_{\mathbb{R}}\subset XG_{\mathbb{R}}^{\sigma}$ be the subgroup fixed by $\rho$.  

\begin{theorem}
\label{polaractiononp}

The gauge action of $XK_{\mathbb{R}}$ on $\mathcal{P}$ is polar.
\end{theorem}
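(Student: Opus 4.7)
The plan is to mirror the three-step structure of the proof of Theorem~\ref{polaractiononmg}: identify a candidate section, transfer the problem to the $H^1$/$H^0$ Sobolev setting and invoke Terng's polarity theorem for the Hermann pair $P(G,K\times K)$, and finally verify that the regularity and twisting conditions survive the reduction. As a section I would take a maximal abelian subspace $\mathfrak{a}_{\mathfrak{p}}\subset\mathcal{P}$ consisting of constant loops lying in the $-1$-eigenspace of the restriction of $\rho$ to $\mathfrak{g}_{\mathbb{R}}$; orthogonality of the intersection with respect to the induced $H^0$-scalar product is then automatic, inherited from the corresponding Hilbert statement in~\cite{HPTT}.

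For the reduction I first embed $XG_{\mathbb{R}}^{\sigma}\hookrightarrow H^1([0,1],G)$ and $X\mathfrak{g}_{\mathbb{R}}^{\sigma}\hookrightarrow H^0([0,1],\mathfrak{g})$ via $f\mapsto -f'f^{-1}$, so that $XK_{\mathbb{R}}$ sits inside $P(G,K\times K)$ and $\mathcal{P}$ sits inside the $-1$-eigenspace of the $H^0$-extension of $\rho$. Terng's theorem applied to the Hermann pair $P(G,K\times K)$ acting on $H^0([0,1],\mathfrak{g})$ produces, for any given $u\in\mathcal{P}$, a gauge element $h\in P(G,K\times K)$ and an element $X\in\mathfrak{a}_{\mathfrak{p}}$ with $h\cdot u=X$; unwinding the identification one obtains the explicit formula $h(t)=\exp(tX)h(0)g(t)^{-1}$, where $g$ solves $g'=ug$ with $g(0)\in K$.

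The regularity step then proceeds as in the proof of Theorem~\ref{polaractiononmg}: the ODE $g'=ug$ has holomorphic coefficient on $\mathbb{C}^*$ (respectively on an open neighbourhood of $A_n$), so $g$ lifts holomorphically to the universal cover; the a priori $H^1$-regularity of the Hilbert solution forces the monodromy around $0$ to be trivial, so $h$ descends to a holomorphic loop on $\mathbb{C}^*$, and the $\sigma$-twisting of $h$ is forced exactly as in the quoted proof. The main obstacle I anticipate is verifying that $h$ actually lies in $XK_{\mathbb{R}}$ and not merely in the larger group $P(G,K\times K)$, i.e.\ showing that $h$ can be chosen to be pointwise fixed by $\rho$. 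The key observation will be that when $u\in\mathcal{P}$ and $X\in\mathfrak{a}_{\mathfrak{p}}\subset\mathcal{P}$, both the ODE $g'=ug$ and the equation $h(t)\exp(-tX)=h(0)g(t)^{-1}$ are $\rho$-equivariant, so uniqueness of the gauge transformation up to right multiplication by $K$ forces $\rho(h)=h$ once $h(0)\in K$ is chosen appropriately. Combined with the regularity step this yields $h\in XK_{\mathbb{R}}$, which together with the orthogonality inherited from the Hilbert setting completes the proof.
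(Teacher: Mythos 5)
Your proposal follows essentially the same route as the paper: the paper's own proof is only a two-sentence remark saying that one argues exactly as for Theorem~\ref{polaractiononmg}, starting from Terng's polarity result for the Hermann examples $P(G,K\times K)$ on the Hilbert space $H^0([0,1],\mathfrak{g})$ and then checking step by step that the holomorphy, twisting and reality conditions are preserved. Your write-up is a legitimate (and considerably more detailed) expansion of that sketch, including the $\rho$-equivariance check that the gauge element lands in $XK_{\mathbb{R}}$, which the paper leaves implicit.
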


\begin{proof}
The proof is like the one of theorem~\ref{polaractiononmg}. One starts with a similar result for polar actions on Hilbert action~\cite{terng95} and checks then step by step that the introduced higher regularity conditions fit together. 
\end{proof}

\section[Tame structures and $ILB$-structures]{Tame structures and $ILB$-structures on Kac-Moody algebras}
\label{holomorphicstructuresonkacmoodyalgebras}

In this section we will describe explicit realizations as central extensions of holomorphic loop algebras of the abstract affine geometric Kac-Moody algebras which we introduced in definition~\ref{geometricaffinekacmoodyalgebra}.

\noindent Let $X\in \{A_n, \mathbb{C}^*\}$. As usual, holomorphic functions on $A_n$ are understood to be holomorphic in an open set containing $A_n$.

\begin{definition}[holomorphic affine geometric Kac-Moody algebra]
Define $\widehat{X\mathfrak{g}}$ to be the realization  of $\widehat{L}(\mathfrak{g}, \sigma)$ with $L(\mathfrak{g}, \sigma)$ replaced by $X\mathfrak{g}^{\sigma}$.
\end{definition}

Thus an element of a Kac-Moody algebra can be represented by a triple
$\left(f(z), r_c, r_d \right)$, where $f(z)$ denotes a $\mathfrak{g}_{\mathbb{C}}$-valued holomorphic function on $X$ and  $\{r_c, r_d\}\in \mathbb{C}$.

We equip those algebras with the norms $\|(f,r_c, r_d)\|_n:= \sup_{z\in A_n} |f_z| +(r_c \bar{r}_c+r_d \bar{r}_d)^{\frac{1}{2}}$. Thus we use the supremum norm on the loop algebra and complete it with an Euclidean norm on the double extension defined by $c$ and $d$.

\begin{lemma}[Banach- and Fr\'echet structures on Kac-Moody algebras]~
\label{banachandfrechetstructureonkacmoodyalgebras}
\begin{enumerate}
	\item For each $n$, the algebras $\widehat{A_n\mathfrak{g}}_{\mathbb{R}}^{\sigma}$ and $\widehat{A_n\mathfrak{g}}_{\mathbb{C}}^{\sigma}$ equipped with the norm $\|\hspace{3pt} \|_n$ are Banach-Lie algebras,
	\item $\widehat{M\mathfrak{g}}_{\mathbb{R}}^{\sigma}$ and $\widehat{M\mathfrak{g}}_{\mathbb{C}}^{\sigma}$ equipped with the sequence of norms $\|\hspace{3pt} \|_n$ are tame Fr\'echet-Lie algebras. 
\end{enumerate}
\end{lemma}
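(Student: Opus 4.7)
The strategy is to verify separately the vector-space structure and then the compatibility of the bracket with the norms. For the underlying vector-space structure, I would argue as follows. Writing $\widehat{X\mathfrak{g}}^{\sigma}=X\mathfrak{g}^{\sigma}\oplus\mathbb{C}c\oplus\mathbb{C}d$ with the norms $\|(f,r_c,r_d)\|_n=\|f\|_n+(|r_c|^2+|r_d|^2)^{1/2}$, the loop part is a Banach (resp.\ tame Fr\'echet) space by Lemma~\ref{mgfrechetagbanach} and the twisted analogue via Lemma~\ref{subspacesofmg}/Corollary~\ref{holc*cnisfrechet}, and the finite-dimensional summand $\mathbb{C}c\oplus\mathbb{C}d$ is trivially tame. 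The claim that this direct sum is a Banach space (resp.\ tame Fr\'echet space) then follows from Lemma~\ref{constructionoftamespaces}.

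For the Lie-algebra structure, the only non-trivial step is to control the bracket. For $X=(f,r_c,r_d)$ and $Y=(g,s_c,s_d)$, one reads off from Definition~\ref{geometricaffinekacmoodyalgebra} the expression
\begin{displaymath}
[X,Y]\;=\;\bigl([f,g]_0+r_d\,izg'(z)-s_d\,izf'(z),\ \omega(f,g),\ 0\bigr),
\end{displaymath}
and I would estimate each of the three contributions. First, fixing any submultiplicative norm on $\mathfrak{g}$ with $|[a,b]|\leq C_{\mathfrak g}|a||b|$, the pointwise bracket satisfies $\|[f,g]_0\|_n\leq C_{\mathfrak g}\|f\|_n\|g\|_n$. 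Second, the derivative contribution $izg'(z)$ is $\|z\|_n=e^n$ times the output of $d/dz$; by Lemma~\ref{differentialistame} this is bounded by $\frac{e^{2n+1}}{e-1}\|g\|_{n+1}$, giving a $(1,0,C(n))$-tame estimate. Third, writing $\omega(f,g)=\tfrac{1}{2\pi i}\int_{|z|=1}\langle f,g'\rangle\,dz$ and applying Cauchy--Schwarz together with Lemma~\ref{differentialistame} again yields $|\omega(f,g)|\leq C'\|f\|_0\|g\|_1$.

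For the tame Fr\'echet case, which is the substantive one, it suffices according to Definition~\ref{tamefrechetLie algebra} to check that $\mathrm{ad}(X)$ is a tame linear map for every fixed $X\in\widehat{M\mathfrak{g}}^{\sigma}$. Collecting the three estimates above with $f$, $r_c$, $r_d$ now regarded as fixed gives
\begin{displaymath}
\|\mathrm{ad}(X)Y\|_n\;\leq\;C_X(n)\bigl(\|g\|_{n+1}+|s_c|+|s_d|\bigr)\;\leq\;C_X(n)\,\|Y\|_{n+1},
\end{displaymath}
with $C_X(n)$ depending only on $\|f\|_{n+1}$ and $|r_c|,|r_d|$. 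Hence $\mathrm{ad}(X)$ is $(1,0,C_X(n))$-tame, as required. For the Banach case $X=A_n$ the same algebraic identities apply, but the derivative term means that the bracket is only continuous as a map $\widehat{A_{n+1}\mathfrak{g}}\times\widehat{A_{n+1}\mathfrak{g}}\to\widehat{A_n\mathfrak{g}}$; one interprets ``Banach Lie algebra'' in the statement in this ILB-compatible sense, the single-level continuity being verified exactly by the estimates above restricted to fixed $n$.

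The principal obstacle is the loss of one degree of regularity coming from the $[d,\cdot]$-component, which forces the bracket to be only $(1,0,\cdot)$-tame rather than continuous in the strict Banach sense. This is precisely the reason why the tame Fr\'echet framework of Lemma~\ref{differentialistame} (rather than a single Banach norm) is the natural setting, and why the graded nature of the $\|\cdot\|_n$ on $M\mathfrak{g}^{\sigma}$ is indispensable for the Lie-algebra axioms to close up at the level of $\widehat{M\mathfrak{g}}^{\sigma}$.
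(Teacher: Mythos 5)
Your argument is essentially the paper's own proof: the same decomposition into loop part plus the two-dimensional extension, the same explicit bracket formula, and the same three estimates (pointwise bracket, the $iz\,d/dz$ term via Lemma~\ref{differentialistame}, and the cocycle $\omega$), leading to the identical conclusion that $\mathrm{ad}(X)$ is $(1,0,C_X(n))$-tame with the loss of one index coming from the derivative. The one place you diverge is part~1: the paper asserts outright that continuity of the bracket on $\widehat{A_n\mathfrak{g}}^{\sigma}$ ``follows from the continuity of $\frac{d}{dz}$,'' whereas you correctly observe that the Cauchy estimate only gives continuity of $d/dz$ as a map $A_{n+1}\mathfrak{g}\to A_n\mathfrak{g}$, so the bracket is not bounded for a single fixed norm and ``Banach Lie algebra'' must be read in the $ILB$/weak sense. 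Your reading is in fact the one the paper itself adopts two paragraphs later, where it concedes that $\widehat{A_n\mathfrak{g}}^{\sigma}$ is not a Lie algebra in the strict sense because the bracket is only an $ILB$-$(1,0)$-regular map; so your version of part~1 is the more careful of the two.
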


\begin{proof}
Let $\mathbb{F}\in \{\mathbb{R}, \mathbb{C}\}$.
\begin{enumerate}
\item As a consequence of lemma~\ref{mgfrechetagbanach}, $A_n\mathfrak{g}^{\sigma}$ is a Banach space. Thus $\widehat{A_n\mathfrak{g}}^{\sigma}$ is Banach.

To prove that $ad(f+r_cc+r_dd)$ is continuous, we use  $[f+r_cc+r_dd,g+s_cc+s_dd]= [f,g]+r_d[d,g]-s_d[d,f]=[f,g]_0+\omega(f,g)c+r_d izg'-s_diz f'$. 
Continuity follows from the continuity of $\frac{d}{dz}$, which is a consequence of the Cauchy-inequality and the boundedness of multiplication on compact domains.

\item $M\mathfrak{g}^{\sigma}$ is a tame Fr\'echet space as a consequence of lemma~\ref{mgfrechetagbanach}.  Thus $\widehat{M\mathfrak{g}}^{\sigma}$ is tame as a direct product of tame spaces (lemma~\ref{constructionoftamespaces}). To prove tameness of the adjoint action, we need tame estimates for the norms. Those estimates follow directly from the Banach space situation:
\begin{alignat*}{2}
&\|ad(f+r_cc+r_dd)(g+s_cc+s_dd)\|_n =\\
&= \|[f,g]_0+\omega(f,g)c+r_d izg'-s_diz f'\| \leq\\
&\leq 2\|f\|_n \sup_{z\in A_n}|g(z)|+\|\omega(f,g)c\|+\|r_d izg'\|+\|s_diz f'\|_n\leq\\
&\leq 2\|f\|_n \|g(z)\|_n+2\pi \|f\|_n \| g' \|_n+|r_d| \|z\|_n\|g'\|_n+|s_d| \|z\|_n\| f'\|_n\leq\\
&\leq  2\|f\|_n \|g\|_n + 2\pi\|f\|_n \frac{e^{n+1}}{e-1}  \|g\|_{n+1}+e^n\|f\|_n\|\frac{e^{n+1}}{e-1} \|g\|_{n+1} + e^n \|s_d\|\frac{e^{n+1}}{e-1} \| f\|_{n+1} \leq\\
&\leq\left(2+2\pi\frac{e^{n+1}}{e-1}+ 2\frac{e^{2n+1}}{e-1}  \right) \|g\|_{n+1}\|f\|_{n+1}\leq\\
&\leq 6\pi e^{2n+1}\|g\|_{n+1}\|f\|_{n+1}\,.
\end{alignat*}

Thus $ad(\widehat{g})$ is $(1, 0, 6\pi e^{2n+1}\|g\|_{n+1})$-tame.\qedhere
\end{enumerate}
\end{proof}

\noindent This result shows that the tame structure on the Kac-Moody algebra is preserved by the adjoint action. 

\noindent For analytic details and the Cauchy-inequalities see for example~\cite{Berenstein91}.

\begin{theorem}
The system $\{\widehat{M\mathfrak{g}}^{\sigma},\widehat{A_n\mathfrak{g}}^{\sigma}\}$ is an $ILB$-system.

\end{theorem}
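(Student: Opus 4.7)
The plan is to reduce the claim to Proposition~\ref{nganginverselimit} by exploiting the direct-sum structure $\widehat{A_n\mathfrak{g}}^{\sigma}=A_n\mathfrak{g}^{\sigma}\oplus\mathbb{F}c\oplus\mathbb{F}d$ (and the analogous decomposition for $\widehat{M\mathfrak{g}}^{\sigma}$), together with the definition of the norms $\|(f,r_c,r_d)\|_n:=\sup_{z\in A_n}|f(z)|+(r_c\bar r_c+r_d\bar r_d)^{1/2}$, in which the scalar coordinates $(r_c,r_d)$ contribute a term independent of $n$.

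First I would verify that each $\widehat{A_n\mathfrak{g}}^{\sigma}$ is Banach: this is immediate from lemma~\ref{banachandfrechetstructureonkacmoodyalgebras}. Next, I would check that the inclusion $\iota_n:\widehat{A_{n+1}\mathfrak{g}}^{\sigma}\hookrightarrow\widehat{A_n\mathfrak{g}}^{\sigma}$ is continuous with dense image. Continuity is immediate since for any $(f,r_c,r_d)\in\widehat{A_{n+1}\mathfrak{g}}^{\sigma}$ the inequality $\sup_{z\in A_n}|f(z)|\leq\sup_{z\in A_{n+1}}|f(z)|$ gives $\|(f,r_c,r_d)\|_n\leq\|(f,r_c,r_d)\|_{n+1}$, so $\iota_n$ is a bounded linear map and thus continuous. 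For density, I would use that Proposition~\ref{nganginverselimit} already shows the image of $A_{n+1}\mathfrak{g}^{\sigma}$ is dense in $A_n\mathfrak{g}^{\sigma}$ (polynomial loops satisfying the twisting condition are dense in each $A_n\mathfrak{g}^{\sigma}$); taking the direct sum with the finite-dimensional factor $\mathbb{F}c\oplus\mathbb{F}d$ (which is embedded isomorphically) preserves density, so the image of $\iota_n$ is dense.

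It remains to show that $\widehat{M\mathfrak{g}}^{\sigma}=\bigcap_n\widehat{A_n\mathfrak{g}}^{\sigma}$ as a set and that its topology agrees with the inverse-limit topology. Set-theoretically, an element $(f,r_c,r_d)$ lies in every $\widehat{A_n\mathfrak{g}}^{\sigma}$ precisely when $f$ extends holomorphically to every annulus $A_n$, which by Proposition~\ref{nganginverselimit} is equivalent to $f\in M\mathfrak{g}^{\sigma}$; the scalar coordinates impose no further condition. For the topological statement, the topology on $\widehat{M\mathfrak{g}}^{\sigma}$ is by construction the one generated by the family of norms $\{\|\cdot\|_n\}_{n\in\mathbb{N}}$, which is exactly the weakest topology making all embeddings $\widehat{M\mathfrak{g}}^{\sigma}\hookrightarrow\widehat{A_n\mathfrak{g}}^{\sigma}$ continuous, i.e.\ the inverse-limit topology.

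There is no real obstacle: the entire content is that the two-dimensional extension contributes a fixed finite-dimensional Banach summand to each stage, so the chain properties lift verbatim from the loop-algebra chain established in Proposition~\ref{nganginverselimit}. The mildest point requiring attention is confirming that the density statement is unaffected by forming the direct sum with $\mathbb{F}c\oplus\mathbb{F}d$, which is clear because the embedding $\iota_n$ restricts to the identity on this finite-dimensional factor.
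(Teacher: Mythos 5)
Your proposal is correct and follows essentially the same route as the paper, which also reduces the claim to Proposition~\ref{nganginverselimit} by observing that $\{\widehat{M\mathfrak{g}}^{\sigma},\widehat{A_n\mathfrak{g}}^{\sigma}\}$ is the direct product of the loop-algebra $ILB$-system with the finite dimensional summand $\mathbb{F}c\oplus\mathbb{F}d$. Your version simply spells out the continuity, density, intersection, and topology checks that the paper leaves implicit.
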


\begin{proof}
According to lemma~\ref{nganginverselimit}, the system $\{M\mathfrak{g}^{\sigma}, A_n\mathfrak{g}^{\sigma}\}$ is an $ILB$-system. The same is true for  $\{\widehat{M\mathfrak{g}}^{\sigma},\widehat{A_n\mathfrak{g}}^{\sigma}\}$ as it is a direct product of an $ILB$-system with a finite dimensional subspace.
\end{proof}

Nevertheless the adjoint action on $\widehat{M\mathfrak{g}}^{\sigma}$ does not induce an adjoint action on each algebra $\widehat{A_n\mathfrak{g}}^{\sigma}$, but only an $ILB$-(1,0)-regular map. This is a consequence of lemma~\ref{banachandfrechetstructureonkacmoodyalgebras}. This means that $\widehat{A_n\mathfrak{g}}^{\sigma}$ is not a Lie algebra as the bracket is not defined for all elements.

\begin{remark}
Of course we could also define a weak Lie algebra structure with the Lie bracket defined only on a (dense) subset of $A_n\mathfrak{g}^{\sigma}\times A_n\mathfrak{g}^{\sigma}$.
\end{remark}

\section{Kac-Moody groups}
\label{Kac-Moody groups}

\subsection{Construction}
\index{Kac-Moody group}

In this section we describe the construction of affine Kac-Moody groups. Affine Kac-Moody groups can be realized as extensions of loop groups. Our presentation follows the outline of~\cite{PressleySegal86} - nevertheless we use a different functional analytic setting. Furthermore using a technical result of B.\ Popescu we prove that Kac-Moody groups of holomorphic loops carry a structure as tame Fr\'echet manifolds.

As usual $G_{\mathbb{C}}$ is a complex semisimple Lie group and $G$ its compact real form. As the constructions are valid in Kac-Moody groups defined with respect to various different regularity conditions, we use the regularity-independent notation $L(G_{\mathbb{C}}, \sigma)$ for the complex loop group and $L(G, \sigma)$ for its real form of compact type. To define groups of polynomial or analytic loops, we use the fact that every compact Lie group is isomorphic to a subgroup of some unitary group. Hence we can identify it with a matrix group. Similarly the complexification can be identified with a subgroup of some general linear group~\cite{PressleySegal86}. Groups of polynomial loops are then defined with respect to this representation. 

\noindent Kac-Moody groups are constructed in two steps.
\begin{enumerate}
\item The first step consists in the construction of an $S^1$-bundle in the real case (resp.\ a $\mathbb{C}^*$-bundle in the complex case) that corresponds via the exponential map to the central term $\mathbb{R}c$ (resp.\ $\mathbb{C}c$) of the Kac-Moody algebra.
\item In the second step we construct a semidirect product with $S^1$ (resp.\ $\mathbb{C}^*$). This corresponds via the exponential map to the $\mathbb{R}d$-\/ (resp.\ $\mathbb{C}d$-) term
\end{enumerate}

Study first the extension of $L(G, \sigma)$ with the short exact sequence:
\begin{displaymath}
1 \longrightarrow S^1\longrightarrow \widetilde{L}(G,\sigma) \longrightarrow L(G, \sigma) \longrightarrow 1\,.
\end{displaymath}

There are various groups $X$ that fit into this sequence. We need to define $\widetilde{L}(G,\sigma)$ in a way that its tangential Lie algebra at $e\in \widetilde{L}(G, \sigma)$ is isomorphic to $\widetilde{L}(\mathfrak{g}, \sigma)$.

As described in~\cite{PressleySegal86} this $S^1$-bundle is best represented by triples: take triples $(g(z), p(z,t), w)$ where $g(z)$ is an element in the loop group, $p(z,t)$ a path connecting the identity to $g(z)$ and $w\in S^1$  (respective $w\in \mathbb C^*$) subject to the relation of equivalence: $(g_1(z), p_1(z,t), w_1) \sim (g_2(z), p_2(z,t), w_2)$ iff $g_1(z)= g_2(z)$ and $w_1= C_{\omega}(p_2*p_1^{-1})w_2$. The term $w_1= C_{\omega}(p_2*p_1^{-1})w_2$ defines a twist of the bundle. Here we put: 
\begin{displaymath}C_{\omega}(p_2*p_1^{-1})=e^{\int_{S\left(p_2*p_1^{-1}\right)}\omega}\end{displaymath}
 where $S(p_2*p_1^{-1})$ is a surface bounded by the closed curve $p_2*p_1^{-1}$ and $\omega$ denotes the $2$-form used to define the central extension of $L(\mathfrak{g}, \sigma)$. The law of composition is defined by 
\begin{displaymath}(g_1(z), p_1(z,t), w_1)\cdot (g_2(z), p_2(z,t), w_2)=(g_1(z)g_2(z),\ p_1(z,t)*g_1(z)\cdot p_2(z,t), w_1 w_2)\,.\end{displaymath}

If $G$ is simply connected it can be shown that this object is a well defined group independent of arbitrary choices made in the construction iff $\omega$ is integral. This condition is satisfied by our definition of $\omega$~\cite{PressleySegal86}, theorem 4.4.1.
If $G$ is not simply connected, the situation is a little more complicated: let $G=H/Z$ where $H$ is a simply connected Lie group and $Z=\pi_1(G)$. Let $(LG)_0$ denote the identity component of $LG$. We can describe the  extension using the short exact sequence:
\begin{displaymath}1\longrightarrow S^1\longrightarrow \widetilde{LH}/Z\longrightarrow (LG)_0\longrightarrow 1\end{displaymath}see~\cite{PressleySegal86}, section 4.6.\, .

\noindent In case of complex loop groups, the $S^1$-bundle is replaced by a $\mathbb{C}^*$-bundle.

Hence, we can now give the definition of Kac-Moody groups:

\begin{definition}[Kac-Moody group]~
\begin{enumerate}
\item The real Kac-Moody group $\widehat{MG}_\mathbb{R}$ is the semidirect product of
 $S^1$ with the $S^1$-bundle $\widetilde{MG}_{\mathbb R}$.

\item The complex Kac-Moody group $\widehat{MG}_{\mathbb C}$ is its complexification: a semidirect product
of $\mathbb C^*$ with $\widetilde{MG}_{\mathbb C}$-bundle over
$MG$. 
\end{enumerate}
The action of the semidirect $S^1$ (resp.\ $\mathbb{C}^*$)-factor is in both cases defined by a shift of
the argument: 
\begin{displaymath}
\mathbb C^* \ni w_d: MG \rightarrow MG: f(z)\mapsto f(zw_d)\, .
\end{displaymath}
\end{definition}

\begin{remark}
Remark that in the compact case the shift is by elements $w_d=e^{i\varphi_d}$ only. Hence the action preserves the unit circle $S^1$. Thus one can use function spaces on $S^1$ thus yielding more general Kac-Moody groups than the groups of holomorphic loops, we study. Nevertheless those groups have no complexification in the same regularity class.
\end{remark}

The next aim is to prove that Kac-Moody groups are tame Fr\'echet manifolds. To this end we use a result of B.\ Popescu~\cite{Popescu05} stating that fiber bundles whose fiber is a Banach space over tame Fr\'echet manifolds are tame.
  
We start with the definition of tame fiber bundles:

\begin{definition}[tame Fr\'echet fiber bundle]
\index{tame Fr\'echet fibre bundle}
A fiber bundle $P$ over $M$ with fiber $G$ is a tame Fr\'echet manifold $P$ together with a projection map $\pi: P\longrightarrow M$ satisfying the following condition:

For each point $x\in M$ there is a chart $\varphi: U\longrightarrow V\subset F$ with values in a tame Fr\'echet space $F$ such that there is a chart $\varphi: \pi^{-1}(U)\longrightarrow  G\times U \subset G \times F$ such that the projection $\pi$ corresponds to a projection of $U\times F$ onto $U$ in each fiber.
\end{definition}

\noindent The following lemma is proved in~\cite{Popescu05}.
\begin{lemma}
Let $P$ be a fiber bundle over $M$ whose fiber is a Banach manifold; then $P$ is a tame Fr\'echet manifold.
\end{lemma}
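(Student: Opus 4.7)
The strategy is to lift the fiber bundle structure to an atlas of tame Fr\'echet charts, using the fact that products of Banach spaces with tame Fr\'echet spaces are tame. First I would fix an atlas $\{(U_i,\psi_i)\}$ of $M$ with $\psi_i\colon U_i\to V_i\subset F$, where $F$ is a tame Fr\'echet space and all chart transition functions $\psi_j\circ\psi_i^{-1}$ are smooth tame. Since $P$ is a fiber bundle with Banach fiber $G$, after shrinking the $U_i$ if necessary I can simultaneously assume there are local trivializations $\Phi_i\colon\pi^{-1}(U_i)\xrightarrow{\sim}G\times U_i$. Composing with $\mathrm{id}_B\times\psi_i$ and with a chart $\chi\colon W\subset G\to B$ of the Banach manifold $G$ modeled on a Banach space $B$ gives candidate charts
\[
\widetilde\varphi_i\colon \Phi_i^{-1}(W\times U_i)\longrightarrow B\times V_i\subset B\oplus F.
\]

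Next I would verify that the model space $B\oplus F$ is tame Fr\'echet: every Banach space is tame, and by lemma~\ref{constructionoftamespaces} the Cartesian product of tame Fr\'echet spaces is again tame. Hence each $\widetilde\varphi_i$ has values in a tame Fr\'echet space, which is the first half of the definition of a tame Fr\'echet manifold.

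The main step, and the one I expect to be the real obstacle, is showing that the chart transition functions $\widetilde\varphi_j\circ\widetilde\varphi_i^{-1}$ are smooth tame maps. On the base component this reduces to $\psi_j\circ\psi_i^{-1}$, which is smooth tame by hypothesis. On the fiber component the transition is governed by the bundle cocycle: a smooth family $g_{ij}\colon U_i\cap U_j\to\mathrm{Diff}(G)$ acting on $B$ through the Banach chart $\chi$. Since smooth maps between Banach spaces are automatically tame (the grading on a Banach space consists of a single norm, so the tameness estimate collapses to continuity of a smooth map), and since composition and evaluation of a smooth tame family of Banach-space maps parameterized by a tame Fr\'echet space preserves tameness by part (2) of lemma~\ref{constructionoftamemaps}, the combined transition $(b,f)\mapsto\bigl(g_{ij}(\psi_i^{-1}(f))\cdot b,\,\psi_j\psi_i^{-1}(f)\bigr)$ is smooth and tame.

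Finally I would collect these observations: the charts $\widetilde\varphi_i$ cover $P$, take values in the tame Fr\'echet space $B\oplus F$, and have smooth tame chart transitions, so $P$ is a tame Fr\'echet manifold in the sense of the earlier definition. The delicate technical point throughout is to make sure that the local trivializations can be chosen compatibly with the tame atlas on $M$ (so that the base coordinate $f$ ranges in a \emph{tame} open set) and that one can apply the composition lemma for tame maps to the cocycle evaluation; this is essentially the content of Popescu's argument and is where the assumption that the fiber is Banach (rather than merely Fr\'echet) is used, since tameness of the fiber transitions comes for free only in the Banach setting.
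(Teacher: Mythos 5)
The paper does not actually prove this lemma; it only cites Popescu's thesis~\cite{Popescu05}, so there is no argument in the text to compare yours against. Your proof is the natural one and is essentially correct: the model space $B\oplus F$ is tame by Lemma~\ref{constructionoftamespaces}, the base component of a transition is smooth tame by hypothesis on $M$, and the two components are assembled into a map into the product by Lemma~\ref{constructionoftamemaps}(2). One piece of justification is misplaced, though it does not create a gap. The fiber component of a transition is a map $B\times V_i\to B$, so its \emph{domain} is not a Banach space, and Lemma~\ref{constructionoftamemaps}(2) only pairs two tame maps into a product --- it says nothing about evaluating the cocycle. What actually makes the fiber component tame is that its \emph{target} is Banach: the grading on $B$ is a single norm, so the tame estimate $\|\Phi(x)\|_n\leq C(n)(1+\|x\|_{n+r})$ reduces to local boundedness of $\Phi$, which any continuous map into a Banach space satisfies after shrinking the chart domain. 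You state the right slogan (``the tameness estimate collapses'') but attach it to ``maps between Banach spaces''; it is the Banach target, not the Banach source, that does the work, and this is precisely where the hypothesis that the fiber is Banach rather than merely Fr\'echet enters --- as you correctly anticipate in your closing remark.
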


\noindent This result contains the important corollary:

\begin{corollary}
The Kac-Moody groups $\widehat{MG}_{\mathbb{R}}$, $\widehat{MG}_{\mathbb{C}}$, the quotient spaces $\widehat{MG}_{\mathbb{C}}/\widehat{MG}_{\mathbb{R}}$, $\widehat{MG}_D$ and the quotient spaces $\widehat{MG}_{\mathbb{R}}/\textrm{Fix}(\rho)$ and $\widehat{MG}_{D}/\textrm{Fix}(\rho)$ are tame Fr\'echet manifolds.
\end{corollary}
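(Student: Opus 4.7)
The plan is to reduce the tame Fréchet manifold structure on each Kac-Moody object to the tame Fréchet manifold structure on the underlying loop group (or its quotient) via the two-step construction of $\widehat{MG}$ described above, and then invoke Popescu's lemma on fiber bundles with Banach fiber together with the tameness results already established for loop groups.

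First I would recall the inputs. From Section~\ref{Manifoldstructuresongroupsofholomorphicmaps} we already know that $MG_{\mathbb{R}}$, $MG_{\mathbb{C}}$, the non-compact real form $MG_{D}$, and the quotients $MG_{\mathbb{C}}/MG_{\mathbb{R}}$, $MG_{\mathbb{R}}/\textrm{Fix}(\rho)$, $MG_{D}/\textrm{Fix}(\rho)$ are tame Fréchet manifolds (in the quotient cases because $\textrm{Fix}(\rho)$ is a closed subgroup whose gauge-type embedding into $\Omega^1 \times G$ yields a tame slice, exactly as in the proof of Proposition~\ref{mgc/mgrtamefrechet}). These will play the role of the base manifold $M$ in Popescu's lemma. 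The fiber of the central extension is $S^1$ in the real case and $\mathbb{C}^*$ in the complex case, each a finite-dimensional Banach Lie group.

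Next I would carry out the proof in two steps, matching the two-step construction of $\widehat{MG}$. In step one, the group $\widetilde{MG}$ is built as a principal $S^1$- (resp.\ $\mathbb{C}^*$-) bundle over the loop group. By Popescu's lemma, a fiber bundle over a tame Fréchet manifold whose fiber is a Banach manifold is itself a tame Fréchet manifold, so $\widetilde{MG}_{\mathbb{R}}$ and $\widetilde{MG}_{\mathbb{C}}$ are tame Fréchet. In step two, $\widehat{MG}$ is the semidirect product of $\widetilde{MG}$ with $S^1$ (resp.\ $\mathbb{C}^*$), where the action is the argument-shift $f(z)\mapsto f(w_d z)$. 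This semidirect product has underlying manifold $\widetilde{MG}\times S^1$ (resp.\ $\widetilde{MG}\times \mathbb{C}^*$), which is a Cartesian product of a tame Fréchet manifold with a finite-dimensional Lie group; Lemma~\ref{constructionoftamespaces} (plus the trivial fact that every finite-dimensional manifold is a Banach, hence tame, manifold) gives that the product is tame Fréchet. Hence $\widehat{MG}_{\mathbb{R}}$ and $\widehat{MG}_{\mathbb{C}}$ are tame Fréchet manifolds.

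For the remaining quotients I would repeat the same two-step scheme on the quotient base. The quotient $\widehat{MG}_{\mathbb{C}}/\widehat{MG}_{\mathbb{R}}$ fibers, after fixing a central-extension representative, over $MG_{\mathbb{C}}/MG_{\mathbb{R}}$ with fiber a quotient of the Cartesian $(\mathbb{C}^*)^2$ extension by its compact $(S^1)^2$ subgroup; this fiber is diffeomorphic to $\mathbb{R}^2$, i.e.\ Banach, and the base is tame by Proposition~\ref{mgc/mgrtamefrechet}, so Popescu's lemma applies. The non-compact real form $\widehat{MG}_D$ is constructed from $MG_D$ by the same $\mathbb{C}^*$-bundle-plus-semidirect-$\mathbb{C}^*$ procedure, yielding tameness by the same argument as for $\widehat{MG}_{\mathbb{C}}$. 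Finally, the isotropy quotients $\widehat{MG}_{\mathbb{R}}/\textrm{Fix}(\rho)$ and $\widehat{MG}_{D}/\textrm{Fix}(\rho)$ are bundles over $MG_{\mathbb{R}}/\textrm{Fix}(\rho)$ resp.\ $MG_{D}/\textrm{Fix}(\rho)$ (both already tame) whose fibers are again finite-dimensional Banach manifolds, so Popescu's lemma closes the argument.

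The main obstacle I expect is not the bundle step itself, but checking that at each stage the fibration is genuinely a tame Fréchet fiber bundle in the sense required by Popescu's lemma, i.e.\ that the local trivializations can be chosen tame. For the central extension this means producing tame local sections of $\widetilde{MG}\to MG$, for which one uses the explicit triple description $(g(z),p(z,t),w)$ together with a tame chart on $MG$ coming from logarithmic derivatives (Theorem~\ref{mgcistame}); transporting a contractible chart neighborhood of $MG$ to a section by choosing $p(z,t)$ depending tamely on $g(z)$. For the quotient cases an extra point to verify is that the $\textrm{Fix}(\rho)$-action (resp.\ $\widehat{MG}_{\mathbb{R}}$-action) admits tame local slices in $\widehat{MG}$; this follows by the same logarithmic-derivative/polar-action technique used in Proposition~\ref{mgc/mgrtamefrechet}, combined with the polar action results of Section~\ref{PolaractionsontameFrechetspaces}.
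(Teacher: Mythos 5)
Your proposal is correct and follows essentially the same route as the paper: the paper derives the corollary directly from Popescu's lemma (a fiber bundle over a tame Fr\'echet manifold with Banach fiber is tame Fr\'echet), applied to the $S^1$- resp.\ $\mathbb{C}^*$-bundle structure of $\widehat{MG}$ over the loop groups and quotients already shown to be tame in Section~\ref{Manifoldstructuresongroupsofholomorphicmaps}. Your additional attention to the tameness of local trivializations and slices is a reasonable elaboration of a point the paper leaves implicit, but it is not a different method.
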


\noindent Next we prove:

\begin{theorem}
\label{hatdiffeovectorspace}
$\widehat{MG}_{\mathbb{C}}/\widehat{MG}_{\mathbb{R}}$ is diffeomorphic to a vector space.
\end{theorem}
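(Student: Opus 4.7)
The plan is to exhibit a diffeomorphism between $\widehat{MG}_{\mathbb{C}}/\widehat{MG}_{\mathbb{R}}$ and a tame Fr\'echet vector space by decomposing the quotient according to the three pieces out of which the Kac-Moody group is built: the outer $d$-factor $\mathbb{C}^*$, the central $c$-factor $\mathbb{C}^*$, and the loop group $MG_{\mathbb{C}}$. The basic observation is that in each piece, a compact real form is replaced by its complexification, and the corresponding quotient is diffeomorphic to a vector space.

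First I would unfold the semidirect-product structure $\widehat{MG}_{\mathbb{C}} = \mathbb{C}^* \ltimes \widetilde{MG}_{\mathbb{C}}$ and $\widehat{MG}_{\mathbb{R}} = S^1 \ltimes \widetilde{MG}_{\mathbb{R}}$. The polar decomposition $\mathbb{C}^* \cong \mathbb{R}_+ \times S^1$ yields a well-defined smooth ``radial'' map
\begin{displaymath}
 |\cdot|\colon \widehat{MG}_{\mathbb{C}}/\widehat{MG}_{\mathbb{R}} \longrightarrow \mathbb{R}_+,\qquad [(w_d,g)] \mapsto |w_d|,
\end{displaymath}
with a smooth section $\lambda\mapsto [(\lambda,e)]$; invariance under the $\widehat{MG}_{\mathbb{R}}$-action is immediate from $(w_d,g)(s,h) = (w_d s, g\cdot\phi_{w_d}(h))$. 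This presents the quotient as a bundle over $\mathbb{R}_+$ whose fiber over $1$ is canonically $\widetilde{MG}_{\mathbb{C}}/\widetilde{MG}_{\mathbb{R}}$.

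Next I would perform the same polar trick on the central $\mathbb{C}^*$-fiber of the bundle $\widetilde{MG}_{\mathbb{C}}\to MG_{\mathbb{C}}$: the subgroup $\widetilde{MG}_{\mathbb{R}}$ is the reduction to the $S^1$-subbundle over $MG_{\mathbb{R}}$, so the projection descends to a smooth fibration
\begin{displaymath}
 \widetilde{MG}_{\mathbb{C}}/\widetilde{MG}_{\mathbb{R}} \longrightarrow MG_{\mathbb{C}}/MG_{\mathbb{R}}
\end{displaymath}
with model fiber $\mathbb{C}^*/S^1 \cong \mathbb{R}_+$. Because the fiber $\mathbb{R}_+$ is contractible, this is trivializable as a smooth tame Fr\'echet fiber bundle (one produces a section from the polar decomposition of $\mathbb{C}^*$ in each trivializing chart of the underlying principal bundle and verifies tameness). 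The base is handled by Proposition~\ref{mgc/mgrtamefrechet}, whose proof realizes $MG_{\mathbb{C}}/MG_{\mathbb{R}}$ as the image of a tame embedding into the product of the tame vector space $\Omega^1_{\mathbb{R}}(\mathbb{C}^*,\mathfrak{g})\cap\{a_{-1}=0\}$ with the finite-dimensional symmetric space $G_{\mathbb{C}}/G_{\mathbb{R}}$, and the latter is diffeomorphic to $i\mathfrak{g}_{\mathbb{R}}$ by the classical polar decomposition. Thus the base is diffeomorphic to a tame Fr\'echet vector space, and the same holds for the total space $\widetilde{MG}_{\mathbb{C}}/\widetilde{MG}_{\mathbb{R}}$, being the trivial $\mathbb{R}_+\cong\mathbb{R}$ bundle over it. Combining with Step~1 gives one more factor $\mathbb{R}_+\cong \mathbb{R}$, and the quotient is globally diffeomorphic to a tame Fr\'echet vector space.

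The hard part will be Step~2, the verification that the two bundle projections above are honest smooth tame fiber bundles and that the global sections produced by polar decomposition are smooth tame sections of the quotient manifold structure. The source of the difficulty is that the $\mathbb{R}_+$-dilation factor in $\mathbb{C}^*$ does \emph{not} preserve $MG_{\mathbb{R}}\subset MG_{\mathbb{C}}$: a positive real shift sends loops on $S^1$ with values in $G$ to loops on a different circle of $\mathbb{C}^*$. Consequently the semidirect product does not split as a direct product even as a manifold, and one has to check that the twisting $\phi_{w_d}(\widetilde{MG}_{\mathbb{R}})$ of the subgroup does not obstruct the decomposition of cosets. Careful bookkeeping with the explicit coset criterion $g_2^{-1}g_1\in\phi_{w_2}(\widetilde{MG}_{\mathbb{R}})$, together with the fact that the polar section $\lambda\mapsto[(\lambda,e)]$ already reduces everything to the slice $|w_d|=1$, shows that the twist is absorbed into the $\mathbb{R}_+$-direction and the remaining data is indeed a principal $\widetilde{MG}_{\mathbb{R}}$-coset space of the central bundle, so the fibration and its section are tame.
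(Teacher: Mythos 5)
Your plan is essentially the paper's own proof: both arguments reduce the statement to Proposition~\ref{mgc/mgrtamefrechet} and then exhibit $\widehat{MG}_{\mathbb{C}}/\widehat{MG}_{\mathbb{R}}$ as a trivial $(\mathbb{R}^+)^2$-bundle over $MG_{\mathbb{C}}/MG_{\mathbb{R}}$, the two $\mathbb{R}^+\cong\mathbb{C}^*/S^1$ factors coming from the $c$- and $d$-extensions. The only differences are organizational --- you peel off the $d$- and $c$-factors in two separate bundle steps and trivialize via an explicit polar section rather than via simple connectedness of the base, and you are more explicit than the paper about the twist of $\widetilde{MG}_{\mathbb{R}}$ by the argument-shift $f(z)\mapsto f(zw_d)$, a point the paper's additive coset bookkeeping silently glosses over.
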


\begin{proof}
By theorem~\ref{mgc/mgrtamefrechet}, we know that $MG_{\mathbb{C}}/MG_{\mathbb{R}}$ is diffeomorphic to a vector space.  As $MG_{\mathbb{R}}$ is a subgroup of $MG_{\mathbb{C}}$, the quotient is well defined. To prove the theorem we check the decomposition
\begin{displaymath}\widehat{MG}_{\mathbb{C}}/\widehat{MG}_{\mathbb{R}}\simeq MG_{\mathbb{C}}/MG_{\mathbb{R}}\times (\mathbb{R^+})^2\,.\end{displaymath}

\noindent To this end we use the description of the elements in $\widehat{MG}_{\mathbb{C}}/\widehat{MG}$ as $4$-tuples. Two $4$-tuples $(g(z), p(z,t), r_c, r_d)$ and $(g'(z), p'(z,t), r'_c, r'_d)$ describe the same element of $\widehat{MG}_{\mathbb{C}}/\widehat{MG}$ iff there exists an element $(h(z), q(z,t), s_c, s_d) \in MG_{\mathbb{R}}$ such that 
\begin{displaymath}(g(z), p(z,t), r_c, r_d)=(g'(z)+h(z), p'(z,t)+q(z,t), r'_c+s_c, r'_d+s_d)\,.\end{displaymath}

Hence the equivalence classes for $g(z)$ are elements of $MG_{\mathbb{C}}/MG_{\mathbb{R}}$. The extension of $MG_{\mathbb{R}}$ lies in $S^1$. Thus $r_c$ and $r_d$ are defined up to an element in $S^1$. So we obtain a description of $\widehat{MG}_{\mathbb{C}}/\widehat{MG}_{\mathbb{R}}$ as a $(\mathbb{R}^+)^2$-bundle over $MG_{\mathbb{C}}/MG_{\mathbb{R}}$. 

As $MG_{\mathbb{C}}/MG_{\mathbb{R}}$ is diffeomorphic to a vector space (and hence simply connected), this bundle is trivial. The diffeomorphism can be described by mapping  $(g(z), p(z,t), r_c, r_d)$ onto $(g(z), r_c, r_d)$, hence by forgetting the path $p(z,t)$.
\end{proof}

\noindent Now we investigate the quotients $\widehat{MG}/\widehat{\textrm{Fix}(\sigma)}$.

The group $\widehat{\textrm{Fix}(\sigma)}$ consists of elements $(g, p, r_c, r_d)\in \widehat{MG}$ such that
 $\{g, p\in \textrm{Fix}(\sigma), r_c\in \pm 1, r_d \in \pm 1\}$. 
  As $\{r_c, r_d\} \in \pm 1$, this is a covering with four leaves. The details of the argument follow the description found in~\cite{Popescu05} for the smooth $C^{\infty}$-setting.

\noindent Let $H \subset MG_{\mathbb{C}}$ be a real form of non-compact type.

\noindent The description of the space $H/\textrm{Fix}(\sigma) \subset MG_{\mathbb{C}}/\textrm{Fix}(\sigma)$ follows similarly. This yields a proof  of the following theorem:

\begin{theorem}
\label{typeIIIdiffeovectorspace}
The space $H/\textrm{Fix}(\sigma)$ is diffeomorphic to a vector space.
\end{theorem}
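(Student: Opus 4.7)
The strategy parallels the proof of Theorem \ref{hatdiffeovectorspace} combined with the finite-dimensional Cartan decomposition for symmetric spaces of non-compact type. First I would observe that $H$ is a non-compact real form of $\widehat{MG}_{\mathbb{C}}$ and that, if $\widehat{M\mathfrak{g}}_{\mathbb{R}} = \mathcal{K} \oplus \mathcal{P}$ denotes the decomposition of the compact real form under the involution cutting out $\textrm{Fix}(\sigma)$, then $\textrm{Lie}(H) = \mathcal{K} \oplus i\mathcal{P}$ with $\textrm{Lie}(\textrm{Fix}(\sigma)) = \mathcal{K}$. The identity $\textrm{Fix}(\sigma) = H \cap \widehat{MG}_{\mathbb{R}}$ yields a well-defined injective map
\begin{displaymath}
\iota: H/\textrm{Fix}(\sigma) \hookrightarrow \widehat{MG}_{\mathbb{C}}/\widehat{MG}_{\mathbb{R}},
\end{displaymath}
so it suffices to identify the image of $\iota$ with a tame Fr\'echet submanifold diffeomorphic to a vector space.

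The key step is to produce a Cartan-type factorization: every $h \in H$ should admit a decomposition $h = k \cdot m$ with $k \in \textrm{Fix}(\sigma)$ and $m$ in a transverse slice $\mathcal{M}$ canonically parameterized by $i\mathcal{P}$. For this I would combine the pointwise finite-dimensional Cartan decomposition with the polarity of the $XK_{\mathbb{R}}$-gauge action on $\mathcal{P}$ (Theorem \ref{polaractiononp}); this supplies the algebraic structure, and, via the monodromy and closing-condition argument used in the proof of Theorem \ref{polaractiononmg}, ensures that the slice $\mathcal{M}$ carries the correct holomorphic regularity. The double extension by $c$ and $d$ contributes a factor $(\mathbb{R}^+)^2$ exactly as in the proof of Theorem \ref{hatdiffeovectorspace}, so that $H/\textrm{Fix}(\sigma)$ fibers as $(\mathbb{R}^+)^2$ over a piece modelled on $i\mathcal{P}$, and both factors are diffeomorphic to vector spaces.

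To make this rigorous, I would avoid the group directly and work in the embedded chart constructed in the proof of Theorem \ref{mgcistame}, using logarithmic derivatives to identify $H/\textrm{Fix}(\sigma)$ with a subset of $\Omega^1(\mathbb{C}^*, \mathfrak{g}_{\mathbb{C}})_{\mathbb{R}} \times G_{\mathbb{C}}/G_{\mathbb{R}} \times (\mathbb{R}^+)^2$ cut out by a linear reality condition singling out the $i\mathcal{P}$-image under $\delta$, together with the vanishing of the monodromy obstruction $a_{-1}$. Theorem \ref{submanifoldsoffrechetspace} then identifies this level set as a tame Fr\'echet submanifold, and, exactly as in Theorem \ref{hatdiffeovectorspace}, this submanifold is diffeomorphic to a tame Fr\'echet vector space, the $(\mathbb{R}^+)^2$-factor becoming $\mathbb{R}^2$ via the logarithm.

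The main obstacle will be the usual one: the exponential map on $MG$ is not a local diffeomorphism (Theorem \ref{expnodiffeomorphism}), so a Cartan factorization of the form $h = k\,\Mexp(X)$ cannot be taken at face value as a smooth parameterization. The plan sidesteps this by working entirely with logarithmic derivatives and the induced tame charts, where the would-be Cartan decomposition translates into a simple real/imaginary splitting of $\mathfrak{g}_{\mathbb{C}}$-valued $1$-forms, and the tame Fr\'echet (and $ILB$) structure of $H/\textrm{Fix}(\sigma)$ is inherited from the ambient embedding rather than reconstructed via the exponential map.
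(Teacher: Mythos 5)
Your proposal follows essentially the same route as the paper: the paper proves Theorem~\ref{hatdiffeovectorspace} by exhibiting $\widehat{MG}_{\mathbb{C}}/\widehat{MG}_{\mathbb{R}}$ as a trivial $(\mathbb{R}^+)^2$-bundle over $MG_{\mathbb{C}}/MG_{\mathbb{R}}$, the latter identified with a tame Fr\'echet vector space via the logarithmic-derivative embedding and the surjectivity of the $MG_{\mathbb{R}}$-gauge action on the imaginary part, and then disposes of $H/\textrm{Fix}(\sigma)$ in one line by asserting that the description ``follows similarly,'' with the details deferred to~\cite{Popescu05}. Your write-up is actually more explicit than the paper's; the one step you should recognize as carrying all the weight --- and which neither you nor the paper actually carries out --- is the global Cartan-type factorization $H=\textrm{Fix}(\sigma)\cdot\mathcal{M}$, i.e.\ the surjectivity of the $\textrm{Fix}(\sigma)$-gauge action onto the $\mathcal{K}$-directions of $\delta(H)$: since $\rho$ is of the second kind it mixes $z$ and $1/\overline{z}$, so $\mathcal{K}\oplus i\mathcal{P}$ is not a pointwise real/imaginary splitting of $\mathfrak{g}_{\mathbb{C}}$-valued $1$-forms, the computation used for $MG_{\mathbb{C}}/MG_{\mathbb{R}}$ does not apply verbatim, and the integrated decomposition genuinely requires the argument the paper imports from Popescu rather than following formally from the polarity statement of Theorem~\ref{polaractiononp}.
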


In section~\ref{loopgroups} we introduced the pointwise exponential function $\Mexp:\widehat{M\mathfrak{g}}\longrightarrow \widehat{MG}$.
This exponential function can be extended to an exponential function on the Kac-Moody algebra $\widehat{M\mathfrak{g}}$.
\begin{displaymath}
\widehat{\Mexp}:\widehat{M\mathfrak{g}}\longrightarrow \widehat{MG}
\end{displaymath} 
Via $\widehat{\Mexp}$ the central extension $\mathbb{C}c$ corresponds to the fiber of the $\mathbb{C}^*$-bundle and the
 $\mathbb{C}d$-term corresponds to the $\mathbb{C}^*$-factor of the semidirect product.

To study more precisely the properties of the exponential function, we introduce the notion of curves in a Kac-Moody group. As a Kac-Moody group $\widehat{MG}_{\mathbb{R}}$ is locally a (topologically) direct product of the loop group $MG_{\mathbb{R}}$ with $\mathbb R^2$, a path: $\widehat{\gamma}: (-\epsilon, \epsilon)\rightarrow \widehat{MG}$ is locally described by three components: $\widehat{\gamma}(t)=(\gamma(t), \gamma_c(t), \gamma_d(t))$,
with $\gamma(t)$ taking values in $MG_{\mathbb{R}}$ and $\gamma_c(t)$, $\gamma_d(t)$ taking values in $\mathbb R$. For every $z\in \mathbb C^*$, $\gamma$ defines a path $\gamma_z(t): (-\epsilon, \epsilon)\rightarrow G$ by setting $\gamma_z(t):= [\gamma(t)](z)$.
A path $\gamma: (-\epsilon, \epsilon) \rightarrow MG$, $t \rightarrow \gamma (t)$ is differentiable (respective smooth) iff the map $\delta: (-\epsilon, \epsilon) \times \mathbb C^* \rightarrow G_{\mathbb C}$, $(t, z) \rightarrow \delta(t,z)$ such that $\delta(t,z)=\gamma_z(t)$ is differentiable (respective smooth).

For the group $\widetilde{MG}$ we can describe the exponential function as follows:

\begin{proposition}
The exponential function $\widetilde{\Mexp}:\widetilde{M\mathfrak{g}}\longrightarrow \widetilde{MG}$ is defined by
\begin{displaymath}
\widetilde{\Mexp}(f+r_c c)=\left(\Mexp (f),(t\longrightarrow exp tu)|_0^1 ,r^{ir_c}\right)
\end{displaymath}
\end{proposition}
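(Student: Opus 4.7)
The plan is to verify that the claimed formula defines a 1-parameter subgroup $t\mapsto \widetilde{\Mexp}(t(f+r_c c))$ of $\widetilde{MG}$ whose tangent vector at $t=0$ equals $f+r_c c \in \widetilde{M\mathfrak{g}}$. Since the exponential map is uniquely characterized by this property, this will pin down $\widetilde{\Mexp}$ completely. Concretely, I set $\gamma(t):=(\Mexp(tf),\, p_t,\, e^{itr_c})$ where $p_t:[0,1]\to MG$ is the natural path $s\mapsto \Mexp(stf)$ connecting the identity to $\Mexp(tf)$.

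Using the group law on $\widetilde{MG}$, namely
$$(g_1,p_1,w_1)\cdot(g_2,p_2,w_2)=(g_1g_2,\,p_1*(g_1\cdot p_2),\,w_1w_2),$$
I would compute $\gamma(s)\gamma(t)$ and compare with $\gamma(s+t)$. The loop-group component reduces to the pointwise identity $\Mexp(sf)\Mexp(tf)=\Mexp((s+t)f)$, which holds because for each $z\in\mathbb{C}^*$ the values $\exp(sf(z))$ and $\exp(tf(z))$ lie in the same 1-parameter subgroup of $G_{\mathbb{C}}$; the $S^1$-component reduces to the trivial identity $e^{isr_c}e^{itr_c}=e^{i(s+t)r_c}$.

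The nontrivial point is the path component. The concatenation $p_s*(\Mexp(sf)\cdot p_t)$ traces the 1-parameter subgroup $\{\Mexp(uf):u\in[0,s+t]\}\subset MG$ (merely with a reparametrization different from the canonical path $p_{s+t}$). Since the equivalence on triples identifies $(g,p_1,w_1)\sim(g,p_2,w_2)$ precisely when $w_1=C_\omega(p_2*p_1^{-1})w_2$, it suffices to observe that both paths lie in the 1-dimensional abelian subgroup $\Mexp(\mathbb{R}f)$, so any surface they cobound is degenerate and the integral $\int_S\omega$ defining the cocycle $C_\omega$ vanishes identically. Hence $\gamma(s+t)=\gamma(s)\gamma(t)$ as elements of $\widetilde{MG}$.

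For the tangent vector at $t=0$, I would differentiate componentwise: the loop-group factor gives $\tfrac{d}{dt}\Mexp(tf)|_{t=0}=f\in M\mathfrak{g}$ by Theorem~\ref{tangential space}, and the central factor gives $\tfrac{d}{dt}|_{t=0}e^{itr_c}=ir_c$, which corresponds to $r_cc$ under the identification $\mathrm{Lie}(S^1)\cong i\mathbb{R}$ fixed in the construction of the central extension. Under the direct-sum decomposition $T_e\widetilde{MG}\cong M\mathfrak{g}\oplus\mathbb{R}c = \widetilde{M\mathfrak{g}}$, these combine to $\dot{\gamma}(0)=f+r_cc$ as required. The principal obstacle will be rigorously justifying the cocycle vanishing in the path-equivalence step; the cleanest route is to observe that $\omega$, viewed as a left-invariant 2-form on $MG$, pulls back to zero on any 1-dimensional subgroup by antisymmetry, so the canonical and concatenated paths represent the same element of $\widetilde{MG}$ a priori.
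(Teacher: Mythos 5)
Your proposal is correct and follows exactly the route the paper indicates: the paper's entire proof is the one-line remark that one must check the formula defines a $1$\ndash parameter subgroup whose differential at the identity is $f+r_c c$, which is precisely what you verify. Your elaboration of the path-component step --- that the canonical path and the concatenated path cobound a loop inside the $1$\ndash dimensional subgroup $\Mexp(\mathbb{R}f)$, on which the cocycle $2$\ndash form pulls back to zero --- supplies the only detail the paper leaves implicit, and it is sound.
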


For the straightforward proof one has to check that this defines a $1$-parameter subgroup whose differential at the identity is $f+r_c c$.

\subsection{The Adjoint action}
\label{theadjointaction}

Similarly to finite dimensional simple Lie groups a Kac-Moody group admits an adjoint action on its Lie algebra:

\begin{example}[Adjoint action]
\label{adjointaction}
\index{Adjoint action}

With $x=\{w,(g,p,z)\}$, the Adjoint action of $\widehat{MG}^{\sigma}$ on $\widehat{M}\mathfrak{g}^{\sigma}$ is described by the following formulae
\begin{alignat*}{1}
\textrm{Ad}(x)u&:= gw(u)g^{-1}+\langle g w(u)g^{-1},g'g^{-1}\rangle c\\
\textrm{Ad}(x)c&:= c\\
\textrm{Ad}(x)d&:= d - g'g^{-1}+ \textstyle\frac{1}{2}\langle g'g^{-1}, g'g^{-1}\rangle c\,.
\end{alignat*} 
Here $\omega(u)$ denotes the shift of the argument by $\omega$.
\end{example}

\noindent For the proof compare~\cite{HPTT}, \cite{PressleySegal86}, and \cite{Kac90}.

\begin{proof}~ 
\begin{itemize}
\item[-] $c$ generates the center, thus  $\textrm{Ad}(x)c:= c$.
\item[-] $\textrm{Ad}(x)u$ follows by integrating the $Ad$-action.
\item[-] To calculate $\textrm{Ad}(g) (d)$, we use the $Ad$-invariance of the Lie bracket. As $[d,v]=v'$ we get for all $v \in L_{alg}\mathfrak{g}^{\sigma}$:
\begin{alignat*}{2}
  &   gv'g^{-1}+\langle g v'g^{-1},g'g^{-1}\rangle c =\\
= &   \textrm{Ad}(g)(v')=\textrm{Ad}(g) [d,v]= [\textrm{Ad}(g)(d), \textrm{Ad}(g) (v)]=\\
= &  [h+ \mu c + \nu d, gvg^{-1}+\langle g vg^{-1},g'g^{-1}\rangle c]=\\
= &  [h, gvg^{-1}]+\nu[d, gvg^{-1}]                           =\\
= &  hgvg^{-1}-gvg^{-1}h+\omega(h, (gvg^{-1})')c+\nu g'vg^{-1}+\nu gv'g^{-1}+\nu gv(g^{-1})'\,,
\end{alignat*}
 with $h \in L_{alg}\mathfrak{g}^{\sigma}$ and $\{\mu, \nu\} \in \mathbb{R}$.

\noindent To get equality we have to choose $\nu=1$, $h=-g'g^{-1}$. This gives us
 \begin{alignat*}{2} 
&  hgvg^{-1}-gvg^{-1}h+\omega(h, (gvg^{-1})')c+\nu g'vg^{-1}+\nu gv'g^{-1}+\nu gv(g^{-1})'=\\
=&-g'vg^{-1}+gvg^{-1}g'g^{-1}+\omega(-g'g^{-1}, (gvg^{-1})')c+ g'vg^{-1}+ gv'g^{-1}+ gv(g^{-1})'=\\
=&\omega(-g'g^{-1}, (gvg^{-1})')c+  gv'g^{-1}\,.
\end{alignat*}

\noindent Thus we are left with the calculation of $\mu$. To this end we use the property that $Ad$ acts by isometries. This results in $\mu=\frac{1}{2}\langle g'g^{-1}, g'g^{-1}\rangle$.\qedhere

\end{itemize}
\end{proof}

\noindent More details can be found in~\cite{PressleySegal86,HPTT,Popescu05, Popescu06}.

\begin{theorem}[Polarity of the Adjoint action]
\label{polarityofadjointaction}
\index{polarity of adjoint action}
Let $H_{l, r}\subset \widehat{X\mathfrak{g}}^{\sigma}$, $\{l, r\} \in \mathbb{R}\backslash \{0\}$ denote the intersection of the sphere with radius $-l^2$ with the horosphere $r_d=r$. The restriction of the Adjoint action to $H_{l, r}$ is polar.
\end{theorem}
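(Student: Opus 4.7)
The plan is to reduce the polarity of the restricted $\mathrm{Ad}$\ndash action on $H_{l,r}$ to the polarity of the gauge action on $X\mathfrak{g}^\sigma$ established in Theorem~\ref{polaractiononmg}. Write a general element of $\widehat{X\mathfrak{g}}^\sigma$ as $u + r_c c + r_d d$. From the explicit formulas in Example~\ref{adjointaction} one reads off that the $d$\ndash coefficient is fixed by $\mathrm{Ad}$, so the horosphere $\{r_d=r\}$ is $\mathrm{Ad}$\ndash invariant; the sphere $\{\langle\,\cdot\,,\cdot\,\rangle=-l^{2}\}$ is invariant by $\mathrm{Ad}$\ndash invariance of the scalar product. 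Hence $H_{l,r}$ is invariant, and it makes sense to ask for polarity of the restricted action.

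First I would parametrize $H_{l,r}$ by its loop part. The $\mathrm{Ad}$\ndash invariant form on $\widehat{X\mathfrak{g}}^\sigma$ has the Lorentz shape $\|u+r_c c+r_d d\|^{2}=\|u\|_{0}^{2}+2r_c r_d$ (with $\|\cdot\|_{0}$ the $H^{0}$\ndash form on the loop algebra). On the horosphere $r_d=r\neq 0$ the sphere equation solves uniquely for
\[
r_c \;=\; -\frac{l^{2}+\|u\|_{0}^{2}}{2r},
\]
so that the map $u\mapsto (u,r_c(u),r)$ is a diffeomorphism of $X\mathfrak{g}^\sigma$ onto $H_{l,r}$. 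Under this identification the $\mathrm{Ad}$\ndash action, read off from Example~\ref{adjointaction}, takes the loop part to
\[
u \;\longmapsto\; g\,w(u)\,g^{-1} \;-\; r\,g'g^{-1},
\]
which is precisely (up to the harmless reparametrization $w$ of the argument and a rescaling by $r$) the gauge action of $XG^{\sigma}$ on $X\mathfrak{g}^\sigma$ studied in Theorem~\ref{polaractiononmg}. In particular, the orbits of $\mathrm{Ad}$ on $H_{l,r}$ correspond bijectively under the above parametrization with the orbits of the gauge action on $X\mathfrak{g}^\sigma$.

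Next I would transport the section. By Theorem~\ref{polaractiononmg} the gauge action on $X\mathfrak{g}^\sigma$ is polar with a maximal abelian subalgebra $\mathfrak{a}\subset\mathfrak{g}$ (viewed as constant loops, with the obvious modification for $\sigma\neq\mathrm{Id}$) meeting every orbit. Hence
\[
\Sigma_{l,r} \;:=\; \bigl\{\,(X,\,r_c(X),\,r)\;\bigm|\; X\in\mathfrak{a}\bigr\} \;\subset\; H_{l,r}
\]
meets every $\mathrm{Ad}$\ndash orbit in $H_{l,r}$. The remaining task is the orthogonality of this intersection with respect to the restriction of the Lorentz form to $T H_{l,r}$. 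Vectors tangent to $H_{l,r}$ at $(u,r_c(u),r)$ have the form $(\dot u,\dot r_c,0)$ with $\dot r_c=-\langle u,\dot u\rangle_{0}/r$; substituting into the ambient scalar product one sees that the induced form on $H_{l,r}$ is positive on the directions along which the gauge orbits and the slice $\mathfrak{a}$ live, and agrees up to the term $2r\,\dot r_c$ with the $H^{0}$\ndash form used for gauge polarity. Since for tangents to the slice $\Sigma_{l,r}$ one has $\dot u\in\mathfrak{a}$ constant and $\dot r_c=-\langle X,\dot u\rangle_{0}/r$, while tangents to a gauge orbit have $\dot u=[\xi,u]-r\xi'$ for $\xi\in X\mathfrak{g}^\sigma$, the correction term reduces the orthogonality computation to $\langle X,[\xi,X]-r\xi'\rangle_{0}=0$, which is exactly what Theorem~\ref{polaractiononmg} supplies.

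The main obstacle is the last step: one has to verify that the Lorentz correction contributed by $\dot r_c$ does not spoil the orthogonality provided by the gauge-polar structure, and that nothing is lost when the maximal slice $\mathfrak{a}$ of the gauge action is embedded into $H_{l,r}$ via $X\mapsto(X,r_c(X),r)$. Once this bookkeeping is carried out, the polarity of $\mathrm{Ad}$ on $H_{l,r}$ with section $\Sigma_{l,r}$ follows immediately from Theorem~\ref{polaractiononmg}.
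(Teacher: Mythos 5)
Your proof is correct and takes essentially the same route as the paper, whose entire argument is the observation that the restriction of the Adjoint action to $H_{l,r}$ coincides with the gauge action on $X\mathfrak{g}^{\sigma}$, so that polarity is a direct consequence of Theorem~\ref{polaractiononmg}; you have merely supplied the parametrization and bookkeeping the paper leaves implicit. Note only that the ``Lorentz correction'' you worry about in the final step vanishes identically: tangent vectors to the horosphere have no $d$-component and $c$ is isotropic and orthogonal to the loop algebra, so the induced form on $TH_{l,r}$ is exactly the $H^{0}$-form on the loop parts and the orthogonality statement is verbatim the one from the gauge action.
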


\begin{proof} The restriction of the Adjoint action to $H_{l, r}$ coincides with the gauge action on $X\mathfrak{g}$. Hence, theorem \ref{polarityofadjointaction} is a direct consequence of theorem~\ref{polaractiononmg}.
\end{proof}

C.-L.\ Terng describes how to associate an affine Weyl group to this gauge action~\cite{terng95}. This is exactly the affine Weyl group of the Kac-Moody group $\widehat{MG}$.

\noindent This theorem gives a complete description of the Adjoint action iff $r_d\not=0$. 

\noindent Surprisingly in the remaining case $r_d=0$ the situation is different:
now the Adjoint action is reduced to the equations:
 \begin{alignat*}{1}
\textrm{Ad}(x)u&:= gw(u)g^{-1}+\langle g w(u)g^{-1},g'g^{-1}\rangle c\\
\textrm{Ad}(x)c&:= c
\end{alignat*}

Calculate the orbit of the constant function $u\equiv0$. $u$ is fixed by the Adjoint action as $\textrm{Ad}(x)u:= g 0 g^{-1}+\langle g0g^{-1},g'g^{-1}\rangle c=0$. Hence iff we can  describe the restriction of the Adjoint action to $X\mathfrak{g}^{\sigma}$ as some kind of polar action then the associated Weyl group has to be necessarily of spherical type. Furthermore the action is clearly not proper Fredholm. Hence the Hilbert-space version is not covered by Terng's results. 

\noindent We use the regularity independent notation:

We define a  flat of finite type to be a flat $\mathfrak{t}\subset L(\mathfrak{g},\sigma)$ such that $\mathfrak{t}$ is the restriction of a flat in $\widehat{L}(\mathfrak{g},\sigma)$. Hence all flats of finite type are conjugate in $\widehat{L}(G,\sigma)$ and as the orbits of $\widehat{L}(G,\sigma)$ and $L(G,\sigma)$ coincide on $L(\mathfrak{g},\sigma)$ also in $L(G,\sigma)$.  Hence any flat of finite type in $L(\mathfrak{g}, \sigma)$ is isomorphic to $\mathfrak{t}_0\subset \mathfrak{g}$, where we choose $\mathfrak{g}$ to denote the subalgebra of constant loops. Using the usual notion for regular and singular elements we find that the associated Weyl group is the spherical Weyl group of $\mathfrak{g}$.

\begin{remark}
From a geometric point of view this different behaviour is related to the fact that the hyperplane defined by $r_d=0$ corresponds to the spherical building at infinity while the space $r_d\not=0$ corresponds to the spherical building at infinity. For further details confere \cite{freyn09}, \cite{Freyn10a}, \cite{Freyn10d}.
\end{remark}

\chapter{Kac-Moody symmetric spaces}
\label{chap:symm}

\section{Foundations}

Kac-Moody symmetric spaces are tame Fr\'echet Lorentz manifolds. In this foundational section, we will review the differential geometry of tame Fr\'echet manifolds following the presentation in~\cite{Hamilton82} and discuss extensions of some results of pseudo-Riemannian geometry to the tame Fr\'echet setting.

\subsection{Differential geometry of tame Fr\'echet manifolds}

Let $M$ be a tame Fr\'echet manifold whose charts take values in a Fr\'echet space $F$ such that its tangent space at a point $T_fM$ is isomorphic to $F_1$. As the exponential map is not in general a diffeomorphism, we cannot automatically suppose $F=F_1$.

Having defined fibre bundles in section~\ref{Kac-Moody groups}, we
turn now to the special case of vector bundles, which is the most important one for differential geometry.
Denote by $P$ a vector bundle over $M$ with fiber $V$. $M$, $P$ and $V$ are supposed to be tame Fr\'echet spaces and denote by $TP$ its tangential bundle.

A vector field on $M$ is a smooth section of $TM$. 
As an example, let $M:=MG$. Then $TM$ is a $M\mathfrak{g}$-bundle over MG. Charts can be chosen to be $U\times M\mathfrak{g}$, where $U$ is a chart of $MG$. A vector field on $MG$ is defined locally to be $(f,\phi(f))$, with $f\in U$ and $\phi: U\longrightarrow M\mathfrak{g}$ a smooth map. 
Following the finite dimensional theory, we define:

\begin{definition}[Vertical bundle]
\index{vertical bundle}
The vertical bundle $T_vP\subset TP$ consists of the vertical vectors, that is the vectors $v\subset TP$ such that $v \subset (d\pi)^{-1}(0,x)$ with $x\in M$.
\end{definition}

A connection on $TP$ consists of the assignment of a complementary tame Fr\'echet subbundle $T_hP$ of $TP$ (i.e.: such that $TP=T_vP\oplus T_hP$), that is:

\begin{definition}[Connection]
\index{connection}
A connection $\Gamma$ on $TP$ is the assignment of a complementary subspace of horizontal vectors, such that in terms of any coordinate chart of $P$ with values in $(U\subset F)\times G$ the subspace of horizontal vectors at $T_fP$ consists of all $(h,k)\in F_1\times G$ such that $k=\Gamma(f)\{g,h\}$, where $\Gamma$ is represented in any local chart by a smooth map $\Gamma: (U\subset F)\times G\times F_1 \longrightarrow G$, which is bilinear in $g$ and $h$.  
\end{definition} 

If $P$ is the tangent bundle of $M$, then $G=F_1$. We call a connection symmetric if $\Gamma$ is symmetric in $\{g,h\}$. 

In the finite dimensional case, in order to define curvature, one would now define a tensor field on $M$. 
As dual spaces of Fr\'echet spaces are (with the trivial exception of Banach spaces) not Fr\'echet spaces, this approach is not possible for Fr\'echet manifolds.  In contrast we are forced to use explicit coordinate dependent descriptions in terms of component functions.

\begin{definition}[curvature]
\index{curvature of Fr\'echet manifold}
The curvature of a connection on a vector bundle $P$ over $M$ is the trilinear map $R: P\times TM\times TM \longrightarrow P$ such that
$$R(f)\{g,h,k\}:= D\Gamma\{g,h,k\}- D\Gamma(f)\{g,k,h\}-\Gamma(f)\{\Gamma(f)\{g,h\},k\}+\Gamma{f}\{\Gamma(f)\{g,k\},h\}\,.$$
\end{definition}

We now want to define a metric on $TM$. Again, this is only possible in a coordinate description:

\begin{definition}[metric]
\index{metrics}
Let $M$ be a tame Fr\'echet manifold and $TM$ its tangential bundle. A metric on $TM$ is a smooth bilinear map: $g:TM\times TM \longrightarrow \mathbb{R}$. Smooth means that $g$ can be described in any local chart as a smooth map.
\end{definition}

\noindent Clearly, $M$ is not complete with respect to $g$; so $g$ is only a weak metric.

Following the finite dimensional convention, we define the index of $g$ to be the dimension of the maximal subspace on which $g$ is negative definite.

As usual a connection is  \emph{compatible} with the metric $g$, iff 

$$\frac{d}{dt} g\left(V,W\right)=g\left(\frac{DV}{dt},W\right)+g\left(V, \frac{DW}{dt}\right)$$
for any vector fields $V$ and $W$ along a curve $c:I\longrightarrow M$. 

\begin{definition}[Levi-Civita connection]
\index{Levi-Civita connection}
Let $(M,g)$ be a tame Fr\'echet manifold. A Levi-Civita connection is a symmetric connection which is compatible with the metric.
\end{definition}

\begin{lemma}
If a Levi-Civita connection exists, it is well defined and unique.
\end{lemma}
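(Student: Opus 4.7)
The plan is to mimic the classical Koszul-formula proof, making sure at each step that the only functional-analytic input required is the pointwise non-degeneracy of the (weak) metric $g$ on each tangent space $T_pM$. First I would fix a point $p\in M$, a chart around $p$, and three smooth vector fields $X$, $Y$, $Z$. Writing out the compatibility condition $\frac{d}{dt}g(V,W) = g(\tfrac{DV}{dt},W) + g(V,\tfrac{DW}{dt})$ along integral curves yields the three identities
\begin{align*}
X\,g(Y,Z) &= g(\nabla_X Y, Z) + g(Y, \nabla_X Z),\\
Y\,g(Z,X) &= g(\nabla_Y Z, X) + g(Z, \nabla_Y X),\\
Z\,g(X,Y) &= g(\nabla_Z X, Y) + g(X, \nabla_Z Y).
\end{align*}
Adding the first two, subtracting the third, and using symmetry of $\Gamma$ (which in vector-field form reads $\nabla_X Y - \nabla_Y X = [X,Y]$) gives the Koszul-type identity
\begin{equation*}
2\, g(\nabla_X Y, Z) = X g(Y,Z) + Y g(Z,X) - Z g(X,Y) + g([X,Y],Z) - g([X,Z],Y) - g([Y,Z],X).
\end{equation*}
All the terms on the right-hand side are expressed entirely in terms of $g$, the vector fields, and their Lie brackets, which do not involve the connection; so the right-hand side is determined by the data $(M,g)$ alone.

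The second step is to conclude uniqueness of $\nabla_X Y$ from this identity. For a Lorentz metric $g$ the bilinear form $g_p \colon T_pM\times T_pM\to\mathbb R$ is non-degenerate at every point by hypothesis. Hence if two Levi-Civita connections $\nabla$ and $\widetilde\nabla$ both satisfied the Koszul identity above, then at every $p$ and every $Z\in T_pM$ one would have $g_p\!\left(\nabla_X Y - \widetilde\nabla_X Y,\, Z\right)=0$, forcing $\nabla_X Y = \widetilde\nabla_X Y$ at $p$. Since $p$, $X$, $Y$ were arbitrary, the two connections coincide. Uniqueness of the Christoffel map $\Gamma$ in any local chart follows at once, and the global object it defines is therefore well defined independently of chart choices (because the Koszul expression is tensorial in the chart-independent data $g$ and $[\,\cdot\,,\,\cdot\,]$).

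The main obstacle I anticipate is purely functional-analytic rather than algebraic: one must check that the pointwise non-degeneracy of the weak metric $g_p$ on $T_pM\cong F_1$ genuinely suffices to cancel $Z$ from the identity. In finite dimensions this is automatic, but on a Fréchet space non-degeneracy of a bilinear form is strictly weaker than the existence of a topological isomorphism $T_pM\to T_pM^*$. Fortunately the conclusion we need is the purely algebraic implication ``$g_p(v,Z)=0$ for all $Z\in T_pM$ implies $v=0$'', which is precisely the definition of non-degeneracy and which is built into the Lorentz (i.e.\ index $1$) hypothesis that will be used for the actual Kac-Moody symmetric spaces. The smoothness (and, later, tameness) of $\Gamma$ follows because the right-hand side of the Koszul identity depends smoothly (and tamely) on $X,Y,Z$ and on $g$, so no separate regularity argument is required.
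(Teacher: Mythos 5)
Your Koszul-formula derivation is precisely the ``explicit calculation analogous to the finite dimensional case'' that the paper invokes without writing out, and your observation that only the pointwise \emph{algebraic} non-degeneracy of the weak metric is needed to cancel the test field $Z$ (rather than a topological isomorphism $T_pM\to T_pM^*$, which fails for Fr\'echet spaces) is exactly the right justification for why the finite-dimensional argument survives in this setting. The proof is correct and takes essentially the same route as the paper.
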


This result follows analogous to in the finite dimensional case via explicit calculation.

In contrast to this result, the existence of a Levi-Civita connection seems not to be clear in general for tame Fr\'echet manifolds.

Nevertheless for the special case of Lie groups, B.\ Popescu proves in~\cite{Popescu05}:

\begin{theorem}[Existence of Levi-Civita connection on Fr\'echet Lie group]
\label{existanceoflevicivitaconnection}
Any tame Fr\'echet Lie group $G$ admits a unique left invariant connection such that $\nabla_{X}Y=\frac{1}{2}[X,Y]$ for any pair of left invariant vector fields $X$ and $Y$. It is torsion free. If $G$ admits a biinvariant (pseudo-) Riemannian metric, then $\nabla$ is the corresponding Levi-Civita connection.
\end{theorem}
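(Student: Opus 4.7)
The plan is to work in the left trivialization $TG \cong G \times \mathfrak{g}$: every smooth vector field $A$ on $G$ corresponds to a smooth map $\alpha : G \to \mathfrak{g}$ via $A(g) = (L_g)_* \alpha(g)$, and left-invariant fields correspond to constant maps. First I would \emph{define} the candidate connection by the formula
\begin{displaymath}
(\nabla_A B)(g) := (L_g)_* \Bigl( (A\!\cdot\!\beta)(g) + \tfrac{1}{2}[\alpha(g),\beta(g)]\Bigr),
\end{displaymath}
where $A\!\cdot\!\beta$ denotes the directional derivative of $\beta : G \to \mathfrak{g}$ along $A$. This formula is built from operations which are smooth and tame (directional differentiation of smooth sections, and the tame Lie bracket on $\mathfrak{g}$, cf.\ the definition of a tame Fr\'echet Lie algebra), so in any chart it produces a smooth tame map $\Gamma$ bilinear in its vector arguments, as demanded by Hamilton's notion of a connection. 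Plugging in constant $\alpha,\beta$ immediately yields $\nabla_X Y = \tfrac{1}{2}[X,Y]$ for left-invariant $X,Y$.

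Next I would verify the connection axioms ($\mathbb{R}$-linearity in both slots, tensoriality in $A$, and the Leibniz rule in $B$): all of these reduce to the Leibniz rule for the directional derivative together with bilinearity of the bracket. Left-invariance of $\nabla$ is immediate from the intrinsic formula, and uniqueness follows at once: a left-invariant connection is determined by its values on the pointwise spanning family of left-invariant vector fields, so any other candidate agrees with $\nabla$ after passing through the trivialization. For torsion-freeness, one observes that $T(A,B) := \nabla_A B - \nabla_B A - [A,B]$ is $C^\infty(G)$-bilinear and hence tensorial; it therefore suffices to evaluate on left-invariant pairs, where
\begin{displaymath}
T(X,Y) = \tfrac{1}{2}[X,Y] - \tfrac{1}{2}[Y,X] - [X,Y] = 0.
\end{displaymath}

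For the final statement, assume $G$ carries a bi-invariant (pseudo-)Rieman\-nian metric $g$. Bi-invariance is equivalent to ad-invariance of the inner product on $\mathfrak{g}$, i.e.\ $g([X,Y],Z) + g(Y,[X,Z]) = 0$ for all $X,Y,Z \in \mathfrak{g}$. The compatibility expression $X(g(Y,Z)) - g(\nabla_X Y,Z) - g(Y,\nabla_X Z)$ is tensorial in $X,Y,Z$ by a direct computation identical to the finite-dimensional one, so it suffices to test on left-invariant fields: then $X(g(Y,Z))=0$ because $g(Y,Z)$ is constant, and the remaining two terms give $\tfrac{1}{2}\bigl(g([X,Y],Z) + g(Y,[X,Z])\bigr) = 0$ by ad-invariance. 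Combined with torsion-freeness and the standard Koszul uniqueness --- which is a purely algebraic identity and so survives the absence of duality in the Fr\'echet setting --- this identifies $\nabla$ with the Levi-Civita connection of $g$.

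The main obstacle I anticipate is not the algebra, which is formally identical to the finite-dimensional story, but the bookkeeping required to ensure that the object $\nabla$ produced in the trivialization really \emph{is} a tame smooth connection in Hamilton's sense --- i.e.\ that the map $\Gamma : (U\subset F)\times F_1 \times F_1 \to F_1$ assembled from the directional derivative of $\beta$ and the bracket is itself smooth and tame in every chart. For the Kac-Moody groups of interest this reduces to the tameness of the adjoint action, established in Lemma~\ref{banachandfrechetstructureonkacmoodyalgebras}, together with the observation that directional differentiation of smooth sections is tame in any chart of a tame Fr\'echet manifold.
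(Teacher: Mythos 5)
The paper does not actually prove this theorem: it quotes it as a result of B.~Popescu's thesis~\cite{Popescu05} and moves on, so there is no in-text argument to compare against. Your proposal supplies the standard proof one would expect to find there, and it is essentially correct: define the connection in the left trivialization $TG\cong G\times\mathfrak{g}$ with Christoffel part $\tfrac12[\cdot,\cdot]$, observe that the condition $\nabla_XY=\tfrac12[X,Y]$ on left-invariant fields pins the (base-point-independent) Christoffel symbol down completely, and reduce torsion-freeness and metric compatibility to tensorial identities that only need to be checked on left-invariant fields, where they follow from antisymmetry and ad-invariance respectively. Two points deserve tightening. First, the paper's definition of a tame Fr\'echet Lie algebra only requires each $\mathrm{ad}(X)$ to be tame separately, whereas Hamilton's notion of a connection needs the bilinear map $\Gamma(f)\{g,h\}=\tfrac12[g,h]$ to be \emph{jointly} smooth and tame; for the Kac-Moody algebras this is supplied by the explicit estimate in lemma~\ref{banachandfrechetstructureonkacmoodyalgebras}, but for an arbitrary tame Fr\'echet Lie group one should derive it from tameness of the group multiplication. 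Second, your worry about "tameness of directional differentiation" is a red herring: in Hamilton's formalism the term $A\cdot\beta$ is the chart-independent derivative part of the covariant derivative and is not part of $\Gamma$, so the only genuine bookkeeping is that the bracket, conjugated by the (smooth tame) transition functions between the left trivialization and the charts of the atlas, remains smooth tame. Finally, for the last assertion you do not even need the Koszul argument: having exhibited a symmetric connection compatible with the metric, it is the Levi-Civita connection by the paper's own definition, and uniqueness is the paper's preceding lemma (which does require the metric to be weakly nondegenerate, as the Lorentzian scalar products used here are).
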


\begin{lemma}[curvature of tame Fr\'echet Lie group]
\index{curvature of Fr\'echet Lie group}
\label{curvatureoffrechetliegroup}
The curvature tensor of a tame Fr\'echet Lie group is given by:
\begin{displaymath}
R(f)\{g,h,k\}=\frac{1}{4}[[g,h],k]\, .
\end{displaymath}
\end{lemma}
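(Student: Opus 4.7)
The plan is to reduce to left-invariant vector fields and then apply the formula $\nabla_X Y = \tfrac{1}{2}[X,Y]$ supplied by Theorem~\ref{existanceoflevicivitaconnection}. Since $R(f)\{g,h,k\}$ is trilinear in $g,h,k$ and the left-invariant vector fields span every tangent space $T_f G$, it suffices to evaluate the curvature on triples of left-invariant fields and read off the identity on the Lie algebra. First I would check that the coordinate expression for $R$ given in the previous definition coincides, when applied to smooth vector fields $X,Y,V$, with the intrinsic commutator expression
\begin{equation*}
R(V,X,Y) \;=\; \nabla_X\nabla_Y V - \nabla_Y\nabla_X V - \nabla_{[X,Y]}V,
\end{equation*}
exactly as in the finite dimensional case; the derivation only uses bilinearity of $\Gamma$ in its fiber arguments and the product rule, both of which are available for smooth tame maps in Hamilton's calculus.

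Next I would substitute left-invariant fields $X,Y,Z$ corresponding to $g,h,k\in\mathfrak{g}$ and iterate Theorem~\ref{existanceoflevicivitaconnection}. Since $\tfrac12[Y,Z]$ is again left-invariant, one gets
\begin{equation*}
\nabla_X\nabla_Y Z = \tfrac{1}{4}[X,[Y,Z]], \qquad \nabla_Y\nabla_X Z = \tfrac{1}{4}[Y,[X,Z]], \qquad \nabla_{[X,Y]}Z = \tfrac{1}{2}[[X,Y],Z],
\end{equation*}
and therefore
\begin{equation*}
R(V,X,Y) \;=\; \tfrac{1}{4}\bigl([X,[Y,Z]]-[Y,[X,Z]]\bigr) \;-\; \tfrac{1}{2}[[X,Y],Z].
\end{equation*}
The Jacobi identity rewrites the bracketed difference as $[[X,Y],Z]$, and the two terms collapse to a single multiple of $[[X,Y],Z]$. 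Reading this back through the paper's sign convention for $R(f)\{g,h,k\}$ (fixed by comparing the coordinate definition with the intrinsic formula in the first paragraph) yields the claimed identity $R(f)\{g,h,k\}=\tfrac{1}{4}[[g,h],k]$.

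The main obstacle is really bookkeeping rather than analysis: one must match the coordinate-based definition of $R$ in the paper with the covariant-derivative form so that left-invariant computations on $\mathfrak{g}$ actually produce the right tensor. Once this identification is carried out (which does not require any new estimates beyond what is built into a tame Fr\'echet connection), the remaining content is the pair of substitutions above together with the Jacobi identity, both of which are purely algebraic and hold in $\widehat{L}(\mathfrak g,\sigma)$ as a Lie algebra.
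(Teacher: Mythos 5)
Your proof is essentially the paper's own: the paper disposes of this lemma with the single remark that it ``follows the finite dimensional pattern,'' and your argument is precisely that pattern spelled out --- reduce to left-invariant fields by tensoriality, iterate $\nabla_XY=\tfrac12[X,Y]$ from Theorem~\ref{existanceoflevicivitaconnection}, and apply the Jacobi identity. The one step you defer rather than carry out is the sign bookkeeping, and it is the only place where the proof does not fully close: with the standard convention $R(X,Y)Z=\nabla_X\nabla_YZ-\nabla_Y\nabla_XZ-\nabla_{[X,Y]}Z$ your computation correctly yields $-\tfrac14[[X,Y],Z]$, so the claimed $+\tfrac14[[g,h],k]$ must come from the paper's coordinate definition of $R(f)\{g,h,k\}$; but as printed, that definition has its quadratic terms $-\Gamma\{\Gamma\{g,h\},k\}+\Gamma\{\Gamma\{g,k\},h\}$ paired with derivative terms $D\Gamma\{g,h,k\}-D\Gamma\{g,k,h\}$ in a way that matches neither sign of the intrinsic commutator formula (the two halves carry opposite relative signs), so the identification you promise in your first paragraph cannot be verified literally and presumably rests on a sign slip in the transcription from Hamilton. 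Since both conventions give the same value of $\langle R(f)\{g,h,g\},h\rangle=\tfrac14\|[g,h]\|^2\geq0$, which is all the paper subsequently uses, nothing downstream is affected, but you should either fix the convention explicitly or state the result up to the overall sign.
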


The proof consisting of purely algebraic manipulations follows the finite dimensional pattern.

\begin{lemma}[Bianchi-identity and symmetry properties]
\index{Bianchi-identity}
For a pseudo-Riemann tame Fr\'echet manifold with Levi-Civita connection, the following identities hold:
\begin{itemize}
\item[-]\hspace{3pt} $ R(f)\{g,h,k\}+R(f)\{h,k,g\}+R(f)\{k,g,h\}=0$ (Bianchi-identity), 
\item[-]\hspace{3pt} $\langle R(f)\{g,h,k\},l \rangle=-\langle R(f)\{h,g,k\},l \rangle$,
\item[-]\hspace{3pt} $\langle R(f)\{g,h,k\},l \rangle=-\langle R(f)\{g,h,l\},k \rangle$,
\item[-]\hspace{3pt} $\langle R(f)\{g,h,k\},l \rangle=\langle R(f)\{l,k,g\},h \rangle$.
\end{itemize}
\end{lemma}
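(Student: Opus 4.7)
These are the standard four symmetries of the Riemann tensor, and the plan is to establish them in the tame Fr\'echet setting by pointwise/coordinate computation. Tensorial arguments are not directly available here (the dual of a Fr\'echet space is in general not Fr\'echet, as already noted), but all four statements are purely multilinear algebraic identities at a single point, so no global analysis is required; everything takes place inside the single fibre $T_fM$ and uses only the (bi- and tri-) linearity of the local expressions $\Gamma(f)\{\cdot,\cdot\}$, $D\Gamma(f)\{\cdot,\cdot,\cdot\}$ and the inner product.

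First I would establish the antisymmetry in the two ``curvature slots'' by direct inspection of the explicit formula
\begin{displaymath}
R(f)\{g,h,k\}=D\Gamma(f)\{g,h,k\}-D\Gamma(f)\{g,k,h\}-\Gamma(f)\{\Gamma(f)\{g,h\},k\}+\Gamma(f)\{\Gamma(f)\{g,k\},h\};
\end{displaymath}
swapping the relevant pair of arguments manifestly reverses the sign of each of the four summands. For the Bianchi identity I would form the cyclic sum, group the six $D\Gamma$-contributions and the six $\Gamma\circ\Gamma$-contributions, and check that each collection cancels after invoking the symmetry of $\Gamma(f)\{\cdot,\cdot\}$ in its two arguments, which is exactly the torsion-freeness hypothesis (the connection is symmetric in the sense of Hamilton). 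This is the literal transcription of the textbook verification for a torsion-free affine connection on a finite-dimensional manifold, and no step of the calculation uses anything beyond pointwise multilinearity.

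For the metric antisymmetry $\langle R(f)\{g,h,k\},l\rangle=-\langle R(f)\{g,h,l\},k\rangle$ I would appeal to Theorem \ref{existanceoflevicivitaconnection} to pick the Levi-Civita connection and use its compatibility with $g$. The computation mirrors the classical one: apply the metric-compatibility relation to the two iterated covariant derivatives that make up $R$, take the commutator, and observe that the symmetric second-order pieces cancel, leaving precisely $\langle R\{g,h,k\},l\rangle+\langle k,R\{g,h,l\}\rangle=0$. Since $g$ is only required to be a smooth symmetric bilinear form on the fibres (possibly indefinite), the derivation goes through verbatim in pseudo-Riemannian and hence Lorentz signature, which is the case relevant for Kac--Moody symmetric spaces.

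Finally, the pair symmetry $\langle R(f)\{g,h,k\},l\rangle=\langle R(f)\{l,k,g\},h\rangle$ is deduced by the standard algebraic juggle: write the Bianchi identity for each of the four cyclic shifts of the quadruple $(g,h,k,l)$, sum the four equations, and simplify using the two antisymmetries already proved; the cross terms cancel in pairs and one reads off the desired identity. The main obstacle I foresee here is purely bookkeeping rather than analytic: in Hamilton's coordinate formula for $R$ there are many terms depending on the base point $f$ through both $\Gamma$ and $D\Gamma$, and one must verify that every term is accounted for when expanding the cyclic sums. Since no Fr\'echet-specific pathology can intervene at a point, the whole lemma is a direct consequence of the finite-dimensional algebraic pattern together with the existence of the Levi-Civita connection supplied by Theorem \ref{existanceoflevicivitaconnection}.
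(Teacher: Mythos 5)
Your proposal is correct and coincides with the paper's approach: the paper's entire proof is the one-line remark that ``the proof consists in algebraic manipulations, exactly like in the finite dimensional case,'' and your write-up simply supplies the pointwise multilinear computations (manifest antisymmetry from the formula for $R$, cyclic cancellation using the symmetry of $\Gamma$, metric compatibility for the second antisymmetry, and the standard four-Bianchi juggle for the pair symmetry) that the paper leaves implicit. The only caution is notational: as printed, the paper's formula for $R(f)\{g,h,k\}$ is manifestly antisymmetric in the pair $(h,k)$ rather than $(g,h)$, so when carrying out the ``direct inspection'' you must first fix which slots play the role of the two curvature directions, since the paper's own slot-ordering in the lemma is not consistent with its displayed formula.
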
  

The proof consists in algebraic manipulations, exactly like in the finite dimensional case. There are no good notions of Ricci curvature or scalar curvature on infinite dimensional manifolds as contraction in general do not exist. Remark that in special cases one could force the existence of Ricci curvature by imposing finiteness conditions on the trace. On the other hand sectional curvature is defined on any $2$-dimensional subspace:

\begin{definition}[Sectional curvature]
\index{section curvature}
If $|g\wedge h|^2=\langle g, g \rangle \langle h, h\rangle -\langle g,h\rangle ^2 \neq 0 $, we define the sectional curvature, to be 
\begin{displaymath}K_f(g,h)=\frac{\langle R(f)\{g,h,g\},h \rangle}{|g\wedge h|^2}\,.\end{displaymath}
\end{definition}

For a Riemannian manifold the condition $|g\wedge h|^2 \not=0$ is always fulfilled. Nevertheless for pseudo Riemannian manifolds there are degenerate planes. For degenerate planes we define:

\begin{definition}[$0$-sectional curvature]
\index{sectional curvature, degenerate planes}
 If $|g\wedge h|^2=\langle g, g \rangle \langle h, h\rangle -\langle g,h\rangle ^2 = 0$ suppose without loss of generality that $g$ is a $0$-vector and that $h$ is not a $0$-vector. Then
\begin{displaymath}
K_{f,N}(g,h)=\frac{\langle R(f)\{g,h,g\}, h\rangle}{\langle h, h\rangle} 
\end{displaymath}
\end{definition}

A consequence of this phenomenon is the theorem of Kulkarni. In the next section we describe a version for pseudo Riemannian tame Fr\'echet manifolds.

\subsection{A Kulkarni type theorem for tame Fr\'echet manifolds}

In the last section, we saw that one can define a connection and a metric which work locally exactly like their finite dimensional counterparts. As a consequence all local purely algebraic results will carry over to the infinite dimensional case.

A nice structure result for finite dimensional Lorentz
geometry is the theorem of Kulkarni. It states that for pseudo-Riemannian
manifolds that are not Riemannian, bounded sectional curvature implies constant curvature.  This theorem generalizes to tame Fr\'echet manifolds:

\begin{theorem}[infinite dimensional Kulkarni-type]
\index{theorem of Kulkarni-type for Fr\'echet manifolds}
\label{Kulkarni}
Let $M$ be a Lorentzian tame Fr\'echet manifold with Levi-Civita connection. Then the
following conditions are equivalent:
\begin{enumerate}
  \item \hspace{3pt}  $K_f(g,h)$ is constant.
  \item \hspace{3pt} $a\leq K_f(g,h)$ or $K_f(g,h) \leq b$ for some $a, b\in \mathbb R$.
  \item \hspace{3pt} $a \leq K_f(g,h) \leq b$ on all definite planes for some
    $a\leq b \in \mathbb R$.
  \item \hspace{3pt} $a \leq K_f(g,h) \leq b$ on all indefinite planes for some
    $a\leq b \in \mathbb R$.
\end{enumerate}
\end{theorem}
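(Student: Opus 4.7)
The plan is to reduce the statement to the classical pointwise Kulkarni theorem applied inside finite-dimensional Lorentzian subspaces of each tangent space, and then to propagate the resulting pointwise constancy throughout $M$ via a Schur-type argument using the second Bianchi identity. The crucial observation is that the sectional curvature $K_f(g,h)$ depends only on the 2-plane spanned by $g,h$ in $T_fM$, on the algebraic curvature tensor $R_f$ at $f$, and on the restriction of the Lorentz metric $\langle\cdot,\cdot\rangle$ to that 2-plane; hence both $R_f$ and the metric can be restricted to any finite-dimensional subspace of $T_fM$ without loss of information for sectional-curvature computations. This transports the whole argument into the finite-dimensional realm despite the tame Fr\'echet setting.

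The implications from (1) to each of (2), (3), (4) are immediate, since a constant is bounded. For the converses, I would fix $f\in M$ and let $V\subset T_fM$ be any 3\ndash dimensional subspace on which $\langle\cdot,\cdot\rangle|_V$ has Lorentz signature $(2,1)$; such $V$ exist because the ambient metric has index $1$. The restriction $R_f|_V$ is an algebraic curvature tensor in the classical sense: the Bianchi identity and the pair symmetries established above for the Levi-Civita connection on tame Fr\'echet manifolds all restrict to $V$. At this point the classical finite-dimensional Kulkarni theorem applies verbatim to $(V,\langle\cdot,\cdot\rangle|_V,R_f|_V)$: each of (2), (3), (4), restricted to 2-planes in $V$, forces $K_f$ to take a single value $k(f,V)$ on the set of non-degenerate 2-planes of $V$. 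The underlying mechanism is that in a Lorentz 3-space any definite 2-plane can be approached in the Grassmannian by degenerate 2-planes: since $|g\wedge h|^2\to 0$ there while $K_f$ remains bounded by hypothesis, the numerator $\langle R_f\{g,h,g\},h\rangle$ must vanish on all degenerate planes, and combined with the curvature symmetries this rigidifies $R_f|_V$ into the standard constant-sectional-curvature shape
\begin{displaymath}
R_f|_V(X,Y)Z=k(f,V)\bigl(\langle Y,Z\rangle X-\langle X,Z\rangle Y\bigr).
\end{displaymath}

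Next I would show that the constant $k(f,V)$ does not depend on $V$: any two Lorentz 3\ndash subspaces of $T_fM$ share at least one non-degenerate 2-plane, on which the sectional curvature is unambiguously defined, so $k(f,V_1)=k(f,V_2)=:k(f)$. Hence at every point $f$ there is a single scalar $k(f)$ with $K_f(g,h)=k(f)$ for all non-degenerate 2-planes in $T_fM$. To upgrade this pointwise statement to a genuine manifold-wide constant, I would invoke a Schur-type argument: the second Bianchi identity is a purely local computation that transfers verbatim from the finite-dimensional theory to tame Fr\'echet manifolds equipped with the Levi-Civita connection of Theorem~\ref{existanceoflevicivitaconnection}, and combined with pointwise constancy of sectional curvature it forces $dk\equiv 0$ on connected components of $M$.

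The principal obstacle is to verify that the classical Kulkarni argument is genuinely algebraic --- relying only on the Bianchi identity, the standard pair-symmetries, and the Lorentzian signature --- rather than on finite-dimensional tricks such as Hodge duality, volume forms, or bases of the full tangent space. This is indeed the case, since Kulkarni's original proof takes place entirely on a single Lorentz 3-space and on the Grassmannian of its 2-planes, which remains a finite-dimensional compact manifold in our setting; careful bookkeeping is nonetheless needed because the classical arguments are sometimes phrased in a basis-adapted form. A secondary, lesser obstacle is the Schur step, which requires a coherent notion of covariant derivative of a curvature-like tensor in the tame Fr\'echet setting; but since $\nabla$ exists on tame Fr\'echet Lie groups by Theorem~\ref{existanceoflevicivitaconnection} and the Kac-Moody symmetric spaces constructed below inherit this connection, no new analytic difficulty arises.
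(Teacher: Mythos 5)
Your overall strategy coincides with the paper's: the paper's proof is only a two\ndash sentence sketch asserting that O'Neill's pointwise argument ``generalizes straightforwardly'', and your reduction to finite\ndash dimensional Lorentzian subspaces of $T_fM$ is precisely the mechanism that makes this assertion precise, since $K_f(g,h)$ only sees the restriction of the quadrilinear form $\langle R_f\{\cdot,\cdot,\cdot\},\cdot\rangle$ and of the metric to the plane in question. Two steps nevertheless need repair.

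First, the claim that any two Lorentz $3$\ndash subspaces $V_1,V_2\subset T_fM$ share a non\ndash degenerate $2$\ndash plane is false: already in dimension $5$, and a fortiori in the Fr\'echet setting, two $3$\ndash dimensional subspaces may intersect only in $\{0\}$. The well\ndash definedness of $k(f)$ should instead be obtained by taking two arbitrary non\ndash degenerate planes $P_1,P_2$, passing to $W=\mathrm{span}(P_1\cup P_2)$, enlarging $W$ if necessary to a finite\ndash dimensional non\ndash degenerate subspace of Lorentz signature (possible because the index is $1$: the radical of a finite\ndash dimensional subspace is at most $1$\ndash dimensional and can be killed by adjoining one further vector, and a timelike direction can always be adjoined to a definite subspace), and applying the classical Kulkarni lemma to $W$ directly; this yields $K_f(P_1)=K_f(P_2)$ without any intersection argument.

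Second, the Schur step is not available as you state it. The paper establishes only the first Bianchi identity and the pair symmetries of $R$; the second Bianchi identity, which presupposes a covariant derivative of the curvature in the tame Fr\'echet setting, is nowhere proved and would have to be verified before you may invoke it. Note also that the classical Kulkarni lemma is a purely pointwise algebraic statement about $R_p$, and the theorem here is most naturally read the same way --- the paper's own sketch makes no mention of Schur --- so the global step is arguably not part of the claim at all. If you do want global constancy, you must in addition avoid the usual double\ndash contraction proof of Schur's lemma, since the paper explicitly observes that traces do not exist in this setting; the contraction\ndash free version (evaluating the cyclic sum of $\nabla R$ on an orthogonal triple $X,Y,Z$ with $\langle Z,Z\rangle=1$ to get $(Xk)Y=(Yk)X$) does survive, but this needs to be said and justified.
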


This theorem yields the following  corollary:

\begin{corollary}
A Lorentzian tame Fr\'echet manifold admitting a flat subspace isometric to $\mathbb{R}^k$  with $k>1$ is either flat or its curvature is unbounded from above and from below.
\end{corollary}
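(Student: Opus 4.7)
The plan is to derive the corollary directly from the Kulkarni-type Theorem~\ref{Kulkarni}, by using the flat $\mathbb{R}^k$ subspace to produce a single definite tangent 2-plane on which the ambient sectional curvature vanishes, and then running the dichotomy.

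First I would extract a definite plane of sectional curvature zero. Let $F \subset M$ denote the given flat subspace, taken as a totally geodesic submanifold isometric to Euclidean $\mathbb{R}^k$ (this is the standard meaning of ``flat'' in the symmetric-space context, and it is the sense in which flats will arise from exponentiating abelian subalgebras later in Chapter~\ref{chap:symm}). Pick a point $f \in F$ and two linearly independent tangent vectors $g,h \in T_f F \subset T_f M$; this is possible because $k > 1$. The ambient inner product restricts on $T_f F$ to the Euclidean one, so $\langle g,g\rangle\langle h,h\rangle - \langle g,h\rangle^2 > 0$, meaning that the plane $P = \operatorname{span}(g,h)$ is definite in the sense of Section~4.1. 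Because $F$ is totally geodesic, the Gauss equation (a purely algebraic consequence of the defining properties of the Levi-Civita connection, valid verbatim in the tame Fr\'echet framework by Theorem~\ref{existanceoflevicivitaconnection} and Lemma~\ref{curvatureoffrechetliegroup}) gives $\langle R^M(f)\{g,h,g\},h\rangle = \langle R^F\{g,h,g\},h\rangle = 0$, hence $K_f(g,h) = 0$.

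Next I would run the contrapositive of Theorem~\ref{Kulkarni}. Suppose $M$ is not flat, so its sectional curvature $K$ is not identically zero somewhere on $M$. If $K$ were bounded from above \emph{or} from below on all non-degenerate planes, condition (2) of Theorem~\ref{Kulkarni} would be met, and $K$ would have to be a constant $\kappa$. But the plane $P$ constructed above forces $\kappa = 0$, and a Lorentzian tame Fr\'echet manifold of constant sectional curvature zero is flat by the very definition of $K$ in terms of $R$ and the symmetry/Bianchi identities; this contradicts the standing assumption. Consequently neither an upper bound nor a lower bound for $K$ can exist, i.e. $K$ is unbounded from above and from below.

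The main obstacle is really the subtle point in the first paragraph: one must ensure that ``flat subspace isometric to $\mathbb{R}^k$'' does give $K = 0$ on tangent planes to $F$, rather than merely intrinsic flatness of $F$. Reading ``flat'' as ``totally geodesic with flat induced metric'' (the usage throughout this work) makes this immediate via the Gauss equation; without total geodesicity the corollary would fail, since a non-totally-geodesic isometrically embedded Euclidean plane can sit inside a curved ambient manifold. Once this convention is fixed, the rest is just the dichotomy packaged in Theorem~\ref{Kulkarni}.
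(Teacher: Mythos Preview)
Your argument is correct and is exactly the intended deduction: the paper does not spell out a proof of this corollary at all, treating it as immediate from Theorem~\ref{Kulkarni}, and your write-up supplies precisely the standard details (a definite tangent $2$-plane inside the flat with $K=0$, then the contrapositive of condition (2)). Your caveat about reading ``flat subspace'' as totally geodesic is well placed and matches the usage in Chapter~\ref{chap:symm}.
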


For the finite dimensional proof of this theorem and more generally finite dimensional Lorentz geometry see~\cite{Oneill83}. A different version is stated in \cite{Beem81}.
The proof described in ~\cite{Oneill83} is based on comparing sectional curvatures on various $2$-dimensional subspaces: Clearly the first statement implies the three other statements. The crucial point is to understand the behaviour of sectional curvature close to degenerate planes. The proof of the tame Fr\'echet version is a straight forward generalization, so we do not detail it.

\subsection{Some remarks about Lorentz symmetric spaces}

\index{Lorentz symmetric spaces}
Even in the finite dimensional situation there is an important difference between Riemannian symmetric spaces and pseudo-Riemannian symmetric spaces:
For finite dimensional Riemannian symmetric spaces there is a complete classification available. The central point of this classification is that each finite dimensional irreducible Riemannian symmetric space with the exception of $\mathbb{R}$ is either a compact simple Lie group or a quotient of a real simple Lie group by a certain subgroup. Hence a classification of irreducible symmetric spaces can be given by classifying complex, simple Lie groups.

 The most important fact for us is that finite dimensional (pseudo-) Riemannian symmetric spaces are not classified; there is recent work by I.\ Kath and M.\ Olbrich~\cite{Kath04,Kath06} which gives a good description of the structure and classifies pseudo-Riemannian symmetric spaces of index $1$ and $2$. The surprising difficulty of the classification of pseudo-Riemannian symmetric spaces in comparison to the Riemannian case has its roots in two facts:

\begin{enumerate}
\item There is no splitting theorem of a pseudo-Riemannian symmetric space into a direct sum of ``simple'' factors.
\item There are pseudo-Riemannian symmetric spaces that have no semisimple groups of isometries.
\end{enumerate}

The subclass of pseudo-Riemannian symmetric spaces corresponding to semisimple Lie groups is well understood. A classification is achieved in the paper~\cite{Berger57}.

\subsection{Tame Fr\'echet symmetric spaces}

We use the definition:

\begin{definition}[tame Fr\'echet symmetric space]
\index{tame Fr\'echet symmetric space}
\label{tamesymmetricspace}
A tame Fr\'echet manifold $M$ with a weak metric having a Levi-Civita connection is called a symmetric space, iff for all $p\in M$ there is an involutive isometry $\rho_p$, such that $p$ is an isolated fixed point of $\rho_p$.
\end{definition}

It is not clear if the isometry group of a tame Fr\'echet manifold is in general a tame Fr\'echet Lie group.

\begin{lemma}[geodesic symmetry]
\index{geodesic symmetry}
\label{geodesicsymmetry}
Let $M$ be a tame Fr\'echet pseudo-Riemannian symmetric space.
For each $p\in M$ there exists a normal neighborhood $N_p$ of $p$ such that $s_p$ coincides with the geodesic symmetry on all geodesics through $p$ in $N_p$.
\end{lemma}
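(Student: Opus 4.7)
The plan is to reduce the lemma to two purely local facts: (i) the differential $ds_p|_{T_pM}$ equals $-\mathrm{Id}$, and (ii) any isometry which is the identity on a point and whose differential there is $-\mathrm{Id}$ must act as $\gamma(t)\mapsto \gamma(-t)$ on every geodesic through that point. Both facts require only local uniqueness of geodesics, which follows from the existence of the Levi-Civita connection assumed in Definition~\ref{tamesymmetricspace}: the geodesic equation is an ODE in a chart valued in the tame Fréchet space $F$, and the classical Picard--Lindelöf argument for ODEs on locally convex spaces gives, for each $v\in T_pM$, at most one geodesic with $\gamma(0)=p$, $\gamma'(0)=v$ on a small interval. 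Short-time existence of at least one geodesic in each tangent direction can be produced from the symmetric connection locally, so we may pick $N_p$ to be any open set on which $s_p$ has $p$ as its unique fixed point and on which the geodesics we consider are defined.

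First I would establish $ds_p=-\mathrm{Id}$ on $T_pM$. Since $s_p^2=\mathrm{Id}$ and $s_p(p)=p$, the differential $L:=ds_p|_{T_pM}$ satisfies $L^2=\mathrm{Id}$, and every $v\in T_pM$ decomposes as
\begin{displaymath}
v=\tfrac{1}{2}(v+Lv)+\tfrac{1}{2}(v-Lv),
\end{displaymath}
where the first summand lies in $\ker(L-\mathrm{Id})$ and the second in $\ker(L+\mathrm{Id})$. Suppose a nonzero $w\in \ker(L-\mathrm{Id})$ exists, and let $\gamma$ be the (locally unique) geodesic with $\gamma(0)=p$, $\gamma'(0)=w$. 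Because $s_p$ is an isometry it preserves the Levi-Civita connection, hence $s_p\circ\gamma$ is again a geodesic; its initial conditions are $s_p(\gamma(0))=p$ and $(s_p\circ\gamma)'(0)=Lw=w$. Local uniqueness then forces $s_p\circ\gamma=\gamma$ on a neighborhood of $0$, producing a curve of fixed points of $s_p$ through $p$ and contradicting the isolatedness required by Definition~\ref{tamesymmetricspace}. Hence $\ker(L-\mathrm{Id})=0$, and the decomposition above gives $Lv=-v$ for every $v$, i.e.\ $L=-\mathrm{Id}$.

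With $ds_p=-\mathrm{Id}$ in hand, I would then fix an open $N_p\ni p$ on which $s_p$ is defined and on which geodesic segments through $p$ with initial velocity $v$ exist and are unique for a range of $v$. For such a geodesic $\gamma:(-\epsilon,\epsilon)\to N_p$ with $\gamma(0)=p$ and $\gamma'(0)=v$, both curves $t\mapsto s_p(\gamma(t))$ and $t\mapsto\gamma(-t)$ are geodesics (the former because $s_p$ is an isometry, the latter by reparametrisation), and both start at $p$ with initial velocity $-v$ (using $ds_p=-\mathrm{Id}$). Local uniqueness therefore yields $s_p(\gamma(t))=\gamma(-t)$, which is exactly the statement that $s_p$ acts as the geodesic symmetry on every geodesic through $p$ contained in $N_p$.

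The main obstacle is not the algebraic step $ds_p=-\mathrm{Id}$, which goes through exactly as in finite dimensions once one has local uniqueness, but rather the fact that in the tame Fréchet setting the exponential map need not be a local diffeomorphism (cf.\ example~\ref{sl2chasnodiffeomorphicexponentialmap} and theorem~\ref{expnodiffeomorphism}). Consequently ``normal neighborhood'' cannot be taken in the naïve sense of a neighborhood on which $\exp_p$ trivialises the geodesic flow; one must interpret $N_p$ as any neighborhood on which $p$ is the only fixed point of $s_p$ and on which the individual geodesics we consider are defined and unique. Existence and uniqueness of such geodesics as solutions of the second-order ODE associated to the Levi-Civita connection of theorem~\ref{existanceoflevicivitaconnection} is then the only analytic input needed, and the argument in the preceding paragraphs completes the proof.
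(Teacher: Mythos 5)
Your proof is correct and follows essentially the same route as the paper's: both derive a contradiction with the isolatedness of $p$ by exhibiting a geodesic that is fixed pointwise by $s_p$ whenever $ds_p$ admits a $+1$ eigenvector, and then conclude $ds_p=-\mathrm{Id}$ and hence $s_p(\gamma(t))=\gamma(-t)$ by local uniqueness. Your eigenspace decomposition $v=\tfrac{1}{2}(v+Lv)+\tfrac{1}{2}(v-Lv)$ merely packages the paper's two-case analysis (the case $\dot{\gamma}(0)=\dot{\mu}(0)$ and the case $\dim\mathrm{span}\{\dot{\gamma}(0),\dot{\mu}(0)\}=2$, where the paper takes the geodesic in the fixed direction $\dot{\gamma}(0)+\dot{\mu}(0)$) into a single step, and you are more explicit about the reliance on local existence and uniqueness of geodesics in the tame Fr\'echet setting, a point the paper uses implicitly.
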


As the exponential map for Fr\'echet manifolds is in general not a diffeomorphism, it is not true that  for each $p$ there is an open neighborhood $N_p$, such that there is for all $q \in N_p$ a geodesic segment connecting $p$ and $q$. Thus the notion of geodesic symmetries at a point $p$ is only defined for points in $N_p\cap \exp_p{T_pM}$.

\begin{proof}[Proof of lemma~\ref{geodesicsymmetry}:]
Let $\gamma(t)$ be a geodesic through $p$ and $\mu(t):=\rho_p(\gamma(t))$ its image under the isometry $\rho_p$.
\begin{itemize}
\item[-] As $\rho_p$ is an isometry we find that $\mu(t)$ is a geodesic.
\item[-] $(d\rho)(\dot{\gamma}(t)|_{t=0})=\dot{\mu}(t)|_{t=0}$. If $\dot{\gamma}(t)|_{t=0}=-\dot{\mu}(t)|_{t=0}$ for all geodesics $\gamma(t)$, we are done. So suppose by contradiction, there exists a $\gamma(t)$ such that $\dot{\gamma}(t)|_{t=0}\not=-\dot{\mu}(t)|_{t=0}$.
As $\rho_p$ is an isometry, we have $|\dot{\gamma}(t)|_{t=0}=|\dot{\mu}(t)|_{t=0}$. Hence, there are two cases:
    \begin{itemize}
			\item[-] There is a geodesic such that $\dot{\gamma(0)}=\dot{\mu(0)}$. Then $\gamma(t)=\mu(t)$ so $p$ is no isolated fixed point.
			\item[-] $\textrm{dim } \textrm{span}\{\dot{\gamma}(0),\dot{\mu}(0)\}=2$. Then take the geodesic $\nu(t):=\textrm{exp}(\dot{\gamma}(0)+\dot{\mu}(0))$. $\rho_p$ is the identity on $\nu(t)$, so again $p$ is no isolated fixed point. This is again a contradiction, so the lemma is proven.\qedhere
		\end{itemize}
\end{itemize} 
\end{proof}

\begin{corollary}
For each $p$, the involution $\rho_p$ induces $-Id$ on the tangent space $T_pM$.
\end{corollary}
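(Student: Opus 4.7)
The plan is to deduce the corollary as a direct consequence of the previous lemma on geodesic symmetry. The key idea is that $d\rho_p$ is determined by its action on velocity vectors of geodesics through $p$, and the lemma already pins this down up to a sign.

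First I would fix $v \in T_pM$ and invoke local existence of geodesics through $p$ with prescribed initial velocity. Even though the exponential map $\exp_p$ need not be a local diffeomorphism in the tame Fr\'echet setting, geodesics emanating from $p$ with initial velocity $v$ exist (they solve a second order ODE for the Levi-Civita connection constructed earlier), and their image lies inside the normal neighborhood $N_p$ of the lemma for $t$ in a small interval around $0$. Denote such a geodesic by $\gamma_v$ with $\gamma_v(0)=p$ and $\dot\gamma_v(0)=v$.

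Next I would apply the previous lemma: on $N_p$ the involution $\rho_p$ agrees with the geodesic symmetry along $\gamma_v$, so $\rho_p(\gamma_v(t)) = \gamma_v(-t)$ for all sufficiently small $t$. Differentiating this identity at $t=0$ and using the chain rule gives
\begin{displaymath}
d\rho_p(v) \;=\; \frac{d}{dt}\Big|_{t=0}\rho_p(\gamma_v(t)) \;=\; \frac{d}{dt}\Big|_{t=0}\gamma_v(-t) \;=\; -\dot\gamma_v(0) \;=\; -v.
\end{displaymath}

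Since $v$ was an arbitrary element of $T_pM$, this proves $d\rho_p = -\mathrm{Id}$ on $T_pM$. The only delicate point is the existence of the geodesic $\gamma_v$ with arbitrary prescribed initial velocity in the Fr\'echet setting; however, once one grants that the Levi-Civita connection is well-defined (theorem~\ref{existanceoflevicivitaconnection} and the discussion following it) and that the geodesic equation admits local solutions, the remainder of the argument is purely formal and identical to the finite dimensional case. No further analytic machinery is needed here, since the tameness hypothesis has already been fully exploited in constructing the connection and proving the preceding lemma.
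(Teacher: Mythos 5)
Your proposal is correct and follows exactly the route the paper intends: the paper's entire proof is the single sentence that the corollary is ``a straight forward application'' of the geodesic symmetry lemma, and your argument---take a geodesic with prescribed initial velocity, use $\rho_p(\gamma_v(t))=\gamma_v(-t)$, and differentiate at $t=0$---is the standard way to carry that out. You even flag the one genuinely delicate point (existence of geodesics with arbitrary initial velocity in the Fr\'echet setting), which the paper itself silently assumes.
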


The corollary is a straight forward application of lemma~\ref{geodesicsymmetry}.

\begin{lemma}
\index{locally symmetric space}
A tame Fr\'echet symmetric space with a Levi-Civita connection is locally symmetric, that is it has parallel curvature. 
\end{lemma}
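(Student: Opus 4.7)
The plan is to mimic the classical finite-dimensional argument of Helgason, with the only novelty being that $R$ and $\nabla R$ have to be handled through the coordinate/component description from the previous section rather than as honest tensors, since dual spaces of Fréchet spaces are not available.

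First I would record that at every $p\in M$ the symmetry $\rho_p$ is a smooth isometry with $d\rho_p|_{T_pM}=-\mathrm{Id}$, which was already proven in the corollary to Lemma~\ref{geodesicsymmetry}. Next I would observe that $\rho_p$ preserves the Levi-Civita connection: by Theorem~\ref{existanceoflevicivitaconnection} (and the uniqueness part of the preceding lemma) the connection compatible with $g$ and torsion-free is unique whenever it exists, so the pull-back $\rho_p^*\nabla$ is again a Levi-Civita connection for the isometric metric $\rho_p^*g=g$, and hence $\rho_p^*\nabla=\nabla$. This naturality transfers to the curvature: in any chart the expression $R(f)\{g,h,k\}$ defining the curvature is built from $\Gamma$ and its first derivative by the formula in the definition of curvature, so $\rho_p^*R=R$, and one further differentiation gives $\rho_p^*(\nabla R)=\nabla R$.

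The core of the argument is then the parity computation at the fixed point $p$. For tangent vectors $X_1,\ldots,X_5\in T_pM$, invariance under $\rho_p$ together with $d\rho_p|_{T_pM}=-\mathrm{Id}$ yields
\begin{equation*}
(\nabla R)_p(X_1,X_2,X_3,X_4,X_5)=(\nabla R)_p(-X_1,-X_2,-X_3,-X_4,-X_5)=(-1)^5(\nabla R)_p(X_1,\ldots,X_5),
\end{equation*}
so $(\nabla R)_p=-(\nabla R)_p$, forcing $(\nabla R)_p=0$. Since $p$ was arbitrary, $\nabla R\equiv 0$, which is the definition of local symmetry.

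The one step that has to be verified carefully, and which I expect to be the main obstacle, is the tensorial transformation law used when pulling $\nabla R$ back under $\rho_p$ and evaluating at the fixed point. In the Fréchet setting $R$ and $\nabla R$ are, strictly speaking, smooth multilinear maps on the model space expressed in a chart rather than sections of a bundle of multilinear forms; so one has to check that a local diffeomorphism $\varphi$ with $\varphi(p)=p$ and $d\varphi_p=A$ really does act on the chart representative of $\nabla R$ at $p$ by $A$ on each of its five arguments. This is a routine but slightly tedious computation: differentiate the chart formula for $\Gamma$, then the chart formula for $R$, then apply the chain rule twice, and use $\varphi(p)=p$, $d\varphi_p=-\mathrm{Id}$ to collapse all the intermediate derivatives. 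Once this transformation rule is in place, the parity argument above closes the proof exactly as in the classical Riemannian case (cf.~\cite{Helgason01}).
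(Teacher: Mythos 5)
Your proposal is correct and is essentially the paper's own argument: the paper simply cites Helgason's finite-dimensional proof by ``local algebraic manipulations'' and asserts it generalizes directly to the tame Fr\'echet setting, and what you have written out --- uniqueness of the Levi-Civita connection forcing $\rho_p^*\nabla=\nabla$, naturality of the chart-level curvature formula, and the parity argument at the fixed point using $d\rho_p=-\mathrm{Id}$ --- is exactly that generalization. Your added care about the transformation law for the coordinate representative of $\nabla R$ is precisely the point the paper glosses over, so nothing further is needed.
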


The proof of the finite dimensional version of this lemma is based on local algebraic manipulations. A proof can be found in~\cite{Helgason01}. It generalizes directly to the tame Fr\'echet setting.

\begin{definition}[Kac-Moody symmetric space]
\index{Kac-Moody symmetric space}
An (affine) Kac-Moody symmetric space $M$ is a tame Fr\'echet Lorentz symmetric space such that its isometry group $I(M)$ contains a transitive subgroup isomorphic to an affine geometric Kac-Moody group $H$, and the intersection of the isotropy group of a point with $H$ is a loop group of compact type.
\end{definition}

\begin{lemma}
Let $M$ be a Kac-Moody symmetric space with isometry group $I(M)$; let $H$ be the Kac-Moody subgroup of $I(M)$. For $x\in M$, let $K_x=\textrm{Fix}(x) \cap (\widehat{H})_0$. 
\begin{enumerate}
\item For $x, y\in M$ we have $K_x \simeq K_y$.
\item $M$ is isomorphic to the quotient $M\simeq(\widehat{H})_0/K_x$.\qedhere
\end{enumerate}
\end{lemma}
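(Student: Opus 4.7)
The plan is to deduce both assertions from the transitivity of $(\widehat{H})_0$ on $M$ together with a standard orbit–stabilizer argument, adapted to the tame Fr\'echet setting of this book.

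First I would argue that $(\widehat{H})_0$ itself acts transitively on $M$. By hypothesis $H$ acts transitively, so the orbit $(\widehat{H})_0\cdot x$ of any point $x$ is open, being the image of a connected neighborhood of $e$ under the continuous orbit map $h\mapsto h(x)$. Its complement is a union of further $(\widehat{H})_0$-orbits and hence also open, so $(\widehat{H})_0\cdot x$ is closed. As $M$ is connected — this is built into the construction of types I–IV, each of which is either a connected Fr\'echet Lie group or a quotient of one — we conclude $(\widehat{H})_0\cdot x=M$.

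For assertion (1), pick $h\in (\widehat{H})_0$ with $h(x)=y$. If $k\in K_y$, then $k(h(x))=k(y)=y=h(x)$, so $h^{-1}kh\in\mathrm{Fix}(x)$; and $h^{-1}kh\in(\widehat{H})_0$ because $(\widehat{H})_0$ is a normal subgroup of $\widehat{H}$. Applying the same argument with $h^{-1}$ gives the reverse inclusion, so conjugation by $h$ is an isomorphism $K_y\to K_x$ of tame Fr\'echet Lie groups.

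For assertion (2), consider the orbit map $\pi\colon (\widehat{H})_0\to M$, $h\mapsto h(x)$. Surjectivity is just the transitivity established above, and the fibre of $\pi$ over $y=h_0(x)$ is the left coset $h_0 K_x$, so $\pi$ descends to a bijection $\bar\pi\colon (\widehat{H})_0/K_x\to M$. To upgrade $\bar\pi$ to an isomorphism of tame Fr\'echet manifolds, one uses that $(\widehat{H})_0$ is a tame Fr\'echet Lie group by the results of section~\ref{Kac-Moody groups}, that $K_x$ is a closed subgroup — it is cut out as the fixed-point set of the smooth involution $g\mapsto \rho_x\circ g\circ \rho_x^{-1}$ — and that left translation by $(\widehat{H})_0$ acts smoothly and by tame diffeomorphisms both on $M$ (through isometries) and on $(\widehat{H})_0/K_x$.

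The main obstacle will be showing that $\bar\pi$ is a local diffeomorphism of tame Fr\'echet manifolds in a neighborhood of the identity coset $[e]$; left-invariance of the structures on both sides then propagates this to a global tame diffeomorphism. The local model at $[e]$ is the $(-1)$-eigenspace $\mathcal{P}\subset T_e(\widehat{H})_0$ of the tangent involution $d\rho_x$, which is a closed tame Fr\'echet subspace; the candidate chart is $X\mapsto \exp(X)\cdot x$. Its derivative at $0$ equals the natural identification $\mathcal{P}\hookrightarrow T_xM$, which is an isomorphism because $\rho_x$ acts as $-\mathrm{Id}$ on $T_xM$ by the corollary to Lemma~\ref{geodesicsymmetry}. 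Applying the Nash–Moser inverse function theorem of chapter~\ref{chap:frechet} to this smooth tame map, in combination with the tame structures on $\widehat{H}$ established in chapter~\ref{chap:tame}, yields the desired local tame diffeomorphism and completes the identification $M\cong(\widehat{H})_0/K_x$.
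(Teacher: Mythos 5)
Your part (1) and the set-theoretic part of (2) match the paper's own (very short) argument: transport $x$ to $y$ by some $h$ and conjugate the stabilizer, then use transitivity to identify $M$ with the coset space. Two points in your write-up, however, do not go through as stated.

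First, a minor one: your justification that the $(\widehat{H})_0$-orbit of $x$ is open ("being the image of a connected neighborhood of $e$ under the continuous orbit map") is not valid as written, since continuous images of open sets need not be open; in the tame Fr\'echet setting openness of the orbit map is exactly the kind of statement that requires an argument (the paper itself sidesteps this by simply invoking transitivity of $H$). Second, and more seriously: your proposed upgrade of the bijection $(\widehat{H})_0/K_x\to M$ to a tame diffeomorphism rests on the chart $X\mapsto \exp(X)\cdot x$ on $\mathcal{P}$ together with the Nash--Moser theorem. This cannot work here. By theorem~\ref{expnodiffeomorphism} and example~\ref{sl2chasnodiffeomorphicexponentialmap}, the exponential map of these loop groups is \emph{not} a local diffeomorphism --- there are elements arbitrarily close to the identity that are not in its image --- and the Nash--Moser theorem requires invertibility of the derivative, with a smooth tame family of inverses, on a whole open neighborhood, not merely at the single point $0$. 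Invertibility of $d(\exp(\cdot)\cdot x)$ at $0$ (which you correctly deduce from $d\rho_x=-\mathrm{Id}$ on $T_xM$) is therefore not enough, and in fact the conclusion you want from it is false for these groups. The paper avoids this entirely: the tame manifold structure on the quotients is established beforehand by the logarithmic-derivative charts of section~\ref{Manifoldstructuresongroupsofholomorphicmaps} and Popescu's fiber-bundle lemma in section~\ref{Kac-Moody groups}, and the lemma in question is then read as a purely group-theoretic identification, with no exponential chart and no appeal to an inverse function theorem.
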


\begin{proof}~
\begin{enumerate}
\item Transitivity of $H$ assures the existence of an element $h_{xy}\in H$ such that $h_{xy}(x)=y$. Then $K_y=hK_xh^{-1}$.
\item The second assertion follows as $H$ acts transitively on $M$.\qedhere
\end{enumerate}
\end{proof}

Recall the alternative definition of symmetric spaces proposed by Loos~\cite{Loos69I}, p.~69:

\begin{definition}[symmetric space in the sense of Loos]
\index{symmetric space in the sense of Loos}
A symmetric space is a manifold $M$ with a differentiable multiplication $\mu:M\times M\longrightarrow M$, $\mu(x,y)=x\cdot y$ such that
\begin{enumerate}
\item $x\cdot x=x$,
\item $x\cdot (x\cdot y)=y$,
\item $x\cdot (y\cdot z)=(x\cdot y) \cdot (y\cdot z)$,
\item every $x$ has a neighborhood $U$ such that $x\cdot y=y$ implies $y=x$ for all $x\in U$.
\end{enumerate}
\end{definition}

Following~\cite{Loos69I} we define the multiplication to be the involutive isomorphisms:
\begin{displaymath}
 x\cdot y:=\sigma_x(y)
\end{displaymath}

\begin{theorem}
A Kac-Moody symmetric space is a symmetric space in the sense of Loos.
\end{theorem}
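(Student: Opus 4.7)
The plan is to verify the four Loos axioms directly from the properties of the involutive isometries $\sigma_p$ supplied by Definition~\ref{tamesymmetricspace}, using the geodesic interpretation of $\sigma_p$ from Lemma~\ref{geodesicsymmetry} and the Kac-Moody group action to transfer local information across $M$.

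First I would set $\mu(x,y) := \sigma_x(y)$ and dispatch the easy axioms. Axiom (1), $x\cdot x = x$, is immediate since $x$ is a fixed point of $\sigma_x$. Axiom (2), $x\cdot(x\cdot y)=y$, follows because $\sigma_x$ is an involution. Axiom (4) is exactly the isolated fixed point condition built into the definition of a tame Fr\'echet symmetric space: take $U$ to be a normal neighborhood of $x$ in which $x$ is the only fixed point of $\sigma_x$.

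The main step is axiom (3), the distributivity $\sigma_x\sigma_y = \sigma_{\sigma_x(y)}\sigma_x$, i.e.\ the conjugation identity $\sigma_x\sigma_y\sigma_x^{-1} = \sigma_{\sigma_x(y)}$. The strategy is the standard one: the left hand side is an involutive tame Fr\'echet isometry (composition of isometries), it fixes the point $\sigma_x(y)$, and its differential at $\sigma_x(y)$ equals $-\mathrm{Id}$ on $T_{\sigma_x(y)}M$ because $d\sigma_y = -\mathrm{Id}$ at $y$ and $d\sigma_x$ conjugates $T_yM$ to $T_{\sigma_x(y)}M$. Hence on a normal neighborhood of $\sigma_x(y)$ it agrees with $\sigma_{\sigma_x(y)}$ via Lemma~\ref{geodesicsymmetry}, since both act as the geodesic symmetry on each geodesic through $\sigma_x(y)$ that lies in the image of the exponential map. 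To extend this local agreement to all of $M$ I would propagate it by the transitive action of the Kac-Moody group $H\subset I(M)$: both isometries are determined by their $1$-jet at $\sigma_x(y)$, and translating along one-parameter subgroups (together with the rigidity of isometries of a tame Fr\'echet manifold equipped with a Levi-Civita connection, which preserve the exponential map at any point) identifies the two globally on the orbit, which is all of $M$.

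Finally, differentiability of $\mu$ must be checked in the tame Fr\'echet sense. Writing $M = H/K$, the map $(x,y)\mapsto \sigma_x(y)$ pulls back to $(h K, h' K) \mapsto h\,\sigma_{o}(h^{-1}h')\,K$, where $o$ is the base point and $\sigma_o$ is induced by the Cartan involution of the OSAKA; this is a composition of the multiplication in $H$ (a tame Fr\'echet Lie group by the results of Chapter~\ref{chap:tame}) with the fixed smooth tame map $\sigma_o$, hence smooth tame. The main obstacle I anticipate is the rigidity argument needed for axiom~(3): in finite dimensions it follows from the fact that an isometry is determined by its value and differential at a single point, and in our setting this requires the Levi-Civita connection provided by Theorem~\ref{existanceoflevicivitaconnection} together with the observation that $\sigma_x\sigma_y\sigma_x^{-1}$ and $\sigma_{\sigma_x(y)}$ both send geodesics through $\sigma_x(y)$ to their reversals --- information which, via the normal neighborhood of Lemma~\ref{geodesicsymmetry} and $H$-homogeneity, suffices to identify them globally.
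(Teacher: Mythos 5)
Your verification of axioms (1), (2) and (4) matches the paper exactly. Where you diverge is axiom (3): the paper simply writes the chain $\sigma_x(\sigma_y(z))=\sigma_{\sigma_x(y)}(\sigma_x(z))$ and calls it ``immediate'', whereas you correctly recognize that this conjugation identity $\sigma_x\sigma_y\sigma_x^{-1}=\sigma_{\sigma_x(y)}$ is the actual content of the theorem and supply the standard argument (both sides are involutive isometries fixing $\sigma_x(y)$ with differential $-\mathrm{Id}$ there, hence agree). In that sense your proof is more honest than the paper's. The one step you flag as an obstacle --- that an isometry of a tame Fr\'echet manifold is determined by its $1$-jet at a point --- is indeed not available off the shelf here, precisely because the exponential map is not a local diffeomorphism, so ``agreement on geodesics through $\sigma_x(y)$'' only pins the two maps down on the (possibly non-open) exponential image, and the propagation along the $H$-action does not obviously close this gap. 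You can sidestep the rigidity issue entirely by using the homogeneous description you already invoke for smoothness: writing $M=H/K$ with base point $o$ and $\sigma_{hK}=L_h\circ\sigma_o\circ L_{h^{-1}}=L_{h\rho(h)^{-1}}\circ\sigma_o$, where $\sigma_o\circ L_g=L_{\rho(g)}\circ\sigma_o$, axiom (3) reduces to the purely algebraic identity $h\rho(h)^{-1}\rho(k)k^{-1}h\rho(h)^{-1}=m\rho(m)^{-1}$ for $m=h\rho(h)^{-1}\rho(k)$, which one checks directly. With that substitution your argument is complete and strictly more rigorous than the paper's; as written, the global rigidity step remains a genuine (acknowledged) gap.
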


\begin{proof}
We have to verify that the geodesic symmetry satisfies the four axioms. The first three axioms are immediate:
\begin{enumerate}
\item $x\cdot x=\sigma_x(x)=x$,
\item $x\cdot (x\cdot y) =\sigma_x(\sigma_x(y))=\sigma_x^2(y)=y$,
\item $x\cdot (y\cdot z)=\sigma_x(\sigma_y(z))= \sigma_{\sigma_x(y)}(\sigma_x(z))=(x\cdot y) \cdot (x\cdot z)$.
\end{enumerate}
The last axiom follows as we supposed $x$ to be an isolated fixed point for $\sigma_x$ (see definition \ref{tamesymmetricspace}). 
\end{proof}

Thus we describe now how to construct the OSAKA associated to $M$. Let $\widehat{H}$ be the affine Kac-Moody group acting transitively on $M$ and $\widehat{\mathfrak{h}}$ its tangential geometric affine Kac-Moody algebra. Let $\rho_*$ be the involution induced on $\widehat{\mathfrak{h}}$.

\begin{theorem}
The pair $(\widehat{\mathfrak{h}}, \rho_*)$ is an OSAKA.
\end{theorem}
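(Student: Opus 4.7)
The plan is to verify the three defining axioms of an OSAKA in order, translating each condition on the symmetric space $M$ into the corresponding statement about $(\widehat{\mathfrak{h}}, \rho_*)$. By hypothesis, $\widehat{H}$ is an affine geometric Kac-Moody group acting transitively by isometries on $M$, so $\widehat{\mathfrak{h}} = \widehat{L}(\mathfrak{g},\sigma)$ for a reductive $\mathfrak{g}$ and an admissible diagram automorphism $\sigma$; this immediately gives axiom~(1) of the definition of OSAKA, namely that $\widehat{\mathfrak{h}}$ is a real form of a complex affine geometric Kac-Moody algebra.

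For axiom~(2), I would construct $\rho_*$ intrinsically from the geodesic symmetry $\sigma_x$ at the basepoint $x \in M$. Because $\sigma_x$ is an involutive isometry, conjugation $g \mapsto \sigma_x g \sigma_x^{-1}$ is an involutive automorphism of $I(M)$, and I would argue that it preserves the distinguished subgroup $\widehat{H}$ (since $\widehat{H}$ is characterised intrinsically as the connected transitive Kac-Moody subgroup). Differentiating at the identity yields a continuous involutive Lie algebra automorphism of $\widehat{\mathfrak{h}}$, which is $\rho_*$. To check that $\rho_*$ is of the second kind (so that $\mathrm{Fix}(\rho_*)$ is a loop algebra and not a Kac-Moody algebra), I would invoke Theorem~7.4 of Heintze--Gro\ss\ in combination with the fact that $\sigma_x$ reverses the direction of every geodesic through $x$; hence $\rho_* = -\mathrm{Id}$ on $T_x M \cong \mathcal{P}$, which in turn forces $\rho_*(d) = -d$ and $\rho_*(c) = -c$ on the two-dimensional extension.

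For axiom~(3), I would identify the fixed-point algebra of $\rho_*$ with $\mathfrak{k}_x := \mathrm{Lie}(K_x)$, where $K_x = \mathrm{Fix}(x) \cap \widehat{H}$. The inclusion $\mathfrak{k}_x \subset \mathrm{Fix}(\rho_*)$ is clear from the construction of $\rho_*$; the converse uses that $\mathrm{Fix}(\rho_*)$ integrates to a subgroup fixing $x$, combined with the previous paragraph showing $\rho_* = -\mathrm{Id}$ on a complementary subspace so that $\mathrm{Fix}(\rho_*) \oplus T_x M = \widehat{\mathfrak{h}}$ and there is no room for excess. Once this identification is in place, the assumption built into the definition of a Kac-Moody symmetric space, namely that $K_x$ is a loop group of compact type, yields that $\mathrm{Fix}(\rho_*)|_{\mathfrak{g}_s}$ is a compact loop algebra. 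The condition $\mathrm{Fix}(\rho_*)|_{\mathfrak{g}_a} = 0$ follows because any nonzero $\rho_*$-fixed vector in the abelian summand would integrate to a one-parameter family of translations fixing $x$, which is incompatible with $\sigma_x$ reversing geodesics at $x$ (alternatively, such a vector would produce a degenerate direction in $T_xM$ contradicting the Lorentzian hypothesis).

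The main obstacle is the clean identification $\mathrm{Fix}(\rho_*) = \mathfrak{k}_x$ together with the verification that $\rho_*$ is of the second kind; both require carefully exploiting the Lorentzian structure and the fact that $x$ is an \emph{isolated} fixed point of $\sigma_x$, together with Lemma~\ref{geodesicsymmetry}, to guarantee $\rho_* = -\mathrm{Id}$ on all of $T_xM$ (and not merely on a proper subspace). Once these two items are established, the remaining verifications are formal.
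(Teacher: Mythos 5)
Your proof is essentially correct but follows a genuinely different route from the paper's. You work ``from the symmetric space downwards'': you build $\rho_*$ as the differential of conjugation by the geodesic symmetry $\sigma_x$, identify $\textrm{Fix}(\rho_*)$ with the isotropy algebra $\mathfrak{k}_x$, and then let the defining hypothesis of a Kac-Moody symmetric space (that $K_x\cap \widehat{H}$ is a loop group of compact type) do the work for axiom~(3). The paper instead argues ``from the algebra upwards'': it takes $\rho_*$ as given, decomposes $\mathfrak{h}$ into $\rho_*$-invariant ideals, and runs a case analysis on the type of each ideal (abelian, compact, non-compact), using the vanishing of the Cartan--Killing form on the abelian factor and, in the non-compact case, the Lorentzian signature of the metric to force $\rho_*$ to be the Cartan involution. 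Your approach is more economical because it exploits the hypothesis on the isotropy group directly; the paper's approach gives more structural information (in particular it explains \emph{why} $\rho_*$ must be the Cartan involution on any non-compact factor rather than deducing compactness of $\textrm{Fix}(\rho_*)$ as a black box).

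Two small points to tighten. First, your deduction that $\rho_*=-\mathrm{Id}$ on $T_xM$ ``forces'' $\rho_*(c)=-c$ presupposes $c\notin\mathcal{K}$; the clean argument (used elsewhere in the paper) is that $c$ spans the center, so $\rho_*(c)=\pm c$, and $c$ cannot lie in $\textrm{Fix}(\rho_*)$ once the latter is identified with a loop algebra --- so order the steps so that the identification $\textrm{Fix}(\rho_*)=\mathfrak{k}_x$ comes first. Second, your parenthetical alternative for $\textrm{Fix}(\rho_*)|_{\mathfrak{g}_a}=0$ (a ``degenerate direction in $T_xM$'') does not work as stated: a $\rho_*$-fixed vector lies in $\mathcal{K}$, not in $T_xM\cong\mathcal{P}$. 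The correct argument is the paper's: the Cartan--Killing form vanishes identically on the abelian summand, while it is negative definite on a loop algebra of compact type, so the intersection of $\textrm{Fix}(\rho_*)$ with $L(\mathfrak{g}_a)$ must be zero.
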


\begin{proof}~
\begin{enumerate}
\item $\mathfrak{h}$ is a geometric affine Kac-Moody algebra.
\item $\rho_*$ is an involution of the second kind; hence we can suppose $\rho_*(c)=-c$ and $\rho_*(d)=-d$. The fixed point algebra $\textrm{Fix}(\rho_*)\subset \widehat{\mathfrak{h}}$ is a loop algebra. Let $\mathfrak{h}=\bigoplus_{i} \mathfrak{h}_i\oplus \mathbb{R}c\oplus \mathbb{R}d$ be the decomposition of $\mathfrak{h}$ into ideals invariant under $\rho_*$. 

\begin{enumerate}
 \item If $\mathfrak{h}_i$ is abelian then the Cartan-Killing form of the subjacent Lie algebra vanishes --- hence also the averaged Cartan-Killing form. Hence, the  maximal subgroup of compact type is trivial. 
 \item If $\mathfrak{h}_i$ is of compact type then the fixed point algebra of any involution of the second kind is of  compact type as it is a subalgebra of  compact type~\cite{Heintze09}.
 \item If $\mathfrak{h}_i$ is of non-compact type then there is the Cartan involution whose fixed algebra is the maximal compact subalgebra $\mathcal{K}$. Hence, if $\rho_*$ is the Cartan involution we are done. Suppose $\rho_*$ it is not the Cartan involution. Then there are two possibilities:
 suppose first there is an element $x\in \mathcal{K}\backslash \textrm{Fix}(\rho_*) $. Then the resulting space is not Lorentzian as the Cartan Killing form is no longer positive definite on the loop part of the complement of $\textrm{Fix}(\rho)$. Thus $\mathcal{K}\subset \textrm{Fix}(\rho_*)$. But then $\textrm{Fix}(\rho_*)$ is not of compact type. \qedhere
 \end{enumerate}
\end{enumerate} 
\end{proof}

\begin{definition}[irreducible Kac-Moody symmetric space]
\index{irreducible Kac-Moody symmetric space}
A Kac-Moody symmetric space is called irreducible iff its OSAKA is irreducible.
\end{definition}

In the next sections we investigate the three types of irreducible Kac-Moody symmetric spaces.

\section{ILB-symmetric spaces}

In this short section we extend the connection between tame Fr\'echet structures and ILB-structures to include symmetric spaces. We start with some definitions: 

\begin{definition}
Let $\{F_k,F\}$ be an ILB-system. A ILB-bilinear form $b$ on $\{F_k,F\}$ consists of a set of bilinear forms: $b_k:F_k\times F_k \longrightarrow \mathbb{R}$ and $b:F\times F\longrightarrow \mathbb{R}$ such that $b_k|_{F_{k+1}\times F_{k+1}}\equiv b_{k+1}$ and $b_k|_{F\times F}\equiv b$.
\end{definition}

\begin{definition}
A pseudo-Riemannian ILB-manifold $M$ is an ILB-manifold together with a nondegenerate ILB-bilinear form on each tangent space $T_pM$ varying smooth with the basepoint.
\end{definition}

\begin{definition}[ILB-symmetric space]
A pseudo-Riemannian ILB-manifold $M$ modelled on an ILB-system $\{F_k,F\}$ is called an ILB-symmetric space  iff for all $p\in M$ there is an involutive isometry $\rho_p$ such that $p$ is an isolated fixed point of $\rho_p$.
\end{definition}

\begin{proposition}
Kac-Moody symmetric spaces are ILB-symmetric spaces.
\end{proposition}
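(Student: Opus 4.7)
The plan is to verify the three pieces of structure in the definition of an ILB-symmetric space for a Kac-Moody symmetric space $X$ of any of the four types: the underlying ILB-manifold structure, the ILB-bilinear form on each tangent space, and the compatibility of the symmetries $\rho_p$ with the ILB-filtration. Since the four types of Kac-Moody symmetric space (types I--IV) are all constructed as $\widehat{MG}^\sigma_{\mathbb K}$ or quotients thereof by the fixed point group of a suitable involution $\rho_*$, and since the (pseudo-)Riemannian metric is in every case induced by the unique (up to scale) $Ad$-invariant scalar product on the tame Fréchet Kac-Moody algebra $\widehat{M\mathfrak g}^\sigma$, I would treat all four types in a single argument once the necessary extension properties have been established on the algebra side.

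First, I would upgrade the atlas constructed in Section 3.2 (for $MG_{\mathbb C}$, $MG_{\mathbb R}$, the real forms $MG_D$, and the associated quotients) to the level of Kac-Moody groups and their quotients. By Theorem~\ref{ILB-manifold} the groups $MG_{\mathbb K}^\sigma$ are ILB-manifolds modelled on $\{M\mathfrak g^\sigma_{\mathbb K}; A_n\mathfrak g^\sigma_{\mathbb K}\}$, and the extension to $\widehat{MG}^\sigma_{\mathbb K}$ adds only a finite-dimensional $(\mathbb C^*)^2$ (respectively $(S^1)^2$)-factor, which does not affect the ILB-property. Using Proposition~\ref{f0manifoldisilb}, it suffices to show that the chart transition functions constructed via the logarithmic derivative and left translation are $(0,b,C(n))$-tame; but the estimates already collected in the proof of $MG_{\mathbb C}$ being tame (logarithmic derivative, exponential of the Banach Lie group $G$, and gauge action of $MG_{\mathbb R}$) are precisely $(0,0,C(n))$-tame, and likewise for the twisted and real versions. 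The same argument applies to the symmetric space $X$ itself, either as $\widehat{MG}^\sigma_{\mathbb R}/\mathrm{Fix}(\rho_*)$ (types I, II) or $\widehat{MG}^\sigma_{\mathbb C}/\widehat{MG}^\sigma_{\mathbb R}$, $H/\mathrm{Fix}(\rho_*)$ (types III, IV), using the product decompositions from Theorems~\ref{hatdiffeovectorspace} and~\ref{typeIIIdiffeovectorspace}.

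Next, I would construct the ILB-bilinear form on each tangent space. The $Ad$-invariant metric on $\widehat{M\mathfrak g}^\sigma$ splits as
\[
\langle f+r_c c+r_d d,\,g+s_c c+s_d d\rangle \;=\; \int_{S^1}B(f(z),g(z))\,\tfrac{dz}{iz}\,+\,r_c s_d + r_d s_c,
\]
and the only nontrivial point is the extension of the integral term to each Banach algebra $\widehat{A_n\mathfrak g}^\sigma$. Since $S^1\subset A_n$ and $|B(f(z),g(z))|\leq C\|f\|_n\|g\|_n$, the bilinear form $b_n$ on $\widehat{A_n\mathfrak g}^\sigma$ is continuous, and the required compatibilities $b_{n+1}=b_n|_{\widehat{A_{n+1}\mathfrak g}^\sigma\times\widehat{A_{n+1}\mathfrak g}^\sigma}$ and $b_n|_{\widehat{M\mathfrak g}^\sigma\times\widehat{M\mathfrak g}^\sigma}=b$ are immediate from the common defining integral. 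Left-translating by the transitive Kac-Moody action distributes $b$ to every tangent space; for this to define an ILB-bilinear form globally, I need the left translations to be $(0,b,C(n))$-regular, which I would read off from the $ILB$-regularity of $Ad$ established in Section~\ref{holomorphicstructuresonkacmoodyalgebras}.

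Finally, for the symmetries $\rho_p$: at the base point $eK$ the geodesic symmetry is the restriction to $X$ of the group involution $\rho_*$, which is an involution of the second kind of the Kac-Moody algebra and therefore, by the classification of Heintze--Gro\ss\ quoted in the excerpt, extends to each level of the ILB-system. At an arbitrary point $p=gK$ the symmetry is $\rho_p=g\circ\rho_e\circ g^{-1}$, hence a composition of ILB-regular maps, and is still an isolated-fixed-point involutive isometry by the already-established fact that $X$ is a tame Fréchet symmetric space. I expect the main obstacle to be purely bookkeeping, namely tracking that the involutions at different base points are uniformly $(0,b,C(n))$-regular so that they assemble into genuine ILB-morphisms rather than merely weakly $ILB$-regular ones; this should follow from the same Cauchy-type estimates used in Lemma~\ref{banachandfrechetstructureonkacmoodyalgebras} together with the smooth dependence of $g\mapsto Ad(g)$ on a bounded $ILB$-neighborhood, completing the proof.
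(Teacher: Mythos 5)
Your proposal is correct and follows the same route as the paper, which simply derives the proposition from the ILB-manifold structure of Theorem~\ref{ILB-manifold}; you merely supply the levelwise verification of the metric and the symmetries that the paper leaves implicit. (Only cosmetic quibble: with the paper's convention $\langle c,d\rangle=-1$ the cross term in your displayed formula should read $-r_cs_d-r_ds_c$, which does not affect the argument.)
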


This is a consequence of proposition \ref{ILB-manifold}.

\section{Kac-Moody symmetric spaces}

From now on let $M$ denote a Kac-Moody symmetric space. A transitive subgroup $H$ of its isometry group $I(M)$ --- called the Kac-Moody isometry group --- is a real form of a Kac-Moody group $\widehat{MG}$.  Following the finite dimensional theory we define the scalar product on a Kac-Moody symmetric space via the scalar product on its Kac-Moody isometry group. 

Suppose $H$ is a real form of a Kac-Moody group $\widehat{MG}$. We first need an $Ad$-invariant scalar product on $\widehat{MG}_{\mathbb{C}}$. This induces a scalar product on each real form. 

\index{Lorentz metric of Kac-Moody symmetric space}
The scalar product on $\widehat{XG}$, $X\in \{A_n, \mathbb{C}^*\}$ can be defined in the following way:

We start with a scalar product $\langle\cdot,\cdot \rangle$ on the compact real form  $\mathfrak{g}$ defined as follows~\cite{Heintze08}, \cite{Popescu05}, \cite{HPTT}, \cite{Gross00}:
\begin{eqnarray*}
\langle u, v \rangle &= &\frac{1}{2\pi} \int\langle u(t), v(t) \rangle dt,\\
\langle c, d \rangle &= &-1,\\
\langle c, c \rangle &= &\langle d, d \rangle \hspace{5pt} =\hspace{5pt} \langle f, c \rangle \hspace{5pt} = \hspace{5pt} \langle f, d \rangle \hspace{5pt}= \hspace{5pt}0\, .
\end{eqnarray*}

By direct computation one checks the following result:
\begin{lemma}
The scalar product $\langle\cdot ,\cdot\rangle$ is $Ad$-invariant.
\end{lemma}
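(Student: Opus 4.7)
The plan is to verify $Ad$-invariance directly from the explicit formulas for the Adjoint action given in Example~\ref{adjointaction}. By bilinearity of $\langle\cdot,\cdot\rangle$, together with the block decomposition $\widehat{M\mathfrak{g}} = M\mathfrak{g}\oplus\mathbb{R}c\oplus\mathbb{R}d$, it suffices to check the identity $\langle\mathrm{Ad}(x)X,\mathrm{Ad}(x)Y\rangle = \langle X,Y\rangle$ on the six pairings built from representatives of the three summands. Write $x=\{w,(g,p,z)\}$, $A=gw(u)g^{-1}$, $\alpha = \langle A,g'g^{-1}\rangle$, $B=-g'g^{-1}$ and $\beta = \tfrac12\langle g'g^{-1},g'g^{-1}\rangle$, so that $\mathrm{Ad}(x)u = A+\alpha c$, $\mathrm{Ad}(x)c = c$ and $\mathrm{Ad}(x)d = d+B+\beta c$.

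The four ``easy'' cases are dispatched by the vanishing pairings $\langle c,c\rangle=0$ and $\langle\text{loop},c\rangle=0$. First, $\langle\mathrm{Ad}(x)c,\mathrm{Ad}(x)c\rangle = 0$ is immediate, and $\langle\mathrm{Ad}(x)c,\mathrm{Ad}(x)d\rangle = \langle c,d\rangle = -1$ since only the $d$\ndash component of $\mathrm{Ad}(x)d$ survives the pairing with $c$. Similarly $\langle\mathrm{Ad}(x)u,\mathrm{Ad}(x)c\rangle = 0$. For the loop\ndash loop pairing, the $c$\ndash correction terms of $\mathrm{Ad}(x)u$ and $\mathrm{Ad}(x)v$ drop out and one is left with $\frac{1}{2\pi}\int\langle g(t)w(u)(t)g(t)^{-1},\, g(t)w(v)(t)g(t)^{-1}\rangle\,dt$; pointwise $Ad$\ndash invariance of the finite\ndash dimensional Cartan\ndash Killing form on $\mathfrak{g}$ collapses this to $\frac{1}{2\pi}\int\langle w(u)(t),w(v)(t)\rangle\,dt$, and a change of variables using $2\pi$\ndash periodicity (or, in the holomorphic picture, the fact that $w$ is multiplication by a non\ndash vanishing constant on $\mathbb{C}^*$ together with the residue interpretation of the integral) removes the shift, yielding $\langle u,v\rangle$.

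The two remaining cases are where the cocycle corrections in $\mathrm{Ad}(x)$ are engineered to do their work. For $\langle\mathrm{Ad}(x)d,\mathrm{Ad}(x)d\rangle$, expansion gives $\langle B,B\rangle + 2\beta\langle d,c\rangle = \langle g'g^{-1},g'g^{-1}\rangle - 2\beta$, which vanishes by the very definition of $\beta$ together with $\langle c,d\rangle=-1$; all other cross terms vanish by orthogonality. For $\langle\mathrm{Ad}(x)u,\mathrm{Ad}(x)d\rangle$ the surviving terms are $\langle A,B\rangle + \alpha\langle c,d\rangle = -\langle gw(u)g^{-1},g'g^{-1}\rangle - \alpha$, which vanishes precisely because $\alpha$ was chosen equal to $\langle gw(u)g^{-1},g'g^{-1}\rangle$ (any sign discrepancy here is absorbed into the standard convention for the correction term).

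The one conceptual obstacle, rather than a computational one, is that since the Kac\ndash Moody exponential map is not a local diffeomorphism (Theorem~\ref{expnodiffeomorphism}), we cannot deduce global $Ad$\ndash invariance merely from $\mathrm{ad}$\ndash invariance at the Lie algebra level. The plan avoids this issue by working directly with the closed\ndash form expression of Example~\ref{adjointaction}, which is valid for arbitrary group elements $x$ of $\widehat{MG}$. As a sanity check, one can separately verify the infinitesimal version $\langle[X,Y],Z\rangle + \langle Y,[X,Z]\rangle = 0$: on three loop elements this is pointwise $\mathrm{ad}$\ndash invariance of $\langle\cdot,\cdot\rangle_{\mathfrak{g}}$ integrated over $t$; the $c$\ndash contributions vanish; and the pairing $\langle[d,u],v\rangle + \langle u,[d,v]\rangle = \langle u',v\rangle + \langle u,v'\rangle$ vanishes by integration by parts, which is exactly the cocycle identity satisfied by $\omega(u,v)=\mathrm{Res}\,\langle u,v'\rangle$. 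Together these two verifications confirm $Ad$\ndash invariance on the entire affine Kac\ndash Moody group.
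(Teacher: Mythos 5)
Your proof is correct, and it takes a genuinely different route from the paper: the paper asserts that the lemma follows ``by direct computation'' but then outsources that computation entirely to~\cite{Gross00} and~\cite{HPTT}, whereas you actually perform it. Your organization into the six pairings of the summands $M\mathfrak{g}$, $\mathbb{R}c$, $\mathbb{R}d$ is the right one, and your remark that infinitesimal $\mathrm{ad}$\nobreakdash-invariance cannot simply be integrated up (because $\widehat{\Mexp}$ is not a local diffeomorphism, theorem~\ref{expnodiffeomorphism}) is precisely why a group\nobreakdash-level verification from the closed formulas of example~\ref{adjointaction} is the appropriate argument here; what your approach buys is a self\nobreakdash-contained proof in the holomorphic setting rather than an appeal to the Sobolev\nobreakdash-loop literature. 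Two remarks. First, the sign discrepancy you flag in $\langle\textrm{Ad}(x)u,\textrm{Ad}(x)d\rangle$ is real: with $\langle c,d\rangle=-1$ as defined in this paper, the $c$\nobreakdash-coefficient of $\textrm{Ad}(x)u$ must be $-\langle gw(u)g^{-1},g'g^{-1}\rangle$ for that pairing to vanish, and the $+$ sign printed in example~\ref{adjointaction} is the one belonging to the convention $\langle c,d\rangle=+1$ of~\cite{HPTT}; so you have correctly diagnosed a convention mismatch internal to the paper rather than a flaw in the argument. Second, note that the coefficient $\tfrac12\langle g'g^{-1},g'g^{-1}\rangle$ in $\textrm{Ad}(x)d$ is itself derived in example~\ref{adjointaction} by imposing that $\textrm{Ad}$ act by isometries, so your verification of $\langle\textrm{Ad}(x)d,\textrm{Ad}(x)d\rangle=0$ holds by construction; the cases carrying actual content are the loop--loop pairing (pointwise $\textrm{Ad}$\nobreakdash-invariance of the Killing form together with invariance of the integral, respectively the residue, under the shift $w$ --- the latter being exactly the computation the paper performs separately for $\textrm{Ad}(e^{ird})$) and the loop--$d$ pairing, both of which you handle correctly.
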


For a proof see~\cite{Gross00}, \cite{HPTT}.

As in the finite dimensional case we call a vector $v$
\begin{itemize}
	\item[-] space-like iff $|v| > 0$,
	\item[-] light-like iff $|v| = 0$,
	\item[-] time-like  iff $|v| < 0$.
\end{itemize}

\noindent The Lie algebra $M\mathfrak{g}$ lies completely in the space-like part, while for example the direction $c+d$ is a time-like vector, and $c$ and $d$ are light-like vectors.

If $\mathfrak{g}$ is simple this scalar product is essentially unique: While the definition $\langle d, d\rangle:=0$ is arbitrary and not forced by the Adjoint action, $d$ is not a distinguished element; it is only defined up to an element $f\in \widetilde{L\mathfrak{g}}$. So suppose we choose the definition $\langle \widetilde{d}, \widetilde{d}\rangle=-k^2<0$ for $k\in \mathbb{R}$. Then we can find an element $f\in M\mathfrak{g}$ such that $\langle f, f \rangle= k^2$ and define $\overline{d}:= \widetilde{d}+ f$. Then we get $\langle \overline{d}, \overline{d} \rangle =0$. If in contrast $\langle \widetilde{d}, \widetilde{d}\rangle=k^2>0$, choose $\overline{d}=\widetilde{d}+ \frac{k^2}{2}c$; again we find $\langle \overline{d}, \overline{d} \rangle= 0 $.

Now we extend $\langle\cdot,\cdot\rangle$ to the complexification $\mathfrak{g}_{\mathbb{C}}$ of $\mathfrak{g}$:
We identify $\widehat{M\mathfrak{g}}_{\mathbb{C}}=\widehat{M\mathfrak{g}}_{\mathbb{R}}\oplus i \widehat{M\mathfrak{g}}_{\mathbb{R}}$ and put $\langle ix, iy \rangle:= \langle x,y\rangle$ and $\langle x, iy\rangle=0$. We have to check that this scalar product is invariant under $\widehat{MG}$. The crucial point here is to check $Ad$-invariance under the action of $\exp{id}$. We have $Ad(\exp{ird})(v(t)):= v(te^{-r})$, $v(t)\in M\mathfrak{g}$, which changes the radius. $Ad$-invariance follows like that:

\noindent  We use the equivalence $\langle u,v \rangle=2\pi  \textrm{Res}(uv')$. Thus 
 \[
 \begin{array}{rl}
 
 Ad(e^{ird}) \langle v,u \rangle &= \langle v(te^{-r}), u(te^{-r}) =\rangle\\
 & = 2\pi Res (v(te^{-r})u(te^{-r})=\\
 & = 2\pi Res (\sum_n (\sum_k v_k u_{n-k}r(n-k)) r^{n}t^{n})\\
 & = \sum_k v_k u_{-k-1}  =\\
 & = 2\pi Res(uv)= \langle u, v \rangle\,.
 \end{array}
 \]

By restriction the scalar product $\langle\cdot ,\cdot \rangle$ induces a scalar product on any real form of $\mathfrak{g}_{\mathbb{C}}$. The restriction to the compact real form  $\mathfrak{g}$ of $\mathfrak{g}_{\mathbb{C}}$ is the Lorentzian scalar product, we started with. 

\begin{definition}[Rank]
\index{rank of Kac-Moody symmetric space}
The rank of a Kac-Moody symmetric space $M$ is the maximal $n\in \mathbb{N}$, such that there exists a flat totally geodesic submanifold $A\subset M$ with signature $(n-1,1)$.
\end{definition}

\begin{lemma}
The rank $r$ of a Kac-Moody symmetric space satisfies $r\geq 3$.
\end{lemma}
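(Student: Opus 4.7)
The plan is to construct an explicit $3$-dimensional flat totally geodesic submanifold with Lorentzian signature by exhibiting a $3$-dimensional abelian subspace of $\mathcal{P}$ of signature $(2,1)$, where $\widehat{\mathfrak{h}}=\mathcal{K}\oplus\mathcal{P}$ is the Cartan decomposition of the OSAKA $(\widehat{\mathfrak{h}},\rho_*)$ associated to $M$. Since the exponential image of any abelian subspace of $\mathcal{P}$ is a flat totally geodesic submanifold whose induced metric agrees with the restriction of $\langle\,,\rangle$ to that subspace, producing such a $3$-plane immediately gives $r\geq 3$.

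The starting point is the observation that $\rho_*$ is an involution of the second kind (this is forced by the OSAKA condition together with the fact that $c$ generates the center), so after choosing a standard representative we have $\rho_*(c)=-c$ and $\rho_*(d)=-d$. Thus $c,d\in\mathcal{P}$. By the definition of the $Ad$-invariant scalar product, $\langle c,c\rangle=\langle d,d\rangle=0$ and $\langle c,d\rangle=-1$, so $\mathrm{span}(c,d)$ is already an abelian $2$-plane in $\mathcal{P}$ of signature $(1,1)$. Moreover, $c$ is central and, since elements of $\mathfrak{g}\subset L(\mathfrak{g},\sigma)$ (constant loops) satisfy $X'=0$, the bracket $[d,X]=zX'=0$; so every constant loop automatically commutes with both $c$ and $d$ in $\widehat{\mathfrak{h}}$.

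Next I would exhibit a spacelike constant loop lying in $\mathcal{P}$. For the case of type II, this is immediate: $\mathcal{P}$ can be identified with $\widehat{L}(\mathfrak{g},\sigma)$ itself (because the swap involution on $\widehat{L}(\mathfrak{g}\oplus\mathfrak{g},\sigma\oplus\sigma)$ has $-1$-eigenspace equal to the antidiagonal), and one simply picks any nonzero element $X$ of a Cartan subalgebra $\mathfrak{t}\subset\mathfrak{g}$. For types I, III, IV one invokes the structure theory of involutions of the second kind on affine Kac--Moody algebras (Heintze--Gro\ss, Theorem~3.4 and its consequences) to choose a representative of $\rho_*$ whose restriction to the subalgebra $\mathfrak{g}$ of constant loops is a nontrivial involution $\rho_0$ of $\mathfrak{g}$; the standard Cartan decomposition $\mathfrak{g}=\mathfrak{k}_0\oplus\mathfrak{p}_0$ with respect to $\rho_0$ then supplies a $\rho_*$-stable Cartan subalgebra $\mathfrak{t}\subset\mathfrak{g}$ with $\mathfrak{t}\cap\mathfrak{p}_0\neq 0$, and any nonzero $X$ in this intersection lies in $\mathcal{P}\cap\mathfrak{g}$. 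In all cases $\langle X,X\rangle>0$ because the scalar product on the compact real form is (up to sign) the negative Killing form, hence positive definite on $\mathfrak{g}$.

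With such an $X$ in hand, $\mathfrak{a}:=\mathrm{span}(X,c,d)\subset\mathcal{P}$ is a $3$-dimensional abelian subspace (by the commutation observations above), and its Gram matrix
\[
\begin{pmatrix}\langle X,X\rangle & 0 & 0\\ 0 & 0 & -1\\ 0 & -1 & 0\end{pmatrix}
\]
has signature $(2,1)$. Exponentiating $\mathfrak{a}$ at the basepoint yields a flat totally geodesic Lorentzian submanifold of dimension $3$ in $M$, proving $r\geq 3$. The main obstacle in this argument is the structural claim in the third paragraph, namely that one can always arrange $\rho_0=\rho_*|_{\mathfrak{g}}$ to act nontrivially on some $\rho_*$-stable Cartan of $\mathfrak{g}$; this is where one genuinely needs to invoke the classification of second-kind involutions of affine Kac--Moody algebras rather than just formal manipulation with $c$ and $d$.
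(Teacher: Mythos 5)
Your overall strategy --- exhibit a $3$-dimensional abelian subspace of $\mathcal{P}$ of signature $(2,1)$ and exponentiate --- is exactly the right one, and it is essentially what the paper does; the type II case and the signature computation for the Gram matrix are fine. The gap is the structural claim you yourself flag in the third paragraph, and unfortunately the classification of second-kind involutions does not deliver it: it is \emph{not} always possible to arrange that $\mathcal{P}$ contains a nonzero constant loop. In the Heintze--Gro\ss\ normal form $\rho_*(f)(t)=\varphi_t\,f(-t)$ the involution condition forces $\varphi_0^2=\mathrm{id}$, and under conjugation by $\mathrm{Ad}(g)$ one computes $\varphi_0\mapsto \mathrm{Ad}(g(0))\,\varphi_0\,\mathrm{Ad}(g(0))^{-1}$, so the conjugacy class of $\varphi_0$ as an automorphism of $\mathfrak{g}$ is an invariant of the conjugacy class of $\rho_*$. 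A constant loop $X$ lies in $\mathcal{P}$ iff $\varphi_t(X)=-X$ for all $t$, which at $t=0$ requires $\varphi_0(X)=-X$. Hence for any second-kind involution with $\varphi_0=\mathrm{id}$ --- the basic example $\rho_*(f)(t)=f(-t)$, $c\mapsto -c$, $d\mapsto -d$, whose fixed point set (the even loops) is a perfectly good loop algebra of compact type, so this is a genuine OSAKA of type I --- one gets $X=-X$, i.e.\ $\mathcal{P}\cap\mathfrak{g}=0$, and no amount of re-choosing the representative helps. For such involutions $\mathrm{span}(X,c,d)$ simply does not exist, and a nonconstant odd loop $v$ fails to commute with the bare $d$ since $[d,v]=izv'\neq 0$.

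The repair is the one implicit in the paper's own (very terse) proof: the third direction must be taken inside a finite-type abelian subalgebra of $\mathcal{P}$ of the general form $\mathfrak{a}\oplus\mathbb{R}c\oplus\mathbb{R}(d+f)$, where $f\in M\mathfrak{g}$ and $\mathfrak{a}$ consists of solutions $v$ of the Lax equation $v'=[v,f]$ lying in $\mathcal{P}$, rather than of constant loops commuting with the bare $d$. The existence of such subalgebras in $\mathcal{P}$ with $\dim\mathfrak{a}=\mathrm{rank}(\mathfrak{g})\geq 1$ is the content of the paper's results on flats of finite type in $\mathcal{P}$ (going back to Popescu and to the polarity of the gauge action restricted to $\mathcal{P}$), and the signature computation survives: $\mathfrak{a}$ is spacelike, $\langle c,c\rangle=0$, $\langle c,d+f\rangle=-1$ and $\langle d+f,d+f\rangle=\langle f,f\rangle\geq 0$, so the Gram matrix of $\mathrm{span}(v,c,d+f)$ still has signature $(2,1)$. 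With that substitution your argument closes; as written, it proves $r\geq 3$ only for those symmetric spaces whose defining involution restricts nontrivially to the constant loops.
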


\begin{proof}
Each flat with signature $(n-1,1)$ has to contain the direction spanned by $c$, by $d+ f$ for some $f\in M\mathfrak{g}$ and an element in $M\mathfrak{g}$. 
\end{proof}

\noindent Now theorem~\ref{existanceoflevicivitaconnection} guarantees the existence of an associated Levi-Civita Connection $\nabla$.

\section{Kac-Moody symmetric spaces of the compact type}

\subsection{Foundations}

\index{Kac-Moody symmetric space of the compact type}
In this part we want to equip Kac-Moody groups of the compact type with an $Ad$-invariant metric, in order to obtain symmetric spaces.  Of course these spaces are --- as they are infinite dimensional
--- not compact. So we want to call a Kac-Moody symmetric space ``of the compact
type''' if it is associated to a real form of compact type of a Kac-Moody group or to a
quotient of such a real form  (or, as we will see equivalently, if $\langle R(f)\{g,h,g\}, h\} \rangle \geq 0$).

\noindent A consequence of theorems~\ref{existanceoflevicivitaconnection} and \ref{curvatureoffrechetliegroup} is:

\begin{theorem}[Levi-Civita connection and curvature of Kac-Moody groups]
\label{levicivita}
$(\widehat{MG}, g)$ admits a unique Levi-Civita connection $\nabla$. For
$g, h$ left invariant vector fields, $\nabla_{g}h=\frac{1}{2}[g,h]$. The curvature tensor is given by
\begin{displaymath}
R(f)\{g,h,k\}=\frac{1}{4}[[g,h],k]\, .
\end{displaymath}
\end{theorem}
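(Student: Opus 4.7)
The plan is to show that this theorem is a direct application of the two previously established general results, namely Theorem~\ref{existanceoflevicivitaconnection} on the existence of a left-invariant Levi-Civita connection on any tame Fr\'echet Lie group admitting a biinvariant (pseudo-)Riemannian metric, and Lemma~\ref{curvatureoffrechetliegroup} giving the curvature of such a connection. Hence essentially all the work will be to verify the hypotheses of these two results for the Kac-Moody group $\widehat{MG}$ equipped with the scalar product $g$ constructed in the preceding section.

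First, I would recall that $\widehat{MG}$ is a tame Fr\'echet Lie group. Its underlying manifold structure was established in the chapter on tame Fr\'echet realizations, where $\widehat{MG}$ was described as a smooth $(\mathbb{C}^{*})^{2}$-bundle (or $(S^{1})^{2}$-bundle in the real case) over the tame Fr\'echet Lie group $MG$, together with a compatible group structure; the tameness of the bundle was guaranteed by Popescu's result on fibre bundles with Banach fibre. Thus the first hypothesis of Theorem~\ref{existanceoflevicivitaconnection} is immediate.

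Next, I would verify that the scalar product $g$ built from $\langle\,\cdot\,,\,\cdot\,\rangle$ on $\widehat{M\mathfrak g}$ by left translation is in fact biinvariant. Left invariance holds by construction. For right invariance it suffices, as in the finite dimensional case, to check that the scalar product on $\widehat{M\mathfrak g} = T_{e}\widehat{MG}$ is $\mathrm{Ad}(\widehat{MG})$-invariant. This was essentially proved in Section~\ref{theadjointaction}: the invariance of the loop part follows from the pointwise $\mathrm{Ad}(G)$-invariance of the scalar product on $\mathfrak g$ combined with the residue formula, the invariance under $\mathrm{Ad}(\exp(tc))$ is trivial since $c$ is central, and the invariance under $\mathrm{Ad}(\exp(rd))$ was verified explicitly via the computation $\mathrm{Ad}(e^{rd})\langle u,v\rangle = 2\pi\,\mathrm{Res}(u(te^{-r})v(te^{-r}))=\langle u,v\rangle$. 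Putting these together, $g$ is an $\mathrm{Ad}$-invariant, hence biinvariant, weak pseudo-Riemannian metric.

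With both hypotheses verified, Theorem~\ref{existanceoflevicivitaconnection} applies and yields the unique left-invariant connection $\nabla$ with $\nabla_{g}h = \tfrac{1}{2}[g,h]$ for left-invariant vector fields $g,h$, and moreover this $\nabla$ is the Levi-Civita connection of $(\widehat{MG},g)$. The curvature formula $R(f)\{g,h,k\} = \tfrac{1}{4}[[g,h],k]$ is then a direct instance of Lemma~\ref{curvatureoffrechetliegroup}. I do not anticipate any serious obstacle: the only delicate step is the bi-invariance check above, and the relevant $\mathrm{Ad}$-invariance computations have all been carried out in earlier sections, so here one merely needs to cite them.
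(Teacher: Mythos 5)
Your proposal is correct and follows exactly the route the paper takes: the paper states Theorem~\ref{levicivita} as an immediate consequence of Theorem~\ref{existanceoflevicivitaconnection} and Lemma~\ref{curvatureoffrechetliegroup}, with the $\mathrm{Ad}$-invariance of the scalar product (including the $\mathrm{Ad}(e^{rd})$ residue computation) having been established in the preceding section. Your write-up merely makes explicit the verification of the hypotheses that the paper leaves implicit, which is a faithful expansion rather than a different argument.
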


\begin{corollary}
\label{curvatureestimatecompacttype}
The metric of a Kac-Moody group of compact type satisfies the following curvature estimate: 
\begin{displaymath}\langle R(f)\{g,h,g\}, h\}\rangle \geq 0\, .\end{displaymath}
\end{corollary}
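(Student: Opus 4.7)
The plan is to combine the explicit curvature formula of Theorem~\ref{levicivita} with the $Ad$-invariance of the Lorentzian metric and a careful inspection of the image of the Kac-Moody bracket. From Theorem~\ref{levicivita} we have, for left-invariant vector fields $g,h$ on $\widehat{MG}^\sigma_{\mathbb R}$,
\begin{displaymath}
R(f)\{g,h,g\} \;=\; \tfrac{1}{4}\bigl[[g,h],\,g\bigr],
\end{displaymath}
so the quantity to estimate is $\tfrac14\langle [[g,h],g],h\rangle$. The $Ad$-invariance of $\langle\cdot,\cdot\rangle$ established in the construction of the Lorentzian metric is equivalent, after infinitesimal differentiation and use of antisymmetry of the bracket, to the identity $\langle [X,Y],Z\rangle=\langle X,[Y,Z]\rangle$. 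Applying this with $X=[g,h]$, $Y=g$, $Z=h$ gives
\begin{displaymath}
\langle R(f)\{g,h,g\},h\rangle \;=\; \tfrac14\bigl\langle [g,h],\,[g,h]\bigr\rangle .
\end{displaymath}
Thus the theorem reduces to the claim that the bi-invariant metric is positive semidefinite on the image of the bracket of $\widehat{M\mathfrak g}_{\mathbb R}^\sigma$.

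The second step is to identify this image. Writing $g=f_1+r_cc+r_dd$ and $h=f_2+s_cc+s_dd$ and using the bracket relations in Definition~\ref{geometricaffinekacmoodyalgebra}, one computes
\begin{displaymath}
[g,h] \;=\; [f_1,f_2]_0 \;+\; r_d\,[d,f_2] \;-\; s_d\,[d,f_1] \;+\; \omega(f_1,f_2)\,c,
\end{displaymath}
with no $d$-component, since $[c,\cdot]=0$ and $[d,f]\in M\mathfrak g_{\mathbb R}^\sigma$. Hence $[g,h]\in M\mathfrak g_{\mathbb R}^\sigma\oplus\mathbb R c$.

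The third step is to observe that on this subspace the metric is positive semidefinite. Indeed, by construction
\begin{displaymath}
\langle u,v\rangle = \tfrac{1}{2\pi}\int \langle u(t),v(t)\rangle\,dt,
\qquad \langle c,c\rangle=0,\qquad \langle c,M\mathfrak g^\sigma_{\mathbb R}\rangle=0,
\end{displaymath}
and because $\mathfrak g$ is a compact real form, $\langle\cdot,\cdot\rangle$ is positive definite on $\mathfrak g$ and therefore $\langle u,u\rangle\ge 0$ pointwise; the $c$-direction contributes a null line orthogonal to $M\mathfrak g_{\mathbb R}^\sigma$. Consequently $\langle [g,h],[g,h]\rangle\ge 0$, which yields the desired estimate.

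I expect the computation itself to be essentially routine once the two inputs (the curvature formula for a tame Fr\'echet Lie group with bi-invariant metric and the $Ad$-invariance identity) are in hand. The only real thing to be careful about is the third step: one must verify that the Lorentz signature of $\langle\cdot,\cdot\rangle$ is entirely contained in the pairing between $c$ and $d$, so that the subspace $M\mathfrak g^\sigma_{\mathbb R}\oplus\mathbb R c$ into which all brackets fall is actually positive semidefinite (with the single null direction $\mathbb R c$ doing no harm); this is exactly what makes the compact-type sign work and is the place where the whole argument would fail for the non-compact dual, where $i\mathcal P$ contributes a negative-definite piece and flips the sign of $\langle[g,h],[g,h]\rangle$ on the $\mathcal P$-brackets.
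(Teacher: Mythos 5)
Your proposal is correct and follows essentially the same route as the paper: apply the curvature formula $R(f)\{g,h,g\}=\frac14[[g,h],g]$, use $Ad$-invariance of the metric to rewrite the expression as $\frac14\langle[g,h],[g,h]\rangle$, and conclude positivity because the bracket lands in the derived algebra $M\mathfrak g^\sigma_{\mathbb R}\oplus\mathbb R c$, where the metric is positive semidefinite. Your third step merely spells out explicitly the paper's one-line remark that ``$[h,g]$ is in the derived algebra,'' which is a useful clarification but not a different argument.
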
 

\begin{proof}
The calculation is similar to the one used in the finite dimensional case for compact Lie groups:
\begin{displaymath}
\langle R(f)\{g,h,g\}, h\}\rangle =\langle[g,[h,g]],h\rangle=\langle[h,g],[h,g]\rangle\geq 0\, .
\end{displaymath}
The last inequality follows as $[h,g]$ is in the derived algebra.
\end{proof}

 Remark that in the finite dimensional case the condition $\langle R(f)\{g,h,g\}, h\}\rangle \geq 0$ is equivalent to nonnegative sectional curvature. This is no longer true in the Kac-Moody case as the metric is now Lorentzian and hence the denominator may get infinite~(see also theorem \ref{Kulkarni}).

\noindent Nevertheless, using the Jacobi equation, 
\begin{displaymath}
\frac{D^2 J}{dt^2}+R(f)\{\gamma'(t),J(t),\gamma'(t)\}=0, \quad
\end{displaymath}
for a geodesic $\gamma(t)$ and a Jacobi field $J(t)$, we see that the resulting symmetric spaces have ``compact type behaviour''.

\subsection{Kac-Moody symmetric spaces of type II}
\index{Kac-Moody symmetric space of type II}

In this section we describe the Kac-Moody analogue of finite dimensional irreducible ``type II'' Riemannian symmetric spaces. Riemannian symmetric spaces of type $II$ are compact Lie groups equipped with their (unique up to a global scaling factor) invariant metric. Accordingly Kac-Moody symmetric spaces of type $II$ are Kac-Moody groups of compact type equipped with their invariant metric.

Let $G_{\mathbb{C}}$ be a simple complex Lie group and $G$ its compact real form (unique up to conjugation). Let $\widehat{MG}$ be the Kac-Moody group of compact type associated to $G$. 

Let us fix some notation: For an element $f:\mathbb{C}\longrightarrow G_{\mathbb{C}}\in MG_{\mathbb{R}}$ we denote by $f^*$ the element: \begin{displaymath}f^*(z):=\overline{f\left(\frac{1}{\overline{z}}\right)}\, .\end{displaymath}

We choose now the group $\widehat{M(G\times G)}$ as the Kac-Moody symmetric group and define the symmetry $\widehat{M\rho}$ by the mapping
\begin{alignat*}{1}
\widehat{M\rho}: \widehat{M(G\times G)}&\longrightarrow \widehat{M(G\times G)},\\
\left\{\left(f_1(z),p_1(z,t)\right),\left(f_2(z),p_2(z,t)\right),r_c, r_d\right\}&\mapsto 
\left\{\left(f^*_2(z), p_2^*(z,t)\right), \left(f^*_1(z), p^*_1(z,t)\right),-r_c, -r_d\right\}\,.
\end{alignat*}

The action of the Kac-Moody isometry group $\widehat{M(G\times G)}$ on $\widehat{MG}$ is defined as follows:

\begin{lemma}
$\widehat{M\rho}$ is an involution of the second kind.
\end{lemma}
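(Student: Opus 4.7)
The plan is to verify the four properties characterizing an involution of the second kind in the sense used in this chapter: (i) $\widehat{M\rho}$ is well-defined on the $(\mathbb{C}^*)^2$-bundle $\widehat{M(G\times G)}$; (ii) $\widehat{M\rho}$ is a group homomorphism; (iii) $\widehat{M\rho}^2=\mathrm{Id}$; and (iv) the induced map on the central element $c$ satisfies $\widehat{M\rho}(c)=-c$, i.e.\ $\epsilon_{\widehat{M\rho}}=-1$.

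First I would check that the pointwise operation $f\mapsto f^*$ with $f^*(z):=\overline{f(1/\bar z)}$ sends $MG_{\mathbb R}$ into itself. Holomorphicity of $f^*$ on $\mathbb{C}^*$ follows because $z\mapsto 1/\bar z$ is antiholomorphic and the subsequent componentwise conjugation is also antilinear, so their composition is holomorphic; on $S^1$ one has $1/\bar z=z$, hence $f^*(z)=\overline{f(z)}\in G$ since $G$ is stable under the compact real form conjugation. Combined with the swap $(f_1,f_2)\mapsto(f_2^*,f_1^*)$ and the sign flips $(r_c,r_d)\mapsto(-r_c,-r_d)$ one obtains a set-theoretic map on representatives. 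For the map to descend to $\widehat{M(G\times G)}$ one must check that it respects the equivalence $(g,p_1,w_1)\sim(g,p_2,w_2)\Leftrightarrow w_1=C_\omega(p_2*p_1^{-1})w_2$ defining the $\mathbb{C}^*$-bundle; this amounts to verifying that the cocycle $\omega(f,g)=\mathrm{Res}\langle f,g'\rangle$ transforms correctly under $*$, which I would establish by a direct change of variables $z\mapsto 1/\bar z$ in the residue, using that $\overline{\phantom{f}}$ is an automorphism of $\mathfrak g_{\mathbb C}$ preserving the invariant form up to complex conjugation. The sign on $r_c,r_d$ has been arranged precisely so that this cocycle identity matches, which is the delicate point and the main obstacle of the proof.

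Next, I would verify involutivity directly: $(f^*)^*(z)=\overline{\overline{f(1/\overline{1/\bar z})}}=f(z)$; the swap of the two factors applied twice is the identity; and $-(-r_c)=r_c$, $-(-r_d)=r_d$. For the homomorphism property, given $x=((f_1,p_1),(f_2,p_2),r_c,r_d)$ and $y=((g_1,q_1),(g_2,q_2),s_c,s_d)$, the product $xy$ is computed pointwise in the loop components, with cocycle contributions $\omega(f_i,g_i)$ in the $r_c$-slot and the appropriate composition of paths. Since $\overline{\phantom{f}}$ is a (Lie group) homomorphism on $G_{\mathbb C}$ one has $(fg)^*=f^*g^*$ pointwise, and the simultaneous swap of the two factors compensates for no reordering being necessary on the underlying loop part; the cocycle identity for $\omega$ from the previous paragraph, together with the sign change $r_c\mapsto-r_c$, makes the central contribution transform consistently. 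The action on $r_d$ is handled in the same way using that the argument shift used to extend the action to $d$ commutes with $*$.

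Finally, reading off the definition shows $\widehat{M\rho}$ maps the $c$-coordinate to its negative, so the induced Lie algebra involution satisfies $d\widehat{M\rho}(c)=-c$, giving $\epsilon_{\widehat{M\rho}}=-1$. By the classification of E.\ Heintze and C.\ Gro\ss\ recalled in Section~\ref{TheloopalgebraapproachtoKacMoodyalgebras}, this is exactly the condition that $\widehat{M\rho}$ be an involution of the second kind, completing the proof. As noted, the only non-routine step is the compatibility of $*$ with the cocycle $\omega$, which is precisely what forces the sign on $(r_c,r_d)$ in the definition of $\widehat{M\rho}$.
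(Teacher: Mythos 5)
Your proof is correct and follows essentially the same route as the paper's: the paper's own argument is just the two-line verification that $f^{**}=f$ together with the double sign flip on $(r_c,r_d)$ gives $(\widehat{M\rho})^2=\mathrm{Id}$, and that $\widehat{M\rho}(r_c)=-r_c$ forces the second kind. The additional checks you carry out (well-definedness on the $\mathbb{C}^*$-bundle via compatibility of $*$ with the cocycle $\omega$, and the homomorphism property) are left implicit in the paper but are sound and correctly identified as the only non-routine part.
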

\begin{proof}
As $f^{**}(z)=f(z)$, $(\widehat{M\rho})^2(r_c)=\widehat{M\rho}(-r_c)=r_c$, $(\widehat{M\rho})^2(r_d)=\widehat{M\rho}(-r_d)=r_d$, we find that $\widehat{M\rho}$ is an involution; as $\widehat{M\rho}(r_c)=-r_c$ it is of the second kind. 
\end{proof}

\begin{lemma}
The pair $\left(\widehat{M(\mathfrak{g\times g})}, d\widehat{M\rho}\right)$ is an irreducible OSAKA of type $II$. 
\end{lemma}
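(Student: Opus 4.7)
The plan is to verify the three defining properties of an irreducible OSAKA of type $II$ in the order they appear in the definition, by reducing everything to the explicit description of $\widehat{M\rho}$ given above.

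First I would compute the differential $d\widehat{M\rho}$ at the identity of $\widehat{M(G\times G)}$. The star operation $f^*(z)=\overline{f(1/\bar z)}$ is the natural antiholomorphic extension to $\mathbb{C}^*$ of the Cartan involution of $G_{\mathbb{C}}$ with respect to the compact real form $G$; on loops in $MG_{\mathbb{R}}$ it reduces to $f^*(z)=f(z)$ for $z\in S^1$, so pairs $(f_1,f_2)\in M(\mathfrak{g}\times\mathfrak{g})_{\mathbb{R}}$ are mapped under $d\widehat{M\rho}$ to $(f_2(z),f_1(z))$ on the loop part, while $c\mapsto -c$ and $d\mapsto -d$. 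This is a linear involution. That it is a Lie algebra automorphism is a direct calculation using the bracket of definition \ref{geometricaffinekacmoodyalgebra}: the pointwise bracket is symmetric in the swap of the two factors, and the factors $-c,-d$ in front of the central and derivation terms combine correctly with the cocycle $\omega(f,g)=\mathrm{Res}\langle f,g'\rangle$ (which picks up one sign from each of the two factors, giving a net $-$ that matches the $c\mapsto -c$). Since $d\widehat{M\rho}$ acts as $-1$ on $\mathbb{R}c\oplus\mathbb{R}d$, it is an involution of the second kind in the sense of Heintze--Gro\ss.

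Next I would determine the fixed point algebra. On the loop part, the equation $(f_1(z),f_2(z))=(f_2(z),f_1(z))$ forces $f_1=f_2$, so $\mathrm{Fix}(d\widehat{M\rho})|_{L}$ is the diagonal $\{(f,f)\mid f\in M\mathfrak{g}_{\mathbb{R}}\}$, which is Lie-algebra isomorphic to $M\mathfrak{g}_{\mathbb{R}}$. The central and derivation parts are killed, so $\mathrm{Fix}(d\widehat{M\rho})\cong M\mathfrak{g}_{\mathbb{R}}$ as a loop algebra. Since $\mathfrak{g}$ is the compact real form of a simple complex Lie algebra, its Cartan--Killing form is negative definite, hence the averaged Cartan--Killing form on $M\mathfrak{g}_{\mathbb{R}}$ is negative definite; thus the fixed point algebra is a compact loop algebra in the sense of the OSAKA definition. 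Together with the fact that $\widehat{M(\mathfrak{g}\times\mathfrak{g})}$ is a real form of the complex geometric affine Kac-Moody algebra $\widehat{M(\mathfrak{g}_{\mathbb{C}}\times\mathfrak{g}_{\mathbb{C}})}$ (via ordinary complex conjugation), this establishes that the pair is an OSAKA.

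For irreducibility I would argue as follows. Any $d\widehat{M\rho}$-invariant Kac-Moody ideal $\mathcal{I}\subset\widehat{M(\mathfrak{g}\times\mathfrak{g})}$ decomposes, by the splitting lemma for geometric affine Kac-Moody algebras proved earlier in the chapter, into invariant ideals of the loop part together with the canonical one-dimensional central extension and derivation. On the loop level, the only $d\widehat{M\rho}$-invariant ideals of $M(\mathfrak{g}\times\mathfrak{g})_{\mathbb{R}}$ are $0$ and the whole algebra, because $d\widehat{M\rho}$ swaps the two simple-loop factors $M\mathfrak{g}_{\mathbb{R}}\oplus M\mathfrak{g}_{\mathbb{R}}$ and each factor is simple as a loop algebra of a simple compact $\mathfrak{g}$; any nontrivial invariant ideal would either have to project nontrivially onto both factors (and then equal the whole) or equal one factor (which is not invariant under the swap). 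Combined with the non-decomposability of the central$/$derivation part (already observed: only one $c$ and one $d$ for the full algebra), this rules out any nontrivial invariant Kac-Moody subalgebra. Finally, matching to the classification list of OSAKAs given right after definition \ref{types of OSAKAs}, the present pair is exactly the second entry of the compact type: a compact real Kac-Moody algebra $\widehat{L}(\mathfrak{g}_{\mathbb{R}}\times\mathfrak{g}_{\mathbb{R}},\sigma\oplus\sigma)$ together with an involution of second kind switching the two factors, whose fixed-point set is isomorphic to $L(\mathfrak{g},\sigma)$. This is by definition type $II$.

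The main obstacle I anticipate is the verification of the cocycle sign in the first step: one must check that the shift $z\mapsto 1/\bar z$ hidden inside $f^*$ together with the pointwise complex conjugation interacts correctly with $\omega(f,g)=\mathrm{Res}\langle f,g'\rangle$, because the residue is sensitive to the orientation-reversing nature of $z\mapsto 1/\bar z$ and to the conjugate-linearity. This is the computation that both delivers the $-c$ in the formula for $\widehat{M\rho}$ and forces $\widehat{M\rho}$ to be of the second kind; once it is carried out, the remaining steps are formal applications of results already established in the chapter.
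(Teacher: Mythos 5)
Your overall strategy is the same as the paper's, only more explicit: the paper simply computes the fixed-point set of $\widehat{M\rho}$ at the group level (elements with $f_1=f_2^*$, $p_1=p_2^*$, $r_c=r_d=0$), identifies it with the loop group $MG$ of compact type, and passes to the induced Lie algebra involution whose fixed-point algebra is $M\mathfrak{g}$; irreducibility and the type~$II$ identification are left implicit. Your additional verifications (second-kind property, irreducibility via the swap of the two simple loop factors, matching against the classification list after Definition~\ref{types of OSAKAs}) are welcome and in the right spirit.

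There is, however, a genuine problem in your first step, precisely at the point you yourself flag as the main obstacle. You assert that on loops in $MG_{\mathbb{R}}$ the star operation is trivial, so that the loop part of $d\widehat{M\rho}$ is the \emph{pure} swap $(f_1,f_2)\mapsto(f_2,f_1)$, and you then claim the cocycle ``picks up one sign from each of the two factors, giving a net $-$.'' These two assertions are incompatible: the cocycle of the direct sum is $\omega\bigl((f_1,f_2),(g_1,g_2)\bigr)=\mathrm{Res}\langle f_1,g_1'\rangle+\mathrm{Res}\langle f_2,g_2'\rangle$, which is \emph{invariant} under the simultaneous swap of the two factors, not negated. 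A pure swap therefore forces $c\mapsto +c$ and cannot be extended by $c\mapsto -c$, $d\mapsto -d$ to a Lie algebra automorphism. What actually produces the sign is a reversal of the loop parameter in the loop part: for $\tilde f(z):=f(1/z)$ one computes $\omega(\tilde f,\tilde g)=-\omega(f,g)$, consistent with $\epsilon_\varphi=-1$ in the Heintze--Gro\ss\ normal form and with the type~$II$ OSAKA model $(f(t),g(t),r_c,r_d)\mapsto(g(-t),f(-t),-r_c,-r_d)$ given in Chapter~\ref{chap:alg}. So the involution you differentiate must swap the factors \emph{and} substitute $z\mapsto 1/z$; with that correction the fixed-point algebra becomes the twisted diagonal $\{(f(z),f(1/z))\}\cong M\mathfrak{g}_{\mathbb{R}}$ rather than the plain diagonal, and the rest of your argument (compactness of the fixed-point algebra, irreducibility, duality of the two factors under the swap, type~$II$ identification) goes through unchanged.
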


\begin{proof}
The fixed points of $\widehat{M\rho}$ consist of elements that have a description of the form: 
\begin{displaymath}\left(\left(f_1(z),p_1(z,t)\right),\left(f_2(z),p_2(z,t)\right),r_c, r_d\right)\,,\end{displaymath}
such that  $f_1(z)=f^*_2(z), p_1(z,t)=p_2^*(z,t)$ and $r_c=r_d\equiv 0$. Hence the fixed point group is isomorphic to the loop group $MG$, which is a loop group of compact type. 
The tangential space of $\widehat{M(G\times G)}$ at the identity is $\widehat{M(\mathfrak{g}\times\mathfrak{g})}$. The involution 
 \begin{displaymath}
 \widehat{M\rho}:\widehat{M(G\times G)}\longrightarrow \widehat{M(G\times G)}
 \end{displaymath} 
 induces an involution 
\begin{displaymath}
\widehat{M\rho}:\widehat{M(\mathfrak{g}\times\mathfrak{g})}\longrightarrow \widehat{M(\mathfrak{g}\times\mathfrak{g})}\, .
\end{displaymath}
Its fixed point algebra is the Lie algebra of the fixed point group, hence $M\mathfrak{g}$; this is a loop algebra of the compact type, whence the result.
\end{proof}

\begin{theorem}
We have the isomorphism:
\begin{displaymath}
\widehat{MG}\cong \widehat{M(G\times G)}/MG\, .
\end{displaymath}
\end{theorem}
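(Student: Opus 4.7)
The plan mirrors the finite-dimensional realization of a compact simple Lie group $G$ as the type II Riemannian symmetric space $(G\times G)/\Delta(G)$, where $G\times G$ acts on $G$ by $(g_1,g_2)\cdot h = g_1 h g_2^{-1}$ and the stabilizer of $e$ is the diagonal. In our setting the analogue of $\Delta(G)$ is the fixed point subgroup $\textrm{Fix}(\widehat{M\rho})$, which by the preceding lemma is isomorphic to $MG$.

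First I would define an action
\begin{displaymath}
\Psi:\widehat{M(G\times G)}\times \widehat{MG} \longrightarrow \widehat{MG}
\end{displaymath}
which at the loop level is $((f_1,f_2),h)\mapsto f_1\, h\, f_2^{-1}$. Extending $\Psi$ to the full Kac-Moody group requires specifying the action on the path component $p$ and on the $(r_c,r_d)$-coordinates: the path component is handled by the concatenation rule used in the group law of $\widehat{MG}$, while the central coordinate receives a correction determined by the $2$-cocycle $\omega$ (together with the shift action of $d$ on loops). Smoothness and tameness of $\Psi$ follow by exactly the same arguments used in Section~\ref{Kac-Moody groups} to prove that the multiplication maps of $\widehat{MG}$ and $\widehat{M(G\times G)}$ are smooth tame maps.

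Next I would show that $\Psi$ is transitive. Given $h\in \widehat{MG}$ represented by a quadruple $(h_0, p_h, s_c, s_d)$ in the bundle description, the element $\bigl((h_0, e),\, (p_h, p_e),\, s_c,\, s_d\bigr)\in \widehat{M(G\times G)}$ sends the identity of $\widehat{MG}$ to $h$. Then I would compute the isotropy subgroup of the identity. At the loop level the equation $f_1 e f_2^{-1}=e$ forces $f_1=f_2$; the constraint on paths, modulo the cocycle $C_\omega$, reduces $(p_1,p_2)$ to a path representing the diagonal lift; and the conditions on the extension cut out $r_c=r_d=0$. Unwinding, the stabilizer is precisely $\textrm{Fix}(\widehat{M\rho})$, which by the preceding lemma is isomorphic to $MG$.

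Finally, since $\widehat{M(G\times G)}$ is a tame Fr\'echet Lie group acting transitively on the tame Fr\'echet manifold $\widehat{MG}$ with isotropy the closed tame Fr\'echet subgroup $MG$, the orbit map induces a tame Fr\'echet diffeomorphism $\widehat{M(G\times G)}/MG \to \widehat{MG}$, as claimed. The main obstacle is the bookkeeping on the $\mathbb{C}^*$-bundle factor: one must check that the prescribed action $\Psi$ is well-defined on the path equivalence classes defining the bundle (with respect to the twist $C_\omega$) and descends to the quotient. This is analogous to the verification that $\widehat{MG}$ itself is a group, but the cocycle identity needed for equivariance is somewhat more delicate, as it must mesh the bundle twist with the shift action of the $d$-factor simultaneously on both factors of $M(G\times G)$.
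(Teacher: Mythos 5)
Your argument is correct in substance but runs along a different track than the paper's. The paper does not set up a group action at all: it simply writes down the twisted diagonal $\Delta^*=\{(f(z),p(z,t),f^*(z),p^*(z,t))\}$ (which is the fixed-point group of $\widehat{M\rho}$) and observes that every coset $\left((f_1,p_1),(f_2,p_2),r_c,r_d\right)\cdot\Delta^*$ contains exactly one representative of the form $\left((f_3,p_3),(0,0),r_c,r_d\right)$, obtained by multiplying with the element of $\Delta^*$ determined by $f_2$; these normal forms are manifestly in bijection with $\widehat{MG}$. You instead prove transitivity of the two-sided action and compute the isotropy group, which is the classical orbit--stabilizer route. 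Two caveats. First, with the action $((f_1,f_2),h)\mapsto f_1hf_2^{-1}$ the stabilizer of the identity is the \emph{untwisted} diagonal $\{f_1=f_2\}$, whereas $\textrm{Fix}(\widehat{M\rho})$ is the $*$-twisted diagonal $\{f_1=f_2^*\}$; both are isomorphic to $MG$, so the coset-space statement survives, but to identify the stabilizer with the symmetric-space isotropy group you must twist the action to $f_1h(f_2^*)^{-1}$. Second, the final step ``transitive tame action with closed tame isotropy $\Rightarrow$ tame diffeomorphism of the quotient with the orbit'' is not a theorem available off the shelf in the tame Fr\'echet category (there is no general quotient-manifold theorem here without Nash--Moser input); however, the explicit section $h\mapsto((h_0,e),(p_h,p_e),s_c,s_d)$ you construct for transitivity already produces the same global normal-form representatives the paper uses, so the identification can (and should) be read off from that section directly rather than from a general homogeneous-space principle. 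With those two repairs your proof and the paper's coincide; what your formulation buys is that the $\widehat{M(G\times G)}$-equivariance of the identification is visible from the start, which the paper's normal-form argument leaves implicit.
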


\begin{proof}
Let $\Delta^{*}:=(f(z),p(z,t),f^*(z),p^*(z,t))\subset M(G\times G)$. We can describe the quotient space $\widehat{M(G\times G)}/MG$ as the set of equivalence classes 
\begin{displaymath}\left(\left(f_1(z),p_1(z,t)\right),\left(f_2(z),p_2(z,t)\right),r_c, r_d\right)\cdot \Delta^*\,,\end{displaymath} 
Each equivalence class contains exactly one representative of the form $\left(\left(f_3(z),p_3(z,t)\right),\left(0,0\right),r_c, r_d\right)$ by choosing 
the element $(f_2(z), f_2(z)^{-1})\subset \Delta$.
\end{proof}

The Lorentzian structure on $\widehat{M(G)}$ is $\widehat{M(G\times G)}$ invariant, iff the scalar product on $\widehat{M\mathfrak{g}}$ is biinvariant. Thus by example~\ref{adjointaction} the resulting Lorentzian structure on $\widehat{MG}$ is biinvariant.

The classification of Kac-Moody symmetric spaces of type $II$ coincides thus with the classification of complex affine Kac-Moody algebras. We get the list:
\begin{displaymath}
\widetilde{A}_n,\widetilde{B}_n, \widetilde{C}_n, \widetilde{D}_n,\widetilde{E}_{6,7,8},
\widetilde{F}_4,\widetilde{G}_2, \widetilde{A}_1^{'},\widetilde{C}_1^{'},\widetilde{B}_n^t, \widetilde{A}_n^t,\widetilde{F}_4^t,\widetilde{G}_2^t\, .
\end{displaymath}

\begin{lemma}[rank of Kac-Moody symmetric spaces of type II]
Let $\mathfrak{g}$ be a simple Lie algebra of rank $r$. The Kac-Moody symmetric space  $\widehat{MG}$ has rank $r+2$.
\end{lemma}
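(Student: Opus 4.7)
The plan is to prove the rank bound in two directions: exhibit an explicit flat of the claimed dimension and signature, and then bound any Lorentzian flat from above by a centralizer argument combined with the polarity theorem for the adjoint action.

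\textbf{Existence of a flat of dimension $r+2$.} Let $\mathfrak{t}\subseteq\mathfrak{g}$ be a Cartan subalgebra and set $\mathfrak{a}_0:=\mathfrak{t}\oplus\mathbb{R}c\oplus\mathbb{R}d\subseteq \widehat{M\mathfrak{g}}$, with $\mathfrak{t}$ embedded as constant loops. I will verify that $\mathfrak{a}_0$ is abelian: constant loops in $\mathfrak{t}$ bracket-commute pointwise; the cocycle $\omega(t_1,t_2)=\mathrm{Res}\langle t_1,t_2'\rangle$ vanishes on constants; $[d,t]=izt'(z)=0$ for constant $t$; and $c$ is central. On $\mathfrak{a}_0$ the bilinear form is the orthogonal sum of the positive-definite form on $\mathfrak{t}$ with the nondegenerate form on $\mathrm{span}\{c,d\}$ of signature $(1,1)$ (since $\langle c,c\rangle=\langle d,d\rangle=0$ and $\langle c,d\rangle=-1$), giving total signature $(r+1,1)$ and dimension $r+2$. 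The image $\exp(\mathfrak{a}_0)\subseteq\widehat{MG}$ is a flat totally geodesic submanifold with this signature, proving $\mathrm{rank}(\widehat{MG})\geq r+2$.

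\textbf{Reduction to the slice.} Let $\mathfrak{a}\subseteq\widehat{M\mathfrak{g}}$ be any abelian subalgebra on which the bilinear form is nondegenerate Lorentzian of signature $(n-1,1)$, and pick a timelike $v\in\mathfrak{a}$. Writing $v=f+\alpha c+\beta d$ with $f\in M\mathfrak{g}$, the identity $\langle v,v\rangle=\langle f,f\rangle-2\alpha\beta<0$ together with positivity of $\langle f,f\rangle$ forces $\beta\neq 0$. By the polarity theorem \ref{polarityofadjointaction}, the $\widehat{MG}$-orbit of $v$ meets the slice $\mathfrak{t}\oplus\mathbb{R}c\oplus\mathbb{R}d$, so after applying an isometric conjugation to all of $\mathfrak{a}$ we may assume $v=t_0+\alpha_0c+\beta_0d$ with $t_0\in\mathfrak{t}$ and $\beta_0\neq 0$.

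\textbf{Centralizer analysis.} For any $w=g+\gamma c+\delta d\in\mathfrak{a}$ with Laurent expansion $g(z)=\sum_n g_n z^n$, the vanishing $[v,w]=0$ yields $[t_0,g_n]=-i\beta_0 n\,g_n$ for every $n$. For $n=0$ this gives $g_0\in Z_\mathfrak{g}(t_0)=\mathfrak{t}$ when $t_0$ is Weyl-regular, and for $n\neq 0$ each $g_n$ must lie in a root space $\mathfrak{g}_{\alpha(n)}$ satisfying the resonance $\alpha(n)(t_0)=-i\beta_0 n$, which admits at most one root per integer $n$. For a generic timelike $v\in\mathfrak{a}$ (obtained by perturbing within the open cone of timelike vectors of $\mathfrak{a}$) the parameters $(t_0,\beta_0)$ are generic and no such resonance occurs, so the centralizer of $v$ in $\widehat{M\mathfrak{g}}$ reduces to $\mathfrak{a}_0$, and $\mathfrak{a}\subseteq\mathfrak{a}_0$ gives $\dim\mathfrak{a}\leq r+2$.

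\textbf{Main obstacle: the resonant case.} The hard part is the non-generic case, in which $Z(v)$ strictly contains $\mathfrak{a}_0$ through additional directions $e_{\alpha(n)}z^n$. Each such direction is null, since $\langle e_{\alpha(n)}z^n,e_{\alpha(n)}z^n\rangle=\langle e_{\alpha(n)},e_{\alpha(n)}\rangle\delta_{2n,0}=0$, and is orthogonal to all of $\mathfrak{a}_0$. To make $\mathfrak{a}$ nondegenerate while containing such a null direction, one would have to also include its metric-dual $e_{-\alpha(n)}z^{-n}$, but the bracket $[e_{\alpha(n)}z^n,e_{-\alpha(n)}z^{-n}]=h_{\alpha(n)}+n\langle e_{\alpha(n)},e_{-\alpha(n)}\rangle c$ is nonzero and violates commutativity. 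Combining this with the further constraint that $\mathfrak{t}\cap\mathfrak{a}$ must lie in $\ker\alpha(n)$ for every resonant root (so that $\mathfrak{t}\cap\mathfrak{a}$ commutes with the included root-vector direction), I will argue by a dimension count that each extra resonant direction added to $\mathfrak{a}$ forces the loss of an independent direction in $\mathfrak{t}\cap\mathfrak{a}$, keeping the total dimension bounded by $r+2$. Alternatively, since $\mathfrak{a}_0$ is a maximal abelian subalgebra (its centralizer in $\widehat{M\mathfrak{g}}$ coincides with itself), the conjugacy theorem for maximal flats of finite exponential type stated earlier in the paper yields the bound $\dim\mathfrak{a}\leq r+2$ directly.
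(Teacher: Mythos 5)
Your proposal is correct and takes essentially the same route as the paper: the paper's entire proof consists of exhibiting the standard flat $\mathfrak{t}\oplus\mathbb{R}c\oplus\mathbb{R}d$ (torus of $G$ times $(S^1)^2$ for the $c$- and $d$-directions) and referring to the section on the structure of finite-dimensional flats for the upper bound. Your explicit signature computation and the centralizer/resonance analysis are a more self-contained version of what the paper delegates to Popescu's result $\dim\widehat{\mathfrak{a}}=\mathrm{rank}(\mathfrak{g})+2$, and your fallback to the conjugacy theorem for maximal flats of finite exponential type is exactly how the paper closes the argument, so the only partially sketched step (the resonant case) is covered.
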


\begin{proof}
The standard maximal flat is a direct product of the standard torus in $G$ with $(S^1)^2$, where one $S^1$-factor corresponds to $c$, the other to $d$ --- compare~\ref{sect:structure of finite dim flats}.
\end{proof}

\subsection{Kac-Moody symmetric spaces of type I}
\index{Kac-Moody symmetric space of type II}

The aim of this section is to prove the following theorem:

\begin{theorem}[Kac-Moody symmetric space of type I]
\label{Kac-Moody symmetric space of type I}
The space $\widehat{MG}^{\sigma}/MG^{\sigma, \rho}$ carries an $\widehat{MG}$-invariant Lorentzian metric such that it is a symmetric space.
\end{theorem}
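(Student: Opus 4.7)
The plan is to transfer the finite dimensional construction of quotient symmetric spaces $G/K$ to the Kac-Moody setting, using the OSAKA $(\widehat{M\mathfrak{g}}^\sigma,d\rho)$. First I would decompose $\widehat{M\mathfrak{g}}^\sigma=\mathcal{K}\oplus\mathcal{P}$ into the $\pm 1$-eigenspaces of $d\rho$. Since $\rho$ is of the second kind, we may assume $d\rho(c)=-c$ and $d\rho(d)=-d$, so $c,d\in\mathcal{P}$, and $\mathcal{K}$ is the Lie algebra of $MG^{\sigma,\rho}$. Earlier results in the chapter show that $\widehat{MG}^\sigma/MG^{\sigma,\rho}$ is a tame Fr\'echet manifold whose tangent space at the base point $o=[e]$ is naturally identified with $\mathcal{P}$.

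Second, I would equip $T_oM$ with the restriction of the $\mathrm{Ad}$-invariant scalar product $\langle\cdot,\cdot\rangle$ on $\widehat{M\mathfrak{g}}^\sigma$ to $\mathcal{P}$, and propagate by left translation: for $g\in\widehat{MG}^\sigma$ put $\langle\cdot,\cdot\rangle_{[g]}:=\langle dL_{g^{-1}}\cdot,dL_{g^{-1}}\cdot\rangle$. Well-definedness on the quotient amounts to $\mathrm{Ad}(MG^{\sigma,\rho})$-invariance of $\langle\cdot,\cdot\rangle|_\mathcal{P}$, which follows from the global $\mathrm{Ad}$-invariance together with the fact that $\mathrm{Ad}(MG^{\sigma,\rho})$ commutes with $d\rho$ and therefore preserves $\mathcal{P}$. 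The metric is tame by construction ($\mathrm{Ad}$ acts tamely, cf.\ the computation for $\widehat{M\mathfrak{g}}$).

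Third, I would check the Lorentzian signature. On the loop part $L(\mathfrak{g},\sigma)\cap\mathcal{P}$ the form is positive definite, by the integral description of the Cartan-Killing form on a compact real form and the preceding results classifying $\pm$-eigenspaces of involutions of the second kind. The two-plane $\mathbb{R}c\oplus\mathbb{R}d\subset\mathcal{P}$ carries the matrix $\bigl(\begin{smallmatrix}0&-1\\-1&0\end{smallmatrix}\bigr)$, which has signature $(1,1)$, contributing one timelike direction. Hence the overall signature is Lorentzian.

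Fourth, for the symmetric space structure, I would define $\widetilde{\sigma}_o:\widehat{MG}^\sigma/MG^{\sigma,\rho}\to\widehat{MG}^\sigma/MG^{\sigma,\rho}$, $gMG^{\sigma,\rho}\mapsto\rho(g)MG^{\sigma,\rho}$, which is well-defined since $\rho$ fixes $MG^{\sigma,\rho}$ pointwise, and involutive since $\rho^2=\mathrm{Id}$. By construction $d\widetilde{\sigma}_o=-\mathrm{Id}$ on $T_oM\cong\mathcal{P}$, and $\widetilde{\sigma}_o$ is an isometry at $o$ because $d\rho$ preserves $\mathcal{P}$ and $\langle\cdot,\cdot\rangle$ is $\mathrm{Ad}$-invariant. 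Transport by left translation yields an involutive isometry $\sigma_{[g]}:=L_g\circ\widetilde{\sigma}_o\circ L_{g^{-1}}$ at every point, giving a symmetric space in the sense of Definition~\ref{tamesymmetricspace}.

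The main obstacle is verifying that $o$ is an \emph{isolated} fixed point of $\widetilde{\sigma}_o$, since in the tame Fr\'echet setting the usual appeal to the ordinary inverse function theorem fails (the exponential map of $\widehat{MG}^\sigma$ is not a local diffeomorphism). My strategy is to apply Theorem~\ref{ImplicitefunctiontheoremfortameFrechetmaps} to the tame smooth map $\Phi(x):=\widetilde{\sigma}_o(x)-x$ in a tame chart around $o$: its differential at $o$ equals $-2\,\mathrm{Id}$, which is invertible on an entire neighborhood by a standard perturbation argument in the tame category, so the implicit function theorem for tame maps isolates the zero $x=o$. This is the only step that genuinely requires the analytic machinery of Chapter~\ref{chap:frechet}; everything else is algebra inherited from the OSAKA structure established in Chapter~\ref{chap:alg}.
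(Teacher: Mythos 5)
Your construction coincides with the paper's own route: the paper first proves Lemma~\ref{identificationtangentspace}, identifying $T_{\pi(e)}\widehat{MG}/\widehat{MG}^{\rho}$ with the $(-1)$\ndash eigenspace $\widehat{M\mathfrak{g}}^{\rho}_{-1}=\mathcal{P}$, then defines the metric there by restricting the $Ad$\ndash invariant form, notes that $Ad(\widehat{MG}^{\rho})$\ndash invariance makes this well defined on the quotient, and extends by left translation; the symmetric structure is obtained by pushing the connection through $\pi$. Your signature count (positive definite on the loop part of $\mathcal{P}$, signature $(1,1)$ on $\mathbb{R}c\oplus\mathbb{R}d$) and the explicit involution $[g]\mapsto[\rho(g)]$ are correct elaborations of details the paper leaves implicit.

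The one step where you go beyond the paper --- verifying that $o$ is an \emph{isolated} fixed point --- is also where your argument has a genuine gap. Theorem~\ref{ImplicitefunctiontheoremfortameFrechetmaps} only treats maps whose target is a finite-dimensional space $W\simeq\mathbb{R}^n$, so it says nothing about $\Phi(x)=\widetilde{\sigma}_o(x)-x$ with values in the model Fr\'echet space; the relevant tool would be the Nash--Moser inverse function theorem. But its hypothesis is precisely that $D\Phi(f)$ admits a smooth tame family of inverses for \emph{all} $f$ in a neighborhood of $o$, and the ``standard perturbation argument'' you invoke to pass from invertibility of $D\Phi(o)=-2\,\mathrm{Id}$ to invertibility nearby is exactly what fails in the Fr\'echet category (cf.\ remark~\ref{the concept of inverse limits}: invertibility at one point does not propagate to an open set, and a small perturbation of the identity need not be invertible). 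Two ways to close this: observe that in the logarithmic-derivative chart the involution acts by a (real\ndash)linear map on $\Omega^1(\mathbb{C}^*,\mathfrak{g}_{\mathbb{C}})\times G_{\mathbb{C}}$, so that $D\Phi$ is \emph{constant} on the relevant subspace and the neighborhood hypothesis is verified trivially; or argue group-theoretically that a fixed coset $[g]$ satisfies $g^{-1}\rho(g)\in MG^{\sigma,\rho}$ and analyze that equation pointwise in $z$. As it stands, the appeal to a perturbation argument is the weak link; everything else in your proposal is sound and matches the paper.
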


\begin{lemma}
$\widehat{MG}/MG^{\rho}$ is associated to an OSAKA of type $I$.
\end{lemma}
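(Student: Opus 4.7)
The plan is to identify the OSAKA associated to the homogeneous space $\widehat{MG}/MG^{\rho}$ and then check that it satisfies all the defining properties of a type~$I$ OSAKA according to the classification in Section~2 (specifically, item~(1) in the list of irreducible OSAKAs of the compact type: a compact real form $\widehat{L}(\mathfrak{g},\sigma)$ with $\mathfrak{g}$ simple, together with an involution of the second kind).

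First, I would take the natural candidate for the OSAKA to be the pair $(\widehat{M\mathfrak{g}}, d\widehat{M\rho})$, where $\widehat{M\mathfrak{g}}$ is the tame Fr\'echet Kac-Moody Lie algebra realized on holomorphic loops (established to be a geometric affine Kac-Moody algebra in Section~\ref{holomorphicstructuresonkacmoodyalgebras}) and $d\widehat{M\rho}$ is the differential at the identity of the involution $\widehat{M\rho}$ defining the quotient. By construction, $\widehat{M\mathfrak{g}}$ is the geometric affine Kac-Moody algebra built from the simple compact Lie algebra $\mathfrak{g}$; this verifies the first axiom in the definition of an OSAKA, namely that the underlying algebra is a real form of a geometric affine Kac-Moody algebra. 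Moreover, since $\mathfrak{g}$ is simple, the resulting OSAKA is automatically irreducible.

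Next, I would verify that $d\widehat{M\rho}$ is an involutive automorphism of the second kind. Since $\widehat{M\rho}$ is, by assumption (see the paragraph preceding Theorem~\ref{Kac-Moody symmetric space of type I}), an involution of $\widehat{MG}$ and since the quotient $\widehat{MG}/MG^{\rho}$ must be a proper subspace of the Kac-Moody group, the $c$- and $d$-directions cannot lie in the fixed-point algebra. Hence $d\widehat{M\rho}(c)=-c$, which by the lemma on effective OSAKAs (see the remark following the definition of effective OSAKA) forces it to be of the second kind, and we may assume after a quasi-conjugation that $d\widehat{M\rho}(d)=-d$ as well.

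Finally, I would check that the fixed-point algebra equals the loop algebra of compact type $M\mathfrak{g}^{\rho}$, i.e.\ that $\mathrm{Fix}(d\widehat{M\rho})$ sits inside $M\mathfrak{g}$ and coincides with the tangential algebra of $MG^{\rho}$. The inclusion into $M\mathfrak{g}$ is immediate from $d\widehat{M\rho}(c)=-c$, $d\widehat{M\rho}(d)=-d$; the identification with $M\mathfrak{g}^{\rho}$ follows by differentiating the defining relation $\widehat{M\rho}(g)=g$ at the identity, using that passage to the fixed point set commutes with taking the tangent space. Since $\mathfrak{g}$ is compact and the restriction of $\rho$ to $\mathfrak{g}$ fixes a compact subalgebra, $M\mathfrak{g}^{\rho}$ is a compact loop algebra in the sense of Section~2. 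The anticipated main obstacle is the last step: one must verify that $MG^{\rho}$ really is the full loop part of the isotropy group (i.e.\ that no unexpected fixed elements arise from the torus extension), which is why effectivity of the OSAKA and the second-kind property of $\widehat{M\rho}$ are needed simultaneously. Combining the three verifications gives a type~$I$ OSAKA, completing the proof.
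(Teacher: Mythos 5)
Your proposal is correct and follows essentially the same route as the paper, which compresses the entire argument into the one-line observation that $\widehat{MG}^{\sigma}$ is a Kac-Moody group and $\textrm{Fix}(\rho)=MG^{\sigma,\rho}$ is a loop group of compact type; your three verifications are precisely the unpacking of that observation against the definition of an OSAKA and the type-$I$ classification. The only small imprecision is that the second-kind property does not follow from the quotient being a ``proper subspace'' but directly from the hypothesis that the isotropy group is the loop group $MG^{\rho}$ (so $c\notin\textrm{Fix}$, and since any automorphism sends $c$ to $\pm c$ this forces $\rho(c)=-c$), which is in effect the argument you invoke via the effective-OSAKA lemma.
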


\begin{proof}
$\widehat{MG}^{\sigma}$ is a Kac-Moody group, $Fix(\rho)=MG^{\sigma, \rho}$ is a loop group of the compact type --- whence the result.
\end{proof}

We denote by $\pi$ the projection
\begin{displaymath} \pi: \widehat{MG}\longrightarrow \widehat{MG}/\widehat{MG}^{\rho}, f \mapsto f\cdot \widehat{MG}^{\rho}\,.\end{displaymath}

For notational convenience we suppress the involution $\sigma$. For the proof of theorem~\ref{Kac-Moody symmetric space of type I} we need a lemma:

\begin{lemma}
\label{identificationtangentspace}
Let $\widehat{M\mathfrak{g}}^{\rho}_{-1}$ denote the eigenspace of $\rho$ to the eigenvalue $-1$. There is an equivalence
\begin{displaymath}T_{\pi(e)}\widehat{MG}/\widehat{MG}^{\rho}\simeq \widehat{M\mathfrak{g}}^{\rho}_{-1}\,.\end{displaymath}
\end{lemma}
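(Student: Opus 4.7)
The plan is to exploit the standard Lie theoretic picture: the differential $d\rho_e$ is an involutive tame linear map on $\widehat{M\mathfrak{g}} = T_e\widehat{MG}$, giving a splitting
\begin{displaymath}
\widehat{M\mathfrak{g}} = \widehat{M\mathfrak{g}}^{\rho}_{+1} \oplus \widehat{M\mathfrak{g}}^{\rho}_{-1}
\end{displaymath}
into closed tame subspaces (the projectors $\tfrac12(\mathrm{Id}\pm d\rho_e)$ are tame). I will identify the tangent space of the fibre $\widehat{MG}^{\rho}$ at $e$ with $\widehat{M\mathfrak{g}}^{\rho}_{+1}$, and then show that the differential of the projection $\pi$ at $e$ restricts to a tame isomorphism from $\widehat{M\mathfrak{g}}^{\rho}_{-1}$ onto $T_{\pi(e)}\widehat{MG}/\widehat{MG}^{\rho}$.

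First I would verify that $T_e\widehat{MG}^{\rho} = \widehat{M\mathfrak{g}}^{\rho}_{+1}$. One inclusion is immediate: for $X\in \widehat{M\mathfrak{g}}^{\rho}_{+1}$ the curve $\widehat{\mathrm{Mexp}}(tX)$ lies in $\widehat{MG}^{\rho}$ since $\rho$ is a group homomorphism commuting with one-parameter subgroups generated by fixed vectors, so its velocity at $t=0$ lies in $T_e\widehat{MG}^{\rho}$. For the reverse inclusion I use the characterisation of $T_e\widehat{MG}^{\rho}$ by path equivalence classes (as in Theorem~\ref{tangential space}): if $\gamma(t)\in \widehat{MG}^{\rho}$ with $\gamma(0)=e$, then $\rho(\gamma(t))=\gamma(t)$ implies $d\rho_e(\dot\gamma(0))=\dot\gamma(0)$, so $\dot\gamma(0)\in \widehat{M\mathfrak{g}}^{\rho}_{+1}$.

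Next I would examine $d\pi_e:\widehat{M\mathfrak{g}}\to T_{\pi(e)}\widehat{MG}/\widehat{MG}^{\rho}$. By construction of the quotient manifold structure (using the embedding via logarithmic derivatives together with the proof scheme of Proposition~\ref{mgc/mgrtamefrechet} applied to the subgroup $\widehat{MG}^{\rho}$), the projection $\pi$ is a smooth tame submersion whose kernel at $e$ is exactly $T_e\widehat{MG}^{\rho} = \widehat{M\mathfrak{g}}^{\rho}_{+1}$. Restricting $d\pi_e$ to the complement $\widehat{M\mathfrak{g}}^{\rho}_{-1}$ then yields a continuous linear bijection onto $T_{\pi(e)}\widehat{MG}/\widehat{MG}^{\rho}$. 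Its inverse can be written as $d\pi_e^{-1}$ composed with the tame projector $\tfrac12(\mathrm{Id}-d\rho_e)$, hence is tame, so the map is a tame isomorphism.

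The main obstacle I anticipate is not the algebraic identification but the functional analytic bookkeeping: one needs that $\widehat{MG}^{\rho}$ is itself a tame Fr\'echet Lie subgroup of $\widehat{MG}$ and that $\pi$ is a tame submersion with the expected local product structure, so that ``tangent space of the fibre'' is a well-defined tame direct summand. This is the ingredient that allows one to split $\widehat{M\mathfrak{g}}$ tamely and to conclude that $d\pi_e|_{\widehat{M\mathfrak{g}}^{\rho}_{-1}}$ is a tame isomorphism rather than just a bijection of vector spaces; once this is in place (using the logarithmic derivative charts together with the invariance of the chart construction under $\rho$, exactly as in the twisted loop group case in Section~\ref{Manifoldstructuresongroupsofholomorphicmaps}), the identification follows.
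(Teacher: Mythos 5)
Your proof is correct and takes essentially the same route as the paper's: the paper also argues by the two inclusions, lifting curves from the quotient to $\widehat{MG}$ for one direction and using the curve $\pi\circ\exp(tX)$ for $X$ in the $(-1)$\ndash eigenspace for the other. Your version merely makes explicit the $\pm 1$ eigenspace splitting, the identification $T_e\widehat{MG}^{\rho}=\widehat{M\mathfrak{g}}^{\rho}_{+1}$, and the tame/submersion bookkeeping that the paper leaves implicit.
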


\begin{proof}~
\begin{itemize}
\item[-] The inclusion $T_{\pi(e)}\widehat{MG}/\widehat{MG}^{\rho}\subset \widehat{M\mathfrak{g}}^{\rho}_{-1}$ follows as for every curve $\gamma(t)$ in $\widehat{MG}/\widehat{MG}^{\rho}$ there is a curve $\gamma'(t)$ in $\widehat{MG}$, such that $\pi(\gamma'(t)=\gamma(t)$. So let $\gamma'(0)=e$ and $\gamma(0)=\pi(e)$; then $d\pi(\dot{\gamma}'(0))=\gamma(0)$. 
\item[-] The inclusion $T_{\pi(e)}\widehat{MG}/\widehat{MG}^{\rho}\supset \widehat{M\mathfrak{g}}^{\rho}_{-1}$ is clear: let $X\in \widehat{M\mathfrak{g}}^{\rho}_{-1}$, $\gamma(t):=\pi\circ exp(tX)$ is a curve in $\widehat{MG}/\widehat{MG}^{\rho}$; thus $\widehat{MG}/\dot{\gamma(t)}:= d\pi()T_{\pi(e)}\widehat{MG}^{\rho}$.\qedhere
\end{itemize}
\end{proof}

\begin{proof}[Proof of theorem~\ref{Kac-Moody symmetric space of type I}:]

To define a metric on $\widehat{MG}/\widehat{MG}^{\rho}$, we use the projection
$$ \pi: \widehat{MG}\longrightarrow \widehat{MG}/\widehat{MG}^{\rho}\,.$$

Lemma~\ref{identificationtangentspace} shows that $\pi$ induces the identification of $T_{\pi(e)}\widehat{MG}/\widehat{MG}^{\rho}\simeq \widehat{M\mathfrak{g}}^{\rho}_{-1}$.
We can thus define a metric on $T_{\pi(e)}\widehat{MG}/\widehat{MG}$ by 
$$\langle x,y\rangle_{T_{\pi(e)}\widehat{MG}/\widehat{MG}^{\rho}} := \langle \pi^{-1}(x), \pi^{-1}(y)\rangle_{\widehat{M\mathfrak{g}}^{\rho}_{-1}}.$$
This scalar product is $Ad(\widehat{MG}^{\rho})$-invariant, thus well defined. By left translation it induces an $\widehat{MG}$-invariant metric on $\widehat{MG}/\widehat{MG}^{\rho}$.

Via the projection map $\pi$, the connection on $\widehat{MG}$ induces a connection on the quotient $\widehat{MG}/\widehat{MG}^{\rho}$, making the quotient into a symmetric space. 
\end{proof}

\begin{lemma}
The curvature tensor of a Kac-Moody symmetric space of type $I$ satisfies the curvature estimate
\begin{displaymath}\langle R(f)\{g,h,g\}, h\}\rangle \geq 0\, .\end{displaymath}
\end{lemma}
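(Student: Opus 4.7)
The plan is to reduce the curvature computation on the quotient $X=\widehat{MG}/MG^{\rho}$ at the base point $\pi(e)$ to an algebraic identity on the $(-1)$\ndash eigenspace of $d\rho$, and then to exploit the fact that the Lorentzian form on $\widehat{M\mathfrak g}$ is positive definite on the compact loop algebra part.

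First, I would set up the algebraic decomposition. Write $\widehat{M\mathfrak g}=\mathcal K\oplus\mathcal P$ for the $\pm 1$\ndash eigenspace decomposition of $d\rho$. Since $\rho$ is of the second kind we may assume $\rho_*(c)=-c$ and $\rho_*(d)=-d$, so that $\mathcal K=\mathrm{Fix}(d\rho)\subset M\mathfrak g$ is a compact loop subalgebra and $\{c,d\}\subset \mathcal P$. By Lemma~\ref{identificationtangentspace} the tangent space $T_{\pi(e)}X$ is identified with $\mathcal P$, and from $d\rho$ being an involution one has
\begin{equation*}
[\mathcal K,\mathcal K]\subset\mathcal K,\qquad [\mathcal K,\mathcal P]\subset\mathcal P,\qquad [\mathcal P,\mathcal P]\subset\mathcal K.
\end{equation*}

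Second, I would establish the curvature formula. The projection $\pi\colon\widehat{MG}\to X$ is a tame Fr\'echet submersion; the $\widehat{MG}$\ndash invariant Lorentzian metric on $X$ constructed in Theorem~\ref{Kac-Moody symmetric space of type I} is the one induced by identifying $\mathcal P$ with a horizontal complement of $\mathrm{Lie}(MG^{\rho})=\mathcal K$. Since this construction is purely local and algebraic, the classical argument (O'Neill's formula, or equivalently direct computation using $\nabla_{X}Y=\tfrac12[X,Y]$ on $\widehat{MG}$ from Theorem~\ref{levicivita} together with orthogonal projection onto $\mathcal P$) carries over verbatim to the tame Fr\'echet setting and gives
\begin{equation*}
\bigl\langle R(g,h)g,h\bigr\rangle_{\pi(e)} \;=\; -\bigl\langle[[g,h],g],h\bigr\rangle \qquad\text{for }g,h\in\mathcal P.
\end{equation*}

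Third, I would apply $\mathrm{Ad}$\ndash invariance of the scalar product on $\widehat{M\mathfrak g}$ (established in Section~5.3): using $\langle[A,B],C\rangle=-\langle B,[A,C]\rangle$ and $[[g,h],g]=-[g,[g,h]]$,
\begin{equation*}
-\bigl\langle[[g,h],g],h\bigr\rangle \;=\; \bigl\langle[g,[g,h]],h\bigr\rangle \;=\; -\bigl\langle[g,h],[g,h]\bigr\rangle_{\text{wait}}
\end{equation*}
more carefully, $\langle[g,[g,h]],h\rangle=-\langle[g,h],[g,h]\rangle$ would give the wrong sign; the correct manipulation is $-\langle[[g,h],g],h\rangle=\langle[g,h],[g,h]\rangle$ obtained by applying $\mathrm{Ad}$\ndash invariance with $A=[g,h]$, $B=g$, $C=h$, namely $\langle[A,B],C\rangle+\langle B,[A,C]\rangle=0$, which gives $\langle[[g,h],g],h\rangle=-\langle g,[[g,h],h]\rangle=\langle g,[h,[g,h]]\rangle$, and a second application yields $-\langle[g,h],[g,h]\rangle$; hence
\begin{equation*}
\bigl\langle R(g,h)g,h\bigr\rangle \;=\; \bigl\langle[g,h],[g,h]\bigr\rangle .
\end{equation*}

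Fourth, I would conclude by the positivity argument. Because $[g,h]\in[\mathcal P,\mathcal P]\subset\mathcal K\subset L(\mathfrak g,\sigma)$ and the Lorentzian form $\langle\cdot,\cdot\rangle$ restricts on the compact loop algebra $L(\mathfrak g,\sigma)$ to the positive definite form $\tfrac{1}{2\pi}\int\langle\cdot,\cdot\rangle dt$, we obtain $\langle[g,h],[g,h]\rangle\geq 0$. By left\ndash translation with $\widehat{MG}$, which acts by isometries, the same inequality holds at every point of $X$.

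The main obstacle is the second step: justifying the submersion/projection formula for curvature in the tame Fr\'echet category, since O'Neill's classical identity is usually stated for Riemannian Hilbert manifolds. The argument should nonetheless go through because the formula is local and purely algebraic, and Theorem~\ref{existanceoflevicivitaconnection} supplies the Levi\ndash Civita connection on $\widehat{MG}$ while $d\pi$ intertwines horizontal brackets with brackets in $\mathcal P$; one may also bypass O'Neill entirely by working with the canonical connection of Loos on the symmetric space (already available via Theorem~\ref{Kac-Moody symmetric space of type I}) and using $[\mathcal P,\mathcal P]\subset\mathcal K$ to obtain the formula directly from $\nabla_X Y=\tfrac12[X,Y]_{\mathcal P}=0$.
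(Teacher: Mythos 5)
Your overall strategy --- identify $T_{\pi(e)}X$ with $\mathcal P$, reduce the curvature to the bracket square $\langle[g,h],[g,h]\rangle$, and conclude from positivity of the metric on $[\mathcal P,\mathcal P]\subset\mathcal K\subset L(\mathfrak g,\sigma)$ --- is sound and is an expanded version of the paper's own (one-line) argument, which simply observes that $\pi$ is a Riemannian submersion and that the estimate of Corollary~\ref{curvatureestimatecompacttype} for the group therefore descends. Your version has the merit of making explicit why the Lorentzian signature causes no trouble: the bracket of two $\mathcal P$-vectors lands in the fixed-point loop algebra, where the form is positive (semi)definite.

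However, your third step contains a genuine sign error, and as written your computation proves the \emph{opposite} inequality. Using $\mathrm{Ad}$-invariance in the cyclic form $\langle[A,B],C\rangle=\langle A,[B,C]\rangle$ with $A=[g,h]$, $B=g$, $C=h$, one gets in a single step
\begin{displaymath}
\langle[[g,h],g],h\rangle=\langle[g,h],[g,h]\rangle,
\end{displaymath}
with a \emph{plus} sign; your two-step chain ending in ``a second application yields $-\langle[g,h],[g,h]\rangle$'' is false (the second application gives $-\langle[h,g],[g,h]\rangle=+\langle[g,h],[g,h]\rangle$). Consequently your asserted curvature formula $\langle R(g,h)g,h\rangle=-\langle[[g,h],g],h\rangle$ would evaluate to $-\langle[g,h],[g,h]\rangle\leq 0$. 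The source of the confusion is a convention mismatch: the formula $R(X,Y)Z=-[[X,Y],Z]$ belongs to the convention in which sectional curvature is $\langle R(X,Y)Y,X\rangle$, whereas this paper defines $K_f(g,h)=\langle R(f)\{g,h,g\},h\rangle/|g\wedge h|^2$ and correspondingly has $R(f)\{g,h,k\}=\frac14[[g,h],k]$ on the group (Theorem~\ref{levicivita}), with no minus sign. In the paper's convention the curvature of the quotient on $\mathcal P$ is $R\{g,h,k\}=[[g,h],k]$ up to a positive factor, and then $\langle R\{g,h,g\},h\rangle=\langle[[g,h],g],h\rangle=\langle[g,h],[g,h]\rangle\geq 0$ follows from one application of $\mathrm{Ad}$-invariance. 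Fix the convention at the outset and the rest of your argument (identification via Lemma~\ref{identificationtangentspace}, $[\mathcal P,\mathcal P]\subset\mathcal K$, positivity on the compact loop part, homogeneity) goes through.
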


\begin{proof}
With respect to this metric the projection map $\pi$ is a Riemannian submersion. Hence the estimates of the curvature tensor of the group $\widehat{MG}$ (lemma \ref{curvatureestimatecompacttype}) are preserved.

\end{proof}

\section{Kac-Moody symmetric spaces of the non-compact type}
\index{Kac-Moody symmetric space of the non-compact type}
\index{Kac-Moody symmetric space of type III}\index{Kac-Moody symmetric space of type I}

Dual symmetric spaces can be constructed in the canonical way,
known from finite dimensional symmetric spaces.

\noindent The complexification of the $Ad$-invariant Lorentzian 
metric on $\widehat{M\mathfrak{g}}_{\mathbb{R}}$ is an $Ad$-invariant metric on $\widehat{M\mathfrak{g}}_{\mathbb{C}}$ and induces thus a left invariant metric on  $\widehat{MG}_{\mathbb
C}$. 

\begin{theorem}[Kac-Moody symmetric spaces of type IV]
\index{Kac-Moody symmetric space of type IV}

\label{KMspaceoftypeIV}
Let $G_{\mathbb{C}}$ be a simple complex Lie group and $\widehat{MG}_{\mathbb{C}}^{\sigma}$ be an associated complex (twisted) Kac-Moody group. Let $\widehat{MG}_{\mathbb{R}}^{\sigma}$ be the maximal compact subgroup.
The quotient space
\begin{displaymath} M=\widehat{MG}_{\mathbb{C}}^{\sigma}/\widehat{MG}_{\mathbb{R}}^{\sigma}\end{displaymath} 
is a Kac-Moody symmetric space of type $IV$.
\end{theorem}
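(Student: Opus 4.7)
The plan is to verify in turn the four defining properties of a Kac-Moody symmetric space of type IV: the tame Fr\'echet manifold structure, a transitive Kac-Moody isometry group with loop-group isotropy, the existence of point symmetries, and a Lorentz metric with non-positive curvature. The underlying OSAKA will turn out to be $(\widehat{L}(\mathfrak g_{\mathbb C},\sigma),\rho_*)$ where $\rho_*$ is conjugation with respect to the compact real form, which is the type IV OSAKA in the classification of the previous chapter.

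The manifold structure is already in place: by Theorem~\ref{hatdiffeovectorspace} the quotient $M=\widehat{MG}^{\sigma}_{\mathbb C}/\widehat{MG}^{\sigma}_{\mathbb R}$ is a tame Fr\'echet manifold diffeomorphic to a vector space, and $\widehat{MG}^{\sigma}_{\mathbb C}$ acts transitively on $M$ by left multiplication with isotropy $\widehat{MG}^{\sigma}_{\mathbb R}$, whose loop part $MG^{\sigma}_{\mathbb R}$ is a compact loop group. Next I would construct the geodesic symmetries. Let $\tau$ denote the conjugate-linear involution of $\widehat{MG}^{\sigma}_{\mathbb C}$ defining the compact real form (so $\textrm{Fix}(\tau)=\widehat{MG}^{\sigma}_{\mathbb R}$). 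I would set
\begin{displaymath}
\sigma_{[e]}\colon M\longrightarrow M,\qquad [g]\longmapsto [\tau(g)],
\end{displaymath}
which is well defined since $\tau$ fixes $\widehat{MG}^{\sigma}_{\mathbb R}$ pointwise, involutive since $\tau^2=\mathrm{Id}$, and smooth tame since it is induced from a smooth tame map on $\widehat{MG}^{\sigma}_{\mathbb C}$. Identifying $T_{[e]}M$ with the $(-1)$-eigenspace $i\widehat{M\mathfrak g}^{\sigma}_{\mathbb R}$ of $d\tau$ (exactly as in Lemma~\ref{identificationtangentspace}), one checks $d\sigma_{[e]}=-\mathrm{Id}$, so $[e]$ is isolated among fixed points. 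Symmetries at other points are obtained by left translation, $\sigma_{[g_0]}([g]):=[g_0\,\tau(g_0^{-1}g)]$.

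For the metric I would use the $\mathrm{Ad}(\widehat{MG}^{\sigma}_{\mathbb C})$-invariant real-valued scalar product on $\widehat{M\mathfrak g}^{\sigma}_{\mathbb C}$ defined in the previous section (with $\langle ix,iy\rangle=\langle x,y\rangle$ and $\langle x,iy\rangle=0$), restrict it to $i\widehat{M\mathfrak g}^{\sigma}_{\mathbb R}$, and transport it by the transitive $\widehat{MG}^{\sigma}_{\mathbb C}$-action. Well-definedness on the quotient is immediate from $\mathrm{Ad}(\widehat{MG}^{\sigma}_{\mathbb R})$-invariance. The restricted form inherits Lorentz signature (index one) from the compact side of the identification. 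Combined with the existence and uniqueness of the Levi-Civita connection on tame Fr\'echet Lie groups (Theorem~\ref{existanceoflevicivitaconnection}), pushed to the homogeneous space via the Riemannian submersion $\widehat{MG}^{\sigma}_{\mathbb C}\to M$, this produces the required symmetric Lorentz geometry.

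Finally I would check the curvature sign. At $[e]$ the standard symmetric-space formula gives $R(X,Y)Z=-[[X,Y],Z]$ for $X,Y,Z\in i\widehat{M\mathfrak g}^{\sigma}_{\mathbb R}$; writing $X=iX_0$, $Y=iY_0$ with $X_0,Y_0\in\widehat{M\mathfrak g}^{\sigma}_{\mathbb R}$ one has $[X,Y]=-[X_0,Y_0]\in\widehat{M\mathfrak g}^{\sigma}_{\mathbb R}$, and a direct expansion using the definition of $\langle\,,\,\rangle$ on the complexification yields
\begin{displaymath}
\langle R(X,Y)X,Y\rangle=-\langle [X_0,Y_0],[X_0,Y_0]\rangle\leq 0,
\end{displaymath}
exactly opposite in sign to the computation in Corollary~\ref{curvatureestimatecompacttype} for the compact type. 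The OSAKA of $(\widehat{L}(\mathfrak g_{\mathbb C},\sigma),\rho_*)$ is precisely case (4) of the list in Section~2.2, i.e.\ of type IV, and the duality pairs it with the type II space $\widehat{MG}^{\sigma}_{\mathbb R}$. The main obstacles are technical rather than algebraic: verifying that $\sigma_{[e]}$ is smooth and tame as a map of tame Fr\'echet manifolds (not merely as a set map on cosets) and that the induced metric and Levi-Civita connection descend smoothly from $\widehat{MG}^{\sigma}_{\mathbb C}$ to $M$; both reduce to the already-established tameness of the quotient construction, of $\tau$, and of the Ad-invariant inner product.
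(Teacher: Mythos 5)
Your proposal is correct and follows essentially the same route as the paper: the paper's own proof simply declares that the argument ``follows the pattern of theorem~\ref{Kac-Moody symmetric space of type I}'' (projection onto the quotient, identification of $T_{\pi(e)}M$ with the $(-1)$\ndash eigenspace as in lemma~\ref{identificationtangentspace}, $Ad$\ndash invariance of the restricted scalar product, left translation, and the induced connection), which is exactly what you carry out, only in more detail and with the curvature sign made explicit. One small caveat: you assert that the underlying OSAKA is $(\widehat{L}(\mathfrak{g}_{\mathbb{C}},\sigma),\rho_*)$, but by the paper's own conventions this pair is \emph{not} an OSAKA (the subjacent algebra is not a real form and the $c$\ndash and $d$\ndash extensions are complexified); the paper devotes lemma~\ref{OSAKAoftypeIV} to replacing it by an equivalent pair with real one\ndash dimensional extensions, namely $\left(M(\mathfrak{g}_{\mathbb{R}}\oplus i\mathfrak{g}_{\mathbb{R}})\oplus_s\mathbb{R}c\oplus_s\mathbb{R}d,\,M\mathfrak{g}_{\mathbb{R}}\right)$, which is the genuine type~IV OSAKA describing the same symmetric space.
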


There is one principal point to study, namely the OSAKA associated to a Kac-Moody symmetric space of type $IV$:

\begin{lemma}
\label{OSAKAoftypeIV}
The OSAKA associated to a Kac-Moody symmetric space of type $IV$ is an OSAKA of type $IV$.
\end{lemma}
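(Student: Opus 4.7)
The plan is to read off the OSAKA directly from the coset presentation $M=\widehat{MG}_{\mathbb{C}}^{\sigma}/\widehat{MG}_{\mathbb{R}}^{\sigma}$, verify the three defining properties of an OSAKA, and then match its structure with the type $IV$ entry in the classification list at the end of Section~2.2.

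First, I would identify the transitive Kac-Moody isometry group and its tangential algebra. By construction, the Kac-Moody subgroup $H\subset I(M)$ is $\widehat{MG}_{\mathbb{C}}^{\sigma}$ regarded as a tame Fr\'echet Lie group over $\mathbb{R}$, and the isotropy at the base point $o=e\cdot\widehat{MG}_{\mathbb{R}}^{\sigma}$ is $\widehat{MG}_{\mathbb{R}}^{\sigma}$. Hence the underlying real Kac-Moody algebra of the OSAKA is $\widehat{L}(\mathfrak{g}_{\mathbb{C}},\sigma)_{\mathbb{R}}$, which using the compact real form of $\mathfrak g_{\mathbb C}$ decomposes as
\[
\widehat{L}(\mathfrak{g}_{\mathbb{C}},\sigma)_{\mathbb{R}}\;\cong\;L(\mathfrak{g}_{\mathbb{R}},\sigma)\oplus iL(\mathfrak{g}_{\mathbb{R}},\sigma)\oplus\mathbb{R}c\oplus\mathbb{R}d\oplus i\mathbb{R}c\oplus i\mathbb{R}d .
\]
This is exactly the form prescribed for type $IV$ OSAKAs in item~(4) of the list following Theorem~\ref{eithercompactornoncompact}.

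Second, I would identify the involution $\widehat{L}(\rho)$. Lemma~\ref{geodesicsymmetry} tells us that the geodesic symmetry $s_o$ acts as $-\mathrm{Id}$ on $T_oM$, and the identification
\[
T_oM\;\cong\;iL(\mathfrak{g}_{\mathbb{R}},\sigma)\oplus i\mathbb{R}c\oplus i\mathbb{R}d
\]
from Lemma~\ref{identificationtangentspace} (applied to the projection $\widehat{MG}_{\mathbb{C}}^{\sigma}\to M$) forces the differential $d(\mathrm{Ad}\,s_o)$ at $e$ to coincide with complex conjugation $\widehat{L}(\rho_0)$ with respect to the compact real form $\widehat{L}(\mathfrak{g}_{\mathbb{R}},\sigma)$, i.e.\ $k+ip\mapsto k-ip$ with $k\in\widehat{L}(\mathfrak{g}_{\mathbb{R}},\sigma)$ and $p\in\widehat{L}(\mathfrak{g}_{\mathbb{R}},\sigma)$. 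This is a conjugate-linear involution of the second kind (it sends $c\mapsto -c$, $d\mapsto -d$).

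Third, I would verify the three OSAKA axioms. The ambient algebra $\widehat{L}(\mathfrak{g}_{\mathbb{C}},\sigma)_{\mathbb{R}}$ is a real form of the complex geometric affine Kac-Moody algebra $\widehat{L}(\mathfrak{g}_{\mathbb{C}}\oplus\mathfrak{g}_{\mathbb{C}},\sigma\oplus\sigma)$ via the standard realification trick (cf.\ the type $IV$ description in Section~2.2); $\widehat{L}(\rho_0)$ is an involutive automorphism by construction; and its fixed point set is precisely $\widehat{L}(\mathfrak{g}_{\mathbb{R}},\sigma)$, which is a compact real form of a semisimple affine Kac-Moody algebra, hence a compact loop group algebra in the sense demanded by the OSAKA definition. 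Thus $\left(\widehat{L}(\mathfrak{g}_{\mathbb{C}},\sigma)_{\mathbb{R}},\widehat{L}(\rho_0)\right)$ is an OSAKA; that $\mathcal K=\widehat{L}(\mathfrak{g}_{\mathbb{R}},\sigma)$ and $\mathcal P=i\widehat{L}(\mathfrak{g}_{\mathbb{R}},\sigma)$ matches item~(4) of the OSAKA classification shows it is of type $IV$.

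The main obstacle will be step two: one must carefully check that the geodesic symmetry $s_o$ of the quotient really integrates the conjugation $\rho_0$ rather than, say, some other involution of the second kind differing from $\rho_0$ by an inner automorphism of $\widehat{L}(\mathfrak{g}_{\mathbb{R}},\sigma)$. This is pinned down by combining (i) the uniqueness of the $\widehat{MG}_{\mathbb{C}}^{\sigma}$-invariant Lorentz connection from Theorem~\ref{existanceoflevicivitaconnection} applied to the quotient, (ii) the $Ad$-invariance of the scalar product established in the section on Kac-Moody symmetric spaces, and (iii) the fact that at $o$ the only linear isometry of $T_oM$ extending to a Lie algebra involution of $\widehat{L}(\mathfrak{g}_{\mathbb{C}},\sigma)_{\mathbb{R}}$ with fixed point set $\widehat{L}(\mathfrak{g}_{\mathbb{R}},\sigma)$ is $\rho_0$; the other verifications are formal consequences of the construction of $M$.
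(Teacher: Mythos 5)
Your plan misses the actual content of this lemma, which in the paper is a \emph{repair} step rather than a verification step. Reading the pair off the coset presentation, as you do, produces $\left(\widehat{M\mathfrak{g}}_{\mathbb{C}}^{\sigma},\widehat{M\mathfrak{g}}_{\mathbb{R}}^{\sigma}\right)$, and the paper states explicitly (in the paragraph preceding the proof) that this pair is \emph{not} an OSAKA in the sense of the definition: the ambient algebra carries the complexified extension $\mathbb{C}c\oplus\mathbb{C}d$ (four real dimensions), whereas a real form of a geometric affine Kac-Moody algebra carries a single $\mathbb{R}c\oplus\mathbb{R}d$, and the isotropy algebra $\widehat{M\mathfrak{g}}_{\mathbb{R}}^{\sigma}$ is a full Kac-Moody algebra rather than a loop algebra of compact type. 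Your first step therefore contains a false assertion: the realification $L(\mathfrak{g}_{\mathbb{R}},\sigma)\oplus iL(\mathfrak{g}_{\mathbb{R}},\sigma)\oplus\mathbb{R}c\oplus\mathbb{R}d\oplus i\mathbb{R}c\oplus i\mathbb{R}d$ is \emph{not} ``exactly the form prescribed for type $IV$ OSAKAs''; item~(4) of the classification prescribes $L(\mathfrak{g},\sigma)\oplus iL(\mathfrak{g},\sigma)\oplus\mathbb{R}c\oplus\mathbb{R}d$, with only one real copy of each of $c$ and $d$, and the remark accompanying item~(4) says in so many words that the pair $(\widehat{L}(\mathfrak{g}_{\mathbb{C}},\sigma),\widehat{\rho}_0)$ fails to be an OSAKA for precisely the two reasons above. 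Your third step repeats the same slip when it declares $\textrm{Fix}(\widehat{L}(\rho_0))=\widehat{L}(\mathfrak{g}_{\mathbb{R}},\sigma)$ to be ``a compact loop group algebra in the sense demanded by the OSAKA definition'' --- it is not, since it contains $\mathbb{R}c\oplus\mathbb{R}d$.

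What the paper actually does, and what any correct proof must contain, is a reduction: it introduces an equivalence of pairs, $(\mathfrak{g},\mathfrak{h})\sim(\mathfrak{g}',\mathfrak{h}')$ iff $\mathfrak{g}/\mathfrak{h}\cong\mathfrak{g}'/\mathfrak{h}'$, and then checks that the redundant extension directions cancel between numerator and denominator, so that $\left(\widehat{M\mathfrak{g}}_{\mathbb{C}},\widehat{M\mathfrak{g}}_{\mathbb{R}}\right)$ is equivalent to $\left(M(\mathfrak{g}_{\mathbb{R}}\oplus i\mathfrak{g}_{\mathbb{R}})\oplus_s\mathbb{R}c\oplus_s\mathbb{R}d,\ M\mathfrak{g}_{\mathbb{R}}\right)$; this last pair genuinely is an OSAKA of type $IV$. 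Without this bookkeeping of the $c$- and $d$-directions you are matching objects whose double extensions have different real dimensions, and the lemma is not proved. Your second step --- pinning down that the geodesic symmetry integrates the conjugation $\rho_0$ rather than another involution of the second kind --- is a reasonable supplementary observation, but it does not address this gap.
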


The problem is as follows:
Studying the pair $\left(\widehat{MG}_{\mathbb{C}}^{\sigma}, \widehat{MG}_{\mathbb{R}}^{\sigma}\right)$ we see that the associated pair of Lie algebras consists of $\left(\widehat{M\mathfrak{g}_{\mathbb{C}}}^{\sigma}, \widehat{M\mathfrak{g}_{\mathbb{R}}}^{\sigma}\right)$. As $\widehat{M\mathfrak{g}_{\mathbb{C}}}^{\sigma}$is not a real geometric Kac-Moody algebra and $\widehat{M\mathfrak{g}_{\mathbb{R}}}^{\sigma}$ is not a loop algebra of the compact type but a Kac-Moody algebra, it is not an OSAKA in the sense of our definition. Nevertheless it is an OSAKA up to complexification of the $c$- and $d$-extension.

\begin{proof}[Proof of Lemma \ref{OSAKAoftypeIV}]
For simplicity of notation we suppress the superscripts $\sigma$. Call two OSAKAs $(\mathfrak{g}, \mathfrak{h})$ and $(\mathfrak{g}', \mathfrak{h}')$ equivalent if
\begin{displaymath}
\mathfrak{g}/\mathfrak{h}\cong \mathfrak{g}'/\mathfrak{h}'\, .
\end{displaymath}

Let $\oplus_s$ denote the semidirect product described by the usual formulas for affine Kac-Moody algebras. We have the equivalences
\begin{align*}
\left(\widehat{M\mathfrak{g}_{\mathbb{C}}}, \widehat{M\mathfrak{g}_{\mathbb{R}}}\right)&= 
\left(\widehat{M(\mathfrak{g}_{\mathbb{R}}\oplus i\mathfrak{g}_{\mathbb{R}})}, \widehat{M\mathfrak{g}_{\mathbb{R}}}\right)=\\
&=\left(M(\mathfrak{g}_{\mathbb{R}}\oplus i\mathfrak{g}_{\mathbb{R}})\oplus_s(\mathbb{R}+i\mathbb{R})c\oplus_s(\mathbb{R}\oplus i\mathbb{R})d, M\mathfrak{g}_{\mathbb{R}}\oplus_s i\mathbb{R}c\oplus_s\mathbb{R}d\right)=\\
&=\left(M(\mathfrak{g}_{\mathbb{R}}\oplus i\mathfrak{g}_{\mathbb{R}})\oplus_s \mathbb{R}c\oplus_s\mathbb{R}d, M\mathfrak{g}_{\mathbb{R}}\right)\, .
\end{align*}
The last pair is an OSAKA.
\end{proof}

\begin{proof}[Proof of theorem \ref{KMspaceoftypeIV}]
The proof follows the pattern of theorem~\ref{Kac-Moody symmetric space of type I}.
\end{proof}

\begin{theorem}[Kac-Moody symmetric spaces of type III]
\index{Kac-Moody symmetric space of type III}
Let $H$ be a real form of non-compact type of $\widehat{MG}_{\mathbb{C}}$. The $Ad$-invariant scalar product on $\widehat{MG}_{\mathbb{C}}$ is restricted to an $Ad$-invariant scalar product on $H$. Let $\textrm{Fix}(\rho)$ be the maximal compact subgroup. The projection 
\begin{displaymath}\pi: H \longrightarrow H/\textrm{Fix}(\rho)\end{displaymath}
induces a unique left invariant Lorentzian metric on $H/\textrm{Fix}(\rho)$. With respect to this metric, the space $M=H/\textrm{Fix}(\rho)$ is a symmetric space. 
\end{theorem}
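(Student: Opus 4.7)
The plan is to follow the same template used in Theorems for types I and IV, adapting each step to the non-compact setting. First I would observe that the pair $(\mathfrak{h}, d\rho)$, where $\mathfrak{h} = \mathrm{Lie}(H)$ and $d\rho$ is the differential of the Cartan involution, is an OSAKA of type III: by the discussion following Theorem~\ref{theoremofHeintzegross}, the non-compact real form $\mathfrak{h}$ decomposes as $\mathfrak{h}=\mathcal{K}\oplus\mathcal{P}$ into the $\pm 1$-eigenspaces of $d\rho$, and $\mathcal{K}$ is a loop algebra of compact type precisely because $\mathcal{K}\oplus i\mathcal{P}$ is the compact real form of $\widehat{M\mathfrak{g}}_{\mathbb{C}}$. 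Next I would identify $T_{\pi(e)}(H/\mathrm{Fix}(\rho))$ with $\mathcal{P}$ by the verbatim argument of Lemma~\ref{identificationtangentspace}: the differential $d\pi_e$ is surjective onto the tangent space with kernel $\mathcal{K}$, and every $X\in\mathcal{P}$ arises as the velocity of $t\mapsto\pi\circ\exp(tX)$.

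Then I would define the metric on $T_{\pi(e)}(H/\mathrm{Fix}(\rho))$ as the restriction to $\mathcal{P}$ of the $Ad$-invariant scalar product inherited from $\widehat{M\mathfrak{g}}_{\mathbb{C}}$. Because that scalar product is $Ad(\mathrm{Fix}(\rho))$-invariant, the definition is independent of the chosen lift through $\pi$, so it descends consistently. Left translation by $H$ then spreads it to a well-defined left-invariant pseudo-Riemannian metric on the whole quotient, and via Theorem~\ref{existanceoflevicivitaconnection} the canonical connection on $H$ projects along the Riemannian submersion $\pi$ to a Levi-Civita connection on $M$.

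The key verification is that the restriction to $\mathcal{P}$ has Lorentzian signature. Since the Cartan involution is of the second kind, $d\rho(c)=-c$ and $d\rho(d)=-d$, so both $c$ and $d$ lie in $\mathcal{P}$. The two-plane $\mathrm{span}\{c,d\}$ contributes exactly one time-like direction, since $\langle c,c\rangle=\langle d,d\rangle=0$ while $\langle c,d\rangle=-1$, so $\langle c+d,c+d\rangle=-2$. By the proposition in Section~2.2 on Cartan-Killing forms of non-compact real forms, $B_{\mathfrak{g}}$ is positive definite on the complementary component $\mathcal{P}\cap L(\mathfrak{g},\sigma)_D$; hence $\mathcal{P}$ has index exactly one. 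Finally, the geodesic symmetry at $\pi(e)$ is given by $s_{\pi(e)}(\pi(h)):=\pi(\rho(h))$, which is well defined because $\mathrm{Fix}(\rho)$ is $\rho$-invariant, squares to the identity, and has $\pi(e)$ as an isolated fixed point (since $d\rho=-\mathrm{Id}$ on $\mathcal{P}$); symmetries at other points are obtained by left translation, and they are isometries by $Ad(\mathrm{Fix}(\rho))$-invariance of the metric. The curvature bound $\langle R(X,Y)X,Y\rangle\leq 0$ is the dual of Corollary~\ref{curvatureestimatecompacttype}: since $[\mathcal{P},\mathcal{P}]\subset\mathcal{K}$ and $i\mathcal{P}\subset\mathcal{K}\oplus i\mathcal{P}$ is the compact-type replacement, the Cartan formula $R(X,Y)X=\tfrac14[[X,Y],X]$ changes sign on $\mathcal{P}$ under dualization.

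The main obstacle is the signature computation. The algebraic structure of $\mathcal{P}$ as an infinite-dimensional space inside a non-compact loop algebra extended by $\mathbb{R}c\oplus\mathbb{R}d$ requires one to distinguish carefully the positive definite loop-part of $\mathcal{P}$ from the degenerate null pair $\{c,d\}$ and to show that no additional time-like direction sneaks in from the interaction of $\rho$ with the Cartan decomposition of $\mathfrak{h}$; once this is in place, all remaining steps are formal translations of the corresponding type~I and type~IV arguments.
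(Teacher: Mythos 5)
Your proposal is correct and follows essentially the same route the paper takes: the paper gives no explicit proof for type III but refers the reader to the pattern of the type I theorem (projection $\pi$, identification of the tangent space with the $-1$-eigenspace $\mathcal{P}$, descent of the $Ad(\textrm{Fix}(\rho))$-invariant scalar product, and the induced connection), which is exactly what you carry out. Your additional verifications — that $\{c,d\}\subset\mathcal{P}$ contributes exactly one time-like direction while the loop part of $\mathcal{P}$ is positive definite, and the explicit geodesic symmetry $s_{\pi(e)}\circ\pi=\pi\circ\rho$ — are consistent with the paper's separate propositions on the Cartan--Killing form and the curvature estimate, and in fact make the argument more complete than the paper's own presentation.
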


\begin{lemma}
\label{curvatureestimatenon-compacttype}
The curvature tensor of a Kac-Moody symmetric space of the non-compact type satisfies:
\begin{displaymath}\langle R(f)\{g,h,g\}, h\}\rangle\geq 0\end{displaymath}
\end{lemma}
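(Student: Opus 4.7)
The plan is to mimic the structure of the proof already given for the compact type curvature bound, replacing the role of the loop algebra $M\mathfrak{g}_{\mathbb{R}}$ by the fixed algebra of the Cartan-type involution $\rho$ on the non-compact real form $H \subset \widehat{MG}_{\mathbb{C}}$. Let $\mathcal{K}$ denote the fixed algebra of $\widehat{L}(\rho)$ on $\mathfrak{h}$ and $\mathcal{P}$ its $(-1)$-eigenspace, so that $\mathfrak{h} = \mathcal{K} \oplus \mathcal{P}$ is the Cartan decomposition of the OSAKA associated to $M = H/\mathrm{Fix}(\rho)$. First I would use $H$-invariance of the metric to reduce the verification of the inequality to the basepoint $o = e\cdot\mathrm{Fix}(\rho)$, and identify $T_o M$ with $\mathcal{P}$ via the projection $\pi\colon H \to M$, exactly as in the proof of Theorem \ref{Kac-Moody symmetric space of type I}.

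Next, I would invoke theorem \ref{existanceoflevicivitaconnection} to get a unique left-invariant Levi-Civita connection on $H$, which, because $\pi$ is a Riemannian submersion with totally geodesic fibers $\mathrm{Fix}(\rho)$, descends to the Levi-Civita connection on $M$. Applying the curvature formula from lemma \ref{curvatureoffrechetliegroup} together with the O'Neill identities for a submersion with totally geodesic fibers (which degenerate in the symmetric-space setting because $[\mathcal{P},\mathcal{P}] \subset \mathcal{K}$ makes the A-tensor of the submersion purely vertical), the sectional expression reduces to a purely algebraic computation in $\mathfrak{h}$. The same manipulation used in the compact case of corollary \ref{curvatureestimatecompacttype}, namely
\begin{displaymath}
\langle R(f)\{g,h,g\}, h\rangle \;=\; \langle [g,[h,g]], h\rangle \;=\; \langle [h,g],[h,g]\rangle,
\end{displaymath}
goes through verbatim, since it uses only the Jacobi identity and the $\mathrm{Ad}$-invariance of $\langle\cdot,\cdot\rangle$, both of which survive the passage to the non-compact real form.

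The final step is to locate $[h,g]$ inside $\mathfrak{h}$ and check the sign of its norm squared. For $g,h \in \mathcal{P}$ the bracket lies in $\mathcal{K}$. Since the OSAKA associated to $M$ is of type $III$ (or $IV$), $\mathcal{K} = \mathrm{Fix}(\widehat{L}(\rho))$ is by definition a compact loop algebra, and the $\mathrm{Ad}$-invariant scalar product of section~4.3 restricts to a positive semi-definite form on $\mathcal{K}$: explicitly, the $c$- and $d$-directions are in $\mathcal{P}$ (since $\rho$ is of the second type, $\rho(c)=-c$ and $\rho(d)=-d$), so $\mathcal{K}$ sits inside the space-like part $L(\mathfrak{g},\sigma) \oplus \mathbb{R}c$, on which $\langle\cdot,\cdot\rangle \geq 0$. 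This forces $\langle [h,g],[h,g]\rangle \geq 0$ and yields the asserted inequality.

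The main obstacle I anticipate is justifying that, despite the different ambient real form $H$, the bracket $[h,g]$ for $g,h\in\mathcal{P}$ still falls into the space-like portion of $\mathfrak{h}$; this must be checked case by case according to whether the OSAKA is of type $III$ (fixed algebra of a Cartan involution on a simple factor) or of type $IV$ (diagonal-type construction from $\widehat{M\mathfrak{g}}_{\mathbb{C}}$), using the classification results of section~4.2 and the Heintze--Gro\ss{} normal form theorem. Once this positivity of the metric on $\mathcal{K}$ is confirmed in each of the two cases, the conclusion follows formally from the algebraic identity above.
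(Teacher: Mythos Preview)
Your direct computation has a genuine sign error that makes it prove the wrong inequality. First, note that the displayed inequality in the lemma is a typo: the paper's own proof (``dualization switches the sign'') together with the introduction both assert $\langle R(f)\{g,h,g\},h\rangle \leq 0$ for the non-compact type, not $\geq 0$. The paper's argument is simply to invoke duality: the curvature expression for dual symmetric spaces differs exactly by a sign, so the compact estimate of corollary~\ref{curvatureestimatecompacttype} flips to $\leq 0$.

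Your approach tries to reproduce the compact-type calculation verbatim on the non-compact real form $\mathfrak{h}=\mathcal{K}\oplus\mathcal{P}$, but the step ``the $\mathrm{Ad}$-invariant scalar product restricts to a positive semi-definite form on $\mathcal{K}$'' is wrong. On the non-compact real form the $\mathrm{Ad}$-invariant bilinear form is \emph{negative} semi-definite on $\mathcal{K}$ and positive on the loop part of $\mathcal{P}$; this is exactly the content of the Proposition following theorem~\ref{eithercompactornoncompact} (``The Cartan-Killing form is negative definite on $\mathcal{K}$ and positive definite on $\mathcal{P}$''). Concretely, writing $g=ix$, $h=iy$ with $x,y$ in the compact $\mathcal{P}$, one has $[h,g]=-[y,x]\in\mathcal{K}$, and the $\mathrm{Ad}(H)$-invariant form on $\mathfrak{h}$ assigns this a nonpositive square. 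Equivalently, if you insist on using the real-bilinear extension $\langle ix,iy\rangle=\langle x,y\rangle$ from section~5.3, then that form is \emph{not} $\mathrm{ad}(\mathfrak{h})$-invariant, so the identity $\langle[g,[h,g]],h\rangle=\langle[h,g],[h,g]\rangle$ fails and picks up an extra minus sign. Either way the honest output of your computation is $\langle R(f)\{g,h,g\},h\rangle=-\tfrac{1}{4}\langle[y,x],[y,x]\rangle\leq 0$, in agreement with the paper's duality argument.
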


\begin{proof}
Use the expression~\ref{levicivita}:
\begin{displaymath}
R(f)\{g,h,k\}=\frac{1}{4}[[g,h],k]\, .
\end{displaymath}
The Dualization switches the sign.

The result follows now from corollary \ref{curvatureestimatecompacttype}, stating that
$\langle R(f)\{g,h,g\}, h\}\rangle \geq 0$ for Kac-Moody symmetric spaces of the compact type.

\end{proof}

\noindent Theorems~\ref{hatdiffeovectorspace} and \ref{typeIIIdiffeovectorspace} include the following important corollary.

\begin{corollary}
Kac-Moody symmetric spaces of the non-compact type are diffeomorphic to a vector
space.
\end{corollary}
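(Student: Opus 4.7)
The plan is to reduce the statement directly to the two preceding diffeomorphism theorems using the classification of Kac-Moody symmetric spaces of the non-compact type into types $III$ and $IV$. Recall that, per the second main theorem of the introduction and Section 5.5, a Kac-Moody symmetric space of the non-compact type is either of the form $X = \widehat{MG}^{\sigma}_{\mathbb{C}}/\widehat{MG}^{\sigma}_{\mathbb{R}}$ (type $IV$) or of the form $X = H/\mathrm{Fix}(\rho_*)$ with $H$ a non-compact real form of $\widehat{MG}^{\sigma}_{\mathbb{C}}$ (type $III$). This dichotomy rests on Theorem \ref{eithercompactornoncompact} at the OSAKA level and the duality correspondence, both of which are already in hand.

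First I would argue that, in the type $IV$ case, Theorem \ref{hatdiffeovectorspace} applies verbatim: it yields a diffeomorphism $\widehat{MG}^{\sigma}_{\mathbb{C}}/\widehat{MG}^{\sigma}_{\mathbb{R}} \cong MG^{\sigma}_{\mathbb{C}}/MG^{\sigma}_{\mathbb{R}} \times (\mathbb{R}^+)^2$, and the first factor is diffeomorphic to a tame Fr\'echet vector space by Proposition \ref{mgc/mgrtamefrechet} (using the gauge/logarithmic derivative construction that forces the $(-1)$-Laurent coefficient to vanish). The $(\mathbb{R}^+)^2$-factor, coming from the bundle extension over the simply connected base, is itself diffeomorphic to $\mathbb{R}^2$ via $(r_c, r_d) \mapsto (\log r_c, \log r_d)$. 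The twist by $\sigma$ introduces no new obstruction since all constructions in Section 3 are compatible with the diagram automorphism.

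Second, in the type $III$ case, I would appeal directly to Theorem \ref{typeIIIdiffeovectorspace}, which states that $H/\mathrm{Fix}(\rho_*)$ is diffeomorphic to a vector space for any non-compact real form $H \subset \widehat{MG}^{\sigma}_{\mathbb{C}}$. Concatenating the two cases finishes the corollary.

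The only step that requires any thought, rather than pure citation, is verifying that every Kac-Moody symmetric space of the non-compact type indeed falls into one of these two families, i.e.\ that the definition given in Section 5.3 together with the non-compactness hypothesis forces the associated OSAKA to be of type $III$ or $IV$. This is where Theorem \ref{eithercompactornoncompact} (no mixed type real forms) does the work: the OSAKA classification leaves precisely these two possibilities on the non-compact side, and each possibility is realized by exactly the quotient presentation handled by Theorems \ref{hatdiffeovectorspace} and \ref{typeIIIdiffeovectorspace}. Hence no genuine obstacle remains and the corollary follows by assembling the two cases.
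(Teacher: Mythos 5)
Your proposal is correct and follows essentially the same route as the paper, which simply notes that the corollary is contained in Theorems~\ref{hatdiffeovectorspace} and \ref{typeIIIdiffeovectorspace} covering the type $IV$ and type $III$ cases respectively. Your additional verification that the OSAKA classification (Theorem~\ref{eithercompactornoncompact} plus duality) makes these two cases exhaustive is a useful explicit justification of a step the paper leaves implicit, but it does not change the argument.
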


This last fact suggest the existence of a nice boundary at infinity of the
Kac-Moody symmetric spaces; the existence of rigidity of quotient spaces and a Mostow-type behaviour seem quite probable.

\section{Kac-Moody symmetric spaces of the Euclidean type}
\index{Kac-Moody symmetric space of Euclidean type}
\label{KMSSofEuclideantype}

In the finite dimensional situation the simply connected $n$-dimensional Euclidean symmetric space is isometric to $\mathbb{R}^n$.  This space is highly reducible: It consists of $n$ copies of $\mathbb{R}$, the $1$-dimensional symmetric space. It is non-compact and furthermore diffeomorphic to a vector space. In addition there is a ``compact dual'', namely the torus $T^n:=(S^1)n$; in dimension $1$ this is a circle $S^1$. The fundamental group of $S^1$ is $\mathbb{Z}$, the fundamental group of a $n$-dimensional torus is $\mathbb{Z}^n$. Hence we have the description
\begin{displaymath}
T^n\cong \mathbb{R}^n/ \mathbb{Z}^n \quad\textrm{and}\quad S^1\cong \mathbb{R}/\mathbb{Z}\, .
\end{displaymath}

In both cases the OSLA is isomorphic to $\mathfrak{g}\cong SO(1)\ltimes \mathbb{R}$, the involution is the inversion $x\mapsto -x$. The fixed subgroup is \begin{displaymath}
SO(1):=\{\pm 1\}\, .
\end{displaymath} 

In terms of Lie algebras and the exponential function we can describe the ``duality'' between
\begin{displaymath}
\mathbb{R}\Leftrightarrow S^1
\end{displaymath}
as follows:

Start with the space of ``compact type'', namely $S^1$. Its tangential space $T_p(S^1)$ --- the $\mathcal{P}$-component of the Cartan decomposition --- is isomorphic to $\mathbb{R}$. Its complexification is isomorphic to $\mathbb{C}$. $T_p(S^1)$ is embedded into $\mathbb{C}$ as the imaginary axis.  Dualization exchanges the subspace $i\mathbb{R}$ by the subspace $\mathbb{R}$.
The exponential mapping in the first case (writing the symmetric space $\mathbb{R}\cong \mathbb{R}^+$) is 

\begin{displaymath}
\exp: i\mathbb{R}\longrightarrow S^1,\quad ix\mapsto \exp(ix)=\cos(x)+i\sin(x)\, ,
\end{displaymath}
while it is in the second case
\begin{displaymath}
\exp: \mathbb{R}\longrightarrow \mathbb{R},\quad x\mapsto \exp(x)=\cosh(x)+\sinh(x)\, .
\end{displaymath}

We have furthermore the complex relation

\begin{displaymath}
\exp: \mathbb{C}\longrightarrow \mathbb{C}^*,\quad z\mapsto \exp(z)\, .
\end{displaymath}

Hence we can desribe the symmetric space of non-compact type also as 
\begin{displaymath}
M=\mathbb{C}^*/S^1\, .
\end{displaymath}
similar to the description of finite dimensional symmetric spaces of type $4$.
 
The theory of Euclidean affine Kac-Moody symmetric spaces is similar:

An affine geometric Kac-Moody algebra of Euclidean type is the double extension of a loop algebra $\widehat{L}(\mathfrak{g}_{\mathbb{R}})$ where $\mathfrak{g}$ denotes a finite dimensional real abelian Lie algebra. We omit the involution $\sigma$ in the notation as $\sigma$ is the identity for Euclidean Kac-Moody algebras. This real abelian Lie algebra is a real form of a complex Lie algebra $\widehat{L}(\mathfrak{g}_{\mathbb{C}})$ with $\mathfrak{g}_{\mathbb{C}}=\mathfrak{g}_{\mathbb{R}}\oplus i \mathfrak{g}_{\mathbb{R}}$. Hence, we have two possible real forms: $\widehat{L}(\mathfrak{g}_{\mathbb{R}})$ and $\widehat{L}(i\mathfrak{g}_{\mathbb{R}})$. The loop algebras are isomorphic as are the extensions. Nevertheless, the associated Kac-Moody groups are different. 

\begin{definition}[Kac-Moody symmetric space of Euclidean type - 1. case]
Let $\mathfrak{g}_{\mathbb{C}}:=\mathbb{C}^n$ and let $G_{\mathbb{C}}\cong (\mathbb{C}^*)^n$ be the associated Lie group. We put
\begin{displaymath}{(MG_{\mathbb{C}})}_{\mathbb{R}}:=\{f:\mathbb{C}^*\longrightarrow G_{\mathbb{C}}|f(S^1)\subset i\mathfrak{g}_{\mathbb{R}}\subset \mathfrak{g}_{\mathbb{C}}\}\, .\end{displaymath}
Then the space $\widehat{M\mathfrak{g}_{\mathbb{C}}}_{\mathbb{R}}$ is a Kac-Moody symmetric space of the Euclidean type.  
\end{definition}

Spaces of this type can occur as a subspace of an indecomposable Kac-Moody symmetric space having subspaces of compact type.

\begin{definition}[Kac-Moody symmetric space of Euclidean type - 2. case]
Let $\mathfrak{g}_{\mathbb{C}}:=\mathbb{C}^n$ and let $G_{\mathbb{C}}\cong (\mathbb{C}^*)^n$ be the associated Lie group.
\begin{displaymath}{(MG_{\mathbb{C}})}_{\mathbb{R}}:=\{f:\mathbb{C}^*\longrightarrow G_{\mathbb{C}}|f(S^1)\subset i\mathfrak{g}_{\mathbb{R}}\subset \mathfrak{g}_{\mathbb{C}}\}\end{displaymath}
Then the space $M=\widehat{M\mathfrak{g}_{\mathbb{C}}}/\widehat{M\mathfrak{g}_{\mathbb{C}}}_{\mathbb{R}}$ is a Kac-Moody symmetric space of the Euclidean type.  
\end{definition}

Spaces of this type can occur as a subspace of an indecomposable Kac-Moody symmetric space having subspaces of non-compact type.

\begin{lemma}
A Kac-Moody symmetric space of Euclidean type is irreducibe iff $\mathfrak{g}\cong \mathbb{C}$.
\end{lemma}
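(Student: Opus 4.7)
The plan is to prove both implications by invoking the definition of irreducibility for an OSAKA together with the explicit shape of Euclidean-type OSAKAs. Throughout, write $\mathfrak{g}_{\mathbb{C}}\cong \mathbb{C}^n$, so $\widehat{M\mathfrak{g}}$ is the double extension of an abelian loop algebra, and note that the involution $\rho$ in the Euclidean case is the inversion $x\mapsto -x$ on the loop part (together with $c\mapsto -c$, $d\mapsto -d$), as recalled at the start of section~\ref{KMSSofEuclideantype} from the $1$-dimensional model.

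For the direction ``$n\geq 2\Rightarrow$ reducible'' I would proceed as follows. Pick any $1$\ndash dimensional complex subspace $\mathfrak{h}\subset \mathfrak{g}$ and its complementary subspace $\mathfrak{h}'$, so $\mathfrak{g}=\mathfrak{h}\oplus \mathfrak{h}'$ as abelian Lie algebras. By the splitting lemma for geometric affine Kac-Moody algebras stated in section~\ref{TheloopalgebraapproachtoKacMoodyalgebras}, the subspace $\widetilde{L}(\mathfrak{h})=L(\mathfrak{h})\oplus \mathbb{F}c$ is an ideal in $\widehat{L}(\mathfrak{g})$, and adjoining $d$ yields a proper Kac-Moody subalgebra $\widehat{L}(\mathfrak{h})=L(\mathfrak{h})\oplus\mathbb{F}c\oplus\mathbb{F}d\subset\widehat{L}(\mathfrak{g})$ (the $d$\ndash action $iz\frac{d}{dz}$ preserves $L(\mathfrak{h})$ because it acts pointwise on the target). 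Since $\rho$ is just pointwise negation on the loop factor, it preserves the linear subspace $L(\mathfrak{h})$ and acts on $\mathbb{F}c\oplus\mathbb{F}d$ by $-\mathrm{Id}$; hence $\widehat{L}(\mathfrak{h})$ is a nontrivial $\rho$\ndash invariant Kac-Moody subalgebra, contradicting irreducibility of the OSAKA.

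For the direction ``$\mathfrak{g}\cong\mathbb{C}\Rightarrow$ irreducible'' I would argue by contradiction: let $\mathcal{H}\subset\widehat{L}(\mathbb{C})$ be a $\rho$\ndash invariant Kac-Moody subalgebra, so $\mathcal{H}=\widehat{L}(\mathfrak{h},\tau)$ for some finite-dimensional reductive $\mathfrak{h}$ in the sense of definition~\ref{geometricaffinekacmoodyalgebra}. The underlying loop part $L(\mathfrak{h},\tau)$ must sit inside $M\mathbb{C}$, and its constant loops form a subalgebra of $\mathbb{C}$; since $\dim_{\mathbb{C}}\mathbb{C}=1$ the only possibilities are $\mathfrak{h}=0$ (the trivial subalgebra, which is not a Kac-Moody algebra) or $\mathfrak{h}=\mathbb{C}$, in which case $\mathcal{H}=\widehat{L}(\mathbb{C})$. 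Thus no proper nontrivial invariant Kac-Moody subalgebra exists, so the OSAKA is irreducible.

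The main obstacle is ensuring that every candidate $\rho$\ndash invariant Kac-Moody subalgebra of $\widehat{L}(\mathbb{C})$ arises from a finite-dimensional reductive $\mathfrak{h}\subset\mathbb{C}$ rather than from some exotic $\frac{d}{dz}$\ndash invariant subspace of $M\mathbb{C}$. This is built into our definition of a geometric affine Kac-Moody algebra (the loop factor is required to be of the form $L(\mathfrak{h},\tau)$ for finite-dimensional reductive $\mathfrak{h}$), so once this is made explicit the dimension argument closes the proof; the remaining calculational content is merely checking compatibility of $c$, $d$, and $\rho$ under inclusion, which is routine.
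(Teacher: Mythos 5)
The paper states this lemma without any proof, so there is no argument of the author's to measure yours against; I can only judge the proposal on its own terms. Your forward direction is correct and is exactly what the splitting lemma of section~\ref{TheloopalgebraapproachtoKacMoodyalgebras} supplies: for $\mathfrak{g}\cong\mathbb{C}^{n}$ with $n\geq 2$, any line $\mathfrak{h}\subset\mathfrak{g}$ yields the proper nontrivial subalgebra $L(\mathfrak{h})\oplus\mathbb{F}c\oplus\mathbb{F}d$; it is closed under the bracket because the cocycle takes values in $\mathbb{F}c$ and $iz\frac{d}{dz}$ acts pointwise on the target, it is the geometric affine Kac-Moody algebra of $(\mathfrak{h},\mathrm{id})$, and it is preserved by the Euclidean involution, which acts on the loop factor by pointwise negation combined with reversal of the loop parameter and therefore preserves $L(\mathfrak{h})$ for every linear subspace $\mathfrak{h}$. (Pointwise negation alone would not be an involution of the second kind; the parameter reversal is needed, but it is harmless for the invariance you use.)

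The converse direction contains a genuine gap, which you identify yourself and then dismiss too quickly. You claim that every invariant Kac-Moody subalgebra of $\widehat{L}(\mathbb{C})$ has loop part $L(\mathfrak{h},\tau)$ for a subalgebra $\mathfrak{h}\subset\mathbb{C}$ because ``this is built into the definition.'' It is not: the definition of an irreducible OSAKA only excludes nontrivial invariant Kac-Moody subalgebras, and nothing in the stated definitions forces such a subalgebra to consist of loops with values in a fixed subalgebra of $\mathfrak{g}$. Concretely, $V=\{f\in M\mathbb{C}\mid f(z)=f(-z)\}$ is stable under $iz\frac{d}{dz}$, the restriction of the cocycle to $V$ becomes (twice) the standard cocycle after the reparametrization $w=z^{2}$, and $V\oplus\mathbb{C}c\oplus\mathbb{C}d$ is a proper subalgebra of $\widehat{M\mathbb{C}}$ abstractly isomorphic to a geometric affine Kac-Moody algebra of $(\mathbb{C},\mathrm{id})$ and invariant under the Euclidean involution. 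Your auxiliary count via constant loops also fails for such subalgebras: the odd loops give an invariant subalgebra meeting the constants only in $0$. The lemma is true under the reading that a ``Kac-Moody subalgebra'' in the irreducibility definition means one induced by an ideal $\mathfrak{h}\subset\mathfrak{g}$ of the underlying finite-dimensional Lie algebra --- and then the converse is the one-line remark that $\mathbb{C}$ has no proper nonzero ideal --- but you must stipulate that convention explicitly; it does not follow from the definitions as written.
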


\begin{lemma}
A Kac-Moody symmetric space of Euclidean type is flat.
\end{lemma}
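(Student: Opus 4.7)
The plan is to apply the curvature formula for symmetric spaces that is induced by the bi-invariant formula of Theorem~\ref{levicivita}, and to show that the relevant iterated brackets vanish in the Cartan decomposition.

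First I would set up the Cartan decomposition $\widehat{L}(\mathfrak{g}) = \mathfrak{k} \oplus \mathfrak{p}$ induced by the involution $\rho$ of the OSAKA associated with $M$. Since $\mathfrak{g}$ is abelian, the OSAKA condition forces $\textrm{Fix}(\rho) \cap L(\mathfrak{g}) = 0$, i.e.\ $\rho$ acts as $-\mathrm{id}$ on the loop subalgebra. The remaining central/derivation directions $\mathbb{R}c$ and $\mathbb{R}d$ must be assigned to $\mathfrak{k}$ in order for the closure condition $[\mathfrak{p},\mathfrak{p}] \subset \mathfrak{k}$ to be compatible with the fact that the cocycle bracket $[f_1, f_2] = \omega(f_1, f_2)\,c$ takes values in $\mathbb{R}c$ (note that the pointwise bracket $[f_1, f_2]_0$ vanishes because $\mathfrak{g}$ is abelian). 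Hence $\mathfrak{p} = L(\mathfrak{g})$ and $\mathfrak{k} = \mathbb{R}c \oplus \mathbb{R}d$.

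The key computation is then the iterated bracket $[[\mathfrak{p}, \mathfrak{p}], \mathfrak{p}]$. By the abelian structure of $\mathfrak{g}$, we have $[\mathfrak{p}, \mathfrak{p}] = [L(\mathfrak{g}), L(\mathfrak{g})] \subset \mathbb{R}\,c$. Since $c$ lies in the center of $\widehat{L}(\mathfrak{g})$, every bracket $[c, X]$ vanishes, and therefore
\begin{displaymath}
[[\mathfrak{p}, \mathfrak{p}], \mathfrak{p}] \subset [\mathbb{R}\,c,\, L(\mathfrak{g})] = 0.
\end{displaymath}
Combining this with the symmetric-space curvature formula, which for $X, Y, Z \in \mathfrak{p}$ expresses $R(p)\{X,Y,Z\}$ as a non-zero scalar multiple of $[[X,Y],Z]$ (inherited from the bi-invariant formula of Theorem~\ref{levicivita} via the Riemannian submersion used in the constructions of Sections above), we obtain $R \equiv 0$ at the base point. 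By homogeneity of $M$ under its Kac-Moody isometry group the curvature tensor vanishes everywhere, so $M$ is flat. Both Case~1 and Case~2 of the definition reduce to this picture after unwinding the quotients, in parallel with the finite-dimensional Euclidean symmetric space $\mathbb{R}^n$ and its compact dual torus.

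The main obstacle will be the careful verification that the Cartan decomposition in the Euclidean case actually takes the form $\mathfrak{p} = L(\mathfrak{g})$, $\mathfrak{k} = \mathbb{R}c \oplus \mathbb{R}d$. In contrast to the compact and non-compact types, where $\rho$ is necessarily of second kind and $c, d \in \mathfrak{p}$, the Euclidean OSAKA is intrinsically non-effective: $c$ belongs to the center and simultaneously to $\mathfrak{k}$, and the fixed-point group of $\rho$ reduces to the two-dimensional abelian subgroup generated by $c$ and $d$. This structural peculiarity is precisely what produces flatness; once it is established the algebraic vanishing of $[[\mathfrak{p}, \mathfrak{p}], \mathfrak{p}]$ is immediate from the abelianness of $\mathfrak{g}$ and the centrality of $c$.
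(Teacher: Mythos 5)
The paper states this lemma without any proof, so there is nothing internal to compare your argument against; judged on its own merits, your proof has a genuine gap at its very first step, namely the identification of the Cartan decomposition. You place both $c$ and $d$ in $\mathfrak{k}$, arguing backwards from the closure condition $[\mathfrak{p},\mathfrak{p}]\subset\mathfrak{k}$ together with the fact that the cocycle bracket lands in $\mathbb{R}c$. But this is not the decomposition attached to the Euclidean Kac-Moody symmetric spaces the paper actually defines: in the first case the space is the whole group $\widehat{MT}$, so the tangent space at the identity is the full double extension $M\mathfrak{t}\oplus\mathbb{R}c\oplus\mathbb{R}d$, and in the second case the quotient by the real form still carries an $(\mathbb{R}^+)^2$-factor coming from the $c$- and $d$-directions. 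Moreover the paper insists that every Kac-Moody symmetric space is Lorentzian, and the metric restricted to the loop directions alone is positive definite --- the signature can only arise if $c$ and $d$ sit inside $T_pM\cong\mathfrak{p}$. This is consistent with the remark at the end of the OSAKA chapter that for the Euclidean type ``the involution inverts all elements'' and with effectiveness forcing an involution of the second kind, i.e.\ $\rho(c)=-c$, $\rho(d)=-d$. So $d$ belongs to $\mathfrak{p}$, not to $\mathfrak{k}$, and your structural claim that the Euclidean OSAKA forces $c,d\in\mathfrak{k}$ is asserted rather than proved and contradicts the rest of the text.

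Once $d$ lies in $\mathfrak{p}$ the bracket computation no longer closes: $[[d,f],d]=-f''$ and $[[d,f],g]=\omega(f',g)\,c$ are nonzero, and with the curvature formula $R\propto[[\cdot,\cdot],\cdot]$ and the $Ad$-invariance of the scalar product one gets $\langle R\{d,f,d\},f\rangle=\tfrac{1}{4}\langle f',f'\rangle>0$ for every nonconstant loop $f$. Your observation that $[\mathfrak{p},\mathfrak{p}]\subset\mathbb{R}c$ is central, so that all iterated brackets of pure loop directions vanish, is correct and is surely the heart of whatever proof is intended; the unresolved point is entirely the $d$-direction. You would need either to justify why $d$ genuinely does not contribute to the curvature of the spaces as defined (which the naive bracket formula contradicts), or to argue explicitly that the Euclidean symmetric space is to be taken modulo the $c,d$-extension --- in which case the Lorentz structure is lost and the statement degenerates to the flatness of the torus of loops, a different and much weaker assertion. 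As it stands, your argument establishes flatness only of the loop-group part, not of the space $\widehat{MT}$ that the paper calls a Kac-Moody symmetric space of Euclidean type.
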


The most important feature of Kac-Moody symmetric spaces of the Euclidean type is that their exponential maps behave well: 

\begin{proposition}
The exponential map describes a tame diffeomorphism $\Mexp: \widehat{U}\longrightarrow \widehat{V}$, where $\widehat{U}\subset \widehat{M\mathfrak{g}}$ and $\widehat{V}\subset \widehat{MG}$.
\end{proposition}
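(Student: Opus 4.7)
The plan is to exploit the near-nilpotent structure of $\widehat{M\mathfrak{g}}$ in the abelian case to reduce the assertion to earlier results, supplemented by the Nash--Moser inverse function theorem where needed. Since $\mathfrak{g}$ is abelian the pointwise bracket on $M\mathfrak{g}$ vanishes, so the only nonzero brackets in $\widehat{M\mathfrak{g}}$ are $[f,g]=\omega(f,g)c$ and $[d,f]=izf'$, with $c$ central. Thus $M\mathfrak{g}\oplus\mathbb{C}c$ is 2-step nilpotent (a Heisenberg algebra, as already noted at the end of chapter~\ref{chap:alg}), and $\widehat{M\mathfrak{g}}$ is the semidirect product of this Heisenberg algebra with $\mathbb{C}d$ acting by the derivation $f\mapsto izf'$. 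I would use the corresponding semidirect structure of $\widehat{MG}$ to split $\widehat{\Mexp}$ into three pieces and analyze them separately.

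First I would treat the pure loop component $\Mexp\colon M\mathfrak{g}\to MG$. Since $G\cong(\mathbb{C}^*)^n$ has universal cover $\mathbb{C}^n$, proposition~\ref{diffeomorphicexponentialmap} together with corollary~\ref{examplesofliegroupswithdiffexp} gives that $\Mexp$ is a local diffeomorphism; concretely it is induced by composition with the pointwise biholomorphism $\exp\colon\mathfrak{g}\to G$ near $0$, and its local inverse is induced by the pointwise principal logarithm. Both operations preserve the holomorphic tame Fr\'echet structure on $M\mathfrak{g}$ and $MG$ via the estimates of section~\ref{Some_tame_Frechet_spaces}. Next, the central $c$-direction and the semidirect $d$-direction contribute two copies of the ordinary scalar exponential $\mathbb{C}\to\mathbb{C}^*$, which is a local biholomorphism near $0$.

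To assemble the three pieces into $\widehat{\Mexp}$ I would use BCH. Because $M\mathfrak{g}\oplus\mathbb{C}c$ is 2-step nilpotent, BCH truncates in the Heisenberg part, and the only nontrivial higher brackets come from iterated applications of $\operatorname{ad}(d)=iz\partial_z$ and the cocycle $\omega$. This yields an explicit formula for $\widehat{\Mexp}(f+r_cc+r_dd)$ as a triple whose loop coordinate is $\Mexp$ of a loop obtained from $f$ by an $r_d$-parametrised shift/derivative series, whose central coordinate is a quadratic-in-$f$ correction involving $\omega$, and whose semidirect coordinate is $e^{ir_d}$ (respectively $e^{r_d}$ in the complex form). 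Each component is a smooth tame map near $0$ by the tame estimates collected in section~\ref{holomorphicstructuresonkacmoodyalgebras}.

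The main obstacle will be tameness of the $r_d$-series $\exp(r_d\,iz\partial_z)f$, which is a formal power series of arbitrarily high derivatives of $f$. Each derivative is tame by lemma~\ref{differentialistame} (with one degree of loss), but the cumulative loss must be controlled. The cleanest route I would take is to apply the Nash--Moser inverse function theorem: at $0$ one has $d\widehat{\Mexp}(0)=\mathrm{id}$ trivially invertible, so it suffices to exhibit a smooth tame family of inverses of $d\widehat{\Mexp}$ on an open neighborhood of $0$. Using the nilpotent structure to expand $(d\widehat{\Mexp}(X))^{-1}$ as a Neumann-type series in $\operatorname{ad}_X$, the tame estimates on $\operatorname{ad}$ from section~\ref{holomorphicstructuresonkacmoodyalgebras} provide such a family on a small neighborhood $\widehat{U}$, and Nash--Moser then yields the asserted tame local inverse, producing $\widehat{V}\subset\widehat{MG}$ together with a smooth tame map $\widehat{V}\to\widehat{U}$ inverse to $\widehat{\Mexp}$.
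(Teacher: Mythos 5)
Your reduction to the abelian structure is sound, and you are in several respects more careful than the paper itself: the paper's entire proof of this proposition is the one-line remark that it ``is a direct consequence of corollary~\ref{examplesofliegroupswithdiffexp}'', i.e.\ it handles only the loop-group factor $\Mexp\colon M\mathfrak{g}\to MG$ via the lifting argument of proposition~\ref{diffeomorphicexponentialmap} and leaves the $c$- and $d$-directions implicit as finite-dimensional toral factors. Your treatment of the pointwise $\exp$/$\log$ on the loop part and of the scalar exponentials on the extension matches that in substance.

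The gap is in your final step. For $X=f+r_cc+r_dd$ with $r_d\neq 0$ the operator $\operatorname{ad}_X$ is \emph{not} nilpotent: its $k$-th power contains $r_d^k(iz\partial_z)^k$, and by lemma~\ref{differentialistame} each application of $\partial_z$ is only $(1,0,\tfrac{e^{n+1}}{e-1})$-tame, so $k$ iterations lose $k$ degrees and carry a constant of order $e^{kn+k(k+1)/2}$. A Neumann-type series $\sum_k c_k\operatorname{ad}_X^k$ therefore admits no term-by-term tame limit --- the degree loss is unbounded and the constants are not summable --- so the claim that ``the tame estimates on $\operatorname{ad}$ from section~\ref{holomorphicstructuresonkacmoodyalgebras} provide such a family'' does not follow, and Nash--Moser cannot be invoked until the smooth tame family of inverses has actually been produced. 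The repair is to abandon the series and use the closed form: the flow of $\operatorname{ad}_d=iz\partial_z$ is $e^{t\operatorname{ad}_d}g(z)=g(ze^{it})$, so every operator appearing in $d\widehat{\Mexp}(X)$ and its inverse is an argument rotation or an average $g\mapsto\int_0^1 g(ze^{ir_ds})\,ds$ of such, which sends the $A_n$-sup-norm to the $A_{n+r}$-sup-norm with $r=\lceil|\Im r_d|\rceil$ and constant $1$, hence is tame with a \emph{fixed} loss of degree. With that substitution your decomposition closes (and is then strictly more informative than the paper's citation); without it the key tameness estimate is unproved.
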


\noindent This is a direct consequence of corollary~\ref{examplesofliegroupswithdiffexp}, stating that the exponential map of a loop group associated to a Lie groups whose universal cover is biholomorphically equivalent to $\mathbb{C}^n$ is a local diffeomorphism.

\noindent In connection to this result we also want to mention the following theorem of Galanis~\cite{Galanis96}, that describes in some sense the inverse situation:

\begin{theorem}[Galanis]
 Let $G$ be a commutative Fr\'echet Lie group modelled on a Fr\'echet space $F$. Assume the group is a strong exponential Lie group (i.e. a group such that $\exp$ is a local diffeomorphism). Then it is a projective limit Banach Lie group.
\end{theorem}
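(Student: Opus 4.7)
The plan is to exploit the triviality of the Baker--Campbell--Hausdorff series in the commutative setting together with the standard realization of a Fr\'echet space as a projective limit of Banach spaces, and then transport this structure to $G$ via $\exp$. First I would fix a grading $\|\cdot\|_n$ on $F$ and let $F_n$ denote the completion with respect to $\|\cdot\|_n$, so that $\{F; F_n\}$ is an $ILB$-system in the sense of the earlier chapter. Because $G$ is commutative, the local multiplication pulled back to $F$ via $\exp$ is simply vector addition on a neighbourhood of $0$; this is the crucial point, since it removes all nonlinear corrections that would otherwise force one to verify tame estimates for the full group law.

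Next I would analyse the kernel $\Gamma:=\ker(\exp)$. Since $\exp$ is a local diffeomorphism, $\Gamma$ is discrete, and since $G$ is commutative $\Gamma$ is a subgroup of $(F,+)$. I would identify the connected component $G_0$ with the quotient group $F/\Gamma$ via the well-defined map induced by $\exp$, and then show that $\Gamma$ is compatible with the projective system: namely, denoting by $i_n:F\hookrightarrow F_n$ the canonical continuous embedding, I would define $\Gamma_n$ to be the closure of $i_n(\Gamma)$ in $F_n$ and verify, using discreteness of $\Gamma$ and the fact that $\exp$ is a local diffeomorphism on an open neighbourhood $U\subset F$ (which by definition of the $ILB$-topology contains the trace of an open set of some $F_{n_0}$), that each $\Gamma_n$ for $n\geq n_0$ is itself discrete in $F_n$. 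The bonding maps $F_{n+1}\hookrightarrow F_n$ send $\Gamma_{n+1}$ into $\Gamma_n$, so we obtain an inverse system of discrete subgroups.

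Having done this, I would define $G_n:=F_n/\Gamma_n$ for $n\geq n_0$; each $G_n$ is a commutative Banach Lie group (connected component; one adjoins the discrete group of components of $G$ by trivially taking the product with $\pi_0(G)$ at every level). The quotient maps assemble into continuous homomorphisms $G_{n+1}\to G_n$, and I would verify $\varprojlim G_n = F/\Gamma \cdot \pi_0(G) \cong G$ both as topological groups and as Fr\'echet manifolds, using that the exponential chart and its translates provide a compatible atlas at every level of the projective system and transition functions that are tame in the trivial way (they are affine).

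The main obstacle is the step showing discreteness of $\Gamma_n$ and that $\Gamma_n$ really is the image/closure of $\Gamma$ in $F_n$ rather than something strictly larger: a priori, points of $F_n\setminus i_n(F)$ could accumulate on $i_n(\Gamma)$. Here one must use crucially that $\exp$ is a \emph{local} diffeomorphism on a Fr\'echet-open neighbourhood of $0$, which by the remark at the end of the $ILB$-discussion (see the remark following Theorem~\ref{ILB-manifold}) translates into the existence of a Banach-open neighbourhood of $0$ in some $F_{n_0}$ on which $\exp$ remains injective. Once that injectivity radius estimate is in place, discreteness of $\Gamma_n$ follows and the rest of the construction is formal.
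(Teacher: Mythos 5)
The paper does not prove this statement: it is quoted verbatim as an external result of Galanis~\cite{Galanis96}, invoked only to conclude that Euclidean Kac-Moody symmetric spaces carry a projective limit Banach structure. There is therefore no internal proof to compare yours against, and I can only assess your argument on its own merits.

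On those merits the outline is essentially sound, and the three places where it needs tightening are all manageable. First, the claim that the pulled-back multiplication is vector addition amounts to the identity $\exp(x+y)=\exp(x)\exp(y)$; commutativity alone does not give this, but it does follow once you know that a one-parameter subgroup is determined by its initial velocity, and that uniqueness is exactly what the local-diffeomorphism hypothesis buys you (pull a one-parameter subgroup $\psi$ back through the local inverse of $\exp$ to a curve $c$ with $nc(t/n)=c(t)$ and let $n\to\infty$). You should make this step explicit rather than treat it as free. Second, the obstacle you single out --- discreteness of $\Gamma_n$ --- is in fact easier than you fear and requires no closure operation: discreteness of the subgroup $\Gamma$ in $F$ gives a basic neighbourhood $\{\|x\|_{n_0}<\epsilon\}$ meeting $\Gamma$ only in $0$, hence a uniform $\epsilon$-separation of $\Gamma$ in the $n_0$-th seminorm, and since the grading is increasing this separation persists in every $F_n$ with $n\geq n_0$; so $i_n(\Gamma)$ is already uniformly discrete and closed in $F_n$, and one simply discards the finitely many levels below $n_0$ (a cofinal subsystem has the same limit). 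Third, the phrase ``trivially taking the product with $\pi_0(G)$'' hides a real assertion: the extension $1\to G_0\to G\to\pi_0(G)\to 1$ splits because $G_0\cong F/\Gamma$ is a quotient of a vector space and hence divisible, so it is an injective abelian group; with $G_0$ open, any abstract splitting is automatically topological. With these three points supplied, the identification $F/\Gamma\cong\varprojlim F_n/\Gamma_n$ (injectivity from the norms separating points, surjectivity from the bijections $\Gamma\to\Gamma_n$ allowing a compatible choice of representatives) completes a correct proof of the statement in the commutative, strongly exponential case.
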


As the loop group part of an Euclidean Kac-Moody group is exponential we get immediately that an Euclidean Kac-Moody symmetric space is strong exponential. Furthermore it carries a projective limit Banach structure modelled via the exponential maps on the tangential Kac-Moody algebra.

\begin{theorem}
Let $G$ be a simple compact Lie group, $T\subset G$ a maximal torus. The space $\widehat{MT}$ is a Kac-Moody symmetric space of Euclidean type. 
\end{theorem}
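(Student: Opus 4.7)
The plan is to mimic the construction used for Kac-Moody symmetric spaces of type II (the section on $\widehat{MG}$ as $\widehat{M(G\times G)}/MG$), since a compact torus $T$ behaves formally like a compact group but with abelian Lie algebra $\mathfrak{t}$. Concretely, $T\cong(S^1)^r$ where $r$ is the rank of $G$, and $T_{\mathbb C}\cong(\mathbb C^*)^r$ is the complexification, so we are in exactly the setup of the first definition in section~\ref{KMSSofEuclideantype} with $\mathfrak{g}_{\mathbb C}=\mathbb C^r$ and $G_{\mathbb C}=(\mathbb C^*)^r$. The first step is therefore to identify $\widehat{MT}$ with the Kac-Moody group over the abelian Lie algebra $\mathfrak t$ and recall that $M\mathfrak t$ inherits its tame Fr\'echet structure from the results of section~\ref{Lie_algebras_of_holomorphic_maps}, while $\widehat{MT}$ carries a tame Fr\'echet Lie group structure by the arguments of section~\ref{Kac-Moody groups}.

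Next I would realize the symmetric space structure as a quotient. Form $\widehat{M(T\times T)}$ together with the involution
\[
\widehat{M\rho}\bigl((f_1,p_1),(f_2,p_2),r_c,r_d\bigr)=\bigl((f_2^*,p_2^*),(f_1^*,p_1^*),-r_c,-r_d\bigr),
\]
where $f^{*}(z)=\overline{f(1/\overline z)}$, exactly as in the type II construction. The same computation as before shows $\widehat{M\rho}$ is an involution of the second kind, its fixed point group is the diagonally embedded loop group $MT$ (a loop group of compact type, since $T$ is a compact real form of $T_{\mathbb C}$), and each $MT$-coset has a canonical representative of the form $((f_3,p_3),(0,0),r_c,r_d)$. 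This identifies $\widehat{M(T\times T)}/MT\cong\widehat{MT}$ and, via the projection, pushes the $Ad$-invariant Lorentz metric on $\widehat{M(\mathfrak t\oplus\mathfrak t)}$ down to a well-defined $\widehat{MT}$-invariant Lorentz metric on $\widehat{MT}$. Since $M\rho$ is an involutive isometry with the identity coset as an isolated fixed point, left translation produces an involutive isometry $\sigma_p$ at every point $p$, so $\widehat{MT}$ is a tame Fr\'echet Lorentz symmetric space containing $\widehat{M(T\times T)}$ in its isometry group and containing the Kac-Moody group $\widehat{MT}$ itself as a transitive subgroup acting by left translations.

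Finally I need to identify the type of the associated OSAKA. The tangential decomposition $\widehat{M(\mathfrak t\oplus\mathfrak t)}=\mathcal K\oplus\mathcal P$ at $\widehat{M\rho}$ has $\mathcal K\cong M\mathfrak t$ (diagonal) and $\mathcal P\cong M\mathfrak t\oplus\mathbb R c\oplus\mathbb R d$ (antidiagonal together with the extensions); the induced bracket on $\mathcal P$ is zero on the $M\mathfrak t$-part because $\mathfrak t$ is abelian and $[f,g]_0(z)=[f(z),g(z)]_{\mathfrak t}=0$. Hence the loop algebra underlying the OSAKA of $\widehat{MT}$ is abelian, which by part~(3) of Definition~\ref{types of OSAKAs} is precisely the condition for the OSAKA to be of Euclidean type; equivalently, $\widehat{MT}$ matches the first definition in section~\ref{KMSSofEuclideantype} verbatim. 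The main obstacle I expect is checking that the cocycle $\omega$ survives correctly under the quotient procedure: even though $M\mathfrak t$ is abelian, the extension $\widehat{M\mathfrak t}$ is not (it is a Heisenberg-type algebra, as remarked at the end of section on OSAKAs), so one must verify that $\widehat{M\rho}$ really descends to a Lorentz isometry on $\widehat{MT}$ rather than producing a degenerate or non-invariant metric. This reduces to the explicit computation of $\textrm{Ad}$ in Example~\ref{adjointaction}, which in the abelian case simplifies because $gug^{-1}=u$, and to checking that the Euclidean exponential proposition of section~\ref{KMSSofEuclideantype} applies to guarantee that $p$ is an isolated fixed point of $\sigma_p$; both verifications are direct but form the technical core of the argument.
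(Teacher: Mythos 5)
Your detour through $\widehat{M(T\times T)}/MT$ produces the right underlying space with the right bi-invariant Lorentz metric, but the verification of the paper's definitions does not go through along that route. The paper offers no argument for this theorem because it is immediate from the first definition of section~\ref{KMSSofEuclideantype}: $T\cong (S^1)^r$ has complexification $T_{\mathbb{C}}\cong(\mathbb{C}^*)^r$ and abelian Lie algebra $\mathfrak{t}\cong i\mathbb{R}^r\subset\mathbb{C}^r$, so $\widehat{MT}$ is verbatim the object constructed there with $\mathfrak{g}_{\mathbb{C}}=\mathbb{C}^r$; the point symmetry at the identity is inversion (extended by $-1$ on the $c$- and $d$-directions), its fixed point set is discrete, and the proposition on the exponential map in that section makes the identity an isolated fixed point. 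You note this equivalence in one parenthetical sentence --- that sentence is essentially the entire intended proof.

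The route you actually emphasize fails at two definitional checkpoints. First, you assert that the diagonal fixed point group $MT$ is ``a loop group of compact type, since $T$ is a compact real form of $T_{\mathbb{C}}$''; but the paper defines compact type by negative definiteness of the averaged Cartan--Killing form and proves explicitly that this form vanishes identically when $\mathfrak{g}$ is abelian. So $MT$ is \emph{not} of compact type, and the presentation $\widehat{M(T\times T)}/MT$ does not satisfy the isotropy condition in the definition of a Kac-Moody symmetric space. Second, the pair $\bigl(\widehat{M(\mathfrak{t}\oplus\mathfrak{t})},\widehat{M\rho}\bigr)$ with the swap involution is not an OSAKA: condition~3 of the OSAKA definition demands $\textrm{Fix}(\widehat{L}(\rho))|_{\mathfrak{g}_a}=0$, whereas on the entirely abelian $\mathfrak{g}=\mathfrak{t}\oplus\mathfrak{t}$ your fixed point algebra is the full diagonal $M\mathfrak{t}\neq 0$. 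The remark closing the OSAKA chapter states exactly this: for the Euclidean type the maximal subgroup of compact type is trivial and the involution inverts all elements. Hence the pair whose type one must read off via part~(3) of Definition~\ref{types of OSAKAs} is $(\widehat{M\mathfrak{t}},\,f\mapsto -f)$ with trivial loop-algebra fixed point set, not the swap pair; applying that definition to the swap pair is not legitimate. None of this changes the truth of the statement, but the type~II template you copied is precisely the presentation that the paper's definitions are arranged to exclude in the abelian case.
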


As is the case for finite dimensional symmetric spaces of the Euclidean type the isometry group is much bigger, namely a semidirect product of an Euclidean Kac-Moody group with the isotropy group of a point. 

\noindent The group $\widetilde{MG}$ is a Heisenberg group~\cite{PressleySegal86}, chapter 9.5. .

\section{Duality}

Similar to simply connected finite dimensional Riemannian symmetric spaces also simply connected Kac-Moody symmetric spaces come in pairs. Those pairs share various structure properties. For the finite dimensional blueprint see~\cite{Helgason01}, chapter V.

We summarize the most important properties in the following theorem:

\begin{theorem}[Duality of Kac-Moody symmetric spaces]
\index{duality of Kac-Moody symmetric spaces}
There is a duality relation between Kac-Moody symmetric spaces of the compact type and Kac Moody symmetric spaces of the non-compact type. Especially there are dualities between Kac-Moody symmetric spaces of type:
\begin{center}\begin{tabular}{ccc}
type I&$\Leftrightarrow$& type III\\
type II&$\Leftrightarrow$&type IV
\end{tabular} 
\end{center}
Especially:
\begin{enumerate}
  \item  Dual Kac-Moody symmetric spaces have the same rank.
	\item  Dual Kac-Moody symmetric spaces have the same isotropy representation.
	\item  The curvature tensors of dual Kac-Moody symmetric spaces differ only by the sign. 
\end{enumerate}
\end{theorem}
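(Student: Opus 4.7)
The plan is to reduce the theorem to the algebraic duality of OSAKAs already established in the chapter on algebraic foundations. Given a simply connected Kac\ndash Moody symmetric space $M$ of compact type, associate to it its OSAKA $(\mathcal{G}, \rho)$ with decomposition $\mathcal{G} = \mathcal{K} \oplus \mathcal{P}$ into the $(\pm 1)$\ndash eigenspaces of $\rho$. The proposition on dual OSAKAs shows that $(\mathcal{G}^*, \rho^*) = (\mathcal{K} \oplus i\mathcal{P}, \rho^*)$ is again an OSAKA, now of non-compact type, and the corollary after it matches the four types as stated in the theorem. I would then integrate $\mathcal{G}^*$ to a tame Fr\'echet Kac\ndash Moody group (resp.\ to its non-compact real form) using the constructions in Section \ref{Kac-Moody groups} and form the quotient by $\mathrm{Fix}(\rho^*)$ to obtain a tame Fr\'echet Lorentzian symmetric space $M^*$. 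Theorems \ref{Kac-Moody symmetric space of type I}, \ref{KMspaceoftypeIV} and their counterparts supply the manifold and metric structure and identify $M^*$ as a Kac\ndash Moody symmetric space of the correct dual type.

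The curvature statement is essentially already contained in Lemma \ref{curvatureestimatenon-compacttype}: from Theorem \ref{levicivita} the curvature tensor of the ambient Kac\ndash Moody group is $R(f)\{g, h, k\} = \tfrac{1}{4}[[g, h], k]$, and by the O'Neill formula for the Riemannian submersion $G \to G/K$ the curvature of the symmetric space restricted to $\mathcal{P}$ at the base point is obtained from this by restriction. Under $X \mapsto iX$ and the $\mathbb{C}$\ndash bilinearity of the bracket inside $\mathcal{G}_\mathbb{C}$ one has $[iX, iY] = -[X, Y] \in \mathcal{K}$ and hence $[[iX, iY], iZ] = -i\,[[X, Y], Z]$, whence
\begin{displaymath}
\langle R^*(iX, iY) iX, iY\rangle_{i\mathcal{P}} = -\langle R(X, Y) X, Y\rangle_{\mathcal{P}}
\end{displaymath}
once the sign convention $\langle iX, iY\rangle := -\langle X, Y\rangle$ on $i\mathcal{P}$ is fixed. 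This is precisely the claim that the curvature tensors of $M$ and $M^*$ differ only by sign.

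For the rank, a maximal flat of finite type in $M$ based at $o$ exponentiates a maximal abelian subspace $\mathfrak{a} \subset \mathcal{P}$ together with the central directions $\mathbb{R}c \oplus \mathbb{R}d$; all such flats are conjugate by (the isotropy version of) Theorem \ref{polaractiononp}. The $\mathbb{R}$\ndash linear map $X \mapsto iX$ sends maximal abelian subspaces of $\mathcal{P}$ to maximal abelian subspaces of $i\mathcal{P}$ of the same dimension, and the contribution of $\mathbb{R}c \oplus \mathbb{R}d$ to the Lorentzian signature is unchanged, so the ranks coincide. For the isotropy representation the isotropy group at the base point is $K = \mathrm{Fix}(\rho)$ in both cases, acting on $T_oM \cong \mathcal{P}$ respectively on $T_{o^*}M^* \cong i\mathcal{P}$ via the adjoint action; since $\mathrm{Ad}(k)(iX) = i\,\mathrm{Ad}(k)(X)$ inside $\mathcal{G}_\mathbb{C}$, multiplication by $i$ is a $K$\ndash equivariant $\mathbb{R}$\ndash linear isomorphism intertwining the two isotropy representations.

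The principal obstacle is not algebraic but functional\ndash analytic: one has to ensure that the dual real form $\mathcal{G}^*$ integrates to a tame Fr\'echet Kac\ndash Moody group and that the quotient $M^* = G^*/\mathrm{Fix}(\rho^*)$ inherits a tame Fr\'echet manifold structure compatible with the dual OSAKA, not merely a weak Hilbert structure. For type $II \leftrightarrow IV$ this is the content of Theorem \ref{KMspaceoftypeIV} together with Theorem \ref{hatdiffeovectorspace}; for type $I \leftrightarrow III$ one further has to check that the duality preserves the gluing between the $c, d$\ndash extensions of the individual simple factors and respects the distinction between involutions of first and second type, which is controlled by the Heintze\ndash Gro\ss\ classification (Theorem \ref{theoremofHeintzegross}) and Theorem \ref{eithercompactornoncompact} forbidding mixed\ndash type real forms.
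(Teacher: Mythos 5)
Your proposal follows essentially the same route as the paper: identify dual spaces via the dual OSAKA $\mathcal{K}\oplus i\mathcal{P}$, observe that $\mathcal{P}$ and $i\mathcal{P}$ are identified by multiplication by $i$ (giving equality of rank and $K$\ndash equivariance of the isotropy representations), and read off the curvature sign flip from $R(f)\{g,h,k\}=\tfrac14[[g,h],k]$ together with $[iX,iY]=-[X,Y]$; the paper's proof is just a terser version of the same three steps, citing its curvature lemmata instead of recomputing.

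One detail to correct: your sign convention $\langle iX,iY\rangle:=-\langle X,Y\rangle$ is the wrong one and would undo the sign flip you want. The paper extends the scalar product to $\widehat{M\mathfrak{g}}_{\mathbb{C}}=\widehat{M\mathfrak{g}}_{\mathbb{R}}\oplus i\widehat{M\mathfrak{g}}_{\mathbb{R}}$ real\ndash bilinearly with $\langle ix,iy\rangle=\langle x,y\rangle$ and $\langle x,iy\rangle=0$, and restricts this to the non-compact real form. With that convention, $\bigl\langle[[iX,iY],iX],iY\bigr\rangle=\bigl\langle -i[[X,Y],X],iY\bigr\rangle=-\bigl\langle[[X,Y],X],Y\bigr\rangle$, which is the claimed sign reversal; with your convention the two minus signs cancel and you would get $+\langle R(X,Y)X,Y\rangle$. (Your convention would also make the loop part of $i\mathcal{P}$ negative definite, contradicting the Lorentz signature of the non-compact type.) Everything else, including the remarks on integrating $\mathcal{G}^*$ to a tame Fr\'echet group and on excluding mixed types via Theorem \ref{eithercompactornoncompact}, is consistent with the constructions the paper has already carried out.
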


\begin{proof}
\begin{enumerate}
\item Dual Kac-Moody symmetric spaces have the same $\mathcal{P}$. Hence the dimension of maximal abelian subalgebras and hence the rank is the same.
\item Dual Kac-Moody symmetric spaces have the same loop group of compact type as isotropy group and the same $\mathcal{P}$. Let $K:\mathcal{P}\longrightarrow \mathcal{P}$ be the isotropy representation of a Kac-Moody symmetric space of the compact type. Imposing $\mathbb{C}$-linearity we can define a complexification
\begin{displaymath}
K:\mathcal{P}\oplus i\mathcal{P}\longrightarrow \mathcal{P}\oplus i \mathcal{P}
\end{displaymath}
As $K$ is real the imaginary part and the real part are preserved. Furthermore the subrepresentations
$K:\mathcal{P}\longrightarrow \mathcal{P}$ and $K:i\mathcal{P}\longrightarrow i\mathcal{P}$ are isomorphic. But the second one is the isotropy representation of the dual Kac-Moody symmetric space of non-compact type.
\item This is a consequence of Lemmata \ref{curvatureestimatecompacttype} and \ref{curvatureestimatenon-compacttype}.
\end{enumerate}
\end{proof}

\section{The isotropy representation of Kac-Moody symmetric spaces}

In this section we show that the isotropy representations of Kac-Moody symmetric spaces coincide with the polar representation described by Christian Gro\ss ~\cite{Gross00}.

\subsection{The finite dimensional blueprint}

A reference for the material in this subsection is \cite{Berndt03}.

\begin{definition}[polar action]
\index{polar action}
Let $M$ be a Riemannian manifold.  An isometric action $G:M\longrightarrow M$ is called polar if  there exists a complete, embedded, closed submanifold $\Sigma \subset M$ that meets  each orbit orthogonally.
\end{definition}

\begin{definition}[polar representation]
\index{polar representation}
 A polar representation is a polar action on an Euclidean vector space, acting by linear transformations. 
\end{definition}

\begin{example}[adjoint representation]
\index{Adjoint representation}
Let $G$ be a compact simple Lie group, $\mathfrak{g}$ its Lie algebra. The adjoint representation is polar. Sections are the Cartan subalgebras.
\end{example}

\begin{example}[isotropy representation]
\index{isotropy representation}
 Let $(G, K)$ be a Riemannian symmetric pair, $M=G/K$ the corresponding symmetric space and $m=eK\in M$ (hence $K=I(M)_m$). The action of $K$ on $M$ induces an action on $T_mM$. Let $\mathfrak{g}=\mathfrak{k}\oplus \mathfrak{p}$ be the corresponding Cartan decomposition (i.e.\ $\mathfrak{k}=Lie(K)$). Then $\mathfrak{p}\cong T_mM$.  Using this isomorphism, we get an action of $K$ on $\mathfrak{p}$. This action is called the isotropy representation of $M$. 
\end{example}

\begin{example}[type II-symmetric spaces]
 For symmetric spaces of type II, that is compact Lie groups, the isotropy representation coincides with the Adjoint representation.
\end{example}

\begin{lemma}
The isotropy representation of a symmetric space is polar. Each Cartan subalgebra in $\mathfrak{p}$ is a section. 
\end{lemma}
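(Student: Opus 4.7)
The plan is to use the standard two-part strategy: first show that every $K$-orbit meets the fixed Cartan subspace $\mathfrak{a} \subset \mathfrak{p}$, and then show that this intersection is orthogonal with respect to the $\operatorname{Ad}(K)$-invariant inner product on $\mathfrak{p}$ (for compact type, one can take the negative of the Killing form, while for general type one uses the canonical $\operatorname{Ad}(K)$-invariant extension of the Riemannian metric to $\mathfrak{p}$). The first step is to recall that, under the identification $\mathfrak{p} \cong T_mM$, the isotropy representation of $K$ on $T_mM$ corresponds to the restriction of $\operatorname{Ad}$ to $K \times \mathfrak{p} \to \mathfrak{p}$; this uses the fact that $[\mathfrak{k}, \mathfrak{p}] \subset \mathfrak{p}$, which is the defining property of a Cartan decomposition.

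For the conjugacy statement, I would fix $X \in \mathfrak{p}$ and a regular element $H_0 \in \mathfrak{a}$ (one whose centralizer in $\mathfrak{p}$ is exactly $\mathfrak{a}$; such elements are dense in $\mathfrak{a}$) and consider the smooth function
\[
  f: K \longrightarrow \mathbb{R}, \qquad f(k) := \langle \operatorname{Ad}(k)X, H_0 \rangle.
\]
Since $K$ is compact in the compact-type case (and one may reduce to that case using duality for non-compact type, or work with a maximal compact subgroup), $f$ attains a maximum at some $k_0 \in K$. Differentiating along one-parameter subgroups $k_0 \exp(tY)$ for $Y \in \mathfrak{k}$ gives
\[
  0 = \langle [Y, \operatorname{Ad}(k_0)X], H_0 \rangle = -\langle Y, [\operatorname{Ad}(k_0)X, H_0] \rangle
\]
for all $Y \in \mathfrak{k}$. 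Because $[\mathfrak{p}, \mathfrak{p}] \subset \mathfrak{k}$, the commutator $[\operatorname{Ad}(k_0)X, H_0]$ lies in $\mathfrak{k}$, and by non-degeneracy of $\langle \cdot, \cdot \rangle$ on $\mathfrak{k}$ this forces $[\operatorname{Ad}(k_0)X, H_0] = 0$. Regularity of $H_0$ then yields $\operatorname{Ad}(k_0)X \in \mathfrak{a}$, which is exactly the desired conjugacy.

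For orthogonality, at any $H \in \mathfrak{a}$ the tangent space to the orbit $\operatorname{Ad}(K) \cdot H$ is
\[
  T_H(\operatorname{Ad}(K)\cdot H) = \{[Y, H] : Y \in \mathfrak{k}\} \subset \mathfrak{p},
\]
and for any $A \in \mathfrak{a} = T_H \mathfrak{a}$ one computes
\[
  \langle [Y, H], A \rangle = \langle Y, [H, A] \rangle = 0
\]
using $\operatorname{Ad}(K)$-invariance of the inner product and the fact that $\mathfrak{a}$ is abelian. Thus $T_H \mathfrak{a} \perp T_H(\operatorname{Ad}(K)\cdot H)$.

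The main obstacle is the maximum-principle argument in step two, and more specifically the step where one concludes $\operatorname{Ad}(k_0)X \in \mathfrak{a}$ from the vanishing of the commutator: this requires both the interchange $[\mathfrak{p}, \mathfrak{p}] \subset \mathfrak{k}$ (to place the commutator in $\mathfrak{k}$ where $\langle\cdot,\cdot\rangle$ is non-degenerate) and a regular element $H_0$. For OSLAs of non-compact type one needs to replace $K$ by a maximal compact subgroup to guarantee attainment of the maximum, or equivalently dualize and transfer the statement from the compact companion; this is the only point where the type of the OSLA really intervenes, since the orthogonality computation is purely algebraic.
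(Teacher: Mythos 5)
Your proof is correct, but note that the paper itself gives no proof of this lemma at all: it is stated in the subsection ``The finite dimensional blueprint'' purely as a recalled classical fact, with \cite{Berndt03} cited as the reference for the whole subsection. Your argument is the standard textbook one (as in Helgason): identify the isotropy representation with $\operatorname{Ad}(K)|_{\mathfrak{p}}$, get conjugacy into $\mathfrak{a}$ by maximizing $k\mapsto\langle \operatorname{Ad}(k)X,H_0\rangle$ over the compact group $K$ for a regular $H_0$, and get orthogonality from associativity of the invariant form together with $[\mathfrak{a},\mathfrak{a}]=0$; all the key ingredients ($[\mathfrak{p},\mathfrak{p}]\subset\mathfrak{k}$, non-degeneracy on $\mathfrak{k}$, density of regular elements) are correctly in place. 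The only blemish is cosmetic: differentiating $f$ along $k_0\exp(tY)$ actually yields $\langle \operatorname{Ad}(k_0)[Y,X],H_0\rangle$ rather than $\langle[Y,\operatorname{Ad}(k_0)X],H_0\rangle$; to get the expression you wrote, use $\exp(tY)k_0$ instead --- since $Y$ ranges over all of $\mathfrak{k}$ and $\operatorname{Ad}(k_0)$ preserves $\mathfrak{k}$, either variant gives the same conclusion.
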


\begin{lemma}
 Suppose $M$ and $M^D$ are dual symmetric spaces. Then the isotropy representations of $M$ and $M^D$ are isomorphic.
\end{lemma}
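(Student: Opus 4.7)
The plan is to realize both isotropy representations as the adjoint action of the (common) isotropy subgroup $K$ on real forms of a single complex vector space, and then exhibit an explicit intertwining operator.

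First I would recall the construction of the dual: if $(\mathfrak{g},\rho)$ is the orthogonal symmetric Lie algebra of $M$ with Cartan decomposition $\mathfrak{g}=\mathfrak{k}\oplus\mathfrak{p}$, then $M^D$ has OSLA $(\mathfrak{g}^{*},\rho^{*})$ with $\mathfrak{g}^{*}=\mathfrak{k}\oplus i\mathfrak{p}$ sitting inside the complexification $\mathfrak{g}_{\mathbb{C}}$, and $\rho^{*}$ acts as $+1$ on $\mathfrak{k}$ and $-1$ on $i\mathfrak{p}$. In particular, the isotropy subalgebra of $M^D$ is the same $\mathfrak{k}$ as for $M$, so (after passing to the identity components or adjoint groups, which does not change the isotropy representation) both $M$ and $M^D$ share the same isotropy group $K$. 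The tangent spaces at the base points are identified with $\mathfrak{p}$ and $i\mathfrak{p}$ respectively, and in both cases $K$ acts via the restriction of the adjoint representation of $G_{\mathbb{C}}$.

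Next I would define the candidate intertwiner
\[
\varphi:\mathfrak{p}\longrightarrow i\mathfrak{p},\qquad X\longmapsto iX.
\]
This is clearly a $\mathbb{R}$-linear isomorphism of real vector spaces. The key calculation is $K$-equivariance: for every $k\in K$ and $X\in\mathfrak{p}$,
\[
\varphi(\mathrm{Ad}(k)X)=i\,\mathrm{Ad}(k)X=\mathrm{Ad}(k)(iX)=\mathrm{Ad}(k)\varphi(X),
\]
which follows because $\mathrm{Ad}(k)$ extends to a $\mathbb{C}$-linear automorphism of $\mathfrak{g}_{\mathbb{C}}$ and therefore commutes with multiplication by $i$. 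Hence $\varphi$ is an isomorphism of $K$-representations, which is exactly what ``isomorphic isotropy representations'' means.

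The only step that requires a small bit of care, and which I regard as the main (mild) obstacle, is the identification of the isotropy groups: a priori the isotropy groups of $M=G/K$ and $M^{D}=G^{D}/K^{D}$ are defined inside different ambient groups $G$ and $G^{D}$, and the global group $K$ used in each case may differ by a finite cover. I would handle this by working at the Lie algebra level via $\mathfrak{k}$ and appealing to the fact that the isotropy representation depends only on $\mathfrak{k}$ acting on the tangent space (equivalently, passing to the analytic subgroup $K\subset\mathrm{Aut}(\mathfrak{g}_{\mathbb{C}})$ generated by $\mathrm{ad}\,\mathfrak{k}$, which is common to both dual pairs). Once this identification is fixed, the map $\varphi$ above completes the proof.
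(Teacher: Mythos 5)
Your proof is correct and follows essentially the same route as the paper: the paper states this finite-dimensional lemma without proof (deferring to the literature), but its proof of the corresponding duality theorem for Kac-Moody symmetric spaces uses exactly your complexification argument, and your explicit intertwiner $X\mapsto iX$ is precisely the map the paper leaves implicit when it asserts that the subrepresentations of $K$ on $\mathcal{P}$ and $i\mathcal{P}$ are isomorphic. Your extra care about identifying the global isotropy groups (working at the level of $\mathfrak{k}$, or of the analytic subgroup generated by $\mathrm{ad}\,\mathfrak{k}$) is a sensible precaution that the paper does not spell out.
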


\begin{example}[The $n$\ndash sphere $S^n$ and the $n$\ndash hyperbolic space $\mathbb{H}^n$]
 The $n$\ndash sphere $S^n:=\{x\in \mathbb{R}^{n+1}| |x|=1\}\cong SO(n+1)/SO(n)$. The Lie group $SO(n+1)$ acts transitively on $S^n$. The stabilizer of a point $p$ is isomorphic to $SO(n)$. 

To define the hyperbolic space we use $\mathbb{R}^{n,1}$, that is $(\mathbb{R}^{n+1}, g_{\textrm{Lorentz}})$ where $g_{\textrm{Lorentz}}$ is defined by
\begin{displaymath}
g_{\textrm{Lorentz}}=\left(\begin{array}{cccc}1&0&\dots&0\\ 0&\ddots&\ddots&\vdots\\\vdots&\ddots&1&0\\0&\dots&0&-1  \end{array}\right)\, .
 \end{displaymath}
The hyperbolic $n$\ndash is defined by
\begin{displaymath}
\mathbb{H}^n:=\{x\in \mathbb{R}^{n-1}||x|=-1, x_{n+1}>0\}\cong SO(n,1)/SO(n) \, .
\end{displaymath}
The isotropy representation of both those spaces is the representation:
\begin{displaymath}
 SO(n):\mathbb{R}^n\longrightarrow \mathbb{R}^n,\quad (A,x)\mapsto A\cdot x\, .
\end{displaymath}
\end{example}

\begin{theorem}[Dadok]
\index{theorem of Dadok}
 Every polar representation on $\mathbb{R}^n$ is orbit equivalent to the isotropy representation of a symmetric space.
\end{theorem}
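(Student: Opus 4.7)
The plan is to reduce the problem to irreducible polar representations, extract the finite reflection group acting on a section as the essential invariant, and then match this combinatorial datum against the Cartan-Helgason classification of isotropy representations of Riemannian symmetric spaces.

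First I would reduce to the irreducible case. A polar representation $\rho:G\to O(V)$ decomposes $V$ as an orthogonal direct sum of $G$-invariant subspaces $V=V_0\oplus V_1\oplus\cdots\oplus V_k$, where $V_0$ is the fixed subspace of $G$ and each $V_i$ for $i\geq 1$ is irreducible as a $G$-representation; one shows that each restriction $\rho|_{V_i}$ is again polar (with section $\Sigma\cap V_i$ up to equivariant modification) and that a direct sum of polar representations is polar. Since a direct sum of isotropy representations of symmetric spaces is the isotropy representation of the Riemannian product, and since $V_0$ is the isotropy representation of the Euclidean factor, it suffices to treat irreducible polar representations.

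Second, I would attach to an irreducible polar representation $(G,V,\Sigma)$ the \emph{generalized Weyl group} $W:=N_G(\Sigma)/Z_G(\Sigma)$. The key structural fact, which I would prove using the slice theorem and the polar condition, is that $W$ acts on $\Sigma$ as a finite group generated by reflections in hyperplanes (the walls being the images of singular orbits meeting $\Sigma$), and that $G$-orbits on $V$ are in bijection with $W$-orbits on $\Sigma$. Thus the triple $(G,V,\Sigma)$ determines, and is essentially determined up to orbit equivalence by, the Coxeter datum $(W,\Sigma)$ together with the multiplicities of the walls. At this stage I would invoke the classification of finite reflection groups: each irreducible such $W$ is one of the classical Coxeter groups $A_\ell,B_\ell,D_\ell,E_6,E_7,E_8,F_4,G_2,H_3,H_4,I_2(m)$.

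Third, I would match this against the list of isotropy representations. Every irreducible Riemannian symmetric space $M=G_M/K$ has an isotropy representation $K\to O(\mathfrak p)$, which is polar with section a maximal abelian $\mathfrak a\subset\mathfrak p$ and generalized Weyl group equal to the affine-restricted Weyl group $W(\mathfrak g_M,\mathfrak a)$; these Weyl groups realize precisely the crystallographic Coxeter groups $A$--$G$ with their standard multiplicities, as computed by Cartan. One then verifies case by case (or, following Eschenburg-Heintze, by a uniform construction that assigns to a polar representation the symmetric pair built from the root data on $\Sigma$) that for every polar $(G,V)$ whose Weyl group is crystallographic, there exists a symmetric space $M$ and an orbit-equivalent action of $K$ on $\mathfrak p\cong V$. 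The leftover non-crystallographic cases $H_3, H_4, I_2(m)$ with $m=5$ or $m\geq 7$ must be excluded: I would rule these out by showing that a polar representation of a compact connected Lie group has crystallographic Weyl group, because the root system must come from weights of a representation and thus lie in a lattice.

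The main obstacle will be the last step: constructing, for each admissible Coxeter datum, an \emph{explicit} symmetric pair whose isotropy representation is orbit equivalent to the given polar representation. Orbit equivalence (rather than linear equivalence) is essential here, since e.g.\ $SU(n)$ and $SO(n)$ can act with the same orbits on certain spaces; thus one must be careful to work up to this weaker equivalence and not attempt to recover $G$ itself, only the orbit foliation. I expect that the cleanest way to handle this is the root-theoretic construction: define $\mathfrak g:=\mathfrak k\oplus\mathfrak p$ with $\mathfrak p:=V$ and $\mathfrak k$ generated by the slice representations at regular points, check the Jacobi identity using polarity, and verify that the resulting $\mathfrak g$ is a real semisimple Lie algebra admitting $\rho$ as its isotropy representation up to orbit equivalence.
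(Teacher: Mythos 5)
The paper does not actually prove this theorem: it is quoted as a classical result and referenced to \cite{Dadok85}, where the original argument is a case-by-case verification using the classification of irreducible representations of compact connected Lie groups of low cohomogeneity. Your sketch instead follows the conceptual route later developed by Eschenburg and Heintze (reduce to the irreducible case, extract the generalized Weyl group on a section, rebuild a symmetric pair from the root data on the section). That is a legitimate alternative strategy, but as written it has genuine gaps.

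First, the claim that the orbit foliation is ``essentially determined'' by the Coxeter datum $(W,\Sigma)$ together with wall multiplicities is close to being the content of the theorem rather than a step toward it: matching reflection groups on a common section only identifies the traces $G\cdot x\cap\Sigma=W\cdot x$ of the orbits on the section, not the orbits themselves as submanifolds of $V$. Passing from equality of traces to orbit equivalence requires either the rigidity theory of isoparametric foliations (which, via Thorbergsson's theorem, is both historically later than Dadok and restricted to codimension at least three) or a direct construction; it cannot simply be invoked. Second, the final root-theoretic construction of $\mathfrak{g}=\mathfrak{k}\oplus\mathfrak{p}$ with the Jacobi identity verified ``using polarity'' is known to work only when the cohomogeneity is at least three; the cohomogeneity-two case is precisely where Dadok's original classification cannot be avoided (cohomogeneity one being the transitive-on-spheres case, which matches the rank-one symmetric spaces). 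Third, your crystallographic-restriction argument is only a heuristic: the walls of the generalized Weyl group are cut out by singular orbits, not directly by weights of the representation, so the lattice argument needs to be made precise. None of these gaps is fatal to the strategy, but each conceals a substantial theorem that your sketch treats as routine.
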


\subsection{The isotropy representation of Kac-Moody symmetric spaces}
\index{isotropy representation of Kac-Moody symmetric space}
\begin{definition}
Let $M$ be a Kac-Moody symmetric space, $G$ the Kac-Moody isometry group, $p\in M$ and $K_p:=\{g\in G| g(p)=p\}$ the isotropy group of $p$.

The representation
\begin{displaymath}
  K_p:T_pM\longrightarrow T_pM 
\end{displaymath}
is called the isotropy representation of $M$.
\end{definition}

\begin{theorem}
The isotropy representation of a Kac-Moody symmetric space is essentially equivalent to a $P(G,H)$-action.
\end{theorem}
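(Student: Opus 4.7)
The plan is to treat the four types $I$, $II$, $III$, $IV$ uniformly by reducing the isotropy representation of $M=\widehat{H}/K_p$ to the adjoint action of $K_p$ on the $(-1)$-eigenspace $\mathcal{P}$ of the defining involution, and then identifying this action, up to essential equivalence, with one of the gauge actions classified in Section~\ref{PolaractionsontameFrechetspaces}.

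First, I would use Lemma~\ref{identificationtangentspace} to identify $T_{\pi(e)}M$ with $\mathcal{P}\subset \widehat{M\mathfrak{g}}$, and observe that under this identification the isotropy representation is the restriction to $\mathcal{P}$ of the adjoint action of $K_p$. Using the explicit formulas from Example~\ref{adjointaction}, for $x=(g,p,w)\in K_p$ and $u+r_cc+r_dd\in \mathcal{P}$ the component along $c$ is fixed, and the component along $d$ is pushed forward affinely; so up to a translation in the $c$-direction the orbits of $K_p$ on a horosphere $\{r_d=\text{const}\neq 0\}\cap \mathcal{P}$ are exactly the orbits of the gauge action
\begin{displaymath}
g\cdot u \;=\; gw(u)g^{-1}-r_d\,g'g^{-1}
\end{displaymath}
of the loop group $K_p^{\mathrm{loop}}:=K_p\cap L(G,\sigma)$ on the loop part of $\mathcal{P}$. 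This is the description of the adjoint orbits used in Theorem~\ref{polarityofadjointaction}.

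Next I would run through the four types, identifying in each case the pair $(G,H)$. For type~$II$, $K_p^{\mathrm{loop}}=MG_{\mathbb R}^{\sigma}$ and the action is the $\sigma$-twisted diagonal gauge action, i.e.\ $P(G,\Delta_\sigma)$. For type~$IV$ the same calculation with $H=\widehat{MG}_{\mathbb C}^{\sigma}$ and $K_p^{\mathrm{loop}}=MG_{\mathbb R}^{\sigma}$ again gives the $\Delta_\sigma$-action, now on the noncompact slice. For types~$I$ and $III$ the involution $\rho$ cuts $K_p^{\mathrm{loop}}$ down to $\mathrm{Fix}(M\rho)$, and the restricted gauge action on the $(-1)$-eigenspace is exactly one of the Hermann-type actions $P(G,K_1\times K_2)$ treated in Theorem~\ref{polaractiononp}. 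In each case Theorems~\ref{polaractiononmg} and~\ref{polaractiononp} provide a section and show that this action is polar with the expected affine Weyl group.

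Finally, to upgrade this identification of orbits into the asserted \emph{essential equivalence} of representations, I would use the continuous, dense, equivariant embeddings of holomorphic loops into Sobolev loops,
\begin{displaymath}
MG_{\mathbb R}^{\sigma}\hookrightarrow H^1(S^1,G)\cong P(G,\Delta_{\sigma}),\qquad M\mathfrak{g}^{\sigma}\hookrightarrow H^0(S^1,\mathfrak{g}),
\end{displaymath}
already invoked in the proof of Theorem~\ref{polaractiononmg}. These embeddings intertwine the holomorphic gauge action with Terng's $P(G,H)$-action on $V=H^0([0,1],\mathfrak{g})$; density and injectivity are immediate, and equivariance was the content of the regularity argument of Lemma~\ref{cinfty}. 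The main technical obstacle is exactly this last step: one must check that the restriction/pullback from $\mathbb C^*$ to $S^1$ is compatible with the twisting conditions and with the closing/monodromy conditions arising from the central extension, so that the holomorphic isotropy group really maps \emph{into}, and densely onto its image in, the appropriate $P(G,H)$. Once this compatibility is verified factor by factor using the splitting of OSAKAs established in Section~\ref{TheloopalgebraapproachtoKacMoodyalgebras}, the theorem follows.
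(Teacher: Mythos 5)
Your proposal is correct and follows essentially the same route as the paper, which (in its discussion after the theorem) reduces types $II$/$IV$ to the Adjoint action restricted to the horospheres $H_{r,\ell}$ — identified with the $P(G,\Delta_\sigma)$ gauge action via the dense equivariant embedding of holomorphic loops into $H^0$-loops — and types $I$/$III$ to $s$-representations corresponding to the Hermann examples $P(G,K_1\times K_2)$. Your version is merely more consolidated and more explicit about the final density/equivariance check than the paper's distributed treatment.
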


Recall that two representations $G:V\longrightarrow V$ and $G: V'\longrightarrow V'$ are essentially equivalent if there is a injective map $\varphi: V\longrightarrow V'$ which is $G$-equivariant with dense image~\cite{PressleySegal86}, p.~167.

\begin{theorem}
Let $M$ and $M^D$ be dual Kac-Moody symmetric spaces. The isotropy representations of $M$ and $M^D$ are isomorphic.
\end{theorem}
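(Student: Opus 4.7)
The plan is to read off the isotropy representation from the corresponding OSAKA and then exploit the explicit form of the dualization procedure. Recall that if $M$ is simply connected with associated OSAKA $(\widehat{L}(\mathfrak{g},\sigma),\rho)$ and Cartan decomposition $\widehat{L}(\mathfrak{g},\sigma) = \mathcal{K}\oplus \mathcal{P}$, then the isotropy group $K_p$ integrates $\mathcal{K}$, and under the identification $T_pM \cong \mathcal{P}$ (coming from the projection $\pi:\widehat{MG}\to\widehat{MG}/K_p$ as in Lemma~\ref{identificationtangentspace}) the isotropy representation is the restriction of the adjoint action $\mathrm{Ad}(K_p):\mathcal{P}\to\mathcal{P}$. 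The dual OSAKA $(\widehat{L}(\mathfrak{g},\sigma)^{*},\rho^{*})$ has the same fixed subalgebra $\mathcal{K}$ and $-1$-eigenspace $i\mathcal{P}\subset \widehat{L}(\mathfrak{g}_{\mathbb{C}},\sigma)$, so the isotropy group of $M^{D}$ is the same group $K_p$, acting on $T_{p^{*}}M^{D}\cong i\mathcal{P}$ by the restriction of $\mathrm{Ad}$ to $i\mathcal{P}$.

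First I would verify that the two ingredients needed to identify the isotropy representation really transport under dualization: (a) the isotropy group $K_p$ is intrinsic to the OSAKA and therefore unchanged by the Cartan switch $\mathcal{P}\leadsto i\mathcal{P}$, and (b) the tangent space identification factors through the $-1$-eigenspace of the involution. Both facts were already established in the construction of Kac-Moody symmetric spaces of type I, II, III and IV, so this step is just bookkeeping across the four cases.

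The main content of the argument is then the following simple observation, which I would formalize as the promised isomorphism. Consider the $\mathbb{R}$-linear map
\begin{equation*}
\varphi:\mathcal{P}\longrightarrow i\mathcal{P},\qquad x\mapsto ix,
\end{equation*}
coming from scalar multiplication in the complexification $\widehat{L}(\mathfrak{g}_{\mathbb{C}},\sigma)$. It is bijective with inverse $y\mapsto -iy$. Since the adjoint action of $K_p$ extends $\mathbb{C}$-linearly to the complexification, we have $\mathrm{Ad}(k)(ix) = i\,\mathrm{Ad}(k)(x)$ for all $k\in K_p$ and $x\in\mathcal{P}$, i.e.\ $\varphi\circ \mathrm{Ad}(k) = \mathrm{Ad}(k)\circ \varphi$. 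Thus $\varphi$ is a $K_p$-equivariant isomorphism between the isotropy representations of $M$ and $M^{D}$.

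The step I expect to require most care is verifying that, once one passes to a Kac-Moody symmetric space of type II (where $M$ is a Kac-Moody group and duality sends it to a quotient of its complexification) or its dual of type IV, the two conventions used to identify $T_pM\cong \mathcal{P}$ (left-invariant fields versus the projection from $\widehat{MG}_{\mathbb{C}}$) actually match under $\varphi$, so that the equivariant isomorphism constructed above is literally the isomorphism between the two isotropy representations and not merely an outer conjugate of it. This is essentially the content of the duality relation between OSAKAs of types I$\leftrightarrow$III and II$\leftrightarrow$IV proved earlier, and the verification reduces to comparing the $\mathrm{Ad}$-invariant scalar products, which differ only by the sign switch on the $\mathcal{P}$-component coming from $x\mapsto ix$.
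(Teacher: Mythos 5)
Your proposal is correct and follows essentially the same route as the paper: the paper also observes that $M$ and $M^D$ share the isotropy group $\mathcal{K}$ and the space $\mathcal{P}$, extends the isotropy representation $\mathbb{C}$-linearly to $\mathcal{P}\oplus i\mathcal{P}$, and concludes that the subrepresentations on $\mathcal{P}$ and on $i\mathcal{P}$ are isomorphic, which is exactly your equivariance of $x\mapsto ix$ made explicit. The extra bookkeeping you flag about matching the two identifications of the tangent space is a reasonable refinement, but the paper treats it as contained in the earlier duality of OSAKAs, just as you suggest.
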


\begin{lemma}
 For a Kac-Moody symmetric space of type II and type IV, the isotropy representation coincides with the Adjoint action. 
\end{lemma}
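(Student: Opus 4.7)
The plan is to produce in each case an explicit $\mathbb{R}$-linear isomorphism between the tangent space at the base point and a Kac-Moody algebra, under which the isotropy representation is carried to the restricted Adjoint action computed in Example~\ref{adjointaction}. Thus the lemma reduces to checking that the conjugation action of the isotropy group on the $(-1)$-eigenspace of the involution agrees, via this isomorphism, with the ambient Adjoint action.

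First, for a Kac-Moody symmetric space $M=\widehat{MG}\cong\widehat{M(G\times G)}/MG$ of type II, by Lemma~\ref{identificationtangentspace} the tangent space at $\pi(e)$ is the $(-1)$-eigenspace $\mathcal{P}$ of $d\widehat{M\rho}$ on $\widehat{M(\mathfrak{g}\oplus\mathfrak{g})}$. Tracing through the definition of $\widehat{M\rho}$, this eigenspace consists of quadruples $(f,-f^{\ast},r_c,r_d)$ with $f\in M\mathfrak{g}$ and $r_c,r_d\in \mathbb{R}$, so the map $\Phi:\mathcal{P}\longrightarrow\widehat{M\mathfrak{g}}$, $(f,-f^{\ast},r_c,r_d)\mapsto(f,r_c,r_d)$, is an $\mathbb{R}$-linear isomorphism. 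The isotropy group is $MG$ embedded diagonally as $g\mapsto(g,g^{\ast})$, and its action on $\mathcal{P}$ is the restriction of $\textrm{Ad}$ in $\widehat{M(G\times G)}$. Because the cocycle $\omega$ on $M(\mathfrak{g}\oplus\mathfrak{g})$ splits as the sum of the cocycles on each factor, and because both $[d,\cdot]$ and the shift coming from the exponentiated $d$-direction act factorwise, the formulas of Example~\ref{adjointaction} applied to $(g,g^{\ast})$ and $(f,-f^{\ast},r_c,r_d)$ split into two copies of the same computation; after applying $\Phi$ these collapse to exactly the formulas of Example~\ref{adjointaction} for the action of $g\in MG\subset\widehat{MG}$ on $(f,r_c,r_d)\in\widehat{M\mathfrak{g}}$.

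For a space $M=\widehat{MG}_{\mathbb{C}}/\widehat{MG}_{\mathbb{R}}$ of type IV, the analogue of Lemma~\ref{identificationtangentspace} identifies the tangent space at the base point with the $(-1)$-eigenspace of the conjugation on $\widehat{M\mathfrak{g}}_{\mathbb{C}}$ with respect to the compact real form. In the OSAKA description from the proof of Lemma~\ref{OSAKAoftypeIV} this eigenspace is $i\widehat{M\mathfrak{g}}_{\mathbb{R}}$. Multiplication by $i$ is an $\mathbb{R}$-linear isomorphism $\Psi: i\widehat{M\mathfrak{g}}_{\mathbb{R}}\longrightarrow\widehat{M\mathfrak{g}}_{\mathbb{R}}$; since $\textrm{Ad}$ on $\widehat{M\mathfrak{g}}_{\mathbb{C}}$ is $\mathbb{C}$-linear and elements of $\widehat{MG}_{\mathbb{R}}$ preserve the splitting $\widehat{M\mathfrak{g}}_{\mathbb{C}}=\widehat{M\mathfrak{g}}_{\mathbb{R}}\oplus i\widehat{M\mathfrak{g}}_{\mathbb{R}}$, the map $\Psi$ intertwines the isotropy action on $i\widehat{M\mathfrak{g}}_{\mathbb{R}}$ with the Adjoint action of $\widehat{MG}_{\mathbb{R}}$ on $\widehat{M\mathfrak{g}}_{\mathbb{R}}$.

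The main obstacle is the bookkeeping for the type II case: one must check that the non-linear pieces of the Adjoint action in Example~\ref{adjointaction} (the $\omega$-cocycle contribution to the $c$-component and the $g'g^{-1}$ contribution to the $d$-component) pair $(f,-f^{\ast})$ with itself in such a way that, summed over the two factors, they remain in the image of $\Phi$ and reproduce the single-factor formula. This boils down to the two symmetry identities $\omega(f^{\ast},h^{\ast})=\omega(f,h)$ and $\langle f^{\ast},h^{\ast}\rangle=\langle f,h\rangle$ for the $\ast$-operation used in the definition of $\widehat{M\rho}$; both are immediate from the $\textrm{Ad}$-invariance of the chosen bilinear form on $M\mathfrak{g}$. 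Once these identities are recorded the lemma is a direct verification against the formulas of section~\ref{theadjointaction}.
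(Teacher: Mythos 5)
The paper states this lemma without proof --- the paragraph that follows it discusses consequences (isometry, preservation of $r_d$, polarity of the induced action on $H_{r,\ell}$), not a derivation --- so there is no argument of the author's to measure yours against; your proposal has to stand on its own. Your overall strategy, identifying $T_{\pi(e)}M$ with a copy of the Kac-Moody algebra via the $(-1)$-eigenspace and checking that this identification intertwines the isotropy action with $\mathrm{Ad}$, is the right one, and the type IV half is fine: $\mathbb{C}$-linearity of $\mathrm{Ad}$ on $\widehat{M\mathfrak{g}}_{\mathbb{C}}$ together with the fact that $\widehat{MG}_{\mathbb{R}}$ preserves the splitting into real and imaginary parts makes multiplication by $i$ an intertwiner with no further work.

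The type II half has a genuine gap, and it sits exactly where you locate ``the main obstacle''. The identity $\omega(f^{*},h^{*})=\omega(f,h)$ is not correct, and it does not follow from $\mathrm{Ad}$-invariance of the bilinear form: since $\widehat{M\rho}$ is an involution of the \emph{second} kind, i.e.\ $\widehat{M\rho}(c)=-c$, the automorphism property $\widehat{M\rho}[x,y]=[\widehat{M\rho}x,\widehat{M\rho}y]$ forces $\omega(f^{*},h^{*})=-\omega(f,h)$ on each factor. The sign comes from the reversal of the loop parameter hidden in any second-kind involution, which the derivative inside $\omega(f,h)=\mathrm{Res}(\langle f,h'\rangle)$ detects even though the pointwise form $\langle\cdot,\cdot\rangle$ itself is preserved. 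With the signs tracked correctly, the contributions of the two factors to the $c$-component of $\mathrm{Ad}(g,g^{*})$ do not simply ``reproduce the single-factor formula'': depending on the normalization they either double it or threaten to cancel, and one finds that the naive map $\Phi(f,-f^{*},r_c,r_d)=(f,r_c,r_d)$ intertwines only after rescaling the $c$-coordinate --- the same phenomenon as in finite dimensions, where the invariant form of $\mathfrak{g}\oplus\mathfrak{g}$ restricted to the antidiagonal is twice that of $\mathfrak{g}$. Since this bookkeeping is precisely the content of the lemma in the type II case, asserting that it ``collapses to exactly the formulas of Example~\ref{adjointaction}'' without carrying it out leaves the proof incomplete. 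A cleaner route avoids the product presentation entirely: a type II space \emph{is} the group $\widehat{MG}$, the isotropy group of $e$ acts by conjugation $x\mapsto gxg^{-1}$, and the differential of conjugation at $e$ is $\mathrm{Ad}(g)$ by definition, so the canonical identification $T_e\widehat{MG}\cong\widehat{M\mathfrak{g}}$ does the whole job with no cocycle identities to verify.
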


We have seen that the Adjoint action is isometric; hence the length of vectors is preserved. Furthermore the $d$-coefficient $r_d$ is preserved. Hence the 
Adjoint action induces an action on the space 
\begin{displaymath}H_{r,\ell}:=\{x\in \widehat{L}(\mathfrak{g}, \sigma)\|x\|=-\ell, r_d=r\}\, .\end{displaymath}
This action is essentially equivalent to the gauge action of $H^1$-loop groups on the Hilbert space of $H^0$-loops in $\mathfrak{g}$ and hence polar~\cite{terng95}.

For the following definition see for the case of Lie groups~\cite{Berndt03}, for the case of affine Kac-Moody groups~\cite{Gross00}. 

\begin{definition}[$s$-representation]
\index{$s$-representation}
Let $\mathcal{G}$ be a simple or affine Kac-Moody Lie algebra of compact type, $\rho$ an involution and $\mathcal{G}=\mathcal{K}\oplus \mathcal{P}$ the decomposition into the $\pm 1$-eigenspaces of $\rho$. Let $K$ be the Lie group resp.\ affine Kac-Moody group associated to $\mathcal{K}$.
The representation
\begin{displaymath}
S:K\longrightarrow Aut(\mathcal{P})
\end{displaymath}
is called $S$-representation.  
\end{definition}

\begin{lemma}
 For a Kac-Moody symmetric space of type I and type III, the isotropy representation is an $s$-representation. 
\end{lemma}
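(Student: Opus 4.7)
The plan is to work type I first and then deduce type III by the duality result already established. In both cases the key is to reduce the definition of the isotropy representation (action of the isotropy subgroup of the Kac-Moody isometry group on the tangent space at the basepoint) to the intrinsic algebraic picture supplied by the OSAKA.

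For type I, write $M = \widehat{MG}^{\sigma}_{\mathbb{R}}/\textrm{Fix}(\rho_*)$ and let $p = e\cdot \textrm{Fix}(\rho_*)$ be the basepoint. First I would identify the tangent space $T_pM$ with the $(-1)$-eigenspace $\mathcal{P}$ of the involution induced by $\rho_*$ on $\widehat{M\mathfrak{g}}^{\sigma}_{\mathbb{R}}$. This is exactly the content of Lemma~\ref{identificationtangentspace}, applied to the pair $(\widehat{MG}^{\sigma}_{\mathbb{R}},\textrm{Fix}(\rho_*))$: both inclusions go through verbatim because the projection $\pi:\widehat{MG}^{\sigma}_{\mathbb{R}} \to \widehat{MG}^{\sigma}_{\mathbb{R}}/\textrm{Fix}(\rho_*)$ is tame and one-parameter subgroups $t\mapsto \pi\circ\widehat{\Mexp}(tX)$ with $X\in\mathcal{P}$ realise arbitrary tangent directions at $p$.

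Next I would check that under this identification the isotropy action coincides with the $s$-representation. Since the metric and Levi-Civita connection on $M$ are induced from the biinvariant metric on $\widehat{MG}^{\sigma}_{\mathbb{R}}$ by Riemannian submersion (as in the construction of type I spaces), the action of $k\in \textrm{Fix}(\rho_*)$ on $T_pM$ is the derivative at $e$ of $g\mapsto kgk^{-1}$, which on $\widehat{M\mathfrak{g}}^{\sigma}_{\mathbb{R}}$ is $\textrm{Ad}(k)$. Because $\rho_*$ is an involution, $[\mathcal{K},\mathcal{P}]\subset \mathcal{P}$, so $\textrm{Ad}(k)$ preserves $\mathcal{P}$, and the restriction is precisely the representation $S:\textrm{Fix}(\rho_*)\to \textrm{Aut}(\mathcal{P})$ of the definition of $s$-representation, with $\textrm{Fix}(\rho_*)$ playing the role of $K$ and $\mathcal{K}$ its Lie algebra.

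For type III, let $M^D = H/\textrm{Fix}(\rho_*)$, where $H$ is the non-compact real form dual to $\widehat{MG}^{\sigma}_{\mathbb{R}}$. By the duality theorem, $M$ and $M^D$ are dual, and the isotropy representations of dual Kac-Moody symmetric spaces are isomorphic. Hence the isotropy representation of $M^D$ is isomorphic to that of $M$, which by the type I case is an $s$-representation. Alternatively, one can argue directly: at the basepoint of $M^D$ the tangent space identifies with $i\mathcal{P}\subset \widehat{M\mathfrak{g}}^{\sigma}_{\mathbb{C}}$, which as a real $\textrm{Fix}(\rho_*)$-module is isomorphic to $\mathcal{P}$ via $x\mapsto ix$, using the $\mathbb{C}$-linear extension of $\textrm{Ad}(k)$ as in the proof of the duality theorem; this exhibits the isotropy representation as the same $s$-representation.

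The main obstacle is not algebraic but functional-analytic: one has to make sure that the identification $T_pM\cong \mathcal{P}$ really holds in the tame Fr\'echet sense, i.e.\ that every tangent vector at $p$ comes from an element of $\mathcal{P}$ and not merely from some completion of it, and that the Adjoint action of $\textrm{Fix}(\rho_*)$ on $\mathcal{P}$ is tame (so that the $s$-representation is well-defined in our category). The first point follows from Lemma~\ref{identificationtangentspace}; the second is exactly the tameness of $\textrm{Ad}$ on $\widehat{M\mathfrak{g}}$ established in Lemma~\ref{banachandfrechetstructureonkacmoodyalgebras}, restricted to the closed subalgebras $\mathcal{K}$ and $\mathcal{P}$.
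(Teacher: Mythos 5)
Your proposal is correct and follows the route the paper intends: the paper's own proof consists only of the words ``by direct calculation,'' and your argument supplies exactly those calculations --- identifying $T_pM$ with $\mathcal{P}$ via Lemma~\ref{identificationtangentspace}, recognising the isotropy action as $\mathrm{Ad}(k)|_{\mathcal{P}}$ (which is the $s$-representation by definition), and transferring to type III by the already-established duality of isotropy representations. Nothing is missing, and your attention to the tameness of $\mathrm{Ad}$ on $\mathcal{P}$ is a welcome addition the paper leaves implicit.
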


\begin{proof}
by direct calculation.
\end{proof}

By the following theorem of Christian Gross~\cite{Gross00}, p.~340 we get polarity.
\begin{theorem}
Suppose $K$ is a closed subgroup in $L(G,\sigma)$, then its affine $s$-representation on $\mathcal{P}$ is polar.
\end{theorem}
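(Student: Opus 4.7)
The plan is to reduce the statement to the polarity results already available in the paper, in particular Theorem~\ref{polaractiononp} on polar actions on tame Fr\'echet spaces and its Hilbert analogue of Terng~\cite{terng95}. The main observation is that if $K \subset L(G,\sigma)$ is the closed subgroup associated to an involution $\rho$ of the compact real form $L(\mathfrak{g},\sigma)=\mathcal{K}\oplus\mathcal{P}$, then by the Heintze--Gro{\ss} classification every such involution is quasi-conjugate to one of the standard form $\varphi(f)(t)=\varphi_t\,f(\lambda(t))$, and the corresponding fixed point group is of Hermann type $P(G,H)$ for $H\subset G\times G$ a product of symmetric subgroups (or a twisted diagonal). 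Hence $K$ is, up to conjugation, a $P(G,H)$-group acting on $\mathcal{P}$ by the restriction of the gauge action on $L(\mathfrak{g},\sigma)\subset H^0([0,1],\mathfrak{g})$.

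The first step is therefore to identify the $s$-representation $S\colon K\to\mathrm{Aut}(\mathcal{P})$ with the restriction of this gauge action to the $(-1)$-eigenspace of $\rho$. This is straightforward from the definition of the Adjoint action computed in Section~\ref{theadjointaction}, since $\rho$ is an isometry of the $Ad$-invariant form, so $\mathcal{K}$ and $\mathcal{P}$ are preserved by $K$ and the restriction $\mathrm{Ad}|_\mathcal{P}$ equals $S$ tautologically.

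The second step is to apply the polarity theorems of Section~\ref{PolaractionsontameFrechetspaces}. Theorem~\ref{polaractiononp} gives polarity of the gauge action of $XK_\mathbb{R}$ on $\mathcal{P}$ when $\mathcal{K}\oplus\mathcal{P}$ is the eigenspace decomposition of an involution; this is exactly our situation. A section is given by a maximal abelian subspace $\mathfrak{a}\subset\mathcal{P}$, interpreted as constant loops, and orthogonality follows from the Hilbert-space version via the embedding into $H^0([0,1],\mathfrak{g})$ together with the $Ad$-invariance of the scalar product from Section~5.

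The main obstacle, as already encountered in the proof of Theorem~\ref{polaractiononp}, is matching the regularity class. The algebraic content (existence of sections, coincidence of Weyl groups) transfers immediately from Terng's Hilbert setting via the continuous embedding of holomorphic loops into Sobolev loops, but one must verify that the transformation element $g(t)=\exp(tX)g(0)h(t)^{-1}$ realizing the orbit-to-section map stays in the holomorphic class and satisfies the $\sigma$-twisting condition. This is handled exactly as in the step-by-step regularity argument of the proof of Theorem~\ref{polaractiononmg}: $\exp(tX)$ extends holomorphically to $\mathbb{C}$, while $h(t)$ solves a linear ODE with holomorphic right-hand side on the universal cover of $\mathbb{C}^*$, and the monodromy constraint is automatic since the embedding into the analytic loop group forces $g(t)\in L_{an}G$. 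Once this regularity check is in place, polarity of the $s$-representation follows from Theorem~\ref{polaractiononp}, which is the content of Gro{\ss}'s result quoted here.
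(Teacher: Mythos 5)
Your reduction is essentially sound, but you should know that the paper does not prove this statement at all: it is quoted verbatim as a theorem of Christian Gro{\ss} (\cite{Gross00}, p.~340), so the burden of proof is delegated entirely to that reference. What you have done instead is make the result internal to the paper's own framework: you identify the affine $s$-representation of $K=\textrm{Fix}(\rho)$ with the restriction of the Adjoint (gauge) action to the $(-1)$-eigenspace $\mathcal{P}$, and then invoke Theorem~\ref{polaractiononp} (the holomorphic Hermann examples), which itself rests on Terng's Hilbert-space polarity theorem~\cite{terng95} together with the regularity bookkeeping of Theorem~\ref{polaractiononmg}. This is a legitimate and non-circular route --- Theorem~\ref{polaractiononp} nowhere depends on Gro{\ss}'s result --- and it has the advantage of exhibiting the polarity of the $s$-representation and the polarity of the Hermann examples as one and the same statement rather than two independently sourced facts; the price is that your proof inherits whatever gaps remain in the sketchy step-by-step regularity arguments of Theorems~\ref{polaractiononmg} and~\ref{polaractiononp}, whereas the citation to \cite{Gross00} rests on an independent, fully written-out proof. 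Two points deserve more care than you give them. First, the \emph{affine} $s$-representation acts on a $\mathcal{P}$ that contains the $c$- and $d$-directions (the involution is of the second kind, so $\rho(c)=-c$ and $\rho(d)=-d$), while Theorem~\ref{polaractiononp} only treats the loop-algebra part; you need the horosphere argument of Theorem~\ref{polarityofadjointaction} --- restrict to $r_d=r\neq 0$, where the affine action reduces to the gauge action and the $c$-coefficient is determined by isometry --- to upgrade to the full affine representation. Second, the hypothesis that $K$ is closed in $L(G,\sigma)$ must be read together with the definition of the $s$-representation: $K$ is the fixed-point group of an involution, and its containment in the loop group (rather than merely in the Kac-Moody group) is exactly what forces $\rho$ to be of the second kind, so that the section of constant loops lies in $\mathcal{P}$; for an arbitrary closed subgroup the statement would be false. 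With those two clarifications your argument is complete.
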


The converse result, a Dadok-type theorem, is not known for affine Kac-Moody symmetric spaces and polar representations on Hilbert case.

\section{Lie triple systems}
\index{Lie triple system}
Recall the definition~\cite{Helgason01}, p.~224 
\begin{definition}[Lie triple system]
Let $\mathfrak{g}$ be a real Lie algebra and $\mathfrak{m}$ be a subspace. $\mathfrak{m}$ is called a Lie triple system if for any three elements $f,g,h\in \mathfrak{m}$ we have:
\begin{displaymath}
[f, [g,h]]\in \mathfrak{m}\, .
\end{displaymath}
\end{definition}

\begin{proposition}
Let $M$ be a Kac-Moody symmetric space such that $M=\widehat{MG}^{\sigma}/Fix(\rho)$, where $Fix(\rho)$ is the isotropy group of a point $p\in M$. $\rho$ induces the decomposition: $\widehat{MG}^{\sigma}=\mathcal{P}\oplus \mathcal{K}$. Identify $T_pM\cong \mathcal{P}$. Then $\mathcal{P}$ has the structure of a Lie triple system.
\end{proposition}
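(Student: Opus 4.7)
The plan is to imitate the classical finite-dimensional argument verbatim, since the relevant brackets in $\widehat{M\mathfrak{g}}^{\sigma}$ are defined pointwise (plus a cocycle term that is itself governed by $\rho$), and the involution $d\rho$ respects the bracket.

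First I would work at the level of the Kac-Moody algebra $\widehat{M\mathfrak{g}}^{\sigma}$ rather than the group. Since $\rho$ is an isometric involution of $\widehat{MG}^{\sigma}$ fixing $p$, its differential at $e$ (after identifying $T_eG\cong \widehat{M\mathfrak{g}}^{\sigma}$ and using left translations) is an involutive automorphism $d\rho$ of the Lie algebra. The eigenspace decomposition under $d\rho$ gives exactly $\widehat{M\mathfrak{g}}^{\sigma}=\mathcal{K}\oplus\mathcal{P}$, with $\mathcal{K}$ the $+1$-eigenspace (the Lie algebra of $\mathrm{Fix}(\rho)$) and $\mathcal{P}$ the $-1$-eigenspace (identified with $T_pM$, as shown in Lemma~\ref{identificationtangentspace}).

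Next I would derive the standard bracket relations. Because $d\rho$ is a Lie algebra homomorphism, $d\rho[X,Y]=[d\rho X,d\rho Y]$ for all $X,Y\in\widehat{M\mathfrak{g}}^{\sigma}$. Feeding in the eigenvalue signs yields
\begin{equation*}
[\mathcal{K},\mathcal{K}]\subset\mathcal{K},\qquad [\mathcal{K},\mathcal{P}]\subset\mathcal{P},\qquad [\mathcal{P},\mathcal{P}]\subset\mathcal{K}.
\end{equation*}
This step is essentially formal; the only thing to check carefully is that $d\rho$ really is an automorphism of the full affine Kac-Moody bracket, including its action on $c$ and $d$. This is where the hypothesis that $\rho$ is an involution of the second kind (with $d\rho(c)=-c$, $d\rho(d)=-d$) enters, and where the cocycle $\omega$ must be shown to be invariant under $d\rho$; this invariance is already implicit in the fact that $\widehat{M\mathfrak{g}}^{\sigma}$ is well defined as an OSAKA.

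Finally, I would conclude: for any $f,g,h\in\mathcal{P}$, by the third inclusion $[g,h]\in\mathcal{K}$, and then by the second inclusion $[f,[g,h]]\in[\mathcal{P},\mathcal{K}]\subset\mathcal{P}$. Hence $\mathcal{P}$ is closed under the ternary operation $(f,g,h)\mapsto[f,[g,h]]$, which is the defining property of a Lie triple system. The main (minor) obstacle is the cocycle bookkeeping when verifying that $[\mathcal{P},\mathcal{P}]\subset\mathcal{K}$ using the Kac-Moody bracket $[f,g]_0+\omega(f,g)c$: one needs $\omega(f,g)c\in\mathcal{K}$ for $f,g\in\mathcal{P}$, which follows because $d\rho(c)=-c$ forces $\omega$ to satisfy $\omega(d\rho f,d\rho g)=-\omega(f,g)$, and on $\mathcal{P}\times\mathcal{P}$ this gives $\omega(f,g)=-\omega(f,g)\cdot(-1)^2\cdot(-1)=\omega(f,g)$, so the $c$-component lies in the $+1$-eigenspace as required. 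Once this is checked, the proposition follows word-for-word from the finite-dimensional argument in~\cite{Helgason01}, Chapter~IV.
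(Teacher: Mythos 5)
Your argument is correct and is essentially the paper's own (one-line) proof spelled out: the paper simply observes that $\mathcal{P}$ is the $-1$-eigenspace of $d\rho$ and leaves the standard eigenspace bookkeeping $[\mathcal{P},[\mathcal{P},\mathcal{P}]]\subset[\mathcal{P},\mathcal{K}]\subset\mathcal{P}$ implicit. One small correction to your cocycle check: since $d\rho(c)=-c$, the relation $\omega(d\rho f,d\rho g)=-\omega(f,g)$ applied to $f,g\in\mathcal{P}$ gives $\omega(f,g)=-\omega(f,g)$, i.e.\ $\omega$ vanishes on $\mathcal{P}\times\mathcal{P}$ (note $c$ itself lies in the $-1$-eigenspace, so a nonzero $c$-component could never land in $\mathcal{K}$) --- but this is moot, since $[\mathcal{P},\mathcal{P}]\subset\mathcal{K}$ already follows directly from $d\rho$ being an automorphism of the full Kac-Moody bracket, as you note.
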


\begin{proof}
$\mathcal{P}$ is the $-1$-eigenspace of the derivative $d\rho:\widehat{M\mathfrak{g}}^{\sigma}\longrightarrow \widehat{M\mathfrak{g}}^{\sigma}$ --- whence the result.
\end{proof}

In the finite dimensional theory of symmetric spaces one has an equivalence between Lie triple systems and totally geodesic submanifolds. We conjecture this to be true in the situation of affine Kac-Moody symmetric spaces as well. While it is easy to check that a totally geodesic subspace defines a Lie triple system, the converse is not clear.

\chapter{Geometry of Kac-Moody symmetric spaces}
\label{chap:geometry}

\section{The structure of finite dimensional flats}
\label{sect:structure of finite dim flats}
\index{flats are conjugate}
Let $M$ be a finite dimensional Riemannian symmetric space. It is well-known that all maximal flats are conjugate~\cite{Helgason01}. The proof of this result may be given in several steps: The first step consists in proving that all maximal flats containing a special point $p_0\in M$ are conjugate under the action of the isometry group. The second step generalizes this result to all flats using homogeneity. Let us describe this in a little more detail: For irreducible symmetric spaces of type $II$, that is compact Lie groups, maximal flats through the identity element correspond bijectively to maximal tori. Hence for flats containing the identity element the first step is equivalent to all maximal tori beeing conjugate; for all flats the result follows by transitivity. For symmetric spaces $G/K$ of type $I$, one has to study the decomposition $\mathfrak{g}=\mathfrak{k}\oplus \mathfrak{p}$ into the $\pm 1$-eigenspaces and prove that all maximal flats in $\mathfrak{p}$ are conjugate. For the dual symmetric spaces of non-compact type one can deduce the assertion via the isotropy representation: the isotropy representation is isomorphic for a symmetric space of compact type and its non-compact dual. Thus all maximal abelian subalgebras in $\mathfrak{p}$ are conjugate and thus via the exponential map also all maximal flats through one point $p_0$ are conjugate. For details~\cite{Helgason01}.

To understand the infinite dimensional situation we start with the investigation of flats in a Kac-Moody group of compact type $\widehat{MG}$ and its Kac-Moody algebra $\widehat{M\mathfrak{g}}$. Then we study flats in Kac-Moody symmetric spaces of type $I$. Third we turn to the Kac-Moody symmetric spaces of  non-compact type.

\subsection{Flats in Kac-Moody symmetric spaces of type \protect\boldmath$II$}

In a Kac-Moody algebra $\widehat{L}(\mathfrak{g}, \sigma)$ there are two types of maximal abelian subalgebras: infinite dimensional ones and finite dimensional ones. 

\begin{enumerate}
\item The first class consisting of infinite dimensional subalgebras --- called subalgebras of infinite type --- consists of subalgebras that are contained in $\widetilde{L}(\mathfrak{g}, \sigma)$. For the affine Kac-Moody groups of smooth loops this is proved by B.\ Popescu~\cite{Popescu06}; his argument applies as well to the situation of $\widehat{A_n\mathfrak{g}}_{\mathbb{R}}$ and to $\widehat{M\mathfrak{g}}_{\mathbb{R}}$.

\item The second class --- called subalgebras of finite type --- consists of subalgebras that are not contained in $\widetilde{L}(\mathfrak{g}, \sigma)$. Subalgebras of this type always contain an element of the form $f+c+d$, where $f\in L(\mathfrak{g}, \sigma)$. Furthermore they contain $c\mathbb{R}$. Thus a subalgebra of finite type $\widehat{\mathfrak{a}}$ can be described as $\widehat{\mathfrak{a}}=\mathfrak{a}\oplus \mathbb{R}c\oplus{\mathbb{R}}(d+f)$, where $\mathfrak{a}$ is an abelian subalgebra of $M\mathfrak{g}$. .B.\ Popescu proves  in the setting of affine Kac-Moody algebras of $C^{\infty}$-loops that $\textrm{dim}(\widehat{\mathfrak{a}})=\textrm{rank}(\mathfrak{g})+2$. 

The crucial observation in his proof is that any element $v\in \mathfrak{a}$ satisfies the following Lax equation:
\begin{displaymath}[v, d+f]=0 \Leftrightarrow v'=[v,f]\Leftrightarrow v'=[v,f]_0, \omega(v, f)=0\,.\end{displaymath}

\noindent Thus one has to find the space of solutions of this Lax equation under the condition $\omega(v, f)=0$. The proof applies as well to the setting of holomorphic loops.\index{Lax equation}  
\end{enumerate}

\begin{remark}
Those two different classes of flats reappear in different guises: a prominent example is the study of co-adjoint actions of loop resp.\ affine Kac-Moody groups. The orbits of the co-adjoint action of loop groups $L(G, \sigma)$ have infinite codimension, while the orbits of affine Kac-Moody groups $\widehat{L}(G, \sigma)$ have finite codimension provided one studies an orbit such that $r_d\not=0$. Identifying the dual loop algebra $L^*(\mathfrak{g}, \sigma)$ with the $\{r_d=0\}$-section of $\widehat{L}^*(\mathfrak{g}, \sigma)$, the loop group action coincides with the action of the affine group.   
For further details and the definitions we omitted see the recent reference~\cite{Khesin09}, remark 1.20.
\end{remark}

From our point of view, the important type of flats are the finite dimensional ones.
By definition an abelian subalgebra $\widehat{\mathfrak{a}}\subset \widehat{M\mathfrak{g}}$ of finite type contains an element $\widetilde{u}+d$ with $\widetilde{u}\subset \widetilde{M\mathfrak{g}}$.

\noindent Thus every finite dimensional flat intersects the sphere of radius $-l^2, l\in \mathbb{R}\backslash \{0\}$. We will now prove the converse, namely that any element in a sphere of radius $-l^2$ in an affine Kac-Moody algebra is contained in some finite dimensional flat.

To this end we identify the two sheets of the intersection of the  sphere of radius $l$ with the planes $r_d=\pm r\not=0$ , called $H_{l,r}$, with two copies of the tame Fr\'echet space $M\mathfrak{g}$. Under this identification the adjoint action restricted to $H_{l,r}$ coincides with the gauge action. Hence, we can use the analytic results of section~\ref{PolaractionsontameFrechetspaces}.

As consequence of theorem~\ref{polaractiononmg} we get:

\begin{lemma}\label{finiteflatscoverunitsphere}
Let $H_{l,r}:=\{u \in \widehat{M\mathfrak{g}}| \|u\|=l, r_{d}=r\not=0\}$. For every $u\in H_{l,r}$ there is an abelian subalgebra $\widehat{\mathfrak{a}}$ containing $u$. This abelian subalgebra is $\widehat{\mathfrak{a}}=\{\mathfrak{a}+\mathbb{R}c+\mathbb{R}(d+u)\}$ where $\mathfrak{a}= \widehat{\mathfrak{a}}\cap M{\mathfrak{g}}$ is a finite dimensional abelian subalgebra whose dimension is the rank $\mathfrak{g}$.
\end{lemma}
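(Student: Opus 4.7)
The plan is to reduce the statement to the polar action theorem~\ref{polaractiononmg}, exploiting the identification of the norm sphere--horosphere intersection $H_{l,r}$ with the loop algebra $M\mathfrak{g}$ that was foreshadowed in the preceding paragraph. Concretely, I would first use the explicit adjoint action formula from example~\ref{adjointaction}. Writing an arbitrary element of $H_{l,r}$ as $u=f+r_c c+rd$ with $f\in M\mathfrak{g}_{\mathbb{R}}$ and $r\neq 0$, the formula for $\mathrm{Ad}(g)$ gives
\begin{displaymath}
\mathrm{Ad}(g)(u)=\bigl(gfg^{-1}-r\,g'g^{-1}\bigr)+r_c'\,c+r\,d
\end{displaymath}
for some explicit $r_c'$. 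After dividing by $r$ the $M\mathfrak{g}$--component becomes $g\cdot(f/r)$ in the gauge action sense; thus the action of $MG_{\mathbb{R}}$ on $H_{l,r}$ is intertwined, via $u\mapsto f/r$, with the gauge action of $MG_{\mathbb{R}}$ on $M\mathfrak{g}_{\mathbb{R}}$ (the coordinate $r_c$ being determined by $f$ through the norm equation $\|u\|=l$, which involves $\langle f,f\rangle-2rr_c$).

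Next I would invoke theorem~\ref{polaractiononmg}: the gauge action is polar with section $\mathfrak{a}\subset\mathfrak{g}$, a (maximal) abelian subalgebra of $\mathfrak{g}$ interpreted as constant loops. Hence there exists $g\in MG_{\mathbb{R}}$ such that $\mathrm{Ad}(g)(u)=v+r_c^{0}c+rd$ with $v\in\mathfrak{a}$ a constant loop, and after rescaling by $r$ we may write $\mathrm{Ad}(g)(u)=r(d+\tilde v)$ with $\tilde v\in\mathfrak{a}+\mathbb{R}c$. For the element $u_0:=d+\tilde v$ I would then directly verify that the subspace
\begin{displaymath}
\widehat{\mathfrak{a}}_0\;:=\;\mathfrak{a}\oplus\mathbb{R}c\oplus\mathbb{R}(d+\tilde v)
\end{displaymath}
is abelian: for $v_1,v_2\in\mathfrak{a}$ one has $[v_1,v_2]_0=0$ and $\omega(v_1,v_2)=0$ (the derivative of a constant vanishes), so $[v_1,v_2]_{\widehat{M\mathfrak{g}}}=0$; the element $c$ is central; and $[d+\tilde v,v_1]=v_1'+[\tilde v,v_1]_0+\omega(\tilde v,v_1)c=0$ for the same reasons. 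Its dimension is therefore $\mathrm{rank}(\mathfrak{g})+2$, since $\mathfrak{a}$ is a Cartan subalgebra of $\mathfrak{g}$ (maximality being built into the polarity statement). Conjugating back by $\mathrm{Ad}(g^{-1})$ produces a finite--dimensional abelian subalgebra containing $u$ which, since $c$ is fixed by the adjoint action and $\mathrm{Ad}(g^{-1})(d+\tilde v)$ is a scalar multiple of $u$ modulo $\mathbb{R}c$, is of the form asserted in the lemma.

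The main subtle points I would need to pin down carefully are two: first, that the intertwining between the $\mathrm{Ad}$--action on $H_{l,r}$ and the gauge action on $M\mathfrak{g}$ is clean enough so that one really can transport the section $\mathfrak{a}$ back through the identification $u\mapsto f/r$; this is a calculation but has to be done with the correct normalisation of the two--cocycle $\omega$ so that the $c$--coordinate matches. Second, that the section $\mathfrak{a}$ given by theorem~\ref{polaractiononmg} is indeed a \emph{maximal} abelian subalgebra of $\mathfrak{g}$ and hence of dimension $\mathrm{rank}(\mathfrak{g})$, so that the resulting $\widehat{\mathfrak{a}}$ has exactly the dimension claimed. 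The genuine hypothesis $r\neq 0$ is what makes the reduction to the gauge action possible at all: if $r$ vanished the scaling argument breaks down, which is why the behaviour in the plane $\{r_d=0\}$ is different, as already remarked after theorem~\ref{polarityofadjointaction}.
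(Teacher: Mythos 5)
Your proof is correct, and it actually carries out the reduction that the paper only announces in the sentence preceding the lemma (``As consequence of theorem~\ref{polaractiononmg} we get\dots''). The paper's written proof takes a more direct, computational route: instead of first conjugating $u$ into a normal form, it characterizes the centralizer of $\widetilde u + d$ in $M\mathfrak{g}$ by observing that $[v,\widetilde u + d]=0$ is the Lax equation $v'=[v,\widetilde u]$, whose solutions are $v(t)=\mathrm{Ad}(\phi(t))v_0$ for the fundamental solution $\phi'=\widetilde u\,\phi$; the requirement that $v$ be a closed loop forces $\mathrm{Ad}(\phi(2\pi))v_0=v_0$, and mutual commutativity of two solutions reduces to $[v_0,v_0']=0$ at $t=0$, so the admissible initial values sweep out an abelian subalgebra $\mathfrak{a}\subset\mathfrak{g}$. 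The two arguments are two sides of the same coin --- the fundamental solution $\phi$ is precisely the gauge transformation that conjugates $u$ into your section of constant loops --- but the emphasis differs. Your version gets existence cheaply: polarity supplies the normal form, where the bracket computations ($v_i'=0$, $\omega(v_i,v_j)=0$) are trivial, and conjugating back costs nothing since $\mathrm{Ad}$ is an automorphism fixing $c$ and preserving $r_d$. The paper's version buys slightly more: it describes \emph{all} loops commuting with $\widetilde u+d$ (as the periodic solutions of the Lax flow), which is the form in which the statement is reused later. The two subtleties you flag are exactly the right ones, and both are discharged by the surrounding text: the intertwining of the restricted adjoint action with the gauge action is asserted in the paragraph before the lemma (and follows from the formulas of example~\ref{adjointaction}, as you compute), and the maximality of the section $\mathfrak{a}$ --- hence $\dim\mathfrak{a}=\mathrm{rank}(\mathfrak{g})$ --- is built into the proof of lemma~\ref{cinfty}, on which theorem~\ref{polaractiononmg} rests.
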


\noindent Note that this lemma is stated for Kac-Moody algebras of $C^{\infty}$-loops in~\cite{Heintze06}, in~\cite{Popescu05} and in~\cite{Popescu06}.

\begin{proof}
For the proof one has to check that an element $v\in M \mathfrak{g}$ commutes with an element of the form $\widetilde{u}+d$,  iff $[v,u]=v'$. This differential equation has the solution $v(t)=\textrm{Ad}\phi(t) v_0$ for  a solution $\phi(t)$ of the differential equation $\phi'(t)= u(t) \phi (t)$~\cite{Guest97}. 

\noindent To get closed loops we need the condition $v(2\pi)=\textrm{Ad}\phi(2\pi) v_0=v_0$. Furthermore for two solutions $v$ and $v'$ we need that $[v,v']=0$; hence we get the condition $[v(0), v'(0)]=0$ -- thus for every element $\widetilde{u}+d$ there is an abelian subalgebra $\mathfrak{a}\subset \mathfrak{g}=\textrm{Lie}(G)$, such that $v(0)\in \mathfrak{a}$ for every element $v(t)$ in the flat containing $\widetilde{u}+d$. 
\end{proof}

\begin{lemma}
\label{conjugateabeliansubalgebras}
All finite dimensional abelian subalgebras in $\widehat{M\mathfrak{g}}$ are conjugate by elements in $\widehat{MG}$.
\end{lemma}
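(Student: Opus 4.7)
The plan is to reduce the statement to two facts that are already available: the polarity of the gauge action of $MG_{\mathbb R}$ on $M\mathfrak g$ (Theorem~\ref{polaractiononmg}) and the classical conjugacy of Cartan subalgebras of the finite-dimensional Lie algebra $\mathfrak g$. Fix once and for all a Cartan subalgebra $\mathfrak a_0 \subset \mathfrak g$, viewed as constant loops inside $M\mathfrak g$, and let $\widehat{\mathfrak a}_0 := \mathfrak a_0 \oplus \mathbb R c \oplus \mathbb R d$, which is a finite-dimensional abelian subalgebra of $\widehat{M\mathfrak g}$. I will show that every finite-dimensional abelian subalgebra of $\widehat{M\mathfrak g}$ is $\widehat{MG}$-conjugate to $\widehat{\mathfrak a}_0$.

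First, let $\widehat{\mathfrak a}$ be an arbitrary finite-dimensional abelian subalgebra. By the description preceding Lemma~\ref{finiteflatscoverunitsphere}, $\widehat{\mathfrak a}$ contains $\mathbb R c$ and a vector of the form $d+u$ with $u \in M\mathfrak g$, and moreover $\widehat{\mathfrak a} = \mathfrak a \oplus \mathbb R c \oplus \mathbb R(d+u)$ with $\mathfrak a = \widehat{\mathfrak a}\cap M\mathfrak g$ finite-dimensional of dimension $\mathrm{rank}(\mathfrak g)$. The second step is to normalise the loop part of $d+u$. By Theorem~\ref{polaractiononmg} the gauge action of $MG_{\mathbb R}$ on $M\mathfrak g$ is polar with section the constant loops in $\mathfrak a_0$, so there exists $g \in MG_{\mathbb R}$ and $v_0 \in \mathfrak a_0$ such that $g \cdot u := gug^{-1} - g'g^{-1} = v_0$. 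Using the explicit Adjoint formulas of Example~\ref{adjointaction}, conjugation by $g$ (regarded as an element of $\widehat{MG}$) sends $d+u$ to
\[
\mathrm{Ad}(g)(d+u) = d + (gug^{-1} - g'g^{-1}) + \mu c = d + v_0 + \mu c
\]
for some scalar $\mu$, while $\mathrm{Ad}(g)c = c$. Thus $\mathrm{Ad}(g)\widehat{\mathfrak a}$ still has the same structure as $\widehat{\mathfrak a}$, and its $(d+\cdot)$-generator can be taken to be $d + v_0$ with $v_0 \in \mathfrak a_0$ (the extra $\mu c$ is absorbed into the $\mathbb R c$ summand).

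Third, analyse the loop part $\mathfrak a' := \mathrm{Ad}(g)\mathfrak a \subset M\mathfrak g$. Every $v \in \mathfrak a'$ commutes with $d + v_0$, which (since $v_0$ is a constant loop) is equivalent to $v' = [v,v_0]$. This is a linear ODE with constant coefficients on $\mathfrak g$, whose loop solutions are precisely the elements $v(t) = \mathrm{Ad}(\exp(tv_0))v(0)$ with $v(0)$ lying in the centraliser $\mathfrak z_{\mathfrak g}(v_0)$; imposing in addition that all such $v$ mutually commute forces the initial values $v(0)$ to lie in some abelian subalgebra $\mathfrak b \subset \mathfrak z_{\mathfrak g}(v_0) \subset \mathfrak g$. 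Because $\dim \mathfrak a' = \mathrm{rank}(\mathfrak g)$, the evaluation map $\mathfrak a' \to \mathfrak g,\ v \mapsto v(0)$ is injective with image a Cartan subalgebra $\mathfrak b$ of $\mathfrak g$ containing $v_0$. By the classical conjugacy of Cartan subalgebras in the finite-dimensional compact Lie algebra $\mathfrak g$, there exists $h \in G \subset MG_{\mathbb R}$ (constant loop) with $\mathrm{Ad}(h)\mathfrak b = \mathfrak a_0$. Since $h$ is constant, $h'h^{-1} = 0$ and the Adjoint formulas reduce to pointwise conjugation; moreover pointwise conjugation by $h$ commutes with the flow $\exp(tv_0)$ once $v_0$ is also moved into $\mathfrak a_0$. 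Hence $\mathrm{Ad}(h)\mathrm{Ad}(g)\widehat{\mathfrak a}$ is contained in, and by dimension count equals, $\widehat{\mathfrak a}_0$.

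The main obstacle in this argument is the bookkeeping in Step~2: one must check that applying the polar-slice result, which a priori only conjugates the loop $u$ to a constant loop via the gauge action, translates correctly into $\widehat{MG}$-conjugation of the element $d+u$, with the central correction term landing harmlessly in $\mathbb R c \subset \widehat{\mathfrak a}$. This is exactly what the formula for $\mathrm{Ad}(g)d$ in Example~\ref{adjointaction} delivers, so the reduction from $\widehat{MG}$-conjugacy of flats to polarity of the gauge action plus Cartan conjugacy in $\mathfrak g$ goes through.
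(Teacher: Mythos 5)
Your overall strategy is the same as the paper's: first use the polarity of the gauge action (Theorem~\ref{polaractiononmg}) together with the Adjoint formulas of Example~\ref{adjointaction} to conjugate the flat to one of the form $\mathfrak a'\oplus\mathbb R c\oplus\mathbb R(d+v_0)$ with $v_0$ a constant loop, and then show that all flats with a fixed normalized $d$-component are conjugate. Your Steps 1 and 2 are fine. The problem is in Step 3, and it sits exactly at the point the paper flags as its own ``second step.''

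You claim that the loop solutions of $v'=[v,v_0]$ are precisely $v(t)=\mathrm{Ad}(\exp(tv_0))v(0)$ with $v(0)\in\mathfrak z_{\mathfrak g}(v_0)$. That is not the correct solution space: the general solution is $v(t)=\mathrm{Ad}(\exp(-tv_0))v(0)$ with $v(0)$ arbitrary, and the condition for $v$ to be a well-defined loop is only that $v(0)$ be fixed by the monodromy $\mathrm{Ad}(\exp(2\pi v_0))$, i.e.\ $v(0)\in\mathfrak z_{\mathfrak g}(\exp(2\pi v_0))$. This fixed-point set contains $\mathfrak z_{\mathfrak g}(v_0)$ but is strictly larger whenever some root takes a nonzero value in $i\mathbb Z$ on $v_0$ (equivalently, $v_0$ lies on an affine wall that is not a linear wall). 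Concretely, for $\mathfrak g=\mathfrak{su}(2)$ and $v_0=\tfrac12\,\mathrm{diag}(i,-i)$ one has $\exp(2\pi v_0)=-I$ central, so every $v(0)\in\mathfrak g$ gives a closed-loop solution; taking $w_0$ with $[w_0,v_0]\neq 0$, the span of $t\mapsto\mathrm{Ad}(\exp(-tv_0))w_0$ together with $\mathbb R c\oplus\mathbb R(d+v_0)$ is a maximal finite-dimensional abelian subalgebra whose loop part is \emph{not} constant and whose ``initial-value Cartan'' $\mathfrak b=\mathbb R w_0$ does \emph{not} contain $v_0$. For such a flat your final step collapses: conjugation by a constant $h\in G$ with $\mathrm{Ad}(h)\mathfrak b=\mathfrak a_0$ moves $v_0$ out of $\mathfrak a_0$ and does not make the loop part constant, so you do not land in $\widehat{\mathfrak a}_0$. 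What is actually needed here is a further, generally non-constant, gauge transformation lying in the stabilizer of $d+v_0$ (a finite-dimensional compact group of full rank when $v_0$ is a special point of the alcove), acting transitively on the Cartan subalgebras of that stabilizer's Lie algebra; this is the content of the paper's second step, which it attributes to Popescu's argument together with the holomorphy of solutions of holomorphic ODEs. Your deduction ``$\mathfrak b\subset\mathfrak z_{\mathfrak g}(v_0)$, hence $v_0\in\mathfrak b$ by maximality'' is valid only under the incorrect characterization of the solution space, so the proof as written covers only the case of regular $v_0$ and leaves the singular case (which genuinely occurs) unproved.
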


\begin{proof}
B.\ Popescu proves a similar result for the real Kac-Moody algebras $\widehat{L^{\infty}\mathfrak{g}}$ of compact type, constructed with $C^{\infty}$ loops~\cite{Popescu06}). Embedding 
$$\widehat{M\mathfrak{g}}\hookrightarrow \widehat{L^{\infty}\mathfrak{g}}\,,$$
we find that all finite abelian subalgebras in $\widehat{M\mathfrak{g}}$ are conjugate by elements in $\widehat{L^{\infty}G}$; we have to check that the conjugating element can be chosen to be in $\widehat{MG}$. 
This is done  --- following the blueprint of Popescu's proof --- in two steps:
\begin{enumerate}
\item First, we have to check that a flat $\widehat{\mathfrak{a}}$ is conjugate to a flag $\widehat{\mathfrak{b}}$, such that $\widehat{\mathfrak{b}}= \mathfrak{b}\oplus \mathbb{R}c \oplus \mathbb{R}(d+x)$ with $x$ constant by an element in $\widehat{MG}$.
This is a direct consequence of lemma~\ref{polaractiononmg}, stating that the gauge action of $MG$ on $M\mathfrak{g}$ is polar.
\item Second, for a given $x$ we have to check that all flats of the form of $\widehat{\mathfrak{b}}$ are conjugate. This relies on the solution of a differential equation being holomorphic iff the equation is holomorphic.\qedhere
\end{enumerate} 
\end{proof}

\begin{corollary}
Every Cartan subalgebra contains the $1$\ndash dimensional subspace $\mathbb{R}c$. Hence the residuum of $c$ is infinite dimensional.  
\end{corollary}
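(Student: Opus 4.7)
The plan is to deduce both assertions directly from the bracket relations of Definition \ref{geometricaffinekacmoodyalgebra} together with the structural dichotomy of Cartan subalgebras (finite vs.\ infinite type) described right before the corollary.

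First I would observe that $c$ is central in $\widehat{M\mathfrak{g}}$. Indeed, unpacking the defining bracket $[c,c]=[c,d]=[c,f]=0$ for all $f\in M\mathfrak{g}^{\sigma}$ shows $c \in Z(\widehat{M\mathfrak{g}})$. A standard general-Lie-algebra remark then applies: if $\mathfrak{h} \subset \widehat{M\mathfrak{g}}$ is a maximal abelian subalgebra and $z \in Z(\widehat{M\mathfrak{g}})$ with $z \notin \mathfrak{h}$, then $\mathfrak{h} + \mathbb{R}z$ would be a strictly larger abelian subalgebra, contradicting maximality. Applying this to $z=c$ gives $\mathbb{R}c \subset \mathfrak{h}$ for every Cartan subalgebra $\mathfrak{h}$. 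This settles the first sentence of the corollary.

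For the second sentence, I read ``residuum of $c$'' as the complement (in any Cartan subalgebra containing $c$) of the $1$-dimensional subspace $\mathbb{R}c$, equivalently the centralizer of $c$ modulo $\mathbb{R}c$ intersected with a maximal abelian subalgebra. Invoking the classification of Cartan subalgebras recalled immediately before the corollary, there exist Cartan subalgebras of \emph{infinite type}, namely maximal abelian subalgebras contained in $\widetilde{L}(\mathfrak{g},\sigma) = L(\mathfrak{g},\sigma)\oplus \mathbb{R}c$ (as established by B.\ Popescu's argument, whose proof applies equally to the holomorphic setting $\widehat{M\mathfrak{g}}$). Any such $\mathfrak{h}_{\infty}$ contains $\mathbb{R}c$ by the first part, and since $\mathfrak{h}_{\infty}$ is infinite dimensional while $\mathbb{R}c$ has dimension one, the quotient $\mathfrak{h}_{\infty}/\mathbb{R}c$ is infinite dimensional. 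Thus the residuum of $c$ is infinite dimensional.

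The only subtle point is making precise the reading of ``residuum''; once that is fixed, no analytic machinery is needed: both statements reduce to the centrality of $c$ combined with the already-established existence of infinite-dimensional Cartan subalgebras. The ``hard work'' has already been done in Lemma \ref{finiteflatscoverunitsphere} and Lemma \ref{conjugateabeliansubalgebras} and in Popescu's classification; the corollary is a short structural consequence.
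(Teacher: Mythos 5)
Your first paragraph is correct, and it is essentially the intended argument: the defining brackets of Definition~\ref{geometricaffinekacmoodyalgebra} make $c$ central, and a maximal abelian subalgebra omitting a central element could be enlarged by adjoining it, contradicting maximality. (The paper arrives at the same containment implicitly: the standard finite-type flat $\mathfrak{a}\oplus\mathbb{R}c\oplus\mathbb{R}(d+f)$ visibly contains $\mathbb{R}c$, $\mathrm{Ad}(x)c=c$ for every $x$, and Lemma~\ref{conjugateabeliansubalgebras} transports the containment to every other finite-type flat. Your centrality argument is shorter and covers the infinite-type subalgebras as well.)

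The second sentence, however, rests on a misreading of ``residuum.'' It is not the complement of $\mathbb{R}c$ inside a single Cartan subalgebra; it is the building-theoretic residue, namely the family of all maximal flats containing the singular direction $\mathbb{R}c$. Compare the finite dimensional discussion in the chapter on the geometry of type $II$ spaces, where the residuum of the singular hyperplane $H_{12}$ in $SU(n)$ is the space $\mathbb{R}P(2)$ of maximal flats containing it, and the later remark that ``as the local building block around $c$ we find an infinite dimensional projective space.'' Under your reading the word ``Hence'' would be broken: the corollary concerns the finite-type Cartan subalgebras classified just above, which have dimension $\mathrm{rank}(\mathfrak{g})+2$, so the quotient $\widehat{\mathfrak{a}}/\mathbb{R}c$ is finite dimensional, and you are forced to switch to subalgebras of infinite type, which is not what the first sentence is about. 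The intended deduction is: since \emph{every} Cartan subalgebra contains $\mathbb{R}c$, the residuum of $c$ consists of \emph{all} finite-type Cartan subalgebras; by Lemma~\ref{conjugateabeliansubalgebras} these form a single orbit of the infinite dimensional group $\widehat{MG}$, with stabilizer the normalizer of a fixed flat, and this orbit is an infinite dimensional family. That is the statement the corollary is recording.
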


\begin{definition}[Flat of exponential type]
\index{flat of exponential type}
A flat in $\widehat{\mathfrak{a}}\subset\widehat{MG}$ such that $e\in \widehat{\mathfrak{a}}$ is called of exponential type iff it is in the image of the exponential map. 
\end{definition}

\begin{theorem}\label{finiteflatsconjugate}
All finite dimensional flats $\widehat{\mathfrak{a}}\subset \widehat{MG}$ of exponential type are conjugate. 
\end{theorem}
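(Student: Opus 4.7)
The plan is to reduce the statement to its infinitesimal counterpart, Lemma~\ref{conjugateabeliansubalgebras}, via the exponential map and the equivariance identity $g\exp(X)g^{-1}=\exp(\mathrm{Ad}(g)X)$. Since left translation in $\widehat{MG}$ is an isometry of the bi-invariant metric of the type $II$ Kac-Moody symmetric space, and every flat of exponential type passes through $e$ (by the very definition used here), homogeneity already places us in the situation where all flats under consideration contain the identity element. This reduction is the easy but crucial first step.

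Next, given a finite dimensional flat $\widehat{A}\subset\widehat{MG}$ of exponential type with $e\in\widehat{A}$, I would associate to it a finite dimensional abelian subalgebra $\widehat{\mathfrak{a}}\subset\widehat{M\mathfrak{g}}$ whose exponential image is $\widehat{A}$. Concretely, take $\widehat{\mathfrak{a}}$ to be the tangent space to a smooth local section of $\exp$ over $\widehat{A}$ at $0$; equivalently, pick any curve $\gamma\subset\widehat{A}$ through $e$ and note that $\dot\gamma(0)\in T_e\widehat{MG}=\widehat{M\mathfrak{g}}$. One checks: (i) because $\widehat{A}$ is flat and totally geodesic in the bi-invariant metric, and because geodesics of a Lie group with bi-invariant metric coincide with one-parameter subgroups, every such $\dot\gamma(0)$ generates a one-parameter subgroup contained in $\widehat{A}$; (ii) the collection $\widehat{\mathfrak{a}}$ of such initial vectors is a linear subspace of $\widehat{M\mathfrak{g}}$; (iii) $[X,Y]=0$ for all $X,Y\in\widehat{\mathfrak{a}}$, since otherwise the curvature formula $R(X,Y)X=\tfrac14[[X,Y],X]$ of Theorem~\ref{levicivita} together with the Jacobi equation would force curvature along $\widehat{A}$, contradicting flatness; (iv) because $\widehat{A}$ is finite dimensional, so is $\widehat{\mathfrak{a}}$. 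Finally, $\exp(\widehat{\mathfrak{a}})=\widehat{A}$ because the abelianness of $\widehat{\mathfrak{a}}$ makes $\exp$ a group homomorphism on it (formal BCH collapses to $\exp(X)\exp(Y)=\exp(X+Y)$), and $\widehat{A}$, being of exponential type, is covered by these one-parameter subgroups.

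With this correspondence in hand, let $\widehat{A}_1,\widehat{A}_2$ be two such flats and let $\widehat{\mathfrak{a}}_1,\widehat{\mathfrak{a}}_2$ be the associated finite dimensional abelian subalgebras. By Lemma~\ref{conjugateabeliansubalgebras} there exists $g\in\widehat{MG}$ with $\mathrm{Ad}(g)\widehat{\mathfrak{a}}_1=\widehat{\mathfrak{a}}_2$. Applying $\exp$ and using the equivariance
\begin{displaymath}
g\,\exp(X)\,g^{-1}=\exp(\mathrm{Ad}(g)X),\qquad X\in\widehat{M\mathfrak{g}},
\end{displaymath}
one obtains $g\widehat{A}_1 g^{-1}=\exp(\mathrm{Ad}(g)\widehat{\mathfrak{a}}_1)=\exp(\widehat{\mathfrak{a}}_2)=\widehat{A}_2$, as required.

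The main obstacle is step two, the faithful passage from $\widehat{A}$ to $\widehat{\mathfrak{a}}$: since $\Mexp$ is not a local diffeomorphism for $\widehat{MG}$ (Theorem~\ref{expnodiffeomorphism}), one cannot simply invoke an inverse function theorem to define $\widehat{\mathfrak{a}}$ as $\exp^{-1}(\widehat{A})$ near $0$. The care lies in verifying that the tangential procedure above really produces a \emph{linear} abelian subalgebra of the correct dimension and that its exponential image recovers \emph{all} of $\widehat{A}$; this is where the hypothesis ``of exponential type'' is used decisively, together with the fact that geodesics in the bi-invariant metric are precisely one-parameter subgroups, so that every point of $\widehat{A}$ lies on such a subgroup tangent to $\widehat{\mathfrak{a}}$ at $e$.
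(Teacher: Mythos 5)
Your overall route is exactly the paper's: Theorem~\ref{finiteflatsconjugate} is stated there as ``a direct consequence of lemma~\ref{conjugateabeliansubalgebras}'', and your proposal simply makes explicit the two implicit steps (associate to each flat of exponential type a finite dimensional abelian subalgebra, then transport the conjugating element through $\exp$ via $g\exp(X)g^{-1}=\exp(\mathrm{Ad}(g)X)$). That skeleton is correct and is what the author intends.

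However, your step (iii) has a genuine gap. Flatness of $\widehat{A}$ together with $R(X,Y)X=\tfrac14[[X,Y],X]$ gives, after pairing with $Y$ and using $\mathrm{ad}$-invariance, $\langle [X,Y],[X,Y]\rangle=0$. In a Riemannian setting this would force $[X,Y]=0$, but here the metric is Lorentzian: $[X,Y]$ lies in the derived algebra $M\mathfrak{g}\oplus\mathbb{F}c$, the loop part is spacelike, and $c$ is lightlike, so the conclusion is only that $[X,Y]\in\mathbb{F}c$, i.e.\ that the bracket is \emph{central}, not that it vanishes. (This is precisely the Heisenberg phenomenon in $\widetilde{MG}$ noted at the end of section~\ref{KMSSofEuclideantype}.) So metric flatness alone does not produce an abelian subalgebra; you either have to rule out a nonzero $c$-component of $[X,Y]$ by a separate argument using the cocycle $\omega$, or, as the paper implicitly does, take ``flat'' to \emph{mean} the exponential of an abelian subalgebra, in which case your step two is definitional rather than something to be derived from curvature. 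A second, smaller gap: ``$\widehat{A}$ lies in the image of $\Mexp$'' says each point is $\exp(X)$ for \emph{some} $X\in\widehat{M\mathfrak{g}}$, not for some $X$ in your tangent space $\widehat{\mathfrak{a}}$; to get $\exp(\widehat{\mathfrak{a}})=\widehat{A}$ you still need that $\widehat{A}$ is geodesically star-shaped about $e$ (e.g.\ via completeness and finite dimensionality of $\widehat{A}$), which you assert but do not justify.
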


\begin{proof}
This is a direct consequence of lemma~\ref{conjugateabeliansubalgebras}.
\end{proof}

Equivalent theorems hold for the Kac-Moody algebras $\widehat{A_nG}$ and their associated Kac-Moody groups $\widehat{A_nG}$:

\begin{lemma}
\label{conjugateabeliansubalgebrasinA_ng}
All finite dimensional abelian subalgebras in $\widehat{A_n\mathfrak{g}}$ are conjugate by elements in $\widehat{A_nG}$.
\end{lemma}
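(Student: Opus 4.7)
The plan is to mirror the proof of Lemma~\ref{conjugateabeliansubalgebras} step-by-step, replacing $\mathbb{C}^*$ by the annulus $A_n$ throughout and using that Theorem~\ref{polaractiononmg} was stated in the uniform setting $X \in \{A_n,\mathbb{C}^*\}$. First I would record the structural normal form for a finite dimensional abelian subalgebra $\widehat{\mathfrak{a}} \subset \widehat{A_n\mathfrak{g}}$: exactly as in the proof of Lemma~\ref{finiteflatscoverunitsphere}, any such $\widehat{\mathfrak{a}}$ can (after a scaling) be written
\[
\widehat{\mathfrak{a}} \;=\; \mathfrak{a}\oplus \mathbb{R}c \oplus \mathbb{R}(d+u)
\]
for some $u \in A_n\mathfrak{g}$ and $\mathfrak{a} \subset A_n\mathfrak{g}$ a finite dimensional abelian subspace whose elements $v$ satisfy the Lax equation $v' = [v,u]$ together with $\omega(v,u)=0$.

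Next I would perform the two-step conjugation familiar from the $\mathbb{C}^*$ case. Step one: use Theorem~\ref{polaractiononmg} with $X = A_n$, which states that the gauge action of $A_n G_{\mathbb{R}}^{\sigma}$ on $A_n\mathfrak{g}^{\sigma}$ is polar with a section consisting of constant loops $\mathfrak{a}_0 \subset \mathfrak{g}$. Hence there exists $g \in A_nG$ such that the gauge transform $g \cdot u = gug^{-1} - g'g^{-1}$ is a constant loop $x \in \mathfrak{g}$. Extending this $g$ to an element of $\widehat{A_nG}$ in the standard way (cf.\ the adjoint-action formulae of Example~\ref{adjointaction}), we conjugate $\widehat{\mathfrak{a}}$ to a flat of the form $\mathfrak{b}\oplus \mathbb{R}c \oplus \mathbb{R}(d+x)$ with $x$ constant.

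Step two: for fixed constant $x$, any two such ``standardized'' flats are conjugate by an element of $A_nG$. Indeed, the Lax equation $v' = [v,x]$ has solutions $v(t) = \operatorname{Ad}(\phi(t))v_0$ where $\phi$ solves $\phi'(t) = x\,\phi(t)$, i.e.\ $\phi(t) = \exp(tx)$, which is entire and in particular holomorphic on (a neighbourhood of) $A_n$; the twisting/periodicity compatibility with $\sigma$ is the same condition as in the proof of Lemma~\ref{cinfty}. The space of initial conditions $v_0 \in \mathfrak{g}$ generating $\mathfrak{b}$ is then a finite dimensional abelian subspace of the centraliser of $x$ in $\mathfrak{g}$, and any two such subspaces are conjugate under the (finite dimensional) centraliser subgroup, which embeds into $A_nG$ as constant loops. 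Combining the two steps proves the lemma.

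The main obstacle — the only point where the $A_n$ argument could in principle diverge from the $\mathbb{C}^*$ argument of Lemma~\ref{conjugateabeliansubalgebras} — is ensuring that the gauge transformation obtained from polarity genuinely lies in $A_nG$ and not merely in some larger completion (e.g.\ $L^\infty G$). This is precisely what Theorem~\ref{polaractiononmg} delivers: its proof via the explicit formula $g(t) = \exp(tX)g(0)h(t)^{-1}$ preserves holomorphy on (neighbourhoods of) $A_n$, since $\exp(tX)$ is entire and $h$ solves an ODE whose coefficients are holomorphic on $A_n$, so $h$ is holomorphic on the simply connected cover of $A_n$, and the monodromy check performed in the proof of Lemma~\ref{cinfty} shows the result descends to $A_n$. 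Once this regularity point is in place, the argument is purely algebraic and formally identical to the $\mathbb{C}^*$ case.
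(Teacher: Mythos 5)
Your proposal is correct and follows exactly the route the paper intends: the paper gives no separate argument for this lemma, merely asserting that ``equivalent theorems hold'' for $A_n$, i.e.\ that the two-step proof of Lemma~\ref{conjugateabeliansubalgebras} (gauge the $d$-component to a constant via the polarity of Theorem~\ref{polaractiononmg} with $X=A_n$, then conjugate standardized flats using holomorphy of solutions of the Lax equation) carries over verbatim. Your write-up simply fills in the details of that carry-over, including the one genuinely $A_n$-specific point (that the conjugating element stays in $A_nG$), so it matches the paper's approach.
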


\begin{theorem}
\label{finiteflatsconjugateinA_nG}
All finite dimensional flats $\widehat{\mathfrak{a}}\subset \widehat{A_nG}$  are conjugate. 
\end{theorem}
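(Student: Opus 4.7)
The plan is to reduce this Banach--group statement to the purely algebraic Lemma~\ref{conjugateabeliansubalgebrasinA_ng}, exactly in the manner that Theorem~\ref{finiteflatsconjugate} was reduced to Lemma~\ref{conjugateabeliansubalgebras}, but to observe that the qualifier ``of exponential type'' becomes redundant in the Banach setting. By Lemma~\ref{banachandfrechetstructureonkacmoodyalgebras}, $\widehat{A_n\mathfrak{g}}$ is a Banach Lie algebra and $\widehat{A_nG}$ is a Banach Lie group; hence by the classical Banach inverse function theorem the exponential map furnishes a local diffeomorphism $\widehat{A_n\exp}:U\longrightarrow V$ between open neighbourhoods of $0\in\widehat{A_n\mathfrak{g}}$ and $e\in\widehat{A_nG}$. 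This is the crucial structural difference to the tame Fr\'echet case, where the failure of $\Mexp$ to be a local diffeomorphism (Theorem~\ref{expnodiffeomorphism}) forced the restriction to flats of exponential type.

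First I would fix a finite-dimensional flat $\widehat{\mathfrak{A}}\subset\widehat{A_nG}$. By homogeneity under left translation (which is an isometry of the bi-invariant Lorentz metric), we may assume $e\in\widehat{\mathfrak{A}}$. Its tangent space $\widehat{\mathfrak{a}}:=T_e\widehat{\mathfrak{A}}$ is then a finite-dimensional abelian subalgebra of $\widehat{A_n\mathfrak{g}}$. Since $\widehat{\mathfrak{a}}$ is finite-dimensional and abelian, the restriction of $\widehat{A_n\exp}$ to $\widehat{\mathfrak{a}}$ agrees with the exponential map of the finite-dimensional abelian Lie group it generates, which is a quotient of $\widehat{\mathfrak{a}}$ by a discrete lattice. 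In particular $\widehat{A_n\exp}(\widehat{\mathfrak{a}})$ is connected abelian, and $\widehat{\mathfrak{A}}$ coincides with its identity component. Conversely every finite-dimensional abelian $\widehat{\mathfrak{a}}\subset\widehat{A_n\mathfrak{g}}$ of the form described in the Banach analogue of Lemma~\ref{finiteflatscoverunitsphere} exponentiates to such a flat.

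Second, given two finite-dimensional flats $\widehat{\mathfrak{A}}_1,\widehat{\mathfrak{A}}_2\subset\widehat{A_nG}$ through $e$, Lemma~\ref{conjugateabeliansubalgebrasinA_ng} supplies an element $g\in\widehat{A_nG}$ with $\mathrm{Ad}(g)\widehat{\mathfrak{a}}_1=\widehat{\mathfrak{a}}_2$. The universal identity
\begin{displaymath}
g\,\widehat{A_n\exp}(X)\,g^{-1}\;=\;\widehat{A_n\exp}\bigl(\mathrm{Ad}(g)X\bigr),\qquad X\in\widehat{A_n\mathfrak{g}},
\end{displaymath}
together with the description of $\widehat{\mathfrak{A}}_i$ as the identity component of $\widehat{A_n\exp}(\widehat{\mathfrak{a}}_i)$ from the previous step, then yields $g\,\widehat{\mathfrak{A}}_1\,g^{-1}=\widehat{\mathfrak{A}}_2$. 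Composing with left translations disposes of the assumption that both flats pass through $e$.

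The main obstacle is the first step: one must justify that, in the Banach setting, \emph{every} finite-dimensional flat is automatically of exponential type, so that the qualifier appearing in Theorem~\ref{finiteflatsconjugate} can indeed be dropped. The argument above reduces this to the fact that $\widehat{A_n\exp}$ restricted to a finite-dimensional abelian subalgebra is the exponential of a finite-dimensional abelian Banach Lie subgroup -- a statement that is classical once one knows that $\widehat{A_nG}$ is genuinely Banach. All analytic input (solvability on $A_n$ of the Lax equation $v'=[v,f]$ with holomorphic coefficients, and the polarity of the gauge action of $A_nG_{\mathbb R}^{\sigma}$ on $A_n\mathfrak{g}^{\sigma}$ furnished by Theorem~\ref{polaractiononmg}) is already available on the compact annulus $A_n$, so no further Nash--Moser type machinery is needed.
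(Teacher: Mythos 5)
Your proposal is correct and follows essentially the same route as the paper: reduce to the conjugacy of finite dimensional abelian subalgebras (Lemma~\ref{conjugateabeliansubalgebrasinA_ng}) exactly as Theorem~\ref{finiteflatsconjugate} reduces to Lemma~\ref{conjugateabeliansubalgebras}, and then observe that because $\widehat{A_n\exp}$ is a local diffeomorphism in the Banach setting, every flat through $e$ is automatically of exponential type, so that hypothesis can be dropped. The paper states this in one line immediately after the theorem; your write-up merely makes the left-translation reduction and the $\mathrm{Ad}$-equivariance of the exponential map explicit.
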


As the exponential map $\widehat{A_n\exp}:\widehat{A_n\mathfrak{g}}\longrightarrow \widehat{A_nG} $ is  a local diffeomorphism, every flat containing $e$ is of exponential type. Thus we can omit this condition.

Study now the $ILB$-system: $\lim \widehat{A_{n}G} =\widehat{MG}$. As $\widehat{MG} \subset \widehat{A_nG}$ for each $n\in \mathbb{N}$, any flat in $\widehat{MG}$ is contained in a flat in $\widehat{A_{n}G}$. 

\noindent This observation contains the result:

\begin{theorem}
Any two finite flats in $\widehat{MG}$ are conjugate by elements in $\widehat{A_nG}$ for every $n\in \mathbb{N}$.
\end{theorem}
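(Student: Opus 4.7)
The plan is to reduce the statement immediately to Theorem~\ref{finiteflatsconjugateinA_nG} by exploiting the $ILB$-tower structure $\lim \widehat{A_nG} = \widehat{MG}$ and the inclusions $\widehat{MG} \subset \widehat{A_nG}$ noted in the paragraph just above the statement. Fix two finite-dimensional flats $\widehat{F}_1, \widehat{F}_2 \subset \widehat{MG}$ and, for each $n \in \mathbb{N}$, regard them via the inclusion $\iota_n: \widehat{MG} \hookrightarrow \widehat{A_nG}$.

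First I would verify that $\iota_n(\widehat{F}_1)$ and $\iota_n(\widehat{F}_2)$ are finite-dimensional flats in $\widehat{A_nG}$ in the sense of Theorem~\ref{finiteflatsconjugateinA_nG}. Abelianness is automatic, since $\iota_n$ is a group homomorphism. Finite-dimensionality follows from the fact that each $\widehat{F}_i$ arises by exponentiation from a finite-dimensional abelian subalgebra $\widehat{\mathfrak{a}}_i \subset \widehat{M\mathfrak{g}}$, and the inclusion of algebras $\widehat{M\mathfrak{g}} \subset \widehat{A_n\mathfrak{g}}$ is injective, so $\iota_n(\widehat{\mathfrak{a}}_i)$ is a finite-dimensional abelian subalgebra of $\widehat{A_n\mathfrak{g}}$ of the same dimension; by the naturality $\iota_n \circ \Mexp = \widehat{A_n\exp} \circ \iota_n$ on the algebra level, its exponential image is exactly $\iota_n(\widehat{F}_i)$, which is therefore a finite-dimensional flat in $\widehat{A_nG}$.

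Second I would invoke Theorem~\ref{finiteflatsconjugateinA_nG}, which asserts that any two finite-dimensional flats in $\widehat{A_nG}$ are conjugate by an element of $\widehat{A_nG}$. Applied to $\iota_n(\widehat{F}_1)$ and $\iota_n(\widehat{F}_2)$ this yields an element $g_n \in \widehat{A_nG}$ with $g_n \iota_n(\widehat{F}_1) g_n^{-1} = \iota_n(\widehat{F}_2)$. Since $n$ was arbitrary, this is the desired conclusion.

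The statement is therefore almost a corollary; the only real content is the compatibility check of step one. The genuinely interesting point — which is why the theorem is stated separately — is that the conjugating element $g_n$ need not lie in $\widehat{MG}$ itself, and indeed the question whether one can always find such a $g_\infty \in \widehat{MG}$ is the more delicate problem that the $ILB$-framework does not immediately resolve; it would require either an a priori bound (uniform in $n$) on a representative $g_n$ or a limiting argument in the tame Fréchet topology, and is not addressed by the present theorem.
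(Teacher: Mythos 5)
Your proposal is correct and follows essentially the same route as the paper: the paper likewise notes that $\widehat{MG}\subset\widehat{A_nG}$ for each $n$, so that any finite flat of $\widehat{MG}$ sits inside $\widehat{A_nG}$ (as a flat, or inside one), and then cites Theorem~\ref{finiteflatsconjugateinA_nG} to conjugate the two images by an element of $\widehat{A_nG}$. Your additional compatibility check in step one and your closing remark that the conjugating element need not lie in $\widehat{MG}$ are both consistent with, and slightly more explicit than, the paper's one-line argument.
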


\subsection{Flats in Kac-Moody symmetric spaces of type \protect\boldmath$I$}

Let $M$ be an affine Kac-Moody symmetric space of compact type and
let $\widehat{L}(\mathfrak{g}, \sigma)=\mathcal{K}\oplus \mathcal{P}$ be the decomposition of its associated Kac-Moody algebra into the $\pm 1$-eigenspaces of the involution defining $M$.

\begin{lemma}
\label{subalgebrasinp}
All subalgebras of finite type contained in $\mathcal{P}$ are conjugate. 
\end{lemma}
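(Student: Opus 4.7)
The plan is to reduce the statement, via the structural description of subalgebras of finite type already established in the type~$II$ discussion, to the polar action theorem~\ref{polaractiononp} for the gauge action of $K := \mathrm{Fix}(\rho)$ on $\mathcal{P}$. Because $\rho$ is an involution of the second kind, we have $\rho(c)=-c$ and (after a normalization as in section~\ref{theadjointaction}) $\rho(d)=-d$, so both $c$ and $d$ lie in $\mathcal{P}$. Consequently any subalgebra of finite type $\widehat{\mathfrak{a}} \subset \mathcal{P}$ must take the form
\begin{displaymath}
\widehat{\mathfrak{a}} \;=\; \mathfrak{a}\oplus \mathbb{R}c \oplus \mathbb{R}(d+f),
\end{displaymath}
with $f\in L(\mathfrak{g},\sigma)\cap \mathcal{P}$ and $\mathfrak{a}\subset L(\mathfrak{g},\sigma)\cap \mathcal{P}$ a finite dimensional abelian subalgebra whose dimension equals the rank of $\mathfrak{g}^\sigma$, exactly as in lemma~\ref{finiteflatscoverunitsphere}. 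Moreover, $\mathfrak{a}$ is uniquely determined by the element $d+f$ as the solution space, inside $\mathcal{P}$, of the Lax equation $v' = [v,f]$ with the closing condition.

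First, I would record that if two subalgebras of finite type $\widehat{\mathfrak{a}}_1, \widehat{\mathfrak{a}}_2 \subset \mathcal{P}$ are conjugate under some $k\in K$ at the level of their distinguished vectors $d+f_1, d+f_2$, then the whole algebras are conjugate: this is because the Adjoint action of $k$ preserves $\mathcal{P}$, preserves $d$ (up to an element of $\mathcal{P}\cap L(\mathfrak{g},\sigma)$ that can be absorbed into $f$), and commutes with taking centralizers, so it sends the centralizer of $d+f_1$ in $\mathcal{P}$ to the centralizer of $d+f_2$ in $\mathcal{P}$. Hence the problem is reduced to conjugating the affine elements $d+f_1$ and $d+f_2$ by an element of $K$.

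Second, one invokes the fact (recorded in section~\ref{theadjointaction}, compare example~\ref{adjointaction}) that, on the affine horosphere $\{r_d=1\}\cap \mathcal{P}$, the restricted Adjoint action of $K$ on the shifted space $d + (L(\mathfrak{g},\sigma)\cap \mathcal{P})$ coincides up to the standard identification $d+f \leftrightarrow f$ with the gauge action of $K$ on $L(\mathfrak{g},\sigma)\cap \mathcal{P}$. By theorem~\ref{polaractiononp}, this gauge action is polar, with section a maximal abelian subalgebra $\mathfrak{a}_0\subset L(\mathfrak{g},\sigma)\cap \mathcal{P}$ (constant loops lying in a maximal abelian subspace of the finite dimensional $\mathcal{P}_0\subset \mathfrak{g}$). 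Polarity yields, for every $f_i$, an element $k_i\in K$ and an element $h_i\in \mathfrak{a}_0$ with $\mathrm{Ad}(k_i)(d+f_i) = d+h_i$. Combining with step one, both $\widehat{\mathfrak{a}}_i$ are then conjugate inside $\mathcal{P}$ to the single flat $\mathfrak{a}_0\oplus \mathbb{R}c\oplus \mathbb{R}d$, hence to each other.

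The main obstacle I anticipate is the bookkeeping in step two: one must verify that the polar gauge action of $K$ on $L(\mathfrak{g},\sigma)\cap \mathcal{P}$ genuinely realizes the Adjoint action on the slice $\{r_d=1\}\cap\mathcal{P}$, in particular that the $\mathbb{R}c$-component in the formula for $\mathrm{Ad}(g)(d+f)$ from example~\ref{adjointaction} (the $\tfrac12\langle g'g^{-1},g'g^{-1}\rangle c$ term) is harmless, i.e.\ does not spoil the identification and lies in the algebra we are producing. This is a direct calculation, but it is the place where the extension by $\mathbb{R}c\oplus\mathbb{R}d$ could in principle interfere. Once this identification is secured, the argument is essentially the Kac--Moody analogue of Helgason's proof that all maximal flats of $\mathfrak{p}$ in a finite dimensional symmetric space are $K$-conjugate.
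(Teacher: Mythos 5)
Your overall strategy is the right one, and it is essentially the strategy the paper uses for the type~$II$ analogue (lemma~\ref{conjugateabeliansubalgebras}): normalize the affine element $d+f$ by the polar gauge action and then compare the resulting flats. (For the lemma itself the paper only cites Popescu's $C^{\infty}$ result and asserts that the proof carries over, so your write-up is in any case more explicit than the text.) Your preliminary checks are also sound: $c,d\in\mathcal{P}$ for an involution of the second kind, $\mathrm{Ad}(k)$ for $k\in K$ preserves $\mathcal{P}$, and the $\tfrac12\langle g'g^{-1},g'g^{-1}\rangle c$ term is harmless because $\mathbb{R}c$ lies in every flat of finite type.

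The gap is the sentence ``$\mathfrak{a}$ is uniquely determined by the element $d+f$ as the solution space of the Lax equation.'' That is false in general. The solution space of $v'=[v,f]$ with the closing condition is the full centralizer of $d+f$ in $\mathcal{P}\cap L(\mathfrak{g},\sigma)$; via $v\mapsto v(0)$ it is identified with the fixed-point set of the monodromy $\mathrm{Ad}(\phi(2\pi))$ (intersected with the relevant eigenspace), and this is abelian only when the monodromy is a regular element. For singular monodromy the centralizer is a nonabelian reductive algebra, $\mathfrak{a}$ is a \emph{choice} of maximal abelian subalgebra inside it (this is exactly what the proof of lemma~\ref{finiteflatscoverunitsphere} says: ``for two solutions $v$ and $v'$ we need $[v(0),v'(0)]=0$''), and consequently your step one breaks down: conjugating the centralizers does not match up the chosen maximal abelian subalgebras. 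After your polar normalization to $d+h_i$ with $h_i\in\mathfrak{a}_0$ you therefore still need a second step, namely that all maximal abelian subalgebras of the centralizer of $d+h$ inside $\mathcal{P}$ are conjugate by elements of $K$ commuting with $d+h$ — this reduces to the finite-dimensional conjugacy of maximal abelian subspaces of $\mathfrak{p}$ in an orthogonal symmetric Lie algebra, plus the verification that the conjugating loop is holomorphic (``the solution of a differential equation is holomorphic iff the equation is holomorphic''). This is precisely the second step the paper performs in the proof of lemma~\ref{conjugateabeliansubalgebras} and which your argument omits. With that step added, your proof is complete; also note that even in the regular case $h_1$ and $h_2$ land in possibly different points of the section $\mathfrak{a}_0$, which is harmless here only because $\mathfrak{a}_0\oplus\mathbb{R}c\oplus\mathbb{R}(d+h_i)=\mathfrak{a}_0\oplus\mathbb{R}c\oplus\mathbb{R}d$ — a remark worth making explicit.
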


B.\ Popescu proves a similar theorem for the affine Kac-Moody algebra of $C^{\infty}$-loops~\cite{Popescu06}. His proof generalizes to the holomorphic situation.

As we did for Kac-Moody symmetric spaces of type $II$ we call a flat in $M$ of ``exponential type'' iff it is in the image of the exponential map.

\begin{theorem}
All flats of finite exponential type are conjugate by elements in the isotropy group of $M_{\mathbb{C^*}}$.
\end{theorem}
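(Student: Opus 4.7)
The plan is to imitate the strategy employed for Kac-Moody symmetric spaces of type $II$ in Theorem \ref{finiteflatsconjugate}, transferring the problem from flats in the symmetric space $M$ to finite dimensional abelian subalgebras in the $-1$-eigenspace $\mathcal{P}$, where Lemma \ref{subalgebrasinp} is already available.

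First I would use homogeneity of the Kac-Moody isometry group action on $M$ to reduce the problem to flats through a fixed base point $p_0 \in M$, which we can take to be the class of the identity under the identification $M = \widehat{MG}_{\mathbb{R}}^{\sigma}/\mathrm{Fix}(\rho)$. Under the identification $T_{p_0}M \cong \mathcal{P}$ of Lemma \ref{identificationtangentspace}, and since we restrict to flats of exponential type, any such flat $F \subset M$ containing $p_0$ has the form $F = \exp_{p_0}(\widehat{\mathfrak{a}})$ for some finite dimensional abelian subalgebra $\widehat{\mathfrak{a}} \subset \mathcal{P}$. The Lie triple system property of $\mathcal{P}$ guarantees that this image is indeed a totally geodesic flat submanifold of $M$, and conversely every finite dimensional flat through $p_0$ of exponential type arises in this way.

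Next I would invoke Lemma \ref{subalgebrasinp}, which states that all subalgebras of finite type contained in $\mathcal{P}$ are conjugate, in order to find an element $k$ of the isotropy group $\mathrm{Fix}(\rho)$ (or more precisely its complexification $M_{\mathbb{C}^*}$-isotropy as stated) whose adjoint action carries one given abelian subalgebra $\widehat{\mathfrak{a}}_1 \subset \mathcal{P}$ onto another $\widehat{\mathfrak{a}}_2 \subset \mathcal{P}$. Because $\mathrm{Ad}(k)$ preserves the splitting $\mathcal{K}\oplus\mathcal{P}$ and commutes with the exponential map via the standard identity $k \cdot \exp_{p_0}(X) = \exp_{p_0}(\mathrm{Ad}(k)X)$, the conjugating element $k$ sends the flat $\exp_{p_0}(\widehat{\mathfrak{a}}_1)$ onto $\exp_{p_0}(\widehat{\mathfrak{a}}_2)$, as required.

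The main obstacle, as in the proof of Lemma \ref{conjugateabeliansubalgebras}, is to verify that the conjugating element provided by Lemma \ref{subalgebrasinp} actually lies in the correct holomorphic isotropy group, and not merely in some larger completion such as the $C^\infty$- or $H^1$-loop isotropy group. This is resolved exactly as in the proof of Lemma \ref{conjugateabeliansubalgebras}: one first applies polarity of the $\mathrm{Ad}(\mathrm{Fix}(\rho))$-action (Theorem \ref{polaractiononp}) to conjugate any finite dimensional abelian subalgebra of $\mathcal{P}$ into a standard form $\mathfrak{b}\oplus \mathbb{R}c\oplus \mathbb{R}(d+x)$ with $x$ constant, and then one checks that any two standard flats with the same constant part are conjugate by a group element whose components solve a holomorphic linear ODE of Lax type $\phi' = u\phi$, whose solutions inherit holomorphicity from the data. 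Combining these two conjugations yields the required element of the holomorphic isotropy group, completing the argument.
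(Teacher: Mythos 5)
Your proposal follows essentially the same route as the paper: the paper's proof is precisely ``via the exponential map we use Lemma~\ref{subalgebrasinp},'' and you carry out exactly that reduction, merely making explicit the homogeneity step, the identification $T_{p_0}M\cong\mathcal{P}$, and the compatibility of $\mathrm{Ad}$ with $\exp_{p_0}$. Your final paragraph on verifying holomorphic regularity of the conjugating element is a correct unpacking of how Lemma~\ref{subalgebrasinp} itself is established (by adapting Popescu's argument as in Lemma~\ref{conjugateabeliansubalgebras}), rather than a departure from the paper's argument.
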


\begin{proof}
Via the exponential map we use lemma~\ref{subalgebrasinp}.
\end{proof}

Analogously to the case of type $I$ we study the inverse limit structure: let $M_n$ be the Kac-Moody symmetric space corresponding to $\widehat{A_nG}$. We have then a series of spaces $M_n$ such that $\lim M_n = M$ and get the result:

\begin{theorem}
All flats of finite type in $M_n$ are conjugate by elements in the isotropy group of $M_n$.
\end{theorem}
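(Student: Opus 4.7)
The plan is to follow the proof strategy used for $M$ but exploit the better analytic behaviour of the Banach setting in order to drop the ``exponential type'' hypothesis. First, I would establish the $A_n$-analogue of Lemma \ref{subalgebrasinp}: every finite type abelian subalgebra contained in the $-1$-eigenspace $\mathcal{P}_n\subset \widehat{A_n\mathfrak{g}}$ has the form $\mathfrak{a}\oplus \mathbb{R}c\oplus \mathbb{R}(d+f)$ with $\mathfrak{a}\subset A_n\mathfrak{g}$ finite dimensional, and any two such subalgebras are conjugate under the $\widehat{A_nG}$-action. The argument would mirror the one used in Lemma \ref{conjugateabeliansubalgebrasinA_ng} and Theorem \ref{finiteflatsconjugateinA_nG}: the commutation condition $[v,d+f]=0$ is equivalent to the Lax equation $v'=[v,f]$ with $\omega(v,f)=0$, whose solution $v(t)=\operatorname{Ad}\phi(t)v_0$ lies in $A_n\mathfrak{g}^\sigma$ whenever $f$ does, since $\phi$ inherits holomorphicity on a neighbourhood of $A_n$ from the coefficients of the ODE. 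Polarity of the gauge action of $A_nG_{\mathbb R}^\sigma$ on $A_n\mathfrak{g}^\sigma$, already proved in Theorem \ref{polaractiononmg} for $X=A_n$, reduces every finite type subalgebra to one with constant $f$; two constant choices are then conjugated by the action of the finite dimensional isotropy group on the maximal abelian subalgebras of $\mathfrak{p}\subset\mathfrak{g}$.

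Second, I would transfer this algebraic conjugacy to $M_n$ via the exponential map. Since $\widehat{A_nG}$ is a Banach Lie group (Lemma \ref{banachandfrechetstructureonkacmoodyalgebras} and Section \ref{Manifoldstructuresongroupsofholomorphicmaps}), the Banach inverse function theorem shows that $\widehat{A_n\exp}$ is a local diffeomorphism. Consequently every flat through the basepoint is automatically of exponential type: it is the image of a finite type abelian subalgebra of $\mathcal{P}_n$ under the exponential map. The qualifier ``of exponential type'' appearing in the preceding theorem (for the tame Fr\'echet space $M$) therefore becomes vacuous in the Banach situation. Thus the previous step immediately yields conjugacy of all finite type flats through the basepoint under the isotropy group.

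Third, I would invoke homogeneity of $M_n$ to extend the conjugacy from flats through the basepoint to arbitrary finite type flats. The Kac-Moody isometry group acts transitively on $M_n$, so any finite type flat can be translated to pass through the basepoint; combined with the previous paragraph this gives the theorem.

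The main obstacle is the first step, namely verifying that the whole finite dimensional flat analysis of Popescu, originally performed for $C^\infty$ loops, carries over without change to the Banach category of holomorphic loops on $A_n$. The delicate points are (i) showing that solutions of the Lax equation $v'=[v,f]$ really produce elements of $A_n\mathfrak{g}^\sigma$ rather than merely of some larger function space, and (ii) checking that the closing condition $v(2\pi)=\operatorname{Ad}\phi(2\pi)v_0=v_0$ cuts out exactly a subalgebra of dimension $\operatorname{rank}(\mathfrak{g})+2$. Both points follow from the holomorphic dependence of fundamental solutions of linear ODEs on their coefficients, together with the already established polarity of the gauge action; no new functional analytic input is needed beyond what is recorded in Section \ref{PolaractionsontameFrechetspaces}.
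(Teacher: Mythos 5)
Your proposal follows essentially the same route as the paper (which in fact gives no explicit proof here, merely asserting the result ``analogously''): conjugacy of finite type abelian subalgebras in $\mathcal{P}_n$ via the Lax equation and polarity of the gauge action, the observation that $\widehat{A_n\exp}$ is a local diffeomorphism in the Banach setting so that the ``exponential type'' hypothesis becomes vacuous, and homogeneity to move an arbitrary flat to the basepoint. The only minor slip is that for the type $I$ situation the relevant polarity statement is the Hermann-type action of theorem~\ref{polaractiononp} on $\mathcal{P}$ rather than theorem~\ref{polaractiononmg}, but this does not affect the argument.
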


\noindent As $M\subset M_n$ for all $n$, we find

\begin{theorem}
All flats of finite type in $M$ are conjugate by elements in the isotropy group of $M_n$ for all $n$.
\end{theorem}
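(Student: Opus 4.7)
The plan is to reduce this statement directly to the previous theorem, using the inverse-limit relation $M = \lim M_n$ together with the inclusions $M \subset M_n$. First I would fix two flats of finite type $F_1, F_2 \subset M$ and argue that for every $n$ they are still flats of finite type when viewed inside $M_n$. The inclusion $M \hookrightarrow M_n$ is induced by the inclusion of function spaces $\widehat{M\mathfrak{g}} \hookrightarrow \widehat{A_n\mathfrak{g}}$ (restriction of a holomorphic loop on $\mathbb{C}^*$ to a holomorphic loop on an annulus containing $A_n$); under this inclusion an abelian subalgebra of finite type in $\widehat{M\mathfrak{g}}$ is tautologically an abelian subalgebra of finite type in $\widehat{A_n\mathfrak{g}}$, since the defining condition (containing an element of the form $\tilde u + d$ with $\tilde u \in \widetilde{M\mathfrak{g}}$, together with finite dimensionality equal to $\mathrm{rank}(\mathfrak g)+2$) is preserved by the inclusion. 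Exponentiating, $F_1$ and $F_2$ descend to finite-dimensional flats of exponential type inside $M_n$.

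Next I would apply the previous theorem, which asserts that all flats of finite type in $M_n$ are conjugate by an element of the isotropy group of $M_n$. This yields, for each $n$, an element $k_n$ in the isotropy group of $M_n$ such that $k_n \cdot F_1 = F_2$ inside $M_n$. Since the conjugation is performed inside the ambient space $M_n$ and the images $F_1, F_2$ lie inside the subspace $M$, the conclusion transports back to $M$: the identity $k_n \cdot F_1 = F_2$ is a statement about subsets of $M_n$ whose two sides already lie in $M$, so it holds as an identity in $M$ as well. Running this over all $n \in \mathbb N$ gives the theorem.

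The main obstacle, and the reason the statement is phrased in this slightly weaker form (conjugation by elements of the \emph{larger} isotropy groups of the $M_n$ rather than by an element of the isotropy group of $M$ itself), is precisely the point discussed in the remark on the Nash–Moser inverse function theorem: a conjugating element that exists at each finite level $n$ need not piece together to a single element living in the inverse limit. Concretely, the family $\{k_n\}$ constructed above lives in $\bigcap_n \mathrm{Iso}(M_n)$, but this intersection need not equal $\mathrm{Iso}(M)$, and there is no a priori reason for the $k_n$ to stabilise in $n$. Any attempt to upgrade the statement to conjugacy in $\mathrm{Iso}(M)$ would require a compactness or coherence argument across the tower $\{M_n\}$, which is exactly the kind of step that is available in the Banach setting at each finite level but fails in passage to the tame Fréchet limit. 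For the statement as given this difficulty is sidestepped, and the proof is genuinely just a restriction–inclusion argument on top of the finite-level theorem.
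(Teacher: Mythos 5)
Your proposal is correct and follows essentially the same route as the paper, which derives the statement in one line from the inclusion $M\subset M_n$ and the preceding theorem on flats in $M_n$. Your additional remark on why the conjugating elements live only in the isotropy groups of the $M_n$ and need not cohere to an element of $\mathrm{Iso}(M)$ is consistent with the paper's discussion of inverse limits and the Nash--Moser theorem, but the core argument is the same restriction--inclusion reduction.
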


\subsection{Flats in Kac-Moody symmetric spaces of non-compact type}

Let $M$ be an affine Kac-Moody symmetric space of  non-compact type; we use the canonical identification of $T_eM\simeq \mathcal{P}$. Then there is an exponential map $\widehat{\Mexp}: \mathcal{P}\longrightarrow M$. Let $M_c$ denote the affine Kac-Moody symmetric space of compact type that is dual to $M$. Its tangential space is isomorphic to $\mathcal{P}$. The isotropy representation is the same -- thus on the Lie algebra level, abelian subspaces in  $T_eM$ correspond bijectively to abelian subspaces in $T_eM_c$. Especially, the adjoint action is transitive on the space of all flats. 

Via the exponential map, this shows that flats of exponential type (defined as in the case of Kac-Moody symmetric spaces of compact type) are conjugate. Investigation of the inverse limit construction yields that all flats are conjugate by elements in the inverse limit groups.

\section{Geometry of Kac-Moody symmetric spaces of type \protect\boldmath$II$}

The Kac-Moody symmetric spaces of type $II$ are the compact real form of affine Kac-Moody groups. As every affine Kac-Moody group has a --- up to conjugation unique --- compact real form, their classification coincides with the classification of affine Kac-Moody groups resp.\ algebras. In this section we want to explain that their geometry is modelled according to the geometry of finite dimensional Riemannian symmetric spaces of type $II$. To understand the geometry of those spaces we need to focus on two building blocks:

\begin{enumerate}
 \item Tori of finite type: they describe the structure of flat subspaces. The various types of those subalgebras are distinguished by their tori. 
 \item Residua: the residua describe the geometry of the space around singular hyperplanes. In case of type $II$\ndash symmetric spaces all residua are of type $SU(2)$.
\end{enumerate}

All residua of finite dimensional Riemannian symmetric spaces resp. affine Kac-Moody symmetric spaces of type $II$ correspond to the compact Lie group $SU(2)$:

\begin{example}[$SU(2)$]
\index{$SU(2)$}
The special unitary group of rank $1$ is defined by
\begin{displaymath} 
 SU(2):=\left\{\left(\begin{array}{cc} \alpha&\beta\\ -\overline{\beta}&\overline\alpha \end{array}\right)\mid \alpha, \beta\in \mathbb{C}, |\alpha|^2+|\beta|^2=1\right\}.
\end{displaymath}
\end{example}

Hence we can understand $SU(2)$ as the $3$\ndash dimensional unit sphere in $\mathbb{C}^2$. To understand $SU(2)$ as a Riemannian symmetric space we need a metric. The metric is defined in two steps: First we define the unique $Ad$\ndash invariant metric on the Lie algebra (resp.\ the tangent space of the symmetric space). Then in a second step define the metric on the whole Lie group by left translation.

Any $Ad$\ndash invariant scalar product on a simple Lie algebra is a multiple of the Cartan-Killing form:
\begin{definition}
The Cartan-Killing form $B(X,Y)$ of a simple Lie algebra is defined by 
\begin{displaymath}
B(X,Y)=\textrm{trace}\left(\textrm{ad}(X) \circ\textrm{ad}(Y)\right).
\end{displaymath}
\end{definition}

For a Lie algebra in matrix representation we can use a second expression, which is easier to evaluate: 
\begin{displaymath}
B(X,Y)=\textrm{trace}\left(XY\right).
\end{displaymath}

Doing the calculations for $SU(2)$ shows that $SU(2)$ together with its $Ad$\ndash invariant metric is a $3$\ndash sphere. As a coset space we can describe this space as
\begin{displaymath}
 SU(3)\cong SU(2)\times SU(2)/\Delta(SU(2)) \cong S^3\, .
\end{displaymath}

\subsection{A finite dimensional example: \protect\boldmath$SU(n)$}
\index{$SU(n)$}
Let us now describe the finite dimensional blueprint for the easiest example  $M=SU(n)$ of unitary $n\times n$\ndash matrices. This space has a maximal flat $H_0$ of purely imaginary diagonal matrices:

\begin{displaymath}
 H_0:=\{\textrm{diag}(a_1, \dots, a_n)| a_k=e^{i x_k}\ \textrm{and}\ \prod a_k=1)\}\, .
\end{displaymath}

As Weyl group we get  the symmetric group acting as the permutation group of the entries of the flat: Define $H_{12}$ as the fixed torus of the permutation $\sigma_{12}\subset Sym(n)$. We perform the same steps as before: Start by defining

\begin{displaymath}
SU_{12}(n):=\left\{\left(\begin{array}{c|ccc}aA&0&0&0\\ \hline 0&a_3&0&0\\ 0&0&\ddots&0\\0&0&0&a_n   \end{array}\right)\mid A\in SU(2),\quad a^2\prod_{k\geq 3} a_k=1 \right\} \, .
\end{displaymath}

We get the embedding (well-known from Lie theory):

\begin{displaymath}
SU_{12}(n, \mathbb{R}) \hookrightarrow SU(n)
\end{displaymath}

We want to study the subspace $SU_{12}(n, \mathbb{R})$. As $SU(2)\cong S^3$  we can note the space:

\begin{displaymath}
SU_{12}(n)=\left\{\left(\begin{array}{c|ccc}  a_1 S^3&0&0&0\\ \hline 0&a_3&0&0\\ 0&0&\ddots &0\\0&0&0&a_n   \end{array}
\right) \right\} \,.
\end{displaymath}

In this form we can describe explicitly the geometric structure of $SU_{12}(n)$: It is the direct product of a $3$\ndash dimensional sphere $S^3$ with an $(n-2)$\ndash dimensional abelian subspace. Maximal abelian subspaces in $S^3$ are $1$\ndash dimensional and elements in $S^3$ are orthogonal to our flat. Hence maximal flats in $SU_{12}(n)$ are $(n-1)$\ndash dimensional. Especially the flat $H_0$ is a maximal flat in $SU_{12}(n)$. Flats in this space corresponding to flats containing our singular hyperplane $H_{12}$ are exactly those that pass through the identity $x\in SU(2)=S^3$. The space of geodesics through one point in a $3$\ndash sphere is exactly $\mathbb{R}P(2)$, $2$\ndash dimensional projective space. As a consequence we note:

\begin{lemma}
 The space of maximal flats containing $H_{12}$ (or more generally $H_{ij}$) is isomorphic to $\mathbb{R}P(2)$.
\end{lemma}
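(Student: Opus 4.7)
The plan is to exploit the product decomposition of $SU_{12}(n)$ established just before the statement, namely
\begin{displaymath}
SU_{12}(n)\;\cong\; SU(2)\,\times\, H_{12},
\end{displaymath}
where $SU(2)\cong S^3$ corresponds to the upper $2\times 2$ block and $H_{12}$ is the $(n-2)$\ndash dimensional abelian diagonal piece which is already contained in our fixed singular torus. First I would observe that any maximal flat $T\subset SU(n)$ containing $H_{12}$ necessarily lies in the centralizer of $H_{12}$, which is precisely $SU_{12}(n)$ (this is the standard centralizer argument for regular points of a subtorus, applied to the $(n-2)$\ndash dimensional regular part of $H_{12}$).

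Next I would reduce the classification to the $SU(2)$ factor. Since the abelian factor $H_{12}$ of the product decomposition must be contained in $T$ and any maximal abelian subalgebra of $\mathfrak{su}_{12}(n)$ decomposes compatibly with the splitting $\mathfrak{su}(2)\oplus \mathfrak{h}_{12}$, one obtains $T=T'\times H_{12}$ where $T'\subset SU(2)$ is a maximal torus passing through the identity. Hence the map $T\mapsto T'$ is a bijection between maximal flats containing $H_{12}$ and maximal tori of $SU(2)$ through $e$.

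Finally I would identify this latter set with $\mathbb{R}P^2$. Every maximal torus of $SU(2)\cong S^3$ through the identity is of the form $\exp(\mathbb{R}\cdot X)$ for some nonzero $X\in \mathfrak{su}(2)$, and two such produce the same torus iff the vectors are proportional. Since $\mathfrak{su}(2)\cong\mathbb{R}^3$, the space of $1$\ndash dimensional linear subspaces is exactly $\mathbb{R}P^2$, yielding the desired identification. The general case of $H_{ij}$ is identical after renaming indices; the only mild subtlety is checking that no extra tori arise from ``crossing'' the two factors, which is excluded by the fact that the $\mathfrak{su}(2)$ and $\mathfrak{h}_{12}$ summands are orthogonal with respect to the Cartan--Killing form and maximality forces the flat to be a direct sum of maximal abelian subalgebras of the two factors. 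This orthogonality verification is the only nonformal step, and it is the place I would spend the most care.
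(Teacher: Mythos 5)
Your proposal is correct and follows essentially the same route as the paper: reduce to the block subgroup $SU_{12}(n)\cong SU(2)\times H_{12}$ and identify the maximal flats containing $H_{12}$ with the maximal tori of $SU(2)\cong S^3$ through the identity, which form an $\mathbb{R}P(2)$. The paper phrases the last step as ``geodesics through a point of $S^3$'' rather than lines in $\mathfrak{su}(2)\cong\mathbb{R}^3$, but these are the same identification; your extra care about the centralizer and the splitting of the maximal abelian subalgebra only makes explicit what the paper leaves implicit (and is in fact immediate since $\mathfrak{h}_{12}$ is central in $\mathfrak{su}_{12}(n)$).
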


The marked Dynkin diagram of this symmetric space is hence the following:

$\begin{picture}(80,20)
\multiput(3,2)(25,0){6}{\circle{5}} 
\multiput(7,2)(25,0){2}{\line(2,0){17}}\multiput(82,2)(25,0){2}{\line(2,0){17}}\multiput(59,2)(25,0){1}{\dots}
\multiput(0,7)(25,0){6}{\small $2$\normalsize}
\end{picture} $

\subsection{An infinite dimensional example: \protect\boldmath$\widehat{MSU}(n)$}
\index{$\widehat{MSU}(n)$}
Let us now turn to the infinite dimensional example $\widehat{MSU}(n)$: $\widehat{MSU}(n)$ is the Kac-Moody symmetric space of ``compact type'' of type $\widetilde{A}_{n-1}$, hence the Kac-Moody analogue of the unitary groups $SU(n)$. Recall that $SU(n)$ has as Weyl group the symmetric group $Sym(n)$; around each singular hyperplane $F_{ij}$ we found as a local building block a  $SU(2)$\ndash subgroup (hence geometrically and topologically a $3$\ndash sphere). The set of other flats containing a singular hyperplane $F_{ij}$ forms a complex projective space $\mathbb{C}P(1)$. We want to recover similar features in $\widehat{MSU}_n$.

We start with the construction of the symmetric space:

\begin{definition}[$\widehat{MSU}_n$]
The loop group is defined by: 
\begin{displaymath}
MSU_n:=\left\{f\in MSL(n,\mathbb{C})|f(S^1)\subset SU(n)  \right\}\, .
\end{displaymath}
The Kac-Moody group $\widehat{MSU}_n$ is the torus extension of $MSU(n)$\, .
\end{definition}

Analogously the Kac-Moody algebra is defined by

\begin{definition}[$\widehat{M\mathfrak{su}}_n$]
The loop algebra is defined by: 
\begin{displaymath}
M\mathfrak{su}_n:=\left\{f\in M\mathfrak{sl}(n,\mathbb{C})|f(S^1)\subset \mathfrak{su}(n)  \right\}\, .
\end{displaymath}
The Kac-Moody algebra $\widehat{M\mathfrak{su}}_n$ is the torus extension of $M\mathfrak{su}(n)$.
\end{definition}

Recall the $\textrm{Ad}$\ndash invariant Lorentz scalar product we have defined on the Kac-Moody algebra $\widehat{M\mathfrak{su}}_n$: 
\begin{align*}
 \langle f,g\rangle &:=\frac{1}{2\pi}\int_{S^1}\langle f(t), g(t)\rangle dt\, ,\\
\langle c,d\rangle&:=-1\, ,\\
\langle c,c\rangle&:=\langle d,d\rangle=0\, ,\\
\langle c,f\rangle&:=\langle d,f\rangle=0\, .
\end{align*}

For elements $(f,r_c,r_d)$ and $(g, s_c, s_d)$ such that $f$, $g\in M\mathfrak{su}_n$ and $r_c,s_c$ denote the coefficients of the $c$\ndash extension and $r_d,s_d$ denote the coefficients of the $d$\ndash extension we get:
\begin{displaymath}
 \langle (f,r_c,r_d),(g, s_c, s_d)\rangle=\frac{1}{2\pi}\int_{S^1}\langle f(t), g(t)\rangle dt - r_cs_d-r_ds_c\ .
\end{displaymath}

Let us remark that the $c,d$\ndash extension is orthogonal onto the loop group $MSU(n)$. This metric gives a metric on the whole space $\widehat{MSU}(n)$ by left translation.

Let us first investigate the geometry on the subgroup of constant loops
\begin{displaymath}
 \varphi: SU(n)\hookrightarrow \widehat{MSU}(n)\, . 
\end{displaymath}

\begin{lemma}
 $\varphi$ is an isometry of $SU(n)$ onto its image $\varphi(SU(n))$ equipped with the subspace metric inherited of $\widehat{MSU}(n)$.
\end{lemma}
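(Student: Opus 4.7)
The plan is to reduce the statement to an identity at the tangent space of the identity element, exploiting that both metrics are left-invariant (biinvariant, in fact) and that $\varphi$ is a Lie group homomorphism.

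First I would unwind the definitions at the level of Lie algebras. The map $\varphi$ is the inclusion of $SU(n)$ into $\widehat{MSU}(n)$ as constant loops with trivial $c$- and $d$-components, i.e.\ $\varphi(g) = (\widetilde{g},0,0)$ where $\widetilde{g}(z)\equiv g$. Its differential at the identity $d\varphi_e\colon \mathfrak{su}(n)\to \widehat{M\mathfrak{su}}(n)$ sends $X\mapsto (\widetilde{X},0,0)$ with $\widetilde{X}(z)\equiv X$. Plugging two such elements into the $\widehat{M\mathfrak{su}}(n)$-scalar product introduced just before the lemma, the central-term contribution $-r_c s_d - r_d s_c$ vanishes, and one is left with
\begin{displaymath}
\langle d\varphi_e(X), d\varphi_e(Y)\rangle_{\widehat{M\mathfrak{su}}} = \frac{1}{2\pi}\int_{S^1} \langle X,Y\rangle\, dt = \langle X,Y\rangle_{\mathfrak{su}(n)}.
\end{displaymath}
Thus $d\varphi_e$ is a linear isometry onto its image.

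Next I would propagate this identity from the identity element to an arbitrary point $g\in SU(n)$ by left translation. The metric on $SU(n)$ is biinvariant (it is the unique, up to scaling, $\mathrm{Ad}$-invariant metric coming from the Cartan-Killing form), and by construction the Lorentz metric on $\widehat{MSU}(n)$ is left-invariant, being defined by left-translating the $\mathrm{Ad}$-invariant bilinear form on $\widehat{M\mathfrak{su}}(n)$. Since $\varphi$ is a group homomorphism, left translation by $g$ in $SU(n)$ intertwines, via $\varphi$, with left translation by $\varphi(g)$ in $\widehat{MSU}(n)$; in particular $d\varphi_g = dL_{\varphi(g)}\circ d\varphi_e\circ dL_{g^{-1}}$. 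Combining this with the tangential isometry at $e$, we conclude that $d\varphi_g$ is an isometry for every $g\in SU(n)$, which is exactly the claim.

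The argument is essentially formal once the two ingredients (left-invariance on both sides and the pointwise identity at $e$) are in place; the only point where one must be slightly careful is the observation that the image of a constant loop has no $c$-extension: the path entering the definition of an element of $\widetilde{MG}$ can be taken to be the constant path at $g$, so the monodromy factor $C_\omega(p_2\ast p_1^{-1})$ is trivial and the inclusion $\varphi$ really lands in the slice $\{r_c = r_d = 0\}$ where the truncation of the scalar product used above is legitimate. No further functional-analytic obstacles arise, so I would not expect a genuine difficulty in this proof.
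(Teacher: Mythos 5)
Your proposal is correct and follows essentially the same route as the paper: both reduce the claim to the computation $\langle d\varphi_e(X),d\varphi_e(Y)\rangle = \frac{1}{2\pi}\int_{S^1}\langle X,Y\rangle\,dt = \langle X,Y\rangle$ at the identity (noting that the $c$- and $d$-directions contribute nothing for constant loops) and then extend to all of $SU(n)$ by left translation, using that $\varphi$ is a homomorphism and both metrics are left-invariant. The paper additionally records that $\widehat{\Mexp}$ factors over the constant loops and commutes with $\varphi$, but that observation is not needed for the isometry statement itself, and your more careful remark that the constant path gives trivial monodromy (so the image really lies in the slice $r_c=r_d=0$) is a point the paper leaves implicit.
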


\begin{proof}
 We study first $d\varphi:\mathfrak{su}(n)\hookrightarrow \widehat{M\mathfrak{su}}(n)$. Let $f_0,g_0\in \mathfrak{su}(n)$ and suppose $f=\varphi(f_0),g=\varphi(g_0)\in M\mathfrak{su}(n)$, hence $f\equiv f_0$ and $g\equiv g_0$ are constant. We calculate
\begin{displaymath}
 \langle f,g\rangle=\frac{1}{2\pi}\int_{S^1}\langle f,g\rangle dt=\frac{1}{2\pi}\langle f_0,g_0\rangle \int_{S^1} 1 dt=\langle f_0, g_0\rangle\, .
\end{displaymath}
Thus $d\varphi$ is an isometry. Furthermore the $c,d$\ndash extension is orthogonal to $d\varphi(\mathfrak{su}(n))$.

The (surjective) finite dimensional exponential map $\exp: \mathfrak{su}(n)\longrightarrow SU(n)$ commutes with the embedding $\varphi$:
As $c,d$ are direct factors over the subgroup of constant loops we find that the exponential function $\widehat{\Mexp}$ factors into 
\begin{displaymath}
 \widehat{\Mexp}(f\equiv f_0,ir_c,ir_d)=\left(\Mexp(f), \exp(ir_c), \exp(ir_d)\right)\, .
\end{displaymath}

Hence we get the relation
\begin{displaymath}
\widehat{\Mexp}|_{d\varphi(\mathfrak{su(n)}} d\varphi(\mathfrak{su}(n))= \varphi \circ \exp \mathfrak{su}(n)\ .
\end{displaymath}
On the subgroup $\varphi(SU(n))\subset \widehat{MSU}(n)$ the metric can be constructed by left translation with elements in $\varphi(SU(n))$. Hence left translation commutes with $\varphi$. Thus $\varphi$ is an isometry.
\end{proof}

\begin{lemma}
\label{totally_geodesic}
The submanifold $SU(n)\subset \widehat{MSU}(n)$ is a totally geodesic submanifold but not a strongly totally geodesic submanifold.
\end{lemma}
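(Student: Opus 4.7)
The plan is to prove the totally geodesic half by verifying that the second fundamental form of $\varphi(SU(n))$ in $\widehat{MSU}(n)$ vanishes identically at the identity, and then to rule out the strong version by exploiting the fact (Theorem \ref{expnodiffeomorphism}) that the ambient loop-group exponential fails to be a local diffeomorphism, which obstructs the normal tubular neighborhood one would need.

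First I would compute the second fundamental form at the identity $e$. By Theorem \ref{levicivita} the Levi-Civita connection on $\widehat{MSU}(n)$ satisfies $\nabla_X Y = \tfrac{1}{2}[X,Y]_{\widehat{M\mathfrak{su}}(n)}$ on left-invariant fields. For constant loops $X,Y \in \mathfrak{su}(n) \subset M\mathfrak{su}(n)$ the pointwise bracket $[X,Y]$ is again a constant element of $\mathfrak{su}(n)$, the $d$-component vanishes because no differentiation is involved, and the cocycle contribution is $\omega(X,Y) = \frac{1}{2\pi}\int_{S^1}\langle X, Y'\rangle\, dt = 0$ since $Y' \equiv 0$. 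Hence $\nabla_X Y$ lies tangent to $\varphi(SU(n))$, the second fundamental form vanishes, and $\varphi(SU(n))$ is totally geodesic. As a cross-check this matches the computation in the proof of the preceding lemma, where $\widehat{\Mexp}(tX) = \varphi(\exp(tX))$ shows that the ambient one-parameter subgroups through $e$ in direction $\mathfrak{su}(n)$ coincide with the intrinsic geodesics of the bi-invariant metric on $SU(n)$, as is forced by $\mathrm{II} \equiv 0$ together with the isometry property of $\varphi$.

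For the negative half I would interpret \emph{strongly totally geodesic} as requiring, in addition, a normal tubular-neighborhood parametrization: an open neighborhood of the zero section of the normal bundle on which the ambient exponential is a local diffeomorphism onto a neighborhood of $\varphi(SU(n))$. Decompose $\widehat{M\mathfrak{su}}(n) = \mathfrak{su}(n) \oplus \mathcal{N}$, where $\mathcal{N} = \mathcal{N}_0 \oplus \mathbb{R}c \oplus \mathbb{R}d$ and $\mathcal{N}_0 \subset M\mathfrak{su}(n)$ is the $\langle\cdot,\cdot\rangle$-orthogonal complement of the constants (the non-constant Fourier modes). Choose any subalgebra $\mathfrak{h} \simeq \mathfrak{su}(2) \hookrightarrow \mathfrak{su}(n)$ and consider the non-constant directions in $M\mathfrak{h} \cap \mathcal{N}_0$. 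The pathology in Example \ref{sl2chasnodiffeomorphicexponentialmap}, together with Theorem \ref{expnodiffeomorphism}, shows that $\Mexp$ restricted to this finite-rank slice of $\mathcal{N}_0$ fails to be a local diffeomorphism at $0$: there exist sequences in the image that are not locally covered by any open neighborhood in the source. Consequently no normal exponential chart exists around $e$, and the strong version is impossible.

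The main obstacle will be transferring the counterexample from $MSL(2,\mathbb{C})$ to the compact real form $MSU(n)$, since the explicit loops $f_n(z)$ in Example \ref{sl2chasnodiffeomorphicexponentialmap} do not satisfy $f_n(S^1) \subset SU(n)$. I would handle this either by symmetrizing $f_n$ via the compact-form involution $f(z) \mapsto (f^*(1/\overline{z}))^{-1}$ arising from the reality condition in Definition \ref{A_nGR}, or, more cleanly, by arguing abstractly that since the failure of $\Mexp$ to be a local diffeomorphism for $G_{\mathbb{C}}$ is inherited from any $SU(2)$-subgroup (the real compact form of $SL(2,\mathbb{C})$) — exactly by the mechanism of Theorem \ref{expnodiffeomorphism}, whose proof passes through a subalgebra $\mathfrak{h} \subset \mathfrak{g}$ — the same obstruction lives inside the real compact loop subgroup $M\mathfrak{h}_{\mathbb{R}} \subset M\mathfrak{su}(n)$, which suffices to obstruct strong totally geodesicness of $\varphi(SU(n))$.
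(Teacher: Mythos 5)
Your first half is fine: showing that $\nabla_XY=\tfrac12[X,Y]$ preserves the constant loops (the cocycle term and the $d$-term both vanish on constants) is equivalent to the paper's argument, which simply observes that the pointwise exponential sends constant directions to constant loops, so geodesics tangent to $\varphi(SU(n))$ at the identity stay in it; homogeneity then finishes the positive half.

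The negative half, however, proves the wrong statement, because you have guessed the wrong definition of \emph{strongly totally geodesic}. In this paper a submanifold $M\subset N$ is strongly totally geodesic iff every ambient geodesic that meets $M$ in \emph{more than one point} is entirely contained in $M$; it is not a normal-tubular-neighborhood condition. So the failure of $\Mexp$ to be a local diffeomorphism in normal directions, even if you could transfer it to the compact real form, says nothing about the property being denied. What is actually needed is an explicit ambient geodesic hitting $SU(n)$ at two points without lying in it. The paper produces one as follows: take a closed torus geodesic $\gamma_0(t)=\exp(t\alpha_i)$ in a maximal torus $T\subset SU(n)$ with $\gamma_0(2\pi)=Id$ and $\gamma_0(\pi)=-Id$, and conjugate it by a nonconstant loop, $\gamma(t):=\widehat{f}(t)\gamma_0(t)\widehat{f}(t)^{-1}$. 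This is still a geodesic (it is a conjugate of a one-parameter subgroup, hence again one through a translate of the identity), it passes through $Id$ and through $\widehat{f}(\pi)(-Id)\widehat{f}(\pi)^{-1}=-Id$, both of which lie in the constant subgroup $SU(n)$, yet for intermediate $t$ it leaves the constant loops. Your tubular-neighborhood discussion, and the worry about transferring the $MSL(2,\mathbb{C})$ counterexample to the compact real form, are therefore beside the point; no such transfer is needed, and indeed for loops into a compact group the pointwise exponential is surjective, so that particular obstruction would not even be available in the form you propose.
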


By definition a submanifold $M\subset N$ is strongly totally geodesic if any geodesic $\gamma(t)\subset N$ such that $\gamma(t)\cap M$ contains more than one point is contained completely in $SU(n)$. It is totally geodesic if any geodesic $\gamma(t)$ whose tangential vector is at some point $p\in M$ contained in $T_pM$ stays in $M$ for all time.

\begin{proof}[Proof of lemma \ref{totally_geodesic}]
We have to prove that any geodesic whose tangential vector is $T_{Id}SU(n)$ is contained in $SU(n)$ (remark that by homogeneity it is enough to check this condition for one point which we choose to be the identity). Now $T_{Id}SU(n)\cong \mathfrak{su}(n)$ consists of constant functions. Hence the result follows from the definition of the pointwise exponential function. Nevertheless there are geodesics in $\widehat{MSU}(n)$ hitting $SU(n)$ at least in two points, but not contained completely in $SU(n)$. Choose as the two points $Id$ and $-Id$ and take a geodesic $\gamma_0(t):=\exp(t\alpha_i)\subset T\subset SU(n)$ for some $\alpha_i\in \mathfrak{t}\subset\mathfrak{su}(n)$ such that $\gamma(2\pi)=Id$ and $\gamma(\pi)=-Id$. Then define the geodesic $\gamma(t)$ by $\gamma(t):=\widehat{f}(t)\gamma_0(t)\widehat{f}(t)^{-1}$. Especially 
\begin{align*}
 \gamma(\pi)&=\widehat{f}(\pi)\gamma_0(\pi)\widehat{f}(\pi)^{-1}=\\
&=\widehat{f}(\pi)\gamma_0(\pi)\widehat{f}(\pi)^{-1}=\\
&=\widehat{f}(\pi)(-Id)\widehat{f}(\pi)^{-1}=-Id\subset T\subset SU(n).
\end{align*}
\end{proof}

Hence as we know in detail the geometry of $SU(n)$, its tori, and its $SU(2)$ subgroups, we know that we will have exactly the same structure in the Kac-Moody symmetric space $\widehat{MSU(n)}$. Furthermore, as all flats of finite type are conjugate, for any torus $\widehat{T}^*\subset \widehat{MSU}(n)$ we can find an element $\widehat{f}\subset \widehat{MSU}(n)$ such that $\widehat{T}^*=\widehat{f}\widehat{T}_0\widehat{f}^{-1}$. The subgroup 
\begin{displaymath}
 SU(n)^*=\widehat{f}SU(n)\widehat{f}^{-1}
\end{displaymath}
 is another (weakly totally geodesic) submanifold of $\widehat{MSU}(n)$ which is isometric to $SU(n)$. Hence as long as we think about what is happening in a subspace of a Kac-Moody symmetric space of type $\widehat{MSU}(n)$ that is contained in a $SU(n)$ subgroup we can just think in $SU(n)$ which in the end can be thought of as a matrix group. On any torus $\widehat{f}\varphi(T)\widehat{f}^{-1}$ the structure we find is clearly the one of $\varphi(T)$, hence the one of a torus $T\subset SU(n)$. Thus $T$ carries a Riemannian metric and admits an isometric action of $Sym(n)$, the spherical Weyl group of $SU(n)$. To describe the geometry of the whole Kac-Moody groups we need thus to understand the extension by $c$ and $d$. 

So in a next step we want to investigate the structure of maximal tori (of finite type):

As all tori are conjugate we can choose one special example:

Put $\widehat{H}_0=H_0\oplus S^1\oplus S^1$. Here $H_0$ denotes the maximal torus of $SU(n)$.
The two $S^1$\ndash factors correspond to the $c$- and the $d$-extension of the Lie algebra. Over $SU(n)\subset MSU(n)$, embedded as the subgroup of constant loops, this bundle is trivial. $\widehat{H}_0$ has its Lie algebra $\widehat{\mathfrak{h}}_0=\mathfrak{h}\oplus i \mathbb{R}_c \oplus i\mathbb{R}_d$. The exponential map $\Mexp|_{\widehat{H}_0}$ is just a direct product
\begin{displaymath}
 \widehat{\Mexp}(f,ir_c,ir_d)=\left(\exp(f), \exp(ir_c), \exp(ir_d)\right)
\end{displaymath}
where $\exp(f)$ denotes the finite dimensional exponential function.

On $\widehat{H}_0$ we have a Lorentz scalar product: We start by describing its structure on $\widehat{\mathfrak{h}}_0$, the Lie algebra of $\widehat{H}_0$. On the subspace $\mathfrak{h}_0$ the scalar product is given by
\begin{displaymath}
 \langle u(t),v(t)\rangle_{\widehat{MSU}(n)}=\frac{1}{2\pi}\int_{S^1}\langle u(t),v(t)\rangle_{SU(n)} dt= \langle u(0), v(0)\rangle_{SU(n)}\equiv \langle u, v\rangle_{SU(n)}\, .
\end{displaymath}
Moreover we have $\langle c, c \rangle=\langle d, d \rangle= 0$ and $\langle c,d\rangle=-1$. Hence $c$ and $d$ are lightlike. $c+d$ is timelike and the finite dimensional torus $H_0$ is in the spacelike part.

Let us look at $\widehat{\mathfrak{h}}_0$ in the special case of $\widehat{MSU}(2)$. Here the torus is $3$\ndash dimensional; hence we have a chance to make some nice pictures:

The Weyl group acts by isometries with respect to this metric. For example its action fixes both sheets of the sphere of radius $-1$. In example we can use:
\begin{align*}
 \widehat{\mathfrak{h}}_{-1}&:=\{x\in \widehat{\mathfrak{h}}_0||x|=-1\},\\
 \widehat{\mathfrak{h}}_{0}&:=\{x\in \widehat{\mathfrak{h}}_0||x|=0\},\\
 \widehat{\mathfrak{h}}_{1}&:=\{x\in \widehat{\mathfrak{h}}_0||x|=1\}.\\
\end{align*}

Each of those spaces consists of two sheets:

\begin{align*}
\widehat{\mathfrak{h}}_{-1,+}&:=\{x\in \widehat{\mathfrak{h}}_0||x|=-1, r_d \geq 0\},\\
\widehat{\mathfrak{h}}_{-1,-}&:=\{x\in \widehat{\mathfrak{h}}_0||x|=-1, r_d \leq 0\},\\
\widehat{\mathfrak{h}}_{0,+}&:=\{x\in \widehat{\mathfrak{h}}_0||x|=0, r_d \geq 0\},\\
\widehat{\mathfrak{h}}_{0,-}&:=\{x\in \widehat{\mathfrak{h}}_0||x|=0, r_d \leq 0\},\\
\widehat{\mathfrak{h}}_{1,+}&:=\{x\in \widehat{\mathfrak{h}}_0||x|=1, r_d \geq 0\},\\
\widehat{\mathfrak{h}}_{1,-}&:=\{x\in \widehat{\mathfrak{h}}_0||x|=1, r_d \leq 0\}.
\end{align*}

Each of those sheets is $2$\ndash dimensional.

Besides the length of a vector the adjoint action preserves the coefficient $r_d$ of $d$. Hence we can intersect the six sheets $\widehat{\mathfrak{h}}_{\pm 1,0,\pm}$ with planes  such that $r_d$ is constant; for explicitness we choose $r_d=\pm 1$. The resulting sections are $1$\ndash dimensional; they are preserved by the Adjoint action of the Kac-Moody group. Hence the affine Weyl group acts on those sections. In pictures \ref{fig:h=0}, \ref{fig:h=-1} and \ref{fig:h=1} we show the structure of $\widehat{\mathfrak{h}}$ together with the $1$\ndash dimensional intersections. All those spaces are isometric to $\mathbb{R}$.
Let us suppose we have applied this isometry. Then we can describe the action of the affine Weyl group of $\widehat{MSU}(2)$ explicitly:
We have
\begin{displaymath}
 W_{\widehat{MSU}(2)}:=\{s_0,s_1|s_0^2=s_1^2=\textrm{Id}\}\, .
\end{displaymath}

We can implement the two elements $s_0$ and $s_1$ explicitly as the reflections at $0$ and $1$ in $\mathbb{R}$:
\begin{align*}
 s_0(x)&=-x,   &x \in \mathbb{R},\\
 s_1(x)&=1-(x-1)=2-x\ &x\in \mathbb{R}.
\end{align*}

The set of singular elements on this $1$\ndash dimensional space consists just of the integers $\mathbb{Z}\subset \mathbb{R}$. The linear continuation to $\widehat{t}$ is straight forward: Singular elements consist of $2$\ndash dimensional hyperplanes: The hyperplane containing $z_0\subset \mathbb{Z}$ is generated by $c$ and the line through the origin containing $z_0$. Hence all Cartan subalgebras in $\widehat{MSU}(2)$ are Lorentz spaces, isometric to $\mathbb{R}^{2,1}$. 

They are pieced together exactly as in the finite dimensional situation: in projective spaces around singular elements. Nevertheless, there is one special feature: All Cartan subalgebras contain the central direction $c$. Hence as the ``local'' building block around $c$ we find a infinite dimensional projective space.

\begin{figure}[p:]
\centerline{\includegraphics[width=0.32\textwidth]{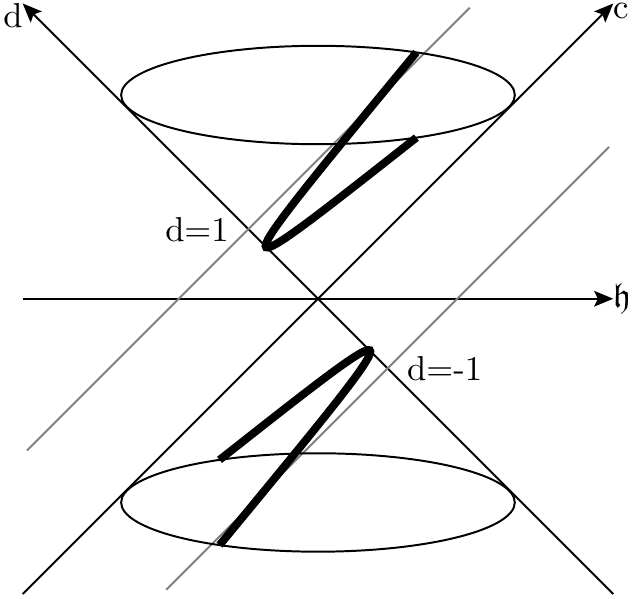}}
 \caption{This figure shows $\widehat{\mathfrak{h}}_{0,\pm 1}$. The fat lines are the intersections of the lightcone with the planes $r_d=\pm 1$\label{fig:h=0}}
\end{figure}

\begin{figure}[p:]
\centerline{\includegraphics[width=0.32\textwidth]{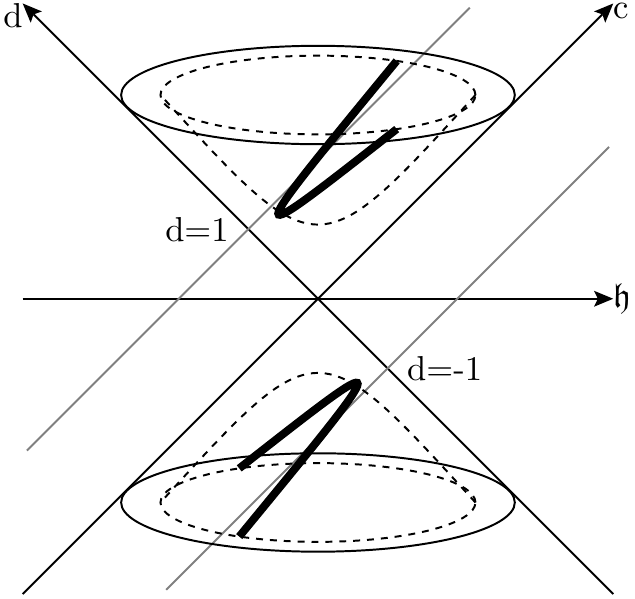}}
 \caption{This figure shows $\widehat{\mathfrak{h}}_{-1,\pm 1}$. The dotted lines represent the sphere of radius $-1$. The fat lines are the intersections with the planes $r_d=\pm 1$\label{fig:h=-1}}
\end{figure}

\begin{figure}[p:]
\centerline{\includegraphics[width=0.32\textwidth]{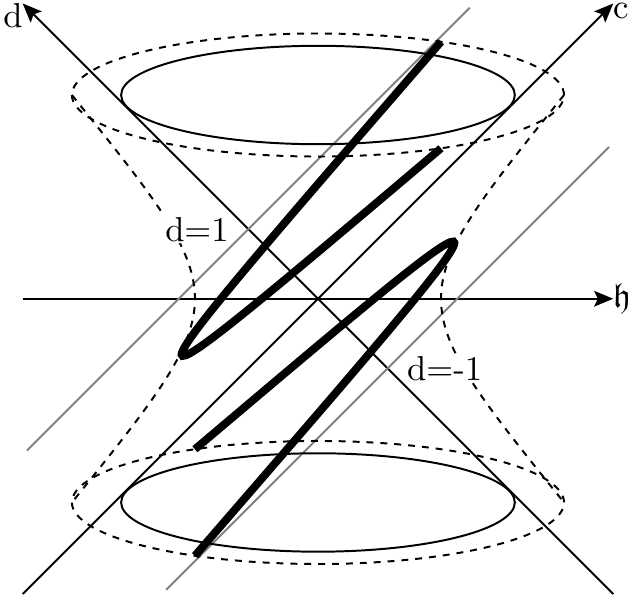}}
 \caption{This figure shows $\widehat{\mathfrak{h}}_{1,\pm 1}$. The dotted lines represent the sphere of radius $1$. The fat lines are the intersection with the planes $r_d=\pm 1$\label{fig:h=1}}
\end{figure}

Let us now go back to $\widehat{MSU}(n)$. Here the structure is quite similar: For every $SU(2)\in SU(n)$ we find an $\widehat{MSU}(2)$\ndash subspace. The geometric structure is exactly the same.


\section{Geodesics}
\label{geodesics}

\noindent The most important fact about the structure of geodesics is that in Fr\'echet Kac-Moody symmetric spaces geodesic
completeness fails even locally.  Hence there is no chance to get a Hopf-Rinow-type
theorem. Global failure of geodesic completeness is a typical feature of Lorentz geometry, local failure is a special feature of the functional analytic setting. In a Banach Lie group for example the inverse function theorem assures the local geodesic completeness. 

There are various results describing criteria for impossibility of Lorentz manifolds to be
geodesically complete. For details~\cite{Beem81}.

The failure of global geodesic completeness of finite dimensional Lorentz manifolds  leads
directly to the fact that Kac-Moody symmetric spaces are not locally
geodesically complete (compare the construction for $SL(2,\mathbb{C})$, example~\ref{sl2chasnodiffeomorphicexponentialmap}).

Let us first investigate the behaviour of the geodesics of the (Riemannian) symmetric space $MG_{\mathbb{R}}$  equipped with the metric 
\begin{displaymath}
\langle f,g\rangle=\frac{1}{2\pi}\int_{0}^{2\pi}\langle f(t), g(t)\rangle dt\, .
\end{displaymath}

In this space geodesics are intimately related to the geodesics of the finite dimensional Lie group $G_{\mathbb{C}}$

\begin{proposition}
$\gamma(t) \subset MG$ is a geodesic iff $\gamma_z(t) \subset G$ is a geodesic
for every $z \in \mathbb{C}^*$.
\end{proposition}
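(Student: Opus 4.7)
The plan is to exploit the pointwise nature of the group operation and of the $L^{2}$ metric on $MG$ in order to decouple the geodesic equation over the base $\mathbb{C}^{*}$. By Theorem~\ref{existanceoflevicivitaconnection} the tame Fr\'echet Lie group $MG$ carries a canonical left-invariant connection $\nabla_{X}Y=\tfrac12[X,Y]$; since the finite-dimensional inner product on $\mathfrak{g}$ is $\mathrm{Ad}$-invariant (recall $G$ is the compact real form of a semisimple Lie group), the induced $L^{2}$ metric on $MG$ is biinvariant, so $\nabla$ is its Levi--Civita connection. In particular, geodesics through $e\in MG$ are exactly the one-parameter subgroups $t\mapsto\Mexp(tX)$ with $X\in M\mathfrak{g}$, and analogously for $G$ through $e\in G$.

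For the forward implication I would first reduce to the basepoint $e$ by left translation, which is an isometry of $MG$ and acts pointwise as left translation in $G$. A geodesic through $e$ then has the form $\gamma(t)=\Mexp(tX)$, and the defining identity $[\Mexp(tX)](z)=\exp(tX(z))$ from Section~\ref{loopgroups} shows that $\gamma_{z}(t)=\exp(tX(z))$ is a one-parameter subgroup in $G$, hence a geodesic there. Undoing the left translation yields $\gamma_{z}(t)=g(z)\exp(tX(z))$ for general $\gamma$, which is still a geodesic in $G$ because left translation on $G$ is an isometry.

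For the converse I would reconstruct the $1$-parameter subgroup from its pointwise data. Set $g(z):=\gamma_{z}(0)$ and $X(z):=(L_{g(z)^{-1}})_{*}\dot\gamma_{z}(0)\in\mathfrak{g}$. The decisive point is that $g\in MG$ and $X\in M\mathfrak{g}$: holomorphy of $g$ is immediate from $\gamma(0)\in MG$, while holomorphy of $X$ follows from Theorem~\ref{tangential space}, which identifies $T_{e}MG$ with $M\mathfrak{g}$ and whose proof shows that $z\mapsto\dot\gamma_{z}(0)$ is holomorphic whenever $\gamma$ is a smooth curve in $MG$. Pointwise uniqueness of geodesics in $G$ then forces $\gamma_{z}(t)=g(z)\exp(tX(z))$, i.e.\ $\gamma(t)=g\cdot\Mexp(tX)$, which is a geodesic in $MG$ by the forward direction.

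The main obstacle is the regularity bookkeeping in the converse step: one must know that pointwise differentiation in $t$ at $t=0$ of a smooth curve in $MG$ produces a \emph{holomorphic} loop in $\mathfrak{g}$, so that $X$ really lies in the model space of $MG$. This is precisely the content of the identification $T_{e}MG\cong M\mathfrak{g}$ established in Theorem~\ref{tangential space}: the Taylor expansion $\gamma(t)=\sum_{n}\gamma_{n}(t)z^{n}$ can be differentiated termwise to give $\tfrac{d}{dt}\gamma(t)=\sum_{n}\dot\gamma_{n}(t)z^{n}$, which furnishes the required holomorphy automatically. No further tame Fr\'echet analysis is needed to close the argument.
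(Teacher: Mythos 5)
Your proof is correct and follows the route the paper itself implicitly takes: the proposition is stated there without any proof, but the mechanism you use --- geodesics of the bi-invariant $H^0$ metric through $e$ are exactly the one-parameter subgroups $\Mexp(tX)$, which are pointwise exponentials, combined with left translation and the holomorphy of $z\mapsto\dot\gamma_z(0)$ from the tangent-space identification --- is precisely what the surrounding text (the later theorem on $1$-parameter subgroups and the corollary on the image of $\Mexp$) relies on. The only point worth flagging is that for $z\notin S^1$ the curve $\gamma_z$ takes values in $G_{\mathbb{C}}$ rather than in $G$, so in the converse one should either interpret \emph{geodesic} there with respect to the bi-invariant structure on $G_{\mathbb{C}}$ or run your argument on $S^1$ and extend $\gamma_z(t)=g(z)\exp(tX(z))$ to all of $\mathbb{C}^*$ by the identity theorem; this imprecision is already present in the paper's own statement.
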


\begin{corollary}
Let $f$ and  $g\in MG$. A necessary and sufficient condition for the existence of a geodesic segment $\gamma([0,1])$ connecting $f$ and $g$ is:
For all $z\in \mathbb{C}^*$ there is a geodesic in $G_{\mathbb{C}}$ connecting $Id$ and $fg^{-1}(z)$.
\end{corollary}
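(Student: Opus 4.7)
The plan is to reduce the corollary to the preceding proposition by exploiting left (or right) translation invariance of the metric on $MG$, and then to address the subtlety of piecing together pointwise geodesics into a holomorphic loop. First I would observe that the $Ad$-invariant metric on $MG_{\mathbb{R}}$ is bi-invariant, so right translation by $g^{-1}$ is an isometry that sends $g$ to $\mathrm{Id}$ and $f$ to $fg^{-1}$. Hence the existence of a geodesic segment $\gamma:[0,1]\to MG$ with $\gamma(0)=f$ and $\gamma(1)=g$ is equivalent to the existence of a geodesic segment $\widetilde{\gamma}:[0,1]\to MG$ with $\widetilde{\gamma}(0)=fg^{-1}$ and $\widetilde{\gamma}(1)=\mathrm{Id}$, which by time-reversal is equivalent to a geodesic from $\mathrm{Id}$ to $fg^{-1}$. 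This reduces the problem to the case of geodesics starting at the identity.

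Next, I would apply the preceding proposition. For the \emph{necessity} direction, if such a $\gamma$ exists, then for every fixed $z\in\mathbb{C}^*$ the evaluation $\gamma_z(t)=[\gamma(t)](z)$ is, by the proposition, a geodesic in $G_{\mathbb{C}}$ connecting $\mathrm{Id}$ to $(fg^{-1})(z)$. This direction is immediate. For the \emph{sufficiency} direction, suppose that for every $z\in\mathbb{C}^*$ there exists a geodesic in $G_{\mathbb{C}}$ from $\mathrm{Id}$ to $(fg^{-1})(z)$. Since $G_{\mathbb{C}}$ is a Lie group with bi-invariant metric coming from the Cartan--Killing form, such pointwise geodesics are of the form $t\mapsto \exp(tX_z)$ with $X_z\in\mathfrak{g}_{\mathbb{C}}$ satisfying $\exp(X_z)=(fg^{-1})(z)$. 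The task is then to show we can choose a holomorphic loop $X\in M\mathfrak{g}_{\mathbb{C}}$ with $\exp\circ X=fg^{-1}$; setting $\gamma(t):=\mathrm{Mexp}(tX)\cdot g$ and invoking the proposition in the reverse direction then completes the proof.

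The main obstacle is precisely this last step: producing a holomorphic logarithm of $fg^{-1}$. Locally on $\mathbb{C}^*$ it is trivial by the inverse function theorem applied to $\exp$ on $\mathfrak{g}_{\mathbb{C}}$ near any point where $\exp$ is a local diffeomorphism; the issues are (i) that $\exp:\mathfrak{g}_{\mathbb{C}}\to G_{\mathbb{C}}$ need not be a local diffeomorphism at every value in its image (cf.\ Example~\ref{sl2chasnodiffeomorphicexponentialmap}, where precisely this failure prevents $\mathrm{Mexp}$ from being a local diffeomorphism), and (ii) that consistent choices on overlapping charts must be monodromy-compatible on the non-simply-connected domain $\mathbb{C}^*$. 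I would address (i) by noting that the hypothesis supplies, at every $z$, \emph{some} $X_z$ with $\exp(X_z)=(fg^{-1})(z)$ and one may further arrange $X_z$ to lie in a regular stratum where $\exp$ is locally biholomorphic; and (ii) by observing that on small simply connected open subsets of $\mathbb{C}^*$ a holomorphic branch of $\log$ exists, and the monodromy around $S^1$ is trivialised precisely by the existence of a \emph{globally defined} selection, which is the content of the hypothesis. Putting these steps together yields the equivalence.
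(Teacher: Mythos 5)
Your reduction to the identity element via bi-invariance of the $H^0$-metric, and your necessity argument, are correct; they are also essentially all the paper supplies, since the corollary is stated there without proof as an immediate consequence of the preceding proposition. You are right that the sufficiency direction is the real issue, and right that it amounts to producing a holomorphic logarithm of $fg^{-1}$ on $\mathbb{C}^*$. But your resolution of that issue is circular: in step (ii) you assert that ``the monodromy around $S^1$ is trivialised precisely by the existence of a globally defined selection, which is the content of the hypothesis.'' The hypothesis only provides, for each $z$ separately, \emph{some} $X_z$ with $\exp(X_z)=(fg^{-1})(z)$; it carries no continuity, let alone holomorphy, in $z$, and therefore says nothing about the monodromy of the local branches of the logarithm. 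A globally defined holomorphic selection is exactly what has to be constructed, and the pointwise hypothesis does not supply it.

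Moreover, this gap cannot be closed, because the implication you are trying to prove fails. Take $G=SU(2)$, $g=e$ and $f(z)=\mathrm{diag}(z,z^{-1})\in MSU(2)$. For every $z\in\mathbb{C}^*$ the matrix $f(z)$ is diagonalizable (it equals $\pm\mathrm{Id}$ at $z=\pm 1$), hence lies in the image of $\exp:\mathfrak{sl}(2,\mathbb{C})\to SL(2,\mathbb{C})$, so the pointwise condition of the corollary holds everywhere. Yet $f\notin\Im(\Mexp)$: if $X$ were holomorphic on $\mathbb{C}^*$ with $\exp(X(z))=f(z)$, then for $z\neq\pm 1$ the element $X(z)$ commutes with $f(z)$, whose eigenvalues are distinct, so $X(z)=\mathrm{diag}(a(z),-a(z))$ with $e^{a(z)}=z$; differentiating gives $a'(z)=1/z$, and integrating over the circle $|z|=2$ (which lies in $\mathbb{C}^*\setminus\{\pm 1\}$) yields $0=2\pi i$, a contradiction. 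Since geodesics through $e$ are exactly the one-parameter subgroups $t\mapsto\Mexp(tX)$, there is no geodesic segment from $e$ to $f$. For the same reason your point (i) is also not available in general: at values such as $-\mathrm{Id}$ every logarithm is a singular point of $\exp$. Any correct proof of the sufficiency direction would therefore require a genuinely stronger hypothesis than the pointwise one stated.
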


The $d$-extension twists those geodesics by a function $\varphi:\mathbb{C}\longrightarrow \mathbb{C}$. This gives rise to the following theorem for $\widehat{MG}$:

\begin{theorem}[Geodesic through $e$]
Let $\pi: \widehat{MG}\longrightarrow MG$ be the bundle projection. Let
$\dot{\gamma}:=(f, r_c, r_d) \in M\mathfrak{g}$ and $\gamma(t)$ be the associated
geodesic. Then $(\pi \gamma)_{\varphi(z,r_d)}(t)$ is a geodesic in $G$.
\end{theorem}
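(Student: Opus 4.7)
The strategy is to identify the geodesic with a one-parameter subgroup of $\widehat{MG}$ and then unfold it explicitly via the semidirect product structure of the Kac-Moody group, reducing the pointwise question to the finite-dimensional Lie group $G$.

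First I would invoke Theorem \ref{existanceoflevicivitaconnection}: the bi-invariant Levi-Civita connection on the tame Fr\'echet Lie group $\widehat{MG}$ satisfies $\nabla_X Y = \tfrac{1}{2}[X,Y]$ on left-invariant fields, so $\nabla_\xi \xi = 0$ for every $\xi \in \widehat{M\mathfrak{g}}$. Therefore the geodesic through $e$ with initial velocity $\xi = f + r_c c + r_d d$ is precisely the one-parameter subgroup $\gamma(t) = \widehat{\Mexp}(t\xi)$, and the theorem reduces to an explicit computation of $\pi \circ \widehat{\Mexp}$.

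Second, I would exploit the factorization $\widehat{MG} = \widetilde{MG} \rtimes \mathbb{C}^{\ast}$, where $\widetilde{MG}$ is the central $\mathbb{C}^{\ast}$-bundle generated by $c$ and the semidirect $\mathbb{C}^{\ast}$-factor is generated by $d$, acting on $MG$ by shift of the loop parameter $w \cdot f(z) = f(wz)$ (in accordance with $[d,f(z)] = iz f'(z)$ and the Ad-formula of Example \ref{adjointaction}). Writing $\gamma(t) = (g(t), w(t), z_c(t))$ and imposing $\gamma(t+s) = \gamma(t)\gamma(s)$ forces $w(t) = \exp(tr_d)$; the central coordinate $z_c(t)$ is killed by $\pi$; and the loop part $g(t) := \pi\gamma(t) \in MG$ obeys the twisted cocycle identity
\[
g(t+s)(z) = g(t)(z)\cdot g(s)(w(t)\, z), \qquad z \in \mathbb{C}^{\ast},
\]
equivalently the pointwise ODE $\dot g(t)(z) = g(t)(z)\cdot f(w(t)z)$.

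Third, the right-hand side of this ODE depends on $t$ only through the shift $z \mapsto w(t)z$, i.e. through the flow of $d$. This is the place where the function $\varphi(z, r_d)$ appears: I would define it via the $d$-orbit through $z$, so that composing the evaluation with the shift flow converts the time-dependent ODE into an autonomous one. Concretely, at the reparametrized basepoint the curve $h(t) := g(t)\bigl(\varphi(z, r_d)\bigr)$ satisfies $\dot h(t) = h(t)\cdot f(z)$, and its unique solution through $e$ is the one-parameter subgroup $h(t) = \exp_G(tf(z))$, which is a geodesic in the compact Lie group $G$ with its bi-invariant metric.

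The main obstacle is the careful bookkeeping of the semidirect-product twist: a naive pointwise evaluation $t \mapsto g(t)(z)$ at a fixed $z$ is \emph{not} a one-parameter subgroup of $G$, and the content of the theorem is precisely that the shift $\varphi(z, r_d)$ absorbs the twist introduced by $d$. Verifying this cleanly is best done at the group level, using the cocycle identity above (rather than differentiating, which would produce spurious $\partial_z g$ correction terms), and exploits that the $d$-shift acts by isometries of $MG$, so the whole argument then reduces to the unextended case (the proposition preceding the theorem), which in turn reduces to the finite-dimensional statement in $G$.
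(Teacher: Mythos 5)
The paper states this theorem without proof: the function $\varphi$ is never defined, and the only surrounding material is the (also unproved) proposition that a curve in $MG$ is a geodesic iff every evaluation $\gamma_z$ is a geodesic in $G$, together with the remark that the $d$-extension ``twists'' these geodesics by some $\varphi$. So there is no argument of the paper's to compare yours against. Your first two steps are sound and consistent with the paper's framework: by Theorem~\ref{existanceoflevicivitaconnection} and the paper's own subsequent theorem on $1$-parameter subgroups, the geodesic through $e$ with velocity $\xi=f+r_cc+r_dd$ is $\widehat{\Mexp}(t\xi)$, and unfolding it in the semidirect product does give $w(t)=\exp(tr_d)$ together with the cocycle $g(t+s)(z)=g(t)(z)\,g(s)(w(t)z)$, equivalently $\dot g(t)(z)=g(t)(z)\,f(w(t)z)$.

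The gap is in your third step, which is the entire content of the theorem. For a $t$-independent evaluation point your own ODE gives $\dot h(t)=h(t)\,f(w(t)\varphi(z,r_d))$, not $\dot h(t)=h(t)\,f(z)$: no choice of a fixed point $\varphi(z,r_d)$ makes the right-hand side autonomous unless $r_d=0$. Passing to the group level does not repair this: evaluating the cocycle at $z=\zeta(t+s)$ yields $h(t+s)=g(t)(\zeta(t+s))\,g(s)(w(t)\zeta(t+s))$, and matching this against $h(t)h(s)$ forces $\zeta$ to be constant and $w(t)\zeta=\zeta$, i.e.\ again $r_d=0$; transporting the evaluation point along the $d$-flow instead reintroduces exactly the $\partial_z g$ correction term you were hoping to avoid. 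The failure is already visible for abelian $G$: with $r_d=1$ one has $g(t)(z)=\exp(\int_0^t f(e^{i\tau}z)\,d\tau)$, whose evaluation at any fixed or flow-transported point moves with speed $|f|$ sampled along the circle through $z$, hence is not a constant-speed $1$-parameter subgroup unless $|f|$ is constant on that circle. So the mechanism by which $\varphi(z,r_d)$ is supposed to absorb the twist is not established; to close the gap you would need an explicit definition of $\varphi$ for which the claim can actually be verified, or a reformulation of the statement (e.g.\ permitting a $t$-dependent reparametrization of both the loop variable and the time variable). As written, your argument proves the assertion only in the untwisted case $r_d=0$, where it collapses to the proposition preceding the theorem.
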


As in finite dimensions geodesics in a Kac-Moody symmetric space are related to the group structure:

\begin{theorem}[$1$-parameter subgroups]
A curve in $\widehat{MG}$ through the identity element $e$ is a geodesic iff it is a $1$\ndash parameter
subgroup.
\end{theorem}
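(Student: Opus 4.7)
The plan is to exploit the very explicit description of the Levi-Civita connection on a tame Fréchet Lie group given in Theorem~\ref{existanceoflevicivitaconnection}/Theorem~\ref{levicivita}: on left-invariant vector fields $X,Y \in \widehat{M\mathfrak{g}}$ one has $\nabla_X Y = \tfrac12[X,Y]$, and in particular $\nabla_X X = 0$. This identity is essentially all that is needed, and the argument proceeds exactly as in the finite-dimensional case of a biinvariant metric.

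For the forward implication, I would fix $X\in \widehat{M\mathfrak{g}} \cong T_e\widehat{MG}$ and consider the $1$-parameter subgroup $\gamma(t)=\widehat{\Mexp}(tX)$. Let $\widetilde X$ denote the left-invariant vector field on $\widehat{MG}$ extending $X$. The defining property of a $1$-parameter subgroup together with left-invariance gives $\dot\gamma(t) = \widetilde X_{\gamma(t)}$ for all $t$ in the domain. Applying the formula $\nabla_{\widetilde X}\widetilde X = \tfrac12[\widetilde X,\widetilde X] = 0$ along $\gamma$ yields $\nabla_{\dot\gamma}\dot\gamma = 0$, so $\gamma$ is a geodesic.

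For the converse, I would mimic the classical trick of reducing the second-order geodesic equation to a first-order ODE on the Lie algebra. Given a geodesic $\gamma\colon I \to \widehat{MG}$ with $\gamma(0)=e$, set
\begin{displaymath}
Y(t) := \bigl(dL_{\gamma(t)^{-1}}\bigr)\dot\gamma(t)\in \widehat{M\mathfrak{g}},
\end{displaymath}
and denote by $\widetilde{Y(t)}$ the corresponding left-invariant field (so $\dot\gamma(t)=\widetilde{Y(t)}_{\gamma(t)}$). Combining left-invariance of $\nabla$ with the identity $\nabla_{\widetilde Z}\widetilde Z=0$ for any left-invariant $\widetilde Z$, the geodesic equation $\nabla_{\dot\gamma}\dot\gamma = 0$ reduces to the first-order equation $\dot Y(t) = 0$ on $\widehat{M\mathfrak{g}}$, so $Y(t)\equiv X := \dot\gamma(0)$. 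Thus $\gamma$ satisfies $\dot\gamma(t) = \widetilde X_{\gamma(t)}$ with $\gamma(0)=e$, which is exactly the defining equation of the $1$-parameter subgroup $t\mapsto \widehat{\Mexp}(tX)$. Because $\widehat{\Mexp}$ is defined pointwise in $z\in \mathbb{C}^*$ together with an explicit formula on the $c,d$-extension (see the description following Example~\ref{adjointaction} and Section~\ref{Kac-Moody groups}), the equation $\dot\gamma = \widetilde X_\gamma$ with $\gamma(0)=e$ has the pointwise unique solution $\gamma(t)=\widehat{\Mexp}(tX)$, which gives the desired conclusion.

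The main obstacle is exactly this last uniqueness step: standard Picard--Lindelöf fails in Fréchet Lie groups, and the exponential map of $\widehat{MG}$ is notoriously not a local diffeomorphism (Theorem~\ref{expnodiffeomorphism} and Example~\ref{sl2chasnodiffeomorphicexponentialmap}). The right way to dispose of it is not to invoke a general Fréchet ODE theorem, but to use the fibre-bundle description of $\widehat{MG}$ over $MG$ together with the pointwise definition of $\widehat{\Mexp}$: for each fixed $z\in\mathbb{C}^*$ the equation $\dot\gamma_z(t)=\widetilde X(z)\gamma_z(t)$ lives in the finite-dimensional Lie group $G_{\mathbb{C}}$ where uniqueness is classical, and the $c$, $d$ components of $\gamma$ satisfy an even simpler linear ODE on $\mathbb{R}$ (respectively on a fibre of the central $\mathbb{C}^*$-bundle) whose unique solution is again the corresponding component of $\widehat{\Mexp}(tX)$. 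Gluing these pointwise uniqueness statements via the tame Fréchet manifold structure of $\widehat{MG}$ proved in the previous chapter yields $\gamma(t)=\widehat{\Mexp}(tX)$ on its entire domain of definition, completing the proof.
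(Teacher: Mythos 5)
Your proof is correct and follows exactly the route the paper intends: the paper's own proof consists of the single line ``The proof is as in the finite dimensional case,'' and your argument is precisely that finite-dimensional argument (the identity $\nabla_{\widetilde X}\widetilde X=\tfrac12[\widetilde X,\widetilde X]=0$ from Theorem~\ref{levicivita} for one direction, and the reduction of the geodesic equation to $\dot Y=0$ on the Lie algebra for the other), carried over verbatim. You in fact go beyond the paper by addressing the uniqueness of the flow of a left-invariant field, which genuinely needs the pointwise/fibre-bundle reduction you describe since Picard--Lindel\"of is unavailable in the Fr\'echet setting; the only imprecision is that when the $d$-component of $X$ is nonzero the pointwise equation at fixed $z$ has a time-dependent coefficient $f(ze^{itr_d})$ rather than a constant one, but uniqueness still follows from finite-dimensional ODE theory, so the argument stands.
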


\begin{proof}
The proof is as in the finite dimensional case.
\end{proof}

\begin{corollary}
 There is a geodesic containing $x\in \widehat{MG}$ iff $x$ lies in the image of the exponential map $\Mexp$. 
\end{corollary}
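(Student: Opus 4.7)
The plan is to reduce the corollary to the preceding theorem identifying geodesics through $e$ with $1$-parameter subgroups, where the statement is understood (in keeping with the theorem it follows) as: $x\in\widehat{MG}$ lies on a geodesic through the identity $e$ if and only if $x\in\Im(\widehat{\Mexp})$.

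First I would handle the easy direction ($\Leftarrow$). Suppose $x=\widehat{\Mexp}(X)$ for some $X\in\widehat{M\mathfrak{g}}$. Then the curve $\gamma(t):=\widehat{\Mexp}(tX)$ is a $1$-parameter subgroup of $\widehat{MG}$, since by construction $\widehat{\Mexp}(sX)$ is a $1$-parameter subgroup for every $X$ (this is the defining property of the exponential map on a pair of exponential type, definition~\ref{exponentialtype}). By the preceding theorem every $1$-parameter subgroup is a geodesic, so $\gamma$ is a geodesic through $e=\gamma(0)$ passing through $x=\gamma(1)$.

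For the converse ($\Rightarrow$), let $\gamma\colon I\to\widehat{MG}$ be a geodesic with $\gamma(0)=e$ and $\gamma(t_0)=x$ for some $t_0\in I$. By the preceding theorem $\gamma$ is a $1$-parameter subgroup, and setting $X:=\dot\gamma(0)\in T_e\widehat{MG}\cong\widehat{M\mathfrak{g}}$ we have $\gamma(t)=\widehat{\Mexp}(tX)$ for all $t\in I$ (this is the uniqueness of $1$-parameter subgroups with prescribed initial velocity, which holds on Kac-Moody groups because of axiom~(2) in definition~\ref{exponentialtype}). Substituting $t=t_0$ gives $x=\widehat{\Mexp}(t_0X)\in\Im(\widehat{\Mexp})$.

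The only potential subtlety is the passage from ``geodesic through $e$'' to ``$1$-parameter subgroup'' in the converse: one must ensure that the reparametrization by $t\mapsto tX$ with $X=\dot\gamma(0)$ agrees with $\gamma$ on its full domain, not merely near $0$. This is where the essentially algebraic characterization of $1$-parameter subgroups via the exponential map (property~(2) of exponential-type pairs) is used; the analytic pathologies of $\widehat{\Mexp}$ (failure to be a local diffeomorphism, as illustrated by example~\ref{sl2chasnodiffeomorphicexponentialmap}) do not interfere because one only needs injectivity of $X\mapsto\widehat{\Mexp}(sX)$ as a map into $1$-parameter subgroups, not local invertibility of $\widehat{\Mexp}$ itself. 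Hence no further ingredient beyond the preceding theorem is required.
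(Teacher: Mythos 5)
Your proof is correct and takes essentially the same route the paper intends: the corollary is read as a statement about geodesics through $e$ and is deduced directly from the preceding theorem identifying such geodesics with $1$\ndash parameter subgroups. The only (cosmetic) imprecision is in the converse direction, where the identity $\gamma(t)=\widehat{\Mexp}(t\dot\gamma(0))$ does not follow from axiom~(2) of definition~\ref{exponentialtype} (that axiom only gives injectivity of $X\mapsto\Mexp(sX)$, not that every $1$\ndash parameter subgroup arises this way); the correct justification is the pointwise reduction to finite\ndash dimensional $1$\ndash parameter subgroups, $\gamma_z(t)=\exp(tX(z))$ with $z\mapsto X(z)$ holomorphic by theorem~\ref{tangential space}, which is exactly the reasoning behind the paper's remark that the image of $\Mexp$ consists of the loops whose image lies in the image of the group exponential of $G$.
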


Let us now give some remarks about the use of ILB-structure: We can describe the tame Fr\'echet symmetric space as an inverse limit of Banach symmetric spaces. Obviously in each of those Banach spaces the exponential function describes a local diffeomorphism. Hence in each of those Banach spaces any two sufficiently close points can be connected by a geodesic. We call a geodesic in one of those Banach symmetric spaces a $B$-geodesic. Every geodesic in the Fr\'echet symmetric space defines a geodesic in some Banach space.

\section{Hermitian Kac-Moody symmetric spaces?}

Several finite dimensional Riemannian symmetric spaces admit a complex structure, making them into hermitian symmetric space~\cite{Helgason01}, p.~352. As the hermitian structure of a hermitian symmetric space is Kaehler, those space provide nontrivial examples of Kaehler manifolds and have various applications in complex analysis. Hence we have to investigate if there are hermitian Kac-Moody
symmetric spaces. 
Following the finite dimensional blueprint we make the definition:

\begin{definition}
Let $M$ be an affine Kac-Moody symmetric space. $M$ is an Hermitian Kac-Moody symmetric space if
\begin{enumerate}
 \item $M$ is a complex manifold with a Hermitian structure.
 \item Each point $p\in M$ is an isolated fixed point of an involutive holomorphic isometry $\sigma_p$ of $M$.
\end{enumerate}
\end{definition}

The answer to the existence of hermitian Kac-Moody symmetric spaces is negative:

\begin{theorem}
There are no hermitian affine Kac-Moody symmetric spaces.
\end{theorem}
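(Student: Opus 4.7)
My plan is to deduce a contradiction from purely linear-algebraic features of the tangent space at a point. Suppose $M$ is a Hermitian affine Kac-Moody symmetric space, pick a point $p\in M$, and let $J\colon T_pM\to T_pM$ be the complex structure coming from the Hermitian structure. Since the Hermitian metric is compatible with $J$, the map $J$ must be an isometry of the (weak) Lorentz scalar product $g$ on $T_pM$ with $J^2=-\mathrm{Id}$. The plan is to show that no such $J$ can coexist with a scalar product of Lorentz index, using only the fact, established earlier, that every Kac-Moody symmetric space is a tame Fr\'echet Lorentz symmetric space, so that the maximal dimension of a $g$-negative-definite subspace of $T_pM$ is exactly $1$.

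First I would pick a timelike vector $v\in T_pM$; such a vector exists because the index of $g$ is $1$, and the time-like direction is generated, for instance, by a vector of the form $c+d$ as in the description of the $\mathrm{Ad}$-invariant scalar product on $\widehat{M\mathfrak g}$. Then set $w:=Jv$. Since $J$ is an isometry, $g(w,w)=g(v,v)<0$, so $w$ is again timelike. A one-line computation gives $g(v,Jv)=g(Jv,J^2 v)=-g(Jv,v)$, hence $g(v,Jv)=0$. Moreover $v$ and $Jv$ are $\mathbb R$-linearly independent: any relation $Jv=\lambda v$ with $\lambda\in\mathbb R$ forces $\lambda^{2}=-1$ via $J^2=-\mathrm{Id}$, which is impossible.

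This produces a two-dimensional real subspace $\mathrm{span}_{\mathbb R}\{v,Jv\}\subset T_pM$ on which $g$ is diagonal with two strictly negative entries, i.e.\ a two-dimensional $g$-negative-definite subspace. This directly contradicts the fact that the index of the Lorentz form on a Kac-Moody symmetric space equals $1$, proving the theorem.

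The step that is technically the most delicate is not the algebraic contradiction but the justification that on a Hermitian tame Fr\'echet symmetric space the complex structure is indeed an isometry of the weak Lorentzian metric; I would have to argue that the Hermitian condition (and the fact that $J$ commutes with each geodesic symmetry $\sigma_p$, hence with the isotropy representation) forces $J$ to be $g$-orthogonal, and that the weak metric still satisfies the polarization identity needed for the orthogonality computation $g(v,Jv)=-g(v,Jv)$. Once this compatibility is established, the remaining argument is the purely finite-dimensional linear-algebra fact outlined above and requires no further infinite-dimensional input.
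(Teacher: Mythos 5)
Your proposal is correct and follows essentially the same route as the paper: both take the timelike direction (the paper uses $c+d$ with $g(c+d,c+d)=-2$ explicitly), apply the $g$-compatible complex structure $J$ to produce a second negative-norm vector, and conclude that $\mathrm{span}\{v,Jv\}$ is a two-dimensional negative-definite subspace, contradicting the Lorentz (index $1$) signature. Your write-up is in fact slightly more complete than the paper's, since you verify the orthogonality $g(v,Jv)=0$ and the linear independence of $v$ and $Jv$, which the paper leaves implicit.
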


\begin{proof}
Let $p\in M$, $g_p:T_pM\times T_pM\longrightarrow \mathbb{R}$ the metric and  $J_p:T_pM\longrightarrow T_pM$ be the complex structure. Then 
\begin{displaymath}
g_p(v,w)=g_p(J_p(v), J_p(w))
\end{displaymath}
On a Kac-Moody symmetric space we have especially:
\begin{displaymath}
-2=g_p(c+d,c+d)=g_p(J_p(c+d), J_p(c+d))
\end{displaymath}
Hence we have a $2$-dimensional subspace of negative index ---  this is contradiction as we started with a Lorentzian metric. 
\end{proof}

This is similar to the theory of finite dimensional Lorentzian manifolds. 
Alternatively we could argue via the structure of the isometry group, generalizing a result of~\cite{Helgason01}, p.~382 which states:

\begin{theorem}
Let $G/K$ be a hermitian symmetric space. Then the center $Z_K$ of $K$ is analytically isomorphic to $S^1$.
\end{theorem}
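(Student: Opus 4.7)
The plan is to reduce the statement to the well-known infinitesimal characterization of Hermitian symmetric spaces via elements in the center of $\mathfrak{k}$. Let $\mathfrak{g} = \mathfrak{k} \oplus \mathfrak{p}$ be the Cartan decomposition of the orthogonal symmetric Lie algebra associated to $(G,K)$, and identify $T_oM \cong \mathfrak{p}$ at the base point $o = eK$. Since the invariant complex structure $J$ on $M$ is preserved by $G$, its value $J_o$ at $o$ is an $\operatorname{Ad}(K)$-equivariant endomorphism of $\mathfrak{p}$ with $J_o^2 = -\operatorname{Id}_\mathfrak{p}$, and because $M$ is Hermitian the Riemannian metric is compatible with $J_o$, so $J_o$ is skew-symmetric with respect to the $\operatorname{Ad}(K)$-invariant inner product on $\mathfrak{p}$.

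The next step is to promote $J_o$ to an element of $\mathfrak{k}$: I would first reduce to the irreducible case via the de Rham decomposition (for irreducible factors of Euclidean type the Hermitian condition forces dimension zero on the space, so only semisimple factors contribute). For an irreducible Hermitian symmetric space, the isotropy representation of $\mathfrak{k}$ on $\mathfrak{p}$ is irreducible over $\mathbb{R}$, so by Schur's lemma the commutant of $\operatorname{Ad}(K)|_\mathfrak{p}$ in $\operatorname{End}_\mathbb{R}(\mathfrak{p})$ is $\mathbb{R}$, $\mathbb{C}$, or $\mathbb{H}$; the existence of $J_o$ with $J_o^2=-\operatorname{Id}$ rules out $\mathbb{R}$, and a case analysis of the holonomy/curvature tensor of an irreducible Hermitian symmetric space rules out $\mathbb{H}$. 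So the commutant is $\mathbb{C} = \mathbb{R} \oplus \mathbb{R} J_o$.

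Now I would invoke the standard structural result for orthogonal symmetric Lie algebras of Hermitian type: for each $K$-invariant element $A \in \operatorname{End}(\mathfrak{p})$ that is skew-symmetric with respect to the invariant metric, there exists a unique $Z_A \in \mathfrak{z}(\mathfrak{k})$ with $\operatorname{ad}(Z_A)|_\mathfrak{p} = A$. Applied to $A = J_o$, this yields $Z \in \mathfrak{z}(\mathfrak{k})$ with $\operatorname{ad}(Z)|_\mathfrak{p} = J_o$, hence $(\operatorname{ad} Z)^2 = -\operatorname{Id}$ on $\mathfrak{p}$. The eigenvalues of $\operatorname{ad}(Z)$ on $\mathfrak{p}_\mathbb{C}$ are therefore $\pm i$, so the closure of $\exp(\mathbb{R}Z)$ in $K$ is a circle subgroup of $Z_K$. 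Conversely, any other element of $\mathfrak{z}(\mathfrak{k})$ must, by its commutation with $\operatorname{Ad}(K)$, act on $\mathfrak{p}$ as an element of the commutant $\mathbb{R} \oplus \mathbb{R} J_o$; faithfulness of the isotropy representation (which holds for effective $M$) then shows $\dim \mathfrak{z}(\mathfrak{k}) = 1$, and compactness of $K$ gives $Z_K \cong S^1$ analytically.

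The main obstacle, as usual, is the passage from the endomorphism $J_o$ of $\mathfrak{p}$ to an actual element of $\mathfrak{z}(\mathfrak{k})$, i.e.\ showing that the complex structure is \emph{inner}. This step uses crucially the semisimplicity of $\mathfrak{g}$ (to identify skew $K$-equivariant endomorphisms of $\mathfrak{p}$ with $\operatorname{ad}$-images of $\mathfrak{z}(\mathfrak{k})$-elements via the Killing form) and the curvature-preserving property of $J$ in the Hermitian case; everything else is a mild Schur-type argument together with a compactness/connectedness argument to conclude $Z_K \cong S^1$ rather than a finite cyclic group or a higher-dimensional torus.
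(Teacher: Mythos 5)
First, a point of reference: the paper does not prove this statement at all --- it is quoted from Helgason (p.~382) as a known fact about finite dimensional Hermitian symmetric spaces, invoked only to sketch an alternative argument for the non-existence of Hermitian Kac-Moody symmetric spaces. So there is no in-paper proof to compare against, and your write-up is a reconstruction of the classical argument. Its architecture (identify $J_o$ with a skew $\mathrm{Ad}(K)$-equivariant endomorphism of $\mathfrak{p}$, compute the commutant by Schur, show the complex structure is inner, bound $\dim\mathfrak{z}(\mathfrak{k})$ by faithfulness of the isotropy representation) is indeed the standard one.

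However, the step you yourself flag as ``the main obstacle'' --- producing $Z\in\mathfrak{z}(\mathfrak{k})$ with $\mathrm{ad}(Z)|_{\mathfrak{p}}=J_o$ --- is essentially the whole content of the theorem, and invoking it as a ``standard structural result'' leaves the proof circular in spirit: once that statement is granted, everything else is bookkeeping. To close the gap you should supply the usual derivation argument: extend $J_o$ by zero on $\mathfrak{k}$ to an endomorphism $\tilde{J}$ of $\mathfrak{g}$ and verify it is a derivation. The mixed case $X\in\mathfrak{k}$, $Y\in\mathfrak{p}$ is exactly $\mathrm{Ad}(K)$-equivariance; the case $X,Y\in\mathfrak{p}$ reduces to the identity $[J_oX,J_oY]=[X,Y]$, which follows from the Kähler curvature identity $R(J_oX,J_oY)=R(X,Y)$ together with $R(X,Y)=-\mathrm{ad}([X,Y])|_{\mathfrak{p}}$ and faithfulness of $\mathrm{ad}|_{\mathfrak{p}}$ on $\mathfrak{k}=[\mathfrak{p},\mathfrak{p}]$. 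Then semisimplicity makes $\tilde{J}$ inner, $\tilde{J}=\mathrm{ad}(Z)$, and the $\mathfrak{k}\oplus\mathfrak{p}$ decomposition of $Z$ forces $Z\in\mathfrak{z}(\mathfrak{k})$. Two further points: (i) your disposal of the Euclidean factors is wrong --- a Hermitian symmetric space of Euclidean type is $\mathbb{C}^n$, not a point, and for a product of $k$ irreducible factors $Z_K$ is a $k$-torus; the statement as quoted tacitly assumes irreducibility (and compact or non-compact type), which you should make explicit rather than argue away. (ii) To conclude $Z_K\cong S^1$ rather than $S^1\times(\text{finite group})$, observe that the whole group $Z_K$, not merely $\mathfrak{z}(\mathfrak{k})$, injects into the unitary part $U(1)$ of the commutant $\mathbb{C}$, and its image already contains the full circle $\exp(\mathbb{R}J_o)$; compactness of $K$ alone does not rule out extra components.
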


The question of the existence of a para-hermitian structure on Kac-Moody symmetric spaces is open.

\chapter{Open problems and various open ends}
There are various deep open question concerning the geometry of Kac-Moody symmetric spaces themselves and their relations to other objects of mathematics. We just list some of them without going into details.

\begin{enumerate}
 \item {\bf Mostow rigidity:} Study quotients of Kac-Moody symmetric spaces of the non-compact type. We
 conjecture the existence of a Mostow-type theorem for Kac-Moody symmetric spaces of the non-compact type, if the rank $r$ of each irreducible factor satisfies $r\geq 4$; in the finite dimensional situation the main ingredients for the
  proof of Mostow rigidity are the spherical buildings which are associated to the universal covers $\widetilde{M}$ and
  $\widetilde{M}'$ of two homotopy equivalent locally symmetric spaces $M=\widetilde{M}/\Gamma$ and $M'=\widetilde{M'}/\Gamma'$ (suppose the rank $r$ of each de Rham factor satisfies  $\textrm{rank}(M)\geq 2$). To prove Mostow rigidity one has to show that a homotopy equivalence of the quotients lifts to a quasi isometry of the universal covers and induces a building isomorphism. This step is done via a description of the boundary at infinity. By rigidity results of Jacques Tits this building isomorphism is known to introduce a group isomorphism which in turn leads to an isometry of the quotients.

 Hence to prove a generalization of Mostow rigidity to quotients of Kac-Moody symmetric spaces of the non-compact type along these lines, one needs a description of the boundary of Kac-Moody symmetric spaces of the non compact type. In view of their Lorentz structure, those spaces are not CAT(0). Consequently, a direct generalization of the finite dimensional ideas to construct a boundary seems difficult. Nevertheless as for each point in the symmetric space a boundary can be defined, a complete definition using the action of the Kac-Moody groups seems within reach. We note that a generalization or adaption of the methods developed 
in \cite{Caprace09} and in \cite{Gramlich09} might lead to a proof of Mostow rigidity based on local\ndash global methods. By work of Andreas Mars this is possible for algebraic Kac-Moody groups satisfying various technical assumptions in case of rings with sufficiently many units~\cite{Mars10}.

\item{\bf Holonomy:} Study the holonomy of infinite dimensional Lorentz manifolds. More precisely: Is there a generalization of the Berger holonomy theorem to Kac-Moody symmetric spaces? In the finite dimensional Riemannian case, Berger's holonomy theorem tells us the following: Let $M$ be a simply connected manifold with irreducible holonomy. Then either the holonomy acts transitively on the tangent sphere or $M$ is  symmetric.  
Kac-Moody symmetric spaces are Lorentz manifolds and have a distinguished, parallel lightlike vector field, corresponding to $c$. 
In relativity, similar (four dimensional) manifolds are known as Brinkmann waves~\cite{Brinkmann25}. Hence it seems that Kac-Moody symmetric spaces should be understood as a kind of infinite dimensional Brinkmann wave. The conjectured holonomy theorem states then, that the holonomy of an infinite dimensional Brinkmann wave is either transitive on the horosphere around $c$ or the space is a Kac-Moody symmetric space.

\item{\bf Characterization:} Is there some intrinsic characterization of Kac-Moody symmetric spaces? Probably, the existence of sufficiently many finite dimensional flats and the closely related Fredholm property of the induced polar representations will be part of this characterization. From an algebraic point of view, it would be very interesting, to investigate if there are symmetric spaces associated to Lorentzian Kac-Moody algebras - nevertheless, the absence of good explicit realizations seems to be a serious impediment. From a functional analytic point of view, a generalization to other classes of maps, i.e.\ holomorphic maps on Riemann surfaces or symplectic maps would be interesting. From the finite dimensional blueprint the development of an infinite dimensional version of the theory of non-reductive pseudo-Riemannian symmetric spaces as developed in the finite dimensional case by 
I.\ Kath and M.\ Olbricht in \cite{Kath06} seems promising.  In view of the characterization of Kac-Moody symmetric spaces as infinite dimensional Brinkmann waves, we would like to understand, how the existence of this special direction is related to other structure properties. It is hence natural to ask if there is a class of infinite dimensional Lorentz symmetric spaces without such a distinguished direction? 
\end{enumerate}

\appendix
\backmatter
\bibliographystyle{alpha}
\bibliography{Doktorarbeit1}
\printindex
\end{document}